\newtheorem*{main*}{Main Theorem}
\newtheorem{theorem}{Theorem}[section]
\newtheorem*{theorem*}{Theorem}
\newtheorem{proposition}[theorem]{Proposition}
\newtheorem{lemma}[theorem]{Lemma}
\newtheorem{corollary}[theorem]{Corollary}
\newtheorem*{question*}{Question}
\newtheorem*{conjecture*}{Conjecture}
\newtheorem{notation}[theorem]{Notations}
\newtheorem*{claim}{Claim}
\newtheorem*{fact*}{Fact}
\theoremstyle{definition}
\newtheorem{definition}[theorem]{Definition}
\newtheorem*{definition*}{Definition}
\newtheorem{pexample}[theorem]{Possible Example}
\newtheorem{example}[theorem]{Example}
\theoremstyle{remark}
\numberwithin{equation}{section}
\newcommand{\isom}{\cong} 
\newcommand{\union}{\cup}
\newcommand{\ints}{\cap}
\newcommand{\bigunion}{\bigcup}
\renewcommand{\AA}{\mathbb{A}}
\newcommand{\RR}{\mathbb{R}}
\newcommand{\TT}{\mathbb{T}}
\newcommand{\ZZ}{\mathbb{Z}}
\newcommand{\bG}{\mathbf{G}}
\newcommand{\bH}{\mathbf{H}}
\newcommand{\bP}{\mathbf{P}}
\newcommand{\sP}{\mathsf{P}}
\newcommand{\cA}{\mathcal{A}}
\newcommand{\cB}{\mathcal{B}}
\newcommand{\cC}{\mathcal{C}}
\newcommand{\cD}{\mathcal{D}}
\newcommand{\cE}{\mathcal{E}}
\newcommand{\cF}{\mathcal{F}}
\newcommand{\cH}{\mathcal{H}}
\newcommand{\cI}{\mathcal{I}}
\newcommand{\cP}{\mathcal{P}}
\newcommand{\cR}{\mathcal{R}}
\newcommand{\cS}{\mathcal{S}}
\newcommand{\cU}{\mathcal{U}}
\newcommand{\cV}{\mathcal{V}}
\newcommand{\cW}{\mathcal{W}}
\newcommand{\del}{\partial}
\newcommand{\Db}{\Delta_{\mathrm{bar}}}
\newcommand{\DbF}[1]{\Delta_{\mathrm{bar},#1}}
\newcommand{\vol}{\mathrm{Vol}}
\newcommand{\id}{\mathrm{Id}}
\newcommand{\hess}{\mathrm{Hess}}
\newcommand{\Sep}{{\mathrm{Sep}}}
\newcommand{\PSep}{{\mathrm{PSep}}}
\newcommand{\stype}[2]{\{#1,#2\setminus #1\}}
\newcommand{\rstype}[2]{\{#1\ints #2,#2\setminus #1\}}
\newcommand{\typemark}[3]{(#1,\{#2,#3\setminus #2\})}
\newcommand{\rtypemark}[3]{(#1,\{#2\ints#3,#3\setminus #2\})}
\newcommand{\BFloor}{\mathrm{BFloor}}
\newcommand{\Floor}{\mathrm{Floor}}
\newcommand{\BChamber}{\mathrm{BChamber}}
\newcommand{\Chamber}{\mathrm{Chamber}}
\newcommand{\Cone}{\mathrm{Cone}}
\newcommand{\Proj}{\mathrm{Proj}}
\newcommand{\pr}{\mathrm{pr}}
\newcommand{\seq}[1]{\{#1\}_{j=1}^\infty}
\newcommand{\hF}{{\widehat F}}
\newcommand{\hI}{{\widehat I}}
\newcommand{\comb}[2]{\left(\begin{array}{c}#1\\#2
\end{array}\right)}
\newcommand{\AnSep}{\cA_{0,\Sep}}
\newcommand{\AnPSep}{\cA_{0,\PSep}}
\newcommand{\MCS}{\mathrm{MCS}}
\newcommand{\Path}{\mathrm{Path}}
\newcommand{\length}{\mathrm{length}}
\newcommand{\res}{\mathrm{Res}}
\newcommand{\GV}{G^{(V)}}
\newcommand{\GW}{G^{(W)}}
\newcommand{\GU}{G^{(U)}}
\newcommand{\GVW}{G^{(V;W)}}
\newcommand{\GWU}{G^{(W;U)}}
\newcommand{\WP}{P^{(W)}}
\newcommand{\VQ}{Q^{(V)}}
\newcommand{\UO}{O^{(U)}}
\newcommand{\subord}{\mathrm{Subord}}
\newcommand{\Enrich}{\mathrm{Enrich}}
\newcommand{\Fl}{\mathrm{Fl}}
\newcommand{\ovec}{\overrightarrow}
\newcommand{\supp}{\mathrm{supp}}
\newcommand{\abs}{\left|\right|}
\newcommand{\Sing}{\mathrm{Sing}}
\newcommand{\FBCone}{{\overline{\mathrm{Cone}}}}
\newcommand{\vertsupp}{\mathrm{vertsupp}}
\theoremstyle{definition}
\newtheorem{thm}{Theorem}[section]
\newtheorem{dfn}[thm]{Definition}
\theoremstyle{definition}
\newtheorem{conj}[thm]{Conjecture}
\newtheorem*{rmk}{Remark}
\newtheorem{cor}[thm]{Corollary}
\begin{document}

\author[Chris Connell]{Chris Connell}

\author[Yuping Ruan]{Yuping Ruan}

\author[Shi Wang]{Shi Wang}

\title{Simplicial volume and isolated, closed totally geodesic submanifolds of codimension one}

\address{Indiana University} 
\email{cconnell@indiana.edu} 

\address{ShanghaiTech University}
\email{shiwang.math@gmail.com}

\address{Northwestern University}
\email{ruanyp@northwestern.edu}
\subjclass[2020]{Primary 53C23; Secondary 57R19, 20F65}

\begin{abstract}
We show that for any closed Riemannian manifold with dimension at least two and with nonpositive curvature, if it admits an isolated, closed totally geodesic submanifold of codimension one, then its simplicial volume is positive. As a direct corollary of this, for any nonpositively curved analytic manifold with dimension at least three, if its universal cover admits a codimension one flat, then either it has non-trivial Euclidean de Rham factors, or it has positive simplicial volume. 
\end{abstract}
\maketitle
\tableofcontents


\thispagestyle{empty} 

\section{Introduction}
\subsection{Main results}
Let $M$ be a connected, oriented, closed topological manifold. The \emph{simplicial volume} of $M$, defined as
\[||M||:=\inf\{\sum_{i=1}^\ell |a_i|\;:\;\sum_{i=1}^\ell a_i \sigma_i \textrm{ is a singular real chain representing the fundamental class}\},\]
is a homotopy invariant introduced by Gromov \cite{Gromov82} and Thurston \cite{Thurston97}. One of the main themes in the study of simplicial volume is to investigate its interplay with the underlying Riemannian structures on $M$. For example, if $M$ admits a metric of nonnegative Ricci curvature, then the fundamental group is virtually abelian \cite{Wilking00}, so $||M||=0$ \cite{Gromov82}. {More generally, Gromov \cite{Gromov82} shows that this vanishing occurs whenever the fundamental group of $M$ is amenable.} On the other hand, if $M$ admits a nonpositively curved metric, then philosophically speaking, $||M||>0$ holds whenever $M$ has enough negative {sectional} curvature. Many results have been obtained towards this direction, including negatively curved manifolds \cite{Gromov82, Thurston97, InoueYano82}, locally symmetric spaces of noncompact type \cite{LafontSchmidt06, ConnellFarb03, Bucher07}, geometric rank one manifolds which has a point of certain rank/curvature condition \cite{ConnellWang19, ConnellWang20}, manifolds whose fundamental groups are hyperbolic \cite{Mineyev01} or relative hyperbolic with respect to lower dimensional peripheral subgroups \cite{MineyevYaman, Franceschini}.

Let $M$ be a connected, closed, oriented Riemannian manifold of nonpositive curvature with dimension at least two. We denote $X$ the Riemannian universal cover of $M$ with distance function $d$ and $\Gamma$ the fundamental group of $M$. We say a totally geodesic submanifold $N\subset M$ is \emph{isolated} if any lift $F$ of $N$ in $X$ satisfies the following two properties:
\begin{enumerate}
	\item (\textbf{$N$ is embedded.}) $\gamma F\ints F=\emptyset$, for any $\gamma\in\Gamma\setminus\mathrm{Stab}_F(\Gamma)$. 
	\item (\textbf{No separate parallel geodesics.}) For any bi-infinite geodesic $c(t)$ in $X$, if $d(c(t),F)$ is uniformly bounded, then $c(t)\in F$ for all $t\in \RR$. 
\end{enumerate}
Our main result is the following.
\begin{thm}\label{thm:main}
Let $M$ be a connected, closed, oriented Riemannian manifold of nonpositive curvature with dimension at least two. If $M$ admits an isolated, codimension one, closed totally geodesic submanifold $N$, then $||M||>0$.
\end{thm}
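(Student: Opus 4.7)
The strategy is to exploit the splitting of $\Gamma = \pi_1(M)$ encoded by the isolated codimension-one hypersurface $N$. Cutting $M$ along $N$ yields a compact nonpositively curved manifold $M_0$ with totally geodesic boundary consisting of copies of $N$; lifting to the universal cover, $X$ decomposes into \emph{chambers}---the closures of connected components of $X\setminus\Gamma\cdot F$---glued along the $\Gamma$-orbit of $F$. This combinatorial structure defines a simplicial tree $T$ on which $\Gamma$ acts cocompactly, with edge stabilizers conjugate to $\mathrm{Stab}_F(\Gamma)\cong\pi_1(N)$ and vertex stabilizers the fundamental groups of the chambers. Isolation condition~(1) is precisely what is needed to verify that $T$ is a well-defined tree, since it guarantees that distinct lifts of $N$ are disjoint and so the wall system $\Gamma\cdot F$ has no genuine self-intersections.

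I would next use isolation condition~(2), the absence of separate parallel geodesics near $F$, to establish that this $\Gamma$-action on $T$ is non-elementary and in fact acylindrical. On the one hand, condition~(2) produces loxodromic elements: a closed geodesic of $M$ crossing $N$ transversally lifts to an axis in $X$ meeting infinitely many distinct lifts of $F$ and therefore translating along $T$ with positive translation length. On the other hand, an edge stabilizer preserving a long segment of $T$ would preserve a long chain of lifts of $N$ at uniformly bounded mutual distances, and combined with cocompactness of the $\Gamma$-action on $X$ this would yield an invariant flat strip adjacent to $F$, contradicting~(2). Hence the action is acylindrical in the sense of Bowditch.

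Finally, I would convert this tree action into positivity of $\|M\|$ via a theorem of Mineyev--Yaman/Franceschini type: if the fundamental group of a closed aspherical $n$-manifold is relatively hyperbolic with respect to a family of subgroups of cohomological dimension at most $n-1$, then $\|M\|>0$. The chamber (vertex) stabilizers are infinite-index subgroups of the Poincar\'e duality group $\Gamma$, so by Strebel's theorem they have cohomological dimension at most $n-1$. The \emph{main obstacle} is upgrading the acylindrical tree action into a genuine relatively hyperbolic structure on $\Gamma$ with respect to the chamber subgroups: one has to verify the bounded coset penetration property on the coned-off Cayley graph, and condition~(2) is again the essential input since it precludes the flat strips that would otherwise obstruct hyperbolicity of the coning-off. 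As an alternative, one may bypass relative hyperbolicity and attempt to build the bounded top-dimensional cocycle directly by straightening singular simplices chamber by chamber using the nonpositive curvature, then using transverse intersections with $\Gamma\cdot F$ to ensure a nontrivial pairing with $[M]$.
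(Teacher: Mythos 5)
There is a genuine gap, and it is a fatal one: the relative hyperbolicity step cannot work under the stated hypotheses.

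Your observation that the action on the Bass--Serre tree $T$ is acylindrical is correct -- if $1 \neq \gamma$ stabilizes two distinct lifts $F_1,F_2$, its axis is at bounded distance from each, hence by isolation condition~(2) lies in each, contradicting $F_1\cap F_2=\emptyset$. But acylindricity of the tree action does \emph{not} imply that $\Gamma$ is hyperbolic relative to the chamber (vertex) groups, and in fact it cannot be, for a structural reason. Any two chamber subgroups $P_1,P_2$ stabilizing adjacent vertices of $T$ intersect along the edge group, so $P_1 \cap P_2$ contains a conjugate of $\pi_1(N)$, which is infinite because $N$ is a closed aspherical manifold of dimension $n-1\geq 1$. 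Relative hyperbolicity of $\Gamma$ with respect to $\{P_1,P_2\}$ would force this collection to be almost malnormal (Osin), in particular $P_1\cap P_2$ finite -- contradiction. The same argument rules out the HNN case with the single vertex group. So verifying bounded coset penetration for the chamber subgroups is not merely ``the main obstacle'' to be overcome by more effort; the conclusion is simply false. The only candidate peripheral structure consistent with the splitting is $\{\pi_1(N)\}$ (or the edge groups), and $\Gamma$ being hyperbolic relative to $\pi_1(N)$ would require each chamber to already be hyperbolic relative to its boundary. That is a strictly stronger hypothesis than isolation of $N$: the paper's Subsections~1.2--1.3 point out explicitly that under the isolation assumption alone, flat behavior can persist inside the chambers away from every lift of $N$, and in the $4$-dimensional analytic case (Schroeder's type (3b) flats) $\pi_1(M)$ fails to be relatively hyperbolic to any invariant collection of stabilizers of the relevant convex subspaces. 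Theorem~\ref{thm:main} is designed precisely to cover these non-relatively-hyperbolic examples, so any proof that routes through relative hyperbolicity proves a strictly weaker statement.

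Your closing alternative (``straighten simplices chamber by chamber and pair with a cocycle supported transverse to $\Gamma\cdot F$'') is much closer in spirit to what the paper actually does, but as stated it does not confront the real difficulty: the individual chambers can have unbounded flat geometry, so geodesic straightening inside a chamber need not produce simplices of uniformly bounded volume, and there is no a priori uniform bound on how many lifts of $F$ a straightened simplex comes near. The paper's resolution is to (i) replace geodesic simplices by barycentric simplices (Definition~\ref{bar simplex}), which enjoy an almost-injectivity property crucial for the volume estimate, (ii) construct a homological bicombing (Section~\ref{sec bicombing}) modelled on Mineyev's but driven only by the hyperbolicity \emph{transverse} to the lifts of $F$ (Lemmas~\ref{hyperbolicity of right triangles} and~\ref{key hyp lem}), encoded through the $\Omega/\Theta$ ``almost between'' relations, (iii) cut each straightened simplex into chambers using the $\Theta$-separation graph $\bG_V$ and refill by cone averages (Sections~\ref{sec combinatorics}--\ref{sec hom arg}) so that the result is a \emph{uniformly} bounded linear combination of special barycentric simplices of \emph{uniformly} bounded depth, and (iv) integrate against an $n$-form $\omega_J$ supported in a thin annulus around $N$ (Lemma~\ref{bar uniform volume bound}); boundedness of the depth is exactly what bounds each individual integral. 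Steps (ii)--(iv) are what make the argument go through even when the chambers themselves have unbounded flats, and they have no analogue in the relative-hyperbolicity framework you propose.
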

\begin{rmk}
The isolated condition implies for any lift $F$ of $N$, $\partial F\subset \partial X$ is disconnected to its complement under the Tits metric. Moreover, one can observe some hyperbolicity in the orthogonal direction of $F$.  (See (K1) and (K2) in \hyperlink{idea-and-plan}{the outline and plan of the proof}.) This notion of hyperbolicity is the key to prove Theorem \ref{thm:main}.
\end{rmk}
\begin{cor}\label{cor:main-1}
Let $M$ be an analytic Riemannian manifold of nonpositive curvature with dimension at least three. If the universal cover of $M$ admits a flat of codimension one, then either of the following holds:
\begin{enumerate}
\item $M$ has non-trivial Euclidean de Rham factors. In particular, $\|M\|=0$.
\item $\|M\|>0$.
\end{enumerate}
\end{cor}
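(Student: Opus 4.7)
The plan is to reduce the corollary to Theorem~\ref{thm:main} via the de Rham decomposition of $X=\tilde M$. If $M$ has a non-trivial Euclidean de Rham factor, then a finite cover of $M$ splits isometrically as $M_Y\times T^k$ with $k\geq 1$; Gromov's product inequality together with $\|T^k\|=0$ (or equivalently his vanishing theorem for amenable normal subgroups, applied to the $\mathbb{Z}^k$ subgroup arising from the $\mathbb{R}^k$-factor) then yields $\|M\|=0$, which is conclusion~(1).

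Assume now that $M$ has no Euclidean de Rham factor, and let $F\subset X$ be the given codimension-one flat. I would show that $F$ descends to a closed, isolated, totally geodesic submanifold $N\subset M$ so that Theorem~\ref{thm:main} applies. First, using periodicity of flats in closed analytic nonpositively curved manifolds, $\mathrm{Stab}_F(\Gamma)$ acts cocompactly on $F$, producing a closed totally geodesic $N=\pi(F)\subset M$ where $\pi\colon X\to M$ is the covering projection. To verify condition~(2) (no separate parallel geodesics), suppose some bi-infinite geodesic $c$ lies at bounded distance from $F$ but is not contained in $F$. Its closest-point projection onto the convex set $F$ is a geodesic $c_0\subset F$, and the Flat Strip Theorem combined with the codimension-one hypothesis forces the parallel set $P(F)$ to be a non-trivial product $F\times I$ where $I$ is an interval in the one-dimensional transverse direction. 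This yields an open subset of $X$ isometric to a flat product $F\times(-\epsilon,\epsilon)$; by analyticity, the resulting local splitting extends globally, giving $X\cong F\times\mathbb{R}$ isometrically, which contradicts the no-Euclidean-factor hypothesis.

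The main obstacle is the verification of condition~(1), that $\gamma F\cap F=\emptyset$ for any $\gamma\in\Gamma\setminus\mathrm{Stab}_F(\Gamma)$. Two distinct codimension-one flats in a CAT(0) space may a priori meet transversally in a lower-dimensional flat without producing any local product structure, so the absence of a Euclidean factor does not obstruct this configuration directly. A likely approach is through the Tits boundary: as noted in the remark following Theorem~\ref{thm:main}, the isolated condition amounts to $\partial F$ being Tits-disconnected from its complement in $\partial X$, so the strategy is to show, using analyticity, that the $\Gamma$-translates of $\partial F$ are pairwise either equal or disjoint in the Tits boundary. Once condition~(1) is established, Theorem~\ref{thm:main} immediately produces $\|M\|>0$, giving alternative~(2).
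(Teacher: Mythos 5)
Your overall strategy is the same as the paper's: reduce to Theorem~\ref{thm:main} by using Bangert--Schroeder periodicity to make $\sP(F)$ closed, and then use analyticity (via the Sandwich Lemma / Bangert--Schroeder~[1.A]) to show that failure of isolatedness forces a Euclidean de~Rham factor. The first alternative is also handled the same way, citing Eberlein and Gromov. However, you misidentify where the work lies. You flag condition~(1) (embeddedness) as ``the main obstacle'' and leave it unresolved, sketching only a ``likely approach'' through the Tits boundary; this is a genuine gap in your write-up but it is not actually a separate obstacle. In dimension $n\geq 3$, if $\gamma F\cap F\neq\emptyset$ and $\gamma F\neq F$, then at any intersection point $q$ one has $\dim\bigl(T_q(\gamma F)\cap T_qF\bigr)\geq (n-1)+(n-1)-n=n-2\geq 1$, so the (closed, convex) intersection $\gamma F\cap F$ contains a complete geodesic line $c_0$. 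Translating $c_0$ inside the flat $\gamma F\cong\RR^{n-1}$ by any vector taking it off $\gamma F\cap F$ produces a geodesic $c\subset\gamma F$ with $c\not\subset F$ and $d(c(t),F)$ bounded; this already violates condition~(2). So condition~(1) is subsumed by condition~(2) in this flat, codimension-one setting, and no Tits-boundary argument is needed. The paper's proof treats (1) and (2) uniformly exactly for this reason: ``if $\sP(F)$ is not isolated, then $F$ being flat yields such a geodesic $c$,'' and then invokes~[1.A] of Bangert--Schroeder to get the $\RR$-factor.

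A secondary, smaller issue: your argument for condition~(2) is a bit loose. Producing a single flat strip between $c$ and its projection $c_0\subset F$ does not by itself give ``the parallel set $P(F)$ is $F\times I$''; the strip is only two-dimensional and records parallelism of one direction in $F$. Upgrading this to a global splitting $X\cong F\times\RR$ requires the Sandwich Lemma together with analyticity, which is precisely what the cited~[1.A] in Bangert--Schroeder supplies; you should cite it rather than claim the extension follows immediately from the one strip.
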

Besides the family of examples for Theorem \ref{thm:main} mentioned in Corollary \ref{cor:main-1}, other examples which satisfy the condition of Theorem \ref{thm:main} include all nonpositively curved, analytic $4$-manifolds with a $3$-dimensional, isolated maximal higher rank submanifold (discussed in Subsection \ref{subsect:4 dim story} with more details), and those by Nguyen-Phan \cite{Phan13} using the smooth hyperbolization technique of Ontaneda \cite{Ontaneda20}. Concrete examples of nonpositively curved, analytic $4$-manifolds with a $3$-dimensional, isolated maximal higher rank submanifold are established in \cite{Schroeder91,AbreschSchroeder}. (In fact, \cite{AbreschSchroeder} constructs a more general family of analytic manifolds of nonpositive curvature, not necessarily restricting to dimension $4$. Some of these examples with dimension greater than $4$ also satisfy the condition of Theorem \ref{thm:main}.) We note that the simplicial volume of the examples constructed in \cite{Phan13,Schroeder91,AbreschSchroeder} are already known to be positive due to the work of \cite{ConnellWang20}.

On the other hand, generalized graph manifolds (in the sense of \cite{FrigerioEtAl15}) which do not have hyperbolic pieces have zero simplicial volume. One can verify that such manifolds do not satisfy the condition of Theorem \ref{thm:main}. (See Subsection \ref{subsect:graph mflds}.)

\subsection{Connections with relative hyperbolicity}\label{subsect:rel hyp}
In \cite[Theorem 1.2.1]{HruskaKleiner}, Hruska and Kleiner show that for a CAT(0) space $X$ with a geometric action of $\Gamma$, it is relatively hyperbolic to a $\Gamma$-invariant family of flats if and only if the following holds:
\begin{itemize}
\item There exists $D>0$ such that any flat is contained in the $D$-tubular neighborhood of a flat in this family. (See \cite[(IF2)-(1)]{HruskaKleiner}.)
\item These flats are ``isolated'' from each other. (See \cite[(IF2)-(2)]{HruskaKleiner}.)
\end{itemize} 
In the appendix of the same paper, Hindawi and the same authors obtain a generalized result for families of certain convex subspaces, replacing families of flats. In particular, if $X$ is the universal cover of a compact, analytic manifold $M$ of nonpositive curvature and one of the convex subspaces is the universal cover of a closed, embedded, totally geodesic submanifold $N$, then $N$ is isolated in $M$ in the sense of Theorem \ref{thm:main}. If in addition that $\dim(N)=\dim(M)-1$ and that all convex subspaces are lifts of closed, totally geodesic submanifolds of $M$, positivity of $\|M\|$ can be proved by either \cite{MineyevYaman,Franceschini} using relative hyperbolicity, or Theorem \ref{thm:main}. 

On the other hand, compared to the notion of being ``isolated'' in \cite{HruskaKleiner}, the isolated condition in Theorem \ref{thm:main} is more local. Roughly speaking, if $X$ has a family of convex subspaces which are ``isolated'' from each other in the sense of \cite{HruskaKleiner}, then all the ``flat behavior'' in $X$ are ``trapped'' by these convex subspaces. (See \cite[Theorem A.0.1, (A)]{HruskaKleiner}.) However, in the setting of Theorem \ref{thm:main}, if $M$ admits an isolated, totally geodesic submanifold $N$ of codimension one, ``flat behavior'' may still occur in two other subspaces which are away from any lift of $N$ and are not isolated from each other. A rich family of examples come from $4$-dimensional analytic manifolds of nonpositive curvature with rank one. See the next subsection or \cite{Schroeder89}.

\subsection{Connections with $4$-dimensional analytic manifolds of nonpositive curvature.}\label{subsect:4 dim story}
In \cite{Schroeder89}, Schroeder classifies all maximal higher rank submanifolds in the universal cover of a closed, rank one, $4$-dimensional analytic manifold of nonpositive curvature: All maximal higher rank submanifolds are closed. Moreover, for any maximal higher rank submanifold $F$, one of the following holds:
\begin{enumerate}
\item $F$ is a $2$-dimensional flat.
\item $F$ is a $3$-dimensional flat.
\item $F$ is isometric to $\Sigma\times \RR$, where $\Sigma$ is a non-flat $2$-dimensional Hadamard manifold. There are two cases:
\begin{enumerate}
\item[(3a)] $F$ does not intersect any other maximal higher rank submanifold of the same type.
\item[(3b)] $F$ intersects another maximal higher rank submanifold of the same type.
\end{enumerate}
\end{enumerate}
Concrete examples of rank one, $4$-dimensional analytic manifolds of nonpositive curvature containing any of the above maximal higher rank submanifolds can be found in \cite{Schroeder91,AbreschSchroeder}.

If the universal cover of $M$ does not admit maximal higher rank submanifolds of type (3b), then all maximal higher rank submanifolds are isolated from each other in the sense of \cite{HruskaKleiner}. Hence by \cite[Theorem A.0.1, (5)]{HruskaKleiner}, $\pi_1(M)$ is relatively hyperbolic to the collection of stabilizers of all maximal higher rank submanifolds. The simplicial volume $\|M\|$ is then positive due to \cite{MineyevYaman,Franceschini}.

One can check that any maximal higher rank submanifold of type (2) or type (3a) is the lift of an isolated, codimension one, closed totally geodesic submanifold in $M$ (in the sense of Theorem \ref{thm:main}). Therefore we have the following corollary of Theorem \ref{thm:main}:
\begin{cor}\label{cor:main-2}
If the universal cover of a closed, rank one, nonpositively curved $4$-dimensional analytic manifold $M$ admits maximal higher rank submanifolds of type (2) or type (3a), then $\|M\|>0$.
\end{cor}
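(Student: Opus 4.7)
The plan is to deduce Corollary \ref{cor:main-2} directly from Theorem \ref{thm:main} by checking that every maximal higher rank submanifold $F$ of type (2) or (3a) is the lift of an isolated, codimension one, closed totally geodesic submanifold of $M$.

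First I would check that $N := \pi(F) \subset M$ (where $\pi \colon X \to M$ is the covering projection) is a closed, embedded, totally geodesic submanifold of codimension one. Total geodesicity and codimension one are immediate from the definitions and the dimension constraint $\dim F = 3$, $\dim X = 4$. Closedness reduces to cocompactness of the $\mathrm{Stab}_F(\Gamma)$-action on $F$, which follows from Schroeder's analysis in \cite{Schroeder89} together with the $\Gamma$-cocompactness of $X$. For embeddedness, suppose $\gamma F \ints F \neq \emptyset$ for some $\gamma \in \Gamma \setminus \mathrm{Stab}_F(\Gamma)$; since the Schroeder classification is invariant under isometries, $\gamma F$ is of the same type as $F$. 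In type (3a) this is an immediate contradiction with the defining non-intersection property of type (3a). In type (2), i.e., when $F$ is a maximal $3$-flat, two distinct maximal $3$-flats in the rank one $4$-manifold $X$ cannot meet non-trivially, as a short angle/flat-sector argument shows their convex hull would violate either the maximality of $F$ or the rank one assumption on $M$.

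Second, I would verify the "no separate parallel geodesics" property. Suppose $c(t)$ is a bi-infinite geodesic in $X$ with $d(c(t), F)$ uniformly bounded and $c(\RR) \not\subset F$. The flat strip (sandwich) lemma provides a parallel geodesic $c_0 \subset F$ and a Euclidean strip joining $c$ and $c_0$. In type (2), adjoining this strip to the $3$-flat $F$ yields a flat subspace of dimension $4$, so $X$ is flat, contradicting the rank one hypothesis on $M$. In type (3a), the product structure $F = \Sigma \times \RR$ together with the new parallel strip enlarges $F$ into a larger totally geodesic higher rank subspace of $X$; by Schroeder's classification this enlarged subspace must itself be of type (2) or (3), and in either case it intersects $F$ non-trivially, producing another type (3) maximal submanifold meeting $F$, which violates the defining property of type (3a).

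I expect the main technical difficulty to lie in the type (3a) analysis above: one must align the parallel strip with the product decomposition $F = \Sigma \times \RR$ and invoke finer consequences of analyticity (such as extension of local Killing fields along the $\RR$-direction, in the style of \cite{Schroeder89}) in order to conclude that the "fattened" region is in fact a maximal higher rank submanifold of type (3). Once both the embeddedness and the no-separate-parallel-geodesics conditions are verified, $N$ is isolated in the sense of Theorem \ref{thm:main}, and the conclusion $\|M\| > 0$ follows.
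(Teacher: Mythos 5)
Your overall strategy---reduce to Theorem \ref{thm:main} by verifying that each type (2) or (3a) submanifold projects to an isolated, closed, totally geodesic hypersurface $N\subset M$---is the right one, and your embeddedness arguments for both types are sound. However, your verification of the ``no separate parallel geodesics'' condition for type (3a) has a genuine gap, and this is precisely where the paper does something sharper.

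The issue is the step where the flat strip between $c$ and $c_0=\Proj_F\circ c\subset F$ ``enlarges $F$ into a larger totally geodesic higher rank subspace.'' This is not automatic. What you get cleanly is that $c_0$ has rank $\geq 2$ in $X$, hence lies in some maximal higher rank submanifold $F'$ containing the strip and therefore meeting $F$ (along $c_0$). But $F'$ need not contain $F$, and $F'$ need not be of type (3): Schroeder's classification also allows $F'$ to be a $2$-flat (type (1)) or a $3$-flat (type (2)). The defining property of type (3a) only forbids $F$ from meeting another type (3) submanifold, so an $F'$ of type (1) or (2) meeting $F$ transversally is not immediately excluded. Your parenthetical ``producing another type (3) maximal submanifold'' is asserted, not derived, and the suggested remedy (Killing field extension) does not close this gap as stated. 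The paper resolves exactly this: it first observes that a $2$-flat meeting $F$ transversally cannot be a \emph{maximal} higher rank submanifold (its parallel set is strictly larger by analyticity), and then invokes \cite[Lemma 3.3]{Schroeder89} to conclude that a type (3a) submanifold $F$ meets \emph{no} other maximal higher rank submanifold whatsoever---a strictly stronger statement than the defining property of (3a). With that in hand, both isolatedness conditions follow as you intended.

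Secondarily, your type (2) verification, while plausible, is much more work than necessary: a maximal $3$-flat in a $4$-dimensional rank one $X$ is a codimension-one flat with no Euclidean de Rham factor, so type (2) is an immediate instance of Corollary \ref{cor:main-1}, which the paper already has. Spending effort on a direct angle/flat-sector argument for embeddedness of intersecting $3$-flats is unnecessary once you notice this reduction.
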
 
In this case, maximal higher rank submanifolds of type (3b) are allowed. If there exists a maximal higher rank submanifold of type (3b), by \cite[Theorem A.0.3]{HruskaKleiner} and \cite{Schroeder89}, $\pi_1(M)$ fails to be relatively hyperbolic to any $\pi_1(M)$-invariant sub-collection of the stabilizers of maximal higher rank submanifolds. (Indeed, if the sub-collection misses a stabilizer of a maximal higher rank submanifold, then $\pi_1(M)$ violates \cite[Theorem A.0.3, (1)]{HruskaKleiner}. Otherwise, $\pi_1(M)$ violates \cite[Theorem A.0.3, (2)]{HruskaKleiner} due to the existence of maximal higher rank submanifolds of type (3b).)

We recall the following two conjectures due to Gromov \cite{SavageRichardP82} and \cite[p. 232]{GromovMikhail96}.
\begin{conj}\label{conj:Ricci}
If $M$ admits a Riemannian metric with nonpositive sectional curvature and negative definite Ricci curvature, then $||M||>0$.
\end{conj}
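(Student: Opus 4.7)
The plan is to exploit the negative definiteness of Ricci curvature to rule out Euclidean de Rham factors and constrain the possible flats in the universal cover $\widetilde M$, then reduce to Theorem \ref{thm:main} or its corollaries. First, note that if $\widetilde M = \widetilde M_0 \times \RR^k$ with $k \geq 1$, then any unit vector $v$ tangent to the $\RR^k$ factor satisfies $\mathrm{Ric}(v,v) = 0$, contradicting the hypothesis. So the de Rham decomposition gives $\widetilde M$ with no Euclidean factor; in particular, the second alternative in Corollary \ref{cor:main-1} never occurs under our hypothesis.

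Next I would proceed by a case analysis on the flat structure of $\widetilde M$. In the analytic setting, if $\widetilde M$ contains a flat of codimension one, then Corollary \ref{cor:main-1} combined with the absence of Euclidean de Rham factors immediately yields $\|M\| > 0$. More generally, the goal is to exhibit an isolated, codimension-one, closed totally geodesic submanifold $N \subset M$ to which Theorem \ref{thm:main} applies. A natural candidate is the projection of a maximal parallel set or a maximal higher-rank stratum of appropriate codimension. The negative Ricci condition should be used to verify the \emph{no separate parallel geodesics} clause of the isolated hypothesis: if a bi-infinite geodesic $c(t)$ stays within bounded distance of a lift $F$ without lying in $F$, then the parallel set spanned by $c$ and $F$ carries an extra parallel direction on which the Ricci form is forced to vanish or be nonnegative, which together with the strict Ricci negativity should yield a contradiction.

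If no such totally geodesic hypersurface is available, I would try to extract a \emph{pointwise} curvature/rank condition from strict Ricci negativity and invoke the rank-one-style positivity results of \cite{ConnellWang19,ConnellWang20}. Concretely, one tries to locate a point $p \in M$ at which sufficiently many sectional curvatures through $p$ are strictly negative — ideally, at which $M$ has rank one with a quantitative curvature pinching in a complementary direction — so that the hypotheses of those works can be verified directly from the Ricci bound.

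The main obstacle, which I expect to be the truly hard step, is the genuinely higher-dimensional case in which $\widetilde M$ contains flats of intermediate codimension $2 \leq k \leq \dim M - 2$ but no codimension-one totally geodesic hypersurface is present and no point is sufficiently negatively curved in all planes. Ricci negativity is an averaged, pointwise condition: a $k$-flat $F$ is compatible with strict Ricci negativity as long as the $\dim M - k$ normal sectional curvatures along $F$ have a sufficiently negative sum. Upgrading this averaged control to a macroscopic hyperbolicity statement strong enough to detect simplicial volume — for instance, by showing that any such intermediate flat is automatically isolated and bounds a codimension-one totally geodesic object to which Theorem \ref{thm:main} applies, or by developing a bounded cohomology argument that uses the Ricci bound directly — appears to require ideas beyond those in the present paper.
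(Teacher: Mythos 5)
The statement you are addressing is labeled a \emph{conjecture} in the paper (attributed to Gromov), and the paper itself offers no proof of it --- Theorem~\ref{thm:main} and its corollaries treat special cases under additional hypotheses (existence of an isolated, codimension-one, closed totally geodesic submanifold; or, in Corollary~\ref{cor:main-1}, analyticity plus a codimension-one flat), and Conjecture~\ref{conj:Ricci} is explicitly stated as motivation, not as a result. So there is no proof in the paper to compare against, and your proposal, to its credit, is candid about the fact that it does not close the conjecture.

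Two substantive remarks on your reasoning. First, the observation that strict Ricci negativity rules out a Euclidean de Rham factor (a unit vector tangent to an $\RR^k$ factor has zero Ricci) is correct and is essentially how the first alternative of Corollary~\ref{cor:main-1} is excluded. Second, your proposed verification of the \emph{no separate parallel geodesics} clause via Ricci negativity does not go through as stated: if a bi-infinite geodesic $c$ stays at bounded distance from $F$ without lying in it, the flat strip theorem gives a totally geodesic flat strip, hence a single plane of zero sectional curvature along $c$. But $\mathrm{Ric}(\dot c,\dot c)$ is a sum of $\dim M - 1$ sectional curvatures and can remain strictly negative even if one of them vanishes. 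This is precisely why the paper invokes \emph{analyticity} in Corollary~\ref{cor:main-1}: there, a bounded parallel geodesic near a codimension-one flat forces a global splitting of $X$ (via Bangert's result and the sandwich lemma), and it is the splitting --- not a pointwise curvature computation --- that produces the contradiction with the de Rham hypothesis. Without analyticity, no such splitting is available, and your route to isolatedness is blocked.

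The honest assessment, which you give yourself in your final paragraph, is that the conjecture remains open: in particular, a nonpositively curved manifold with negative-definite Ricci need not contain any totally geodesic hypersurface or codimension-one flat, and in that generality none of the positivity criteria from this paper or from \cite{ConnellWang19,ConnellWang20} are known to apply. Your proposal is a reasonable sketch of how the known results might be leveraged in favorable cases, but it is not a proof of Conjecture~\ref{conj:Ricci} and should not be presented as one.
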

\begin{conj}\label{conj:Euler}
If $M$ is aspherical and $||M||=0$, then the Euler characteristic of $M$ is zero.
\end{conj}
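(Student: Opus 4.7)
The statement is Gromov's Euler characteristic conjecture, which is open in full generality; accordingly the outline below is a strategy rather than a complete proof. The main idea is to route the problem through $L^2$-cohomology of the universal cover $\widetilde M$. For odd $\dim M$ the conclusion is automatic via Poincar\'e duality, so assume $n := \dim M$ is even. Atiyah's $L^2$-index theorem gives
\[
\chi(M) \;=\; \sum_{i=0}^{n} (-1)^i\, b_i^{(2)}(\widetilde M),
\]
so it suffices to show that every $L^2$-Betti number of $\widetilde M$ vanishes. This splits the problem into two sub-goals: (i) the \emph{Singer conjecture}, namely $b_i^{(2)}(\widetilde M)=0$ for $i\neq n/2$ whenever $M$ is aspherical; and (ii) the implication $\|M\|=0 \Rightarrow b_{n/2}^{(2)}(\widetilde M)=0$.

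For step (i) I would take the Singer conjecture as an input, relying on the substantial body of partial results available for various classes of aspherical manifolds (nonpositively curved ones in low dimensions, locally symmetric of noncompact type, K\"ahler hyperbolic, and similar settings). For step (ii) the natural bridge is bounded cohomology, since $\|M\|$ is the $\ell^1$-seminorm of the real fundamental class, dual to the bounded-cohomology norm of the volume cocycle. A concrete implementation would proceed via the \emph{integral foliated simplicial volume} $|M|^\infty_\Gamma$: for aspherical $M$ one has the known implication $|M|^\infty_\Gamma = 0 \Rightarrow b_i^{(2)}(\widetilde M) = 0$ for all $i$, so the task would reduce to upgrading the vanishing of the real-valued $\|M\|$ to the vanishing of the dynamical invariant $|M|^\infty_\Gamma$.

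The hard part, and the reason this conjecture remains open, is precisely step (ii), and specifically the gap between $\|M\|$ and $|M|^\infty_\Gamma$. Classical simplicial volume is a coarse real-valued invariant built from finite singular chains, while $L^2$-Betti numbers and the integral foliated variant are sensitive to the full measure-theoretic structure of the $\Gamma$-action on a standard parameter space, and no general mechanism is known that converts vanishing of the former into vanishing of the latter. A secondary difficulty is that Singer's conjecture itself is open outside of special geometric classes. As a partial result in the spirit of the present paper, one can at least combine Theorem \ref{thm:main} with Chern--Gauss--Bonnet to constrain possible counterexamples among nonpositively curved aspherical $M$: any such $M$ with $\chi(M)\neq 0$ must fail to admit an isolated, closed, codimension-one totally geodesic submanifold, which is a meaningful geometric restriction but falls far short of the full conjecture.
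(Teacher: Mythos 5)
This statement is labeled as a \emph{conjecture} in the paper, and the paper offers no proof of it; it is Gromov's Euler characteristic conjecture, recorded only for context and motivation. So there is no argument in the paper against which to compare your proposal, and you are right not to claim a proof. Your outline is a faithful summary of the standard $L^2$-cohomological strategy: reduce to even dimension, invoke Atiyah's $L^2$-index formula $\chi(M)=\sum_i(-1)^i b_i^{(2)}(\widetilde M)$, appeal to the Singer conjecture for $i\neq n/2$, and then try to show that vanishing simplicial volume kills the middle $L^2$-Betti number. You also correctly locate the two genuine obstructions: the Singer conjecture is itself open in general, and the known implication runs from vanishing of the \emph{integral foliated} simplicial volume $|M|^\infty_\Gamma$ to vanishing of all $L^2$-Betti numbers, while no general mechanism is known to upgrade $\|M\|=0$ to $|M|^\infty_\Gamma=0$. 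One small caveat worth flagging for precision: the chain of implications you describe is strictly one-directional and conditional, so even granting the Singer conjecture on the relevant class of aspherical manifolds, step (ii) as you phrase it remains the essential missing ingredient and is not merely a technical gap; no partial version of it suffices unless it covers all aspherical $M$ with $\|M\|=0$. Your closing observation, that Theorem \ref{thm:main} together with Chern--Gauss--Bonnet constrains nonpositively curved counterexamples by ruling out those admitting an isolated closed totally geodesic hypersurface, is a correct and reasonable remark in the spirit of the paper's contribution.
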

The local straightening technique in \cite{ConnellWang20} suggests that the Ricci condition in Conjecture \ref{conj:Ricci} need only be satisfied at one point, thus a stronger conjecture is proposed as follows \cite[Conjecture 4.1]{ConnellWang20}.
\begin{conj}\label{conj:Ricci-strong}
If $M$ admits a Riemannian metric with nonpositive sectional curvature everywhere and negative definite Ricci curvature at some point, then $||M||>0$.
\end{conj}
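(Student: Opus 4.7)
The plan is to combine the barycentric straightening technology of Connell--Wang with the local hyperbolicity forced by the negative-definite Ricci hypothesis at the chosen point $p\in M$. Under nonpositive sectional curvature, the assumption $\mathrm{Ric}_p<0$ is equivalent to saying that for every unit vector $v\in T_pM$, at least one $2$-plane through $v$ has strictly negative sectional curvature; in particular, $p$ has geometric rank one in a quantitatively controlled way (the angle/volume of the ``degenerate cone'' of flat $2$-planes through $p$ is bounded away from the full sphere). This should play the role that the orthogonal hyperbolicity near an isolated codimension one totally geodesic submanifold plays in Theorem \ref{thm:main}.

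First, I would work on the universal cover $X=\widetilde{M}$ with a lift $\tilde p$. Following the Connell--Wang framework, I would construct a family of probability measures $\{\mu_x\}_{x\in X}$ on $\partial X$, equivariant under $\Gamma=\pi_1(M)$ and sufficiently concentrated so that the barycenter map
\[
\mathrm{bar}:\text{Prob}(X)\to X,\qquad \mu\mapsto \arg\min_{y\in X}\int_X d(y,z)^2\,d\mu(z),
\]
is well defined and smooth. From this I would define a straightening $\mathrm{str}$ on singular chains: given $\sigma:\Delta^n\to M$, let $\mathrm{str}(\sigma)$ be the barycentric simplex spanned by lifts of the vertices. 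The central estimate is an upper bound, uniform over simplices and over all points $q\in M$, on the Jacobian determinant $|\mathrm{Jac}_q\,\mathrm{str}(\sigma)|$ in terms of the local geometry near $q$.

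Second, the key new ingredient would be a \emph{local-to-global} argument showing that the uniform Jacobian bound in fact follows from the curvature condition at the single point $p$. The idea is to modulate the straightening by a weighting that concentrates the barycenter construction toward translates of $\tilde p$ under $\Gamma$, so that the relevant Hessian $\int \mathrm{Hess}_y\,d(y,z)^2\,d\mu(z)$ is bounded below by a multiple of the Hessian at $p$. The negative-definite Ricci condition at $p$ then yields a definite lower eigenvalue for that Hessian (here one uses that $\mathrm{tr}\,\mathrm{Hess}\,B_\xi\ge -\mathrm{Ric}$ in nonpositive curvature, where $B_\xi$ is the Busemann function in direction $\xi$), giving a uniform upper Jacobian bound $|\mathrm{Jac}|\le C$. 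Standard arguments then conclude
\[
\|M\|\ge \frac{\mathrm{Vol}(M)}{C\,v_n}>0,
\]
where $v_n$ is the maximal volume of a straight simplex.

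The main obstacle is precisely this local-to-global step: a typical straight simplex can be entirely contained in a region where the metric is flat or nearly flat and never sees the point $p$, so the curvature datum at $p$ does not enter the Jacobian integral in any obvious pointwise way. Overcoming this requires either (a) exploiting analyticity (not assumed here) to propagate the rank-one behavior at $p$ globally via an identity theorem, or (b) inventing a weighted straightening whose support is driven toward $\Gamma\cdot\tilde p$ uniformly, while still bounding the distortion introduced by the weighting. Making (b) quantitative, and ensuring equivariance and measurability of the resulting map, is where I expect the real work to lie; the special case established in Theorem \ref{thm:main} can be viewed as a proof of concept in which an isolated totally geodesic hypersurface replaces the single point $p$ and makes the concentration step tractable.
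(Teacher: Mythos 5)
This statement is a \emph{conjecture} (attributed to \cite{ConnellWang20}, Conjecture 4.1), not a theorem; the paper does not prove it, and it remains open. So there is no ``paper's own proof'' to compare against, and your proposal should be read as a sketch of a possible attack rather than a verification.

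That said, you have correctly located the real difficulty, and you are also right that the difficulty is not addressed by your sketch. The crux is your step (b): you need a straightening whose Jacobian at an \emph{arbitrary} point of an arbitrary straightened simplex is controlled by the curvature datum at the single point $p$. In the known successful cases, the quantity that controls the barycentric Jacobian is pointwise --- either strict negativity of Ricci on all of $M$ (\cite{ConnellWang19,ConnellWang20}), or, in Theorem \ref{thm:main}, a hyperbolicity phenomenon that is spread out along the entire codimension-one submanifold and its $\Gamma$-translates, so that straightened simplices can be cut into pieces each of which lives near some translate of the good locus. A single point $p$ has no analogous extended locus: there is no reason a straight (or barycentric) simplex should pass near $\Gamma\cdot\tilde p$, and weighting the defining measures toward $\Gamma\cdot\tilde p$ as you suggest does not by itself prevent the resulting simplex from lying in a region where every plane is flat, in which case the Hessian lower bound you want fails. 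Your invocation of $\mathrm{tr}\,\mathrm{Hess}\,B_\xi \ge -\mathrm{Ric}$ is a pointwise inequality at the barycenter; it gives you nothing unless you can force the barycenter to be close to $\Gamma\cdot\tilde p$, which is precisely the unsolved local-to-global step. Option (a), using analyticity, is explicitly disallowed by the hypotheses of the conjecture, and in any case the paper itself only treats the analytic higher-rank situation via the extra structure of an isolated codimension-one flat (Corollary \ref{cor:main-1}), not via the Ricci condition at a point. In short: your proposal is a reasonable description of the state of the art and of the obstruction, but it does not constitute a proof, and the paper does not claim one either.
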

In particular, based on the above conjectures, we believe that for any closed, rank one, analytic manifold $M$ of nonpositive curvature with dimension at least two, the simplicial volume $\|M\|$ is positive. When $\dim(M)=4$, by the above discussion, the only remaining case is the following:
\begin{itemize}
\item The only maximal higher rank submanifolds in the universal cover of $M$ are of type (1) or type (3b).
\item There exists at least one maximal higher rank submanifold of type (3b) in the universal cover of $M$. 
\end{itemize}
\subsection{Comments on graph manifolds}\label{subsect:graph mflds}
According to \cite{FrigerioEtAl15}, a closed, smooth $n$-dimensional Riemannian manifold $M$ is a generalized graph manifold if it is constructed in the following way: (See \cite{ConnellSuarez} for more general notions of generalized graph manifolds.)
\begin{enumerate}
\item For every $i=1,...,r$, let $N_i$ be a complete, finite-volume, $n_i$-dimensional, non-compact hyperbolic manifold with toric cusps. Here, $0\leq n_i\leq n$. 
\item Remove the horospherical neighborhood of each cusp in $N_i$ to obtain $\overline{N}_i$.
\item Under a suitable pairing, glue $V_i:=\overline{N}_i\times\TT^{n-n_i}$ together to form $M$, where $\TT^{k}$ is the $k$-dimensional torus. The metric near the boundary of $\overline{N}_i$ is modified so that $\overline{N}_i$ is a nonpositively curved manifold with totally geodesic boundary.
\end{enumerate}
$V_1,...,V_r$ are called \emph{pieces} of $M$. The manifold $\overline{N}_i$ is the \emph{base} of $V_i$, while every subset of the form $\{*\}\times \TT^{n-n_i}$ is a \emph{fiber} of $V_i$. A piece of $M$ is called a \emph{pure} piece if its fibers are trivial. \emph{Surface} pieces of $M$ refer to those pieces with $2$-dimensional bases.
\begin{figure}[h]
	\centering
	\includegraphics[width=4in]{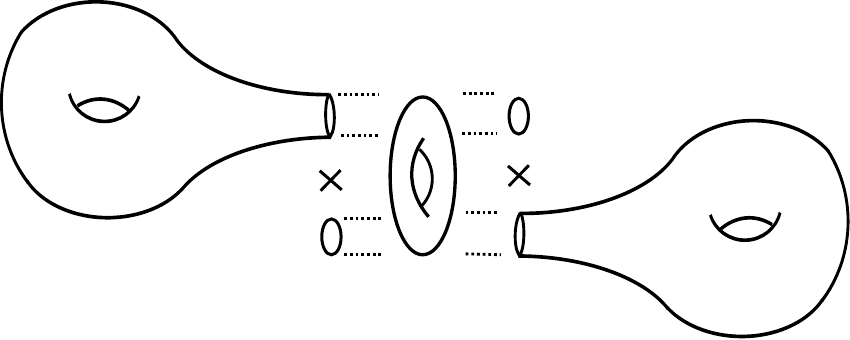}
	\caption{ \label{graphmfld}}
\end{figure}

When a generalized graph manifold $M$ has a pure piece, the simplicial volume of $M$ is positive due to \cite{ConnellSuarez,ConnellWang20}. On the other hand, as a corollary of \cite[Theorem 1]{ConnellSuarez}, $\|M\|=0$ when it has no pure pieces. One can also verify that in this case, $M$ does not have any closed, isolated totally geodesic submanifold of codimension one. If not, there exists a closed, isolated totally geodesic submanifold of codimension one $N$ in $M$. Then, one of the following must hold:
\begin{itemize}
\item $N$ is a gluing torus.
\item $N$ transversally intersects a gluing torus.
\item $N$ is trapped in a piece of $M$.
\end{itemize}
When $M$ has no pure pieces, any of the above contradicts the isolated assumption on $N$. 

One final comment is that Theorem \ref{thm:main} fails easily if we drop the codimension one assumption. Consider the classical $3$-dimensional graph manifold obtained by gluing two copies of the same surface piece. (See Figure \ref{graphmfld}.) With a careful choice of the surface piece, one can find a rank one periodic geodesic which visits both pieces and is not self-intersecting. In particular, this periodic geodesic is isolated. Yet the simplicial volume of this graph manifold is zero since it does not have any pure pieces.
\subsection{Outline of the proof and plan of the paper}\label{subsect:idea and plan}
\hypertarget{idea-and-plan}{We} first briefly review two approaches to prove positivity of simplicial volume:
\begin{enumerate}
\item[(A1)] One way to prove positivity of simplicial volume is to show that for any singular simplex $\sigma$ of top dimension, one can construct a ``straightened simplex'' $\mathrm{st}(\sigma)$ such that the following holds:
\begin{itemize}
\item The ``volume'' of $\mathrm{st}(\sigma)$ is uniformly bounded from above;
\item For any $\sum_{i=1}^la_l\sigma_l$ representing the fundamental class, $\sum_{i=1}^la_l\mathrm{st}(\sigma_l)$ also represents the same class.
\end{itemize}
This strategy shows up in many earlier works. For example, when proving positivity of simplicial volume for negatively curved manifolds, \cite{Gromov82, Thurston97} use geodesic simplices as ``straightened simplices'' and show that there is a uniform upper bound on their Riemannian volume. A similar argument is used for locally symmetric spaces of non-compact type in \cite{LafontSchmidt06} using a notion of barycentric simplices instead. Later, \cite{ConnellWang20} notices that for any nonpositively curved manifold which has a point of strict negative curvature, the above barycentric simplices have uniform volume control near this special point. Hence the simplicial volume is positive in this case. Sufficient negative curvature is crucial when verifying the uniform upper bound on the volumes of ``straightened simplices''.
\item[(A2)] There is a parallel approach without using the manifold structure. Let $Y$ be the geometric realization of the homogeneous bar-construction for $\pi_1(M)$ (See \cite[Example 1B.7]{Hatcher02}.) In particular, $Y$ is a $K(\pi_1(M),1)$-space. In the case when $\pi_1(M)$ is hyperbolic, \cite{Mineyev01} shows that for any $k$-dimensional simplex $\sigma$ in $Y$ with $k\geq 2$, one can construct a ``straightened simplex'' $\mathrm{st}_k(\sigma)$ with dimension $k$ such that the following holds:
\begin{itemize}
\item There are finitely many $k$-dimensional simplices $\sigma_1,...,\sigma_l$ such that $\mathrm{st}_k(\sigma)$ is a linear combination of elements in $\pi_1(M)\cdot\{\sigma_1,...,\sigma_l\}$;
\item The $l^1$-norm of $\mathrm{st}_k(\sigma)$ is uniformly bounded from above;
\item $\mathrm{st}_\bullet(\sigma)$ can be naturally extended to a $\pi_1(M)$-equivariant linear map which is chain homotopic to the identity.
\end{itemize}
Positivity of $\|M\|$ then follows immediately from the properties of $\mathrm{st}_{\mathrm{dim}(M)}$. The construction of $\mathrm{st}_\bullet$ in \cite{Mineyev01} relies on a notion of ``homological bicombing'' so that for any $2$-simplex in $Y$, the ``straightening'' of its boundary has a uniform $l^1$-norm upper bound. Hyperbolicity of the fundamental group is heavily used to construct ``homological bicombings''.
\end{enumerate}

In this paper and under the setting of Theorem \ref{thm:main}, we provide a different way to ``straighten'' singular simplices on $M$ using ideas from both (A1) (simplices constructed via barycenter methods and the use of top form supported in a subset from \cite{ConnellWang20}) and (A2) (homological bicombings from \cite{Mineyev01}): For any $k$-dimensional singular simplex $\sigma$ with $k\geq 0$, we construct a ``straightened simplex'' $\mathrm{st}_{k}(\sigma)$ such that the following holds:
\begin{enumerate}
\item[(S1)] When $k\geq 2$, any lift of $\mathrm{st}_{k}(\sigma)$ in the universal cover of $M$ is a linear combination of $k$-dimensional special barycentric simplices with $l^1$-norm uniformly controlled from above;
\item[(S2)] If a lift of a top dimensional simplex $\sigma$ is a special barycentric simplex, then its volume near an annulus region centered at the codimension one submanifold is uniformly bounded. This is due to the following properties of special barycentric simplices (defined in Definition \ref{special bar simplices}):
\begin{itemize}
\item There exists a uniform constant $\cC(k)>0$ such that any $k$-dimensional barycentric simplex is close to at most $\cC(k)$ lifts of the codimension one submanifold; (This is the key property which distinguishes special barycentric simplices from barycentric simplices. Special barycentric simplices are alway ``small''.)
\item For any lift $F$ of the codimension one submanifold and any barycentric simplex ${\sigma}$ with vertices away from $F$, the image of ${\sigma}$ intersecting an annulus region centered at $F$ is bounded by a ball of uniform radius;
\item Any barycentric simplex is ``almost'' injective. (It is not clear if geodesic simplices satisfy this property. This is the main reason we choose barycentric simplices instead of geodesic simplices.)
\end{itemize}
\item[(S3)] For any $\sum_{i=1}^la_l\sigma_l$ representing the fundamental class, $\sum_{i=1}^la_l\mathrm{st}_{\mathrm{dim}(M)}(\sigma_l)$ also represent the same class.
\end{enumerate}
Roughly speaking, the driving mechanism behind all these constructions and properties is the ``\emph{hyperbolicity away from the codimension one submanifold}''. To be specific, under the setting of Theorem \ref{thm:main}, for any lift $F$ of the codimension one submanifold, the following key properties hold:
\begin{enumerate}
\item[(K1)] For any geodesic right triangle in the universal cover of $M$, if one edge of the right angle lies in $F$, then it is $\delta$-thin for some uniform choice of $\delta$; (See Lemma \ref{hyperbolicity of right triangles}.)
\item[(K2)] For any geodesic segment $[p,q]$ connecting two points $p,q$ in the universal cover of $M$, if $p,q$ are away from $F$, then the distance between $[p,q]$ and $F$ is closely related to the distance between the orthogonally projected images of $p$ and $q$ onto $F$. (The codimension one assumption and the previous bullet point are heavily used here. See Lemma \ref{key hyp lem}.)
\end{enumerate}
For any barycentric simplex in the universal cover of $M$, it satisfies the last two bullet points in (S2) as long as its vertices are away from all lifts of the codimension one submanifold. (The second bullet point for $\sigma$ is due to the above discussion, proved in Lemma \ref{bar bded ints away from flats}; the third bullet point is essentially due to the definition of barycentric simplices. See Corollary \ref{bar almost injectivity}.) However, it may not satisfy the first bullet point in (S2) if it has a huge size. (For example, if one edge is very close to too many lifts of the codimension one submanifold, then the first bullet point in (S2) is not satisfied.) The rough idea behind constructing the ``straightened simplex'' $\mathrm{st}_k(\sigma)$ is the following:
\begin{itemize}
\item Let $\widetilde{\sigma}$ be a lift of $\sigma$ in the universal cover. We ``move'' the vertices of $\widetilde{\sigma}$ away from all lifts of the codimension one submanifold (in order to use (K2) and all lemmas which rely on (K2). This step is standard and is mostly done in the construction of $\psi_\bullet$ in Subsection \ref{subsect:phi and psi}.)
\item We ``cut'' the barycentric simplex with the same vertices as $\widetilde{\sigma}$ and its lower dimensional faces into ``small polygonal pieces''. Here, the ``cutting'' is done by lifts of the codimension one submanifold which ``almost separate'' the $1$-skeleton into two disjoint pieces. With the help of (K1) and (K2), the notion of ``almost separation'' satisfies some similar properties compared to ``actual separation''. (See Proposition \ref{almost ints positioning}.) Similar to ``almost separation'', for any two different lifts $F_1$ and $F_2$ of the codimension one submanifold, we define all lifts of the codimension one submanifold which are ``almost between'' $F_1$ and $F_2$. (See Subsection \ref{subsect:Omega and Theta}.)  The aforementioned notion of ``almost separation'' is subsequently improved in Section \ref{sec combinatorics} and exhibits more properties similar to ``actual separation''.
\item We modify $1$-dimensional pieces via homological bicombings inspired by \cite{Mineyev01}. This is crucial when verifying (S1). As is stated in (A2), the bicombing construction in \cite{Mineyev01} relies on hyperbolicity. Here, our bicombing construction relies on a notion of hyperbolicity on all lifts of the codimension one submanifold. (The origin of this notion of hyperbolicity is essentially (K1) and (K2).) The bicombing construction heavily relies on the language of the aforementioned ``almost between'' relation among lifts of the codimension one submanifold. (See Section \ref{sec bicombing}.)
\item We ``refill'' each ``polygonal piece'' by special barycentric simplices inductively from dimension $2$ to dimension $k$. The ``refilling'' process is essentially done by triangulating  each ``polygonal piece'' in a consistent way. (See Section \ref{sec hom arg}.)
\end{itemize}
The rest of this paper is organized as follows: The basic setting and notations can be found in Section \ref{sect:setting}. Section \ref{sect:hyperbolicity} mainly focuses on the hyperbolicity away from the codimension one submanifold. The concepts of barycentric simplices, the aforementioned ``almost separation'' and ``almost between'' relations among lifts of the codimension one submanifold are introduced in Section \ref{sec:bar simplex and almost sep} along with their properties. Section \ref{sec bicombing} discusses homological bicombings. Section \ref{sec combinatorics} is the most technical part of this paper, which explains the combinatorics behind the way we ``cut'' simplices into ``small polygonal pieces''. In particular, we construct a graph related to how those lifts of the codimension one submanifold ``almost separate''  the $1$-skeleton of a barycentric simplex and its lower dimensional faces. The vertices roughly correspond to ``how the simplex is being cut'' and the maximal complete subgraphs roughly correspond to the ``small polygonal pieces'' after the ``cutting''. The technicalities mostly come from certain bizzare phenomena in ``almost separation'' which do not show up in ``actual separation''. Section \ref{sec hom arg} mostly focuses on how ``polygonal pieces'' are refilled. Theorem \ref{thm:main} is finally proved in Section \ref{last sec} with all the previous preparations.
\medskip

\noindent\textbf{Acknowledgements}. We would like to thank Ralf Spatzier for helpful discussions. C. C. was supported in part by Simons Foundation collaboration grant \#965245. S. W. was supported in part by NSFC grant \#12301085. C. C and S. W. would like to thank the Max Planck Institute for Mathematics whose hospitality was enjoyed during some of this work.

\section{Setting and basic notations}\label{sect:setting}
Let $M$ be a compact, nonpositively curved manifold of dimension $n\geq 2$ and $\Gamma=\pi_1(M)$ be its fundamental group. Let $X$ be the universal cover of $M$ and hence $M=\Gamma\backslash X$. Denoted by $\del X$ the geodesic boundary of $X$. Let $d(\cdot,\cdot)$ be the distance function on $X$ and $d_T(\cdot,\cdot)$ be the Tits distance on $\del X$. Let $\pi:TX\to X$ be the natural projection which maps every vector to its base point. The covering map of $X$ over $M$ is denoted by $\sP:X\to M$.

Let $N$ be a closed, totally geodesic submanfold of $M$ with codimension one. In addition, we assume that $N$ is \emph{isolated} in the following sense: Let $F$ be a lift of $N$ in $X$. Then $F$ satisfies the following two properties:
\begin{enumerate}
\item[\hypertarget{I-1}{(1).}] $\gamma F\ints F=\emptyset$, for any $\gamma\in\Gamma\setminus\mathrm{Stab}_\Gamma(F)$;
\item[\hypertarget{I-2}{(2).}] For any bi-infinite geodesic $c:\RR\to X$ in $X$, if $d(c(t),F)$ is bounded by a uniform constant from above, then $c(\RR)\subset F$.
\end{enumerate}

Denoted by $[p,q]$ the geodesic segment connecting $p,q\in X$. For any geodesically convex subset $A\subset X$, we denote by $\Proj_A(\cdot)$ the closest point projection onto $A$.

Complicated summations are used in some parts of this paper. To avoid confusions, we denote by
$$\sum_{\substack{x_1,...,x_k:\cP_1(x_1,...,x_k)\\\cP_2(x_1,...,x_k)\\\cdots\cdots\\\cP_l(x_1,...,x_k)}}f(x_1,...,x_k)$$
the summation of $f(x_1,...,x_k)$ over all possible $(x_1,...,x_k)$ satisfying the properties $\cP_j(x_1,...,x_k)$, $j=1,...,l$. We also use similar notations for taking products and taking unions.

\section{Hyperbolicity away from submanifolds}\label{sect:hyperbolicity}
Since $F$ is a totally geodesic submanifold of $X$, the geodesic boundary $\del F$ of $F$ is naturally a subset of $\del X$. Then we have the following lemma.
\begin{lemma}\label{geo rays asymp to flats}
For any geodesic ray $c:[0,\infty)\to X$ with $c(\infty)\in\del F$, we have $\lim_{t\to\infty}d(c(t),F)=0$.
\end{lemma}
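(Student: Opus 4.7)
The plan is to combine convexity of the distance function in the CAT(0) space $X$ with the isolated condition \hyperlink{I-2}{(2)} by means of a cocompactness argument that extracts a bi-infinite limiting geodesic from the asymptotic behaviour of $c$.

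First, I will reduce the problem to a monotonicity statement. Since $c(\infty)\in\partial F$ and the totally geodesic submanifold $F\subset X$ is itself a Hadamard manifold, there is a geodesic ray $\gamma:[0,\infty)\to F$ with $\gamma(\infty)=c(\infty)$. Parameterising so that $c(0)$ and $\gamma(0)$ lie on the same horosphere centred at $c(\infty)$, the fact that $c$ and $\gamma$ are asymptotic rays in the CAT(0) space $X$ implies that $t\mapsto d(c(t),\gamma(t))$ is convex and bounded, hence non-increasing, converging to some $L\geq 0$. Since the distance to the closed convex set $F$ is a convex function along $c$ and is dominated by $d(c(t),\gamma(t))$, the function $h(t):=d(c(t),F)$ is bounded above and is likewise non-increasing, converging to some $L'\in[0,L]$. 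The goal is to show $L'=0$.

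Suppose, for contradiction, that $L'>0$. Using cocompactness of $M=\Gamma\backslash X$, fix a compact fundamental domain $K\subset X$ and for each $n\in\NN$ choose $\gamma_n\in\Gamma$ with $\gamma_n c(n)\in K$. Passing to a subsequence, $\gamma_n c(n)\to p_\infty\in K$ and $(d\gamma_n)c'(n)\to v_\infty\in T^1_{p_\infty}X$, and I let $\tilde c_\infty:\RR\to X$ be the bi-infinite geodesic with these initial data. Continuous dependence of geodesics on initial conditions gives $\gamma_n c(n+t)\to\tilde c_\infty(t)$ for every fixed $t\in\RR$, and since $d(\gamma_n c(n+t),\gamma_n F)=h(n+t)\to L'$, I deduce $d(\tilde c_\infty(t),\gamma_n F)\to L'$ as well. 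Moreover, the projections $q_n:=\gamma_n\Proj_F(c(n))\in\gamma_n F$ remain in a bounded neighbourhood of $p_\infty$, so after a further subsequence $q_n\to q_\infty\in X$ with $d(q_\infty,p_\infty)=L'$.

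It remains to identify the submanifolds $\gamma_n F$ with a single lift of $N$. Here I would invoke local finiteness of the family of lifts $\{\sigma F\}_{\sigma\in\Gamma}$: since $N$ is an embedded closed submanifold of the compact $M$, it admits a uniform tubular neighbourhood in $M$; lifting this neighbourhood shows that any compact subset of $X$ is met by only finitely many translates $\sigma F$. Applied to a small ball around $q_\infty$, this forces $\gamma_n F=F^*$ along a further subsequence, where $F^*$ is a single fixed lift of $N$. Then $d(\tilde c_\infty(t),F^*)=L'$ for every $t\in\RR$, so $\tilde c_\infty$ is a bi-infinite geodesic in $X$ at uniformly bounded distance from $F^*$, whence the isolated condition \hyperlink{I-2}{(2)} applied to $F^*$ (which holds for every lift of $N$, not only for $F$ itself) forces $\tilde c_\infty(\RR)\subset F^*$. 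This contradicts $d(\tilde c_\infty(0),F^*)=L'>0$, proving $L'=0$. The main obstacle I anticipate is precisely the step of pinning down a single limit submanifold $F^*$ by local finiteness; a secondary subtlety is choosing the Busemann-synchronised parameterisation of $\gamma$ needed to guarantee the boundedness of $d(c(t),\gamma(t))$ in the first step.
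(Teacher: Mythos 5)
Your proof is correct, but it takes a genuinely different route from the paper's, and the difference is worth noting. The paper also extracts a limiting bi-infinite geodesic via a sequence of translates $\gamma_j\in\Gamma$, but it chooses $\gamma_j$ from $\mathrm{Stab}_\Gamma(F)\cong\pi_1(N)$, using compactness of $N$ rather than of $M$: since $c(t_j)$ stays at bounded distance from $F$, one can push $\Proj_F(c(t_j))$ back into a compact fundamental domain for the $\mathrm{Stab}_\Gamma(F)$-action on $F$. With that choice $\gamma_j F=F$ automatically, so the translated geodesic tangent vectors $\gamma_j\dot c(t_j)$ converge to a vector $v$ whose bi-infinite geodesic stays uniformly close to the \emph{same} lift $F$, and property (2) applies directly. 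Your proof instead uses cocompactness of $M$ with arbitrary $\gamma_n\in\Gamma$, which creates exactly the difficulty you anticipate: the translates $\gamma_n F$ may vary, and you must separately pin them down to a single $F^*$ via local finiteness of the family $\{\sigma F\}$. That local finiteness is indeed correct -- it follows from the uniform tubular neighbourhood of the embedded closed hypersurface $N$, which gives a uniform lower bound $\rho_0>0$ on $d(\sigma_1 F,\sigma_2 F)$ for distinct lifts (the paper introduces this $\rho_0$ later, before Proposition~\ref{almost ints positioning}) -- but the paper's choice of $\gamma_j\in\mathrm{Stab}_\Gamma(F)$ renders the whole step unnecessary. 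One further small remark: the ``Busemann-synchronised'' parameterisation you worry about in the first step is not actually needed; any parameterisation of a ray $\gamma\subset F$ asymptotic to $c$ keeps $d(c(t),\gamma(t))$ bounded, and then convexity of $d(c(t),F)$ along with boundedness already gives the non-increasing limit $L'$, which is also all the paper uses.
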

\begin{proof}
By the fact that $c(\infty)\in\del F$, $d(c(t),F)$ is uniformally bounded from above. Choose an increasing sequence $\{t_j\}_{j=1}^\infty\subset \RR$ with $t_j\to \infty$ as $j\to\infty$. By compactness of $N$, there exist a sequence $\seq{\gamma_j}\subset\pi_1(N)\isom\mathrm{Stab}_\Gamma(F)$ such that
$\seq{\pi(\gamma_j\dot{c}(t_j))}$ is a bounded sequence. Passing to a subsequence, we can assume that $v_j:=\gamma_{j} \dot{c}(t_j)\to v\in  TX$ as $j\to\infty$. Then the bi-infinite geodesic tangent to $v$ is uniformly close to $F$. By \hyperlink{I-2}{Property (2)} of being isolated, $v\in TF$. In particular
$$0\leq\lim_{j\to\infty}d(c(t_j),F)=\lim_{j\to\infty}d(\gamma_j(c(t_j)),F)=\lim_{j\to\infty}d(\pi(v_j),F)\leq\lim_{j\to\infty}d(\pi(v_j),\pi(v))=0,$$
which implies $\lim_{j\to\infty}d(c(t_j),F)=0$. Since $X$ is nonpositively curved, $d(c(t),F)$ is a convex function in $t$. Hence we have $\lim_{t\to\infty}d(c(t),F)=0$.
\end{proof}

We recall some classical results in the study of CAT(0) spaces, Hadamard manifolds and their Tits boundaries.

\begin{proposition}[{\cite[9.2 Proposition and 9.5. Proposition, (1)\&(2)]{BridsonHaefliger99}}]\label{BH9.2and9.5}
Let $X$ be a complete metric CAT(0) space. $\overline{X}=X\union\del X$.
\begin{enumerate}
\item[(i).] For fixed $p\in X$, the function $(x,x')\to \angle_p(x,x')$ is continuous on $\{(x,x')\in \overline{X}\times\overline{X}|x\neq p\mathrm{~and~}x'\neq p\}$.
\item[(ii).] The function $(p,x,x')\to \angle_p(x,x')$ is upper semicontinuous at points $(p,x,x')\in X\times\overline{X}\times\overline{X}$ with $x\neq p$ and $x'\neq p$.
\item[(iii).] For any $\xi,\xi'\in\del X$, we define $\angle(\xi,\xi')=\sup_{x\in X}\angle_x(\xi,\xi')$. This function defines a metric called the \emph{angular metric}. The extension to $\del X$ of any isometry of $X$ is an isometry of $\del X$ with respect to the angular metric.
\item[(iv).] The function $(\xi,\xi')\to\angle(\xi,\xi')$ is lower semicontinuous with respect to the cone topology: for every $\epsilon>0$, there exist neighborhoods $U$ of $\xi$ and $U'$ of $\xi'$ in the cone topology such that $\angle(\eta,\eta')>\angle(\xi,\xi')-\epsilon$ for all $\eta\in U$ and $\eta'\in U'$.
\end{enumerate}
\end{proposition}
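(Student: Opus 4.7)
The plan is to leverage the standard construction of the angle in a CAT(0) space as a limit of comparison angles, together with the continuity of geodesic segments/rays in their endpoints. For $p \in X$ and $x \in \overline{X} \setminus \{p\}$, there is a unique geodesic ray $c_{p,x}$ from $p$ to $x$, and the initial unit direction $v_{p,x}$ is an element of the space of directions $\Sigma_p$; one has $\angle_p(x,x') = d_{\Sigma_p}(v_{p,x}, v_{p,x'})$, equivalently the limit (which in a CAT(0) space equals the infimum) over $t, t' \to 0$ of the comparison angles $\bar\angle_p(c_{p,x}(t), c_{p,x'}(t'))$.

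For (i), I would fix $p$ and let $x_n \to x$, $x'_n \to x'$ in the cone topology on $\overline{X}$. By the standard description of the cone topology, the rays $c_{p,x_n}$ converge uniformly on compact sets to $c_{p,x}$, and likewise for $x'_n$. Thus for any small $t, t' > 0$ one has $d(c_{p,x_n}(t), c_{p,x}(t)) \to 0$; plugging into the Euclidean law of cosines gives continuity of the comparison angles $\bar\angle_p(c_{p,x_n}(t), c_{p,x'_n}(t'))$ in $n$ for each fixed $t, t'$. The monotonicity of these comparison angles as $t, t' \to 0^+$ in a CAT(0) space provides uniform convergence to $\angle_p$, allowing the two limits to be exchanged and yielding joint continuity of $(x, x') \mapsto \angle_p(x, x')$.

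For (ii), the key ingredient is the CAT(0) inequality $\angle_q(y,y') \le \bar\angle_q(c_{q,y}(t), c_{q,y'}(t'))$, valid for every $q, y, y'$ and every $t, t' > 0$. Given $(p_n, x_n, x'_n) \to (p, x, x')$ with all points eventually distinct, the comparison triangle at $p_n$ with the two auxiliary vertices $c_{p_n,x_n}(t), c_{p_n,x'_n}(t')$ has sides converging, by joint continuity of $d$, to those of the analogous triangle at $p$. Hence $\limsup_n \angle_{p_n}(x_n, x'_n) \le \bar\angle_p(c_{p,x}(t), c_{p,x'}(t'))$, and taking the infimum over $t, t'$ on the right yields $\limsup_n \angle_{p_n}(x_n, x'_n) \le \angle_p(x, x')$. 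I expect this to be the main technical obstacle, since the base point itself is allowed to vary: one must justify that the geodesics from $p_n$ to $x_n$ truly converge to the geodesic from $p$ to $x$ in a uniform enough sense, which relies on uniqueness of CAT(0) geodesics plus a diagonal argument to control both the $t \to 0$ and $n \to \infty$ limits simultaneously.

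The remaining two items are formal consequences. For (iii), the triangle inequality for $\angle$ follows from the triangle inequalities in each $\Sigma_x$ together with the sup over $x \in X$, and since $\angle_x$ is defined purely from the distance function, any isometry of $X$ extends to a bijection of $\partial X$ that preserves all $\angle_x$ and hence their supremum. For (iv), since each $\angle_x(\cdot,\cdot)$ is continuous on the relevant domain by (i), the pointwise supremum $\angle(\xi,\xi') = \sup_{x \in X} \angle_x(\xi,\xi')$ is automatically lower semicontinuous with respect to the cone topology, which is precisely the claimed approximation from below.
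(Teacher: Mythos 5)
The paper states this result as a citation to Bridson--Haefliger (Propositions II.9.2 and II.9.5) and gives no proof, so there is no in-house argument to compare. Your sketches for items (ii), (iii), and (iv) are essentially sound: (ii) is the standard ``fix $t,t'$ for the limit point and then let $n\to\infty$'' argument, using that the Alexandrov angle at any vertex is bounded above by every comparison angle; (iii) follows from the triangle inequality in each space of directions $\Sigma_x$ plus the purely metric definition of $\angle_x$; and (iv) follows because a pointwise supremum of functions that are lower semicontinuous (here, continuous by (i)) is lower semicontinuous.

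Item (i), however, has a genuine gap. You assert that ``the monotonicity of these comparison angles as $t, t' \to 0^+$ provides uniform convergence to $\angle_p$, allowing the two limits to be exchanged.'' Monotone pointwise convergence does not yield uniform convergence on its own; the tool that would (Dini's theorem) requires compactness of the $(x,x')$-domain, which you do not have, and \emph{a priori} continuity of the limit function $\angle_p$, which is exactly what you are trying to prove, so the reasoning is circular. What your set-up does deliver is one-sided: since for fixed $p$ the angle $\angle_p(x,x')$ is an infimum over $t,t'$ of functions that are continuous in $(x,x')$, it is automatically upper semicontinuous. The lower semicontinuity requires a separate argument, for instance the triangle inequality in $\Sigma_p$,
\[
\angle_p(x_n,x'_n) \;\ge\; \angle_p(x,x') - \angle_p(x_n,x) - \angle_p(x'_n,x'),
\]
combined with the fact that $x_n \to x$ in $\overline{X}\setminus\{p\}$ forces $\angle_p(x_n,x)\to 0$. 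That last step is where monotonicity actually does the work, but in a different way than you used it: for any fixed $t_0>0$, $\angle_p(x_n,x) \le \overline{\angle}_p\bigl(c_{p,x_n}(t_0), c_{p,x}(t_0)\bigr) \to 0$ because the geodesics from $p$ converge pointwise and the Euclidean comparison angle is continuous in the three side lengths. With this replacement for the uniform-convergence claim, your proof of (i), and hence your derivation of (iv) from (i), goes through.
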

\begin{proposition}[{\cite[9.8 Proposition, (2)]{BridsonHaefliger99}}]\label{BH9.8}
Let $X$ be a complete CAT(0) space with basepoint $x_0$. Let $\xi, \xi'\in\del X$ and let $c,c':[0,\infty)\to X$ be geodesic rays such that $c(0)=c'(0)=x_0$, $c(\infty)=\xi$ and $c'(\infty)=\xi'$. Then $\angle(\xi,\xi')=\lim_{t\to\infty}\angle_{c(t)}(\xi,\xi')$.
\end{proposition}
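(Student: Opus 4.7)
The plan is to establish both inequalities in $\angle(\xi,\xi')=\lim_{t\to\infty}\angle_{c(t)}(\xi,\xi')$, after first verifying that the limit exists via a monotonicity argument. The inequality $\limsup_{t\to\infty}\angle_{c(t)}(\xi,\xi')\le \angle(\xi,\xi')$ is immediate from Proposition~\ref{BH9.2and9.5}(iii), since $\angle(\xi,\xi')$ is by definition the supremum of $\angle_p(\xi,\xi')$ over $p\in X$.

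Next I would show that the map $t\mapsto \angle_{c(t)}(\xi,\xi')$ is monotonically nondecreasing, which both guarantees the limit exists and identifies it with $\sup_{t\ge 0}\angle_{c(t)}(\xi,\xi')$. For $0\le s<t$, I consider the asymptotic triangle with vertices $c(s),c(t),\xi'$. Truncating $\xi'$ to $c'(R)$, the CAT(0) angle-sum comparison gives
\[
\angle_{c(s)}(c(t),c'(R))+\angle_{c(t)}(c(s),c'(R))+\angle_{c'(R)}(c(s),c(t))\le \pi.
\]
Letting $R\to\infty$ and applying upper semicontinuity of the angle function (Proposition~\ref{BH9.2and9.5}(ii)), I obtain $\angle_{c(s)}(c(t),\xi')+\angle_{c(t)}(c(s),\xi')\le \pi$. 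Since $c(t)$ lies on the geodesic ray from $c(s)$ to $\xi$, the first term equals $\angle_{c(s)}(\xi,\xi')$. For the second, the fact that $c$ is geodesic forces $\angle_{c(t)}(c(s),\xi)=\pi$, and the triangle inequality in the CAT(1) space of directions at $c(t)$ then yields $\angle_{c(t)}(c(s),\xi')\ge \pi-\angle_{c(t)}(\xi,\xi')$. Substituting gives $\angle_{c(s)}(\xi,\xi')\le \angle_{c(t)}(\xi,\xi')$.

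The remaining task is to show $L:=\lim_{t\to\infty}\angle_{c(t)}(\xi,\xi')$ dominates $\angle_p(\xi,\xi')$ for every $p\in X$. Given $\epsilon>0$, the plan is first to approximate $\angle_p(\xi,\xi')$ by $\angle_p(c(R),c'(R))$ for $R$ large (using an analogous truncation and the upper semicontinuity of Proposition~\ref{BH9.2and9.5}(ii)), and then to bound this quantity by $\angle_{c(t_R)}(\xi,\xi')$ by sliding the vertex from $p$ out toward $c(R)$ along the geodesic $[p,c(R)]$. The sliding step mirrors the monotonicity proof: applying CAT(0) angle-sum comparison to the two sub-triangles obtained by cutting the configuration $p,c(R),c'(R)$ with the geodesic from an intermediate vertex to $c'(R)$ shows the angle at the moving vertex does not decrease as the vertex slides toward infinity along $c$.

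The main obstacle will be that the geodesic $[p,c(R)]$ does not generally lie along $c$, so after sliding one lands at a point close to but distinct from $c(t_R)$, and the resulting angular error must be controlled. The required estimate is that for any fixed $p\in X$, $\angle_{c(t)}(p,\xi)\to 0$ as $t\to\infty$: intuitively, $c(t)$ moves deep toward $\xi$ while $p$ stays at bounded distance from the ray $c$, so the initial directions at $c(t)$ toward $p$ and toward $\xi$ become nearly antipodal. Once this estimate is in hand, passing $R\to\infty$ (and hence $t_R\to\infty$) and then $\epsilon\to 0$ closes the proof.
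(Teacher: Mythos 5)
The paper offers no proof of Proposition~\ref{BH9.8}; it is quoted verbatim from Bridson--Haefliger. So I am judging your attempt on its own, not against an argument in the text.

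Your first two steps are sound. The inequality $\limsup_{t}\angle_{c(t)}(\xi,\xi')\le\angle(\xi,\xi')$ is indeed immediate from the definition in Proposition~\ref{BH9.2and9.5}(iii), and the monotonicity of $t\mapsto\angle_{c(t)}(\xi,\xi')$ via the CAT(0) angle-sum inequality in $\triangle\bigl(c(s),c(t),c'(R)\bigr)$ plus the triangle inequality in the space of directions at $c(t)$ is correct. One small correction: when you let $R\to\infty$ with $c(s)$, $c(t)$ fixed, the appropriate tool is the \emph{continuity} of $(x,x')\mapsto\angle_p(x,x')$ for fixed $p$, i.e.\ Proposition~\ref{BH9.2and9.5}(i), not the upper semicontinuity statement (ii). You make the same slip again in the third paragraph when approximating $\angle_p(\xi,\xi')$ by $\angle_p(c(R),c'(R))$.

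The genuine gap is in the step that would show $L:=\lim_t\angle_{c(t)}(\xi,\xi')\ge\angle_p(\xi,\xi')$. Two problems. First, the ``required estimate'' you write, $\angle_{c(t)}(p,\xi)\to 0$, is not the right statement and in fact contradicts your own explanation: if the directions at $c(t)$ toward $p$ and toward $\xi$ become ``nearly antipodal,'' then $\angle_{c(t)}(p,\xi)\to\pi$. The useful form of your observation is $\angle_{c(t)}(p,x_0)\to 0$, which does hold by the Euclidean comparison triangle for $\triangle(x_0,p,c(t))$. Second, and more seriously, the sliding-and-control-the-error scheme does not close as described. After sliding along $[p,c(R)]$ you land at a point $q$ that is near but not on the ray $c$, and you would then need $\angle_q(\xi,\xi')\lesssim\angle_{c(t_R)}(\xi,\xi')+\epsilon$. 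But the vertex-angle function $p\mapsto\angle_p(\xi,\xi')$ is only \emph{upper} semicontinuous (Proposition~\ref{BH9.2and9.5}(ii)), which is a statement about the angle at the limiting vertex $c(t_R)$ dominating the $\limsup$ of nearby angles -- it gives you no uniform bound on $\angle_q$ for $q$ at a fixed positive distance from $c$, and in any case the direction of that semicontinuity estimate is the one you need, not one you can take for granted at finite distance. As written, the argument does not deliver the inequality. (There is also a minor inconsistency in the description: you first slide along $[p,c(R)]$ and then describe it as sliding ``along $c$.'')

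A clean way to finish, which uses exactly the ingredients you already have but avoids the problematic sliding, is the following. Apply the CAT(0) angle-sum inequality to $\triangle\bigl(p,c(R),c'(S)\bigr)$ and drop the angle at $c'(S)$:
\[
\angle_p(c(R),c'(S))+\angle_{c(R)}(p,c'(S))\le\pi.
\]
Let $S\to\infty$ (continuity with vertices $p$ and $c(R)$ fixed, Proposition~\ref{BH9.2and9.5}(i)):
\[
\angle_p(c(R),\xi')+\angle_{c(R)}(p,\xi')\le\pi.
\]
In the space of directions at $c(R)$, use $\angle_{c(R)}(x_0,\xi)=\pi$ and the triangle inequality twice:
\[
\angle_{c(R)}(p,\xi')\;\ge\;\angle_{c(R)}(x_0,\xi')-\angle_{c(R)}(x_0,p)\;\ge\;\bigl(\pi-\angle_{c(R)}(\xi,\xi')\bigr)-\angle_{c(R)}(x_0,p).
\]
Combining, $\angle_p(c(R),\xi')\le\angle_{c(R)}(\xi,\xi')+\angle_{c(R)}(x_0,p)$. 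Now let $R\to\infty$: the left side tends to $\angle_p(\xi,\xi')$ by continuity, the middle term tends to $L$ by your monotonicity step, and $\angle_{c(R)}(x_0,p)\to 0$ by comparison triangles. This gives $\angle_p(\xi,\xi')\le L$ for every $p$, hence $\angle(\xi,\xi')\le L$, which together with your first inequality completes the proof.
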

\begin{proposition}[{\cite[9.21 Proposition]{BridsonHaefliger99}}]\label{BH9.21}
Let $X$ be a proper CAT(0) space and let $\xi_0,\xi_1\in\del X$ be distinct points. Then the following holds:

\begin{enumerate}
\item[(i).] If $d_T(\xi_0,\xi_1)>\pi$, then there is a bi-infinite geodesic $c:\RR\to X$ such that $c(\infty)=\xi_0$ and $c(-\infty)=\xi_1$;
\item[(ii).] If $\xi_0$ and $\xi_1$ are not the infinity endpoints of a bi-infinite geodesic in $X$, then $d_T(\xi_0,\xi_1)=\angle(\xi_0,\xi_1)$ and there is a geodesic segment in $(\del X, d_T)$ joining $\xi_0$ to $\xi_1$. In particular, this applies to any $\xi_0$ and $\xi_1$ such that $\angle(\xi_0,\xi_1)<\pi$.
\end{enumerate}
\end{proposition}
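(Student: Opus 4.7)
The plan is to prove both parts by a common construction: fix a basepoint $p \in X$, let $c_0, c_1 : [0, \infty) \to X$ be the unique geodesic rays from $p$ to $\xi_0$ and $\xi_1$ respectively, and for each positive integer $n$ let $\sigma_n := [c_0(n), c_1(n)]$ be the geodesic segment joining the two rays at parameter $n$. Let $q_n \in \sigma_n$ be the closest point on $\sigma_n$ to $p$. The key dichotomy will be whether $\sup_n d(p, q_n) < \infty$.

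For part (i), I claim that under the hypothesis $d_T(\xi_0, \xi_1) > \pi$ the sequence $\{q_n\}$ must be bounded; once boundedness is established, properness of $X$ combined with Arzel\`a--Ascoli lets me extract a subsequential limit $q \in X$ of the $q_n$, and (after reparametrising $\sigma_n$ with $q_n$ placed at parameter $0$) the segments converge uniformly on compact sets to a bi-infinite geodesic $c : \mathbb{R} \to X$ with $c(0) = q$, $c(\infty) = \xi_0$, and $c(-\infty) = \xi_1$. To establish boundedness I argue by contradiction: if $d(p, q_n) \to \infty$ along a subsequence, properness lets me further extract a limit $\eta \in \del X$ of $q_n$ in the cone topology. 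The defining property of $q_n$ as a closest-point projection forces the Alexandrov angles $\angle_{q_n}(p, c_i(n)) \geq \pi/2$ for $i = 0, 1$; in the CAT(0) sub-triangles $\triangle(p, q_n, c_i(n))$ the Alexandrov angle inequality then gives $\angle_p(q_n, c_i(n)) \leq \pi/2$. Passing to $n \to \infty$ and using Propositions~\ref{BH9.2and9.5} and~\ref{BH9.8} to track the behaviour of angles under limits of both interior and boundary arguments, I will extract $\angle(\xi_0, \eta) + \angle(\eta, \xi_1) \leq \pi$; concatenating paths in $(\del X, d_T)$ then yields $d_T(\xi_0, \xi_1) \leq \pi$, contradicting the hypothesis.

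For part (ii), the contrapositive of (i) immediately gives $d_T(\xi_0, \xi_1) \leq \pi$, and the final ``in particular'' clause follows because any bi-infinite geodesic with ideal endpoints $\xi_0, \xi_1$ would force $\angle_x(\xi_0, \xi_1) = \pi$ at every point $x$ on it, hence $\angle(\xi_0, \xi_1) \geq \pi$. To prove $d_T(\xi_0, \xi_1) = \angle(\xi_0, \xi_1)$ and exhibit a Tits geodesic between $\xi_0$ and $\xi_1$, I would run a dyadic midpoint construction on the boundary: under the no-bi-infinite-geodesic hypothesis the segments $\sigma_n$ must eventually leave every compact set, so their midpoints $m_n$ converge along a subsequence to some $\eta \in \del X$ satisfying $\angle(\xi_i, \eta) \leq \tfrac{1}{2} \angle(\xi_0, \xi_1)$ for $i = 0, 1$ (by the same angle-bookkeeping as above, now applied to midpoints rather than closest-point projections). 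Iterating this construction recursively on the sub-pairs $(\xi_0, \eta)$ and $(\eta, \xi_1)$ yields a dyadic family of boundary points which, by the semicontinuity statements of Proposition~\ref{BH9.2and9.5}, close up into a rectifiable path in $(\del X, d_T)$ from $\xi_0$ to $\xi_1$ of length at most $\angle(\xi_0, \xi_1)$, forcing the desired equality. The main technical obstacle throughout is the passage to limits in CAT(0) angle inequalities: one must deploy the upper semicontinuity of angles on interior-point arguments (Proposition~\ref{BH9.2and9.5}(ii)) and the lower semicontinuity on ideal-point arguments (Proposition~\ref{BH9.2and9.5}(iv)) in the right directions, since neither statement alone gives continuity.
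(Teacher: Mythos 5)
The paper does not supply a proof of this proposition: it is quoted from Bridson--Haefliger \cite[9.21 Proposition]{BridsonHaefliger99} and used as a black box, so there is no in-paper argument to compare against. Your outline does reproduce the standard shape of the Bridson--Haefliger proof (basepoint, joining segments $\sigma_n$ between the two rays, bounded--unbounded dichotomy via properness and Arzel\`a--Ascoli, dyadic midpoint iteration), but the two inequalities that carry the entire weight of the argument are asserted rather than proved, and the route you indicate for them would not deliver.

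For (i), the claim $\angle(\xi_0,\eta)+\angle(\eta,\xi_1)\le\pi$ cannot come from the Alexandrov-angle bounds $\angle_p(q_n,c_i(n))\le\pi/2$ together with the semicontinuity statements of Propositions~\ref{BH9.2and9.5} and~\ref{BH9.8}: since $\angle(\cdot,\cdot)=\sup_x\angle_x(\cdot,\cdot)$, an upper bound on $\angle_p$ only constrains the angular metric from below, and ``$\angle_p(\eta,\xi)\le\pi/2\Rightarrow\angle(\eta,\xi)\le\pi/2$'' is simply false (in $\HH^2$ one has $\angle\equiv\pi$ for distinct ideal points while $\angle_p$ can be made arbitrarily small). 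What is actually needed is control of the \emph{comparison} angles $\bar\angle_p(c_0(n),q_n)$ and $\bar\angle_p(q_n,c_1(n))$ --- obtained from $\angle_{q_n}(p,c_i(n))\ge\pi/2$ via the angle-sum bound in a Euclidean comparison triangle --- fed into the monotone comparison-angle formula for $\angle$ from \cite[II.9.8(1)]{BridsonHaefliger99}, which is a different statement from the Alexandrov-angle limit recorded in Proposition~\ref{BH9.8}. The midpoint halving $\angle(\xi_i,\eta)\le\frac12\angle(\xi_0,\xi_1)$ in (ii) is said to follow ``by the same angle-bookkeeping,'' but it is not even the same computation: midpoints satisfy no projection inequality $\angle_{m_n}(p,\cdot)\ge\pi/2$, and the proof instead runs through the CAT(0) midpoint inequality $d(p,m_n)\le d(\bar p,\bar m_n)$ and the isoceles comparison triangle. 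Finally, (i) and (ii) as written are circular: (i) concludes by ``concatenating paths in $(\del X,d_T)$,'' but producing a Tits path of length $\angle(\xi_0,\eta)$ from $\xi_0$ to $\eta$ is precisely the output of the dyadic construction in (ii), while (ii) opens by invoking the contrapositive of (i). The dependency can be untangled by proving the midpoint lemma as a freestanding statement prior to both parts, but your proposal does not do so.
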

\begin{proposition}[{\cite[Proposition 3.3]{Hindawi}}]\label{Hindawi}
Let $X$ be a Hadamard manifold. Let $p\in X$ and $[x_j,y_j]$ be a sequence of geodesic segments such that $x_j,y_j\in X$ converge to $\xi_x,\xi_y\in\del X$ respectively as $j\to\infty$. If $d(p,[x_j,y_j])\to\infty$ as $j\to\infty$, then $d_T(\xi_x,\xi_y)\leq \pi$. Moreover, $\xi_x$ and $\xi_y$ are connected in the Tits boundary $(\del X,d_T)$.
\end{proposition}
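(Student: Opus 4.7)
The plan is to argue by contradiction: suppose $d_T(\xi_x,\xi_y)>\pi$. Proposition~\ref{BH9.21}(i) then supplies a bi-infinite geodesic $c:\RR\to X$ with $c(-\infty)=\xi_x$ and $c(\infty)=\xi_y$; fix $q=c(0)$. Since $d(p,q)<\infty$, it suffices to bound $d(q,[x_j,y_j])$ uniformly in $j$, which will contradict the hypothesis $d(p,[x_j,y_j])\to\infty$.

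For each $j$, let $q_j$ be the nearest-point projection of $q$ onto $[x_j,y_j]$. The CAT(0) projection property gives $\angle_{q_j}(q,x_j),\angle_{q_j}(q,y_j)\geq \pi/2$; since the two directions along $[x_j,y_j]$ from $q_j$ are opposite, both angles equal $\pi/2$ whenever $q_j$ is interior. CAT(0) comparison in the triangles $(q,q_j,x_j)$ and $(q,q_j,y_j)$ yields the Pythagorean-type bounds
\[
d(q,x_j)^2\geq d(q,q_j)^2+d(q_j,x_j)^2, \qquad d(q,y_j)^2\geq d(q,q_j)^2+d(q_j,y_j)^2.
\]
After passing to a subsequence, the initial directions of the segments $[q,q_j]$ converge in the unit sphere $S_qX$ to some $v_\infty$, determining an ideal point $\xi_\infty\in\del X$ with $q_j\to\xi_\infty$ in the cone topology. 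Continuity of $\angle_q$ (Proposition~\ref{BH9.2}(i)) together with the Pythagorean bounds above forces $\angle_q(\xi_x,\xi_\infty),\angle_q(\xi_y,\xi_\infty)\leq \pi/2$. Since $\angle_q(\xi_x,\xi_y)=\pi$ (as $c$ passes through $q$ with those asymptotic directions), the triangle inequality for $\angle_q$ upgrades these to equalities, so $v_\infty$ is \emph{perpendicular} to $\dot c(0)$.

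The remaining step---and what I expect to be the main obstacle---is to turn this perpendicular ideal direction $\xi_\infty$ into a contradiction with $d_T(\xi_x,\xi_y)>\pi$. The structural point is that $d_T>\pi$ forces the parallel set $P(c)\isom c\times Y$ of $c$ to have $Y$ bounded: an unbounded $Y$ would yield, via the spherical-join description of the Tits boundary of $c\times Y$, a Tits path from $\xi_x$ to $\xi_y$ of length at most $\pi$, contradicting $d_T>\pi$. On the other hand, if $d(q,q_j)\to\infty$ along the chosen subsequence, then the geodesic ray from $q$ in direction $v_\infty$ (perpendicular to $c$), combined with the asymptotic parallelism of $[x_j,y_j]$ to $c$ forced by the shared ideal endpoints (realized, for instance, via convexity of $b_{\xi_x}+b_{\xi_y}$), would extract a flat half-plane bounded by $c$---equivalently, an unbounded $Y$---contradicting $d_T>\pi$. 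The borderline case $d_T=\pi$, exemplified by $X=\RR^2$ with antipodal $\xi_x,\xi_y$ (where $[x_j,y_j]$ is free to drift arbitrarily far from any fixed parallel line), shows that the strict inequality is indispensable for the final rigidity step. The ``moreover'' assertion follows from $d_T(\xi_x,\xi_y)\leq \pi<\infty$: Proposition~\ref{BH9.21}(ii) supplies a Tits segment from $\xi_x$ to $\xi_y$ when no bi-infinite geodesic connects them, and when one does, $\angle(\xi_x,\xi_y)=\pi=d_T(\xi_x,\xi_y)$ and a Tits segment of length $\pi$ is realized in $(\del X,d_T)$.
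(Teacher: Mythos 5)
Your overall strategy matches the paper's, which is itself a wrapper around Hindawi's \cite[Proposition 3.3]{Hindawi}: assume $\xi_x,\xi_y$ are not connected in $(\del X,d_T)$ (equivalently $d_T>\pi$), invoke Proposition~\ref{BH9.21}(i) to produce a bi-infinite geodesic $c$ from $\xi_x$ to $\xi_y$, and show that $c$ bounds a flat half-plane whose ideal boundary joins $\xi_x$ to $\xi_y$ by a Tits arc of length $\pi$. Your preparatory estimates are correct: $d(q,q_j)\to\infty$, else $d(p,[x_j,y_j])$ would be bounded; the CAT(0) projection inequality gives $\cos\bar\angle_q(q_j,x_j)\geq d(q,q_j)/d(q,x_j)\geq 0$, hence $\angle_q(q_j,x_j),\angle_q(q_j,y_j)\leq\pi/2$; and passing to the limit through Proposition~\ref{BH9.2and9.5}(i) (your citation points to a label that does not exist in this paper) forces $\angle_q(\xi_\infty,\xi_x)=\angle_q(\xi_\infty,\xi_y)=\pi/2$.

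The genuine gap is exactly the step you flag as ``the main obstacle.'' Having a direction $v_\infty$ at $q$ perpendicular to $\dot c(0)$ with $q_j\to\xi_\infty$ does not by itself produce a flat half-plane bounded by $c$: you still need to show that, for every $R$, the convex hull of $c$ together with a long enough initial piece of $[q,q_j]$ contains a flat strip of width $R$, and then take a diagonal limit to obtain the half-plane. You gesture at ``asymptotic parallelism of $[x_j,y_j]$ to $c$'' via convexity of $b_{\xi_x}+b_{\xi_y}$ --- which is the right idea --- but you only assert that it ``would extract a flat half-plane'' without carrying out the construction. Since this extraction is the entire technical content of the proposition, and precisely what the paper declines to reprove (it quotes Hindawi's middle argument verbatim, changing only the first and last lines), the proposal is an outline rather than a proof. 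A minor simplification: the ``moreover'' clause needs no case split through Proposition~\ref{BH9.21}(ii); once $d_T(\xi_x,\xi_y)\leq\pi<\infty$ is known, connectedness in $(\del X,d_T)$ is automatic, because the Tits metric is a length metric and assigns distance $\infty$ to points lying in different path components.
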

\begin{proof}
The proof is identical to the proof of \cite[Proposition 3.3]{Hindawi} except for the first and the last lines. If $\xi_x$ and $\xi_y$ are not connected in the Tits boundary $(\del X,d_T)$, by Proposition \ref{BH9.21}, $\xi_x$ and $\xi_y$ are connected by a bi-infinite geodesic in $X$. The proof of \cite[Proposition 3.3]{Hindawi} then shows that this bi-infinite geodesic bounds a half flat, which implies that $\xi_x$ and $\xi_y$ are actually connected in the Tits boundary $(\del X,d_T)$. This gives the contradiction.
\end{proof}

\begin{lemma}\label{flat sector lem}
Let $U:=\{(r\cos\theta,r\sin\theta)|r\in[0,\infty)\mathrm{~and~}\theta\in[0,\theta_0]\}\subset\RR^2$ for some $\theta_0\in [0,2\pi]$ and $c:U\to X$ be a metric space embedding, where $U$ is equipped with the standard Euclidean metric on $\RR^2$. (In other words, $c(U)$ is a closed flat sector in $X$). For any $\theta\in[0,\theta_0]$ and $t\geq 0$, we define geodesic rays $c_\theta(t):=c(t\cos\theta,t\sin\theta)$ in $X$. Suppose $c_0(\infty)\in\del F$, then $c_\theta(\infty)\in\del F$ for any $\theta\in[0,\theta_0]$.
\end{lemma}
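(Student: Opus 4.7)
The plan is to reduce the claim to showing that the convex function $h(t):=d(c_\theta(t),F)$ is uniformly bounded on $[0,\infty)$. Once this is known, setting $q_t:=\Proj_F(c_\theta(t))$ produces a sequence in $F$ within bounded distance of $c_\theta(t)$; an angular comparison at a fixed basepoint (analogous to the end of Lemma \ref{geo rays asymp to flats}) yields $q_t\to c_\theta(\infty)$ in the cone topology, and closedness of $F$ forces $c_\theta(\infty)\in\partial F$. Since $h$ is convex and nonnegative (because $d(\cdot,F)$ is convex on $X$), either $h$ is bounded and we are done, or $h(t)\to\infty$, which we seek to rule out.

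Assume the latter and restrict first to $\theta\in[0,\pi/2)$: once the lemma is established on this range, one iterates with $c_{\pi/2}$ playing the role of $c_0$ on the sub-sector of angles $[\pi/2,\theta_0]$, and so on. Fix $t_j\to\infty$ and set $p_j:=c_0(t_j\cos\theta)$; since $t_j\cos\theta\to\infty$, Lemma \ref{geo rays asymp to flats} gives $d(p_j,F)\to 0$. By cocompactness of the $\mathrm{Stab}_\Gamma(F)$-action on $F$, choose $\gamma_j\in\mathrm{Stab}_\Gamma(F)$ so that $\gamma_j\Proj_F(p_j)$ lies in a compact fundamental domain. After passing to a subsequence, $\gamma_j p_j\to q\in F$, $\gamma_j\dot c_0(t_j\cos\theta)\to v\in T_qF$, and the translated rays $\gamma_j c_0$ recentered at time $t_j\cos\theta$ converge to a bi-infinite geodesic $\ell\subset F$ through $q$ with $\dot\ell(0)=v$.

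The translated sectors $\gamma_j c(U)$, rebased at $\gamma_j p_j$, are flat with perpendicular aperture from the basepoint growing without bound, so a diagonal extraction produces a pointed Gromov--Hausdorff limit which is a flat half-plane $E^+\subset X$ with boundary line $\ell$ through $q$. For every $s\geq 0$ the parallel line $\ell_s\subset E^+$ at perpendicular distance $s$ from $\ell$ satisfies $d(\ell_s(\tau),F)\leq s$ for all $\tau\in\RR$ by the triangle inequality, using $\ell\subset F$; applying the isolated hypothesis \hyperlink{I-2}{(2)} to the bi-infinite geodesic $\ell_s$ forces $\ell_s\subset F$. Varying $s$ yields $E^+\subset F$, and in particular the perpendicular direction $e\in T_qE^+$ lies in $T_qF$.

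To conclude, consider the perpendicular-to-$c_0$ geodesic in the sector at time $t_j\cos\theta$, namely $h\mapsto\gamma_j c(t_j\cos\theta,h)$, through which $\gamma_j c_\theta(t_j)$ appears at height $h=t_j\sin\theta$. Its starting point converges to $q$ and its starting tangent to $e\in T_qF$, so on any fixed compact interval of heights it converges to a geodesic in $F$. Combined with convexity of $\phi_j(h):=d(\gamma_j c(t_j\cos\theta,h),F)$ in $h$, one aims to upgrade this to a uniform bound of $\phi_j$ on the whole interval $[0,t_j\sin\theta]$, giving $\phi_j(t_j\sin\theta)=h(t_j)$ bounded, a contradiction. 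The main obstacle is precisely this final upgrade, from compact-scale convergence $\gamma_j c(U)\to E^+\subset F$ to a quantitative bound on $\phi_j$ at the growing height $h=t_j\sin\theta$; I expect the required uniform rate to emerge from iterating the half-plane construction along sub-sectors of angle tending to $\theta$, each invocation using \hyperlink{I-2}{(2)} to force an additional parallel family of bi-infinite geodesics into $F$.
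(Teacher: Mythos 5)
Your proposal takes a genuinely different route from the paper, and the flat half-plane part is sound: after rebasing the translated sectors at $\gamma_j p_j$ and passing to a locally uniform limit, one obtains a flat half-plane $E^+$ with boundary line $\ell\subset F$, and applying the isolated hypothesis (2) to each parallel $\ell_s$ (a bi-infinite geodesic at uniformly bounded distance $s$ from $F$) does force $E^+\subset F$. The paper instead tracks the boundary function $L(x)$ of the sublevel set $U'=\{(x,y)\in U : d(c(x,y),F)\le d(p,F)+1\}$, shows $L$ is concave and non-decreasing with asymptotic slope $0$ (after reducing, as you also reduce, to the case where $c_0$ is the only ray with endpoint in $\partial F$), and then extracts geodesic segments $[p_l,q_l]$ of unbounded length confined to the annulus $\{(d(p,F)+1)/2\le d(\cdot,F)\le d(p,F)+1\}$; the bi-infinite limit of \emph{those} segments is what violates (2), not a limit half-plane.

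The gap you acknowledge at the end is a real one and, as stated, your suggested fix cannot close it. Locally uniform convergence of $\gamma_j c(t_j\cos\theta,\cdot)$ to a geodesic in $E^+\subset F$ gives, for each \emph{fixed} $H$, that $\phi_j(h)\to 0$ uniformly on $[0,H]$, but carries no rate at the growing height $h_j=t_j\sin\theta$; and convexity of $\phi_j$ alone is compatible with $\phi_j(h_j)\to\infty$, since a convex nonnegative function vanishing near $0$ can still grow arbitrarily on a long interval. Worse, the proposed iteration over sub-sectors with base ray $c_{\theta'}$, $0<\theta'<\theta$, is circular: to run the half-plane construction from $c_{\theta'}$ one needs $c_{\theta'}(\infty)\in\partial F$, which is exactly the statement being proved for those $\theta'$. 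The paper's argument sidesteps the whole issue by never trying to bound $d(c_\theta(t),F)$ directly, instead manufacturing a bi-infinite geodesic trapped in a fixed annulus around $F$ from the shape of the boundary curve $L$.
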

\begin{proof}
Since we can chop $U$ into sectors with angles less than $\pi/2$, we assume WLOG that $\theta_0<\pi/2$. For simplicity, we let $p:=c(0,0)$. If there exists some $\theta\in (0,\theta_0]$ such that $c_\theta(\infty)\in\del F$, then by convexity of distance function in nonpositive curvature,
$$d(p,F)\geq \max\{d(c_\theta(t),F), d(c_0(t),F)\},~\forall t\in[0,\infty).$$
By convexity of distance function in nonpositive curvature again, this implies that $d(p,F)\geq d(c_{\theta'}(t),F)$ for any $\theta'\in[0,\theta]$. In particular, $c_{\theta'}(\infty)\in\del F$ for any $\theta'\in[0,\theta]$.

Assume the contrary, by the above discussions, there exists some $U$ with $\theta_0<\pi/2$ such that
\begin{align}\label{eqn:only one point in del F}
\theta_0>\sup\{\theta\in[0,\theta_0]|c_\theta(\infty)\in\del F\}=:\theta_1.
\end{align}
WLOG, we assume that $\theta_1=0$. For any $x\in\RR_{\geq 0}$, define
$$L(x):=\sup\{y\in[0,x\tan\theta_0]|d(c(x,y),F)\leq d(p,F)+1\}.$$
By convexity of distance function in nonpositive curvature and the fact that $c_0(\infty)\in\del F$, the following holds:
\begin{itemize}
\item For any $y'\in[0,L(x)]$, $d(c(x,y'),F)\leq d(p,F)+1$;
\item $L(x)$ is concave;
\item $L(x)$ is non-decreasing;
\item If $L(x)<x\tan\theta_0$, then $d(c(x,L(x)),F)=d(p,F)+1$.
\end{itemize}
In particular, the following subset
\begin{align}\label{eqn:good points in sector}
U':=\{(x,y)\in U|d(c(x,y),F)\leq d(p,F)+1\}=\{(x,y)\in\RR^2|x\geq 0\mathrm{~and~}0\leq y\leq L(x)\}
\end{align}
is convex. Define
$$s_L(x):=\sup_{z>y\geq x}\frac{L(z)-L(y)}{z-y},~\forall x\in\RR_{\geq 0}.$$
Then for any $z>y\geq x\geq 0$, we have
\begin{align}\label{eqn:convexity of L}
s_L(x)\geq s_L(y)\geq\frac{L(z)-L(y)}{z-y}\geq s_L(z)\geq s:=\lim_{t\to\infty}s_L(t)\geq 0.
\end{align}
In particular, by \eqref{eqn:good points in sector}, for any $\theta\in[0,\arctan(s)]$, $c_\theta(\RR_{\geq 0})\in U'$. By the assumption that $\theta_1=0$ (see \eqref{eqn:only one point in del F}), we have $s=0$.

For any $l\in\ZZ_+$, we choose $x_l\in\RR_+$ such that $s_L(x_l)-s\leq (1/2l)\cdot\min\{1,\tan\theta_0\}$ and $L(x_l)<x_l\tan\theta_0$. (Existence of $x_l$ is guaranteed by \eqref{eqn:convexity of L}.) Let $y_l=x_l+2l\cdot ((d(p,F)+1)/2)$, $p_l:=c(x_l,L(x_l))$, $q_l:=c(y_l,L(y_l))$ and $m_l:=c((x_l+y_l)/2,(L(x_l)+L(y_l))/2)$ for any $l\in\ZZ_+$. Then the following holds:
\begin{itemize}
\item $m_l$ is the midpoint of the geodesic segment $[p_l,q_l]$ connecting $p_l$ and $q_l$ for any $l\in\ZZ_+$;
\item Length of $[p_l,q_l]$ is greater than or equal to $2l\cdot ((d(p,F)+1)/2)$;
\item For any $x'\geq x_l$, $L(x')<x'\tan\theta_0$ due to \eqref{eqn:convexity of L};
\item For any point $q\in[p_l,q_l]$, there exists some $q'\in c(U')$ such that $d(q',F)=d(p,F)+1$ and $d(q,q')\leq (d(p,F)+1)/2$. In particular, $d(p,F)+1\geq d(q,F)\geq (d(p,F)+1)/2$. (See Figure \ref{lem3.6}.)
\end{itemize}

\begin{figure}[h]
	\centering
	\includegraphics[width=4in]{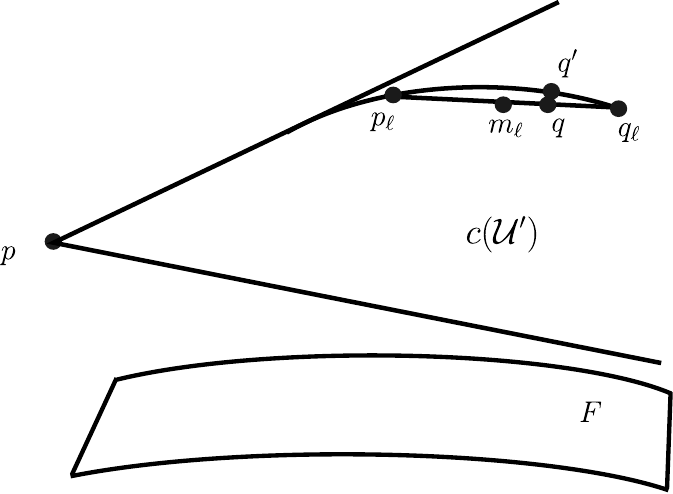}
	\caption{ \label{lem3.6}}
\end{figure}

Let $v_l\in T^1_{m_l}X$ be a unit vector tangent to $[p_l,q_l]$ for any $l\in\ZZ_+$. Then there exists a sequence $\{\gamma_l\}_{l\in\ZZ_+}\subset\mathrm{Stab}_\Gamma (F)$ such that $\{\gamma_lm_l\}_{l\in\ZZ_+}$ and $\{\gamma_lv_l\}$ are bounded sequences. Up to a subsequence, we can assume that $\gamma_lm_l\to m\in X$ and $\gamma_lv_l\to v\in T^1_mX$ as $l\to \infty$. Let $\gamma_{(l)}$ and $\gamma$ be geodesics with initial vectors $v_l$ and $v$ respectively, $l\in\ZZ_+$. Since $d(\gamma_{(l)}(t),F)\in[(d(p,F)+1)/2,d(p,F)+1]$ for any $l\in\ZZ_+$ and any $t$ such that $|t|\leq l\cdot ((d(p,F)+1)/2)$, we have
$$d(\gamma(t),F)\in[(d(p,F)+1)/2,d(p,F)+1],~\forall t\in\RR.$$
This is impossible due to the assumption that $N$ is isolated. (See Property \hyperlink{I-2}{(2)} of being isolated.)
\end{proof}
\begin{lemma}\label{res angle lem}
For any $R>0$, there exists some $\upsilon(R)>0$ such that for any $p\in X$ and $q_1,q_2\in F$ satisfying $d(p,F)=R$, the angle $\angle_p(q_1,q_2)\leq \pi-\upsilon(R)$.
\end{lemma}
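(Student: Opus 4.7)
The plan is to argue by contradiction and extract a limit configuration that is forbidden by convexity of $d(\cdot,F)$. Suppose the lemma fails at some fixed $R>0$: then there exist sequences $p_n\in X$ with $d(p_n,F)=R$ and $q_1^n,q_2^n\in F$ with $\angle_{p_n}(q_1^n,q_2^n)\to \pi$. Since $\mathrm{Stab}_\Gamma(F)$ acts cocompactly on $F$ (because $N=\sP(F)$ is closed), I may replace the triple $(p_n,q_1^n,q_2^n)$ by a $\mathrm{Stab}_\Gamma(F)$-translate so that $\Proj_F(p_n)$ lies in a fixed compact fundamental domain of $F$. Because $d(p_n,F)=R$, the sequence $\{p_n\}$ then lies in a compact subset of $X$, and after passing to a subsequence $p_n\to p\in X$ with $d(p,F)=R>0$. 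Passing to further subsequences, each $q_i^n$ converges in the cone topology on $\overline{X}$ to some $\xi_i$; since $F$ is closed and totally geodesic in $X$, necessarily $\xi_i\in F\cup \partial F$, and in particular $\xi_i\neq p$ because $d(p,F)=R>0$.

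Now apply upper semicontinuity of the angle function (Proposition~\ref{BH9.2and9.5}(ii)) at the triple $(p,\xi_1,\xi_2)\in X\times\overline{X}\times\overline{X}$: this gives
\[
\angle_p(\xi_1,\xi_2)\ \geq\ \limsup_{n\to\infty}\angle_{p_n}(q_1^n,q_2^n)\ =\ \pi,
\]
so $\angle_p(\xi_1,\xi_2)=\pi$. Consequently the geodesics emanating from $p$ toward $\xi_1$ and toward $\xi_2$ fit together into a single (possibly finite-length) geodesic $c$ passing through $p$ whose two endpoints are $\xi_1$ and $\xi_2$. Write $c(s)=p$ at some interior parameter $s$.

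To conclude, I use the standard fact that in nonpositive curvature the function $t\mapsto d(c(t),F)$ is convex. At $t=s$ it takes the value $R>0$. At each endpoint the value is $0$: if $\xi_i\in F$ this is immediate, and if $\xi_i\in\partial F$ it follows from Lemma~\ref{geo rays asymp to flats}. A nonnegative convex function on an interval cannot attain a strictly positive value at an interior point while tending to $0$ at both ends, e.g. by $R=d(c(s),F)\leq\frac{T-s}{T-T'}d(c(T'),F)+\frac{s-T'}{T-T'}d(c(T),F)\to 0$ as $T'\to$ left endpoint and $T\to$ right endpoint. This contradiction proves the existence of the uniform gap $\upsilon(R)>0$. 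The only non-routine step is ensuring $\xi_i\neq p$ so that Proposition~\ref{BH9.2and9.5}(ii) applies (automatic from $d(p,F)=R>0$) and carefully handling the case where one or both $\xi_i$ lie in $F$ rather than $\partial F$; in all three subcases the convexity-plus-Lemma~\ref{geo rays asymp to flats} argument closes identically.
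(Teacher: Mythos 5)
Your proof is correct, and it follows the same broad outline as the paper's: argue by contradiction, use cocompactness of $\mathrm{Stab}_\Gamma(F)$ on $F$ to extract a convergent subsequence, and obtain a limit geodesic through a point $p$ at distance $R$ from $F$. Where you diverge is in how you finish. The paper first rules out $\xi_i\in F$ by a separate convexity argument, then for the remaining case $\xi_1,\xi_2\in\partial F$ argues that $d(\gamma(t),F)$ admits a global maximum (by a monotonicity observation transferred from the approximating geodesics), hence is constant by convexity, and then invokes Property~(2) of isolatedness directly. You instead appeal to Lemma~\ref{geo rays asymp to flats} (which itself rests on Property~(2)) to get $d(c(t),F)\to 0$ at each end of $c$, and then close with a single convexity estimate that treats the cases $\xi_i\in F$ and $\xi_i\in\partial F$ uniformly. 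This is a genuine streamlining: you reuse an already-proved lemma instead of re-deriving its content, and you avoid the case split. The only thing to watch is that the upper semicontinuity in Proposition~\ref{BH9.2and9.5}(ii) requires $\xi_i\neq p$, which you correctly note follows from $d(p,F)=R>0$; and that $\overline{F}=F\cup\partial F$ is closed in the cone topology, which holds for closed convex subsets of a Hadamard manifold and is used implicitly (the paper uses the same fact). Both of these are fine.
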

\begin{proof}
If not, there exists some $R>0$ and sequences of points $\{p_l\}_{l\in\ZZ_+}\subset X$, $\{q_{1;l}\}_{l\in\ZZ_+}\subset F$ and $\{q_{2;l}\}_{l\in\ZZ_+}\subset F$ such that $d(p_l,F)=R$ for any $l\in\ZZ_+$ and $\angle_{p_l}(q_{1;l},q_{2;l})\to\pi$ as $l\to\infty$. Let $v_{1;l},v_{2;l}\in T_{p_l}^1X$ be initial vectors of the geodesic rays from $p_l$ to $q_{1;l}$ and $q_{2;l}$ respectively. Then there exist a sequence $\{\gamma_l\}_{l\in\ZZ_+}\subset\mathrm{Stab}_\Gamma (F)$ such that the sequences $\{\gamma_lp_l\}_{l\in\ZZ_+}$, $\{\gamma_lv_{1;l}\}_{l\in\ZZ_+}$ and $\{\gamma_lv_{2;l}\}_{l\in\ZZ_+}$ are bounded sequences. After passing to a subsequence, we can assume that $\gamma_lp_l\to p$, $\gamma_lv_{1;l}\to v_1\in T^1_pX$ and $\gamma_lv_{2;l}\to v_2\in T^1_pX$ as $l\to \infty$. Since $d(p_l,F)=R$ for any $l\in\ZZ_+$, and $\angle_{p_l}(q_{1;l},q_{2;l})\to\pi$ as $l\to\infty$, we have $d(p,F)=R$ and $v_1=-v_2=:v$. Let $\gamma$, $\gamma_{1;l}$ and $\gamma_{2;l}$ be geodesic with initial vectors $v$, $v_{1;l}$ and $v_{2;l}$ respectively for any $l\in\ZZ_+$. Since $\overline{F}=F\cup\del F$ is compact, up to a subsequence, we can assume that $q_{1;l}\to\xi_1\in\overline{F}$ and $q_{2;l}\to\xi_2\in\overline{F}$ as $l\to\infty$. Since $v_1=-v_2$, we first observe that $\xi_1,\xi_2\in\del F$. (Otherwise, there exists some segment of $\gamma$ containing $p$ such that either it is a finite segment connecting two points on $F$, or it is a geodesic ray starting at a point on $F$ with infinity at $\del F$. Since $d(p,F)=R>0$, both cases are impossible due to the convexity of distance function in nonpositive curvature.) Assume that $q_{1;l}=\gamma_{1;l}(t_{1;l})$ and $q_{2;l}=\gamma_{2;l}(t_{2;l})$ for some $t_{1;l},t_{2;l}>0$. In particular, $t_{1;l},t_{2;l}\to \infty$ as $l\to \infty$. Notice that $d(\gamma_{1;l}(t),F)$ is decreasing when $t\leq t_{1;l}$ and $d(\gamma_{2;l}(t),F)$ is decreasing when $t\leq t_{2;l}$,  Hence $d(\gamma(t),F)$ admits a global maximum on $t\in\RR$. By convexity of distance functions in nonpositive curvature, $d(\gamma(t),F)\equiv R>0$. This is impossible due to the assumption that $N$ is isolated. (See Property \hyperlink{I-2}{(2)} of being isolated.)
\end{proof}
\begin{lemma}\label{isolated Tits bdry}
$\del F$ is disconnected from $\del X\setminus \del F$ in the Tits boundary $(\del X, d_T)$.
\end{lemma}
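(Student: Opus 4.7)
The plan is to prove $\del F$ is both closed and open in $(\del X, d_T)$, which implies $\del F$ is a union of Tits path components and hence disconnected from $\del X \setminus \del F$. Both halves will exploit the cocompactness of the $\mathrm{Stab}_\Gamma(F)$-action on $F$ (since $N$ is closed) together with the isolated property~(\hyperlink{I-2}{2}).

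To prove closedness, I would take $\xi_n \to \xi$ in Tits with $\xi_n \in \del F$, fix $p \in X$, and look at the rays $c_n, c$ from $p$ to $\xi_n, \xi$. Since $\angle_p(\xi_n, \xi) \leq d_T(\xi_n, \xi) \to 0$, the initial vectors converge and $c_n(t) \to c(t)$ pointwise. Lemma \ref{geo rays asymp to flats} combined with convexity of $t \mapsto d(c_n(t), F)$ gives $d(c_n(t), F) \leq d(p, F)$, so $d(c(t), F) \leq d(p, F)$ in the limit. If $\xi \notin \del F$, this bounded convex function has positive limit $L$; translating $c(t_k)$ back by a $\gamma_k \in \mathrm{Stab}_\Gamma(F)$ into a bounded fundamental domain and extracting subsequential limits of point and tangent vector then produces a bi-infinite geodesic at constant distance $L > 0$ from $F$, contradicting~(\hyperlink{I-2}{2}). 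The same cocompactness argument shows: for any $\eta \notin \del F$ and any ray $\rho$ to $\eta$, one has $d(\rho(t), F) \to \infty$.

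For openness, I would argue by contradiction: suppose $\xi \in \del F$, $\eta_n \notin \del F$, and $d_T(\eta_n, \xi) \to 0$. Pick $p \in F$ and let $c \subset F$ be the ray to $\xi$, $\rho_n$ the ray to $\eta_n$. Set $T_n := \sup\{t : d(\rho_n(t), F) \leq 1\}$, finite by the observation above. The CAT$(0)$ comparison $d(\rho_n(t), c(t)) \leq t \angle_p(\eta_n, \xi)$ with $c(t) \in F$ forces $T_n \to \infty$. Letting $e_n$ denote the outer unit normal to $F$ at $\rho_n(T_n)$ and $u_n$ the direction to $\xi$, Lemma \ref{geo rays asymp to flats} applied to the ray along $u_n$ yields $\angle(u_n, e_n) \geq \pi/2$, while $d(\rho_n(\cdot), F)$ being nondecreasing through $T_n$ gives $\angle(\dot\rho_n(T_n), e_n) \leq \pi/2$. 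Proposition \ref{BH9.8} applied to $\rho_n$ yields $\angle_{\rho_n(T_n)}(\xi, \eta_n) \leq \angle(\xi, \eta_n) \leq d_T(\eta_n, \xi) \to 0$, and the spherical triangle inequality then forces $\angle(\dot\rho_n(T_n), e_n) \to \pi/2$; in particular the right-derivative $r_n' := \cos\angle(\dot\rho_n(T_n), e_n)$ of $d(\rho_n(\cdot), F)$ at $T_n$ tends to $0$.

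The final step will be to translate $\rho_n(T_n)$ back by $\gamma_n \in \mathrm{Stab}_\Gamma(F)$ and extract subsequential limits to build a bi-infinite geodesic $\tilde c$ through some $q$ with $d(q, F) = 1$. The right-derivative of $d(\tilde c(\cdot), F)$ at $0$ equals $\lim r_n' = 0$, and $d(\tilde c(s), F) \leq 1$ for $s \leq 0$ since eventually $T_n + s \in [0, T_n]$. Convexity with vanishing derivative at $0$ forces $d(\tilde c, F) \geq 1$ globally, so $d(\tilde c, F) \equiv 1$ on $(-\infty, 0]$. Applying cocompactness once more along $s_k \to -\infty$ inside this constant-distance half produces a second limiting bi-infinite geodesic at constant distance $1$ from $F$, contradicting~(\hyperlink{I-2}{2}). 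The main obstacle is this openness step, specifically the angle identity pinning down the right-derivative in the limit and the double subsequential-limit argument needed to reach a genuinely bi-infinite constant-distance contradiction.
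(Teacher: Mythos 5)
Your proof is correct, and it takes a genuinely different route from the paper's. The paper argues directly: it assumes some $\eta\in\del X\setminus\del F$ has $d_T(\eta,\del F)<\pi/2$, chooses a Tits-distance minimizer $\xi\in\del F$ via lower semicontinuity of the angle metric, does one cocompactness limit along points $q_j$ at distance $1$ from $F$, and then invokes the flat sector lemma (Lemma \ref{flat sector lem}) together with the residual-angle lemma (Lemma \ref{res angle lem}) to show that the limiting flat sector would force $\eta_0$ simultaneously into and out of $\del F$. You instead decompose the statement into "$\del F$ is closed" and "$\del F$ is open" in the Tits topology. Closedness is cheap: Lemma \ref{geo rays asymp to flats}, convexity of $d(\cdot,F)$ along rays, and one cocompactness limit yield a bi-infinite geodesic at constant positive distance from $F$, violating the isolated property \hyperlink{I-2}{(2)}. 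Openness uses a first-variation argument at the exit time $T_n$ of the tubular neighborhood $\{d(\cdot,F)\le 1\}$: the outgoing direction $\dot\rho_n(T_n)$ gets squeezed between the direction to $\xi$ (angle $\ge\pi/2$ to the gradient, by Lemma \ref{geo rays asymp to flats}) and the direction to $\eta_n$ (angle $\le\pi/2$), and these collapse together since $\angle(\xi,\eta_n)\to 0$; passing to a limit makes the gradient derivative vanish at $\tilde c(0)$, forcing $d(\tilde c(\cdot),F)\equiv 1$ on $(-\infty,0]$ by convexity, and a second cocompactness limit then violates \hyperlink{I-2}{(2)}. This avoids Lemmas \ref{flat sector lem} and \ref{res angle lem} entirely and is arguably more self-contained, at the cost of the iterated limit bookkeeping. (Minor bookkeeping notes: the inequality $\angle_{\rho_n(T_n)}(\xi,\eta_n)\le\angle(\xi,\eta_n)$ you attribute to Proposition \ref{BH9.8} is just the definition of the angular metric as a supremum; and "outer unit normal to $F$ at $\rho_n(T_n)$" should read the gradient of $d(\cdot,F)$ at $\rho_n(T_n)$, since that point lies off $F$. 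Neither affects the argument.)
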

\begin{proof}
If not, there exists some $\eta\in\del X\setminus\del F$ such that $d_T(\eta,\del F)<\infty$. Since the Tits distance is a length metric, we can assume WLOG that $d_T(\eta,\del F)<\pi/2$. Let $\seq{\xi_j}\subset \del F$ be a sequence of points such that $\seq{d_T(\eta,\xi_j)}$ is a decreasing sequence with limit equal to $d_T(\eta,\del F)$. WLOG we assume that $\angle(\eta,\xi_j)=d_T(\eta,\xi_j)<\pi/2$. By triangle inequality, $\angle(\xi_j,\xi_l)<\pi$ for any $j,l>0$. Therefore $\seq{\xi_j}$ is a bounded sequence in $\del F$ with respect to the cone topology. By compactness of $\del F$ in the cone topology, we can assume WLOG that $\xi_j$ converge to $\xi\in \del F$ as $j\to \infty$ with respect to the cone topology. By the fourth assertion in Proposition \ref{BH9.2and9.5}, $\angle(\eta,\xi)-\epsilon\leq\lim_{j\to\infty}\angle(\eta,\xi_j)=d_T(\eta,\del F)$ for any $\epsilon>0$. Hence $\angle(\eta,\xi)=d_T(\eta,\xi)\leq d_T(\eta,\del F)$. By the fact that $\xi\in\del F$, we have
\begin{align}\label{eqn:min dist exists}
d_T(\eta,\xi)=d_T(\eta,\del F).
\end{align}

Let $p\in F$ and let $c(t)$, $c'(t)$ be geodesic rays such that $c(0)=c'(0)=p$, $c(\infty)=\xi$ and $c'(\infty)=\eta$. Let $\seq{t_j}\subset \RR_{\geq 0}$ be an increasing sequence such that $t_j\to\infty$ as $j\to\infty$. Since $\eta\not\in\del F$, we can let $q_j\in[c(t_j),\eta]$ such that $d(q_j,F)=1$. (See Figure \ref{lem3.8}.) By the fact that $X$ is nonpositively curved, we have
$$\angle_{q_j}(\xi,\eta)=\pi-\angle_{q_j}(\xi,c(t_j))\geq \pi-(\pi-\angle_{c(t_j)}(\xi,\eta))=\angle_{c(t_j)}(\xi,\eta).$$
Therefore by Proposition \ref{BH9.8},
$$\angle(\xi,\eta)\geq \limsup_{j\to\infty}\angle_{q_j}(\xi,\eta)\geq\liminf_{j\to\infty}\angle_{q_j}(\xi,\eta)\geq \liminf_{j\to\infty}\angle_{c(t_j)}(\xi,\eta)=\angle(\xi,\eta),$$
which implies that
\begin{align}\label{alt computation of angle}
\angle(\xi,\eta)= \lim_{j\to\infty}\angle_{q_j}(\xi,\eta).
\end{align}
\begin{figure}[h]
	\centering
	\includegraphics[width=4in]{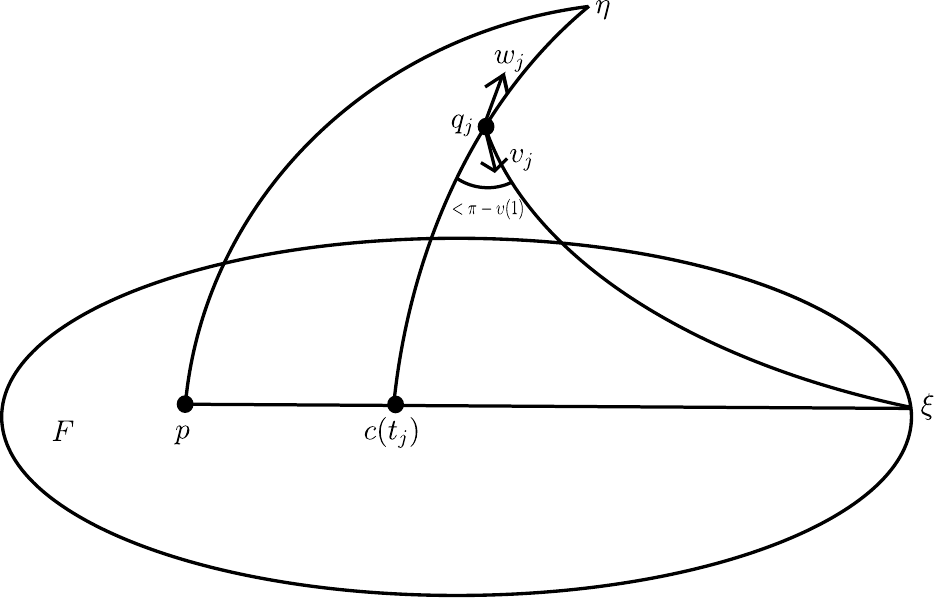}
	\caption{ \label{lem3.8}}
\end{figure}

We denote by $v_j,w_j\in T_{q_j}X$ the unit vector at $q_j$ pointing towards $\xi,\eta$ respectively. Then by compactness of $N,M$ and the assumption that $d(q_j,F)=1$ for any $j\geq 1$, up to a subsequence, there exist $\seq{\gamma_j}\subset \mathrm{Stab}_\Gamma(F)$ such that
$$\begin{cases}
\displaystyle \gamma_jq_j\to p_0\in X\\
\displaystyle \gamma_jv_j\to v\in T_{p_0}X\\
\displaystyle \gamma_jw_j\to w\in T_{p_0}X\\
\displaystyle \gamma_j\xi\to\xi_0,~\gamma_j\eta\to \eta_0\mathrm{~w.r.t.~the~cone~topology}
\end{cases}\quad\mathrm{~as~} j\to\infty.$$
In particular, by the fact that $\del F$ is closed with respect to the cone topology, $\xi_0\in\del F$. By \eqref{eqn:min dist exists}, the fourth assertion in Proposition \ref{BH9.2and9.5} and the second assertion in Proposition \ref{BH9.21}, we have
$$\angle(\xi_0,\eta_0)\leq \lim_{j\to\infty} \angle(\gamma_j\xi,\gamma_j\eta)=\angle(\xi,\eta)=d_T(\xi,\eta)=d_T(\eta,F)<\pi.$$
On the other hand, by the second assertion in Proposition \ref{BH9.2and9.5} and \eqref{alt computation of angle},
$$\angle(\xi,\eta)=\lim_{j\to\infty}\angle_{{q_j}}(\xi,\eta)=\lim_{j\to\infty}\angle_{\gamma_j{q_j}}(\gamma_j\xi,\gamma_j\eta)\leq \angle_{p_0}(\xi_0,\eta_0)\leq \angle(\xi_0,\eta_0).$$
Therefore $\angle(\xi_0,\eta_0)=\angle(\xi,\eta)=\angle_{p_0}(\xi_0,\eta_0)<\pi/2$. Hence by \cite[9.9 Corollary]{BridsonHaefliger99}, the geodesic rays $[p_0,\xi_0]$ and $[p_0,\eta_0]$ span a flat sector.
By Lemma \ref{flat sector lem} and the fact that $\xi_0\in\del F$, we have $\eta_0\in\del F$. However, for any $\zeta\in\del F$ and any $j\in\ZZ_+$, by Lemma \ref{res angle lem} and the first assertion in Proposition \ref{BH9.2and9.5},
$$\angle_{\gamma_jq_j}(\gamma_j\eta,\zeta)=\pi-\angle_{\gamma_jq_j}(\gamma_j c(t_j),\zeta)\geq \upsilon(d(\gamma_jq_j, F))=\upsilon(1)>0.$$
By the second assertion in Proposition \ref{BH9.2and9.5}, we have
$$\angle_{p_0}(\eta_0,\zeta)\geq \limsup_{j\to\infty}\angle_{\gamma_jq_j}(\gamma_j\eta,\zeta)\geq \upsilon(1)>0.$$
The arbitrariness of $\zeta\in\del F$ implies that $\eta_0\not\in\del F$. This contradicts $\eta_0\in\del F$.
\end{proof}

\begin{lemma}\label{hyperbolicity of right triangles}
There exists some $\delta>0$ such that for any $p\in F$, $q\in X\setminus F$ and $x=\Proj_F(q)$, $d(x,[p,q])<\delta$. As a corollary, the geodesic triangle with vertices $x,p,q$ is $\delta$-thin.
\end{lemma}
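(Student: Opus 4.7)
The plan is to argue by contradiction and derive a violation of the disconnection of $\partial F$ from $\partial X\setminus \partial F$ in the Tits boundary (Lemma \ref{isolated Tits bdry}) via Hindawi's proposition. Suppose no such $\delta$ exists. Then we can find sequences $p_j\in F$, $q_j\in X\setminus F$, $x_j=\Proj_F(q_j)$ with $r_j:=d(x_j,[p_j,q_j])\to\infty$. The trivial bounds $r_j\le d(x_j,p_j)$ and $r_j\le d(x_j,q_j)$ force $a_j:=d(x_j,p_j)\to\infty$ and $b_j:=d(x_j,q_j)\to\infty$. Since $N$ is closed, $\mathrm{Stab}_\Gamma(F)$ acts cocompactly on $F$, so I can choose $\gamma_j\in \mathrm{Stab}_\Gamma(F)$ with $\gamma_j x_j$ staying in a compact subset of $F$.

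After passing to a subsequence, assume $\gamma_j x_j\to x_\infty\in F$, while $\gamma_j p_j\to \xi_p$ and $\gamma_j q_j\to \xi_q$ in the cone topology, and the initial unit vectors $v_{q,j}$ of $[\gamma_j x_j,\gamma_j q_j]$ converge to some $v_q\in T^1_{x_\infty}X$ with $v_q\perp T_{x_\infty} F$ (using that $\gamma_j x_j=\Proj_F(\gamma_j q_j)$). Since $\gamma_j p_j\in F$ and $d(x_\infty,\gamma_j p_j)\to\infty$, the limit $\xi_p$ lies in $\partial F$. For $\xi_q$, observe that $\xi_q$ is the endpoint of the ray $c(t):=\exp_{x_\infty}(tv_q)$; because $c$ leaves $F$ orthogonally, the convex function $t\mapsto d(c(t),F)$ has right-derivative $1$ at $0$, so $d(c(t),F)\ge t\to\infty$. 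If $\xi_q\in\partial F$, Lemma \ref{geo rays asymp to flats} would instead give $d(c(t),F)\to 0$, a contradiction. Thus $\xi_q\in \partial X\setminus\partial F$.

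Now the distance $d(x_\infty,[\gamma_j p_j,\gamma_j q_j])\ge r_j-d(x_\infty,\gamma_j x_j)\to\infty$, so Proposition \ref{Hindawi} with basepoint $x_\infty$ shows that $\xi_p$ and $\xi_q$ are connected in $(\partial X,d_T)$, contradicting Lemma \ref{isolated Tits bdry}. The main obstacle I anticipate is ensuring the perpendicularity of the limiting direction $v_q$ is preserved under the compactness extraction, which should follow cleanly from smoothness and continuity of the projection at the finite point $x_\infty$. For the corollary, let $y\in[p,q]$ be a point with $d(x,y)<\delta$. Convexity of CAT(0) distance functions along geodesics gives $d(\cdot,[p,q])<\delta$ on $[x,p]$ and $[x,q]$ (value $0$ at $p,q$ and $<\delta$ at $x$), while $d(\cdot,[x,p])<\delta$ on $[p,y]$ and $d(\cdot,[x,q])<\delta$ on $[y,q]$ (value $0$ at $p,q$ and $\le d(y,x)<\delta$ at $y$), establishing $\delta$-thinness of the triangle $\triangle(x,p,q)$.
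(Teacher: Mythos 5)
Your proof is correct and follows essentially the same strategy as the paper's: argue by contradiction, use cocompactness of $\mathrm{Stab}_\Gamma(F)$ to extract limits $x_\infty\in F$, $\xi_p\in\partial F$, $\xi_q\in\partial X\setminus\partial F$, and then conclude with Proposition~\ref{Hindawi} together with Lemma~\ref{isolated Tits bdry}. The only cosmetic difference is how you rule out $\xi_q\in\partial F$ — via the linear escape rate $d(c(t),F)\ge t$ along the orthogonal limit ray combined with Lemma~\ref{geo rays asymp to flats}, rather than the paper's direct use of upper semicontinuity of angles (Proposition~\ref{BH9.2and9.5}(ii)) to get $\angle_{x_\infty}(p',\xi_q)\ge\pi/2$ for all $p'\in F$ — and both variants are equally valid.
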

\begin{proof}
If not, there exist $\seq{p_j}\subset F$, $\seq{q_j}\subset X\setminus F$, $\seq{x_j}\subset F$ such that $x_j=\Proj_F(q_j)$ and $d(x_j,[p_j,q_j])\to\infty$ as $j\to\infty$. By compactness of $N$ and $M$, up to a subsequence, there exists $\seq{\gamma_j}\subset \mathrm{Stab}_F(\Gamma)$ such that $\gamma_jx_j\to x\in F$, $\gamma_jp_j\to\xi_p\in\del F$ and $\gamma_jq_j\to\xi_q\in \del X$ as $j\to\infty$. For simplicity, we identify $x_j,p_j,q_j$ with $\gamma_jx_j,\gamma_jp_j,\gamma_jq_j$ for any $j>0$. Since $[q_j,x_j]$ is perpendicular to $F$, by the second assertion of Proposition \ref{BH9.2and9.5}, for any $p'\neq x_j, x$ in $F$,
$$\angle_x(p',\xi_q)\geq\limsup_{j\to\infty}\angle_{x_j}(p',q_j)\geq \frac{\pi}{2}.$$
Hence $\xi_q\not\in\del F$. By Proposition \ref{Hindawi}, $\xi_p$ is connected to $\xi_q$ in the Tits boundary, which contradicts Lemma \ref{isolated Tits bdry}.
\end{proof}

For any $\gamma\in\Gamma$, it is easy to see that $\gamma F$ divides $X$ into 2 geodesically convex open subsets which are connected components of $X\setminus \gamma F$. We say $p,q\in X\setminus \gamma F$ are \emph{on the same side of $\gamma F$} if $[p,q]\ints \gamma F=\emptyset$. Otherwise, we say $p,q$ are \emph{on opposite sides of $\gamma F$}.

\begin{lemma}\label{key hyp lem}
Let $q_1,q_2\in X\setminus F$. Denoted by $r_j=\Proj_F(q_j)$, j=1,2.
\begin{enumerate}
\item[\hypertarget{hyp-1}{(1).}] For any $\epsilon>0$, there exist some $R_1(\epsilon)>0$ such that if $d(r_1,r_2)\geq R_1(\epsilon)$, then $d([q_1,q_2],F)\leq \epsilon$. In other words, if $d([q_1,q_2],F)> \epsilon$,  then $d(r_1,r_2)< R_1(\epsilon)$;
\item[\hypertarget{hyp-2}{(2).}] If we assume in addition that $q_1,q_2$ are on the same side of $F$, for any $r>0$, $R>0$, there exists some $c_1(r,R)>0$ such that if $d(q_j,F)\geq r$, $j=1,2$ and $d(r_1,r_2)\leq R$, then $d([q_1,q_2],F)\geq c_1(r,R)$.
\end{enumerate}
\end{lemma}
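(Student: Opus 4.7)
Both parts will be proved by contradiction and compactness, exploiting the cocompact action of $\mathrm{Stab}_\Gamma(F)$ on $F$ together with three already-established tools: the $\delta$-thinness of right triangles from Lemma \ref{hyperbolicity of right triangles}, the asymptotic vanishing of distance to $F$ from Lemma \ref{geo rays asymp to flats}, and the Tits-disconnection from Lemma \ref{isolated Tits bdry}.

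For part (1), the case where $q_1, q_2$ lie on opposite sides of $F$ is immediate since $[q_1,q_2]$ then meets $F$. So assume they lie on the same side and suppose, for contradiction, that there are sequences $q_j^{(n)}$ with $d(r_1^{(n)}, r_2^{(n)})\to\infty$ while $d([q_1^{(n)},q_2^{(n)}], F)\geq\epsilon_0>0$. Let $p_n$ realize the minimum distance on the segment and $s_n=\Proj_F(p_n)$; translate by $\mathrm{Stab}_\Gamma(F)$ so that $s_n$ stays in a compact subset of $F$, whence WLOG $d(s_n, r_1^{(n)})\to\infty$. Applying Lemma \ref{hyperbolicity of right triangles} to the triangle $s_n, r_1^{(n)}, q_1^{(n)}$ yields $y_1^{(n)}\in[s_n, q_1^{(n)}]$ with $d(y_1^{(n)}, r_1^{(n)})<\delta$; as $d(s_n, y_1^{(n)})\to\infty$ with $s_n$ bounded, $q_1^{(n)}$ must escape to a boundary point $\xi_{q_1} = \lim_n r_1^{(n)} \in \del F$ in the cone topology. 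If $d(p_n,F)$ stays bounded, then $p_n$ remains in a bounded set and a subsequential limit of the segments produces a geodesic ray from $p=\lim_n p_n$ with endpoint at infinity $\xi_{q_1}\in\del F$ and $d(\cdot, F)\geq\epsilon_0$ along the ray, contradicting Lemma \ref{geo rays asymp to flats}. If $d(p_n,F)\to\infty$, then $d(s, [q_1^{(n)}, q_2^{(n)}])\geq d(p_n,F)\to\infty$ with $s=\lim_n s_n\in F$, so Proposition \ref{Hindawi} combined with Lemma \ref{isolated Tits bdry} forces $\xi_{q_2}\in\del F$; a case split on whether $d(s_n, r_2^{(n)})$ diverges then yields a contradiction, either via a perpendicular ray whose endpoint at infinity cannot lie in $\del F$ (perpendicularity plus Lemma \ref{geo rays asymp to flats}) or via Hausdorff-on-compacts convergence of the segments to a bi-infinite geodesic contained in $F$. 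I expect this unbounded sub-case to be the main technical obstacle.

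For part (2), suppose for contradiction $q_j^{(n)}$ on the same side of $F$ satisfy $d(q_j^{(n)}, F) \geq r$, $d(r_1^{(n)}, r_2^{(n)}) \leq R$, and $d_n := d([q_1^{(n)}, q_2^{(n)}], F) \to 0$. With $p_n$ the minimizer, Lemma \ref{hyperbolicity of right triangles} applied to the triangles $p_n, r_j^{(n)}, q_j^{(n)}$ yields $y_j^{(n)}\in[p_n, q_j^{(n)}]$ with $d(y_j^{(n)}, r_j^{(n)})<\delta$; combined with $d(r_1^{(n)}, r_2^{(n)})\leq R$ and the fact that $p_n$ separates $y_1^{(n)}$ from $y_2^{(n)}$ on the segment, this forces $d(p_n, r_j^{(n)})\leq R+3\delta$. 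Translating by $\mathrm{Stab}_\Gamma(F)$ to keep $r_1^{(n)}$ bounded then keeps $p_n$ bounded, and since $d_n \to 0$, $p_n \to p \in F$ along a subsequence. Let $v_n\in T^1_{p_n}X$ be the unit tangent to the segment; on a subsequence $v_n\to v\in T^1_pX$, giving a geodesic through $p$ tangent to $v$. Because each segment lies in one convex component of $X\setminus F$ while $p\in F$, this geodesic must be tangent to $F$ at $p$, hence contained in $F$ by total geodesy. The hypothesis $d(q_j^{(n)}, p_n)\geq r - d_n \geq r/2$ for large $n$ ensures the limit extends at least $r/2$ on each side of $p$. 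If $q_j^{(n)}$ admits a bounded subsequence, then its limit lies in $F$ along the extended geodesic, contradicting $d(q_j^{(n)}, F)\geq r$; otherwise $q_j^{(n)}\to\xi_j\in\del F$, and the perpendicular segment $[r_j^{(n)}, q_j^{(n)}]$ of length $d(q_j^{(n)}, F)\to\infty$ converges to a ray perpendicular to $F$ at some $r_j^\infty$ with endpoint $\xi_j\in\del F$. But perpendicularity forces $d(\cdot, F) = t$ along this ray, contradicting Lemma \ref{geo rays asymp to flats}, which demands $d\to 0$.
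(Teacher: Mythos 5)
Your own flag on part (1) is exactly where the gap is. When $d(p_n,F)\to\infty$, the segments $[q_1^{(n)},q_2^{(n)}]$ escape every compact set near the bounded point $s_n$ (since $d(s_n,p_n)\geq d(p_n,F)\to\infty$), so there is no Hausdorff-on-compacts limit and the "bi-infinite geodesic contained in $F$" you invoke does not exist. Concluding $\xi_{q_1},\xi_{q_2}\in\partial F$ via Proposition \ref{Hindawi} and Lemma \ref{isolated Tits bdry} produces no contradiction by itself, because $F$ need not be flat and there is no hyperbolicity controlling a segment far from $F$. The paper sidesteps the dichotomy with a truncation device, and in doing so never needs $q_2$, $\xi_2$, or Hindawi at all. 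Fix $C<\epsilon$. Let $q_j$ be the closest point of the segment to $F$, $r_j=\Proj_F(q_j)$ (bounded after translation), and $q_j^C\in[r_j,q_j]$ with $d(r_j,q_j^C)=C$; then $q_j^C$ is bounded. Take $p_{1j}\in[q_{1j},r_j]$ with $d(p_{1j},r_{1j})<\delta$ from Lemma \ref{hyperbolicity of right triangles}, and use CAT(0) convexity (comparing $[q_{1j},r_j]$ with $[q_{1j},q_j^C]$) to slide it to $p_{1j}^C\in[q_{1j},q_j^C]$ with $d(p_{1j},p_{1j}^C)\leq C$. The auxiliary segments $[q_j^C,p_{1j}^C]$ have one bounded and one escaping endpoint, lie in the $(\delta+C)$-neighborhood of $F$, and converge to a ray $[q^C,\xi_1]$; hence $\xi_1\in\partial F$ and $d(\cdot,F)\to 0$ along the ray by Lemma \ref{geo rays asymp to flats}. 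But since $q_j$ minimizes distance to $F$, $\angle_{q_j}(q_{1j},r_j)\geq\pi/2$, and nonpositive curvature propagates this to $\angle_{q_j^C}(q_{1j},r_j)\geq\pi/2$, which makes $d(\cdot,F)$ nondecreasing on $[q_j^C,q_{1j}]$ and forces $d([q^C,\xi_1],F)=C>0$, a contradiction. This is the idea you are missing: truncate at a fixed small distance from $F$ so the bounded endpoint survives, rather than splitting on whether $d(p_n,F)$ blows up.

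For part (2) your soft compactness argument is in the right spirit but differs from the paper, which — having part (1) available — introduces auxiliary points $a_{lj}\in[q_{lj},q_j]$ at distance $2\epsilon$ from $F$, shows $d(a_{1j},a_{2j})$ becomes large (because the limit tangent lies in $TF$), and chains part (1) and the triangle inequality through the projections of $a_{lj}$ to get $d(r_{1j},r_{2j})>R+1$, the desired quantitative contradiction. Two small issues in your version: Lemma \ref{hyperbolicity of right triangles} requires its apex to lie on $F$, whereas your $p_n$ only satisfies $d(p_n,F)\to 0$; apply it at $s_n=\Proj_F(p_n)$ and pass to $p_n$ with an $O(d_n)$ error. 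Also, the step forcing the limit tangent $v$ into $T_pF$ is cleanest via the paper's observation that $v_n\perp w_n$, $w_n\to w$ normal to $F$, and $\codim F=1$, rather than the one-sidedness heuristic, which as stated does not directly exclude a transversal tangency at $p$.
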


\begin{proof}
\begin{enumerate}
\item[(1).] This assertion is trivial if $q_1,q_2$ are on different sides of $F$. Therefore we assume that they are on the same side of $F$.

\begin{figure}[h]
	\centering
	\includegraphics[width=4in]{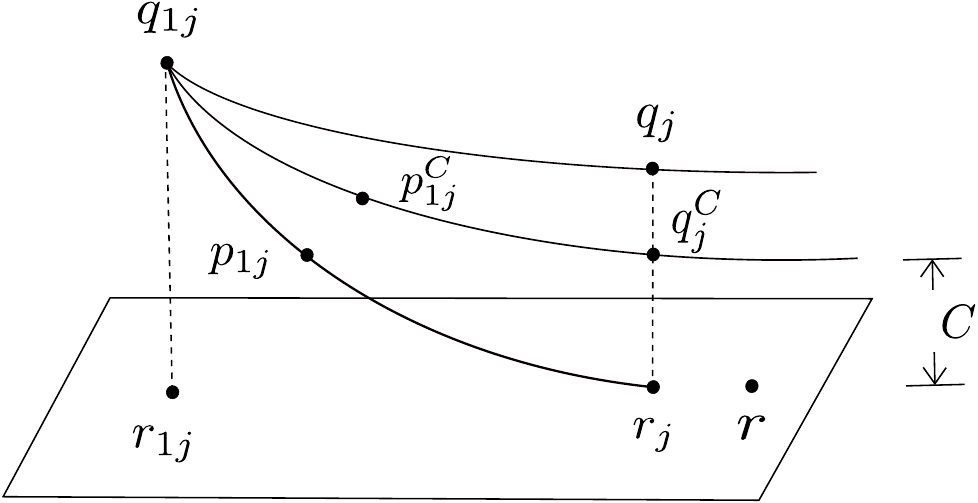}
	\caption{ \label{lem3.10-1}}
\end{figure}

If the statement is not true, there exist $\epsilon>0$ and $\seq{q_{lj}}$, $l=1,2$, with $r_{lj}=\Proj_F(q_{lj})$, such that $d([q_{1j},q_{2j}],F)>\epsilon$ and $d(r_{1j},r_{2j})\to\infty$ as $j\to\infty$. (See Figure \ref{lem3.10-1}.) Let $q_j\in[q_{1j},q_{2j}]$ be the closest point on this geodesic segment to $F$ and $r_j:=\Proj_F(q_j)$. By compactness of $N$ and $M$, we can assume WLOG that $r_j\to r\in F$, $d(r_{1j},r_j)\to \infty$ and $q_{1j}\to\xi_1\in\del X$ as $j\to \infty$. (To be specific, there exist $\seq{\gamma_j}\subset\mathrm{Stab}_\Gamma(F)$ such that, up to a subsequence, $\gamma_jr_j\to r\in F$. WLOG, we identify $\gamma_jr_j$ with $r_j$. Since $d(r_{1j},r_{2j})\to\infty$ as $j\to\infty$, up to a subsequence, we can assume WLOG that $ d(r_{1j},r_{j})\to\infty$ as $j\to\infty$. By compactness of $\overline{X}=X\union\del X$, up to a subsequence, $q_{1j}\to\xi_1\in\overline{X}$ as $j\to \infty$. Since $r_{1j}=\Proj_F(q_{1j})$  and $d(r_{1j},r)\geq d(r_{1j},r_{j})-d(r,r_{j})\to\infty$, we have $d(q_{1j},r)\geq d(r_{1j},r)\to\infty$ and hence $\xi_1\not\in X$. Therefore $\xi_1\in\del X$.) By Lemma \ref{hyperbolicity of right triangles}, there exists some $p_{1j}\in[q_{1j}, r_j]$ such that $d(r_{1j},p_{1j})<\delta$. For any $C>0$ fixed such that $C<\epsilon$, let $q_j^C\in[r_j,q_j]$ such that $d(r_j,q_j^C)=C$. Then by convexity of distance functions in nonpositive curvature, we can find $p_{1j}^C\in[q_{1j},q_j^C]$ such that $d(p_{1j},p_{1j}^C)\leq d(r_j,q_j^C)=C$.  Notice that
$$d(r_{1j},\Proj_F(p_{1j}^C))=d(\Proj_F(r_{1j}),\Proj_F(p_{1j}^C))\leq d(r_{1j},p_{1j}^C)\leq d(r_{1j},p_{1j})+d(p_{1j},p_{1j}^C)\leq \delta+C.$$
Therefore
$$d(p_{1j}^C,q_j^C)\geq d(\Proj_F(p_{1j}^C),\Proj_F(q_j^C))=d(\Proj_F(p_{1j}^C),r_j)\geq d(r_{1j},r_j)-d(r_{1j},\Proj_F(p_{1j}^C))\to\infty,$$
as $j\to\infty$. Since $d(q_j^C,F)=C$ and
$$d(p_{1j}^C,F)\leq d(p_{1j}^C,r_{1j})\leq d(p_{1j}^C,p_{1j})+d(p_{1j},r_{1j})\leq \delta+C,$$
up to a subsequence, $q_j^C$ converge to $q^C$ with $d(q^C,F)=\lim_{j\to\infty} d(q_j^C,F)=C$ (by the fact that $r_j\to r$ as $j\to\infty$ and $d(q_j^C,r_j)=C$), and the geodesic segment $[q_j^C,p_{1j}^C]$ converge to a geodesic ray $[q^C,\xi_1]$ which lies within the $(\delta+C)$-neighborhood of $F$, as $j\to\infty$. (This is because $p_{1j}^C\in[q_j^C,q_{1j}]$, $d(p_{1j}^C,q_j^C)\to\infty$ and $q_{1j}\to\xi_1$ as $j\to\infty$). In particular, we have $\xi_1\in \del F$. Hence by Lemma \ref{geo rays asymp to flats}, $d([q^C,\xi_1],F)=0$.

On the other hand, since $q_j$ is the closest point on $[q_{1j},q_{2j}]$ to $F$, $\angle_{q_j}(q_{1j},r_{j})\geq\pi/2$. Hence
$$\angle_{q_j^C}(q_{1j},r_j)\geq \angle_{q_j}(q_{1j},r_{j})+\angle_{q_{1j}}(q_j,q_j^C)\geq \pi/2.$$
Therefore $d([q_j^C,p_{1j}^C],F)=d(q_j^C,F)=C$ for any $j>0$. This implies that $d([q^C,\xi_1],F)=C>0$, which contradicts $d([q^C,\xi_1],F)=0$.
\item[(2).] If not, there exist $r,R>0$ and sequences $\seq{q_{lj}}$ with $r_{lj}=\Proj_F(q_{lj})$, $j=1,2$, such that
$$d(q_{lj},F)\geq r,~j=1,2,~ d(r_{1j} r_{2j})\leq R~\mathrm{and}~d([q_{1j},q_{2j}],F)\to 0.$$
Let $q_j\in [q_{1j},q_{2j}]$ be the closest point on this geodesic segment to $F$ and $v_j\in T_{q_j}X$ be a unit vector tangent to $[q_{1j},q_{2j}]$. (See Figure \ref{lem3.10-2}.) By the fact that $d(q_{lj},F)\geq r$ for any $l=1,2$, we assume WLOG that $q_j$ is in the interior of the geodesic segment $[q_{1j},q_{2j}]$. Denoted by $r_j:=\Proj_F(q_j)$ and $w_j\in T_{q_j}X$ a unit vector tangent to $[q_j,r_j]$. In particular $v_j\perp w_j$ for any $j>0$. By compactness of $N$ and $M$, we can assume WLOG that $q_j\to q\in F$, $v_j\to v\in T_qX$, $w_j\to w\in T_qX$ as $j\to \infty$. (To be specific, there exist $\seq{\gamma_j}\subset \mathrm{Stab}_F(\Gamma)$ such that $\gamma_jq_j\to q\in X$. The fact that $q\in F$ follows from $d([q_{1j},q_{2j}],F)=d(q_j,F)\to 0$ as $j\to \infty$. For simplicity, we identify $\gamma_jq_j$ with $q_j$. Then, up to a subsequence, $v_j\to v\in T_qX$, $w_j\to w\in T_qX$ as $j\to \infty$.) By definition of $w_j$, $w\perp T_qF$. Therefore by the fact that $F$ has codimension equal to one, $v\in T_qF$.

\begin{figure}[h]
	\centering
	\includegraphics[width=4in]{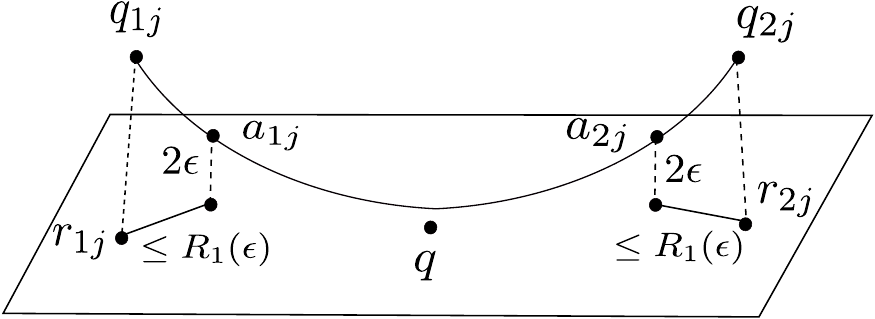}
	\caption{ \label{lem3.10-2}}
\end{figure}

Let $0<\epsilon\ll r/2$. For any $l=1,2$ and any $j$, we denote by $a_{lj}\in[q_{lj},q_j]$ the point such that $d(a_{1j},F)=d(a_{2j},F)=2\epsilon$. Since $v_j\to v\in TF$ as $j\to\infty$, by the fact that $M$ is compact, we can choose $K\gg 1$ such that
$$d(a_{1j},a_{2j})>2R_1(\epsilon)+R+4\epsilon+1,\quad\forall j>K.$$
Then for any $j>K$,
\begin{align*}
 &d(r_{1j},r_{2j})&\\
 \geq& d(\Proj_F(a_{1j}),\Proj_F(a_{2j}))-d(r_{1j},\Proj_F(a_{1j}))-d(r_{2j},\Proj_F(a_{2j}))&(\mathrm{Triangle~ineq.})\\
  \geq& d(\Proj_F(a_{1j}),\Proj_F(a_{2j}))-R_1(\epsilon)-R_1(\epsilon)&(\mathrm{Lemma~\ref{key hyp lem}, \hyperlink{hyp-1}{(1)}.})\\
\geq &d(a_{1j},a_{2j})-d(a_{1j},\Proj_F(a_{1j}))-d(a_{2j},\Proj_F(a_{2j}))-2R_1(\epsilon)&(\mathrm{Triangle~ineq.})\\
\geq &d(a_{1j},a_{2j})-4\epsilon-2R_1(\epsilon)>R+1.
\end{align*}
This contradicts the assumption that $d(r_{1j}, r_{2j})\leq R$ for all $j\geq 1$. \qedhere
\end{enumerate}
\end{proof}
\begin{rmk}
By applying isometries in $\Gamma$, all results in this section hold for any $\gamma F$, $\gamma\in \Gamma$.
\end{rmk}

\section{Barycentric simplices and almost separation}\label{sec:bar simplex and almost sep}
\subsection{Barycentric simplices and their properties}
In this subsection, we introduce barycentric simplices and some of its properties. In particular, we would like to verify the last two bullet points in (S2) in \hyperlink{idea-and-plan}{the outline and plan of the proof}.

We first recall an elementary lemma.
\begin{lemma}[{\cite[Lemma 20, p.165]{Peterson}}]\label{convexity of energy}
For any Hadamard manifold $(M',g)$ and any $x\in M'$, the function $E_{x;M'}:M'\to\RR$ defined by $E_{x;M'}(y)=(d_{M'}(x,y))^2/2$ is a smooth function. Moreover, it is strictly convex with $\mathrm{Hess}_g E_{x;M'}\geq g$.
\end{lemma}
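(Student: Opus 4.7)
The plan is to establish smoothness via the Cartan--Hadamard diffeomorphism and then obtain the Hessian inequality by combining the second variation of energy along a geodesic variation with a flat-space comparison using parallel transport.

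For smoothness, I would invoke Cartan--Hadamard to say that $\exp_x\colon T_xM'\to M'$ is a diffeomorphism, whence
$$E_{x;M'}(y)=\tfrac{1}{2}\bigl|\exp_x^{-1}(y)\bigr|^2$$
is the composition of the smooth map $\exp_x^{-1}$ with the smooth squared norm on $T_xM'$, hence smooth everywhere on $M'$ (including at $y=x$, where only the non-squared distance fails to be smooth).

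For the Hessian bound, fix $y\in M'$ and $v\in T_yM'$, and consider the geodesic $\sigma(s)=\exp_y(sv)$ together with the geodesic variation $c_s\colon[0,1]\to M'$ with $c_s(0)=x$ and $c_s(1)=\sigma(s)$. The variation field $J(t)=\partial_sc_s(t)|_{s=0}$ is then the Jacobi field along $c_0$ with $J(0)=0$ and $J(1)=v$. Setting $E(c_s)=\tfrac{1}{2}\int_0^1|c_s'|^2\,dt$, one has $E(c_s)=\tfrac{1}{2}d(x,\sigma(s))^2=E_{x;M'}(\sigma(s))$, so $\mathrm{Hess}_gE_{x;M'}(v,v)=E''(0)$. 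Because $c_s(0)\equiv x$ and $\sigma$ is a geodesic (so $\nabla_s\sigma'|_{s=0}=0$), the boundary terms in the second variation of energy vanish, giving
$$\mathrm{Hess}_gE_{x;M'}(v,v)=\int_0^1\Bigl(|\nabla_tJ|^2-\langle R(J,c_0')c_0',J\rangle\Bigr)\,dt.$$
Nonpositive sectional curvature yields $\langle R(J,c_0')c_0',J\rangle\le0$, so the proof reduces to showing $\int_0^1|\nabla_tJ|^2\,dt\ge|v|^2$.

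For this last step I would compare $J$ with the flat test field $V(t)=t\,\bar v(t)$, where $\bar v$ is the parallel vector field along $c_0$ with $\bar v(1)=v$. Then $V(0)=0$, $V(1)=v$, and $\nabla_tV=\bar v$ is parallel with $|\nabla_tV|\equiv|v|$. Setting $W=J-V$ (so $W(0)=W(1)=0$) and expanding $|\nabla_tJ|^2=|\nabla_tV+\nabla_tW|^2$, the cross term is
$$2\int_0^1\langle\bar v,\nabla_tW\rangle\,dt=2\int_0^1\tfrac{d}{dt}\langle\bar v,W\rangle\,dt=2\langle\bar v,W\rangle\Bigr|_0^1=0,$$
so $\int_0^1|\nabla_tJ|^2\,dt=|v|^2+\int_0^1|\nabla_tW|^2\,dt\ge|v|^2$, as desired. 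The main technical point to watch is the vanishing of the boundary terms in the second variation formula, which needs both endpoints to move along geodesics: trivially at $t=0$ since $x$ is fixed, and at $t=1$ precisely because $\sigma$ was chosen to be a geodesic rather than an arbitrary curve through $y$ with velocity $v$. The flat comparison via $\bar v$ is then an elementary integration by parts.
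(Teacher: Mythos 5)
Your proof is correct and is essentially the standard argument (which is also what the cited reference uses): smoothness from the Cartan--Hadamard diffeomorphism, the Hessian computed by second variation of energy along a geodesic two-parameter family with fixed initial endpoint and geodesic terminal curve, nonpositive curvature to drop the curvature term, and the parallel comparison field $t\,\bar v(t)$ to bound $\int_0^1|\nabla_t J|^2\,dt$ from below by $|v|^2$. The boundary-term analysis is the one genuinely delicate step and you identify and handle it correctly.
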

\begin{rmk}
For simplicity, we write $E_p:=E_{p;X}$ for any $p\in X$. One can clearly see that for any $p,q\in X$ and any $\gamma\in \Gamma$, we have $E_{\gamma p}(\gamma q)=E_{p}(q)$.
\end{rmk}
\begin{dfn}[Singular simplices]
For any $k\in\ZZ_{\geq 0}$, a \emph{singular simplex in $X$} is a continuous map $\sigma:\Delta^k_{\RR^{k+1}}\to X$, where
$$\Delta^k_{\RR^{k+1}}=\left\{(t_0,...,t_k)\in\RR^{k+1}\left|0\leq t_0,...,t_k\leq 1, \sum_{j=0}^kt_j=1\right.\right\}.$$
Points of the form $\sigma(\delta_{0j},...,\delta_{kj})$ with $0\leq j\leq k$ are called \emph{vertices} of $\sigma$, where $\delta_{ij}=0$ when $i\neq j$ and $\delta_{ij}=1$ when $i=j$.
\end{dfn}
\begin{definition}[Barycentric simplex]\label{bar simplex}
For any $(k+1)$-tuple of points $(p_0,...,p_k)$, we defined the \emph{$k$-dimensional barycentric simplex} $\Db^k(p_0,...,p_k)$ as a singular simplex such that $\Db^k(p_0,...,p_k)(a_0,...,a_k)$ is the unique minimum of the function $E_{p_0,...,p_k}(x;a_0,...,a_k):=\sum_{j=0}^ka_jE_{p_j}(x)$ for any $(a_0,...,a_k)\in\Delta_{\RR^{k+1}}^k$.
\end{definition}
\begin{rmk}
Barycentric simplices naturally satisfy the following properties:
\begin{enumerate}
\item[(1).] $\Db^k(p_0,...,p_k)(a_0,...,a_k)$ is the unique point in $X$ such that $DE_{p_0,...,p_k}(x;a_0,...,a_k)$ vanishes for any $p_0,...,p_k$ and $(a_0,...,a_k)$.
\item[(2).] $\Db^k(\cdot,...,\cdot)(\cdot,...,\cdot)$ is a smooth function on $X^{k+1}\times \mathrm{int}(\Delta_{\RR^{k+1}}^k)$. Also, $\Db(\cdot,...,\cdot)(a_0,...,a_k)$ is a smooth function on $X^{k+1}$ for any $(a_0,...,a_k)\in\Delta_{\RR^{k+1}}^k$ fixed.
\item[(3).] ($\Gamma$-equivariance) For any $\gamma\in \Gamma$, any $p_0,...,p_k\in X$ and any $(a_0,...,a_k)\in \Delta_{\RR^{k+1}}^k$, we have $\gamma\Db^k(p_0,...,p_k)(a_0,...,a_k)=\Db^k(\gamma p_0,...,\gamma p_k)(a_0,...,a_k)$.
\item[(4).] For any $k\geq 1$, $\del_k^X\Db^k(p_0,...,p_k)$ is a linear combination of $(k-1)$-dimensional barycentric simplices, where $\del_k^X$ is introduced in Definition \ref{singular chains in X}.
\end{enumerate}
When $k=1$, one can easily verify that the image of $\Db^1(p_0,p_1)$ is the geodesic segment $[p_0,p_1]$. Moreover, $\Db^1(p_0,p_1)$ is injective whenever $p_0\neq p_1$.
\end{rmk}
For simplicity, let $\AA_k:=\{(a_0,...,a_k)\in\RR^{k+1}|a_0+...+a_k=1\}$ for any $k\geq 0$. In particular, $\Delta^k_{\RR^{k+1}}\subset\AA_k$. We first prove that barycentric simplices are smooth simplices in the sense of \cite[p.416]{Lee03}.
\begin{lemma}[Smoothness of barycentric simplices]\label{smoothness of bar simplex}
For any $k\geq 0$ and any $(k+1)$-(ordered) tuple of points $(p_0,...,p_k)$ in $X$, there exists some neighborhood $\cU$ of $\Delta^k_{\RR^{k+1}}\subset\AA_k$ such that the barycentric simplex map $\Db^k(p_0,...,p_k)$ can be smoothly extended onto $\cU$.
\end{lemma}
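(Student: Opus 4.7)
The plan is to apply the implicit function theorem to the gradient equation characterizing the barycentric simplex, and then glue the local extensions using compactness of $\Delta^k_{\RR^{k+1}}$.

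First I would rewrite the defining property variationally. Let $\Phi : X \times \AA_k \to \RR$ be the smooth function $\Phi(x,a_0,\ldots,a_k) = \sum_{j=0}^k a_j E_{p_j}(x)$, and let $F : X \times \AA_k \to T^*X$ be its fiberwise differential $F(x,a) = d_x\Phi(x,a)$. By Definition \ref{bar simplex}, for $a \in \Delta^k_{\RR^{k+1}}$ the point $\Db^k(p_0,\ldots,p_k)(a)$ is the unique zero of $F(\cdot,a)$. Differentiating in $x$ gives $\partial_x F(x,a) = \mathrm{Hess}_x\Phi(x,a) = \sum_j a_j\,\mathrm{Hess}_x E_{p_j}$. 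Lemma \ref{convexity of energy} says each $\mathrm{Hess}\,E_{p_j} \geq g$, so for $a \in \Delta^k_{\RR^{k+1}}$ we have $\partial_x F(x,a) \geq \bigl(\textstyle\sum_j a_j\bigr)g = g$, which is positive definite and in particular invertible.

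The implicit function theorem then provides, for each $a^* \in \Delta^k_{\RR^{k+1}}$, an open neighborhood $V_{a^*} \subset \AA_k$ and a smooth map $\psi_{a^*} : V_{a^*} \to X$ with $\psi_{a^*}(a^*) = \Db^k(p_0,\ldots,p_k)(a^*)$ and $F(\psi_{a^*}(a),a) \equiv 0$. To assemble these into a single smooth extension, I would consider the zero locus $Z := F^{-1}(0) \subset X \times \AA_k$ and the compact set $L := \{(\Db^k(p_0,\ldots,p_k)(a),a) : a \in \Delta^k_{\RR^{k+1}}\} \subset Z$. IFT implies that the projection $\pi : Z \to \AA_k$ is a local diffeomorphism at every point of $L$, while $\pi|_L : L \to \Delta^k_{\RR^{k+1}}$ is a homeomorphism (indeed the inverse of $a \mapsto (\Db^k(a),a)$). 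A standard compactness argument then produces an open neighborhood $\tilde{W}$ of $L$ in $Z$ on which $\pi$ restricts to a diffeomorphism onto an open neighborhood $\cU \subset \AA_k$ of $\Delta^k_{\RR^{k+1}}$. Composing the inverse $\cU \to \tilde{W}$ with projection onto $X$ yields the desired smooth extension of $\Db^k(p_0,\ldots,p_k)$ to $\cU$.

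The main obstacle, as I see it, is ruling out spurious additional zeros of $F(\cdot,a)$ near $\Db^k(a)$ once $a$ leaves $\Delta^k_{\RR^{k+1}}$: when some coefficient $a_j$ becomes negative, $\Phi(\cdot,a)$ is no longer convex, so we lose global uniqueness of critical points. This is why the extension must be built locally by IFT rather than by globally minimizing $\Phi(\cdot,a)$, and why the gluing step requires shrinking $\tilde{W}$ so that $\pi|_{\tilde{W}}$ remains injective. Compactness of $L$ and continuity of $\partial_x F$ together ensure the existence of such a shrunken neighborhood, and on it the local branches $\psi_{a^*}$ automatically coincide by local uniqueness of the implicit function.
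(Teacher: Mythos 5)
Your proof is correct, but it takes a genuinely different route from the paper's. Both arguments hinge on the invertibility of $\partial_x F(x,a) = \sum_j a_j\,\mathrm{Hess}_x E_{p_j} \geq g$ and then apply the implicit function theorem, but the way the local IFT branches are assembled into a single-valued extension differs. The paper constructs the neighborhood $\cU$ explicitly by a priori estimates: using nonpositive curvature it shows that outside a fixed large ball $B_R(p_0)$ the gradients $\mathrm{grad}\,E_{p_i}$ are nearly parallel, so $\sum_j a_j\,\mathrm{grad}\,E_{p_j}$ cannot vanish there when the negative coefficients are small; inside $B_{2R}(p_0)$ an upper Hessian bound shows $\Phi(\cdot,a) = \sum_j a_j E_{p_j}$ remains strictly convex and achieves its unique global minimum in the interior of $B_R(p_0)$ for all $a \in \cU$. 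Thus uniqueness of the critical point is established outright for every $a \in \cU$ simultaneously, and a single application of IFT finishes. You instead dismiss the global-minimization route (reasonably, since it looks dangerous once some $a_j < 0$), apply IFT locally at each $a^*\in\Delta^k_{\RR^{k+1}}$, and glue the local branches by the standard compactness argument on the zero locus $Z = F^{-1}(0)$: shrinking a neighborhood $\tilde W$ of $L$ until $\pi|_{\tilde W}$ is injective, using the facts that $L$ is compact, $\pi|_L$ is a homeomorphism, and $\pi$ is a local diffeomorphism near $L$. This is a correct and somewhat softer argument. It does implicitly use continuity of $a\mapsto\Db^k(p_0,\ldots,p_k)(a)$ on $\Delta^k_{\RR^{k+1}}$ to justify that $L$ is compact and $\pi|_L$ is a homeomorphism, but that follows from uniform coercivity on the compact simplex (for $a\in\Delta^k_{\RR^{k+1}}$ some $a_j\geq 1/(k+1)$) plus uniqueness of the critical point, so the gap is cosmetic. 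The trade-off: the paper's method yields an explicit $\cU$ and shows the extension is still a unique critical point in a fixed ball, which is directly useful elsewhere (e.g.\ Lemma~\ref{Lip bar simplex} reuses the same ball control); your method is shorter and more modular but provides less quantitative information about $\cU$.
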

\begin{proof}
Since $X$ is nonpositively curved, for any $\theta\in (0,\pi/4)$, there exists some $R>(n+1000)(1+\max_{i,j\in\{0,...,k\}}\{d(p_i,p_j)\})$ depending on $\theta$ and $(p_0,...,p_k)$, such that for any $q\in X\setminus B_R(p_0)$, $q\not\in\{p_0,...,p_k\}$ and
\begin{align}\label{no soln outside}
\angle(\mathrm{grad} E_{p_i}(q), \mathrm{grad} E_{p_j}(q))<\theta,~\forall i,j\in\{0,...,k\}.
\end{align}
In particular, there exists some $\theta'\in(0,\min\{0.1,\theta\})$ depending on $\theta$, $R$ and $(p_0,...,p_k)$, such that
\begin{align}\label{convex inside}
\hess E_{p_i}(q)(v,v)\leq (2\theta')^{-1}\|v\|^2,~\forall q\in B_{2R}(p_0),~\forall v\in T_qX~\mathrm{and}~\forall i\in\{0,...,k\}.
\end{align}
Let $\cU:=\{(a_0,...,a_k)\in\AA_k|a_j>- \theta'/(n+1),~\forall j\in\{0,...,k\}\}$. By Lemma \ref{convexity of energy}, \eqref{no soln outside}, \eqref{convex inside} and the lower bound on $R$, the function $E_{p_0,...,p_k}(x;a_0,...,a_k):=\sum_{j=0}^ka_jE_{p_j}(x)$ defined on $\cU$ satisfies the following properties:
\begin{itemize}
\item $E_{p_0,...,p_k}(\cdot;a_0,...,a_k)$ does not admit critical points on $X\setminus B_R(p_0)$ for any $(a_0,...,a_k)\in\cU$ (guaranteed by \eqref{no soln outside}). 
\item $E_{p_0,...,p_k}(\cdot;a_0,...,a_k)$ is strictly convex on $B_R(p_0)$ for any $(a_0,...,a_k)\in\cU$ (guaranteed by \eqref{convex inside} and Lemma \ref{convexity of energy}).
\item $E_{p_0,...,p_k}(\cdot;a_0,...,a_k)$ achieves a unique global minimum in the interior of $B_R(p_0)$ for any $(a_0,...,a_k)\in\cU$ (guaranteed by the fact that $R>(n+1000)(1+\max_{i,j\in\{0,...,k\}}\{d(p_i,p_j)\})$).
\end{itemize}
Hence, by the smoothness of $E_{p_0,...,p_k}$ and the implicit function theorem, the barycentric simplex map $\Db^k(p_0,...,p_k)$ introduced in Definition \ref{bar simplex} can be smoothly extended onto $\cU$.
\end{proof}
\begin{lemma}\label{Lip bar simplex}
For any open balls $B_{R_j}(p_j)\subset X$ centered at $p_j\in X$, $j=0,...,k$, $\Db^k(\cdot,...,\cdot)(\cdot,...,\cdot)$ is Lipschitz on $B_{R_0}(p_0)\times...\times B_{R_k}(p_k)\times \Delta_{\RR^{k+1}}^k$.
\end{lemma}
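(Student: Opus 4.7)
The plan is to apply the implicit function theorem with uniform estimates over the compact product $\overline{B_{R_0}(p_0)}\times\cdots\times\overline{B_{R_k}(p_k)}\times\Delta^k_{\RR^{k+1}}$. Since $X$ is a complete Hadamard manifold, each closed ball $\overline{B_{R_j}(p_j)}$ is compact, so this product is compact.

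First, I would upgrade the argument in Lemma \ref{smoothness of bar simplex} to yield constants that are uniform over the compact product of closed balls. Concretely, I would show there exist single choices $R>0$, $\theta\in(0,\pi/4)$, and $\theta'\in(0,\min\{0.1,\theta\})$ (depending only on the data $(p_0,R_0),\ldots,(p_k,R_k)$) such that, for every $(q_0,\ldots,q_k)\in\overline{B_{R_0}(p_0)}\times\cdots\times\overline{B_{R_k}(p_k)}$: $R>(n+1000)(1+\max_{i,j}d(q_i,q_j))$; the angle between $\mathrm{grad}\,E_{q_i}(x)$ and $\mathrm{grad}\,E_{q_j}(x)$ is less than $\theta$ whenever $x\in X\setminus B_R(p_0)$; and $\mathrm{Hess}\,E_{q_i}(x)(v,v)\leq(2\theta')^{-1}\|v\|^2$ for every $x\in B_{2R}(p_0)$ and $v\in T_xX$. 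Each of these bounds depends continuously on $(q_0,\ldots,q_k)$ and follows from compactness. Setting $\cU:=\{(a_0,\ldots,a_k)\in\AA_k:a_j>-\theta'/(n+1),~\forall j\}$, the argument in Lemma \ref{smoothness of bar simplex} then gives, uniformly, that $E_{q_0,\ldots,q_k}(\cdot;a_0,\ldots,a_k)$ has a unique critical point---a global minimum---in $B_R(p_0)$ for all parameters in the closed product times $\cU$.

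Second, I would view the barycentric simplex as the solution of the smooth equation
\[
F(q_0,\ldots,q_k,a_0,\ldots,a_k;x):=\sum_{j=0}^k a_j\,\mathrm{grad}_xE_{q_j}(x)=0.
\]
By Lemma \ref{convexity of energy}, the partial derivative $D_xF=\sum_j a_j\,\mathrm{Hess}_xE_{q_j}(x)$ satisfies $D_xF\geq(\sum_j a_j)g=g$ whenever $(a_0,\ldots,a_k)\in\Delta^k_{\RR^{k+1}}$, and by continuity this positive-definiteness, together with the upper bound from the first step, persists on a slightly smaller neighborhood $\cU'$ of $\Delta^k_{\RR^{k+1}}$. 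The implicit function theorem then shows that $\Db^k$ is $C^\infty$ jointly in all its arguments on an open set containing $\overline{B_{R_0}(p_0)}\times\cdots\times\overline{B_{R_k}(p_k)}\times\Delta^k_{\RR^{k+1}}$. Moreover, the implicit function theorem provides explicit formulas for the partial derivatives of $\Db^k$ in terms of $(D_xF)^{-1}$ and the partial derivatives of $F$ in the parameters $q_j,a_j$; all of these are continuous functions of the compact set of parameters and are thus uniformly bounded.

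Finally, since $\Db^k$ is $C^1$ on an open neighborhood of the compact product $\overline{B_{R_0}(p_0)}\times\cdots\times\overline{B_{R_k}(p_k)}\times\Delta^k_{\RR^{k+1}}$, uniform boundedness of its partial derivatives there immediately yields the desired Lipschitz estimate on $B_{R_0}(p_0)\times\cdots\times B_{R_k}(p_k)\times\Delta^k_{\RR^{k+1}}$ (for instance via the mean value inequality applied along geodesics in each factor). The main technical obstacle is securing the uniform choice of $R,\theta,\theta'$ in the first step; the rest is a routine application of the implicit function theorem together with compactness.
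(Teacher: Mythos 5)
Your argument is correct and uses the same underlying tools as the paper's proof: the implicit function theorem applied to the critical-point equation $\sum_j a_j\,\mathrm{grad}\,E_{q_j}(x)=0$, the Hessian lower bound from Lemma \ref{convexity of energy}, and compactness to bound the derivative of $\Db^k$. The one structural difference is how the boundary of the simplex is handled: the paper proves continuity of $\Db^k$ on the full set by a sequential compactness argument and then bounds $\|D\Db^k\|$ only on the \emph{open} factor $\mathrm{int}(\Delta^k_{\RR^{k+1}})$ (where smoothness is already known), whereas you upgrade the uniform-constant argument of Lemma \ref{smoothness of bar simplex} to get $C^1$ smoothness on an open neighborhood of the compact product times the \emph{closed} simplex. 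Your route removes the slight delicacy of deducing Lipschitz from continuity plus a derivative bound that is only valid off the boundary, at the cost of carrying the $R,\theta,\theta'$ bookkeeping into this proof; the paper avoids that bookkeeping because the Hessian lower bound $\mathrm{Hess}\,E_{q_0,\dots,q_k}\geq g$ holds outright on $\Delta^k_{\RR^{k+1}}$ (all coefficients nonnegative summing to one) without any correction term. Both are sound, and the trade-off is cosmetic.
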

\begin{proof}
We first prove that $\Db^k(\cdot,...,\cdot)(\cdot,...,\cdot)$ is continuous on $B_{R_0}(p_0)\times...\times B_{R_k}(p_k)\times \Delta_{\RR^{k+1}}^k$. Let $\{(q_0^{(m)},...,q_k^{(m)};a_{0}^{(m)},...,a_k^{(m)})\}_{m\in\ZZ_+}$ be a sequence of points in $B_{R_0}(p_0)\times...\times B_{R_k}(p_k)\times \Delta_{\RR^{k+1}}^k$ which converges to $(q_0,...,q_k;a_0,...,a_k)\in B_{R_0}(p_0)\times...\times B_{R_k}(p_k)\times \Delta_{\RR^{k+1}}^k$ as $m\to\infty$. For simplicity, let $x^{(m)}:=\Db^k(q_0^{(m)},...,q_k^{(m)})(a_{0}^{(m)},...,a_k^{(m)})$ and $x:=\Db^k(q_0,...,q_k)(a_{0},...,a_k)$. Let
\begin{align}\label{eqn:very large R}
R=(n+1000)\left(1+2\sum_{j=0}^kR_k+\sum_{0\leq i,j\leq k}d(p_i,p_j)\right).
\end{align}
By the first remark after Definition \ref{bar simplex}, $\{x_m\}_{m\in\ZZ_+}$ and $x$ are contained in $B_R(p_0)$. In particular, $\{x_m\}_{m\in\ZZ_+}$ has convergent subsequences. For any subsequence $\{x_{m_l}\}_{l\in\ZZ_+}$ of $\{x_m\}_{m\in\ZZ_+}$ which converges to some point $x'\in X$, we have
$$
0=\lim_{l\to\infty} DE_{q_0^{(m_l)},...,q_k^{(m_l)}}(x_{m_l};a_{0}^{(m_l)},...,a_k^{(m_l)})=DE_{q_0,...,q_k}(x';a_{0},...,a_k).
$$
Hence by the first remark after Definition \ref{bar simplex}, we have $x'=x$. This completes the proof that $\Db^k(\cdot,...,\cdot)(\cdot,...,\cdot)$ is continuous on $B_{R_0}(p_0)\times...\times B_{R_k}(p_k)\times \Delta_{\RR^{k+1}}^k$.

It remains to prove that $\Db^k(\cdot,...,\cdot)(\cdot,...,\cdot)$ is Lipschitz on $B_{R_0}(p_0)\times...\times B_{R_k}(p_k)\times \Delta_{\RR^{k+1}}^k$. By the second remark after Definition \ref{bar simplex}, it suffices to find an upper bound for $\|(D\Db^k)(\cdot,...,\cdot)(\cdot,...,\cdot)\|$ on $B_{R_0}(p_0)\times...\times B_{R_k}(p_k)\times \mathrm{int}(\Delta_{\RR^{k+1}}^k)$. Let $(q_0,...,q_k;a_0,...,a_k)$ be an arbitrary point in $B_{R_0}(p_0)\times...\times B_{R_k}(p_k)\times \mathrm{int}(\Delta_{\RR^{k+1}}^k)$ and $x:=\Db^k(q_0,...,q_k)(a_{0},...,a_k)$. By the first remark after Definition \ref{bar simplex} and the implicit function theorem applied to
$$0=DE_{q_0,...,q_k}(x;a_0,...,a_k)=\sum_{j=0}^ka_jDE_{q_j}(x),$$
for any $\xi=(v_0,...,v_k;\delta_0,...,\delta_k)\in T_{(q_0,...,q_k;a_0,...,a_k)}\left(B_{R_0}(p_0)\times...\times B_{R_k}(p_k)\times \Delta_{\RR^{k+1}}^k\right)$, we have
\begin{align}\label{bar jacobian computations}
\begin{split}
&\mathrm{Hess}( E_{q_0,...,q_k}(x;a_{0},...,a_k))((D\Db^k)(q_0,...,q_k)(a_{0},...,a_k)(\xi),\cdot)\\
=&-\left(\sum_{j=0}^k(a_j(D_{q_j}(DE_{q_j}(x)(\cdot)))(v_j))+\sum_{j=0}^k\delta_jDE_{q_j}(x)(\cdot)\right).
\end{split}
\end{align}
By the first remark after Definition \ref{bar simplex}, $B_{R_0}(p_0)\times...\times B_{R_k}(p_k)\times \Delta_{\RR^{k+1}}^k$ and its image under $\Db^k$ is contained in $B_R(p_0)$. (See \eqref{eqn:very large R}.) Therefore the norm of the RHS of \eqref{bar jacobian computations} is uniformly bounded from above by some positive constant. Then by Lemma \ref{convexity of energy} and \eqref{bar jacobian computations}, $\|(D\Db^k)(\cdot,...,\cdot)(\cdot,...,\cdot)\|$ has a uniform upper bound on $B_{R_0}(p_0)\times...\times B_{R_k}(p_k)\times \mathrm{int}(\Delta_{\RR^{k+1}}^k)$. This completes the proof of Lemma \ref{Lip bar simplex}.
\end{proof}

For any positive integer $k$, we denote by $d\vol_{X^k}$ the volume form of the product Riemannian manifold $X^k$. Then we have the following lemma.
\begin{lemma}\label{almost injectivity lem}
For any $q\in X$ and $d\vol_{X^{n+1}}$-a.e. $(p_0,...,p_{n})\in X^{n+1}$, $\mathrm{span}\{\mathrm{grad}E_{p_i}(q)\}_{i=0}^{n}= T_qX$. Moreover, all points $(p_0,...,p_{n})\in X^{n+1}$ satisfying $\mathrm{span}\{\mathrm{grad}E_{p_i}(q)\}_{i=0}^{n}= T_qX$ form an open and dense subset of $X^{n+1}$.
\end{lemma}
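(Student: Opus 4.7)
The plan is to reduce the statement to elementary linear algebra on $T_qX$ via the exponential map. Since $X$ is a Hadamard manifold, for any fixed $q\in X$ the exponential map $\exp_q:T_qX\to X$ is a diffeomorphism. A standard computation in nonpositive curvature (using that $d(p,\cdot)^2/2$ has gradient pointing away from $p$ with magnitude equal to the distance) gives
\[
\mathrm{grad}\, E_p(q)=-\exp_q^{-1}(p),\qquad \forall p\in X.
\]
Thus the condition $\mathrm{span}\{\mathrm{grad}\, E_{p_i}(q)\}_{i=0}^{n}=T_qX$ is equivalent to the condition that the vectors $v_i:=\exp_q^{-1}(p_i)\in T_qX$ span $T_qX$.

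Next, consider the diffeomorphism
\[
\Phi:X^{n+1}\longrightarrow (T_qX)^{n+1},\qquad \Phi(p_0,\dots,p_n)=\bigl(\exp_q^{-1}(p_0),\dots,\exp_q^{-1}(p_n)\bigr).
\]
Let $S\subset (T_qX)^{n+1}$ denote the set of $(n+1)$-tuples $(v_0,\dots,v_n)$ that span $T_qX$. Identifying $T_qX$ with $\RR^n$ via any linear isomorphism, the tuple $(v_0,\dots,v_n)$ forms the columns of an $n\times(n+1)$ matrix, and $S$ is exactly the set where this matrix has rank $n$. The complement of $S$ is cut out by the simultaneous vanishing of all $n\times n$ minors, so it is a proper real-algebraic subvariety of $(T_qX)^{n+1}\cong\RR^{n(n+1)}$. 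In particular, $S$ is open and dense in $(T_qX)^{n+1}$ and its complement has Lebesgue measure zero.

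Pulling back through $\Phi$, which is a diffeomorphism and therefore carries the Lebesgue-negligible (respectively, nowhere-dense) complement of $S$ to a $d\vol_{X^{n+1}}$-negligible (respectively, nowhere-dense) subset of $X^{n+1}$, we conclude that $\Phi^{-1}(S)$ is open and dense in $X^{n+1}$ and that its complement has $d\vol_{X^{n+1}}$-measure zero. This is exactly the claim.

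The only nontrivial step is the identification $\mathrm{grad}\, E_p(q)=-\exp_q^{-1}(p)$, but this is standard on Hadamard manifolds; everything else is a routine transfer of genericity from $(T_qX)^{n+1}$ to $X^{n+1}$ via the Hadamard diffeomorphism $\exp_q$.
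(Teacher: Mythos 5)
Your proof is correct and follows essentially the same route as the paper: both reduce the statement to $(T_qX)^{n+1}$ via the diffeomorphism induced by $\exp_q$, observe that spanning is a generic (open, dense, full-measure) condition there by elementary linear algebra, and transfer genericity back through the diffeomorphism. One small note: the paper records $\mathrm{grad}\,E_{\exp_q(v)}(q)=v$ while you correctly have $\mathrm{grad}\,E_p(q)=-\exp_q^{-1}(p)$; the sign discrepancy is immaterial to the span condition, and your sign is the right one.
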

\begin{proof}
Let $d\vol_{T_qX}$ be the Euclidean volume on $T_qX$ induced by the Riemannian metric of $X$ at $q$. Consider the map
$$\exp^{(n+1)}_q:(T_qX)^{n+1}\to X^{n+1},~\exp^{(n+1)}_q(v_0,...,v_n)=(\exp_q(v_0),...,\exp_q(v_n)).$$
Then $\exp^{(n+1)}_q$ is a diffeomorphism. In particular, the pullback measure $(\exp^{(n+1)}_q)^*(d\vol_{X^{n+1}})$ is equivalent to $d\vol_{(T_qX)^{n+1}}$, the volume form of the product Riemannian manifold $(T_qX)^{n+1}$. Therefore it suffices to show that
$$
\{(v_0,...,v_n)\in (T_qX)^{n+1}|\mathrm{span}\{\mathrm{grad}E_{\exp_q(v_i)}(q)\}_{i=0}^{n}= T_qX\}
$$
is an open and dense subset of $(T_qX)^{n+1}$ whose complement has $0$ measure with respect to $d\vol_{(T_qX)^{n+1}}$. This easily follows from linear algebra since $\mathrm{grad}E_{\exp_q(v)}(q)=v$ for any $v\in T_qM$.
\end{proof}
\begin{corollary}\label{bar almost injectivity}
For $d\vol_{X^{n+1}}$-a.e. $(p_0,...,p_{n})\in X^{n+1}$, there exists an open and dense subset $\cU_{(p_0,...,p_{n})}\subset X$ whose complement has $0$ measure with respect to $d\vol_X$, such that for any $q\in \Db^n(p_0,...,p_n)(\Delta^n_{\RR^{n+1}})\cap(\cU_{(p_0,...,p_{n})})$, there exists a unique $(a_0,...,a_n)\in \Delta^n_{\RR^{n+1}}$ satisfying
$$\Db^n(p_0,...,p_n)(a_0,...,a_n)=q.$$
\end{corollary}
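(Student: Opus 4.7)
The plan is to deduce the corollary from Lemma \ref{almost injectivity lem} by a Fubini argument, combined with a one-line linear-algebra observation about when barycentric coordinates can be nonunique.

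First, I would consider the set
\[
Z := \bigl\{(p_0,\dots,p_n,q)\in X^{n+1}\times X : \mathrm{span}\{\mathrm{grad}E_{p_i}(q)\}_{i=0}^n \neq T_qX\bigr\}.
\]
This is closed, since the failure of spanning is the simultaneous vanishing of all $n\times n$ minors of the $n\times(n+1)$ matrix whose columns are the gradients, a condition that depends continuously on $(p_0,\dots,p_n,q)$. Lemma \ref{almost injectivity lem} says that every horizontal slice $Z\cap (X^{n+1}\times\{q\})$ has zero $d\vol_{X^{n+1}}$-measure. Applying Fubini--Tonelli to $\mathbf{1}_Z$ on $X^{n+1}\times X$, the vertical slice
\[
\cU_{(p_0,\dots,p_n)}^c := \{q\in X : (p_0,\dots,p_n,q)\in Z\}
\]
therefore has zero $d\vol_X$-measure for $d\vol_{X^{n+1}}$-a.e.\ $(p_0,\dots,p_n)$. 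For such $(p_0,\dots,p_n)$, set $\cU_{(p_0,\dots,p_n)} := X\setminus \cU_{(p_0,\dots,p_n)}^c$. Since $Z$ is closed, $\cU_{(p_0,\dots,p_n)}$ is open; being the complement of a closed zero-measure set in the connected manifold $X$, it is dense (its closed complement has empty interior).

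Next I would fix such $(p_0,\dots,p_n)$ and a point $q \in \cU_{(p_0,\dots,p_n)} \cap \Db^n(p_0,\dots,p_n)(\Delta^n_{\RR^{n+1}})$, and abbreviate $g_i := \mathrm{grad}E_{p_i}(q)$. By construction $\{g_0,\dots,g_n\}$ spans the $n$-dimensional space $T_qX$, so the linear dependency space
\[
K := \bigl\{(b_0,\dots,b_n)\in\RR^{n+1} : \textstyle\sum_i b_i g_i = 0\bigr\}
\]
is exactly one-dimensional. Suppose $(a_0,\dots,a_n)$ and $(a_0',\dots,a_n')$ in $\Delta^n_{\RR^{n+1}}$ both map to $q$ under $\Db^n(p_0,\dots,p_n)$. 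By the first remark after Definition \ref{bar simplex}, each tuple satisfies $\sum_i a_i g_i = 0$, so both lie in $K$. Moreover $\sum_i a_i = \sum_i a_i' = 1 \neq 0$, so $(a_0,\dots,a_n)$ is a nonzero element of $K$. Since $\dim K = 1$, there exists $\lambda\in\RR$ with $(a_0',\dots,a_n') = \lambda(a_0,\dots,a_n)$; summing coordinates gives $1 = \lambda\cdot 1$, hence $\lambda = 1$ and the two tuples coincide.

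The real content is all packaged into Lemma \ref{almost injectivity lem}; the remaining work is just measure-theoretic bookkeeping plus the one-dimensional-kernel trick. The only step requiring a small amount of care is the interchange of fibers in the Fubini argument, which is clean here because $Z$ is closed and both factors carry locally finite smooth measures. I do not anticipate a serious obstacle.
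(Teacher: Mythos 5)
Your proposal is correct and follows essentially the same approach as the paper: both define $\cU_{(p_0,\dots,p_n)}$ as the set of $q$ where the gradients $\mathrm{grad}E_{p_i}(q)$ span $T_qX$, apply Fubini to the conclusion of Lemma \ref{almost injectivity lem}, and finish with the rank-nullity (one-dimensional kernel plus the affine normalization $\sum a_i = 1$) argument for uniqueness. The only cosmetic difference is that you package the openness of $\cU$ via the closedness of the bad set $Z$, whereas the paper invokes smoothness of $(p,q)\mapsto\mathrm{grad}E_q(p)$ directly; these are the same observation.
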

\begin{proof}
Let
\begin{align}\label{good points for top simplex}
\cU_{(p_0,...,p_{n})}=\{q\in X|\mathrm{span}\{\mathrm{grad}E_{p_i}(q)\}_{i=0}^{n}= T_qX\}.
\end{align}
By Lemma \ref{almost injectivity lem} and Fubini's theorem, for $d\vol_{X^{n+1}}$-a.e. $(p_0,...,p_{n})\in X^{n+1}$, we have $X\setminus \cU_{(p_0,...,p_{n})}$ has $0$ measure with respect to $d\vol_X$. In particular, $\cU_{(p_0,...,p_{n})}$ is dense. The fact that $\cU_{(p_0,...,p_{n})}$ is open follows directly from the fact that $(p,q)\to\mathrm{grad} E_q(p)$ is smooth on $X\times X$.

Suppose there exist $q\in \cU_{(p_0,...,p_n)}$ and $(a_0,...,a_n), (a_0',...,a_n')\in\Delta^n_{\RR^{n+1}}$ such that
$$
\Db^n(p_0,...,p_n)(a_0,...,a_n)=\Db^n(p_0,...,p_n)(a_0',...,a_n')=q.
$$
Then by the first remark after Definition \ref{bar simplex},
$$
\sum_{i=0}^n a_i\mathrm{grad}E_{p_i}(q)=\sum_{i=0}^n a_i'\mathrm{grad}E_{p_i}(q)=0.
$$
By \eqref{good points for top simplex} and the fact that $q\in\cU_{(p_0,...,p_n)}$, we have $\RR\cdot(a_0,...,a_n)=\RR\cdot(a_0',...,a_n')$. (Here, we are making use of the fact that if the span of $(n+1)$ vectors $v_0,...,v_n$ has dimension equal to $n$, then the space of solutions $(a_0,...,a_n)$ to the equation $\sum_{i=0}^na_iv_i=0$ has dimension equal to $1$.) Since $a_0+...+a_n=a_0'+...+a_n'=1$, we have $(a_0,...,a_n)=(a_0',...,a_n')$. This completes the proof of Corollary \ref{bar almost injectivity}.
\end{proof}
\begin{lemma}\label{bar bded ints away from flats}
For any $\epsilon>0$, $\widehat{F}\in\Gamma F=\{\gamma F|\gamma\in\Gamma\}$ and $k\in\ZZ_{\geq 0}$, let
$$\DbF{(\epsilon;\widehat{F})}^k(p_0,...,p_k):=\{p\in\mathrm{Im}(\Db^k(p_0,...,p_k))|d(p,\widehat{F})\geq \epsilon\}.$$
Then
$$\Proj_{\widehat{F}}(\DbF{(\epsilon;\widehat{F})}^k(p_0,...,p_k))\subset\bigcup_{j=0}^kB_{R_1(\epsilon/2)}(\Proj_{\widehat F}(p_j)),$$
where $B_R(p)$ is the open geodesic ball of radius $R$ centered at $b\in X$.
\end{lemma}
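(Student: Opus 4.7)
The plan is to argue by contradiction, combining Lemma \ref{key hyp lem}(\hyperlink{hyp-1}{1}) (which remains valid for $\widehat F$ in place of $F$ by the remark at the end of Section \ref{sect:hyperbolicity}) with the first-order critical-point condition characterizing barycentric simplices.

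Write $p = \Db^k(p_0,\dots,p_k)(a_0,\dots,a_k)$ with $d(p,\widehat F)\geq\epsilon$, set $r := \Proj_{\widehat F}(p)$, and suppose for contradiction that $r \notin \bigcup_{j=0}^k B_{R_1(\epsilon/2)}(\Proj_{\widehat F}(p_j))$. The case $p = p_j$ for some $j$ is trivial, so I assume $p \neq p_j$ for every $j$, and let $v_j\in T^1_p X$ be the initial unit tangent of the geodesic $\gamma_j$ from $p$ to $p_j$. Since $p \notin \widehat F$, the projection $r$ is well-defined and the function $d(\cdot,\widehat F)$ is smooth near $p$ with gradient the outward unit normal to $\widehat F$; let $w\in T^1_pX$ be the unit vector from $p$ toward $r$, so that $\mathrm{grad}_p d(\cdot,\widehat F) = -w$.

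For each $j$, the contrapositive form of Lemma \ref{key hyp lem}(\hyperlink{hyp-1}{1}) applied to $(p,p_j)$ with $\epsilon/2$ in place of $\epsilon$ yields $d([p,p_j],\widehat F) \leq \epsilon/2$. Combining this with $d(p,\widehat F)\geq \epsilon$ and the convexity of $t\mapsto d(\gamma_j(t),\widehat F)$ on $[0,d(p,p_j)]$, the tangent-line-below-the-graph inequality for convex functions at $t=0$ gives
\[ -\langle v_j, w\rangle = \tfrac{d}{dt}\Big|_{t=0} d(\gamma_j(t),\widehat F) \leq -\frac{\epsilon}{2\,d(p,p_j)}, \]
so $\langle v_j, w\rangle \geq \epsilon/(2\,d(p,p_j))$ for every $j$.

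The first-order condition of the first remark after Definition \ref{bar simplex} characterizing $p$ as the minimum of $\sum_j a_j E_{p_j}$ reads $\sum_j a_j \mathrm{grad}_p E_{p_j} = 0$, which via $\mathrm{grad}_p E_{p_j} = -d(p,p_j)v_j$ rewrites as $\sum_j a_j d(p,p_j)v_j = 0$ in $T_pX$. Pairing this identity with $w$ gives
\[ 0 \;=\; \sum_{j=0}^k a_j\,d(p,p_j)\,\langle v_j, w\rangle \;\geq\; \frac{\epsilon}{2}\sum_{j=0}^k a_j \;=\; \frac{\epsilon}{2} \;>\; 0, \]
the desired contradiction. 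There is no genuine obstacle: the only point requiring care is the tangent-line inequality for $d(\cdot,\widehat F)\circ\gamma_j$ at $t=0$, which needs the smoothness of $d(\cdot,\widehat F)$ at $p$, guaranteed by $p\notin\widehat F$. The conceptual content is simply that each pairwise projection bound forces $v_j$ to have a definite component toward $\widehat F$ along $w$, and this is incompatible with $\sum_j a_j d(p,p_j) v_j = 0$.
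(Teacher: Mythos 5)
Your proof is correct and follows essentially the same approach as the paper's: both combine the first-order critical point condition $\sum_j a_j \mathrm{grad}_p E_{p_j}=0$ with Lemma~\ref{key hyp lem}(1) and the convexity of $t\mapsto d(\gamma_j(t),\widehat F)$. The paper argues in the forward direction — from the vanishing of $\sum_j a_j DE_{p_j}(p)(w)$ it picks a single index $j$ for which the geodesic toward $p_j$ does not initially decrease $d(\cdot,\widehat F)$, concludes by convexity that $d([p,p_j],\widehat F)=d(p,\widehat F)\geq\epsilon>\epsilon/2$, and applies Lemma~\ref{key hyp lem}(1) to that single pair — while you run exactly the contrapositive, showing that if every $\Proj_{\widehat F}(p_j)$ were $R_1(\epsilon/2)$-far then every $\langle v_j,w\rangle$ would be strictly positive, contradicting $\sum_j a_j d(p,p_j)v_j=0$; your quantitative tangent-line estimate $\langle v_j,w\rangle\geq\epsilon/(2d(p,p_j))$ is a bit more than needed (strict positivity already suffices), but everything is sound.
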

\begin{proof}
For any $q\in \DbF{(\epsilon;\widehat{F})}^k(p_0,...,p_k)$, let $v_q\in T_q^1X$ such that $v_q$ is the initial vector of the geodesic ray starting at $q$ passing through $\mathrm{Proj}_{\widehat{F}}(q)$. Then by the first remark after Definition \ref{bar simplex}, there exist some $j\in\{0,...,k\}$ such that $DE_{p_j}(q)(v_q)\leq 0$. Therefore $p_j$ and $q$ are on the same side of $\hF$ and $d([p_j,q],\widehat{F})=d(q,\widehat{F})$ (due to the fact that $X$ is nonpositively curved). The rest of the proof follows directly from the first assertion in Lemma \ref{key hyp lem}.
\end{proof}

\subsection{Almost separation by elements in $\Gamma F$}
In this subsection, we introduce the aforementioned notion of ``almost separation'' of a barycentric simplex in \hyperlink{idea-and-plan}{the outline and plan of the proof}. We also show some similarity between  ``almost separation'' and ``actual separation''. A rough summary is the following:
\begin{enumerate}
\item If $\hF\in\Gamma F$ (almost) separates an edge of the barycentric simplex into two pieces, it will also (almost) separate the $1$-skeleton of this simplex into two pieces; (See Proposition \ref{almost sep edge, almost sep simplex}.)
\item If $\hF\in\Gamma F$ fails to (almost) separate all edges of a barycentric simplex, then it must be far away from this simplex; (See Lemma \ref{bar: away from edge, away from simplex}.)
\item Let $x,y,z\in X$ be points which are away from all elements in $\Gamma F$. Assume that $F_1\in\Gamma F$ (almost) separate $[x,y]$ and $[x,z]$ and that $F_2\in\Gamma F\setminus\{F_1\}$ (almost) separate $[x,y]$ at somewhere closer to $x$ compared to $F_1$. Then the following holds: (See Proposition \ref{almost ints positioning}.)
\begin{itemize}
\item $F_2$ cannot (almost) separate $[y,z]$;
\item If $F_2$ (almost) separates $[x,z]$, it must (almost) separate $[x,z]$ at somewhere closer to $x$ compare to $F_1$.
\end{itemize}
\end{enumerate}
This subsection is ended by introducing two lemmas (Lemma \ref{prep1} and Lemma \ref{prep2}) which are used in the next subsection.
\begin{proposition}\label{almost sep edge, almost sep simplex}
For any $k\in \ZZ_+$, $r>0$ and $0\leq\epsilon\leq r$, there exist $0< c_2(k,r,\epsilon)<\epsilon$ such that for any $(k+1)$-tuple of points $(p_0,...,p_k)$, if $d(p_j,F)\geq r$ for any $j=0,...,k$, and there exist $i_0\neq j_0$ such that $d([p_{i_0},p_{j_0}],F)<c_2(k,r,\epsilon)$, then there exist $\emptyset\neq I\subsetneq \{0,...,k\}$ such that $i_0\in I$, $j_0\not\in I$ and
$$d([p_i,p_j],F)\leq \epsilon,\quad \forall i\in I, j\in\{0,...,k\}\setminus I.$$
\end{proposition}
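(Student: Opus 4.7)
The plan is to define $I$ as a connected component of an auxiliary ``side-plus-proximity'' graph on the vertex set $\{0,\ldots,k\}$, rather than trying to partition by side of $F$ directly. The reason partitioning by side alone can fail is the case where $p_{i_0}$ and $p_{j_0}$ happen to lie on the same side of $F$: since $d(\cdot,F)$ is convex along geodesics, the segment $[p_{i_0},p_{j_0}]$ can merely graze $F$ without crossing it, so both endpoints could sit on the same side even though $d([p_{i_0},p_{j_0}],F)$ is tiny. First I would set $R':=R_1(\epsilon)$ from Lemma \ref{key hyp lem}\hyperlink{hyp-1}{(1)}, and then choose $c_2(k,r,\epsilon)$ to be strictly less than $\min\{c_1(r,kR'),\,\epsilon\}$, where $c_1$ comes from Lemma \ref{key hyp lem}\hyperlink{hyp-2}{(2)}.

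Next I would build a graph $G$ on $\{0,\ldots,k\}$, placing an edge between $l$ and $l'$ precisely when (i) $p_l$ and $p_{l'}$ lie on the same side of $F$, and (ii) $d(\Proj_F(p_l),\Proj_F(p_{l'}))\leq R'$. The central claim is that $i_0$ and $j_0$ lie in distinct connected components of $G$. If they lie on opposite sides of $F$, this is automatic, since every edge of $G$ preserves the side by (i). If they lie on the same side, Lemma \ref{key hyp lem}\hyperlink{hyp-2}{(2)} applied with $R=kR'$, combined with the hypothesis $d([p_{i_0},p_{j_0}],F)<c_2\leq c_1(r,kR')$, forces $d(\Proj_F(p_{i_0}),\Proj_F(p_{j_0}))>kR'$; but any simple path in $G$ from $i_0$ to $j_0$ uses at most $k$ edges (since $G$ has only $k+1$ vertices), so the triangle inequality applied to its image under $\Proj_F$ would cap the projection distance by $kR'$, a contradiction.

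Taking $I$ to be the connected component of $i_0$ in $G$ then yields $i_0\in I$, $j_0\notin I$, and $\emptyset\neq I\subsetneq\{0,\ldots,k\}$. For any $i\in I$ and $j\in\{0,\ldots,k\}\setminus I$, the absence of an $ij$-edge in $G$ means exactly one of two things: either $p_i$ and $p_j$ lie on opposite sides of $F$, in which case $[p_i,p_j]$ crosses $F$ and $d([p_i,p_j],F)=0$; or they lie on the same side with $d(\Proj_F(p_i),\Proj_F(p_j))>R'=R_1(\epsilon)$, in which case Lemma \ref{key hyp lem}\hyperlink{hyp-1}{(1)} yields $d([p_i,p_j],F)\leq\epsilon$. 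In either situation the required bound holds.

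The main subtlety I expect is the joint calibration of $R'$ and $c_2$: the edge threshold $R'$ must be large enough (at least $R_1(\epsilon)$) so that non-edges across the partition actually deliver the desired $\epsilon$ bound via Lemma \ref{key hyp lem}\hyperlink{hyp-1}{(1)}, while $kR'$ must remain in the regime where $c_1(r,kR')$ is a meaningful positive quantity bounding $c_2$. The factor $k$ entering through the product $kR'$ is exactly the price paid for the combinatorial bound on the length of a simple path in $G$, and it is the reason $c_2$ must depend on $k$ and not just on $r$ and $\epsilon$.
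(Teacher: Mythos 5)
Your proof is correct and follows essentially the same approach as the paper's: the paper's iterated operator $\cD^\infty(\{i_0\})$ is exactly the connected component of $i_0$ in the graph you describe (the paper uses the threshold $2R_1(\epsilon)$ — two closed balls of radius $R_1(\epsilon)$ intersecting — giving $c_2=c_1(r,2kR_1(\epsilon))$, whereas you use $R_1(\epsilon)$ and $c_2<c_1(r,kR_1(\epsilon))$, but this is a cosmetic recalibration). Your formulation is slightly cleaner in that it handles the opposite-sides case of $i_0,j_0$ uniformly rather than as a separate trivial case, but the underlying idea is the same.
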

\begin{proof}
Denote by $r_j=\Proj_F(p_j)$. Let $c_2(k,r,\epsilon)=c_1(r,2kR_1(\epsilon))$. (See Lemma \ref{key hyp lem} for the definition of $c_1$ and $R_1$.) If $p_{i_0}$ and $p_{j_0}$ are on opposite sides of $F$, then choose $I=\{0\leq i\leq k|p_i,p_{i_0}\mathrm{~on~the~same~side~of~}F\}$ and the result follows. We now assume that $p_{i_0}$ and $p_{j_0}$ are on the same side of $F$.

For any $J\subset\{0,...,k\}$, we define
$$\cD(J):=\left\{0\leq j\leq k\left| \overline{B_{R_1(\epsilon)}(r_j)}\ints\overline{B_{R_1(\epsilon)}(r_l)}\neq\emptyset\mathrm{~and~}[p_l,p_{j}]\cap F=\emptyset\mathrm{~for~some~}l\in J\right.\right\},$$
and $\cD^\infty(J):=\union_{j=1}^\infty\cD^j(J)$, where $\cD^0(J)=J$ and $\cD^j(J):=\cD(\cD^{j-1}(J))$. Clearly, $J\subset \cD(J)$ and $\cD^\infty(J)=\cD^{k}(J)$. By the definition of $\cD$, for any $t\geq 1$ and $j\in\cD^t(J)$, there exist some $i\in\cD^{t-1}(J)$ such that $j\in\cD(\{i\})$.

Let $I:=\cD^\infty(\{i_0\})\neq \emptyset$. In particular, for any $i,j\in I$, $p_i$ and $p_j$ are on the same side of $F$. For any $j\in I$, there exist a unique $0\leq t\leq k$ such that $j\in\cD^t(\{i_0\})\setminus\cD^{t-1}(\{i_0\})$. For simplicity, we set $i_t=j$. Inductively (in the reverse order), we choose $i_l\in\cD^{l}(\{i_0\})$ such that $i_{l+1}\in\cD(\{i_l\})$, $0\leq l\leq t-1$. If $i_l=i_s$ for some $0\leq l<s\leq t$, then we have $j=i_t\in\cD^{t-(s-l)}(\{i_0\})\subset \cD^{t-1}(\{i_0\})$, contradictory to the assumption on $j$ and $t$. Therefore $i_0,...,i_t=j$ are distinct numbers in $\{0,...,k\}$. By the definition of $\cD$ and the triangle inequality, $d(r_{i_0},r_j)\leq 2tR_1(\epsilon)\leq 2kR_1(\epsilon)$. By the second assertion in Lemma \ref{key hyp lem}, $d([p_{i_0},p_j],F)\geq c_1(r,2kR_1(\epsilon) )=c_2(k,r,\epsilon)$. Therefore by the assumption of $j_0$, $j_0\not\in I$.

On the other hand, for any $i\in I$ and $j\in\{0,...,k\}\setminus I$, either $p_i, p_j$ are on opposite sides of $F$, or $\overline{B_{R_1(\epsilon)}(r_i)}\ints\overline{B_{R_1(\epsilon)}(r_j)}=\emptyset$. In the first case, $[p_i,p_j]$ intersects $F$ and hence $d([p_i,p_j],F)=0$. In the second case, $d(r_i,r_j)\geq 2R_1(\epsilon)$. By the first assertion in Lemma \ref{key hyp lem}, $d([p_i,p_j],F)\leq \epsilon$.
\end{proof}
\begin{rmk}
 The above proposition holds for any $\widehat F\in \Gamma F$, not just $F$.
\end{rmk}
\begin{lemma}\label{bar: away from edge, away from simplex}
For any $k\in\ZZ_+$, $\widehat{F}\in\Gamma F$ and any $\epsilon>0$, there exist some $c_3(\epsilon)\in (0,\epsilon)$, such that for any $(k+1)$-tuple of points $p_0,...,p_k$ satisfying $d([p_i,p_j],\widehat{F})>\epsilon$ for any $i\neq j$, $i,j\in\{0,...,k\}$, we have
$$d(q,\widehat{F})\geq c_3(\epsilon)$$
for any $q\in\Db^k(p_0,...,p_k)(\Delta^k_{\RR^{k+1}})$.
\end{lemma}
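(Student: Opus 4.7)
\medskip

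\noindent\textbf{Proof proposal.}
My plan is a compactness/contradiction argument driven by the gradient characterization of barycentric simplices. By the $\Gamma$-equivariance of $\Db^k$ and the remark after Lemma~\ref{key hyp lem}, I reduce to the case $\widehat F=F$. Suppose the lemma fails: then there exist $\epsilon>0$, sequences $(p_0^{(m)},\dots,p_k^{(m)})$ with $d([p_i^{(m)},p_j^{(m)}],F)>\epsilon$ for all $i\neq j$, and points $q_m=\Db^k(p_0^{(m)},\dots,p_k^{(m)})(a_0^{(m)},\dots,a_k^{(m)})$ with $d(q_m,F)\to 0$. Since $\Stab_\Gamma(F)$ acts cocompactly on $F$, I may apply such isometries so that $\Proj_F(q_m)$ stays in a compact set; hence, up to subsequence, $q_m\to q^\infty\in F$ and $a^{(m)}\to a^\infty\in\Delta^k_{\RR^{k+1}}$. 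By the contrapositive of Lemma~\ref{key hyp lem}\hyperlink{hyp-1}{(1)}, the projections $r_j^{(m)}:=\Proj_F(p_j^{(m)})$ all lie within $R_1(\epsilon)$ of one another, hence are bounded. Passing to a further subsequence, each $p_j^{(m)}$ either converges to $p_j^\infty\in X$ with $d(p_j^\infty,F)\geq\epsilon$, or $p_j^{(m)}\to\xi_j\in\partial X$ with $d(p_j^{(m)},F)\to\infty$.

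The key step is to extract a limit of the Euler--Lagrange equation for the barycentric simplex. The point $q_m$ is characterized by
\[
\sum_{j=0}^k a_j^{(m)}\exp_{q_m}^{-1}(p_j^{(m)})=0.
\]
Writing $b_j^{(m)}:=a_j^{(m)}d(q_m,p_j^{(m)})\geq 0$ and $u_j^{(m)}:=\exp_{q_m}^{-1}(p_j^{(m)})/d(q_m,p_j^{(m)})\in T^1_{q_m}X$, this becomes $\sum_j b_j^{(m)}u_j^{(m)}=0$. Normalizing by $B_m:=\sum_j b_j^{(m)}>0$ and passing to a subsequence, $\tilde b_j^{(m)}:=b_j^{(m)}/B_m\to\tilde b_j\in[0,1]$ with $\sum_j\tilde b_j=1$, and $u_j^{(m)}\to u_j\in T^1_{q^\infty}X$, where $u_j$ is the initial direction at $q^\infty$ of the geodesic segment/ray to $p_j^\infty$ or $\xi_j$. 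The limit equation is
\[
\sum_{j=0}^k \tilde b_j u_j=0\qquad\text{in }T_{q^\infty}X.
\]

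Now I exploit the half-space geometry. All $p_j^{(m)}$ lie in one closed geodesically convex component $\overline H$ of $X\setminus F$ (else some edge would meet $F$), and the image of $\Db^k$ lies in $\overline H$ as well (since for any $x\notin\overline H$, $\sum_j a_j E_{p_j}(\Proj_{\overline H}(x))\leq \sum_j a_j E_{p_j}(x)$). Hence $q^\infty\in F=\partial H$ and every $u_j$ has non-negative component along the inward normal $\nu$ to $F$ (pointing into $H$). Taking the inner product of the limit equation with $\nu$ yields $\sum_j \tilde b_j\langle u_j,\nu\rangle=0$, a sum of non-negative terms; hence $\langle u_j,\nu\rangle=0$ whenever $\tilde b_j>0$, so $u_j\in T_{q^\infty}F$. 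Since $q^\infty\in F$ and $F$ is totally geodesic, the corresponding geodesic in $X$ lies entirely in $F$.

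This gives a contradiction in both cases. If $p_j^\infty\in X$, then $p_j^\infty\in F$, contradicting $d(p_j^\infty,F)\geq\epsilon$. If $p_j^{(m)}\to\xi_j\in\partial X$, then, since $r_j^{(m)}$ is bounded (so up to subsequence $r_j^{(m)}\to r_j^\infty\in F$) while $d(p_j^{(m)},r_j^{(m)})\to\infty$ with $[r_j^{(m)},p_j^{(m)}]\perp F$, the ray $[q^\infty,\xi_j]\subset F$ forces $\xi_j\in\partial F$; but then Lemma~\ref{geo rays asymp to flats} applied to the perpendicular ray from $r_j^\infty$ to $\xi_j$ would force its distance to $F$ to tend to $0$, whereas it equals $t$ along the parameter, absurd. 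Since $\sum_j\tilde b_j=1$ forces some $\tilde b_j>0$, we reach a contradiction, completing the proof. The main obstacle is the non-compactness of the configuration space modulo $\Stab_\Gamma(F)$ in the perpendicular direction; the normalization of the gradient equation combined with the isolation condition (through Lemma~\ref{geo rays asymp to flats}) is exactly what is needed to handle vertices escaping to infinity.
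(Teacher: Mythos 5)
Your proposal shares with the paper the key structural idea of exploiting the Euler--Lagrange (gradient) characterization of the barycentric point, but there is a genuine gap in the compactness step, and it is exactly the step the paper devotes most of its effort to.

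You claim that because the projections $r_j^{(m)}=\Proj_F(p_j^{(m)})$ are pairwise within $R_1(\epsilon)$ of one another (by the contrapositive of Lemma~\ref{key hyp lem}\hyperlink{hyp-1}{(1)}), they are ``hence bounded.'' This does not follow: for each fixed $m$ the cluster $\{r_j^{(m)}\}_{j}$ has diameter at most $R_1(\epsilon)$, but the entire cluster can drift off to infinity in $F$ as $m\to\infty$. Having applied isometries so that $\Proj_F(q_m)$ stays bounded does not help, because nothing you have established relates $r_j^{(m)}$ to $\Proj_F(q_m)$. This matters twice in your argument. First, the dichotomy ``either $p_j^{(m)}\to p_j^\infty\in X$ or $p_j^{(m)}\to\xi_j\in\partial X$ with $d(p_j^{(m)},F)\to\infty$'' is only correct when $r_j^{(m)}$ is bounded (otherwise $p_j^{(m)}$ may escape to $\partial X$ with $d(p_j^{(m)},F)$ bounded). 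Second, and more importantly, Case~2 of your contradiction extracts a limit $r_j^\infty\in F$ and a limiting perpendicular ray $[r_j^\infty,\xi_j]$, both of which require exactly this boundedness. If $r_j^{(m)}$ escapes to infinity, the limit equation $\sum_j\tilde b_j u_j=0$ with $u_j\in T_{q^\infty}F$ is perfectly consistent and yields no contradiction on its own: the segments $[q_m,p_j^{(m)}]$ can hug $F$ for most of their (ever-growing) length and only ``peel off'' near the far endpoint $p_j^{(m)}$ to achieve distance $>\epsilon$.

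The paper's proof closes exactly this gap. After projecting $q$ to $r=\Proj_{\widehat F}(q)$ and choosing auxiliary points $p_j''\in[q,p_j]$ near the $r_j$, it derives the a priori bound $d(q,p_j'')\le 3(2\delta+2d(q,r)+R_1(\epsilon))$ from the barycentric condition: if some $p_j''$ were much farther from $q$ than the others, all the unit vectors $w_i=\exp_q^{-1}(p_i)/|\cdot|$ would lie in a cone of half-angle $<\pi/2$, contradicting $\sum_i a_i\,\mathrm{grad}\,E_{p_i}(q)=0$. This yields $d(r,r_j)\le C(\epsilon)$ uniformly (see the step leading to \eqref{bar: away from edge, away from simplex - 3}), which is precisely what you are implicitly assuming. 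You invoke the gradient equation only to derive the \emph{limit} equation $\sum_j\tilde b_j u_j=0$; you would also need to use it, as the paper does, to control the spread of the projections $r_j^{(m)}$ relative to $\Proj_F(q_m)$. Without that, the contradiction in Case~2 cannot be reached.
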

\begin{proof}
If $p_0=...=p_k$, then the lemma holds trivially. From now on, we assume that $p_0,...,p_k$ are not equal to the same point. Also, we assume WLOG that $\epsilon\leq1 $. (For $\epsilon> 1$, we simply choose $c_3(\epsilon)=c_3(1)$.)

Let $q\in\Db^k(p_0,...,p_k)(\Delta^k_{\RR^{k+1}})$ and $r:=\Proj_{\hF}(q)$. (See Figure \ref{lem4.9-1}.) Notice that the lemma holds trivially if $q\in\{p_0,...,p_k\}$. Therefore we assume WLOG that $q\not\in\{p_0,...,p_k\}$. For simplicity, we let $r_j:=\mathrm{Proj}_{\widehat{F}}(p_j)$, $j=0,...,k$. Then by Lemma \ref{hyperbolicity of right triangles}, one can pick a point $p_j'\in[r,p_j]$ such that $d(p_j',r_j)<\delta$ for any $j=0,...,k$. (Here, $\delta$ is from Lemma \ref{hyperbolicity of right triangles}.) Since $X$ is nonpositively curved, there exist a $p_j''\in[q,p_j]$ such that $d(p_j'',p_j')\leq d(q,r)$. In particular
\begin{align}\label{bar: away from edge, away from simplex - 1}
d(p_j'',r_j)\leq d(p_j'',p_j')+d(p_j',r_j)\leq \delta+d(q,r),~\forall j\in\{0,...,k\}.
\end{align}

\begin{figure}[h]
	\centering
	\includegraphics[width=4in]{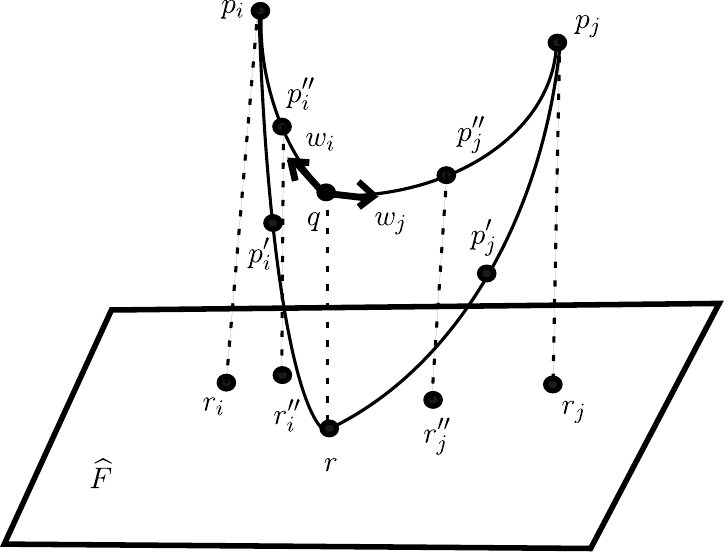}
	\caption{ \label{lem4.9-1}}
\end{figure}

By the first assertion in Lemma \ref{key hyp lem}, $d(r_i,r_j)<R_1(\epsilon)$ for any $i\neq j$, $i,j\in\{0,...,k\}$. Therefore by triangle inequality and \eqref{bar: away from edge, away from simplex - 1}, we have
$$d(p_i'',p_j'')\leq d(p_i'',r_i)+d(r_i,r_j)+d(r_j,p_j'')\leq 2\delta+2d(q,r)+R_1(\epsilon),~\forall i\neq j,~i,j\in\{0,...,k\}.$$
Let $r_j''=\mathrm{Proj}_{\widehat{F}}(p_j'')$ for any $j=0,...,k$. (See Figure \ref{lem4.9-1}.) Then the above implies that $d(r_i'',r_j'')\leq 2\delta+2d(q,r)+R_1(\epsilon)$ for any $i\neq j$, $i,j\in\{0,...,k\}$.

We first claim that
\begin{align}\label{bar: away from edge, away from simplex - 2}
d(r,r_j'')\leq d(q,p_j'')\leq 3(2\delta+2d(q,r)+R_1(\epsilon)),~\forall j\in\{0,...,k\}.
\end{align}
Indeed, if $ d(q,p_j'')>3(2\delta+2d(q,r)+R_1(\epsilon))$ for some $j\in\{0,...,k\}$, then by the triangle inequality, $d(q,p_i'')\geq d(q,p_j'')-d(p_j'',p_i'')>2(2\delta+2d(q,r)+R_1(\epsilon))$ for any $i\in\{0,...,k\}$. For any $i\in\{0,...,k\}$, let $w_i\in T_q^1X$ be the unit vector at $q$ pointing towards $p_i$. (This is well-defined since we assumed that $q\not\in\{p_0,...,p_k\}$.) Then by the fact that $X$ is nonpositively curved and the fact that $2d(p_i'',p_j'')\leq \min\{d(q,p_i''),d(q,p_j'')\}$, the angle $\angle(w_j,w_i)\leq \theta_0$ for some $\theta_0\in [0,\pi/2)$ and any $i\in\{0,...,k\}$. This contradicts the first remark after Definition \ref{bar simplex} and the fact that $q\in\Db^k(p_0,...,p_k)(\Delta^k_{\RR^{k+1}})$. Hence $\eqref{bar: away from edge, away from simplex - 2}$ holds. In particular, if $d(q,r)<1$, by the triangle inequality and \eqref{bar: away from edge, away from simplex - 1}, for any $j\in\{0,...,k\}$,
\begin{align}\label{bar: away from edge, away from simplex - 3}
d(r,r_j)\leq d(r,r_j'')+d(r_j'',r_j)\leq d(r,r_j'')+d(p_j'',r_j)\leq 3(2\delta+2+R_1(\epsilon))+\delta+1.
\end{align}
Let $v_q\in T^1_qX$ be the unit vector at $q$ pointing towards $r$. (See Figure \ref{lem4.9-2}.) Recall that $w_i\in T_q^1X$ is the unit vector at $q$ pointing towards $p_i$ for any $i\in\{0,...,k\}$. By the first remark after Definition \ref{bar simplex}, there exist distinct $l,l'\in\{0,...,k\}$ such that $\langle v_q,w_l\rangle\geq 0$ and $\langle v_q,w_{l'}\rangle\leq 0$. By the fact that $\langle v_q,w_{l'}\rangle\leq 0$ and the fact that $d([p_i,p_j],\hF)>\epsilon$ for any $i,j\in\{0,...,k\}$, $q$ and $p_0,...,p_k$ are on the same side of $\widehat{F}$. Let $\gamma_{w_l}(t)$ be the bi-infinite geodesic with $\dot\gamma_{w_l}(t_0)=w_l$ for some $t_0\in\RR$ to be determined. If we assume in addition that $d(q,r)\leq\min\{1,\epsilon\}$, then there exists a unique $q'\in[q,p_l]$ such that $d(q',\widehat{F})=\min_{t\in\RR}d(\gamma_{w_l}(t),\widehat{F})>0$. For simplicity, we let $\gamma_{w_l}(0)=q'$, $\gamma_{w_l}(-t_1)=q$ and $\gamma_{w_l}(t_2)=p_l$ for some $t_1\geq 0$ and $t_2>0$. (In particular, $t_0=-t_1$.)

\begin{figure}[h]
	\centering
	\includegraphics[width=4in]{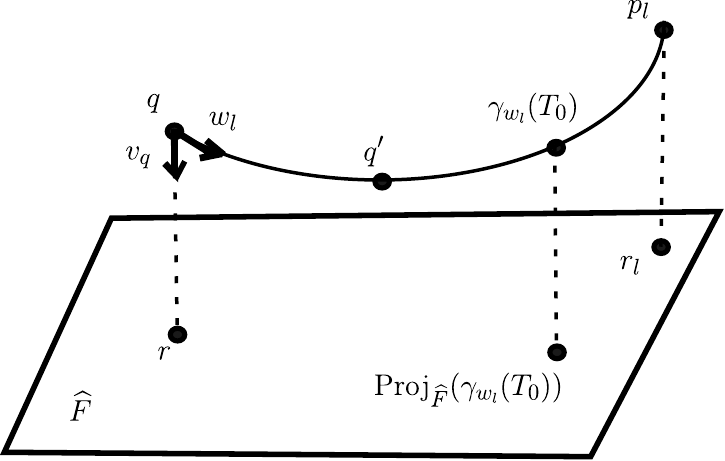}
	\caption{ \label{lem4.9-2}}
\end{figure}

Since $M=\Gamma\backslash X$ is nonpositively curved and $N=\mathrm{Stab}_{\Gamma}(F)\backslash F$ is compact, for any $R>0$, there exist some $\kappa(\epsilon, R)>0$ such that for any $p\in X$ and any $w_p\in T^1_pX$ satisfying
\begin{itemize}
\item $d(p,r)<\kappa(\epsilon, R)$;
\item $w_p\perp v_p$. Here $v_p\in T^1_pX$ is the unit vector at $p$ pointing towards $\mathrm{Proj}_{\widehat{F}}(p)$ (when $p\not\in\widehat{F}$), or a normal vector of $\widehat {F}$ at $p$ (when $p\in\widehat{F}$),
\end{itemize}
there exists some $T\geq R$ such that $d(\exp_p(tw_p),\widehat{F})\leq \epsilon$ for any $t\in[0,T]$. Now, if we further assume that
\begin{align}\label{choice of c3'}
d(q,\widehat{F})\leq c_3(\epsilon):=\min\{1,\epsilon/2,\kappa(\epsilon,3(2\delta+2+R_1(\epsilon))+\delta+3+R_1(\epsilon/2))\},
\end{align}
then we have
$$t_2\geq T_0:=\min_{t\geq 0}\{t|d(\gamma_{w_l}(t),\widehat{F})>\epsilon\}\geq 3(2\delta+2+R_1(\epsilon))+\delta+3+R_1(\epsilon/2)).$$
(See Figure \ref{lem4.9-2}.) In particular, by triangular inequality, we have
\begin{align}\label{bar: away from edge, away from simplex - 4}
\begin{split}
d\left(r,\mathrm{Proj}_{\widehat{F}}(\gamma_{w_l}(T_0))\right)\geq& d(q,\gamma_{w_l}(T_0))-d(q,r)-d\left(\gamma_{w_l}(T_0),\mathrm{Proj}_{\widehat{F}}(\gamma_{w_l}(T_0))\right) \\
> &d(q,\gamma_{w_l}(T_0))-2\epsilon\\
=&T_0+t_1-2\epsilon>T_0-2\geq 3(2\delta+2+R_1(\epsilon))+\delta+1+R_1(\epsilon/2).
\end{split}
\end{align}
Since $d([\gamma_{w_l}(T_0),p_l],\widehat{F})=d(\gamma_{w_l}(T_0),\widehat{F})=\epsilon$, by the first assertion in Lemma \ref{key hyp lem}, we have $d\left(r_l, \mathrm{Proj}_{\widehat{F}}(\gamma_{w_l}(T_0))\right)\leq R_1(\epsilon/2)$. Therefore, \eqref{bar: away from edge, away from simplex - 4} and the triangle inequality imply that
$$d(r_l,r)\geq d\left(r,\mathrm{Proj}_{\widehat{F}}(\gamma_{w_l}(T_0))\right)-d\left(r_l, \mathrm{Proj}_{\widehat{F}}(\gamma_{w_l}(T_0))\right)> 3(2\delta+2+R_1(\epsilon))+\delta+1.$$
This contradicts \eqref{bar: away from edge, away from simplex - 3}. Therefore the choice of $c_3(\epsilon)$ in \eqref{choice of c3'} completes the proof for Lemma \ref{bar: away from edge, away from simplex}.
\end{proof}
\begin{definition}[Almost separation, almost separation types]\label{almost sep}
For any $\epsilon\geq 0$, $\widehat F\in \Gamma F$ and any barycentric $k$-simplex $\Db^k(p_0,...,p_k)$, we say $\widehat F$ is \emph{$\epsilon$-almost separating} $\Db^k(p_0,...,p_k)$ if there exist $\emptyset\neq I\subsetneq \{0,...,k\}$ such that
$$d([p_i,p_j],\widehat F)\leq \epsilon,\quad \forall i\in I, j\in\{0,...,k\}\setminus I.$$
In this case, $\stype{\{p_i|i\in I\}}{\{p_0,...,p_k\}}$ is called an \emph{$\epsilon$-almost separation type} for $\widehat F$ with respect to $\Db^k(p_0,...,p_k)$. The collection of $\epsilon$-almost separation types for $\widehat F$ with respect to $\Db^k(p_0,...,p_k)$ is a subset of $\{\stype{I}{\{p_0,...,p_k\}}|\emptyset\neq I\subsetneq \{p_0,...,p_k\}\}\subset 2^{\{p_0,...,p_k\}}\times 2^{\{p_0,...,p_k\}}$, where $2^{\{p_0,...,p_k\}}$ is the set of all subsets of $\{p_0,...,p_k\}$.
\end{definition}
\begin{rmk}
If we set $\epsilon=0$, then $\widehat F$ $\epsilon$-almost separates $\Db^k(p_0,...,p_k)$ really means that $\widehat F$ intersects $\Db^k(p_0,...,p_k)$. In this simple case, if $p_j\not\in\widehat F$ for all $j=0,...,k$, $\widehat F$ has a unique $\epsilon$-almost separation type, describing how it cuts the 1-skeleton of the simplex into 2 pieces. However, if we allow $\epsilon>0$, $\epsilon$-almost separation types are not unique for a single $\widehat F$. For example, we can let all $p_j$ lie on the same side of $\widehat F$, far away from $\widehat F$, and set $\Proj_{\widehat F}(p_j)$ extremely far away from each other. By the first assertion in Lemma \ref{key hyp lem}, all edges of $\Db^k(p_0,...,p_k)$ can be $\epsilon$-close to $\widehat F$. Then any set of the form $\stype{I}{\{p_0,...,p_k\}}$ for some $\emptyset\neq I\subsetneq \{p_0,...,p_k\}$ is an $\epsilon$-almost separation type for $\widehat F$ with respect to $\Db^k(p_0,...,p_k)$.
\end{rmk}

Proposition \ref{almost sep edge, almost sep simplex} shows that for submanifolds $\widehat F\in \Gamma F$, almost separating an edge implies almost separating the barycentric simplex. Proposition \ref{bar: away from edge, away from simplex} shows that failure to almost separate a barycentric simplex means being not very close to this simplex.

Let $\rho_0>0$ be such that $d(F_1,F_2)>\rho_0$ for any $F_1\neq F_2\in\Gamma F$.
\begin{proposition}\label{almost ints positioning}
There exist some $\epsilon_0\in(0,\rho_0/3)$ sufficiently small such that for any triple of distinct points $x,y,z\in X$ and any $F_1\neq F_2\in \Gamma F$, the following holds: If $a_1, a_2\in [x,y]$, $b_1\in[x,z]$ such that
\begin{align*}
\begin{cases}
\displaystyle a_2\in[x,a_1],~d(a_2, F_2)\leq \epsilon_0,\\
\displaystyle d(a_1, F_1)\leq \epsilon_0,~d(b_1, F_1)\leq \epsilon_0,
\end{cases}
\end{align*}
Then
\begin{enumerate}
\item[(1)] $d([y,z], F_2)>\epsilon_0$.
\item[(2)] If there exist some $b_2\in[x,z]$ such that $d(b_2, F_2)\leq \epsilon_0$, then $b_2\in[x, b_1]$.
\end{enumerate}
\end{proposition}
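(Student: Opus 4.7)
The plan is to prove both (1) and (2) by contradiction, with the smallness of $\epsilon_0 \in (0, \rho_0/3)$ calibrated against $\rho_0$, the constant $\delta$ of Lemma~\ref{hyperbolicity of right triangles}, and the functions $R_1(\cdot)$, $c_1(\cdot,\cdot)$ of Lemma~\ref{key hyp lem}. The driving mechanism will be the convexity of the distance functions $d(\cdot,F_i)$ along geodesics, used in tandem with the two hyperbolicity lemmas. As a preliminary, since $d(F_1,F_2)>\rho_0$ and $\epsilon_0<\rho_0/3$, the triangle inequality yields $d(a_1,F_2),d(a_2,F_1),d(b_1,F_2)>2\rho_0/3$. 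Applying convexity of $t\mapsto d([x,y](t),F_2)$ with the ordering $a_2\in[x,a_1]$ and $d(a_2,F_2)\leq\epsilon_0<d(a_1,F_2)$, the function must be strictly increasing on $[a_1,y]$, so $d(y,F_2)>2\rho_0/3$. The mirror argument with $F_1,F_2$ swapped gives $d(x,F_1)>2\rho_0/3$.

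For (2), suppose for contradiction $b_1\in[x,b_2]$. Rerunning the convexity arguments along $[x,z]$ yields $d(x,F_2)>2\rho_0/3$ and $d(z,F_1)>2\rho_0/3$. So $x$ is simultaneously far from both $F_1$ and $F_2$, yet $[x,a_1]$ contains an interior point $a_2$ within $\epsilon_0$ of $F_2$ and $[x,b_2]$ contains an interior point $b_1$ within $\epsilon_0$ of $F_1$. Splitting according to whether the endpoints of $[x,a_1]$ (resp.\ $[x,b_2]$) lie on the same or opposite sides of $F_2$ (resp.\ $F_1$), I would apply the contrapositive of Lemma~\ref{key hyp lem}~(2) in the same-side case (forcing $d(\Proj_{F_2}(x),\Proj_{F_2}(a_1))$ to exceed any prescribed threshold) or a direct crossing analysis using convexity in the opposite-side case, obtaining in either case strong positioning constraints on the projections $\Proj_{F_i}(x)$. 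Lemma~\ref{hyperbolicity of right triangles} then locates these projections close to the geodesics emanating from $x$; combined with $d(F_1,F_2)>\rho_0$, the required simultaneous configuration around $x$ becomes geometrically impossible, delivering the contradiction.

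For (1), suppose there exists $c\in[y,z]$ with $d(c,F_2)\leq\epsilon_0$. By the intermediate value theorem, choose $a_2^{\sharp}\in[a_2,y]$ and $y^{\sharp}\in[y,c]$ to be the points closest to $y$ with $d(\cdot,F_2)=2\epsilon_0$; then on $(a_2^{\sharp},y]$ and $[y,y^{\sharp})$ the distance to $F_2$ strictly exceeds $2\epsilon_0$, and both $a_2^{\sharp},y^{\sharp}$ lie on the same side of $F_2$ as $y$. The contrapositive of Lemma~\ref{key hyp lem}~(1) applied to $[a_2^{\sharp},y]$ and $[y,y^{\sharp}]$ gives $d(\Proj_{F_2}(a_2^{\sharp}),\Proj_{F_2}(y))\leq R_1(2\epsilon_0)$ and $d(\Proj_{F_2}(y),\Proj_{F_2}(y^{\sharp}))\leq R_1(2\epsilon_0)$, hence by the triangle inequality $d(\Proj_{F_2}(a_2^{\sharp}),\Proj_{F_2}(y^{\sharp}))\leq 2R_1(2\epsilon_0)$. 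Lemma~\ref{key hyp lem}~(2) then yields $d([a_2^{\sharp},y^{\sharp}],F_2)\geq c_1(2\epsilon_0,2R_1(2\epsilon_0))$. The contradiction will be extracted by combining these bounds with the position of $y$ at the common corner of the broken path $a_2^{\sharp}\to y\to y^{\sharp}$: a CAT(0) comparison together with Lemma~\ref{hyperbolicity of right triangles} applied to the right triangles with legs in $F_2$ forces a relationship between $d(y,[a_2^{\sharp},y^{\sharp}])$ and the projection distances that is incompatible with $d(y,F_2)>2\rho_0/3$ once $\epsilon_0$ is small enough.

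The principal technical difficulty in both parts will be passing from the qualitative ``$\epsilon_0$-close approach'' data to rigorous positioning statements about actual (approximate) crossings and separations relative to $F_1$ and $F_2$. This requires a delicate calibration of $\epsilon_0$ against the behavior of $R_1$ and $c_1$ near zero, and critically exploits both the uniform gap $d(F_1,F_2)>\rho_0$ and the isolated property of $N$ (which is what feeds the hyperbolicity of right triangles into the argument).
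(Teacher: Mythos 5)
Your preliminary convexity observations are correct, and the calibration strategy (make $\epsilon_0$ small against $R_1$, $c_1$) is exactly what the paper uses. However, there is a fatal gap in Part~(1), and Part~(2) is left too vague to verify.

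\textbf{Part (1).} Your argument for (1) never uses the $F_1$ data ($a_1$, $b_1$) — you work only with $a_2$, $c$, and $F_2$, building projection bounds around $y$. But the conclusion $d([y,z],F_2)>\epsilon_0$ is \emph{false} if you strip away the $F_1$ hypothesis: there is nothing to prevent a geodesic triangle from having two of its sides (here $[x,y]$ and $[y,z]$) each come $\epsilon_0$-close to $F_2$ while the common vertex $y$ stays far away. The inequality $d([a_2^{\sharp},y^{\sharp}],F_2)\geq c_1(2\epsilon_0,2R_1(\epsilon_0))$ you derive is just a harmless lower bound; there is no tension with $d(y,F_2)>2\rho_0/3$, and no contradiction is forthcoming. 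The role of $F_1$ is essential: it ``blocks'' the triangle. The paper's proof observes that $a_2$ and $c$ are both $\epsilon$-close to $F_2$, hence the whole segment $[a_2,c]$ is $\epsilon$-close to $F_2$ and therefore stays at distance $>2\rho_0/3$ from $F_1$. One first shows $d([y,c],F_1)\neq d(c,F_1)$ (else $y,c$ would be on the same side of $F_1$ as $a_2$ with close $F_1$-projections, making $[y,a_2]$ far from $F_1$ and contradicting $d(a_1,F_1)\leq\epsilon$), and then that $a_2,c,z,x$ all lie on the same side of $F_1$. This lets you chain projections onto $F_1$ from $x$ through $a_2$ and $c$ to $z$, giving $d(\Proj_{F_1}(x),\Proj_{F_1}(z))\leq 3R_1(2\rho_0/3)$, whence $d([x,z],F_1)\geq c_1(2\rho_0/3,3R_1(2\rho_0/3))$, contradicting $d(b_1,F_1)\leq\epsilon_0$. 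The contradiction lives on $F_1$, not on $F_2$.

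\textbf{Part (2).} Your preliminary deductions $d(x,F_1),d(x,F_2),d(z,F_1)>2\rho_0/3$ are correct. But the proposed endgame — ``the required simultaneous configuration around $x$ becomes geometrically impossible'' via Lemma~\ref{hyperbolicity of right triangles} — is not an argument; it is an assertion. You would need to specify what quantities clash and why. The paper's route is shorter and only uses $F_1$: from $d(a_1,F_1)\leq\epsilon$ and $a_2\in[x,a_1]$ one gets $d([x,a_2],F_1)=d(a_2,F_1)>2\rho_0/3$, so $x,a_2$ are on the same side of $F_1$; from $d(a_2,F_2),d(b_2,F_2)\leq\epsilon$ and convexity one gets $d([a_2,b_2],F_1)>2\rho_0/3$, so $a_2,b_2$ are on the same side of $F_1$. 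Lemma~\ref{key hyp lem}~(1) then bounds $d(\Proj_{F_1}(x),\Proj_{F_1}(b_2))<2R_1(2\rho_0/3)$, and Lemma~\ref{key hyp lem}~(2) gives $d([x,b_2],F_1)\geq c_1(2\rho_0/3,2R_1(2\rho_0/3))$, which contradicts $d(b_1,F_1)\leq\epsilon_0$ once $\epsilon_0$ is below that threshold (and $b_1\in[x,b_2]$). If you intend to make your two-projection approach rigorous, you would need a lemma quantifying the incompatibility of close $F_1$- and $F_2$-encounters near $x$; as written the proposal does not supply it.
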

\begin{proof}
\begin{enumerate}
\item[(1)] If not, for any $0<\epsilon<\rho_0/3$, we can find distinct points $x,y,z\in X$, distinct $F_1, F_2\in \Gamma F$ and $a_1,a_2\in[x,y]$, $b_1\in[x,z]$, $c\in[y,z]$ such that $d(c, F_2)\leq \epsilon$ and
\begin{align*}
\begin{cases}
\displaystyle a_2\in[x,a_1],~d(a_2, F_2)\leq \epsilon,\\
\displaystyle d(a_1, F_1)\leq \epsilon,~d(b_1, F_1)\leq \epsilon,
\end{cases}
\end{align*}
\begin{figure}[h]
	\centering
	\includegraphics[scale=1]{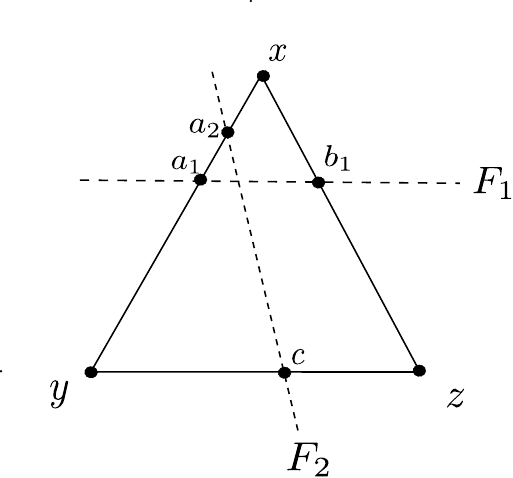}
	\caption{ \label{prop4.11-1}}
\end{figure}
(See Figure \ref{prop4.11-1}.) We first claim that $d([y,c], F_1)\neq d(c, F_1)$:

If not,
$$d([y,c], F_1)=d(c, F_1)\geq d(F_2, F_1)-d(c, F_2)>2\rho_0/3>0,$$ which implies that $y,c$ are on the same side of $F_1$. Notice that $d(F_1, F_2)>\rho_0$ and that $d([a_2,c],F_2)\leq\max\{d(a_2,F_2),d(c,F_2)\}\leq\epsilon$, we have for any $q\in[a_2,c]$,
$$d(q, F_1)\geq d(F_2, F_1)-d(q, F_2)>2\rho_0/3,$$
and hence $d([a_2,c],F_1)>2\rho_0/3$. In particular, $a_2$ and $c$ are on the same side of $F_1$.
By the first assertion in Lemma \ref{key hyp lem},
$$d(\Proj_{F_1}(y),\Proj_{F_1}(a_2))\leq d(\Proj_{F_1}(y),\Proj_{F_1}(c))+d(\Proj_{F_1}(c),\Proj_{F_1}(a_2))< 2R_1(2\rho_0/3).$$
By the second assertion in Lemma \ref{key hyp lem}, we have
$$d([y,a_2],F_1)\geq c_1(2\rho_0/3,2R_1(2\rho_0/3)).$$
This contradicts $\epsilon\geq d(a_1,F_1)\geq d([y,a_2],F_1)$ and the arbitrariness of $\epsilon>0$.

We then claim that $a_2,c,z,x$ are on the same side of $F_1$: (See Figure \ref{prop4.11-1}.)

If $[x,y]$ intersects $F_1$, by the fact that $d(F_1, F_2)>\rho_0$ and the convexity of distance functions, $[x,a_2]$ does not intersect $F_1$. Hence $y$ and the $2\rho_0/3$-neighborhood of $F_2$ are on opposite sides of $F_1$. This implies that $[y,c]$ intersects $F_1$ and $[c,z]$ does not intersect $F_1$. Therefore $a_2,c,z,x$ are on the same side of $F_1$.

If $[x,y]$ does not intersect $F_1$, then $x,a_2,y,c$ are on the same side of $F_1$. If $[y,z]$ does not intersect $F_1$, then $[x,y],[y,z],[z,x]$ are all on the same side of $F_1$. In particular $a_2,c,z,x$ are on the same side of $F_1$. If $y$ and $z$ are on opposite sides of $F_1$, then $F_1$ intersects $[c,z]$. In particular, by convexity of distance functions in nonpositive curvature, $d([y,c], F_1)=d(c, F_1)>2\rho_0/3,$ which is impossible as discussed previously.

We have proved that $a_2,c,z,x$ are on the same side of $F_1$ and $d([y,c], F_1)\neq d(c, F_1)$. By convexity of distance functions in $X$ and the fact that $d([y,c], F_1)\neq d(c, F_1)$, we have
$$d([c,z], F_1)=d(c, F_1)\geq d(F_2, F_1)-d(c, F_2)>2\rho_0/3.$$
Similarly,
$$d([a_2,x],F_1)=d(a_2,F_1)\geq d(F_2, F_1)-d(a_2, F_2)>2\rho_0/3.$$
Notice that
$$d([c,a_2], F_1)\geq \min_{q\in[c ,a_2]}\{d(F_2, F_1)-d(q, F_2)\}=d(F_2,F_1)-\max_{q\in[c,a_2]}(d(q,F_2))>2\rho_0/3.$$
Therefore by the first assertion in Lemma \ref{key hyp lem},
\begin{align*}
&~d(\Proj_{F_1}(x),\Proj_{F_1}(z))\\
\leq &~d(\Proj_{F_1}(x),\Proj_{F_1}(a_2))+d(\Proj_{F_1}(a_2),\Proj_{F_1}(c))+d(\Proj_{F_1}(c),\Proj_{F_1}(z))\leq 3R_1(2\rho_0/3).
\end{align*}
By the second assertion Lemma \ref{key hyp lem},
$$d([x,z],F_1)\geq c_1(2\rho_0/3,3R_1(2\rho_0/3)).$$
This contradicts $\epsilon\geq d(b_1,F_1)\geq d([x,z],F_1)$ and the arbitrariness of $\epsilon>0$.
\item[(2)] If not, for any $0<\epsilon<\rho_0/3$, we can find distinct points $x,y,z\in X$, distinct $F_1, F_2\in \Gamma F$ and $a_1,a_2\in[x,y]$, $b_1,b_2\in[x,z]$ such that $d(b_2, F_2)\leq \epsilon$, $b_1\in[x,b_2]$ and
\begin{align*}
\begin{cases}
\displaystyle a_2\in[x,a_1],~d(a_2, F_2)\leq \epsilon,\\
\displaystyle d(a_1, F_1)\leq \epsilon,~d(b_1, F_1)\leq \epsilon,
\end{cases}
\end{align*}
\begin{figure}[h]
	\centering
	\includegraphics[scale=1]{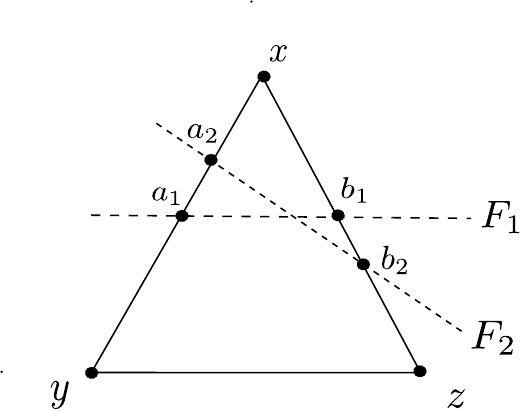}
	\caption{ \label{prop4.11-2}}
\end{figure}
(See Figure \ref{prop4.11-2}.) Since $d(a_1,F_1)\leq \epsilon< 2\rho_0/3< d(F_1,F_2)-d(a_2, F_2)\leq d(a_2, F_1)$, we have $d([x,a_2],F_1)=d(a_2, F_1)>2\rho_0/3$. This implies that $x$ and $a_2$ are on the same side of $F_1$. Notice that $d(F_1, F_2)>\rho_0$ and that $\max_{q\in[a_2,b_2]}d(q,F_2)=\max\{d(a_2,F_2),d(b_2,F_2)\}\leq\epsilon$ (due to nonpositive curvature), we have
$$d([a_2,b_2], F_1)\geq \min_{q\in[a_2,b_2]}\{d(F_2, F_1)-d(q, F_2)\}=d(F_2, F_1)-\max_{q\in[a_2,b_2]}d(q,F_2)>2\rho_0/3,$$
and hence $a_2,b_2$ are on the same side of $F_1$. By the first assertion in Lemma \ref{key hyp lem},
$$d(\Proj_{F_1}(x),\Proj_{F_1}(b_2))\leq d(\Proj_{F_1}(x),\Proj_{F_1}(a_2))+d(\Proj_{F_1}(a_2),\Proj_{F_1}(b_2))< 2R_1(2\rho_0/3).$$
By the second assertion in Lemma \ref{key hyp lem}, we have
$$d([x,b_2],F_1)\geq c_1(2\rho_0/3,2R_1(2\rho_0/3)).$$
This contradicts $\epsilon\geq d(b_1,F_1)\geq d([x,b_2],F_1)$ and the arbitariness of $\epsilon>0$.\qedhere
\end{enumerate}
\end{proof}
\begin{rmk}
From the proof, we can choose
$$\epsilon_0=\min\{c_1(2\rho_0/3,2R_1(2\rho_0/3)),c_1(2\rho_0/3,3R_1(2\rho_0/3)),\rho_0/3\}/2,$$
which only depends on the Riemannian manifold $M$.
\end{rmk}

\begin{corollary}\label{almost ints type property}
Let $F_1\neq F_2\in\Gamma F$ $\epsilon$-almost separate $\Db^k(p_0,...,p_k)$ for some $0\leq \epsilon<\epsilon_0$. For simplicity, we denote $V=\{p_0,...,p_k\}$. For any $\epsilon$-almost separation types $\stype{I_j}{V}$ of $F_j$ with respect to $\Delta^k(p_0,...,p_k)$, there exist $I_j'\in\stype{I_j}{V}$ such that $I_1'\ints I_2'=\emptyset$.
\end{corollary}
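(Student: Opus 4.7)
The plan is a proof by contradiction that reduces the claim to a combinatorial pigeonholing against the two ``almost separations.'' Since $\stype{I_j}{V}=\{I_j,V\setminus I_j\}$, the existence of disjoint representatives is equivalent to the assertion that at least one of the four ``quadrants''
\[
A_{00}=(V\setminus I_1)\cap(V\setminus I_2),\; A_{01}=(V\setminus I_1)\cap I_2,\; A_{10}=I_1\cap(V\setminus I_2),\; A_{11}=I_1\cap I_2
\]
is empty. So I would assume for contradiction that all four quadrants contain vertices of the simplex, and derive a contradiction from Proposition \ref{almost ints positioning}.

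Pick vertices $x\in A_{11}$ and $y\in A_{00}$. Since $x,y$ lie on opposite sides of both $I_1$ and $I_2$, both $F_1$ and $F_2$ $\epsilon$-almost separate $[x,y]$, giving $a_1,a_2\in[x,y]$ with $d(a_1,F_1)\leq\epsilon$ and $d(a_2,F_2)\leq\epsilon$. Along $[x,y]$, exactly one of $a_2\in[x,a_1]$ or $a_1\in[x,a_2]$ holds; after swapping the roles of $F_1$ and $F_2$ if necessary, I may assume $a_2\in[x,a_1]$. Now choose $z$ in the quadrant $A_{01}$ (if one swapped, one would instead use $A_{10}$, by symmetry). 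Then $x\in I_1$ and $z\in V\setminus I_1$ force $F_1$ to $\epsilon$-almost separate $[x,z]$, yielding $b_1\in[x,z]$ with $d(b_1,F_1)\leq\epsilon$; and $y\in V\setminus I_2$, $z\in I_2$ force $F_2$ to $\epsilon$-almost separate $[y,z]$, yielding $c\in[y,z]$ with $d(c,F_2)\leq\epsilon$.

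Since $\epsilon<\epsilon_0$, all hypotheses of Proposition \ref{almost ints positioning}(1) are now met: $a_2\in[x,a_1]$, $d(a_2,F_2)\leq\epsilon_0$, $d(a_1,F_1)\leq\epsilon_0$, $d(b_1,F_1)\leq\epsilon_0$, and $F_1\neq F_2$. The conclusion is $d([y,z],F_2)>\epsilon_0$, which directly contradicts $c\in[y,z]$ with $d(c,F_2)\leq\epsilon<\epsilon_0$. Hence some $A_{ij}$ is empty, and choosing $I_1'$ to be $I_1$ or $V\setminus I_1$ and $I_2'$ to be $I_2$ or $V\setminus I_2$ according to which quadrant vanishes produces the required disjoint pair.

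The key design choice — and the main obstacle — is correlating which side of $I_2$ the third vertex $z$ sits on with the order of $a_1,a_2$ along $[x,y]$; this is precisely what the $F_1\leftrightarrow F_2$ symmetry accomplishes, ensuring that whichever of $A_{01},A_{10}$ we consult, Proposition \ref{almost ints positioning}(1) applies cleanly. A minor non-degeneracy point is that Proposition \ref{almost ints positioning} requires $x,y,z$ to be distinct points of $X$; this is automatic in the generic setting since $x,y,z$ lie in pairwise disjoint subsets of $V$ and the vertices of the simplices we apply the corollary to are distinct. Part (2) of Proposition \ref{almost ints positioning} is not needed for this corollary.
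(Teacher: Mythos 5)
Your proof is correct, and it uses the same overall strategy as the paper (argue by contradiction from nonemptiness of all four ``quadrants'' $I_1\cap I_2$, $I_1\cap(V\setminus I_2)$, $(V\setminus I_1)\cap I_2$, $(V\setminus I_1)\cap(V\setminus I_2)$, then invoke Proposition~\ref{almost ints positioning}), but your execution of the final contradiction is genuinely different and a bit leaner. The paper picks one vertex from each of the four quadrants, applies Proposition~\ref{almost ints positioning} twice (once with the triple $x_{11},x_{22},x_{12}$ and once with $x_{11},x_{22},x_{21}$) to pin down the ordering of the closest points $a_1,a_2$ to $F_1,F_2$ along $[x_{11},x_{22}]$ from both ends, deduces $a_1=a_2$, and then concludes $d(F_1,F_2)\leq 2\epsilon<\rho_0$, contradicting the separation constant $\rho_0$. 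You instead notice that whichever way $a_1,a_2$ are ordered along $[x,y]$, one can \emph{choose} the third vertex $z$ from the appropriate one of the two ``mixed'' quadrants so that a single application of part~(1) of the proposition immediately contradicts the hypothesis $d([y,z],F_2)\leq\epsilon$; the $F_1\leftrightarrow F_2$ symmetry lets you reduce to one case. This avoids both the second invocation of the proposition and any appeal to $\rho_0$, and uses only three of the four quadrant vertices per case. One cosmetic nit: you say ``exactly one of $a_2\in[x,a_1]$ or $a_1\in[x,a_2]$ holds,'' but both hold if $a_1=a_2$; this does not affect the argument since either ordering suffices to apply the proposition.
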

\begin{proof}
If not, we can choose $x_{11}\in I_1\ints I_2$, $x_{12}\in I_1\ints(V\setminus I_2)$, $x_{21}\in I_2\ints(V\setminus I_1)$, $x_{22}\in(V\setminus I_1)\ints(V\setminus I_2)$. Then $d([x_{1j},x_{2l}], F_1)\leq \epsilon$ and $d([x_{j1},x_{l2}], F_2)\leq \epsilon$, $1\leq j,l\leq 2$. Let $a_j$ be the closest point on $[x_{11},x_{22}]$ to $F_j$, $j=1,2$ respectively. Then by Proposition \ref{almost ints positioning},
\begin{equation*}
\left.
\begin{aligned}
F_2~\epsilon\textrm{-}\mathrm{close~to}~[x_{11},x_{12}]~\mathrm{and}~[x_{11},x_{22}] \\
F_1~\epsilon\textrm{-}\mathrm{close~to}~[x_{12},x_{22}]~\mathrm{and}~[x_{11},x_{22}]
\end{aligned}
\right\}\implies a_2 \in [a_1,x_{11}]
\end{equation*}
and
\begin{equation*}
\left.
\begin{aligned}
F_2~\epsilon\textrm{-}\mathrm{close~to}~[x_{11},x_{22}]~\mathrm{and}~[x_{21},x_{22}] \\
F_1~\epsilon\textrm{-}\mathrm{close~to}~[x_{11},x_{21}]~\mathrm{and}~[x_{11},x_{22}]
\end{aligned}
\right\}\implies a_2 \in [a_1,x_{22}].
\end{equation*}
Hence $a_1=a_2$ and $d(F_1, F_2)\leq 2\epsilon<\rho_0$, which is impossible.
\end{proof}

\begin{notation}\label{Fx def}
For any $x\in X$ such that there exist some $\hF\in \Gamma F$ satisfying $d(x,\hF)\leq\epsilon_0/2$, we define $F_x:=\hF$. Since $d(F_1, F_2)>\rho_0\geq 3\epsilon_0$ for any $F_1\neq F_2\in\Gamma F$, the choice of $\hF$ is unique. This implies that $F_x$ is well-defined.

Fix some $x_0\in X$ such that $d(x_0,F)=\epsilon_0/2$, where $\epsilon_0$ is introduced in Proposition \ref{almost ints positioning}. In particular, for any $x\in \Gamma x_0$, we have $d(x,F_x)=\epsilon_0/2$.
\end{notation}
\begin{lemma}\label{prep1}
For any $F_1\in\Gamma F$, $m\in\ZZ_{\geq 0}$, $p_1,p_2,a_1,a_2\in X$ such that
\begin{enumerate}
\item[(1).] $d([p_j,a_j],F_1)=d(a_j,F_1)=\epsilon_0/2$, $j=1,2$.
\item[(2).] There exist $p_1=q_0,q_1,...,q_m,q_{m+1}=p_2\in X$ such that $d([q_j,q_{j+1}],F_1)>2\rho_0/3$.
\end{enumerate}
Then there exists $\cC_2(\rho_0,\epsilon_0,m)>0$ such that
$$d(\Proj_{F_1}(a_1),\Proj_{F_1}(a_2))\leq \cC_2(\rho_0,\epsilon_0,m).$$
\end{lemma}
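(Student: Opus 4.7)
The plan is to use the telescoped chain $a_1, p_1 = q_0, q_1, \ldots, q_m, q_{m+1} = p_2, a_2$ and apply Lemma~\ref{key hyp lem}(1) to each consecutive pair, then sum via the triangle inequality on $F_1$.

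First, for each of the $m+1$ consecutive pairs $q_j, q_{j+1}$ coming from hypothesis (2), we have $d([q_j, q_{j+1}], F_1) > 2\rho_0/3 > 0$, so in particular $q_j, q_{j+1} \notin F_1$ and the projections $\Proj_{F_1}(q_j)$ are well-defined. Applying the contrapositive form of Lemma~\ref{key hyp lem}(1) with $\epsilon = 2\rho_0/3$ gives $d(\Proj_{F_1}(q_j), \Proj_{F_1}(q_{j+1})) < R_1(2\rho_0/3)$. The triangle inequality along $F_1$ then yields
\[
d(\Proj_{F_1}(p_1), \Proj_{F_1}(p_2)) < (m+1)\, R_1(2\rho_0/3).
\]

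Next, for the endpoints, hypothesis (1) gives $d([p_j, a_j], F_1) = \epsilon_0/2$ for $j = 1, 2$. In particular $d(a_j, F_1) = \epsilon_0/2 > 0$ and $d(p_j, F_1) \geq d([q_0, q_1], F_1) > 2\rho_0/3 > 0$ (and similarly for $j=2$), so both $p_j, a_j \notin F_1$. Since $\epsilon_0/2 > \epsilon_0/4$, Lemma~\ref{key hyp lem}(1) applied with $\epsilon = \epsilon_0/4$ gives $d(\Proj_{F_1}(p_j), \Proj_{F_1}(a_j)) < R_1(\epsilon_0/4)$ for each $j = 1, 2$.

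Combining the three estimates by the triangle inequality,
\[
d(\Proj_{F_1}(a_1), \Proj_{F_1}(a_2)) < 2\, R_1(\epsilon_0/4) + (m+1)\, R_1(2\rho_0/3),
\]
so we may set $\cC_2(\rho_0, \epsilon_0, m) := 2R_1(\epsilon_0/4) + (m+1)R_1(2\rho_0/3)$. There is no real obstacle here: the hypotheses are engineered so that every link of the chain has strictly positive distance to $F_1$, which is exactly the regime in which Lemma~\ref{key hyp lem}(1) provides a uniform bound on projection displacement. The role of $m$ in the constant reflects that we are summing over the $m+1$ links of the given chain.
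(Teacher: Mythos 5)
Your proof is correct and is essentially identical to the paper's argument: the paper also projects the chain $a_1,\,p_1=q_0,\,q_1,\dots,q_{m+1}=p_2,\,a_2$ onto $F_1$, applies Lemma~\ref{key hyp lem}(1) in contrapositive form with $\epsilon = 2\rho_0/3$ for each link of the chain and with $\epsilon = \epsilon_0/4$ at the two endpoints, and sums by the triangle inequality to obtain exactly the constant $\cC_2(\rho_0,\epsilon_0,m) = 2R_1(\epsilon_0/4) + (m+1)R_1(2\rho_0/3)$.
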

\begin{proof}
By the assumptions of this lemma, $a_1,a_2,q_0,...,q_{m+1}$ are on the same side of $F_1$. Let $y_j:=\Proj_{F_1}(q_j)$, $j=0,...,m+1$. By the first assertion in Lemma \ref{key hyp lem}, $d(y_j,y_{j+1})\leq R_1(2\rho_0/3)$ for any $0\leq j\leq m$ and $d(\Proj_{F_1}(a_l),\Proj_{F_1}(p_l))\leq R_1(\epsilon_0/4)$, $ l=1,2$. (See Figure \ref{lem4.14}.) Hence
\begin{figure}[h]
	\centering
	\includegraphics[width=4in]{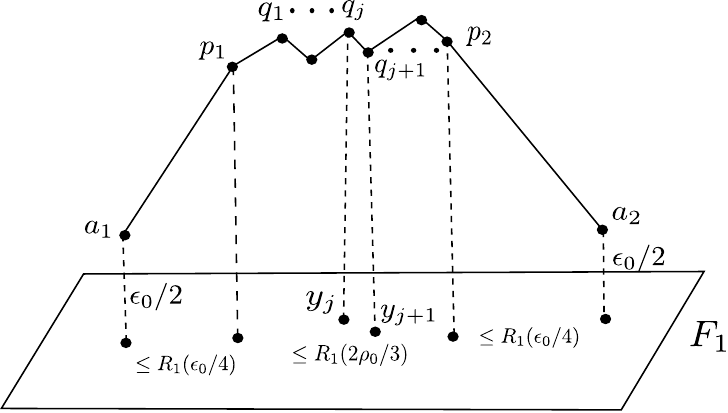}
	\caption{ \label{lem4.14}}
\end{figure}
\begin{align*}
&d(\Proj_{F_1}(a_1),\Proj_{F_1}(a_2))\\
\leq&d(\Proj_{F_1}(a_1),\Proj_{F_1}(p_1))+d(\Proj_{F_1}(a_2),\Proj_{F_1}(p_2))+d(\Proj_{F_1}(p_1),\Proj_{F_1}(p_2))\\
\leq&d(\Proj_{F_1}(a_1),\Proj_{F_1}(p_1))+d(\Proj_{F_1}(a_2),\Proj_{F_1}(p_2))+\sum_{j=0}^m d(y_j,y_{j+1}) \\
\leq& 2R_1(\epsilon_0/4)+(m+1)R_1(2\rho_0/3)=:\cC_2(\rho_0,\epsilon_0,m).
\end{align*}
\end{proof}
\begin{lemma}\label{prep2}
Let $F_1\in\Gamma F$ and $0<\epsilon\leq \epsilon_0$. Suppose $p_1,p_2,q_1,q_2\in X$ satisfies
\begin{enumerate}
\item[(1).] $d([p_1,p_2],F_1)>2\rho_0/3$ and $d([q_1,q_2], F_1)>2\rho_0/3$.
\item[(2).] $d([p_1,q_1],F_1)<c_4(\epsilon,\rho_0)$, where
$$c_4(\epsilon,\rho_0):=\min\left\{\frac{\epsilon}{4},c_1\left(\frac{2\rho_0}{3}, R_1\left(\frac{\epsilon}{4}\right)+1+2R_1\left(\frac{2\rho_0}{3}\right)\right)\right\}.$$
\end{enumerate}
Then $d([p_2,q_2],F_1)\leq \epsilon/4$. (See Figure \ref{lem4.15}.)
\end{lemma}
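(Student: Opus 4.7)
The plan is to split into two cases based on whether $p_1, q_1$ lie on the same side of $F_1$, and then exploit Lemma \ref{key hyp lem} in both directions (its two parts are essentially contrapositives of each other in the quantitative sense).

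First I would record a structural observation: since $d([p_1,p_2], F_1) > 2\rho_0/3 > 0$, the segment $[p_1,p_2]$ does not meet $F_1$, so $p_1$ and $p_2$ lie on the same side of $F_1$; moreover $d(p_j, F_1) > 2\rho_0/3$ for $j=1,2$. The same holds for $q_1,q_2$. Recall that $F_1$ separates $X$ into exactly two connected components (noted in the paper just before Lemma \ref{key hyp lem}).

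Case 1 (opposite sides): if $p_1$ and $q_1$ lie on opposite sides of $F_1$, then $p_2$ lies on the same side as $p_1$ and $q_2$ lies on the same side as $q_1$, so $p_2$ and $q_2$ lie on opposite sides, forcing $[p_2,q_2] \cap F_1 \neq \emptyset$ and hence $d([p_2,q_2], F_1) = 0 \leq \epsilon/4$.

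Case 2 (same side): if $p_1, q_1$ lie on the same side of $F_1$, then the contrapositive of Lemma \ref{key hyp lem}, assertion \hyperlink{hyp-2}{(2)}, applied with $r = 2\rho_0/3$ and $R = R_1(\epsilon/4) + 1 + 2R_1(2\rho_0/3)$, together with the hypothesis
\[
d([p_1,q_1], F_1) < c_4(\epsilon, \rho_0) \leq c_1\!\left(\tfrac{2\rho_0}{3}, R_1(\epsilon/4) + 1 + 2R_1(2\rho_0/3)\right),
\]
yields
\[
d(\Proj_{F_1}(p_1), \Proj_{F_1}(q_1)) > R_1(\epsilon/4) + 1 + 2R_1(2\rho_0/3).
\]
On the other hand, since $d([p_1,p_2], F_1), d([q_1,q_2], F_1) > 2\rho_0/3$, the contrapositive of Lemma \ref{key hyp lem}, assertion \hyperlink{hyp-1}{(1)}, gives
\[
d(\Proj_{F_1}(p_1), \Proj_{F_1}(p_2)) < R_1(2\rho_0/3), \qquad d(\Proj_{F_1}(q_1), \Proj_{F_1}(q_2)) < R_1(2\rho_0/3).
\]
The triangle inequality then produces
\[
d(\Proj_{F_1}(p_2), \Proj_{F_1}(q_2)) > R_1(\epsilon/4) + 1 > R_1(\epsilon/4),
\]
and another application of Lemma \ref{key hyp lem}, assertion \hyperlink{hyp-1}{(1)}, yields $d([p_2,q_2], F_1) \leq \epsilon/4$, as desired.

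There is no real obstacle; the constant $c_4(\epsilon, \rho_0)$ has been engineered exactly so that the two quantitative applications of Lemma \ref{key hyp lem} chain together via the triangle inequality. The only point worth double-checking is that the hypotheses of assertion \hyperlink{hyp-2}{(2)} are met (i.e.\ $d(p_1, F_1), d(q_1, F_1) \geq 2\rho_0/3$), which follows directly from the condition on the segments $[p_1,p_2]$ and $[q_1,q_2]$.
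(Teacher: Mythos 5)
Your proof is correct and follows essentially the same route as the paper's: split on whether $p_1, q_1$ lie on the same side of $F_1$, apply Lemma \ref{key hyp lem}(2) (in contrapositive form) to push $\Proj_{F_1}(p_1)$ and $\Proj_{F_1}(q_1)$ far apart, control $\Proj_{F_1}(p_1)$ vs.\ $\Proj_{F_1}(p_2)$ and $\Proj_{F_1}(q_1)$ vs.\ $\Proj_{F_1}(q_2)$ via Lemma \ref{key hyp lem}(1), and finish with the triangle inequality and a second application of Lemma \ref{key hyp lem}(1). The structural observation at the start (that condition (1) forces each pair onto a single side of $F_1$ and gives the distance lower bound $d(p_j, F_1), d(q_j, F_1) > 2\rho_0/3$) is exactly the hypothesis check the paper also makes, so nothing is missing.
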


\begin{figure}[h]
	\centering
	\includegraphics[width=4in]{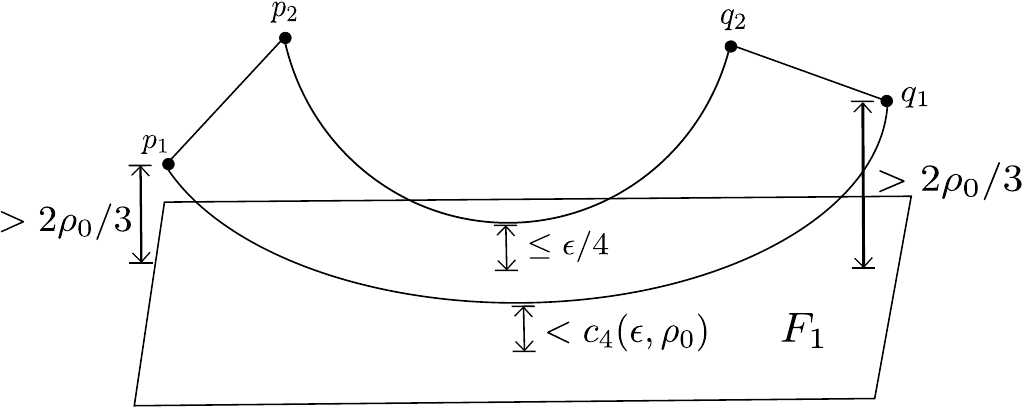}
	\caption{ \label{lem4.15}}
\end{figure}

\begin{proof}
If $p_1,q_1$ are on opposite sides of $F_1$, notice that $p_1,p_2$ are on the same side of $F_1$ and $q_1,q_2$ are on the same side of $F_1$, we have $p_2,q_2$ are on opposite sides of $F_1$. Hence $d([p_2,q_2],F_1)=0$ and the lemma follows.

Now we assume that $p_1,q_1$ are on the same side of $F_1$. Then $p_1,p_2,q_1,q_2$ are on the same side of $F_1$. By the second assertion in Lemma \ref{key hyp lem} (with $r=2\rho_0/3$ and $R=R_1(\epsilon/4)+1+2R_1(2\rho_0/3)$),
\begin{equation*}
\left.
\begin{aligned}
&d(p_1,F_1)>\frac{2\rho_0}{3},~d(q_1, F_1)>\frac{2\rho_0}{3},\\
&d([p_1,q_1],F_1)<c_1\left(\frac{2\rho_0}{3}, R_1\left(\frac{\epsilon}{4}\right)+1+2R_1\left(\frac{2\rho_0}{3}\right)\right)
\end{aligned}
\right\}\implies
\begin{aligned}
&d(\Proj_{F_1}(p_1),\Proj_{F_1}(q_1)) \\
>~&R_1\left(\frac{\epsilon}{4}\right)+1+2R_1\left(\frac{2\rho_0}{3}\right).
\end{aligned}
\end{equation*}

By the first assertion in Lemma \ref{key hyp lem},
\begin{align*}
d([p_1,p_2],F_1)>\frac{2\rho_0}{3}\implies d(\Proj_{F_1}(p_1),\Proj_{F_1}(p_2))<R_1\left(\frac{2\rho_0}{3}\right),\\
d([q_1,q_2],F_1)>\frac{2\rho_0}{3}\implies d(\Proj_{F_1}(q_1),\Proj_{F_1}(q_2))<R_1\left(\frac{2\rho_0}{3}\right).
\end{align*}
Hence by triangle inequality, we have
\begin{align*}
&d(\Proj_{F_1}(p_2),\Proj_{F_1}(q_2))\\
\geq& d(\Proj_{F_1}(p_1),\Proj_{F_1}(q_1))-d(\Proj_{F_1}(p_1),\Proj_{F_1}(p_2))-d(\Proj_{F_1}(q_1),\Proj_{F_1}(q_2))\\
>&R_1\left(\frac{\epsilon}{4}\right)+1+2R_1\left(\frac{2\rho_0}{3}\right)-2R_1\left(\frac{2\rho_0}{3}\right)=R_1\left(\frac{\epsilon}{4}\right)+1.
\end{align*}
By the first assertion in Lemma \ref{key hyp lem}, $d([p_2,q_2],F_1)\leq \epsilon/4$.
\end{proof}

\subsection{$\Omega(\cdot,\cdot)$, $\Theta(\cdot,\cdot)$ and their properties}\label{subsect:Omega and Theta} In this subsection, we introduce the aforementioned notion of ``almost between'' relations among elements in $\Gamma F$ mentioned in \hyperlink{idea-and-plan}{the outline and plan of the proof}.

Let
\begin{align}\label{eqn:epsilon 0-2}
\epsilon_1=c_4(\epsilon_0,\rho_0)\mathrm{~and~}\epsilon_2=c_4(\epsilon_1,\rho_0),
\end{align}
where $c_4(\cdot,\cdot)$ is introduced in Lemma \ref{prep2}. In particular, $\epsilon_2\leq \epsilon_1/4\leq\epsilon_0/16\leq \rho_0/48$.

For any $F_1, F_2\in\Gamma F$, we define $\Omega_0(F_1, F_2)\subset \Gamma F$ such that
$$\Omega_0(F_1, F_2)=\left\{\widehat F\in\Gamma F\left| \exists p_j\in X~\mathrm{s.t.~} d(p_j,F_j)\leq \epsilon_0/2, j=1,2,~\mathrm{and}~d([p_1,p_2],\widehat F)<\epsilon_2/2.\right.\right\}.$$
Clearly, $F_j\in\Omega_0(F_1, F_2)$, $j=1,2$ and $\Omega_0(F_1,F_1)=\{F_1\}$. Inductively we define
$$\Omega_k(F_1,F_2):=\bigcup_{F',F''\in\Omega_{k-1}(F_1,F_2)}\Omega_0(F',F'').$$
Then $\Omega_0(F_1,F_2),...,\Omega_k(F_1,F_2),...$ is an increasing sequence of subsets of $\Gamma F$. Finally, we define
$$\Omega(F_1,F_2):=\bigcup_{j=0}^\infty\Omega_{k}(F_1,F_2).$$
In particular, one can inductively show that $\Omega(F_1,F_1)=\{F_1\}$. One should think about $\Omega(F_1,F_2)$ as a notion of ``candidates'' in $\Gamma F$ which are almost between $F_1,F_2$. We have the following properties for $\Omega$.
\begin{lemma}\label{properties of Omega}
$\Omega(\cdot,\cdot)$ satisfies the following properties:
\begin{enumerate}
\item[\hypertarget{Omega-1}{($\Omega$1).}] For any $F'\in\Omega(F_1,F_2)\setminus\{F_1,F_2\}$ and any $p_j\in X$ such that $d(p_j, F_j)\leq \epsilon_0/2$, j=1,2, we have $d([p_1,p_2],F')\leq \epsilon_1/4$. (See \eqref{eqn:epsilon 0-2} for the definition of $\epsilon_0,\epsilon_1$ and $\epsilon_2$.) As a corollary, $|\Omega(F_1,F_2)|<\infty$;
\item[\hypertarget{Omega-2}{($\Omega$2).}] Let $F_1',...,F_k'$ be distinct elements in $\Gamma F\setminus \{F_1,F_2\}$ such that $\Omega(F_1,F_2)=\{F_1,F_1',...,F_k',F_2\}$. For any $p_j\in X$ such that $d(p_j,F_j)\leq \epsilon_0/2$ and for any $r_j\in[p_1,p_2]$ such that $d(r_j,F_j')\leq \epsilon_0/2$, $1\leq j\leq k$, if $r_i\in[p_1,r_j]$ whenever $i<j$, then for any $j\in\{1,...,k\}$, we have $\Omega(F_1,F_j')=\{F_1,F_1',...,F_j'\}$ and $\Omega(F_j',F_2)=\{F_j',...,F_k',F_2\}$. (Here, the existence of $r_j$ is guaranteed by \hyperlink{Omega-1}{property ($\Omega$1) of $\Omega(\cdot,\cdot)$} and \eqref{eqn:epsilon 0-2} for any $j\in\{1,...,k\}$.)

As a direct corollary, for any $F'\in\Omega(F_1,F_2)$, $\Omega(F_1,F_2)=\Omega(F_1, F')\union\Omega(F',F_2)$ and $\Omega(F_1,F')\ints\Omega(F', F_2)=\{F'\}$.
\end{enumerate}
\end{lemma}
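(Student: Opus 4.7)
The plan is to derive both parts from Lemmas~\ref{prep1} and~\ref{prep2} combined with convexity of the distance function to convex sets in nonpositive curvature. For~(\hyperlink{Omega-1}{$\Omega$1}), I induct on the minimal $k$ with $F' \in \Omega_k(F_1, F_2)$. In the base case $F' \in \Omega_0(F_1, F_2) \setminus \{F_1, F_2\}$ the definition supplies witnesses $p_1^{\star}, p_2^{\star}$ with $d(p_j^{\star}, F_j) \leq \epsilon_0/2$ and $d([p_1^{\star}, p_2^{\star}], F') < \epsilon_2/2$. Given any other admissible pair $p_1, p_2$, I apply Lemma~\ref{prep2} with its $F_1$ role played by $F'$, its ``horizontal'' segment $[p_1^{\star}, p_2^{\star}]$ (close to $F'$, since $\epsilon_2/2 < c_4(\epsilon_1, \rho_0)$), and its two ``vertical'' segments $[p_1^{\star}, p_1]$, $[p_2^{\star}, p_2]$, which by convexity stay within $\epsilon_0/2$ of $F_j$ and hence at distance $\geq \rho_0 - \epsilon_0/2 > 2\rho_0/3$ from $F'$. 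The conclusion of Lemma~\ref{prep2} yields $d([p_1, p_2], F') \leq \epsilon_1/4$. The inductive step is similar: if $F' \in \Omega_0(F'', F''')$ with $F'', F''' \in \Omega_{k-1}(F_1, F_2)$, the inductive hypothesis locates $q_1, q_2 \in [p_1, p_2]$ within $\epsilon_1/4 < \epsilon_0/2$ of $F''$ and $F'''$ respectively (or takes $q_j = p_j$ when $F'' \in \{F_1, F_2\}$), and the base-case argument applied on the sub-segment $[q_1, q_2]$ gives the bound. For finiteness, each $F' \in \Omega(F_1, F_2)$ contributes a point of $F'$ within $\epsilon_1/4$ of the compact segment $[p_1, p_2]$; these points lie in a fixed compact set while distinct flats in $\Gamma F$ are separated by $> \rho_0$, so a packing argument bounds their number.

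For~(\hyperlink{Omega-2}{$\Omega$2}) I proceed in three steps. First I establish the auxiliary inclusion $\Omega(F_1, F_j') \subseteq \Omega(F_1, F_2)$ by induction on level: for $F'' \in \Omega_0(F_1, F_j')$, since both $F_1$ and $F_j'$ already lie in $\Omega_M(F_1, F_2)$ for sufficiently large $M$, we obtain $F'' \in \Omega_{M+1}(F_1, F_2)$, and the inductive step propagates upward. Combined with~(\hyperlink{Omega-1}{$\Omega$1}) applied to the pair $(F_1, F_j')$ with the witnesses $q_1 = p_1, q_2 = r_j$, any $F'' \in \Omega(F_1, F_j') \setminus \{F_1, F_j'\}$ satisfies $d([p_1, r_j], F'') \leq \epsilon_1/4$ and simultaneously lies in $\{F_1, F_1', \ldots, F_k', F_2\}$. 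If $F'' = F_l'$ for some $l > j$, then convexity of $d(\cdot, F_l')$ along $[p_1, p_2]$, together with $d(r_j, F_j') \leq \epsilon_0/2$ and $d(F_j', F_l') > \rho_0$, forces the contradiction that $r_j$ itself is close to $F_l'$; the case $F'' = F_2$ is ruled out identically. Thus $\Omega(F_1, F_j') \subseteq \{F_1, F_1', \ldots, F_j'\}$.

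The reverse inclusion $\{F_1, F_1', \ldots, F_j'\} \subseteq \Omega(F_1, F_j')$ is where I expect the main technical obstacle. The endpoints $F_1$ and $F_j'$ lie in $\Omega_0(F_1, F_j')$ trivially (take a witness inside the target flat itself). For $F_i'$ with $1 \leq i < j$, the naive choice $q_1 = p_1, q_2 = r_j$ yields only $d([p_1, r_j], F_i') \leq \epsilon_0/2$, weaker than the $\epsilon_2/2$ threshold required by $\Omega_0$. To bootstrap, I take $q_1 \in F_1$ and $q_2 \in F_j'$ (so both endpoints have zero distance to their target flat), positioned near the orthogonal projections of $r_i$ using Lemma~\ref{hyperbolicity of right triangles}; Lemma~\ref{prep2} applied to the resulting chain of quadrilaterals pinned near the successive $r_l$'s then iteratively tightens the closeness bound from $\epsilon_0/2$ through $\epsilon_1/4$ down to $\epsilon_2/2$, while Lemma~\ref{prep1} controls the projection displacements and thereby bounds the number of iterations in terms of $|\Omega(F_1, F_2)|$. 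This places $F_i'$ in some $\Omega_m(F_1, F_j')$. The statement for $\Omega(F_j', F_2)$ follows by a symmetric argument, and the corollary on decomposition of $\Omega(F_1, F_2)$ at any $F'$ drops out by intersecting and unioning the two descriptions, with Proposition~\ref{almost ints positioning} reconciling the positional orderings on either side of $F'$.
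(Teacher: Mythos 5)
Your treatment of ($\Omega 1$) matches the paper: same base-case use of Lemma~\ref{prep2} with convexity, same level-induction, and an equivalent packing/separation argument for finiteness. Your Steps~1 and~2 for ($\Omega 2$) are also essentially the paper's (Step~2 is stated somewhat vaguely, but the sandwich argument you allude to --- $r_j$ trapped between $r_l'$ and $r_l$ on $[p_1,p_2]$, forcing $d(r_j,F_l')\le\epsilon_0/2$ --- is exactly what the paper does).

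The gap is in Step~3, the inclusion $\{F_1,F_1',\dots,F_j'\}\subseteq\Omega(F_1,F_j')$, and your instinct that this is "the main technical obstacle" is right. The bootstrap you propose --- iterating Lemma~\ref{prep2} to tighten a bound of $\epsilon_1/4$ or $\epsilon_0/2$ down to $\epsilon_2/2$ --- cannot work, because $c_4$ is chosen precisely so that $c_4(\epsilon,\rho_0)\le\epsilon/4$: Lemma~\ref{prep2} only \emph{transfers} a distance bound from one segment to a neighboring one, mapping a hypothesis of $c_4(\epsilon,\rho_0)$ to a conclusion of $\epsilon/4$, which is no smaller. Repeated applications cannot push you below the $\epsilon_2/2$ threshold in the definition of $\Omega_0$, so you will never exhibit $F_i'\in\Omega_m(F_1,F_j')$ this way.

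The paper avoids this entirely. It never improves any closeness bound; instead it inducts on the level $m$ for which $F_i'\in\Omega_m(F_1,F_2)$, tracing how $F_i'$ entered $\Omega$ through the recursive definition. At each level it takes the \emph{same} witnesses $q',q''$ that placed $F_i'$ into some $\Omega_0(F',F'')$, locates a point on $[q',q'']$ close to $F_j'$ (via ($\Omega 1$) and Lemma~\ref{prep2}), and then uses the linear order of the relevant close-points on $[q',q'']$ --- together with the already-established inclusion $\Omega(F_j',F_2)\subseteq\{F_j',\dots,F_k',F_2\}$ from Step~2 --- to force $F_i'$ onto the $F_1$-side, hence into $\Omega_0(\,\cdot\,,F_j')$ with the \emph{original} threshold $\epsilon_2/2$ intact. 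That use of the forward inclusion as a dichotomy tool, rather than any quantitative tightening, is the idea you are missing.
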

\begin{proof}
Since $\Omega(F_1,F_1)=\{F_1\}$, the lemma holds trivially if $F_1=F_2$. Hence we assume that $F_1\neq F_2$.
\begin{enumerate}
\item[($\Omega$1).] We prove inductively on $k$ for $F'\in\Omega_k(F_1,F_2)\setminus\{F_1,F_2\}$. When $k=0$, by definition of $\Omega_0(\cdot,\cdot)$, there exists  $q_j\in X$ with $d(q_j, F_j)\leq \epsilon_0/2$ such that $d([q_1,q_2],F')\leq \epsilon_2/2<c_4(\epsilon_1,\rho_0)$. By convexity of distance functions in $X$, $\max_{q\in[p_j,q_j]}d(q,F_j)=\max\{d(p_j,F_j),d(q_j,F_j)\}\leq \epsilon_0/2$. Since $F'\neq F_j$, $j=1,2$, $\max_{q\in[p_j,q_j]}d(q,F_j)\leq\epsilon_0/2$ implies that $d([p_j,q_j],F')\geq d(F',F_j)-(\max_{q\in[p_j,q_j]}d(q,F_j))>\rho_0-\epsilon_0/2>2\rho_0/3$, $j=1,2$. By Lemma \ref{prep2}, $d([p_1,p_2],F')\leq \epsilon_1/4$.

Suppose \hyperlink{Omega-1}{property ($\Omega$1) of $\Omega(\cdot,\cdot)$} is proved for any $F'\in \Omega_{k-1}(F_1,F_2)$, then for any $F'\in\Omega_k(F_1,F_2)\setminus\Omega_{k-1}(F_1,F_2)$, there exist some $F_1'\neq F_2'\in\Omega_{k-1}(F_1,F_2)$ such that $F'\in \Omega_0(F_1',F_2')\setminus\{F_1',F_2'\}$. ($F_1'\neq F_2'$ is because $\Omega(F_1',F_1')=\{F_1'\}$ and $F'\not\in\Omega_{k-1}(F_1,F_2)$.) By the case $k-1$ applied to $F_j'\in\Omega_{k-1}(F_1,F_2)$, there exists $p_j'\in[p_1,p_2]$ such that either $p_j'=p_l$ and $F_j'=F_l$ for some $l\in\{1,2\}$, or $d(p_j', F_j')\leq \epsilon_1/4$. In both cases, we have $d(p_j', F_j')\leq \epsilon_0/2$. By the case $0$ applied to $F'\in\Omega_0(F_1',F_2')\setminus\{F_1',F_2'\}$, $d([p_1',p_2'],F')\leq \epsilon_1/4$. Hence $d([p_1,p_2],F')\leq d([p_1',p_2'],F')\leq \epsilon_1/4$.

As a corollary, let $p_j\in X$ such that $d(p_j,F_j)\leq \epsilon_0/2$, $j=1,2$. For any $\widehat F\in\Omega(F_1,F_2)\setminus\{F_1,F_2\}$, let $r_{\widehat F}\in[p_1,p_2]$ such that $d(r_{\widehat F}, \widehat F)\leq \epsilon_1/4$. For simplicity, we set $r_{F_j}=p_j$, $j=1,2$. Then for any $F'\neq F''\in\Omega(F_1,F_2)$, $d(r_{F'}, r_{F''})>\rho_0-\epsilon_0\geq2\rho_0/3$. Hence $|\Omega(F_1,F_2)|\leq 1+((d(p_1,p_2)+\epsilon_0)/(2\rho_0/3))<\infty$.
\item[($\Omega$2).] The proof of $\Omega(F_j',F_2)=\{F_j',...,F_k',F_2\}$ is the same as $\Omega(F_1,F_j')=\{F_1,F_1',...,F_j'\}$ by switching the order of $F_1$ and $F_2$. Therefore we only need to prove $\Omega(F_1,F_j')=\{F_1,F_1',...,F_j'\}$.

\textbf{Proof of $\Omega(F_1,F_j')\subset\{F_1,F_1',...,F_j'\}$}: For simplicity, we write $F_{k+1}':=F_2$. If $F_i'\in \Omega(F_1,F_j')$ for some $i>j$, then by \hyperlink{Omega-1}{property ($\Omega$1) of $\Omega(\cdot,\cdot)$}, there exist $r_i'\in[p_1,r_j]$ such that $d(r_i',F_i')\leq \epsilon_1/4$. (Since $F_1\neq F_2$ and $F_2\neq F_j'$ by the assumption that $1\leq j\leq k$.) By convexity of distance functions in $X$, $d(q_i,F_i')\leq \max\{d(r_i,F_i'),d(r_i',F_i')\}\leq \epsilon_0/2$ for any $q_i\in[r_i',r_i]$. In particular, since $r_j\in[p_1,r_i]$ due to the assumptions on the ordering of $r_1,...,r_k$ and the assumption that $i>j$, $r_j\in[r_i',r_i]$ and hence $d(r_j,F_i')\leq \epsilon_0/2$. This implies $d(F_j',F_i')\leq d(F_j',r_j)+d(F_i',r_j)\leq \epsilon_0$. Since $F_i'\neq F_j'$ and hence $d(F_j',F_i')>\rho_0>\epsilon_0$. This leads to a contradiction. Similarly, $\Omega(F_j',F_2)\subset\{F_j',...F_k',F_2\}$.

\textbf{Proof of $\{F_1,F_1',...,F_j'\}\subset\Omega(F_1,F_j')$}: Since $\Omega(F_1,F_2)$ is the union of $\Omega_m(F_1,F_2)$, $m\geq0$, it suffices for us to prove the following \emph{claim}:
\begin{center}
If $i<j$ and $F_i'\in \Omega_m(F_1,F_2)$ for some $m\geq 0$, then $F_i'\in \Omega(F_1,F_j')$.
\end{center}
We prove this using induction on $m$. If $m=0$, then by definition of $\Omega_0$, there exist $q_l\in X$ such that $d(q_l,F_l)\leq \epsilon_0/2$, $l=1,2,$ and $d([q_1,q_2],F_i')\leq\epsilon_2/2$. (See Figure \ref{lem4.16-1}.) Let $x_i\in [q_1,q_2]$ such that $d(x_i,F_i')\leq\epsilon_2/2$. By \hyperlink{Omega-1}{property ($\Omega$1) of $\Omega(\cdot,\cdot)$}, there exists $x_j\in[q_1,q_2]$ such that $d(x_j,F_j')\leq\epsilon_1/4<\epsilon_0/2$. If $x_i\in [x_j,q_2]$, then $F_i'\in\Omega_0(F_j', F_2)$. This contradicts with the fact that $\Omega(F_j',F_2)\subset\{F_j',...F_k',F_2\}$. (See the previous paragraph.) Therefore $x_i\in[q_1,x_j]$ and hence $F_i'\in\Omega(F_1,F_j')$.

\begin{figure}[h]
	\centering
	\includegraphics[width=4in]{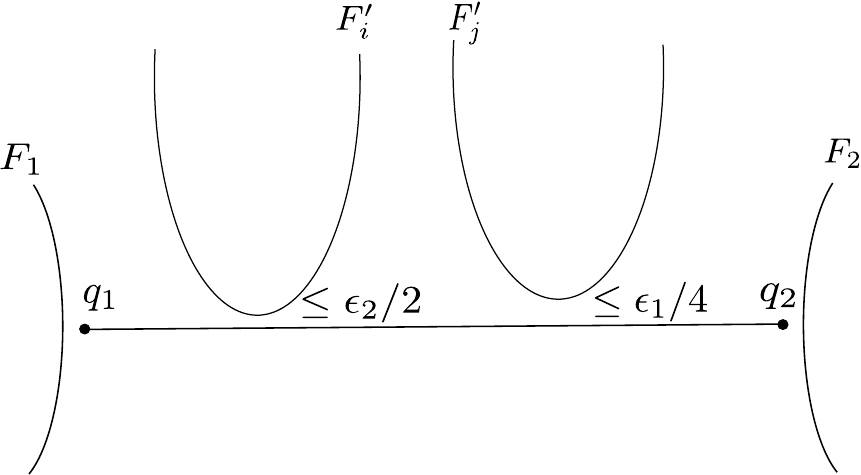}
	\caption{ \label{lem4.16-1}}
\end{figure}

Suppose the above claim is true for $\leq m-1$, for any $F_i'\in \Omega_m(F_1,F_2)\setminus \Omega_{m-1}(F_1,F_2)$, there exist some $F',F''\in \Omega_{m-1}(F_1,F_2)$ such that $F_i'\in\Omega_0(F',F'')\setminus\{F',F''\}$. By the definition of $\Omega_0(\cdot,\cdot)$, there exist $p',p''\in X$ and $y_i\in[p',p'']$ such that $d(p',F')\leq\epsilon_0/2$, $d(p'',F'')\leq\epsilon_0/2$ and $d(y_i, F_i')\leq \epsilon_2/2$. There are 3 possible cases for $F', F''$:

\textbf{Case 1:} $F', F''\in \Omega_{m-1}(F_1,F_2)\ints \{F_1,F_1',...,F_j'\}$. By the claim in the case of $m-1$, $F',F''\in\Omega(F_1,F_j')$. Hence $F_i'\in\Omega(F',F'')\subset\Omega(F_1,F_j')$.

\textbf{Case 2:} $F', F''\in \Omega_{m-1}(F_1,F_2)\ints \{F_j',...F_k',F_2\}$.
By the claim in the case of $m-1$, $F',F''\in\Omega(F_j',F_2)$. Hence $F_i'\in\Omega(F',F'')\subset\Omega(F_j',F_2)\subset \{F_j',...F_k',F_2\}$. (See the second paragraph in the proof of \hyperlink{Omega-2}{property ($\Omega$2) of $\Omega(\cdot,\cdot)$}.) This is impossible since we assumed that $i<j$ in the claim.

\textbf{Case 3:} One of $F', F''$ is in $\{F_1,F_1',...,F_{j-1}'\}$ and the other one is in $\{F_{j+1}',...F_k',F_2\}$. WLOG, we assume that $F'\in\{F_1,F_1',...,F_{j-1}'\}$ and $F''\in\{F_{j+1}',...F_k',F_2\}$. By the claim in the case $m-1$, $F'\in\Omega(F_1,F_j')$ and $F''\in\Omega(F_j',F_2)$.

\begin{figure}[h]
	\centering
	\includegraphics[width=4in]{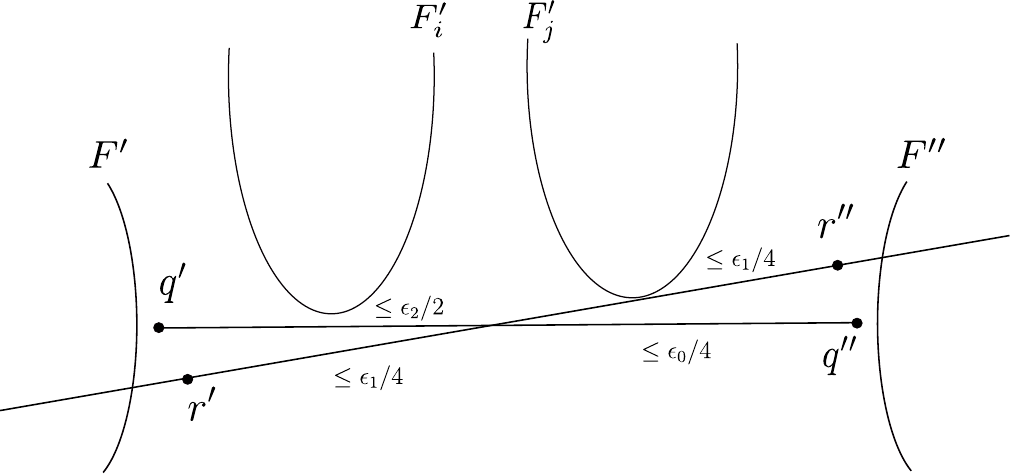}
	\caption{ \label{lem4.16-2}}
\end{figure}

By previous assumptions on $r_l$, there exist $r'\in \{p_1,r_1,... ,r_{j-1}\}$ and $r''\in\{r_{j+1},...,r_k,p_2\}$ such that $d(r',F')\leq \epsilon_0/2$ and $d(r'',F'')\leq \epsilon_0/2$. Since $F_i'\in \Omega(F',F'')\setminus\{F',F''\}$, there exist $q',q''\in X$ such that $d(q',F')\leq \epsilon_0/2$, $d(q'',F'')\leq \epsilon_0/2$ and $d([q',q''],F_i')\leq \epsilon_2/2$. (See Figure \ref{lem4.16-2}.) By \hyperlink{Omega-1}{property ($\Omega$1) of $\Omega(\cdot,\cdot)$}, $d([r',r''],F_j')\leq \epsilon_1/4<\epsilon_0/2$. Also, $F_j'\not\in\{ F', F''\}$ (from the assumptions of \textbf{Case 3}) implies that $d([r',q'], F_j'),d([r'',q''], F_j')>\rho_0-(\epsilon_0/2)\geq 2\rho_0/3$. (This is because $[r',q']$ lies in the $\epsilon_0/2$-neighborhood of $F'$ and $[r'',q'']$ lies in the $\epsilon_0/2$-neighborhood of $F''$.) By Lemma \ref{prep2} and \eqref{eqn:epsilon 0-2}, $d([q',q''],F_j')\leq \epsilon_0/4$. Let $y_i,y_j\in[q', q'']$ such that $d(y_i,F_i')\leq\epsilon_2/2$ (from the definition of $q', q''$) and $d(y_j,F_j')\leq\epsilon_0/2$. If $y_i\in[y_j,q'']$, then $F_i'\in\Omega(F_j', F'')$. This contradicts $\Omega(F_j, F'')\subset\Omega(F_j',F_2)\subset\{F_j',...F_k',F_2\}$. (See the second paragraph in the proof of \hyperlink{Omega-2}{property ($\Omega$2) of $\Omega(\cdot,\cdot)$}.) Therefore $y_i\in[q',y_j]$ and hence $F_i'\in\Omega_0(F',F_j')\subset\Omega(F',F_j')\subset \Omega(F_1, F_j')$.\qedhere
\end{enumerate}
\end{proof}

For technical reasons in the bicombing construction, we want to introduce $\Theta(F_1,F_2)$ as a refined notion of elements in $\Gamma F$ which are between $F_1,F_2$ compared to $\Omega(F_1,F_2)$. In particular, $\Theta(F_1,F_2)\subset\Omega(F_1,F_2)$ and elements in $\Theta(F_1,F_2)$ can be thought of as ``elites'' in $\Omega(F_1,F_2)$ since they satisfy additional properties. (Compare Lemma \ref{properties of Omega} and Lemma \ref{properties of Theta}.)

Let
\begin{align}\label{eqn:epsilon 3-4}
\epsilon_3=c_2(2,\epsilon_0/2,\epsilon_2/4)\leq \epsilon_2/4\mathrm{~and~}
\epsilon_4=c_4(\epsilon_3,\rho_0)\leq \epsilon_3/4.
\end{align}
(See Proposition \ref{almost sep edge, almost sep simplex} for $c_2(\cdot,\cdot,\cdot)$ and Proposition \ref{almost ints positioning} for $c_4(\cdot,\cdot)$.) For any $F_1, F_2\in\Gamma F$, we define
$$\Theta_0(F_1,F_2)=\{F_1,F_2\}\union\left\{\widehat F\in\Gamma F\left| \begin{aligned}
&\exists p_j'\in X~\mathrm{and}~F_j'\in\Omega(F_1,F_2)~\mathrm{s.t.}~\widehat F\neq F_j',\\
& d(p_j',F_j')\leq \epsilon_0/2, j=1,2,~\mathrm{and}~d([p_1',p_2'],\widehat F)<\epsilon_4/2.
\end{aligned}\right.\right\}.$$
Clearly $F_j\in\Theta_0(F_1,F_2)$, $j=1,2$ and $\Theta_0(F_1,F_1)=\{F_1\}$. Similar to $\Omega(\cdot,\cdot)$, we define
$$\Theta_k(F_1,F_2):=\bigcup_{F',F''\in\Theta_{k-1}(F_1,F_2)}\Theta_0(F',F'').$$
Then $\Theta_0(F_1,F_2),...,\Theta_k(F_1,F_2),...$ is an increasing sequence of subsets of $\Gamma F$. Finally, we define
$$\Theta(F_1,F_2):=\bigcup_{j=0}^\infty\Theta_{k}(F_1,F_2).$$
We have the following properties for $\Theta$.

\begin{lemma}\label{properties of Theta}
$\Theta(\cdot,\cdot)$ satisfies the following properties:
\begin{enumerate}
\item[\hypertarget{Theta-0}{($\Theta$0).}] $\Theta_0(F_1,F_2)\subset \Omega(F_1,F_2)$;
\item[\hypertarget{Theta-1}{($\Theta$1).}] For any $F'\in\Theta_0(F_1,F_2)\setminus\{F_1,F_2\}$ and any $p_j\in X$ such that $d(p_j, F_j)\leq \epsilon_0/2$, j=1,2, we have $d([p_1,p_2],F')\leq \epsilon_3/4$. As a corollary, $|\Theta_0(F_1,F_2)|<\infty$;
\item[\hypertarget{Theta-2}{($\Theta$2).}] $\Theta(\cdot,\cdot)=\Theta_0(\cdot,\cdot)$. In particular, combined with property $(\Theta0)$ of $\Theta(\cdot,\cdot)$, we have $\Theta(\cdot,\cdot)\subset \Omega(\cdot,\cdot)$;
\item[\hypertarget{Theta-3}{($\Theta$3).}] Let $F_1',...,F_k'$ be distinct elements in $\Gamma F\setminus\{F_1,F_2\}$ such that $\Theta(F_1,F_2)=\{F_1,F_1',...,F_k',F_2\}$ and let $F'\in\Omega(F_1,F_2)$. For any $p_1,p_2\in X$ such that $d(p_1,F_1)\leq \epsilon_0/2$ and $d(p_2,F_2)\leq\epsilon_0/2$, if $r,r_1,...,r_k$ are points in $[p_1,p_2]$ such that the following holds: (The existence of $r,r_1,...,r_k$ are guaranteed by either Lemma \ref{properties of Omega}, \hyperlink{Omega-1}{property ($\Omega$1) of $\Omega(\cdot,\cdot)$}, or Lemma \ref{properties of Theta},  \hyperlink{Theta-1}{property ($\Theta$1) of $\Theta(\cdot,\cdot)$}.)
\begin{itemize}
\item $d(r, F')\leq \epsilon_0/2$. Also, $d(r_i,F_i')\leq \epsilon_0/2$ for any $i\in\{1,...,k\}$.
\item $r_i\in[p_1,r_l]$ whenever $i<l$.
\item For simplicity, we let $r_0:=p_1$, $r_{k+1}:=p_2$, $F_0':=F_1$ and $F_{k+1}':=F_2$. By the previous bullet point and the above notations, we have $r\in[r_j,r_{j+1}]$ for some $j\in\{0,...,k\}$. We assume in addition that $r\in\{r_j,r_{j+1}\}$ if $F'\in\{F_j',F_{j+1}'\}$.
\end{itemize}
Then we have
$$\Theta(F_1,F')=\{F_1,F_1',...,F_j'\}\union\{F'\}=(\Omega(F_1,F')\ints\Theta(F_1,F_2))\union\{F'\}$$
and
$$\Theta(F',F_2)=\{F'\}\union\{ F_{j+1}',...,F_k',F_2\}=\{F'\}\union(\Omega(F',F_2)\ints\Theta(F_1,F_2)).$$
As a direct corollary, for any $F'\in\Omega(F_1,F_2)$, $\Theta(F_1,F_2)\union\{F'\}=\Theta(F_1, F')\union\Theta(F',F_2)$ and $\Theta(F_1,F')\ints\Theta(F', F_2)=\{F'\}$.
\item[\hypertarget{Theta-4}{($\Theta$4).}] For any distinct points $x,y,z\in \Gamma x_0$, we define $\cF(x,y,z)=\Theta(F_x,F_y)\union\Theta(F_y,F_z)\union\Theta(F_z,F_x)$ and
$$\cA(x,y,z)=(\Theta(F_x,F_y)\ints\Theta(F_y,F_z))\union(\Theta(F_y,F_z)\ints\Theta(F_z,F_x))\union(\Theta(F_z,F_x)\ints\Theta(F_x,F_y))$$
Then $|\cF(x,y,z)\setminus\cA(x,y,z)|\leq 3$.
\end{enumerate}
\end{lemma}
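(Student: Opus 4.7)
The plan is to exploit the close parallelism between the constructions of $\Omega$ and $\Theta$: the set $\Theta$ is built by the same iterative pattern as $\Omega$, but with the tighter constants $\epsilon_3, \epsilon_4$ (in place of $\epsilon_1, \epsilon_2$) and with the ``seeds'' $F_1', F_2'$ in the base step drawn from all of $\Omega(F_1, F_2)$ rather than only from the previous stage of $\Theta$. This lets me reuse the arguments of Lemma \ref{properties of Omega} with small modifications.

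Properties \hyperlink{Theta-0}{($\Theta$0)} and \hyperlink{Theta-1}{($\Theta$1)} are near-copies of the corresponding arguments for $\Omega$. Since $\epsilon_4 \leq \epsilon_3/4 \leq \epsilon_2$ by \eqref{eqn:epsilon 3-4} and \eqref{eqn:epsilon 0-2}, any $\widehat F$ satisfying the defining inequality for $\Theta_0$ with seeds $F_1', F_2' \in \Omega(F_1, F_2)$ also satisfies the defining inequality for $\Omega_0(F_1', F_2')$; the nested definition of $\Omega$ then yields $\widehat F \in \Omega(F_1, F_2)$, proving \hyperlink{Theta-0}{($\Theta$0)}. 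For \hyperlink{Theta-1}{($\Theta$1)}, given witnesses $(F_1', F_2', p_1', p_2')$ of $F' \in \Theta_0$, I first apply \hyperlink{Omega-1}{($\Omega$1)} to $F_j' \in \Omega(F_1, F_2)$ to place points $q_j \in [p_1, p_2]$ within $\epsilon_1/4 \leq \epsilon_0/2$ of $F_j'$. Since $F' \notin \{F_1', F_2'\}$, each sub-segment $[p_j', q_j]$ stays in the $\epsilon_0/2$-neighborhood of $F_j'$ and so lies at distance $> 2\rho_0/3$ from $F'$. Two applications of Lemma \ref{prep2} with threshold $c_4(\epsilon_3, \rho_0) = \epsilon_4$ then transfer $d([p_1', p_2'], F') < \epsilon_4/2$ into $d([p_1, p_2], F') \leq \epsilon_3/4$. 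Finiteness of $\Theta_0$ follows from the $\geq 2\rho_0/3$ separation between distinct flats along $[p_1, p_2]$, exactly as in the corollary of \hyperlink{Omega-1}{($\Omega$1)}.

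Property \hyperlink{Theta-2}{($\Theta$2)} is a single induction showing $\Theta_k(F_1, F_2) \subset \Theta_0(F_1, F_2)$ for every $k$. If $\widehat F \in \Theta_0(F_1'', F_2'')$ with $F_j'' \in \Theta_{k-1}(F_1, F_2) \subset \Theta_0(F_1, F_2) \subset \Omega(F_1, F_2)$, then the witnessing seeds of $\widehat F$ lie in $\Omega(F_1'', F_2'') \subset \Omega(F_1, F_2)$ (again by the nested property of $\Omega$), so $\widehat F$ is already in $\Theta_0(F_1, F_2)$. For \hyperlink{Theta-3}{($\Theta$3)}, I reproduce the two halves of the proof of \hyperlink{Omega-2}{($\Omega$2)} with $\Omega$ replaced by $\Theta$: the ordering of the $r_i$ along $[p_1, p_2]$ together with the $\geq \rho_0$ separation between distinct flats rules out any $F_l'$ with $l > j$ lying in $\Theta(F_1, F')$, while for the reverse inclusion the $\Theta_0$-witness of each $F_i'$ with $i \leq j$ can be promoted to a $\Theta_0(F_1, F')$-witness using that $\Omega(F_1, F') \subset \Omega(F_1, F_2)$ by Lemma \ref{properties of Omega}, so the seed flats remain admissible.

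The substantive step, which I expect to be the main obstacle, is \hyperlink{Theta-4}{($\Theta$4)}. The geometric picture is that the three $\Theta$-sequences along the sides of the triangle $p_x p_y p_z$ (with $d(p_u, F_u) \leq \epsilon_0/2$) behave like a discrete tripod: by $\delta$-thinness of the associated right triangles (Lemma \ref{hyperbolicity of right triangles}) and Corollary \ref{almost ints type property}, a flat almost separating one side of the triangle is forced to almost separate another side unless it sits in a bounded ``center zone''. The plan is to use \hyperlink{Theta-1}{($\Theta$1)} to place ordered representatives of each $\Theta$-element along its corresponding side, and then, for each $F' \in \Theta(F_x, F_y) \setminus \{F_x, F_y\}$ whose representative lies strictly closer to $p_x$ (resp.\ $p_y$) than the projection from the opposite vertex $p_z$, apply Proposition \ref{almost ints positioning} to conclude $F' \in \Theta(F_z, F_x)$ (resp.\ $F' \in \Theta(F_y, F_z)$), placing $F' \in \cA$. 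The elements outside $\cA$ are then confined to the transition zone where neither side-matching argument applies; bounding this zone by three flats is the combinatorial crux, and should follow from the $\geq 2\rho_0/3$ pairwise separation of distinct flats together with the refined threshold $\epsilon_4 \leq \epsilon_3/4$, which leaves room for at most one transition flat per pair of tripod legs.
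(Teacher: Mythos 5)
Your treatment of \hyperlink{Theta-0}{($\Theta$0)}--\hyperlink{Theta-2}{($\Theta$2)} is essentially the paper's proof: reduce to $\Omega$-membership via the constant inequality $\epsilon_4 \le \epsilon_2/4$, transfer proximity along $[p_1,p_2]$ with Lemma~\ref{prep2} (the paper uses a single application to the four endpoints $q_1',p_1',q_2',p_2'$, not two), and kill the induction at $\Theta_1=\Theta_0$ by pushing witnesses through the nested $\Omega$'s.

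The gap is in \hyperlink{Theta-3}{($\Theta$3)}, specifically the reverse inclusion $\{F_1,F_1',\dots,F_j'\} \subset \Theta(F_1,F')$. You claim the $\Theta_0(F_1,F_2)$-witness $(F_1'',F_2'')$ of each $F_i'$ with $i\le j$ ``can be promoted to a $\Theta_0(F_1,F')$-witness using that $\Omega(F_1,F')\subset\Omega(F_1,F_2)$.'' This inclusion goes the wrong way: it shows that $\Omega(F_1,F')$-seeds are also $\Omega(F_1,F_2)$-seeds, but gives no reason the seeds $F_1'',F_2''\in\Omega(F_1,F_2)$ should land inside $\Omega(F_1,F')$. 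That is precisely the obstruction the paper's Case~3 handles --- when one seed lies in $\Omega(F_1,F')\setminus\{F'\}$ and the other in $\Omega(F',F_2)\setminus\{F'\}$, one must first use \hyperlink{Omega-1}{($\Omega$1)} to produce a point $q'$ on $[q_1'',q_2'']$ near $F'$, argue via Lemma~\ref{prep2} that $F_j'$ is also $\epsilon_0/4$-close to $[q',q'']$, and then compare positions along this geodesic to rule out $F_i'\in\Omega(F',F_2)$ before concluding $F_i'\in\Theta_0(F',F_j')$. Without this argument, the reverse inclusion does not follow.

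For \hyperlink{Theta-4}{($\Theta$4)} you explicitly flag the step as unresolved, and the geometric ``tripod/transition-zone'' sketch is in fact not the paper's route. The paper avoids new hyperbolicity estimates entirely: by \hyperlink{Theta-1}{($\Theta$1)} and Proposition~\ref{almost sep edge, almost sep simplex}, every $F'\in\cF(x,y,z)\setminus\cA(x,y,z)$ is $\epsilon_2/4$-almost separating $\Db^2(x,y,z)$, and there are only three possible almost separation types for a $2$-simplex. If two such flats $F',F''$ shared a type, both would lie in the same double intersection of $\Omega$'s, and \hyperlink{Theta-3}{($\Theta$3)} (together with \hyperlink{Omega-2}{($\Omega$2)}) then forces one of them into a double intersection of $\Theta$'s, i.e.\ into $\cA$ --- a contradiction. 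Once ($\Theta$3) is available, ($\Theta$4) is purely combinatorial; your approach would require a separate geometric argument to bound the ``center zone'' which you have not supplied.
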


\begin{proof}
Since $\Theta(F_1,F_1)=\Theta_0(F_1,F_1)=\{F_1\}$, properties ($\Theta0$)-($\Theta3$) of $\Theta(\cdot,\cdot)$ hold trivially if $F_1=F_2$. Hence we assume that $F_1\neq F_2$.
\begin{enumerate}
\item[($\Theta$0).] Let $F'\in\Theta_0(F_1,F_2)\setminus\{F_1,F_2\}$. Then there exist $q_j'\in X$ and $F_j'\in\Omega(F_1,F_2)$ such that $F'\neq F_1',F_2'$, $d(q_j',F_j')\leq\epsilon_0/2$, $j=1,2,$ and $d([q_1',q_2'], F')\leq \epsilon_4/2$. By \eqref{eqn:epsilon 3-4}, $\epsilon_4\leq\epsilon_3/4\leq\epsilon_2/4$. In particular, we have $F'\in\Omega_0(F_1', F_2')\subset \Omega(F_1,F_2)$.
\item[($\Theta$1).] Let $q_j'\in X$ and $F_j'\in\Omega(F_1,F_2)$ such that $F'\neq F_1',F_2'$, $d(q_j',F_j')\leq\epsilon_0/2$, $j=1,2,$ and $d([q_1',q_2'], F')\leq \epsilon_4/2$. By Lemma \ref{properties of Omega}, \hyperlink{Omega-1}{property ($\Omega$1) of $\Omega(\cdot,\cdot)$}, there exists $p_j'\in[p_1,p_2]$ such that $d(p_j', F_j')\leq \epsilon_0/2$, $j=1,2$. (If in addition that $F_j'\neq F_1, F_2$, then $d(p_j', F_j')\leq \epsilon_1/4$.) Since $F'\neq F_1', F_2' $, for any $q'\in [p_j',q_j']$, $d(q',F')\geq d(F_j',F)-d(q',F_j')\geq d(F_j',F')-\max\{d(p_j',F_j'),d(q_j',F_j')\}>\rho_0-\epsilon_0\geq 2\rho_0/3$, $j=1,2$. Therefore by Lemma \ref{prep2} (applied to $F'$ and $q_1',p_1',q_2',p_2'$) and \eqref{eqn:epsilon 3-4}, $d([p_1,p_2],F')\leq d([p_1',p_2'],F')\leq \epsilon_3/4$ .
\item[($\Theta$2).] By definition of $\Theta_k(\cdot,\cdot)$, it suffices to show that $\Theta_1(F_1,F_2)=\Theta_0(F_1,F_2)$. Suppose there exist some $F'\in \Theta_1(F_1,F_2)\setminus\Theta_0(F_1,F_2)$, (in particular $F'\neq F_1, F_2$,) there exist some $F_1',F_2'\in\Theta_0(F_1,F_2)$ such that $F'\in\Theta_0(F_1',F_2')\setminus\{F_1',F_2'\}$. By definition of $\Theta_0(\cdot,\cdot)$, there exist $F_1'', F_2''\in\Omega(F_1',F_2')$ and $p_j''\in X$ such that $F'\neq F_1'', F_2''$, $d(p_j'', F_j'')\leq\epsilon_0/2$ for $j=1,2$, and $d([p_1'',p_2''], F')\leq \epsilon_4/2$. By \hyperlink{Theta-0}{property ($\Theta$0) of $\Theta(\cdot,\cdot)$}, $F_1', F_2'\in\Theta_0(F_1, F_2)\subset\Omega(F_1, F_2)$. Hence $F_1'', F_2''\in\Omega(F_1', F_2')\subset\Omega(F_1, F_2)$. This implies that $F'\in\Theta_0(F_1,F_2)$. This contradicts the assumption that $F'\in \Theta_1(F_1,F_2)\setminus\Theta_0(F_1,F_2)$. Hence $\Theta_0(F_1,F_2)=\Theta_1(F_1,F_2)$.

\item[($\Theta$3).] The result is clearly true when $F'=F_1$ or $F_2$. Therefore we assume that $F'\neq F_1,F_2$. The proof of
$\Theta(F_1,F')=\{F_1,F_1',...,F_j'\}\union\{F'\}$ is the same as the proof of  $\Theta(F_j',F_2)=\{F'\}\union\{ F_{j+1}',...,F_k',F_2\}$ by switching the order of $F_1, F_2$. Therefore we only need to prove $\Theta(F_1,F')=\{F_1,F_1',...,F_j'\}\union\{F'\}$.

We first show that $\Theta(F_1,F')\subset \Theta(F_1,F_2)\cup\{F'\}$. Clearly $F_1,F'\in\Theta(F_1,F_2)\cup\{F'\}$. For any $F''\in\Theta(F_1,F')\setminus\{F_1,F'\}$, by \hyperlink{Theta-2}{property ($\Theta$2) of $\Theta(\cdot,\cdot)$}, Lemma \ref{properties of Omega}, \hyperlink{Omega-2}{property ($\Omega$2) of $\Omega(\cdot,\cdot)$} and the definition of $\Theta_0(\cdot,\cdot)$, there exist $F_1'', F_2''\in\Omega(F_1,F')\subset\Omega(F_1,F_2)$ and $p_j''\in X$ such that $F'\neq F_1'', F_2''$, $d(p_j'', F_j'')\leq\epsilon_0/2$ for any $j=1,2$, and $d([p_1'',p_2''], F'')\leq \epsilon_4/2$. This implies that $F''\in\Theta(F_1,F_2)$. Hence we conclude that $\Theta(F_1,F')\subset \Theta(F_1,F_2)\cup\{F'\}$.

\textbf{Proof of $\Theta(F_1,F_j')\subset\{F_1,F_1',...,F_j'\}\union\{F'\}$}: By \hyperlink{Theta-2}{property ($\Theta$2) of $\Theta(\cdot,\cdot)$} and Lemma \ref{properties of Omega}, \hyperlink{Omega-2}{property ($\Omega$2) of $\Omega(\cdot,\cdot)$}, for any $F_i'\in\Theta(F_1,F_2)$ with $i>j$ and $F_i'\neq F'$, we have $F_i'\not\in\Omega(F_1,F')$. Therefore by \hyperlink{Theta-2}{property ($\Theta$2) of $\Theta(\cdot,\cdot)$}, $F_i'\not\in \Theta(F_1,F')\subset\Omega(F_1,F')$. Hence by the fact that $\Theta(F_1,F')\subset \Theta(F_1,F_2)\cup\{F'\}$, we have $\Theta(F_1,F_j')\subset\{F_1,F_1',...,F_j'\}\cup\{ F'\}$. (Similarly, $\Theta(F',F_2)\subset\{F'\}\cup\{ F_{j+1}',...F_k',F_2\}$.)

\textbf{Proof of $\{F_1,F_1',...,F_j'\}\union\{F'\}\subset\Theta(F_1,F')$}: Since $\Theta_0(\cdot,\cdot)=\Theta(\cdot,\cdot)$ (by \hyperlink{Theta-2}{property ($\Theta$2) of $\Theta(\cdot,\cdot)$}), for any $F_i'\in\Theta(F_1,F_2)$ such that $1\leq i\leq j$ and $F_i'\neq F'$, there exist $F_1'', F_2''\in\Omega(F_1,F_2)$ and $q_1'',q_2''\in X$ such that $F_i'\neq F_1'', F_2''$, $d(q_j'', F_j'')\leq\epsilon_0/2$, $j=1,2,$ and $d([q_1'',q_2''],F_i')\leq\epsilon_4/2$. By Lemma \ref{properties of Omega}, \hyperlink{Omega-2}{property ($\Omega$2) of $\Omega(\cdot,\cdot)$}, we have the following $3$ cases:

\textbf{Case 1}: If $F_1'',F_2''\in\Omega(F_1, F')$, then by the definition of $\Theta_0(\cdot,\cdot)$ and \hyperlink{Theta-2}{property ($\Theta$2) of $\Theta(\cdot,\cdot)$}, $F_i'\in\Theta_0(F_1,F')=\Theta(F_1,F')$.

\textbf{Case 2}:  If $F_1'',F_2''\in\Omega(F', F_2)$, then by the definition of $\Theta_0(\cdot,\cdot)$ and \hyperlink{Theta-2}{property ($\Theta$2) of $\Theta(\cdot,\cdot)$}, $F_i'\in\Theta_0(F', F_2)=\Theta(F', F_2)\subset\Omega(F', F_2)$. On the other hand, by Lemma \ref{properties of Omega}, \hyperlink{Omega-2}{property ($\Omega$2) of $\Omega(\cdot,\cdot)$} and the assumptions on $r,r_1,...,r_k$, $F_i'\in\Omega(F_1,F')\setminus\{F'\}$. This contradicts with $\Omega(F_1,F')\ints\Omega(F',F_2)=\{F'\}$ and $F_i'\in\Omega(F', F_2)$.

\textbf{Case 3}: If one of $F_1'', F_2''$ is in $\Omega(F_1, F')\setminus\{F'\}$ and the other one is in $\Omega(F', F_2)\setminus\{F'\}$, WLOG, we assume that $F_1''\in\Omega(F_1, F')\setminus\{F'\}$ and $F_2''\in\Omega(F', F_2)\setminus\{F'\}$. By both properties of Lemma \ref{properties of Omega}, $F'\in\Omega(F_1'',F_2'')\setminus\{F_1'',F_2''\}$ and hence there exists $q'\in[q_1'',q_2'']$ such that $d(q',F')= d([q_1'',q_2''],F')\leq \epsilon_1/4$. If $d([q',q_2''], F_i')\leq \epsilon_4/2$, then $F_i'\in\Omega(F',F_2'')\subset \Omega(F',F_2)$. On the other hand, by Lemma \ref{properties of Omega}, \hyperlink{Omega-2}{property ($\Omega$2) of $\Omega(\cdot,\cdot)$} and the assumptions on $r,r_1,...,r_k$, $F_i'\in\Omega(F_1,F')\setminus\{F'\}$,  which contradicts $\Omega(F_1,F')\ints\Omega(F',F_2)=\{F'\}$. Therefore $d([q_1'',q'],F_i')\leq \epsilon_4/2$ and hence $F_i'\in\Theta_0(F_1,F')=\Theta(F_1, F')$.
\item[($\Theta$4).] By \hyperlink{Theta-1}{property ($\Theta$1) of $\Theta(\cdot,\cdot)$}, for any $F'\in\cF(x,y,z)\setminus\cA(x,y,z)$, $F'$ is $\epsilon_3/4$-close to one of $[x,y]$, $[y,z]$ and $[z,x]$. Since $\epsilon_3=c_2(2,\epsilon_0/2,\epsilon_2/4)$ (due to \eqref{eqn:epsilon 3-4}) and that $d(p,\widehat F)\geq\epsilon_0/2$ for any $\widehat F\in\Gamma F$ and any $p\in\{x,y,z\}$, by Proposition \ref{almost sep edge, almost sep simplex}, $F'$ is $\epsilon_2/4$-almost separating $\Db^2(x,y,z)$. Notice that we have at most $3$ different $\epsilon_2/4$-almost separation types for $\Db^2(x,y,z)$. If \hyperlink{Theta-4}{property ($\Theta$4) of $\Theta(\cdot,\cdot)$} does not hold, by Proposition \ref{almost sep edge, almost sep simplex} again, we can assume WLOG that there exist distinct $F', F''\in\cF(x,y,z)\setminus\cA(x,y,z)$ satisfying the following properties:
\begin{enumerate}
\item[(i).] $F', F''\in \Theta(F_x,F_y)\union\Theta(F_x,F_z)$.
\item[(ii).] $F', F''$ are $\epsilon_2/4$-close to $[x,y],[x,z]$.
\end{enumerate}
By the definition of $\Omega(\cdot,\cdot)$ and the second property above, $F' , F''\in\Omega(F_x,F_y)\ints\Omega(F_x,F_z)\setminus\{F_x,F_y,F_z\}.$ By Lemma \ref{properties of Omega}, \hyperlink{Omega-2}{property ($\Omega$2) of $\Omega(\cdot,\cdot)$}, we assume WLOG that $F'\in\Omega(F_x,F'')\setminus\{F_x,F''\}$. (See Figure \ref{lem4.17}.) Then by \hyperlink{Theta-3}{property ($\Theta$3) of $\Theta(\cdot,\cdot)$}, we have
\begin{figure}[h]
	\centering
	\includegraphics[width=4in]{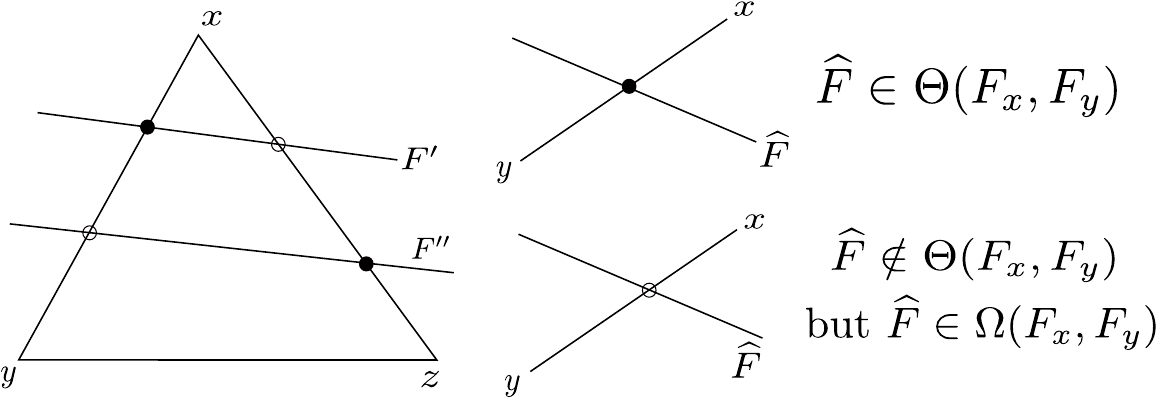}
	\caption{ \label{lem4.17}}
\end{figure}
\begin{equation*}
\left.
\begin{aligned}
&F'\in \Theta(F_x,F_y)\union\Theta(F_x,F_z)\\
&F''\in\Omega(F_x,F_y)\ints\Omega(F_x,F_z)\\
&F'\in\Omega(F_x,F'')\setminus\{F_x,F''\}\subset\Omega(F_x,F'')
\end{aligned}
\right\}\implies F'\in\Theta(F_x,F'')\setminus\{F''\}\subset\Theta(F_x,F_y)\ints\Theta(F_x,F_z).
\end{equation*}
(To be specific, the LHS implies that
\begin{align*}
F'\in& ((\Theta(F_x,F_y)\cup\Theta(F_x,F_z))\cap\Omega(F_x,F''))\setminus\{F''\}\\
=&((\Theta(F_x,F_y)\cap\Omega(F_x,F''))\cup(\Theta(F_x,F_z)\cap\Omega(F_x,F'')))\setminus\{F''\}=\Theta(F_x,F'')\setminus\{F''\},
\end{align*}
where the last equality uses \hyperlink{Theta-3}{property ($\Theta$3) of $\Theta(\cdot,\cdot)$}.) Therefore $F'\in\cA(x,y,z)$. This contradicts the original assumption that $F'\in\cF(x,y,z)\setminus\cA(x,y,z)$.
\qedhere
\end{enumerate}
\end{proof}

\section{Bicombing construction inspired by Mineyev}\label{sec bicombing}
We denote by $\cS_k(X)$ the set of all singular simplices in $X$ and $\cS_{k,x_0}(X)$ the set of all singular simplices in $X$ with vertices in $\Gamma x_0$.
\begin{definition}[Singular chains in $X$ and boundary map $\del^X$ for singular chains]\label{singular chains in X}
Let $C_k(X;\RR)$ and $C_{k,x_0}(X;\RR)$ be $\RR$-vector spaces with basis $\cS_k(X)$ and $\cS_{k,x_0}(X)$ respectively. Elements in $C_k(X;\RR)$ are called \emph{$k$-dimensional singular chains} in $X$. $C_k(X;\RR)$ is called the \emph{space of $k$-dimensional singular chains} in $X$ and $C_{k,x_0}(X;\RR)$ is called the space of $k$-dimensional singular chains in $X$ with vertices in $\Gamma x_0$. For any $k\geq 1$, we define the \emph{boundary map} $\del^X_k:C_k(X;\RR)\to C_{k-1}(X;\RR)$ as an $\RR$-linear map such that for any $\sigma\in \cS_k(X)$, $\del^X_k\sigma=\sum_{j=0}^k(-1)^j\sigma\circ\iota_j$, where
$$\iota_j:\Delta^{k-1}_{\RR^{k}}\to\Delta^k_{\RR^{k+1}},~\iota_j(t_0,...,\widehat{t_j},...,t_k)=(t_0,...,t_{j-1},0,t_{j+1},...,t_k),~\forall(t_0,...,\widehat{t_j},...,t_k)\in \Delta^{k-1}_{\RR^{k}}.$$
It is obvious to see that $\del^X_k(C_{k,x_0}(X;\RR))\subset C_{k-1,x_0}(X;\RR)$.
\end{definition}
\begin{rmk}
For simplicity, we omit $k$ and write $\del^X$ instead of $\del^X_k$ throughout this section only.
\end{rmk}
\begin{definition}[$l^1$-(semi)norm]\label{l1-seminorm}
For any $k\geq0$ and any $a\in C_k(X;\RR)$, we define $|a|_{l_1}$ to be its \emph{$l^1$-norm} with respect to the basis $\cS_k(X)$. To be specific, if $a=\sum_{\sigma\in\cS_k(X)}\alpha_\sigma\sigma$ for some $\alpha_\sigma\in\RR$, then $|a|_{l^1}=\sum_{\sigma\in\RR}|\alpha_\sigma|$. One can easily check that if $a\in C_{k,x_0}(X;\RR)$, $|a|_{l^1}$ is also the $l^1$-norm in $C_{k,x_0}(X;\RR)$ with respect to the basis $\cS_{k,x_0}(X)$.

For any $\cF\subset\Gamma F$ and any $a\in C_{k,x_0}(X;\RR)$ with $a=\sum_{\sigma\in\cS_{k,x_0}(X)}\alpha_\sigma\sigma$, we define the \emph{$l^1$-seminorm} near $\cF$ as
$$|a|_{\cF,l^1}:=\sum_{\substack{\sigma:\sigma\in\cS_{k,x_0}(X)\\ \forall x\in\Gamma x_0, x\mathrm{~is~a~vertex~of~}\sigma\implies F_x\in\cF}}|\alpha_\sigma|.$$
Clearly in $C_{k,x_0}(X;\RR)$,
$$|\cdot|_{\cF,l^1}\leq|\cdot|_{\cF',l^1}\leq|\cdot|_{l^1}=|\cdot|_{\Gamma F, l^1}$$
for any $\cF\subset\cF'\subset\Gamma F$ and any $k\geq 0$.
\end{definition}
\begin{definition}[Geodesic bicombing]\label{dfn:geo.bicomb}
For any $p,q\in X$, we denote by $\ovec{[p,q]}:=(\Db^1(p,q)-\Db^1(q,p))/2\in  C_1(X;\RR)$ \emph{the geodesic bicombing from $p$ to $q$}.
\end{definition}
\begin{rmk}
\begin{enumerate}
\item For any $m\in\ZZ_+$, any real numbers $\alpha_1,...,\alpha_m\in\RR$ and any pairwise distinct subsets $\{p_1,q_1\},...,\{p_m,q_m\}\subset \{V\subset X: |V|=2\}$, one can easily verify that
$$\left|\sum_{i=1}^m\alpha_i\cdot\ovec{[p_I,q_i]}\right|_{l^1}=\sum_{i=1}^m|\alpha_i|.$$
\item By the remark after Definition \ref{bar simplex}, the image of $\Db^1(p,q)$ is exactly the geodesic segment connecting $p$ and $q$. Hence, one can view $\ovec{[p,q]}$ as the oriented geodesic segment from $p$ to $q$. Moreover, $\ovec{[p,q]}+\ovec{[q,p]}=0$. $p$ and $q$ are called the \emph{endpoints} of $\ovec{[p,q]}$.
\end{enumerate}
\end{rmk}

For the rest of this section, we only consider barycentric simplices with vertices in $\Gamma x_0$. The goal of this section is to construct a \emph{homological bicombing} inspired by Mineyev \cite{Mineyev01}. First we introduce the definition of a homological bicombing.
\begin{definition}\label{Homological bicombing}
A \emph{homological bicombing} is a map $\beta:\Gamma x_0\times\Gamma x_0\to C_{1,x_0}(X;\RR)$ such that ${\del^X}\beta[p,q]=q-p$ for any $p,q\in \Gamma x_0$. Moreover, $\beta$ satisfies the following properties
\begin{enumerate}
\item[(1).] ($\Gamma$-equivariance) $\gamma\cdot\beta[p,q]=\beta[\gamma\cdot p,\gamma\cdot q]$ for any $\gamma\in\Gamma$ and $p,q\in\Gamma x_0$;
\item[(2).] (Anti-symmetry) $\beta[p,q]+\beta[q,p]=0$ for any $p,q\in\Gamma x_0$.
\end{enumerate}
\end{definition}
One can naturally see that the map $(p,q)\to\ovec{[p,q]}$ is a homological bicombing in the above sense. Hence any homological bicombing can be thought of as a generalization of oriented geodesic segments. Also, any homological bicombing can be naturally extended to a $\RR$-bilinear map $\beta:C_{0,x_0}(X;\RR)\times C_{0,x_0}(X;\RR)\to C_{1,x_0}(X;\RR)$. (Here, the extended map is also denoted by $\beta$.)

The rest of this section is devoted to contruct a special homological bicombing satisfying similar properties as in Mineyev \cite{Mineyev01}.
\begin{notation}\label{1-lvl lower flat}
For any $p,q\in \{x\in X|\exists \hF\in\Gamma F\mathrm{~s.t.~}d(x,\hF)\leq \epsilon_0/2\}\supset \Gamma x_0$ (see Notation \ref{Fx def}), by Lemma \ref{properties of Theta}, \hyperlink{Theta-3}{property ($\Theta$3) of $\Theta(\cdot,\cdot)$}, we can define $F_{p,q}\in\Theta(F_p,F_q)$ as follows: $F_{p,q}=F_p$ if $\Theta(F_p,F_q)=\{F_p,F_q\}$. Otherwise $F_{p,q}$ is the unique element in $\Theta(F_p,F_q)$ such that $\Theta(F_{p,q},F_q)=\Theta(F_p,F_q)\setminus\{F_p\}$. In other words, if $\Theta(F_p,F_q)=\{F_p,F_1,...,F_k,F_q\}$ for some $k\geq 0$ and distinct $F_1,...,F_k\in\Gamma F\setminus\{F_p,F_q\}$ such that $
\Theta(F_p,F_j)=\{F_p,F_1,...,F_j\}$, then
$$F_{p,q}=
\begin{cases}
\displaystyle  F_p,\quad&\mathrm{if}~k=0,\\
\displaystyle  F_1,\quad&\mathrm{if}~k\geq 1,
\end{cases}
\mathrm{~and~}
\Theta(F_{p,q},F_q)=
\begin{cases}
\displaystyle  \Theta(F_p,F_q),\quad&\mathrm{if}~k=0,\\
\displaystyle  \Theta(F_p,F_q)\setminus\{F_p\},\quad&\mathrm{if}~k\geq 1.
\end{cases}$$
\end{notation}
\begin{definition}[Flower]\label{flower}
For any $q\in X$ which is $\epsilon_0/2$-close to some $\hF$, we define the \emph{flower} at $q$ as
$$\Fl(q)=\left\{
 x\in\Gamma x_0\left|
\begin{aligned}
&d(x,\hF)=\epsilon_0/2 \\
&d(\Proj_\hF(x),\Proj_\hF(q))<\cC_2(\rho_0,\epsilon_0,0)+\mathrm{diam}(N)+1
\end{aligned}
\right.\right\},$$
where $\cC_2$ is introduced in Lemma \ref{prep1} and $N$ is introduced in Section \ref{sect:setting}. In particular, $\Fl(q)$ is never empty. (See Figure \ref{def5.6}.)
\end{definition}
\begin{figure}[h]
	\centering
	\includegraphics[width=4in]{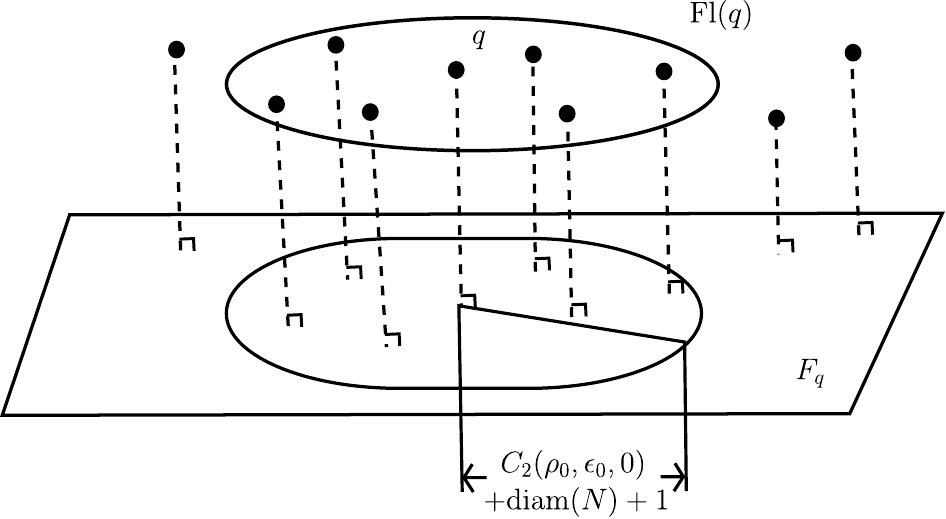}
	\caption{ \label{def5.6}}
\end{figure}
\begin{definition}[One-level lower projection]\label{1-lvl lower proj}
For any $p,q\in\{x\in X|\exists \hF\in\Gamma F\mathrm{~s.t.~}d(x,\hF)\leq \epsilon_0/2\}\supset \Gamma x_0$, we define the \emph{one-level lower projection} $\pr_p(q)$ from $q$ to $p$ satisfying the following two conditions:
\begin{itemize}
\item $\pr_p(q)\in\left\{x\in[p,q]| d(x,F_{q,p})=d([x,q],F_{q,p})\leq\epsilon_0/2\right\}=:L_{p,q,1}$;
\item Notice that $L_{p,q,1}$ is a closed subset of $[p,q]$, we further require that $d(\pr_p(q),q)=\min\{d(x,q)|x\in L_{p,q,1}\}$.
\end{itemize}
In other words, if we travel from $q$ to $p$ along the geodesic segment $[p,q]$, $\pr_p(q)$ is the first point which is $\epsilon_0/2$-close to $F_{q,p}$. (See Figure \ref{def5.7}.)
\end{definition}
\begin{figure}[h]
	\centering
	\includegraphics[width=4in]{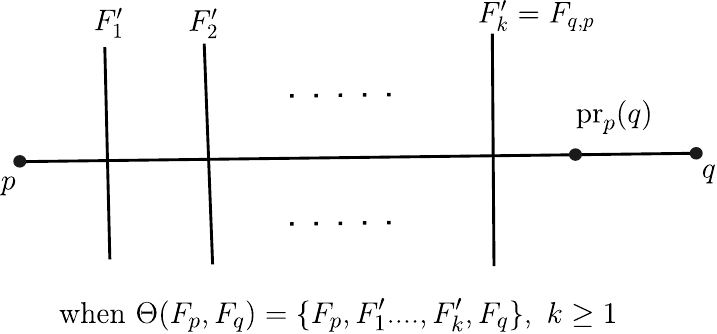}
	\caption{ \label{def5.7}}
\end{figure}
\begin{lemma}\label{easiest}
For any $x,y\in\{x\in X|\exists \hF\in\Gamma F\mathrm{~s.t.~}d(x,\hF)\leq \epsilon_0/2\}\supset \Gamma x_0$, $\pr_x(\cdot)$ satisfies the following properties:
\begin{enumerate}
\item[(1).] For any $m\geq 1$, $\pr_x^m(y)$ satisfies the following properties:
\begin{enumerate}
\item[(i).] $F_{\pr_x^{m}(y)}=F_{\pr_x^{m-1}(y),x}\in\Theta(F_x,F_y)$. Moreover, $\pr_x^m(y)\neq \pr_x^{m-1}(y)$ if and only if $|\Theta(F_x,F_{\pr_x^{m-1}(y)})|\geq 3$;
\item[(ii).] $\pr^m_x(y)\in\left\{w\in[x,y]\left|d(w, F_{\pr_x^m(y)})=d([w,y],F_{\pr_x^m(y)})\leq\epsilon_0/2\right.\right\}=:L_{x,y,m}$;
\item[(iii).] $d(\pr^m_x(y),y)=\min\{d(x,y)|x\in L_{x,y,m}\}$.
\end{enumerate}
As a straightforward corollary of the above properties and the convexity of distance functions in nonpositive curvature, $d(\pr^m_x(y),F_{\pr^m_x(y)})=\epsilon_0/2$ if $y\neq \pr^m_x(y)$.
\item[(2).] For any $m\geq\max\{0,|\Theta(F_x,F_y)|-2\}$, we have
$\pr_x^{m+1}(y)=\pr_x^m(y)=\lim_{j\to\infty}\pr_x^j(y)=:\pr_x^\infty(y)$. Moreover, when $|\Theta(F_x,F_y)|\geq 3$, we have $F_{\pr_x^\infty(y)}=F_{x,y}$.
\end{enumerate}
\end{lemma}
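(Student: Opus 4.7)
The plan is to induct on $m$, guided by the geometric picture that each application of $\pr_x$ ``walks back'' through the ordered chain $\Theta(F_x,F_y)=\{F_x,F_1',\ldots,F_k',F_y\}$ (as organized by Lemma \ref{properties of Theta}\hyperlink{Theta-3}{($\Theta$3)}) by one flat at a time, moving from $F_y$ toward $F_x$, until either the chain shrinks to size $\leq 2$ and the iteration becomes trivial, or there is nothing left to remove.

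For the base case $m=1$: when $|\Theta(F_x,F_y)|\leq 2$, Notation \ref{1-lvl lower flat} forces $F_{y,x}=F_y$, and since $y$ is already $\epsilon_0/2$-close to $F_y$, I get $\pr_x(y)=y$ and all three conclusions are immediate; when $|\Theta(F_x,F_y)|\geq 3$, Lemma \ref{properties of Theta}\hyperlink{Theta-1}{($\Theta$1)} produces a point on $[x,y]$ within $\epsilon_3/4<\epsilon_0/2$ of $F_{y,x}$, so $L_{x,y,1}$ is non-empty and $\pr_x(y)$ is well defined. The estimate $d(y,F_{y,x})\geq d(F_y,F_{y,x})-\epsilon_0/2>2\rho_0/3>\epsilon_0/2$ then combines with Notation \ref{Fx def} to pin down $F_{\pr_x(y)}=F_{y,x}\in\Theta(F_x,F_y)$. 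Properties (ii) and (iii) are built into the definition of $\pr_p(q)$ itself.

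For the inductive step, set $q_{m-1}=\pr_x^{m-1}(y)$ and $q_m=\pr_x(q_{m-1})$. Applying the base case with $q_{m-1}$ in place of $y$ yields $F_{q_m}=F_{q_{m-1},x}$ together with $q_m\in[x,q_{m-1}]\subset[x,y]$. To place $F_{q_m}$ back in $\Theta(F_x,F_y)$ I will apply Lemma \ref{properties of Theta}\hyperlink{Theta-3}{($\Theta$3)} with $F'=F_{q_{m-1}}\in\Theta(F_x,F_y)\subset\Omega(F_x,F_y)$, verifying the required ordering of closest points via Lemma \ref{properties of Omega}\hyperlink{Omega-2}{($\Omega$2)}, to obtain $\Theta(F_x,F_{q_{m-1}})\subset\Theta(F_x,F_y)$, whence $F_{q_{m-1},x}\in\Theta(F_x,F_y)$. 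The ``moreover'' in (i) is then the equivalence $F_{q_{m-1},x}=F_{q_{m-1}}\Leftrightarrow|\Theta(F_x,F_{q_{m-1}})|\leq 2$ recorded in Notation \ref{1-lvl lower flat}, together with the $\rho_0$-separation of distinct elements of $\Gamma F$, which prevents one point from lying in the $\epsilon_0/2$-neighborhood of two distinct flats.

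The main obstacle is extending the control on $d(\cdot,F_{q_m})$ from the sub-segment $[q_m,q_{m-1}]$ (where it comes directly from the base case) to the full segment $[q_m,y]$ demanded by (ii). In the trivial case $q_m=q_{m-1}$ the claim reduces to the inductive hypothesis at level $m-1$. In the non-trivial case, I will argue via convexity: the sublevel set $U=\{w\in X:d(w,F_{q_m})\leq\epsilon_0/2\}$ is convex, so $U\cap[x,y]$ is a sub-interval; since $d(F_{q_m},F_{q_{m-1}})>\rho_0$ and $d(q_{m-1},F_{q_{m-1}})\leq\epsilon_0/2$, the point $q_{m-1}$ lies outside $U$; combined with $q_m\in U\cap[x,q_{m-1}]$, this forces $U\cap[x,y]\subset[x,q_{m-1}]$ and hence $d(w,F_{q_m})>\epsilon_0/2\geq d(q_m,F_{q_m})$ for every $w\in[q_{m-1},y]$. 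Property (iii) is then immediate because enlarging $L$ from $[x,q_{m-1}]$ to $[x,y]$ adds no new points satisfying the equality condition. Finally, part (2) follows because (i) shows that the sequence $|\Theta(F_x,F_{\pr_x^m(y)})|$ strictly decreases until it first becomes $\leq 2$, after which the ``moreover'' in (i) stabilizes the iteration; this requires at most $\max\{0,|\Theta(F_x,F_y)|-2\}$ steps, and when $|\Theta(F_x,F_y)|\geq 3$ the terminal flat is the unique element of $\Theta(F_x,F_y)$ adjacent to $F_x$ in the chain, which is exactly $F_{x,y}$ by Notation \ref{1-lvl lower flat}.
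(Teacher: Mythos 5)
Your proof is correct and follows essentially the same route as the paper's: induction on $m$, applying the base case at $(x,\pr_x^{m-1}(y))$, invoking ($\Theta$3) to keep the flats inside $\Theta(F_x,F_y)$, splitting on whether $F_{q_{m-1},x}=F_{q_{m-1}}$, and in the nontrivial case using convexity of the $\epsilon_0/2$-sublevel set of $d(\cdot,F_{q_m})$ to show $L_{x,y,m}=L_{x,q_{m-1},1}$. The only cosmetic difference is that you establish $q_{m-1}\notin U$ directly from the $\rho_0$-separation of flats, whereas the paper argues by contradiction via convexity through $\pr_x(z)$; the two computations are interchangeable.
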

\begin{proof}
\begin{enumerate}
\item[(1).] We prove by induction on $m$. When $m=1$, $F_{\pr_x(y)}=F_{y,x}\in\Theta(F_x,F_y)$, (ii) and (iii) follow immediately from Definition \ref{1-lvl lower proj}. If $|\Theta(F_x,F_y)|\geq 3$, then $F_{y,x}\neq F_x,F_y$. Hence $y\not\in L_{x,y,1}$ and therefore $\pr_x(y)\neq y$. On the other hand, if $F_y=F_{\pr_x(y)}=F_{y,x}$, by Notation \ref{1-lvl lower flat}, $\Theta(F_x,F_y)=\{F_x,F_y\}$ and therefore $|\Theta(F_x,F_y)|< 3$. This finishes the proof when $m=1$.

Now we assume that the assertion is proved whenever $m\leq k-1$ for some $k\geq 2$. By (i) in the case $m=k-1$ and Lemma \ref{properties of Theta}, \hyperlink{Theta-3}{property ($\Theta$3) of $\Theta(\cdot,\cdot)$}, we have $\Theta(F_x,F_{\pr_x^{m-1}(y)})\subset\Theta(F_x,F_y)$. Write $z=F_{\pr_x^{m-1}(y)}$ for simplicity and (i) in the case $m=k$ follows verbatim from (i) in the case $m=1$ applied to $x,z$.

It remains for us to prove (ii) and (iii) in the case $m=k$. We divide the rest of the proof into two cases:

\textbf{Case 1}: $F_{z,x}=F_z$, then by Notation \ref{1-lvl lower flat} and Definition \ref{1-lvl lower proj}, $\Theta(F_x,F_z)=\{F_x,F_z\}$ and $\pr_x^m(y)=\pr_x(z)=z=\pr_x^{m-1}(y)$. Hence (ii) and (iii) in the case $m=k$ is the same as (ii) and (iii) in the case $m=k-1$.

\textbf{Case 2}: $F_{z,x}\neq F_z$, then $d([z,y],F_{z,x})>\epsilon_0/2$. (To be specific, notice that $\pr_x^m(y)=\pr_x(z)\in[x,z]$ and $d(\pr_x(z),F_{z,x})\leq \epsilon_0/2$, if $d([z,y],F_{z,x})\leq \epsilon_0/2$, by convexity of distance functions in nonpositive curvature, $d(z,F_{z,x})\leq \epsilon_0/2$. This implies that $F_{z,x}=F_z$, which contradicts the assumptions of this case.) Recall that $\pr_x^m(y)=\pr_x(z)\in[x,z]$ and $d(\pr_x(z),F_{z,x})\leq \epsilon_0/2$. For any $w\in[x,z]$, $d([w,y],F_{z,x})=d([w,z],F_{z,x})$ due to convexity of distance functions in nonpositive curvature. Hence $L_{x,y,m}=L_{x,z,1}$ and (ii) in the case $m=k$ follows directly from the definition of $\pr_x(z)=\pr_x^m(y)$. Since $d(w,y)=d(w,z)+d(z,y)$ for any $w\in[x,z]$, by the fact that $L_{x,y,m}=L_{x,z,1}\subset[x,z]$.
\begin{align*}
d(\pr_x^m(y),y)=d(\pr_x(z),y)=&d(\pr_x(z),z)+d(z,y)\\
=&\min\{d(w,z)|w\in L_{x,z,1}\}+d(z,y)=\min\{d(w,y)|w\in L_{x,y,m}\},
\end{align*}
which proves (iii) in the case $m=k$. This completes the induction.
\item[(2).] By Notation \ref{1-lvl lower flat} and Definition \ref{1-lvl lower proj}, we have
$$|\Theta(F_x,F_{\pr_x^m(y)})|=
\begin{cases}
\displaystyle |\Theta(F_x,F_{\pr_x^{m-1}(y)})|,\quad& \mathrm{~if~}|\Theta(F_x,F_{\pr_x^{m-1}(y)})|\leq 2,\\
\displaystyle |\Theta(F_x,F_{\pr_x^{m-1}(y)})|-1,\quad &\mathrm{~if~}|\Theta(F_x,F_{\pr_x^{m-1}(y)})|\geq 3.
\end{cases}$$
Hence when $m\geq\max\{0,|\Theta(F_x,F_y)|-2\}$, $|\Theta(F_x,F_{\pr_x^m(y)})|\leq 2$. Moreover, for such $m$, $|\Theta(F_x,F_{\pr_x^m(y)})|= 2$ when $|\Theta(F_x,F_y)|\geq 2\Leftrightarrow F_x\neq F_y$. By the first assertion in Lemma \ref{easiest}, $\pr_x^{m+1}(y)=\pr_x^m(y)=\lim_{j\to\infty}\pr_x^j(y)=:\pr_x^\infty(y)$ for any $m\geq\max\{0,|\Theta(F_x,F_y)|-2\}$.

In the case when $|\Theta(F_x,F_y)|\geq 3$, $|\Theta(F_x,F_{\pr_x^\infty(y)})|=2$. Since $\Theta(F_x,F_{\pr_x^\infty(y)})\subset\Theta(F_x,F_y)$ and $F_{x,y}\neq F_x$, by Notation \ref{1-lvl lower flat} and Lemma \ref{properties of Theta}, \hyperlink{Theta-3}{property ($\Theta$3) of $\Theta(\cdot,\cdot)$}, we have $F_{\pr_x^\infty(y)}=F_{x,y}$.\qedhere
\end{enumerate}
\end{proof}

The construction of the homological bicombing is a weighted sum of {geodesic bicombings}. The endpoints of all these {geodesic bicombings} are closely related to the following construction: For any $p,q\in\Gamma x_0$, we defined a singular $0$-chain $f(p,q)\in C_{0,x_0}(X;\RR)$ by the following induction.
\begin{align}\label{f}
f(x,y)=
\begin{cases}
\displaystyle y,\quad&\mathrm{if~}\Theta(F_x,F_y)=\{F_x,F_y\},     \\
\displaystyle \frac{1}{|\Fl(\pr_x(y))|}\sum_{z\in\Fl(\pr_x(y))}f(x,z), \quad& \mathrm{else}.
\end{cases}
\end{align}
From Definition \ref{flower}, Definition \ref{1-lvl lower proj} and \eqref{f}, we notice that for any $z\in\Fl(\pr_x(y))$, $F_z=F_{\pr_x(y)}=F_{y,x}$. By Lemma \ref{properties of Theta}, \hyperlink{Theta-1}{property ($\Theta$1) of $\Theta(\cdot,\cdot)$}, the inductive definition in \eqref{f} using the second case will end in finite steps. Hence $f(\cdot,\cdot)$ is well-defined. Similar to homological bicombings, we can naturally extend $f$ to an $\RR$-bilinear map on $C_{0,x_0}(X;\RR)\times C_{0,x_0}(X;\RR)$ with images in $C_{0,x_0}(X;\RR)$. To be specific, for any $a_j=\sum_{x\in\Gamma x_0}{\alpha^{(j)}_x}\cdot x\in C_{0,x_0}(X;\RR)$ with $j\in\{1,2\}$ and $\alpha^{(j)}_x\in\RR$, the extended map (also denoted by $f$) satisfies
$$f(a_1,a_2)=\sum_{x,y\in\Gamma x_0}\alpha^{(1)}_x\alpha^{(2)}_yf(x,y).$$

\begin{definition}[Support of a singular $0$-chain with vertices in $\Gamma x_0$]\label{supp 0-chain}
For any singular $0$-chain $a\in C_{0,x_0}(X;\RR)$, there exists a unique map $x\to\alpha_x$ from $\Gamma x_0$ to $\RR$ such that $a=\sum_{x\in\Gamma x_0}\alpha_x\cdot x$. We define its \emph{support} as
$$\supp(a)=\{x\in\Gamma x_0|\alpha_x\neq 0\}.$$
For any $\hF\in\Gamma x_0$, we say that $a$ is \emph{supported near $\hF$} if $F_x=\hF$ for any $x\in\supp(a)$.
\end{definition}

\begin{definition}[Convex combination]\label{convex comb}
A singular $0$-chain $a\in C_{0,x_0}(X;\RR)$ is called a \emph{convex combination} if there exists a unique map $x\to\alpha_x$ from $\Gamma x_0$ to $\RR_{\geq 0}$ such that $a=\sum_{x\in\Gamma x_0}\alpha_x\cdot x$ with $|a|_{l^1}=\sum_{x\in\Gamma x_0}\alpha_x=1$.
\end{definition}

The following lemma is an analogue of \cite[Proposition 7]{Mineyev01}.

\begin{proposition}\label{prop of f}
The map $f:\Gamma x_0\times\Gamma x_0\to C_{0,x_0}(X;\RR)$ satisfies the following properties:
\begin{enumerate}
\item[(1).] For any $x,y\in \Gamma x_0$, $f(x,y)$ is a convex combination supported near $F_{\pr^\infty_x(y)}$. Moreover, for any $\hF\in\Theta(F_x,F_y)\setminus\{F_x\}$, there exist a convex combination $a\in C_{0,x_0}(X;\RR)$ supported near $\hF$ such that $f(x,y)=f(x,a)$;
\item[(2).] $f(x,y)$ is $\Gamma$-equivariant. Namely, $f(\gamma x,\gamma  y)=\gamma\cdot f(x,y)$ for any $x,y\in\Gamma x_0$ and $\gamma\in\Gamma$;
\item[(3).] For any $x,y\in \Gamma x_0$ and $m\geq\max\{0,|\Theta(F_x,F_y)|-2\}$,
$$\supp(f(x,y))\subset\left\{z\in\Gamma x_0\left|\begin{aligned}
&F_z=F_{\pr^\infty_x(y)}, \\
&d(\Proj_{F_{\pr^\infty_x(y)}}(z),\Proj_{F_{\pr^\infty_x(y)}}(\pr^\infty_x(y)))<2\cC_2(\rho_0,\epsilon_0,0)+\mathrm{diam}(N)+2
\end{aligned}\right.\right\}.$$
\item[(4).] There exist some $\cC_3=\cC_3(\rho_0,\epsilon_0)>0$ and $0\leq \lambda=\lambda(\rho_0,\epsilon_0)<1$ such that
$$|f(x,y)-f(x,z)|_{l^1}\leq \cC_3\lambda^{(F_y|F_z)_{F_x}},$$
where $(F_1|F_2)_{F_3}:=|\Theta(F_1,F_3)\ints\Theta(F_2,F_3)|$ for any $F_1,F_2,F_3\in\Gamma F$;
\item[(5).] For the same $\lambda$ as in the above and any $y,z\in\Gamma x_0$ with $F_y=F_z$, we have
$$|f(y,x)-f(z,x)|_{l^1}\leq 2\lambda,\quad\forall~x\in \Gamma x_0\mathrm{~with~}F_x\neq F_y.$$
\end{enumerate}
\end{proposition}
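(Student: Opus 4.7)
The plan is to establish parts (1), (2), (3) by strong induction on $|\Theta(F_x, F_y)|$, then prove (4) and (5) together by mutual induction. The driving geometric fact is the tree-like structure of $\Theta$-chains encoded in Lemma \ref{properties of Theta}, property $(\Theta 3)$, which turns arguments about shared ``paths'' into arguments about shared levels of averaging.

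For (1), the base case $|\Theta(F_x, F_y)| = 2$ gives $f(x, y) = y$ outright, and Lemma \ref{easiest}(2) identifies $F_y = F_{\pr_x^\infty(y)}$. In the inductive step, I rewrite the recursive definition via bilinearity as $f(x, y) = f(x, a_{y, x})$, where
\[
a_{y, x} := \frac{1}{|\Fl(\pr_x(y))|} \sum_{z \in \Fl(\pr_x(y))} z
\]
is itself a convex combination supported near $F_{y, x}$. Since $|\Theta(F_x, F_z)| = |\Theta(F_x, F_y)| - 1$ for each $z$ in this flower (by Notation \ref{1-lvl lower flat}), the inductive hypothesis applies and propagates both the convex-combination structure and the support claim. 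Iterating this rewriting along $\Theta(F_x, F_y)$ from $F_y$ toward any prescribed $\hF \in \Theta(F_x, F_y) \setminus \{F_x\}$ produces the second assertion of (1). Part (2) follows from exactly the same induction, since $F_\cdot$, $\pr_\cdot(\cdot)$, $\Fl(\cdot)$, and $F_{\cdot, \cdot}$ are all $\Gamma$-equivariantly defined. For (3), I combine (1) with the explicit flower radius $\cC_2 + \mathrm{diam}(N) + 1$ from Definition \ref{flower} and a single application of Lemma \ref{prep1} to control projection drift between successive iterates of $\pr_x$, yielding the stated bound $2\cC_2 + \mathrm{diam}(N) + 2$.

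Parts (4) and (5) form the main technical content. I would choose $\lambda := 1 - 1/K$ where $K$ is a uniform upper bound on $|\Fl(\cdot)|$, which exists because the flower radius is a geometric constant and $N$ is compact. For (5), after one step of the recursion $f(y, x)$ and $f(z, x)$ both become averages of convex combinations on the common flat $F_{x, y} = F_{x, z}$; Lemma \ref{prep1} bounds the offset of the two flower centers, and the uniform cardinality bound on $|\Fl(\cdot)|$ forces a quantitative overlap between the two distributions, producing the factor $2\lambda$. For (4), by (1) I write $f(x, y) = f(x, a)$ and $f(x, z) = f(x, b)$ with $a$, $b$ supported near the deepest common flat in $\Theta(F_x, F_y) \cap \Theta(F_x, F_z)$, then peel off the $k := (F_y | F_z)_{F_x}$ shared averaging rounds one at a time, invoking (5) at each round to collect a factor $\lambda$, and finishing with the trivial $l^1$-bound $|a - b|_{l^1} \leq 2$ absorbed into $\cC_3$.

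The main obstacle is producing the strict contraction $\lambda < 1$ rigorously: the uniform upper bound on $|\Fl(\cdot)|$ prevents dispersion blow-up, but genuine contraction at each averaging step requires a quantitative overlap between successive flowers, which must be extracted from Lemma \ref{prep1} together with the compactness of $N$. The constants $\cC_3$ and $\lambda$ must then be chosen so that both the base cases (where only the trivial bound of $2$ is available and must fit inside $\cC_3 \lambda$) and the shared-level iteration in (4) close consistently in the mutual induction. Once this one-step contraction is in hand, the exponential decay $\cC_3 \lambda^k$ in (4) is an immediate consequence of the tree-like structure of $\Theta$ supplied by property $(\Theta 3)$.
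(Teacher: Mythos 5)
Your outline for parts (1), (2), (3), and (5) matches the paper's proof: the recursive unfolding of $f$ into nested flower averages, the iterated rewriting $f(x,y)=f(x,a_j)$, the $\Gamma$-equivariance of all the ingredients, the single application of Lemma \ref{prep1} for the support bound, and the flower-overlap estimate for (5). Your choice $\lambda=1-1/K$ in place of the paper's $1-1/K^2$ is fine — it corresponds to using $|a-b|\leq a+b-2\min(a,b)$ rather than $|a-b|\leq a+b-2ab$ and gives the same conclusion.

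There is, however, a genuine misstep in your plan for (4): you cannot ``invoke (5) at each round.'' The map $f$ is not symmetric, and the two statements concern opposite slots: (5) bounds $|f(y,x)-f(z,x)|_{l^1}$ (first argument varying, second fixed), while the recursion in (4) unwinds $f(x,\cdot)$ in the \emph{second} argument. When you peel off one round of $f(x,y)=f(x,a_k)$ and $f(x,z)=f(x,b_k)$, what you obtain is two flower averages $a_k,b_k$ supported near the level closest to $F_y=F_z$; these are not of the form $f(\cdot,x)$, and (5) gives no information about them. Moreover (5) yields a single constant bound $2\lambda$, not a multiplicative $\lambda$ per round. The actual per-round gain in (4) comes from two separate ingredients you did not name: first, the flower-overlap estimate applied directly to $a_k$ and $b_k$ (the inline computation \eqref{shrinking norm}, proved fresh from Lemma \ref{prep1}, not quoted from (5)), which gives $|a_k-b_k|_{l^1}\leq 2\lambda$ and hence a mass $\alpha^{(k)}\leq\lambda$; and second, a re-decomposition of $a_k-b_k$ into a signed combination of pairs $(p,q)$ with $F_p=F_q$ at the next level down, which is then fed into the \emph{self}-inductive hypothesis for (4) at a lower $|\Theta|$. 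Without this re-expansion into singleton differences, a naive ``chain the contractions'' argument does not close, because the intermediate chains are not averages of $f$-values and the $l^1$ diameter of the intermediate convex combinations does not shrink on its own. Relatedly, your ``mutual induction'' framing is misleading: in the actual argument (4) is proved by internal induction on $|\Theta(F_x,F_y)|$ (after reducing the general case to $F_y=F_z$ via (1)), (5) is proved independently from (1) and the flower overlap, and neither invokes the other. Your final paragraph shows you have the right geometric mechanism in mind — flower overlap plus compactness of $N$ giving quantitative contraction — so the fix is bookkeeping rather than a new idea, but as written the per-round step in (4) does not go through.
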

\begin{proof}
\begin{enumerate}
\item[(1).] The assertion clearly holds when $F_x=F_y$. Therefore we assume that $F_x\neq F_y$. Let $\Theta(F_x,F_y)=\{F_x,F_1,...,F_k,F_y\}$ for some $k\geq 0$ and $F_1,...,F_k\not\in\{F_x,F_y\}$ distinct such that $
\Theta(F_p,F_j)=\{F_p,F_1,...,F_j\}$. Then for any $1\leq j\leq k$, we have
\begin{align*}
f(x,y)
=&\frac{1}{|\Fl(\pr_x(y))|}\sum_{z_k:z_k\in\Fl(\pr_x(y))}f(x,z_k)\\
=&\frac{1}{|\Fl(\pr_x(y))|}\sum_{z_k:z_k\in\Fl(\pr_x(y))}\frac{1}{|\Fl(\pr_x(z_k))|}\sum_{z_{k-1}:z_{k-1}\in\Fl(\pr_x(z_{k}))}f(x,z_{k-1})\\
=&\frac{1}{|\Fl(\pr_x(y))|}\sum_{\substack{z_{k-2},z_{k-1},z_k:z_k\in\Fl(\pr_x(y))\\z_{k-1}\in\Fl(\pr_x(z_{k}))\\ z_{k-2}\in\Fl(\pr_x(z_{k-1}))}}\frac{1}{|\Fl(\pr_x(z_k))|\cdot|\Fl(\pr_x(z_{k-1}))|}f(x,z_{k-2}) \\
=&\cdots=\frac{1}{|\Fl(\pr_x(y))|}\sum_{\substack{z_j,z_{j+1},...,z_{k-1},z_k:z_k\in\Fl(\pr_x(y))\\z_{k-1}\in\Fl(\pr_x(z_{k}))\\z_{k-2}\in\Fl(\pr_x(z_{k-1}))\\\cdots\cdots\\z_{j}\in\Fl(\pr_x(z_{j+1}))}}\frac{1}{\prod_{l=j}^{k-1}|\Fl(\pr_x(z_{l+1}))|}f(x,z_j).
\end{align*}
Denoted by
\begin{align}\label{a_j}
a_j=\frac{1}{|\Fl(\pr_x(y))|}\sum_{\substack{z_j,z_{j+1},...,z_{k-1},z_k:z_k\in\Fl(\pr_x(y))\\z_{k-1}\in\Fl(\pr_x(z_{k}))\\z_{k-2}\in\Fl(\pr_x(z_{k-1}))\\\cdots\cdots\\z_{j}\in\Fl(\pr_x(z_{j+1}))}}\frac{1}{\prod_{l=j}^{k-1}|\Fl(\pr_x(z_{l+1}))|}z_j.
\end{align}

Then $f(x,y)=f(x,a_j)$. Let $x\to\alpha^{(j)}_x$ be the unique map from $\Gamma x_0$ to $\RR$ such that $a_j=\sum_{x\in\Gamma x_0}\alpha^{(j)}_x\cdot x$. Then by the definition of $a$, $\alpha^{(j)}_x\geq0$ for any $x\in \Gamma x_0$. Moreover,
\begin{align}\label{supp a_j}
\begin{split}
&\supp(a_j)\\
\subset&\{z_j\in\Gamma x_0|\exists z_k,...,z_{j+1}\in\Gamma x_0\mathrm{~s.t.~} z_k\in\Fl(\pr_x(y))\mathrm{~and~}z_l\in\Fl(\pr_x(z_{l+1})),\forall~j\leq l\leq k-1\}.
\end{split}
\end{align}
By Definition \ref{flower} and Definition \ref{1-lvl lower proj}, for any $z_j,...,z_k\in\Gamma x_0$ such that $z_k\in\Fl(\pr_x(y))$ and $z_l\in\Fl(\pr_x(z_{l+1}))$ for any $j\leq l\leq k-1$, we have $F_{z_l}=F_l$ for any $j\leq l\leq k$. Therefore $\supp(a_j)\subset\{z\in\Gamma x_0|F_z=F_j\}$ and hence $a_j$ is supported near $F_j$.

Notice that
\begin{align*}
|a_j|_{l^1}
=&\frac{1}{|\Fl(\pr_x(y))|}\sum_{\substack{z_j,z_{j+1},...,z_{k-1},z_k:z_k\in\Fl(\pr_x(y))\\z_{k-1}\in\Fl(\pr_x(z_{k}))\\z_{k-2}\in\Fl(\pr_x(z_{k-1}))\\\cdots\cdots\\z_{j}\in\Fl(\pr_x(z_{j+1}))}}\frac{1}{\prod_{l=j}^{k-1}|\Fl(\pr_x(z_{l+1}))|} \\
=&\frac{1}{|\Fl(\pr_x(y))|}\sum_{\substack{z_j,z_{j+1},...,z_{k-1},z_k:z_k\in\Fl(\pr_x(y))\\z_{k-1}\in\Fl(\pr_x(z_{k}))\\z_{k-2}\in\Fl(\pr_x(z_{k-1}))\\\cdots\cdots\\z_{j+1}\in\Fl(\pr_x(z_{j+2}))}}\frac{1}{\prod_{l=j}^{k-1}|\Fl(\pr_x(z_{l+1}))|}\left(\sum_{z_j\in\Fl(\pr_x(z_{j+1}))}1\right) \\
=&\frac{1}{|\Fl(\pr_x(y))|}\sum_{\substack{z_j,z_{j+1},...,z_{k-1},z_k:z_k\in\Fl(\pr_x(y))\\z_{k-1}\in\Fl(\pr_x(z_{k}))\\z_{k-2}\in\Fl(\pr_x(z_{k-1}))\\\cdots\cdots\\z_{j+1}\in\Fl(\pr_x(z_{j+2}))}}\frac{1}{\prod_{l=j}^{k-1}|\Fl(\pr_x(z_{l+1}))|} \\
=&\cdots=\frac{1}{|\Fl(\pr_x(y))|}\sum_{\substack{z_{k-1},z_k:z_k\in\Fl(\pr_x(y))\\z_{k-1}\in \Fl(\pr_x(z_{k}))}}\frac{1}{|\Fl(\pr_x(z_{k}))|} \\
=&\frac{1}{|\Fl(\pr_x(y))|}\left(\sum_{z_k:z_k\in\Fl(\pr_x(y))}1\right)=1.
\end{align*}
This finishes the proof of $a_j$ being a convex combination supported near $F_j$ and $f(x,y)=f(x,a_j)$. In particular, if $j=1$,  by the fact that $\Theta(F_x,F_1)=\{F_x,F_1\}$ and the first case in \eqref{f}, we have $f(x,y)=f(x,a_1)=a_1$, which is a convex combination supported near $F_1=F_{pr^\infty_x(y)}$ due to the second assertion of Lemma \ref{easiest}.
\item[(2).] Notice that for any $F_1,F_2\in\Gamma F$ and any $\gamma\in\Gamma$, by the definition of $\Omega(\cdot,\cdot)$ (before Lemma \ref{properties of Omega}, which only uses distance functions on $X$) and the definition of $\Theta(\cdot,\cdot)$ (before Lemma \ref{properties of Theta}, which only uses distance functions on $X$), we have $\gamma(\Omega(F_1,F_2))=\Omega(\gamma F_1,\gamma F_2)$ and $\gamma(\Theta(F_1,F_2))=\Theta(\gamma F_1,\gamma F_2)$. Notice that for any $z\in\{q\in X|\exists \hF\in\Gamma F\mathrm{~s.t.~}d(q,\hF)\leq \epsilon_0/2\}$ and any $\gamma\in \Gamma$, we have $\gamma F_z=F_{\gamma z}$. Therefore by Notation \ref{1-lvl lower flat} and Lemma \ref{properties of Theta}, \hyperlink{Theta-3}{property ($\Theta$3) of $\Theta(\cdot,\cdot)$}, for any $x,y\in \{q\in X|\exists \hF\in\Gamma F\mathrm{~s.t.~}d(q,\hF)\leq \epsilon_0/2\}$ and any $\gamma\in \Gamma$, $\gamma F_{x,y}=F_{\gamma x,\gamma y}$. Since in Definition \ref{flower} and Definition \ref{1-lvl lower proj}, only distance functions in $X$ are involved, for any $p,x,y\in\{q\in X|\exists \hF\in\Gamma F\mathrm{~s.t.~}d(q,\hF)\leq \epsilon_0/2\}$ and any $\gamma\in\Gamma$, we have $\gamma\Fl(p)=\Fl(\gamma p)$ and $\gamma\pr_x(y)=\pr_{\gamma x}(\gamma y)$. In particular, the map $w\to\gamma w$ gives a bijection between $\Fl(\pr_x(y))$ and $\Fl(\pr_{\gamma x}(\gamma y))$. According to \eqref{f}, finally we have $\gamma f(x,y)=f(\gamma x,\gamma y)$ for any $x,y\in\Gamma x_0$ and any $\gamma \in\Gamma$.

\item[(3).] If $|\Theta(F_x,F_y)|=2$, this is equivalent to $\Theta(F_x,F_y)=\{F_x,F_y\}$. By the first assertion in Lemma \ref{easiest} and \eqref{f}, $f(x,y)=y=\pr_x^\infty(y)$ and the assertion clearly follows.

If $|\Theta(F_x,F_y)|=3$, by the second assertion in Lemma \ref{easiest} and \eqref{f}, we have $\pr_x(y)=\pr_x^\infty(y)$ and
$$f(x,y)= \frac{1}{|\Fl(\pr_x(y))|}\sum_{z\in\Fl(\pr_x(y))}f(x,z)= \frac{1}{|\Fl(\pr_x(y))|}\sum_{z\in\Fl(\pr_x(y))}z.$$
Therefore $\supp(f(x,y))= \Fl(\pr_x(y))=\Fl(\pr_x^\infty(y))$. The rest of this case follows directly from Definition \ref{flower}.

From now on, we assume that $|\Theta(F_x,F_y)|\geq 4$.

Similar to the proof of the first assertion in Proposition \ref{prop of f}, we let $\Theta(F_x,F_y)=\{F_x,F_1,...,F_k,F_y\}$ for some $k\geq 2$ and distinct $F_1,...,F_k \in\Gamma F\setminus\{F_x,F_y\}$ such that $
\Theta(F_x,F_j)=\{F_x,F_1,...,F_j\}$. By \eqref{a_j} and \eqref{supp a_j}, we have $f(x,y)=f(x,a_j)$ with $a_j$ supported near $F_j$ for any $1\leq j\leq k$. Moreover, by \eqref{supp a_j} we have
\begin{align}\label{supp induction}
\supp(a_j)\subset\bigcup_{z\in\supp(a_{j+1})}\Fl(\pr_x(z)),\quad\forall~1\leq j\leq k-1.
\end{align}
By Definition \ref{1-lvl lower proj}, we have $F_{\pr_x^{k+1-j}(y)}=F_j$ for any $1\leq j\leq k$. In particular, $F_1\neq F_2$ implies that $\pr_x^{k-1}(y)\neq \pr_x^k(y)$. By the first assertion in Lemma \ref{easiest}, $d(\pr_x^k(y),F_1)=\epsilon_0/2$. The same argument shows that $d(\pr_x(z),F_1)=\epsilon_0/2$ for any $z\in\supp(a_2)$. Therefore for any $z\in\supp(a_{2})$, the first assertion in Lemma \ref{easiest} and the convexity of distance function in nonpositive curvature imply (See Figure \ref{prop5.11}.)
$$\left\{\begin{aligned}
&d(\pr_x^{k-1}(y),F_{2}),d(z, F_2)\leq \epsilon_0/2\implies d([\pr_x^{k-1}(y),z], F_1)\geq \rho_0-\epsilon_0/2>2\rho_0/3,\\
&d([\pr_x^{k-1}(y),\pr_x^k(y)],F_1)=d(\pr_x^k(y),F_1)=d([\pr_x(z),z],F_1)=d(\pr_x(z),F_1)= \epsilon_0/2.
\end{aligned}\right.$$
\begin{figure}[h]
	\centering
	\includegraphics[width=4in]{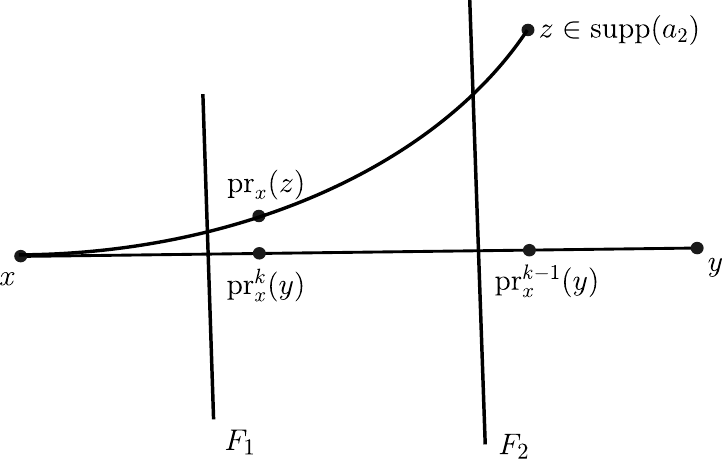}
	\caption{ \label{prop5.11}}
\end{figure}
Applying Lemma \ref{prep1} to the above yields
$$d(\Proj_{F_1}(\pr_x(z)),\Proj_{F_1}(\pr_x^k(y)))\leq \cC_2(\rho_0,\epsilon_0,0).$$
Hence by Definition \ref{flower} and the second assertion in Lemma \ref{easiest}, we have $\pr_x^k(y)=\pr_x^\infty(y)$ and
$$\Fl(z)\subset\left\{w\in\Gamma x_0\left|\begin{aligned}
&F_w=F_{\pr^\infty_x(y)}, \\
&d(\Proj_{F_{\pr^\infty_x(y)}}(w),\Proj_{F_{\pr^\infty_x(y)}}(\pr^\infty_x(y)))<2\cC_2(\rho_0,\epsilon_0,0)+\mathrm{diam}(N)+2
\end{aligned}\right.\right\}.$$
By arbitrariness of $z\in\supp(a_2)$ and \eqref{supp induction}, we have
$$\supp(a_1)\subset\left\{w\in\Gamma x_0\left|\begin{aligned}
&F_w=F_{\pr^\infty_x(y)}, \\
&d(\Proj_{F_{\pr^\infty_x(y)}}(w),\Proj_{F_{\pr^\infty_x(y)}}(\pr^\infty_x(y)))<2\cC_2(\rho_0,\epsilon_0,0)+\mathrm{diam}(N)+2
\end{aligned}\right.\right\}.$$
Notice that $a_1$ is supported near $F_1$, by the first case in \eqref{f}, we have $f(x,y)=f(x,a_1)=a_1$ and $\supp(f(x,y))=\supp(a_1)$. This finishes the proof.

\item[(4).] We choose $\cC_3= 2\lambda^{-2}$. Then for any $x,y,z\in\Gamma x_0$
$$|f(x,y)-f(x,z)|_{l^1}\leq |f(x,y)|_{l^1}+|f(x,z)|_{l^1}\leq 1+1=2.$$
In particular, the assertion holds when $(F_y|F_z)_{F_x}\leq 2$. From now on, we assume that $(F_y|F_z)_{F_x}\geq 3$. We further choose
$$\lambda=\lambda(\rho_0,\epsilon_0)=1-\left(\max_{p\in X}\left|\Gamma x_0\ints B_{\cC_2(\rho_0,\epsilon_0,1)+2+\epsilon_0+\mathrm{diam}(N)}(p)\right|\right)^{-2}.$$
Then for any $p\in X$ such that $d(p,\hF)\leq\epsilon_0/2$ for some $\hF\in\Gamma F$, we have
\begin{align}\label{exp decay}
|\Fl(p)|\leq \max_{p\in X}\left|\Gamma x_0\ints B_{\cC_2(\rho_0,\epsilon_0,1)+2+\epsilon_0+\mathrm{diam}(N)}(p)\right|=\left(\frac{1}{1-\lambda}\right)^{1/2}.
\end{align}

We first prove this assertion with the above choices of $\cC_2$ and $\lambda$ for a special case:

\textbf{Special case}: Assume $F_y=F_z$. We prove the assertion via induction on $(F_y|F_z)_{F_x}=|\Theta(F_x,F_y)|$. Since the assertion is true whenever $|\Theta(F_x,F_y)|\leq 2$, suppose that the assertion holds when $|\Theta(F_x,F_y)|=k+1$ for some $k\geq 1$, we consider the case when $|\Theta(F_x,F_y)|=k+2$. Similar to the proof of the first assertion in Proposition \ref{prop of f}, we let $\Theta(F_x,F_y)=\{F_x,F_1,...,F_k,F_y\}$ with $k\geq 1$ and distinct $F_1,...,F_k\in\Gamma F\setminus \{F_x,F_y\}$ such that $
\Theta(F_x,F_j)=\{F_x,F_1,...,F_j\}$. Notice that $F_y\neq F_k$ implies that $\pr_x(y)\neq y$ and $\pr_x(z)\neq z$. By Definition \ref{1-lvl lower proj} and the first assertion in Lemma \ref{easiest}, we have $d(\pr_x(y),F_k)=d(\pr_x(z),F_k)=\epsilon_0/2$. Hence
$$\left\{
\begin{aligned}
& d(y,F_y),d(z,F_y)\leq\epsilon_0/2\implies d([y,z], F_k)\geq \rho_0-\epsilon_0/2>2\rho_0/3\\
& d(\pr_x(y),F_k)=d([\pr_x(y),y],F_k)=d(\pr_x(z),F_k)=d([\pr_x(z),z],F_k)
=\epsilon_0/2
\end{aligned}
\right.$$
By Lemma \ref{prep1}, we have
$$d(\Proj_{F_k}(\pr_x(y)),\Proj_{F_k}(\pr_x(z)))\leq \cC_2(\rho_0,\epsilon_0,0).$$
Together with Definition \ref{flower}, we have
\begin{align}\label{local cancel}
\Fl(\pr_x(y))\ints\Fl(\pr_x(z))\neq\emptyset
\end{align}
For any $\cP\subset\Gamma x_0$, we define the \emph{characteristic function of $\cP$} by
\begin{align}\label{char fcn}
\chi_\cP(x)=\begin{cases}
\displaystyle  1,\quad&\mathrm{if~}x\in\cP,\\
\displaystyle  0,\quad&\mathrm{if~}x\not\in\cP.
\end{cases}
\end{align}
Recall that in the second case of \eqref{f} we have $f(x,y)=f(x,a_k)$ and $f(x,z)=f(x,b_k)$, where
$$a_k=\frac{1}{|\Fl(\pr_x(y))|}\sum_{w\in\Fl(\pr_x(y))}w\mathrm{~and~}b_k=\frac{1}{|\Fl(\pr_x(z))|}\sum_{w\in\Fl(\pr_x(z))}w.$$
Therefore we can write $a_k-b_k=\sum_{p\in\Gamma x_0}\alpha^{(k)}_{x;y,z}(p)p$, where
\begin{align}\label{coeff of difference-1}
\alpha_{x;y,z}^{(k)}(p)=\frac{\chi_{\Fl(\pr_x(y))}(p)}{|\Fl(\pr_x(y))|}-\frac{\chi_{\Fl(\pr_x(z))}(p)}{|\Fl(\pr_x(z))|}.
\end{align}
In particular,
\begin{align}\label{ch4 cycle-1}
\sum_{p\in\Gamma x_0}\alpha_{x;y,z}^{(k)}(p)=0.
\end{align}
Moreover, by \eqref{exp decay} and \eqref{local cancel}, we have
\begin{align}\label{shrinking norm}
\begin{split}
&|a_k-b_k|_{l^1}
=\sum_{p\in\Gamma x_0}|\alpha_{x;y,z}^{(k)}(p)|\\
=&\sum_{\substack{p:p\in\Gamma x_0\\ \chi_{\Fl(\pr_x(y))}(p)\cdot\chi_{\Fl(\pr_x(z))}(p)=0}}|\alpha_{x;y,z}^{(k)}(p)|+\sum_{\substack{p\in\Gamma x_0\\ \chi_{\Fl(\pr_x(y))}(p)\cdot\chi_{\Fl(\pr_x(z))}(p)=1}}|\alpha_{x;y,z}^{(k)}(p)|\\
=&\sum_{\substack{p:p\in\Gamma x_0\\ \chi_{\Fl(\pr_x(y))}(p)\cdot\chi_{\Fl(\pr_x(z))}(p)=0}}\frac{\chi_{\Fl(\pr_x(y))}(p)}{|\Fl(\pr_x(y))|}+\frac{\chi_{\Fl(\pr_x(z))}(p)}{|\Fl(\pr_x(z))|} \\
&+\sum_{\substack{p:p\in\Gamma x_0\\ \chi_{\Fl(\pr_x(y))}(p)\cdot\chi_{\Fl(\pr_x(z))}(p)=1}}\left|\frac{\chi_{\Fl(\pr_x(y))}(p)}{|\Fl(\pr_x(y))|}-\frac{\chi_{\Fl(\pr_x(z))}(p)}{|\Fl(\pr_x(z))|}\right|\\
\leq&\sum_{\substack{p:p\in\Gamma x_0\\ \chi_{\Fl(\pr_x(y))}(p)\cdot\chi_{\Fl(\pr_x(z))}(p)=0}}\frac{\chi_{\Fl(\pr_x(y))}(p)}{|\Fl(\pr_x(y))|}+\frac{\chi_{\Fl(\pr_x(z))}(p)}{|\Fl(\pr_x(z))|} \\
&+\sum_{\substack{p:p\in\Gamma x_0\\ \chi_{\Fl(\pr_x(y))}(p)\cdot\chi_{\Fl(\pr_x(z))}(p)=1}}\left(\frac{\chi_{\Fl(\pr_x(y))}(p)}{|\Fl(\pr_x(y))|}+\frac{\chi_{\Fl(\pr_x(z))}(p)}{|\Fl(\pr_x(z))|}-\frac{2}{|\Fl(\pr_x(y))|\cdot|\Fl(\pr_x(z))|}\right)\\
\leq &2-\frac{2}{|\Fl(\pr_x(y))|\cdot|\Fl(\pr_x(z))|}\leq 2-\frac{2}{1/(1-\lambda)}=2\lambda.
\end{split}
\end{align}
Let $\alpha^{(k),+}_{x;y,z}(p)=\max\{\alpha^{(k)}_{x;y,z}(p),0\}\geq 0$ and $\alpha^{(k),-}_{x;y,z}(p)=\max\{-\alpha^{(k)}_{x;y,z}(p),0\}\geq 0$ for any $p\in\Gamma x_0$. Then we have
$$\alpha^{(k),+}_{x;y,z}(p)\cdot\alpha^{(k),-}_{x;y,z}(p)=0\mathrm{~and~}\alpha^{(k),+}_{x;y,z}(p)-\alpha^{(k),-}_{x;y,z}(p)=\alpha^{(k)}_{x;y,z}(p),~\forall~p\in\Gamma x_0.$$
Define
$$a^{(k),+}=\sum_{p\in\Gamma x_0}\alpha^{(k),+}_{x;y,z}(p)p\mathrm{~and~}a^{(k),-}=\sum_{p\in\Gamma x_0}\alpha^{(k),-}_{x;y,z}(p)p.$$
Then $a_k-b_k=a^{(k),+}-a^{(k),-}$. Moreover, by \eqref{ch4 cycle-1} and \eqref{shrinking norm}, we have
\begin{align}\label{shrinking half norm}
\alpha^{(k)}:=\frac{1}{2}|a_k-b_k|_{l^1}=\sum_{p\in\Gamma x_0}\alpha^{(k),+}_{x;y,z}(p)=|a^{(k),+}|_{l^1}=\sum_{p\in\Gamma x_0}\alpha^{(k),-}_{x;y,z}(p)=|a^{(k),-}|_{l^1}\leq \lambda.
\end{align}
Notice that by \eqref{coeff of difference-1}, $\alpha^{(k)}_{x;y,z}(p)=0$ whenever $F_p\neq F_{\pr_x(y)}=F_{\pr_x(z)}=F_k$. Therefore $\alpha^{(k),\pm}_{x;y,z}(p)=0$ whenever $F_p\neq F_{\pr_x(y)}=F_{\pr_x(z)}=F_k$. By \eqref{shrinking half norm} and the inductive step (i.e. the case when when $|\Theta(F_x,F_p)|=|\Theta(F_x,F_q)|=k+1$), we have
\begin{align*}
|f(x,y)-f(x,z)|_{l^1}=&|f(x,a_k-b_k)|_{l^1}
=|f(x,a^{(k),+}-a^{(k),-})|_{l^1}\\
=&\left|\sum_{p\in\Gamma x_0}\alpha^{(k),+}_{x;y,z}(p)f(x,p)-\sum_{q\in\Gamma x_0}\alpha^{(k),-}_{x;y,z}(q)f(x,q)\right|_{l^1} \\
=&\left|\sum_{p,q:p,q\in\Gamma x_0}\frac{\alpha^{(k),+}_{x;y,z}(p)\alpha^{(k),-}_{x;y,z}(q)}{\alpha^{(k)}}(f(x,p)-f(x,q))\right|_{l^1}\\
=&\left|\sum_{\substack{p,q:p,q\in\Gamma x_0\\F_p=F_q=F_k}}\frac{\alpha^{(k),+}_{x;y,z}(p)\alpha^{(k),-}_{x;y,z}(q)}{\alpha^{(k)}}(f(x,p)-f(x,q))\right|_{l^1} \\
\leq&\sum_{\substack{p,q:p,q\in\Gamma x_0\\F_p=F_q=F_k}}\frac{\alpha^{(k),+}_{x;y,z}(p)\alpha^{(k),-}_{x;y,z}(q)}{\alpha^{(k)}}\left|f(x,p)-f(x,q)\right|_{l^1} \\
\leq&\sum_{\substack{p,q:p,q\in\Gamma x_0\\F_p=F_q=F_k}}\frac{\alpha^{(k),+}_{x;y,z}(p)\alpha^{(k),-}_{x;y,z}(q)}{\alpha^{(k)}}\cC_3\lambda^{k+1}=\alpha^{(k)}\cC_3\lambda^{k+1}\leq\cC_3\lambda^{k+2}.
\end{align*}
This finishes the proof of this assertion in the special case of $F_y=F_z$.

\textbf{General case}: Notice that for any $F'\in\Theta(F_x,F_y)\ints\Theta(F_x,F_z)$, by Lemma \ref{properties of Theta}, \hyperlink{Theta-3}{property ($\Theta$3) of $\Theta(\cdot,\cdot)$}, $\Theta(F_x,F')\subset\Theta(F_x,F_y)\ints\Theta(F_x,F_z)$. Therefore, by Lemma \ref{properties of Theta}, \hyperlink{Theta-3}{property ($\Theta$3) of $\Theta(\cdot,\cdot)$} again, there exist some $\hF\in\Theta(F_x,F_y)\ints\Theta(F_x,F_z)$ such that $\Theta(F_x,\hF)=\Theta(F_x,F_y)\ints\Theta(F_x,F_z)$. By the first assertion in Proposition \ref{prop of f}, there exist convex conbinations $a,b$ supported near $\hF$ such that $f(x,y)=f(x,a)$ and $f(x,z)=f(x,b)$. Write $a=\sum_{p:p\in\Gamma x_0, F_p=\hF}\alpha_a(p)p$ and $b=\sum_{q:q\in\Gamma x_0, F_q=\hF}\alpha_b(q)q$. Then $\alpha_a(\cdot),\alpha_b(\cdot)\geq 0$ and
$$1=|a|_{l^1}=\sum_{p:p\in\Gamma x_0, F_p=\hF}\alpha_a(p)=|b|_{l^1}=\sum_{q:q\in\Gamma x_0, F_q=\hF}\alpha_b(q).$$
By the \textbf{Special case}, we have
\begin{align*}
|f(x,y)-f(x,z)|_{l^1}=&|f(x,a)-f(x,b)|_{l^1}\\
=&\left|\sum_{\substack{p,q:p,q\in\Gamma x_0\\F_p=F_q=\hF}}\alpha_a(p)\alpha_b(q)(f(x,p)-f(x,q))\right|_{l^1} \\
\leq&\sum_{\substack{p,q:p,q\in\Gamma x_0\\F_p=F_q=\hF}}\alpha_a(p)\alpha_b(q)\left|f(x,p)-f(x,q)\right|_{l^1} \\
\leq&\sum_{\substack{p,q:p,q\in\Gamma x_0\\F_p=F_q=\hF}}\alpha_a(p)\alpha_b(q)\cC_3\lambda^{(F_p|F_q)_{F_x}}=\cC_3\lambda^{(\hF|\hF)_{F_x}}=\cC_3\lambda^{(F_y|F_z)_{F_x}}.
\end{align*}
This completes the proof of this assertion.
\item[(5).] If $|\Theta(F_x,F_y)|\leq 2$, then $f(y,x)=f(z,x)=x$ and the assertion follows trivially. From now on, we assume that $|\Theta(F_x,F_y)|\geq 3$. By Lemma \ref{properties of Theta}, \hyperlink{Theta-3}{property ($\Theta$3) of $\Theta(\cdot,\cdot)$}, we can choose $F_1\neq F_2\in\Theta(F_x,F_y)\setminus{F_y}$ such that $\Theta(F_y,F_1)=\{F_y,F_1\}$ and $\Theta(F_y,F_2)=\{F_y,F_1,F_2\}$. By the first assertion of Proposition \ref{prop of f}, there exist convex combinations $a,b$ supported near $F_2$ such that $f(y,x)=f(y,a)$ and $f(z,x)=f(z,b)$.

Notice that for any $p,q\in X$ such that $F_p=F_q=F_2$, we have $F_{\pr_y(p)}=F_{\pr_z(q)}=F_1\neq F_2$. In particular $p\neq\pr_y(p)$ and $q\neq\pr_z(q)$. By the first assertion of Lemma \ref{easiest},
$$\left\{
\begin{aligned}
&d(p,F_2),d(q,F_2)\leq\epsilon_0/2\implies d([p,q],F_1)\geq \rho_0-\epsilon_0/2>2\rho_0/3, \\
&d(\pr_y(p),F_1)=d([\pr_y(p),p],F_1)=d(\pr_z(q),F_1)=d([\pr_z(q),q],F_1)
=\epsilon_0/2.
\end{aligned}
\right.
$$
By Lemma \ref{prep1}, we have $d(\Proj_{F_1}(\pr_y(p)),\Proj_{F_1}(\pr_z(q)))\leq \cC_2(\rho_0,\epsilon_0,0).$ As a direct corollary of Definition \ref{flower}, we have $\Fl(\pr_y(p))\ints\Fl(\pr_z(q))\neq\emptyset.$ Recall that in \eqref{f} we have $f(y,p)=f(y,a_p)=a_p$ and $f(z,q)=f(z,b_q)=b_q$, where $a_p=\frac{1}{|\Fl(\pr_y(p))|}\sum_{w\in\Fl(\pr_y(p))}w$ and $b_q=\frac{1}{|\Fl(\pr_z(q))|}\sum_{w\in\Fl(\pr_z(q))}w.$
Therefore we can write $a_p-b_q=\sum_{w\in\Gamma x_0}\alpha_{y,z;p,q}(w)w$, where
\begin{align}\label{coeff of difference-2}
\alpha_{y,z;p,q}(w)=\frac{\chi_{\Fl(\pr_y(p))}(w)}{|\Fl(\pr_y(p))|}-\frac{\chi_{\Fl(\pr_z(q))}(w)}{|\Fl(\pr_z(q))|},\quad\forall w\in\Gamma x_0.
\end{align}
In particular, $\sum_{w\in\Gamma x_0}\alpha_{y,z;p,q}(w)=0$. Similar to \eqref{shrinking norm}, \eqref{exp decay} and the fact that $\Fl(\pr_y(p))\ints\Fl(\pr_z(q))\neq\emptyset$ imply
\begin{align}\label{shrinking norm-2}
&|f(y,p)-f(z,q)|_{l^1}=|a_p-b_q|_{l^1}
=\sum_{w\in\Gamma x_0}|\alpha_{y,z;p,q}(w)|\nonumber\\
=&\sum_{\substack{w:w\in\Gamma x_0\\ \chi_{\Fl(\pr_y(p))}(w)\cdot\chi_{\Fl(\pr_z(q))}(w)=0}}|\alpha_{y,z;p,q}(w)|+\sum_{\substack{w:w\in\Gamma x_0\\ \chi_{\Fl(\pr_y(q))}(w)\cdot\chi_{\Fl(\pr_z(q))}(w)=1}}|\alpha_{y,z;p,q}(w)|\nonumber\\
=&\sum_{\substack{w:w\in\Gamma x_0\\ \chi_{\Fl(\pr_y(p))}(w)\cdot\chi_{\Fl(\pr_z(q))}(w)=0}}\frac{\chi_{\Fl(\pr_y(p))}(w)}{|\Fl(\pr_y(p))|}+\frac{\chi_{\Fl(\pr_z(q))}(w)}{|\Fl(\pr_z(q))|} \nonumber\\
&+\sum_{\substack{w:w\in\Gamma x_0\\ \chi_{\Fl(\pr_y(p))}(w)\cdot\chi_{\Fl(\pr_z(q))}(w)=1}}\left|\frac{\chi_{\Fl(\pr_y(p))}(w)}{|\Fl(\pr_y(p))|}-\frac{\chi_{\Fl(\pr_z(q))}(w)}{|\Fl(\pr_z(q))|}\right|\nonumber\\
\leq&\sum_{\substack{w:w\in\Gamma x_0\\ \chi_{\Fl(\pr_y(p))}(w)\cdot\chi_{\Fl(\pr_z(q))}(w)=0}}\frac{\chi_{\Fl(\pr_y(p))}(w)}{|\Fl(\pr_y(p))|}+\frac{\chi_{\Fl(\pr_z(q))}(w)}{|\Fl(\pr_z(q))|} \nonumber\\
&+\sum_{\substack{w:w\in\Gamma x_0\\ \chi_{\Fl(\pr_y(p))}(w)\cdot\chi_{\Fl(\pr_z(q))}(w)=1}}\left(\frac{\chi_{\Fl(\pr_y(p))}(w)}{|\Fl(\pr_y(p))|}+\frac{\chi_{\Fl(\pr_z(q))}(w)}{|\Fl(\pr_z(q))|}-\frac{2}{|\Fl(\pr_y(p))|\cdot|\Fl(\pr_z(q))|}\right)\nonumber\\
\leq &2-\frac{2}{|\Fl(\pr_y(p))|\cdot|\Fl(\pr_z(q))|}\leq 2-\frac{2}{1/(1-\lambda)}=2\lambda.
\end{align}
Back to the discussion on $f(y,x)=f(y,a)$ and $f(z,x)=f(z,b)$, write
$$a=\sum_{w:w\in\Gamma x_0, F_w=F_2}\alpha_a(w)w\mathrm{~and~}b=\sum_{v:v\in\Gamma x_0, F_v=F_2}\alpha_b(v)v,$$
where $\alpha_a,\alpha_b\geq0$ and
$$\sum_{w:w\in\Gamma x_0, F_w=F_2}\alpha_a(w)=\sum_{v:v\in\Gamma x_0, F_v=F_2}\alpha_b(v)=1.$$
Then by \eqref{shrinking norm-2}, we have
\begin{align*}
|f(y,x)-f(z,x)|_{l^1}=&|f(y,a)-f(z,b)|_{l^1}\\
=&\left|\sum_{\substack{v,w:v,w\in\Gamma x_0\\F_v=F_w=F_2}}\alpha_a(w)\alpha_b(v)(f(y,w)-f(z,v))\right|_{l^1} \\
\leq&\sum_{\substack{v,w:v,w\in\Gamma x_0\\F_v=F_w=F_2}}\alpha_a(w)\alpha_b(v)\left|f(y,w)-f(z,v)\right|_{l^1} \\
\leq&2\lambda\sum_{\substack{v,w:v,w\in\Gamma x_0\\F_v=F_w=F_2}}\alpha_a(w)\alpha_b(v)=2\lambda.
\end{align*}
This finishes the proof.\qedhere
\end{enumerate}
\end{proof}

Now we construct the desired ``homological bicombing'' without the anti-symmetry assumptions. Define the $\RR$-bilinear map $\beta':C_{0,x_0}(X;\RR)\times C_{0,x_0}(X;\RR)\to C_{1,x_0}(X;\RR)$ inductively as follows: (See Definition \ref{dfn:geo.bicomb} for $\ovec{[\cdot,\cdot]}$.)
\begin{align}\label{beta'}
\beta'[x,y]=
\begin{cases}
\displaystyle \ovec{[x,y]},\quad&\mathrm{if~}|\Theta(F_x,F_y)|\leq 2,\\
\displaystyle \beta'[x,f(y,x)]+\ovec{[f(y,x),y]}, \quad&\mathrm{else},
\end{cases}
\quad\forall x,y\in\Gamma x_0=\cS_{0,x_0}(X),
\end{align}
where $\ovec{[\sum_{p\in\Gamma}\alpha_pp,x]}:=\sum_{p\in\Gamma}\alpha_p\ovec{[p,x]}$ for any $\sum_{p\in\Gamma x_0}\alpha_pp\in C_{0,x_0}(X;\RR)$.

When $|\Theta(F_x,F_y)|\leq 2$, we define $\cI'_{F_x, x}[x,y]=x$, $\cI'_{F_y, y}[x,y]=y$ and $\beta'_{F_xF_y}[x,y]=\beta'[x,y]$. (These definitions can be viewed as a trivial case for later definitions when $|\Theta(F_x,F_y)|\geq 3$.) If $F_x\neq F_y$, we write $\cI'_{F_x}[x,y]=\cI'_{F_x, x}[x,y]$ and $\cI'_{F_y}[x,y]=\cI'_{F_y, y}[x,y]$ for simplicity. In particular, by the first case in \eqref{beta'}, we have
\begin{align}\label{easy del beta' seg}
\del^X\beta'_{F_xF_y}[x,y]=\del^X \beta'[x,y]=y-x=\cI'_{F_y, y}[x,y]-\cI'_{F_x, x}[x,y].
\end{align}

When $|\Theta(F_x,F_y)|\geq 3$, we let $\Theta(F_x,F_y)=\{F_x,F_1,...,F_k,F_y\}$ for some $k\geq 1$ and distinct $F_1,...,F_k\in\Gamma F\setminus \{F_x,F_y\}$ such that $
\Theta(F_{x},F_j)=\{F_{x},F_1,...,F_j\}$. Define $\cI'_{F_x,x}[x,y]:=\cI'_{F_x}[x,y]=x$, $\cI'_{F_y,y}[x,y]:=\cI'_{F_y}[x,y]=y$ and
\begin{align}\label{beta' seg endpts}
\cI'_{F_k}[x,y]=f(y,x)=f(\cI'_{F_{y}}[x,y],x),~\cI'_{F_j}[x,y]=f(\cI'_{F_{j+1}}[x,y],x),~\forall~1\leq j\leq k-1.
\end{align}
By the first assertion in Proposition \ref{prop of f} and the second assertion in Lemma \ref{easiest}, one can inductively prove that $\cI'_{\hF}[x,y]\in C_{0,x_0}(X;\RR)$ are convex combinations supported near $\hF$. In particular,
\begin{align}\label{beta' seg endpts norm}
|\cI'_{\hF}[x,y]|_{l^1}=1, ~\forall \hF\in\Theta(F_x,F_y).
\end{align}
Let $\cI'_{F_j}[x,y]=\sum_{z:z\in\Gamma x_0, F_z=F_j}\alpha_z^{(j)}z$ for some $\alpha_z^{(j)}\geq 0$ such that $\sum_{z:z\in\Gamma x_0, F_z=F_j}\alpha_z^{(j)}=1$. Then we define
\begin{align}\label{beta' seg}
\begin{split}
&\beta'_{F_jF_{j+1}}[x,y]=\sum_{z:z\in \Gamma x_0,F_z=F_{j+1}}\alpha_z^{(j+1)}\ovec{[f(z,x),z]},~\forall~1\leq j\leq k-1, \\
&\beta'_{F_kF_y}[x,y]=\ovec{[f(y,x),y]},~\beta'_{F_xF_1}[x,y]=\ovec{[x, \cI'_{F_1}[x,y]]}.
\end{split}
\end{align}
Then by \eqref{beta' seg endpts} and \eqref{beta' seg}, we have
\begin{align}\label{del beta' seg}
\begin{split}
\del^X\beta'_{F_jF_{j+1}}[x,y]=&\sum_{z:z\in \Gamma x_0,F_z=F_{j+1}}\alpha_z^{(j+1)}\del^X\ovec{[f(z,x),z]}\\
=&\sum_{z:z\in \Gamma x_0,F_z=F_{j+1}}\alpha_z^{(j+1)}(z-f(z,x)) \\
=&\cI'_{F_{j+1}}[x,y]-f(\cI'_{F_{j+1}}[x,y],x)=\cI'_{F_{j+1}}[x,y]-\cI'_{F_{j}}[x,y],~\forall 1\leq j\leq k-1,\\
\del^X\beta'_{F_kF_y}[x,y]=&\del^X\ovec{[f(y,x),y]}=y-f(y,x)=\cI'_{F_{y}}[x,y]-\cI'_{F_{k}}[x,y], \\
\del^X\beta'_{F_xF_1}[x,y]=&\del^X\ovec{[x, \cI'_{F_1}[x,y]]}=\cI'_{F_1}[x,y]-x=\cI'_{F_{1}}[x,y]-\cI'_{F_{x}}[x,y].
\end{split}
\end{align}
Since for any $1\leq j\leq k-1$ and any $z\in\Gamma x_0$ such that $F_z=F_{j+1}$,  using notations from Definition \ref{l1-seminorm} and the remark after Definition \ref{dfn:geo.bicomb}, the first assertion in Proposition \ref{prop of f} implies that
\begin{align*}
|f(z,x)|_{l^1}=|\ovec{[f(z,x),z]}|_{l^1}=|\ovec{[f(z,x),z]}|_{\{F_j,F_{j+1}\},l^1}=1.
\end{align*}
Therefore, {by \eqref{beta' seg}, the notations from Definition \ref{l1-seminorm} and the remark after Definition \ref{dfn:geo.bicomb}, we have}
\begin{align}\label{l1 norms of beta' seg}
\begin{split}
|\beta'_{F_jF_{j+1}}[x,y]|_{l^1}=&|\beta'_{F_jF_{j+1}}[x,y]|_{\{F_j,F_{j+1}\},l^1} \\
=&\left|\sum_{z:z\in \Gamma x_0,F_z=F_{j+1}}\alpha_z^{(j+1)}\ovec{[f(z,x),z]}\right|_{\{F_j,F_{j+1}\},l^1}\\
=&\sum_{z:z\in \Gamma x_0,F_z=F_{j+1}}\alpha_z^{(j+1)}\left|\ovec{[f(z,x),z]}\right|_{\{F_j,F_{j+1}\},l^1}=\sum_{z:z\in \Gamma x_0,F_z=F_{j+1}}\alpha_z^{(j+1)}=1,\\
|\beta'_{F_kF_y}[x,y]|_{l^1}=&|\beta'_{F_kF_y}[x,y]|_{\{F_k,F_y\}, l^1}=|\ovec{[f(y,x),y]}|_{\{F_k,F_y\}, l^1}=1,\\
|\beta'_{F_xF_1}[x,y]|_{l^1}=&|\beta'_{F_xF_1}[x,y]|_{\{F_x,F_1\}, l^1}=|\ovec{[x,\cI'_{F_1}[x,y]]}|_{\{F_x,F_1\}, l^1}=1.
\end{split}
\end{align}

Following the definitions in \eqref{beta'}, \eqref{beta' seg endpts} and \eqref{beta' seg}, for any $1\leq j\leq k$, we obtain
\begin{align}
\beta'[x,y]=&\beta'[x,f(y,x)]+\ovec{[f(y,x),y]} \nonumber\\
=&\beta'[x,\cI'_{F_{k}}[x,y]]+\beta'_{F_kF_y}[x,y] \nonumber\\
=&\beta'\left[x,\sum_{z:z\in\Gamma x_0, F_z=F_k}\alpha_z^{(k)}z\right]+\beta'_{F_kF_y}[x,y] \nonumber\\
=& \beta'\left[x,f\left(\sum_{z:z\in\Gamma x_0, F_z=F_k}\alpha_z^{(k)}z,x\right)\right]+\left(\sum_{z:z\in\Gamma x_0, F_z=F_k}\alpha_z^{(k)}\ovec{[f(z,x),z]}\right)+\beta'_{F_kF_y}[x,y]    \nonumber\\
=&\beta'[x,f(\cI'_{F_{k}}[x,y],x)]+\left(\sum_{z:z\in\Gamma x_0, F_z=F_k}\alpha_z^{(k)}\ovec{[f(z,x),z]}\right)+\beta'_{F_kF_y}[x,y] \nonumber\\
=&\beta'[x,\cI'_{F_{k-1}}[x,y]]+\beta'_{F_{k-1}F_{k}}[x,y]+\beta'_{F_kF_y}[x,y] \nonumber\\
=&\cdots=\beta'[x,\cI'_{F_{j}}[x,y]]+\beta'_{F_{j}F_{j+1}}[x,y]+\cdots+\beta'_{F_{k-1}F_{k}}[x,y]+\beta'_{F_kF_y}[x,y] \label{beta' detailed partial}\\
=&\cdots=\beta'[x,\cI'_{F_{1}}[x,y]]+\beta'_{F_{1}F_{2}}[x,y]+\cdots+\beta'_{F_{k-1}F_{k}}[x,y]+\beta'_{F_kF_y}[x,y] \nonumber\\
=&\ovec{[x,\cI'_{F_{1}}[x,y]]}+\beta'_{F_{1}F_{2}}[x,y]+\cdots+\beta'_{F_{k-1}F_{k}}[x,y]+\beta'_{F_kF_y}[x,y] \nonumber\\
=&\beta'_{F_{x}F_{1}}[x,y]+\beta'_{F_{1}F_{2}}[x,y]+\cdots+\beta'_{F_{k-1}F_{k}}[x,y]+\beta'_{F_kF_y}[x,y]. \label{beta' detailed full}
\end{align}
(See Figure \ref{eqn5.18} for a rough graph of $\beta'$ and its decompostion in \eqref{beta' detailed full}.) In particular, by \eqref{del beta' seg} and \eqref{beta' detailed full},
\begin{align}\label{is bicombing}
\begin{split}
&{\del^X}\beta'[x,y]\\
=&{\del^X}\beta'_{F_{x}F_{1}}[x,y]+{\del^X}\beta'_{F_{1}F_{2}}[x,y]+\cdots+{\del^X}\beta'_{F_{k-1}F_{k}}[x,y]+{\del^X}\beta'_{F_kF_y}[x,y] \\
=&\cI'_{F_{1}}[x,y]-\cI'_{F_{x}}[x,y]+\cI'_{F_{2}}[x,y]-\cI'_{F_{1}}[x,y]+\cdots+\cI'_{F_{k}}[x,y]-\cI'_{F_{k-1}}[x,y]+\cI'_{F_{y}}[x,y]-\cI'_{F_{k}}[x,y]\\
=&\cI'_{F_{y}}[x,y]-\cI'_{F_{x}}[x,y]=y-x.
\end{split}
\end{align}
\begin{figure}[h]
	\centering
	\includegraphics[width=4in]{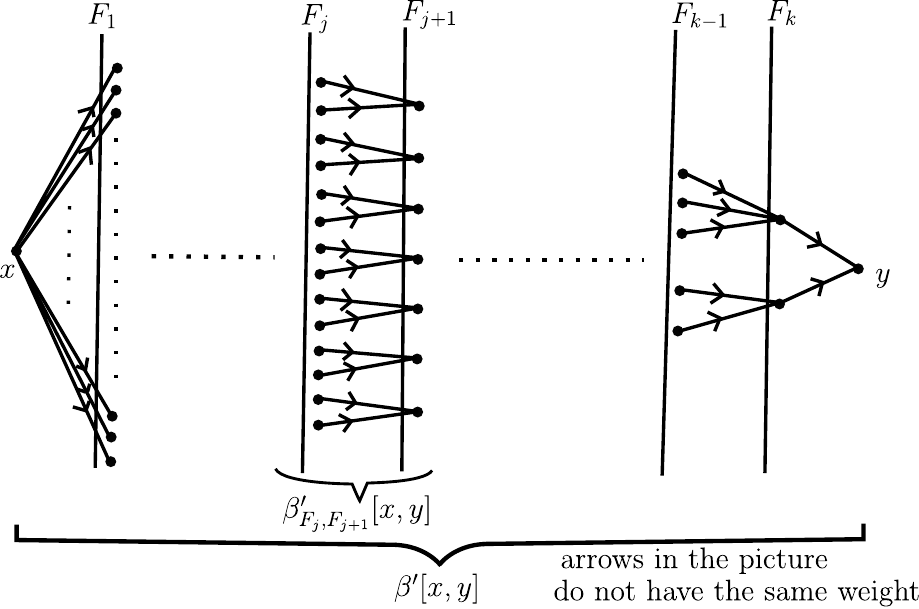}
	\caption{ \label{eqn5.18}}
\end{figure}
For any $x,y,z\in\Gamma x_0$, for simplicity we write
\begin{align}\label{I am lazy}
\cF_x(y,z):=\Theta(F_x,F_y)\ints\Theta(F_x,F_z).
\end{align}
Then we have
\begin{proposition}\label{prop of beta'}
For any $x,y,z\in\Gamma x_0$, there exists some $\cC_4(\rho_0,\epsilon_0)>0$ such that
\begin{enumerate}
\item[(1).] $|\beta'[x,y]-\beta'[x,z]|_{\cF_x(y,z),l^1}\leq \cC_4(\rho_0,\epsilon_0)$ and
$$\sum_{\hF\in\cF_x(y,z)\setminus F_x}|\cI'_\hF[x,y]-\cI'_\hF[x,z]|_{l^1}\leq \cC_4(\rho_0,\epsilon_0);$$
\item[(2).] $|\beta'[y,x]-\beta'[z,x]|_{\cF_x(y,z),l^1}\leq \cC_4(\rho_0,\epsilon_0)$ and
$$\sum_{\hF\in\cF_x(y,z)\setminus F_x}|\cI'_\hF[y,x]-\cI'_\hF[z,x]|_{l^1}\leq \cC_4(\rho_0,\epsilon_0).$$
\end{enumerate}
\end{proposition}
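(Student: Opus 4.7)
By Lemma \ref{properties of Theta}, \hyperlink{Theta-3}{property ($\Theta$3) of $\Theta(\cdot,\cdot)$}, the common intersection $\cF_x(y,z)=\Theta(F_x,F_y)\cap\Theta(F_x,F_z)$ has the form $\Theta(F_x,F_{\hat{j}})=\{F_x,F_1,\ldots,F_{\hat{j}}\}$, so that the full chains $\Theta(F_x,F_y)=\{F_x,F_1,\ldots,F_{\hat{j}},F_{\hat{j}+1},\ldots,F_k,F_y\}$ and $\Theta(F_x,F_z)=\{F_x,F_1,\ldots,F_{\hat{j}},G_1,\ldots,G_{k'},F_z\}$ share this common prefix and diverge thereafter. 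Note that $F_y,F_z\notin\cF_x(y,z)$. In both parts, the plan is to propagate the exponential decay estimates of Proposition \ref{prop of f} (4)--(5) along this common prefix, one flat at a time.

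For part (1), the main observation is that $a\mapsto f(a,x)$ is a $\lambda$-contraction of the $l^1$-norm on differences of convex combinations supported near the same flat $\neq F_x$: if $a,b$ are such convex combinations, writing $a-b=\tfrac{1}{2}|a-b|_{l^1}(\widetilde{a}-\widetilde{b})$ for convex combinations $\widetilde{a},\widetilde{b}$ and applying Proposition \ref{prop of f} (5) yields $|f(a,x)-f(b,x)|_{l^1}\leq\lambda|a-b|_{l^1}$. Starting from the trivial bound $|\cI'_{F_{\hat{j}}}[x,y]-\cI'_{F_{\hat{j}}}[x,z]|_{l^1}\leq 2$ and iterating the recursion $\cI'_{F_j}[x,\bullet]=f(\cI'_{F_{j+1}}[x,\bullet],x)$ downward from $\hat{j}$ to $1$, one obtains $|\cI'_{F_j}[x,y]-\cI'_{F_j}[x,z]|_{l^1}\leq 2\lambda^{\hat{j}-j}$, whose sum over $1\leq j\leq\hat{j}$ is geometric. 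For the bicombing norm, only the segments $\beta'_{F_{j-1}F_j}[x,\bullet]$ with $1\leq j\leq\hat{j}$ survive the $\cF_x(y,z)$-seminorm (all others have at least one endpoint supported outside $\cF_x(y,z)$); applying the remark after Definition \ref{dfn:geo.bicomb} to the pairwise distinct oriented geodesic bicombings $\ovec{[f(z,x),z]}$ indexed by vertices $z$ with $F_z=F_j$ reduces each such term to $|\cI'_{F_j}[x,y]-\cI'_{F_j}[x,z]|_{l^1}$, and summing finishes the argument.

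For part (2), the recursion becomes $\cI'_{F_j}[y,x]=f(\cI'_{F_{j-1}}[y,x],y)$ for $1\leq j\leq\hat{j}$ (with $\cI'_{F_x}[y,x]=x$), and analogously with $z$. Since both the convex-combination argument and the ``center'' of the iteration change between the two sides, I decompose
\[
f(a,y)-f(b,z)=\bigl(f(a,y)-f(b,y)\bigr)+\bigl(f(b,y)-f(b,z)\bigr).
\]
The first term is controlled by the part (1) contraction (with $y$ in place of $x$, legitimate since $F_y\neq F_{j-1}$), contributing a factor $\lambda$. For the second, with $b$ supported near $F_{j-1}\in\cF_x(y,z)$, Proposition \ref{prop of f} (4) applied atomically together with the identity $(F_y|F_z)_{F_{j-1}}=\hat{j}-j+2$ (from Lemma \ref{properties of Theta}, \hyperlink{Theta-3}{property ($\Theta$3) of $\Theta(\cdot,\cdot)$}) yields a bound $\cC_3\lambda^{\hat{j}-j+2}$. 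This produces the recursion $D_j\leq\lambda D_{j-1}+\cC_3\lambda^{\hat{j}-j+2}$ for $D_j:=|\cI'_{F_j}[y,x]-\cI'_{F_j}[z,x]|_{l^1}$, with base case $D_1\leq\cC_3\lambda^{\hat{j}+1}$ (Proposition \ref{prop of f} (4) directly, since $a=b=x$). A routine induction gives $D_j\leq\cC_3\lambda^{\hat{j}-j+2}/(1-\lambda^2)$, whose sum over $1\leq j\leq\hat{j}$ is a finite geometric series bounded independently of $\hat{j}$. The bicombing estimate follows by decomposing each relevant summand $\beta'_{F_{m+1}F_m}[y,x]-\beta'_{F_{m+1}F_m}[z,x]$ into a coefficient-perturbation part (bounded by $D_m$) and an endpoint-shift part $\sum_w\alpha_w(\ovec{[f(w,y),w]}-\ovec{[f(w,z),w]})$ whose $l^1$-norm is bounded by $\cC_3\lambda^{\hat{j}-m+2}$ via Proposition \ref{prop of f} (4) again.

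\textbf{Main obstacle.} The delicate step is the bookkeeping in part (2), where both inputs of $f$ differ simultaneously; the added-and-subtracted middle term $f(b,y)$ cleanly separates the ``contraction'' contribution (governed by Proposition \ref{prop of f} (5)) from the ``base-point shift'' contribution (governed by Proposition \ref{prop of f} (4)), and matching the exponents against the chain geometry via Lemma \ref{properties of Theta}, \hyperlink{Theta-3}{property ($\Theta$3) of $\Theta(\cdot,\cdot)$}, is what allows the convolved recursion to be summed. A secondary technicality common to both parts is verifying that the various oriented geodesic bicombings $\ovec{[f(z,\bullet),z]}$ within each $\beta'_{F_{j-1}F_j}$ are pairwise distinct as singular $1$-chains (they are, since the right endpoint $z$ varies), so that the $l^1$-norm of the linear combination splits as the sum of the absolute values of the coefficients.
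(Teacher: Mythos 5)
Your argument is correct in the generic case and, in part (2), takes a genuinely different route from the paper. For part (1) you and the paper use essentially the same recursion: iterate the $\lambda$-contraction $|f(a,x)-f(b,x)|_{l^1}\le\lambda|a-b|_{l^1}$ (derived from Proposition \ref{prop of f}(5) by bilinearizing the $\pm$ decomposition of $a-b$) down the common chain $\cF_x(y,z)$, and reduce each surviving bicombing segment to the $l^1$-norm of the corresponding $\cI'$-difference. Your treatment is actually slightly more uniform, since $\beta'_{F_xF_1}[x,\cdot]=\ovec{[x,\cI'_{F_1}[x,\cdot]]}$ fits the same formula; the paper instead bounds the $j=0$ segment crudely by $2$.

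The real divergence is in part (2). The paper writes $\cI'_{F_{j}}[y,x]=\cI'_{F_j,+}+\cI'_{F_j,0}$, $\cI'_{F_{j}}[z,x]=\cI'_{F_j,-}+\cI'_{F_j,0}$ (positive part, negative part, overlap), bounds the $\pm$ parts by their mass $\alpha^{(j)}$ without any contraction, and applies Proposition \ref{prop of f}(4) only to the overlap $\cI'_{F_j,0}$, giving the recursion $2\alpha^{(j+1)}\le 2\alpha^{(j)}+\cC_3\lambda^{k-j+1}$. You instead telescope through the intermediate point $f(b,y)$, splitting $f(a,y)-f(b,z)=(f(a,y)-f(b,y))+(f(b,y)-f(b,z))$: the first difference has the same center $y$ so Proposition \ref{prop of f}(5) gives the genuine contraction $\lambda|a-b|_{l^1}$; the second fixes the support $b$ so Proposition \ref{prop of f}(4) applies atom by atom. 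This gives $D_j\le\lambda D_{j-1}+\cC_3\lambda^{\hat j-j+2}$, a strictly tighter recursion (in fact $1-\lambda^2$ rather than $1-\lambda$ in the denominator). The exponent identity $(F_y|F_z)_{F_{j-1}}=\hat j-j+2$ is exactly what property ($\Theta$3) delivers, as you say. Your split of each $\beta'_{F_{m+1}F_m}$ into coefficient-perturbation and endpoint-shift parts is also correct because for distinct $w$ with $F_w=F_m$ the bicombings $\ovec{[f(w,\cdot),w]}$ never coincide (the support of $f(w,\cdot)$ lies near $F_{m-1}\neq F_m$, so no orientation-reversal collision can occur), so the remark after Definition \ref{dfn:geo.bicomb} applies.

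Two small gaps worth flagging. First, your parenthetical ``$F_y,F_z\notin\cF_x(y,z)$'' is not always true --- if, say, $F_y\in\Theta(F_x,F_z)$ then $F_y$ is the final element $F_{\hat j}$ of $\cF_x(y,z)$ and your chain decomposition has a trivial tail. The recursion is unaffected (the base $D_{\hat j}\le 2$ is still valid), but the remark as stated is inaccurate. Second, when $|\cF_x(y,z)|\le 2$, in particular when $F_y=F_x$ or $F_z=F_x$, there are no segments $\beta'_{F_{j-1}F_j}$ with $1\le j\le\hat j$ yet $\ovec{[x,y]}$ (say) can still contribute $1$ to the $\cF_x(y,z)$-seminorm; the paper disposes of this with the crude bound $|\beta'[x,\cdot]|_{\cF_x(y,z),l^1}\le|\cF_x(y,z)|$ and takes $\cC_4\ge 4$. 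You should add that buffer (or run the bound $|\beta'[x,\cdot]|_{\cF_x(y,z),l^1}\le|\cF_x(y,z)|$ first and only then refine for $|\cF_x(y,z)|\ge 3$) so the estimate is uniformly valid.
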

\begin{proof}

By Lemma \ref{properties of Theta}, \hyperlink{Theta-3}{property ($\Theta$3) of $\Theta(\cdot,\cdot)$}, we can assume WLOG that $\cF_x(y,z)=\{F_x,F_1,...,F_k\}$ for some $k\geq 0$ and distinct $F_x,F_1,...,F_k\in\Gamma F$ such that $\Theta(F_x,F_j)=\{F_x,F_1,...,F_j\}$ for any $1\leq j\leq k$. In particular, $|\cF_x(y,z)|=k+1$. By Definition \ref{l1-seminorm}, \eqref{beta'}, \eqref{l1 norms of beta' seg} and \eqref{beta' detailed full}, we have
\begin{align*}&|\beta'[x,y]|_{\cF_x(y,z),l^1} \\
=&
\begin{cases}
\displaystyle |\ovec{[x,y]}|=1\leq |\cF_x(y,z)|,&\mathrm{if~}|\Theta(F_x,F_y)|= 1,\\
\displaystyle |\beta'_{F_xF_1}[x,y]|_{l^1}+|\beta'_{F_1F_2}[x,y]|_{l^1}+\cdots+|\beta'_{F_{k-1}F_k}[x,y]|_{l^1}=k<|\cF_x(y,z)|,~&\mathrm{otherwise}
\end{cases}
\end{align*}
and
\begin{align*}&|\beta'[y,x]|_{\cF_x(y,z),l^1} \\
=&
\begin{cases}
\displaystyle |\ovec{[y,x]}|=1\leq |\cF_x(y,z)|,&\mathrm{if~}|\Theta(F_x,F_y)|= 1,\\
\displaystyle |\beta'_{F_kF_{k-1}}[x,y]|_{l^1}+\cdots+|\beta'_{F_2F_1}[x,y]|_{l^1}+|\beta'_{F_{1}F_x}[x,y]|_{l^1}=k<|\cF_x(y,z)|,~&\mathrm{otherwise.}
\end{cases}
\end{align*}
Similarly, $|\beta'[x,z]|_{\cF_x(y,z),l^1}\leq |\cF_x(y,z)|$ and $|\beta'[z,x]|_{\cF_x(y,z),l^1}\leq |\cF_x(y,z)|$. Therefore by triangular inequality of $|\cdot|_{\cF_x(y,z),l^1}$, we have
$$|\beta'[x,y]-\beta'[x,z]|_{\cF_x(y,z),l^1}\leq 2|\cF_x(y,z)|\text{ and }|\beta'[y,x]-\beta'[z,x]|_{\cF_x(y,z),l^1}\leq 2|\cF_x(y,z)|.$$
WLOG we choose $\cC_4\geq 4$. Then by \eqref{beta' seg endpts norm} and the above, the proposition holds trivially when $|\cF_x(y,z)|=2$. It remains to consider the case when $|\cF_x(y,z)|=k+1\geq 3$. We prove the two assertions separately.
\begin{enumerate}
\item[(1).]
By \eqref{beta' seg}, \eqref{l1 norms of beta' seg} and \eqref{beta' detailed full},
\begin{align}\label{cut tail}
\begin{split}
&|\beta'[x,y]-\beta'[x,z]|_{\cF_x(y,z),l^1} \\
=&\left|\beta'_{F_xF_1}[x,y]-\beta'_{F_xF_1}[x,z]+\sum_{j=1}^{k-1}(\beta'_{F_jF_{j+1}}[x,y]-\beta'_{F_jF_{j+1}}[x,z])\right|_{l^1} \\
\leq&\left|\beta'_{F_xF_1}[x,y]\right|_{l^1}+\left|\beta'_{F_xF_1}[x,z]\right|_{l^1}+\sum_{j=1}^{k-1}\left|\beta'_{F_jF_{j+1}}[x,y]-\beta'_{F_jF_{j+1}}[x,z]\right|_{l^1} \\
\leq&2+\sum_{j=1}^{k-1}\left|\beta'_{F_jF_{j+1}}[x,y]-\beta'_{F_jF_{j+1}}[x,z]\right|_{l^1}.
\end{split}
\end{align}
Therefore, it suffices to find an upper bound for $\left|\beta'_{F_jF_{j+1}}[x,y]-\beta'_{F_jF_{j+1}}[x,z]\right|_{l^1}$ for any $1\leq j\leq k-1$. For any $1\leq j\leq k$, by the discussion near \eqref{beta' seg endpts norm}, we let
\begin{align*}
\cI'_{F_j}[x,y]=\sum_{\substack{w:w\in\Gamma x_0\\F_w=F_j}}\alpha_{y,w}^{(j)}w\mathrm{~and~}\cI'_{F_j}[x,z]=\sum_{\substack{w:w\in\Gamma x_0\\F_w=F_j}}\alpha_{z,w}^{(j)}w,
\end{align*}
where
\begin{align*}
\alpha_{y,w}^{(j)},\alpha_{z,w}^{(j)}\geq 0,~\forall w\in\Gamma x_0\mathrm{~s.t.~}F_w=F_j,\mathrm{~and~}\sum_{\substack{w:w\in\Gamma x_0\\F_w=F_j}}\alpha_{y,w}^{(j)}=\sum_{\substack{w:w\in\Gamma x_0\\F_w=F_j}}\alpha_{z,w}^{(j)}=1.
\end{align*}
Let
\begin{align*}
\cI'_{F_j,+}(x;y,z)=\sum_{\substack{w:w\in\Gamma x_0\\F_w=F_j}}\alpha_{w,+}^{(j)}w\mathrm{~and~}\cI'_{F_j,-}(x;y,z)=\sum_{\substack{w:w\in\Gamma x_0\\F_w=F_j}}\alpha_{w,-}^{(j)}w,
\end{align*}
where
\begin{align*}
\alpha^{(j)}_{w,+}=\max\{0,\alpha_{y,w}^{(j)}-\alpha_{z,w}^{(j)}\}\mathrm{~and~}\alpha^{(j)}_{w,-}=\max\{0,\alpha_{z,w}^{(j)}-\alpha_{y,w}^{(j)}\}.
\end{align*}
Hence
\begin{align}\label{rearange-1}
\alpha_{w,+}^{(j)}-\alpha_{w,-}^{(j)}=\alpha_{y,w}^{(j)}-\alpha_{z,w}^{(j)}\mathrm{~and~}\cI'_{F_{j}}[x,y]-\cI'_{F_{j}}[x,z]=\cI'_{F_j,+}(x;y,z)-\cI'_{F_j,-}(x;y,z).
\end{align}
Moreover,
\begin{align}\label{norm of difference at F_j-1}
\alpha^{(j)}:=\sum_{\substack{w:w\in\Gamma x_0\\ F_w=F_j}}\alpha_{w,+}^{(j)}=\sum_{\substack{w:w\in\Gamma x_0\\ F_w=F_j}}\alpha_{w,-}^{(j)}=\frac{1}{2}\sum_{\substack{w:w\in\Gamma x_0\\ F_w=F_j}}|\alpha_{w,+}^{(j)}-\alpha_{w,-}^{(j)}|=\frac{1}{2}\left|\cI'_{F_{j}}[x,y]-\cI'_{F_{j}}[x,z]\right|_{l^1}.
\end{align}
By \eqref{beta' seg endpts}, \eqref{rearange-1} and \eqref{norm of difference at F_j-1}, we notice that
\begin{align*}
\cI'_{F_{j}}[x,y]-\cI'_{F_{j}}[x,z]=&f(\cI'_{F_{j+1}}[x,y]-\cI'_{F_{j+1}}[x,z],x) \\
=&f(\cI'_{F_{j+1},+}(x;y,z)-\cI'_{F_{j+1},-}(x;y,z),x) \\
=&\sum_{\substack{w:w\in\Gamma x_0\\F_w=F_{j+1}}}\alpha_{w,+}^{(j+1)}f(w,x)-\sum_{\substack{v:v\in\Gamma x_0\\F_v=F_{j+1}}}\alpha_{v,-}^{(j+1)}f(v,x) \\
=&\sum_{\substack{w,v:w,v\in\Gamma x_0\\F_w=F_v=F_{j+1}}}\frac{\alpha_{w,+}^{(j+1)}\alpha_{v,-}^{(j+1)}}{\alpha^{(j+1)}}(f(w,x)-f(v,x)).
\end{align*}
Apply the fifth assertion in Proposition \ref{prop of f} and \eqref{norm of difference at F_j-1} to the above, we obtain
\begin{align*}
2\alpha^{(j)}=|\cI'_{F_{j}}[x,y]-\cI'_{F_{j}}[x,z]|_{l^1}\leq\sum_{\substack{w,v\in\Gamma x_0\\F_w=F_v=F_{j+1}}}\frac{\alpha_{w,+}^{(j+1)}\alpha_{v,-}^{(j+1)}}{\alpha^{(j+1)}}|f(w,x)-f(v,x)|_{l^1}\leq 2\lambda\alpha^{(j+1)}.
\end{align*}
Hence for any $1\leq j\leq k$,
\begin{align}\label{segment norm induction-1}
\begin{split}
2\alpha^{(j)}=&|\cI'_{F_{j}}[x,y]-\cI'_{F_{j}}[x,z]|_{l^1}\\\leq&2\lambda\alpha^{(j+1)}\leq\cdots\leq 2\lambda^{k-j}\alpha^{(k)}\leq\lambda^{k-j}(|\cI'_{F_{k}}[x,y]|_{l^1}+|\cI'_{F_{k}}[x,z]|_{l^1})\leq 2\lambda^{k-j}.
\end{split}
\end{align}
As a consequence of \eqref{beta' seg} (used in the second (in)equality of \eqref{C4 est-2}), \eqref{cut tail} (used in the first inequality of \eqref{C4 est-2}), \eqref{rearange-1} (used in the third (in)equality of \eqref{C4 est-2}), \eqref{norm of difference at F_j-1} (used in the fifth (in)equality of \eqref{C4 est-2}), \eqref{segment norm induction-1} (used in \eqref{C4 est-1} and the sixth (in)equality of \eqref{C4 est-2}) and the remark after Definition \ref{dfn:geo.bicomb} (used in the fourth (in)equality of \eqref{C4 est-2}), we have
\begin{align}\label{C4 est-1}
\sum_{\hF\in\cF_x(y,z)\setminus F_x}|\cI'_\hF[x,y]-\cI'_\hF[x,z]|_{l^1}=\sum_{j=1}^k|\cI'_{F_j}[x,y]-\cI'_{F_j}[x,z]|_{l^1}\leq\sum_{j=1}^k2\lambda^{k-j}\leq\frac{2}{1-\lambda}
\end{align}
and
\begin{align}\label{C4 est-2}
\begin{split}
&|\beta'[x,y]-\beta'[x,z]|_{\cF_x(y,z),l^1} \\
\leq&2+\sum_{j=1}^{k-1}\left|\beta'_{F_jF_{j+1}}[x,y]-\beta'_{F_jF_{j+1}}[x,z]\right|_{l^1} \\
=&2+\sum_{j=1}^{k-1}\left|\sum_{\substack{w:w\in\Gamma x_0\\F_w=F_{j+1}}}\alpha_{y,w}^{(j+1)}\ovec{[f(w,x),w]}-\sum_{\substack{w:w\in\Gamma x_0\\F_w=F_{j+1}}}\alpha_{z,w}^{(j+1)}\ovec{[f(w,x),w]}\right|_{l^1} \\
=&2+\sum_{j=1}^{k-1}\left|\sum_{\substack{w:w\in\Gamma x_0\\F_w=F_{j+1}}}\left(\alpha_{w,+}^{(j+1)}-\alpha_{w,-}^{(j+1)}\right)\ovec{[f(w,x),w]}\right|_{l^1} \\
=&2+\sum_{j=1}^{k-1}\sum_{\substack{w:w\in\Gamma x_0\\F_w=F_{j+1}}}\left|\alpha_{w,+}^{(j+1)}-\alpha_{w,-}^{(j+1)}\right|= 2+\sum_{j=1}^{k-1}2\alpha^{(j+1)}\leq2+\sum_{j=1}^{k-1}2\lambda^{k-j-1}\leq\frac{2}{1-\lambda}+2.
\end{split}
\end{align}
\item[(2).] Similar to the beginning of the discussions regarding the first requirement, by \eqref{beta' detailed partial} and \eqref{beta' detailed full}, we have
\begin{align}\label{cut head}
\begin{split}
&|\beta'[y,x]-\beta'[z,x]|_{\cF_x(y,z),l^1} \\
=&\left|\left(\sum_{j=1}^{k-1}\beta'_{F_{j+1}F_{j}}[y,x]-\beta'_{F_{j+1}F_j}[z,x]\right)+\beta'_{F_1F_x}[y,x]-\beta'_{F_1F_x}[z,x]\right|_{l^1} \\
\leq&\left(\sum_{j=1}^{k-1}\left|\beta'_{F_{j+1}F_j}[y,x]-\beta'_{F_{j+1}F_j}[z,x]\right|_{l^1}\right)+\left|\beta'_{F_1F_x}[y,x]\right|_{l^1}+\left|\beta'_{F_1F_x}[z,x]\right|_{l^1} \\
\leq&2+\sum_{j=1}^{k-1}\left|\beta'_{F_{j+1}F_j}[y,x]-\beta'_{F_{j+1}F_j}[z,x]\right|_{l^1}.
\end{split}
\end{align}
Therefore, it suffices to find a suitable upper bound for $\left|\beta'_{F_{j+1}F_j}[y,x]-\beta'_{F_{j+1}F_j}[z,x]\right|_{l^1}$ for any $1\leq j\leq k-1$. For any $1\leq j\leq k$, by the discussion near \eqref{beta' seg endpts norm}, we let
\begin{align*}
\cI'_{F_j}[y,x]=\sum_{\substack{w\in\Gamma x_0\\F_w=F_j}}\alpha_{y,w}^{(j)}w\mathrm{~and~}\cI'_{F_j}[z,x]=\sum_{\substack{w\in\Gamma x_0\\F_w=F_j}}\alpha_{z,w}^{(j)}w,
\end{align*}
where
\begin{align*}
\alpha_{y,w}^{(j)},\alpha_{z,w}^{(j)}\geq 0,~\forall w\in\Gamma x_0\mathrm{~s.t.~}F_w=F_j,\mathrm{~and~}\sum_{\substack{w:w\in\Gamma x_0\\F_w=F_j}}\alpha_{y,w}^{(j)}=\sum_{\substack{w:w\in\Gamma x_0\\F_w=F_j}}\alpha_{z,w}^{(j)}=1.
\end{align*}
Let
\begin{align}\label{+- decomp-2}
\cI'_{F_j,+}(x;y,z)=\sum_{\substack{w:w\in\Gamma x_0\\F_w=F_j}}\alpha_{w,+}^{(j)}w\mathrm{~and~}\cI'_{F_j,-}(x;y,z)=\sum_{\substack{w:w\in\Gamma x_0\\F_w=F_j}}\alpha_{w,-}^{(j)}w,
\end{align}
where
\begin{align}\label{+- decomp coeff-2}
\alpha^{(j)}_{w,+}=\max\{0,\alpha_{y,w}^{(j)}-\alpha_{z,w}^{(j)}\}\leq \alpha^{(j)}_{y,w}\mathrm{~and~}\alpha^{(j)}_{w,-}=\max\{0,\alpha_{z,w}^{(j)}-\alpha_{y,w}^{(j)}\}\leq \alpha^{(j)}_{z,w}.
\end{align}
Hence
\begin{align}\label{rearange-2}
\alpha_{w,+}^{(j)}-\alpha_{w,-}^{(j)}=\alpha_{y,w}^{(j)}-\alpha_{z,w}^{(j)}\mathrm{~and~}\cI'_{F_{j}}[y,x]-\cI'_{F_{j}}[z,x]=\cI'_{F_j,+}(x;y,z)-\cI'_{F_j,-}(x;y,z).
\end{align}
and
\begin{align}\label{norm of difference at F_j-2}
\alpha^{(j)}:=\sum_{\substack{w:w\in\Gamma x_0\\ F_w=F_j}}\alpha_{w,+}^{(j)}=\sum_{\substack{w:w\in\Gamma x_0\\ F_w=F_j}}\alpha_{w,+}^{(j)}=\frac{1}{2}\sum_{\substack{w:w\in\Gamma x_0\\ F_w=F_j}}|\alpha_{w,+}^{(j)}-\alpha_{w,-}^{(j)}|=\frac{1}{2}\left|\cI'_{F_{j}}[y,x]-\cI'_{F_{j}}[z,x]\right|_{l^1}.
\end{align}
For any $w\in\Gamma x_0$ with $F_w=F_j$, by \eqref{+- decomp coeff-2} and \eqref{rearange-2} we have $\alpha^{(j)}_{w,0}=\alpha^{(j)}_{y,w}-\alpha^{(j)}_{w,+}=\alpha^{(j)}_{z,w}-\alpha^{(j)}_{w,-}\geq 0$. Hence from \eqref{+- decomp-2} and \eqref{norm of difference at F_j-2}, we can define
\begin{align}\label{overlap terms}
\cI'_{F_j,0}(x;y,z):=\cI'_{F_j}[y,x]-\cI'_{F_j,+}(x;y,z)=\cI'_{F_j}[z,x]-\cI'_{F_j,-}(x;y,z)=\sum_{\substack{w:w\in\Gamma x_0\\F_w=F_j}}\alpha^{(j)}_{w,0}w
\end{align}
and
\begin{align}\label{overlap norm}
|\cI'_{F_j,0}(x;y,z)|_{l^1}=\sum_{\substack{w:w\in\Gamma x_0\\F_w=F_j}}|\alpha^{(j)}_{w,0}|=\sum_{\substack{w:w\in\Gamma x_0\\F_w=F_j}}\alpha^{(j)}_{w,0}=\sum_{\substack{w:w\in\Gamma x_0\\F_w=F_j}}\alpha^{(j)}_{y,w}-\alpha^{(j)}_{w,+}=1-\alpha^{(j)}.
\end{align}
We first observe that for any $1\leq j\leq k-1$, by \eqref{beta' seg endpts} (used in the second equality), \eqref{+- decomp-2} (used in the sixth (in)equality), \eqref{norm of difference at F_j-2} (used in the first and the sixth (in)equalities), \eqref{overlap terms} (used in the third and the fifth (in)equalities), \eqref{overlap norm} (used in the eighth (in)equality) and the fourth assertion in Proposition \ref{prop of f} (used in the seventh (in)equality), we have
\begin{align*}
&2\alpha^{(j+1)}\\
=&\left|\cI'_{F_{j+1}}[y,x]-\cI'_{F_{j+1}}[z,x]\right|_{l^1}\\
=&\left|f(\cI'_{F_{j}}[y,x],y)-f(\cI'_{F_{j}}[z,x],z)\right|_{l^1} \\
=& \left|f(\cI'_{F_{j},+}(x;y,z),y)+f(\cI'_{F_{j},0}(x;y,z),y)-f(\cI'_{F_{j},-}(x;y,z),z)-f(\cI'_{F_{j},0}(x;y,z),z)\right|_{l^1}\\
\leq&\left|f(\cI'_{F_{j},+}(x;y,z),y)\right|_{l^1}+\left|f(\cI'_{F_{j},-}(x;y,z),z)\right|_{l^1}+\left|f(\cI'_{F_{j},0}(x;y,z),y)-f(\cI'_{F_{j},0}(x;y,z),z)\right|_{l^1} \\
=&\left|\cI'_{F_{j},+}(x;y,z)\right|_{l^1}+\left|\cI'_{F_{j},-}(x;y,z)\right|_{l^1}+\left|\sum_{\substack{w:w\in\Gamma x_0\\F_w=F_j}}\alpha^{(j)}_{w,0}(f(w,y)-f(w,z))\right|_{l^1} \\
\leq&2\alpha^{(j)}+\sum_{\substack{w:w\in\Gamma x_0\\F_w=F_j}}\alpha^{(j)}_{w,0}\left|f(w,y)-f(w,z)\right|_{l^1}\\
\leq&2\alpha^{(j)}+\cC_3\lambda^{(F_y|F_z)_{F_j}}\sum_{\substack{w:w\in\Gamma x_0\\F_w=F_j}}\alpha^{(j)}_{w,0}=2\alpha^{(j)}+\cC_3\lambda^{k-j+1}(1-\alpha^{(j)})\leq 2\alpha^{(j)}+\cC_3\lambda^{k-j+1}.
\end{align*}
Hence by \eqref{beta' seg endpts}, \eqref{norm of difference at F_j-2}, the fourth assertion in Proposition \ref{prop of f} and the above,
\begin{align}\label{seg norm induction-2}
\begin{split}
&\left|\cI'_{F_{j}}[y,x]-\cI'_{F_{j}}[z,x]\right|_{l^1}=2\alpha^{(j)}\\
\leq &2\alpha^{(j-1)}+\cC_3\lambda^{k-j+2} \\
\leq& 2\alpha^{(j-2)}+\cC_3\lambda^{k-j+2}+\cC_3\lambda^{k-j+3}\\
\leq&\cdots\leq 2\alpha^{(1)}+\cC_3\lambda^{k-j+2}+\cdots+\cC_3\lambda^{k} \\
\leq&\left|\cI'_{F_{1}}[y,x]-\cI'_{F_{1}}[z,x]\right|_{l^1}+\cC_3\lambda^{k-j+2}+\cdots+\cC_3\lambda^{k} \\
=&\left|f(x,y)-f(x,z)\right|_{l^1}+\cC_3\lambda^{k-j+2}+\cdots+\cC_3\lambda^{k}=\cC_3\lambda^{k-j+2}+\cdots+\cC_3\lambda^{k+1}\leq\frac{\cC_3\lambda^{k-j+2}}{1-\lambda}.
\end{split}
\end{align}
As a direct corollary of \eqref{seg norm induction-2}, we have
\begin{align}\label{C4 est-3}
\begin{split}
\sum_{\hF\in\cF_x(y,z)\setminus\{F_x\}}\left|\cI'_{\hF}[y,x]-\cI'_{\hF}[z,x]\right|_{l^1}=\sum_{j=1}^{k}\left|\cI'_{F_{j}}[y,x]-\cI'_{F_{j}}[z,x]\right|_{l^1}\leq \sum_{j=1}^{k}\frac{\cC_3\lambda^{k-j+2}}{1-\lambda}\leq\frac{\cC_3\lambda^2}{(1-\lambda)^2}.
\end{split}
\end{align}
In order to bound $\left|\beta'_{F_{j+1}F_j}[y,x]-\beta'_{F_{j+1}F_j}[z,x]\right|_{l^1}$ from above for any $1\leq j\leq k-1$, we notice that by the remark after Definition \ref{dfn:geo.bicomb} (used in the fifth (in)equality), \eqref{beta' seg} (used in the first equality), \eqref{norm of difference at F_j-2} (used in the seventh (in)equality), \eqref{overlap terms} (used in the second equality), \eqref{overlap norm} (used in the seventh (in)equality), \eqref{seg norm induction-2} (used in the ninth (in)equality), the first and the fourth assertions in Proposition \ref{prop of f} (used in the sixth (in)equality), we have
\begin{align*}
&\left|\beta'_{F_{j+1}F_j}[y,x]-\beta'_{F_{j+1}F_j}[z,x]\right|_{l^1} \\
=&\left|\sum_{\substack{w:w\in\Gamma x_0\\F_w=F_j}}\alpha^{(j)}_{y,w}\ovec{[f(w,y),w]}-\sum_{\substack{w:w\in\Gamma x_0\\F_w=F_j}}\alpha^{(j)}_{z,w}\ovec{[f(w,z),w]}\right|_{l^1} \\
=&\left|\sum_{\substack{w:w\in\Gamma x_0\\F_w=F_j}}(\alpha^{(j)}_{w,+}+\alpha^{(j)}_{w,0})\ovec{[f(w,y),w]}-\sum_{\substack{w:w\in\Gamma x_0\\F_w=F_j}}(\alpha^{(j)}_{w,-}+\alpha^{(j)}_{w,0})\ovec{[f(w,z),w]}\right|_{l^1} \\
\leq&\left|\sum_{\substack{w:w\in\Gamma x_0\\F_w=F_j}}\alpha^{(j)}_{w,+}\ovec{[f(w,y),w]}\right|_{l^1}+\left|\sum_{\substack{w:w\in\Gamma x_0\\F_w=F_j}}\alpha^{(j)}_{w,-}\ovec{[f(w,z),w]}\right|_{l^1}+\left|\sum_{\substack{w:w\in\Gamma x_0\\F_w=F_j}}\alpha^{(j)}_{w,0}\ovec{[f(w,y)-f(w,z),w]}\right|_{l^1} \\
\leq&\sum_{\substack{w:w\in\Gamma x_0\\F_w=F_j}}\alpha^{(j)}_{w,+}\left|\ovec{[f(w,y),w]}\right|_{l^1}+\sum_{\substack{w:w\in\Gamma x_0\\F_w=F_j}}\alpha^{(j)}_{w,-}\left|\ovec{[f(w,z),w]}\right|_{l^1}+\sum_{\substack{w:w\in\Gamma x_0\\F_w=F_j}}\alpha^{(j)}_{w,0}\left|\ovec{[f(w,y)-f(w,z),w]}\right|_{l^1}\\
=&\sum_{\substack{w:w\in\Gamma x_0\\F_w=F_j}}\alpha^{(j)}_{w,+}\left|f(w,y)\right|_{l^1}+\sum_{\substack{w:w\in\Gamma x_0\\F_w=F_j}}\alpha^{(j)}_{w,-}\left| f(w,z)\right|_{l^1}+\sum_{\substack{w:w\in\Gamma x_0\\F_w=F_j}}\alpha^{(j)}_{w,0}\left| f(w,y)-f(w,z)\right|_{l^1}\\
\leq&\sum_{\substack{w:w\in\Gamma x_0\\F_w=F_j}}\alpha^{(j)}_{w,+}+\sum_{\substack{w:w\in\Gamma x_0\\F_w=F_j}}\alpha^{(j)}_{w,-}+\sum_{\substack{w:w\in\Gamma x_0\\F_w=F_j}}\alpha^{(j)}_{w,0}\cC_3\lambda^{(F_y|F_z)_{F_j}} \\
=&2\alpha^{(j)}+\cC_3\lambda^{k-j+1}(1-\alpha^{(j)})<2\alpha^{(j)}+\cC_3\lambda^{k-j+1}\leq\cC_3\frac{\lambda^{k-j+2}}{1-\lambda}+\cC_3\lambda^{k-j+1}=\cC_3\frac{\lambda^{k-j+1}}{1-\lambda}.
\end{align*}
Applying the above to \eqref{cut head}, we have
\begin{align}\label{C4 est-4}
\begin{split}
&|\beta'[y,x]-\beta'[z,x]|_{\cF_x(y,z),l^1} \\
\leq&2+\sum_{j=1}^{k-1}\left|\beta'_{F_{j+1}F_j}[y,x]-\beta'_{F_{j+1}F_j}[z,x]\right|_{l^1}\leq 2+\cC_3\sum_{j=1}^{k-1}\frac{\lambda^{k-j+1}}{1-\lambda}\leq 2+\frac{\cC_3}{(1-\lambda)^2}.
\end{split}
\end{align}
\end{enumerate}
As a summary, by \eqref{C4 est-1}, \eqref{C4 est-2}, \eqref{C4 est-3} and \eqref{C4 est-4}, we choose
$$\cC_4=\max\left\{4,\frac{2}{1-\lambda},\frac{2}{1-\lambda}+2,\frac{\cC_3\lambda^2}{(1-\lambda)^2},2+\frac{\cC_3}{(1-\lambda)^2}\right\}.$$
and the proposition follows. (Dependence on $\rho_0$ and $\epsilon_0$ follows directly from the fourth and the fifth assertions in Proposition \ref{prop of f}.)
\end{proof}
Now we are ready to construct our desired homological bicombing. For any $x,y\in\Gamma x_0$ and $\hF\in\Theta(F_x,F_y)$, we define
\begin{align}\label{beta}
\beta[x,y]=\frac{1}{2}\left(\beta'[x,y]-\beta'[y,x]\right)~\mathrm{and~if~}F_x\neq F_y,~\mathrm{then}~\cI_\hF[x,y]:=\frac{1}{2}\left(\cI'_\hF[x,y]-\cI'_\hF[y,x]\right).
\end{align}
Then we have the following corollary of Proposition \ref{prop of beta'}.

\begin{corollary}\label{bicombing finished}
$\beta[\cdot,\cdot]$ is a homological bicombing in the sense of Definition \ref{Homological bicombing}. Moreover, there exists $\cC_5=\cC_5(\rho_0,\epsilon_0)>0$ such that for any $x,y,z\in\Gamma x_0$,
$$\left|\beta[x,y]+\beta[y,z]+\beta[z,x]\right|_{l^1}\leq \cC_5.$$
\end{corollary}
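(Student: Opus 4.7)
First I would verify the three axioms of Definition \ref{Homological bicombing} for $\beta$. Anti-symmetry is immediate from \eqref{beta}; the boundary identity $\partial^X\beta[x,y]=y-x$ follows by linearity from \eqref{is bicombing}. For $\Gamma$-equivariance: the constructions $\Omega$, $\Theta$, $\Fl$, and $\pr$ depend only on the $\Gamma$-invariant metric on $X$, while $f$ is $\Gamma$-equivariant by Proposition \ref{prop of f}(2); it follows inductively from the definitions in \eqref{beta' seg endpts} and \eqref{beta' seg} that each $\cI'_{\hat F}[\cdot,\cdot]$ and each $\beta'_{F_j F_{j+1}}[\cdot,\cdot]$, hence $\beta'$ and $\beta$, are $\Gamma$-equivariant.

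For the cycle estimate, I would expand the six $\beta'$-terms and regroup them by common second argument:
\[
2\bigl(\beta[x,y]+\beta[y,z]+\beta[z,x]\bigr)=\bigl(\beta'[x,y]-\beta'[z,y]\bigr)+\bigl(\beta'[y,z]-\beta'[x,z]\bigr)+\bigl(\beta'[z,x]-\beta'[y,x]\bigr).
\]
By Proposition \ref{prop of beta'}(2) with appropriate cyclic relabelling of $(x,y,z)$, each of the three differences on the right is bounded by $\cC_4$ in the seminorms $|\cdot|_{\cF_y(x,z),l^1}$, $|\cdot|_{\cF_z(x,y),l^1}$, $|\cdot|_{\cF_x(y,z),l^1}$ respectively; however, none of them is individually bounded in the full $l^1$-norm. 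I would therefore extract bounded contributions region by region, exploiting the tripod structure of the $\Theta$-intervals provided by Lemma \ref{properties of Theta}.

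Partition $\cF(x,y,z)$ into the three ``legs'' $\cF_x(y,z)$, $\cF_y(x,z)$, $\cF_z(x,y)$ together with the boundary set $\cF(x,y,z)\setminus\cA(x,y,z)$, which has cardinality at most $3$ by property ($\Theta$4). Consider the $\cF_x(y,z)$-seminorm of the entire cycle: a segment of $\beta'[y,z]$ or $\beta'[z,y]$ could contribute only if both of its indexing flats lie in $\cF_x(y,z)\cap\Theta(F_y,F_z)$, i.e.\ in the triple intersection $\Theta(F_x,F_y)\cap\Theta(F_x,F_z)\cap\Theta(F_y,F_z)$, which by repeated use of property ($\Theta$3) is contained in a set of size bounded by a universal constant. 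The remaining contribution to the $\cF_x(y,z)$-seminorm then reduces to
\[
\bigl(\beta'[x,y]-\beta'[x,z]\bigr)\bigr|_{\cF_x(y,z)}+\bigl(\beta'[z,x]-\beta'[y,x]\bigr)\bigr|_{\cF_x(y,z)},
\]
and is bounded by $2\cC_4$ via parts (1) and (2) of Proposition \ref{prop of beta'}. Summing the three symmetric bounds on $\cF_x(y,z)$, $\cF_y(x,z)$, $\cF_z(x,y)$ together with the constant contribution from the boundary set produces the uniform constant $\cC_5=\cC_5(\rho_0,\epsilon_0)$. The main technical hurdle is the combinatorial bookkeeping: rigorously justifying the ``negligibility'' of the triple-intersection contributions and enumerating the segments of each $\beta'$ whose endpoint-flats straddle two different legs of the tripod or fall into the boundary set $\cF\setminus\cA$.
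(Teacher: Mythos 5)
Your verification of the bicombing axioms is correct and matches the paper. For the cycle estimate, your plan is essentially the paper's: partition $\cF(x,y,z)$ into three legs $\cF_x(y,z)$, $\cF_y(x,z)$, $\cF_z(x,y)$ plus a small middle/boundary set, control each leg's seminorm via Proposition~\ref{prop of beta'} (parts (1) and (2), giving six estimates, which is what the paper does in \eqref{C5 est-3}), and bound the rest using cardinality estimates. Your initial regrouping of the cycle by common second argument is, however, a detour: the subsequent leg-by-leg analysis you describe does not use it, and the paper never makes that regrouping.

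The genuine gap is what you call the ``main technical hurdle'' at the end. Summing the three leg-seminorms does not control the full $l^1$-norm, because segments $\beta'_{F_iF_{i+1}}[\cdot,\cdot]$ can have one endpoint flat in one leg and the other in a different leg, or endpoints in $\cF(x,y,z)\setminus\cA(x,y,z)$; such segments are counted by none of the three seminorms. This is not a bookkeeping afterthought but the crux of the argument. The paper handles it by explicitly writing $\Theta(F_x,F_y)$ (and its two siblings) as a disjoint union of three intervals — the two legs plus a middle path $\Theta(F_{x;y,z},F_{y;z,x})$ — using property~($\Theta$3), proving the triple intersection has at most one element via property~($\Theta$1) together with Proposition~\ref{almost ints positioning} (not by ``repeated use of ($\Theta$3)'' as you suggest), and then bounding the total number of middle segments by $18$ using property~($\Theta$4) and the resulting disjoint-union decomposition \eqref{eqn:no triple ints}. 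Each middle segment has $l^1$-norm exactly $1$ by \eqref{l1 norms of beta' seg}, which closes the estimate with $\cC_5=3\cC_4+18$. You need to carry out this explicit three-interval decomposition to complete the argument.

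Two smaller omissions: (i) your tripod picture presupposes $F_x,F_y,F_z$ distinct; when two coincide (say $F_x=F_y$), $\cF_x(y,z)=\{F_x\}=\cF_y(z,x)$ and $\beta[x,y]=\ovec{[x,y]}$, so the estimate needs a separate short argument, which the paper supplies with the bound $1+\cC_4$ in \eqref{C5 est-2}. (ii) You should state that the support property of $\beta'$ (every segment's endpoints lie near flats in the relevant $\Theta$) is what makes the full $l^1$-norm equal to the seminorm over $\cF(x,y,z)$, so that the partition of $\cF(x,y,z)$ suffices.
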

\begin{proof}
By the second assertion in Proposition \ref{prop of f} and \eqref{beta'}, we have $\gamma\beta'[x,y]=\beta'[\gamma x,\gamma y]$ for any $x,y\in\Gamma x_0$ and any $\gamma\in\Gamma$. Therefore $\beta$ is a homological bicombing in the sense of Definition \ref{Homological bicombing} following \eqref{beta}.

It remains for us to find an upper bound for $\left|\beta[x,y]+\beta[y,z]+\beta[z,x]\right|_{l^1}$. If two of $F_x,F_y,F_z$ are the same, WLOG we assume that $F_x=F_y$. Then $\beta[x,y]=\ovec{[x,y]}$, $\cF_x(y,z)=\{F_x\}=\cF_y(z,x)$ and $\cF_{z}(x,y)=\Theta(F_x,F_z)$. In this case, by \eqref{l1 norms of beta' seg}, \eqref{beta' detailed full} and Proposition \ref{prop of beta'},
\begin{align}\label{C5 est-2}
\begin{split}
&\left|\beta[x,y]+\beta[y,z]+\beta[z,x]\right|_{l^1} \leq\left|\ovec{[x,y]}\right|_{l^1}+\left|\beta[y,z]+\beta[z,x]\right|_{l^1}\\
=&1+\frac{1}{2}\left|\beta'[y,z]-\beta'[z,y]+\beta'[z,x]-\beta'[x,z]\right|_{l^1}\\
\leq&1+\frac{1}{2}\left|\beta'[y,z]-\beta'[x,z]\right|_{l^1}+\frac{1}{2}\left|\beta'[z,x]-\beta'[z,y]\right|_{l^1} \\
=&1+\frac{1}{2}\left|\beta'[y,z]-\beta'[x,z]\right|_{\cF_z(x,y), l^1}+\frac{1}{2}\left|\beta'[z,x]-\beta'[z,y]\right|_{\cF_z(x,y), l^1} \leq 1+\cC_4.
\end{split}
\end{align}

From now on, we assume that $F_x,F_y,F_z$ are distinct elements in $\Gamma F$. By Lemma \ref{properties of Theta}, \hyperlink{Theta-3}{property ($\Theta$3) of $\Theta(\cdot,\cdot)$}, we can find $F_{x;y,z}\in\cF_{x}(y,z)$, $F_{y;z,x}\in\cF_{y}(z,x)$ and $F_{z;x,y}\in\cF_{z}(x,y)$ such that $\Theta(F_x,F_{x;y,z})=\cF_{x}(y,z)$, $\Theta(F_y,F_{y;z,x})=\cF_{y}(z,x)$ and $\Theta(F_z,F_{z;x,y})=\cF_{z}(x,y)$. (See Figure \ref{eqn5.45}.) We also notice that
\begin{figure}[h]
	\centering
	\includegraphics[width=4in]{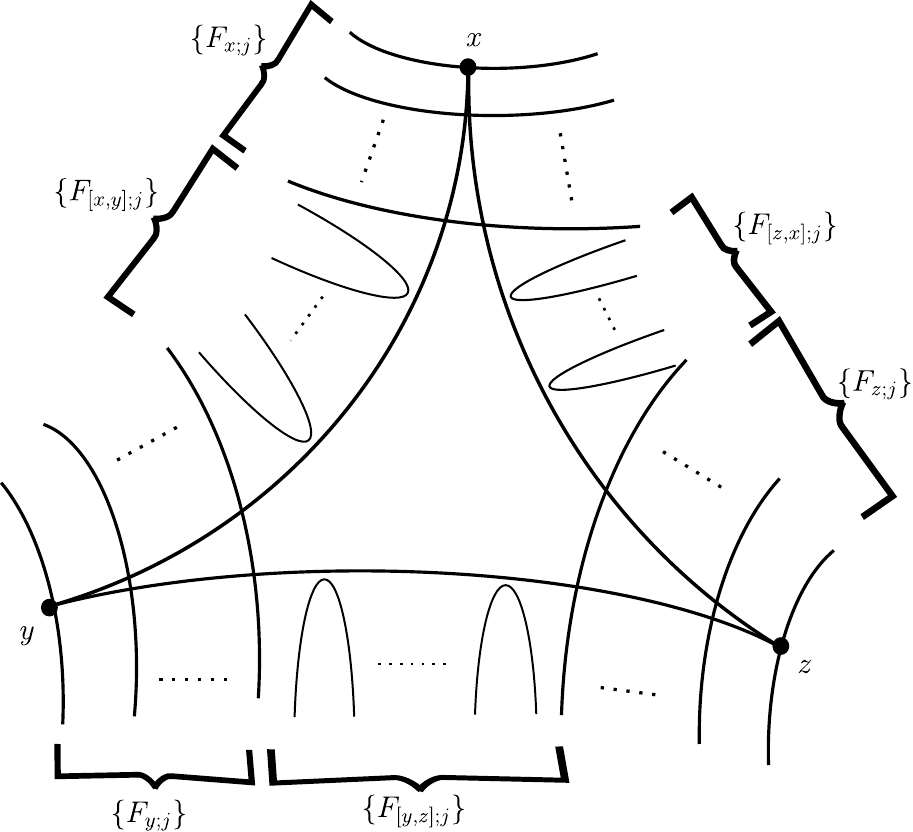}
	\caption{ \label{eqn5.45}}
\end{figure}
$$\cF_x(y,z)\cap\cF_{y}(x,z)=\cF_{x}(y,z)\cap\cF_{z}(x,y)=\cF_y(x,z)\cap\cF_{z}(x,y)=\Theta(F_x,F_y)\cap\Theta(F_y,F_z)\cap\Theta(F_z,F_x).$$
We claim that $|\Theta(F_x,F_y)\cap\Theta(F_y,F_z)\cap\Theta(F_z,F_x)|\leq 1$. In fact, if there exist distinct $F',F''\in\Theta(F_x,F_y)\cap\Theta(F_y,F_z)\cap\Theta(F_z,F_x)$, then by Lemma \ref{properties of Theta}, \hyperlink{Theta-1}{property ($\Theta$1) of $\Theta(\cdot,\cdot)$}, $F',F''$ are $\epsilon_0/2$ close to $[x,y],[y,z]$ and $[z,x]$. By Proposition \ref{almost ints positioning} and the fact that $F'\neq F''$, this is impossible. Hence we conclude that $|\Theta(F_x,F_y)\cap\Theta(F_y,F_z)\cap\Theta(F_z,F_x)|\leq 1$. As a corollary of Lemma \ref{properties of Theta}, \hyperlink{Theta-3}{property ($\Theta$3) of $\Theta(\cdot,\cdot)$}, we have
\begin{align}\label{eqn:no triple ints}
\begin{split}
\Theta(F_x,F_y)=(\cF_x(y,z)\setminus\{F_{x;y,z}\})\sqcup\Theta(F_{x;y,z},F_{y;x,z})\sqcup(\cF_y(x,z)\setminus\{F_{y;x,z}\}),\\
\Theta(F_y,F_z)=(\cF_y(x,z)\setminus\{F_{y;x,z}\})\sqcup\Theta(F_{y;x,z},F_{z;x,y})\sqcup(\cF_z(x,y)\setminus\{F_{z;x,y}\}),\\
\Theta(F_x,F_z)=(\cF_x(y,z)\setminus\{F_{x;y,z}\})\sqcup\Theta(F_{x;y,z},F_{z;x,y})\sqcup(\cF_z(x,y)\setminus\{F_{z;x,y}\}).
\end{split}
\end{align}
By Lemma \ref{properties of Theta}, \hyperlink{Theta-4}{property ($\Theta$4) of $\Theta(\cdot,\cdot)$} and \eqref{eqn:no triple ints}, we have the following cardinality estimate.
\begin{align}\label{bad cardinality dim 1}
\begin{split}
&\left|\Theta(F_{x;y,z},F_{y;z,x})\union\Theta(F_{y;z,x},F_{z;x,y})\union\Theta(F_{z;x,y},F_{x;y,z})\right| \\
=&\left|(\cF(x,y,z)\setminus\cA(x,y,z))\sqcup\{F_{x;y,z},F_{y;z,x},F_{z;x,y}\}\right|\leq 6.
\end{split}
\end{align}
By Lemma \ref{properties of Theta}, \hyperlink{Theta-3}{property ($\Theta$3) of $\Theta(\cdot,\cdot)$}, we can assume that $\Theta(F_x,F_{x;y,z})=\{F_{x;0}:=F_x,F_{x;1},...,F_{x:k_x}:=F_{x;y,z}\}$, $\Theta(F_y,F_{y;z,x})=\{F_{y;0}:=F_y,F_{y;1},...,F_{y:k_y}:=F_{y;z,x}\}$ and $\Theta(F_z,F_{z;x,y})=\{F_{z;0}:=F_z,F_{z;1},...,F_{z:k_z}:=F_{z;x,y}\}$ for some $k_x,k_y,k_z\geq 0$ such that
\begin{itemize}
\item $F_{p,0},...,F_{p,k_p}$ are distinct elements in $\Gamma x_0$ for any $p\in\{x,y,z\}$;
\item $\Theta(F_{p,i},F_{p,j})=\{F_{p,i},F_{p,i+1},...,F_{p,j}\}$ for any $0\leq i\leq j\leq k_p$ and $p\in\{x,y,z\}$.
\end{itemize}
(See Figure \ref{eqn5.45}.) We further assume that $\Theta(F_{x;y,z},F_{y;z,x})=\{F_{[x,y],0}:=F_{x;y,z},F_{[x,y],1},...,F_{[x,y],m_{[x,y]}}:=F_{y;z,x}\}$, $\Theta(F_{y;z,x},F_{z;x,y})=\{F_{[y,z],0}:=F_{y;z,x},F_{[y,z],1},...,F_{[y,z],m_{[y,z]}}:=F_{z;x,y}\}$ and $\Theta(F_{z;x,y},F_{x;y,z})=\{F_{[z,x],0}:=F_{z;x,y},F_{[z,x],1},...,F_{[z,x],m_{[z,x]}}:=F_{x;y,z}\}$ for any $m_{[x,y]},m_{[y,z]},m_{[z,x]}\geq 0$ such that
\begin{itemize}
\item $F_{[p,q],0},...,F_{[p,q],m_{[p,q]}}$ are distinct elements in $\Gamma x_0$ for any distinct $p, q\in\{x,y,z\}$;
\item $\Theta(F_{[p,q],i},F_{[p,q],j})=\{F_{[p,q],i},F_{[p,q],i+1},...,F_{[p,q],j}\}$ for any $0\leq i\leq j\leq m_{[p,q]}$ and $p\neq q\in\{x,y,z\}$.
\end{itemize}
By \eqref{bad cardinality dim 1}, we have
\begin{align}\label{bad cardinality dim 1 rewrite}
(m_{[x,y]}+1)+(m_{[y,z]}+1)+(m_{z,x}+1)\leq 18.
\end{align}
With the above definitions, it follows from \eqref{l1 norms of beta' seg} (used in the fifth (in)equality), \eqref{beta' detailed full} (used in the second and the fifth (in)equalities), Proposition \ref{prop of beta'} (used in the last inequality), \eqref{eqn:no triple ints} (used in the second equality) and \eqref{bad cardinality dim 1 rewrite} (used in the last inequality) that
\begin{align}\label{C5 est-3}
&\left|\beta[x,y]+\beta[y,z]+\beta[z,x]\right|_{l^1} \nonumber\\
=&\frac{1}{2}\left|\beta'[x,y]-\beta'[y,x]+\beta'[y,z]-\beta'[z,y]+\beta'[z,x]-\beta'[x,z]\right|_{l^1} \nonumber\\
=&\frac{1}{2}\left|\left[\left(\sum_{j=0}^{k_x-1}\beta'_{F_{x;j}F_{x;j+1}}[x,y]\right)+\left(\sum_{j=0}^{m_{[x,y]}-1}\beta'_{F_{[x,y],j}F_{[x,y],j+1}}[x,y]\right)+\left(\sum_{j=0}^{k_y-1}\beta'_{F_{y;j+1}F_{y;j}}[x,y]\right)\right]\right. \nonumber\\
&\quad-\left[\left(\sum_{j=0}^{k_x-1}\beta'_{F_{x;j+1}F_{x;j}}[y,x]\right)+\left(\sum_{j=0}^{m_{[x,y]}-1}\beta'_{F_{[x,y],j+1}F_{[x,y],j}}[y,x]\right)+\left(\sum_{j=0}^{k_y-1}\beta'_{F_{y;j}F_{y;j+1}}[y,x]\right)\right] \nonumber\\
&\quad+\left[\left(\sum_{j=0}^{k_y-1}\beta'_{F_{y;j}F_{y;j+1}}[y,z]\right)+\left(\sum_{j=0}^{m_{[y,z]}-1}\beta'_{F_{[y,z],j}F_{[y,z],j+1}}[y,z]\right)+\left(\sum_{j=0}^{k_z-1}\beta'_{F_{z;j+1}F_{z;j}}[y,z]\right)\right]\nonumber \\
&\quad-\left[\left(\sum_{j=0}^{k_y-1}\beta'_{F_{y;j+1}F_{y;j}}[z,y]\right)+\left(\sum_{j=0}^{m_{[y,z]}-1}\beta'_{F_{[y,z],j+1}F_{[y,z],j}}[z,y]\right)+\left(\sum_{j=0}^{k_z-1}\beta'_{F_{z;j}F_{z;j+1}}[z,y]\right)\right] \nonumber\\
&\quad+\left[\left(\sum_{j=0}^{k_z-1}\beta'_{F_{z;j}F_{z;j+1}}[z,x]\right)+\left(\sum_{j=0}^{m_{[z,x]}-1}\beta'_{F_{[z,x],j}F_{[z,x],j+1}}[z,x]\right)+\left(\sum_{j=0}^{k_x-1}\beta'_{F_{x;j+1}F_{x;j}}[z,x]\right)\right] \nonumber\\
&\quad-\left.\left[\left(\sum_{j=0}^{k_z-1}\beta'_{F_{z;j+1}F_{z;j}}[x,z]\right)+\left(\sum_{j=0}^{m_{[z,x]}-1}\beta'_{F_{[z,x],j+1}F_{[z,x],j}}[x,z]\right)+\left(\sum_{j=0}^{k_x-1}\beta'_{F_{x;j}F_{x;j+1}}[x,z]\right)\right]\right|_{l^1}\nonumber \\
=& \frac{1}{2}\left|\left(\sum_{j=0}^{k_x-1}\beta'_{F_{x;j}F_{x;j+1}}[x,y]-\beta'_{F_{x;j}F_{x;j+1}}[x,z]\right)-\left(\sum_{j=0}^{k_x-1}\beta'_{F_{x;j+1}F_{x;j}}[y,x]-\beta'_{F_{x;j+1}F_{x;j}}[z,x]\right)\right.\nonumber\\
&\quad+\left(\sum_{j=0}^{k_y-1}\beta'_{F_{y;j+1}F_{y;j}}[x,y]-\beta'_{F_{y;j+1}F_{y;j}}[z,y]\right)-\left(\sum_{j=0}^{k_y-1}\beta'_{F_{y;j}F_{y;j+1}}[y,x]-\beta'_{F_{y;j}F_{y;j+1}}[y,z]\right)\nonumber\\
&\quad+\left(\sum_{j=0}^{k_z-1}\beta'_{F_{z;j+1}F_{z;j}}[y,z]-\beta'_{F_{z;j+1}F_{z;j}}[x,z]\right)-\left(\sum_{j=0}^{k_z-1}\beta'_{F_{z;j}F_{z;j+1}}[z,y]-\beta'_{F_{z;j}F_{z;j+1}}[z,x]\right) \nonumber\\
&\quad+\left(\sum_{j=0}^{m_{[x,y]}-1}\beta'_{F_{[x,y],j}F_{[x,y],j+1}}[x,y]\right)-\left(\sum_{j=0}^{m_{[x,y]}-1}\beta'_{F_{[x,y],j+1}F_{[x,y],j}}[y,x]\right) \nonumber\\
&\quad +\left(\sum_{j=0}^{m_{[y,z]}-1}\beta'_{F_{[y,z],j}F_{[y,z],j+1}}[y,z]\right)- \left(\sum_{j=0}^{m_{[y,z]}-1}\beta'_{F_{[y,z],j+1}F_{[y,z],j}}[z,y]\right)\nonumber\\
&\quad\left.+\left(\sum_{j=0}^{m_{[z,x]}-1}\beta'_{F_{[z,x],j}F_{[z,x],j+1}}[z,x]\right)-\left(\sum_{j=0}^{m_{[z,x]}-1}\beta'_{F_{[z,x],j+1}F_{[z,x],j}}[x,z]\right) \right|_{l^1}\nonumber\\
\leq& \frac{1}{2}\left|\sum_{j=0}^{k_x-1}\beta'_{F_{x;j}F_{x;j+1}}[x,y]-\beta'_{F_{x;j}F_{x;j+1}}[x,z]\right|_{l^1}+\frac{1}{2}\left|\sum_{j=0}^{k_x-1}\beta'_{F_{x;j+1}F_{x;j}}[y,x]-\beta'_{F_{x;j+1}F_{x;j}}[z,x]\right|_{l^1}\nonumber\\
&+\frac{1}{2}\left|\sum_{j=0}^{k_y-1}\beta'_{F_{y;j+1}F_{y;j}}[x,y]-\beta'_{F_{y;j+1}F_{y;j}}[z,y]\right|_{l^1}+\frac{1}{2}\left|\sum_{j=0}^{k_y-1}\beta'_{F_{y;j}F_{y;j+1}}[y,x]-\beta'_{F_{y;j}F_{y;j+1}}[y,z]\right|_{l^1}\nonumber\\
&+\frac{1}{2}\left|\sum_{j=0}^{k_z-1}\beta'_{F_{z;j+1}F_{z;j}}[y,z]-\beta'_{F_{z;j+1}F_{z;j}}[x,z]\right|_{l^1}+\frac{1}{2}\left|\sum_{j=0}^{k_z-1}\beta'_{F_{z;j}F_{z;j+1}}[z,y]-\beta'_{F_{z;j}F_{z;j+1}}[z,x]\right|_{l^1} \nonumber\\
&+\frac{1}{2}\sum_{j=0}^{m_{[x,y]}-1}\left|\beta'_{F_{[x,y],j}F_{[x,y],j+1}}[x,y]\right|_{l^1}+\frac{1}{2}\sum_{j=0}^{m_{[x,y]}-1}\left|\beta'_{F_{[x,y],j+1}F_{[x,y],j}}[y,x]\right|_{l^1} \nonumber\\
&+\frac{1}{2}\sum_{j=0}^{m_{[y,z]}-1}\left|\beta'_{F_{[y,z],j}F_{[y,z],j+1}}[y,z]\right|_{l^1}+\frac{1}{2}\sum_{j=0}^{m_{[y,z]}-1}\left|\beta'_{F_{[y,z],j+1}F_{[y,z],j}}[z,y]\right|_{l^1}\nonumber\\
&+\frac{1}{2}\sum_{j=0}^{m_{[z,x]}-1}\left|\beta'_{F_{[z,x],j}F_{[z,x],j+1}}[z,x]\right|_{l^1}+\frac{1}{2}\sum_{j=0}^{m_{[z,x]}-1}\left|\beta'_{F_{[z,x],j+1}F_{[z,x],j}}[x,z] \right|_{l^1}\nonumber\\
=&\frac{1}{2}\left|\beta'[x,y]-\beta'[x,z]\right|_{\cF_x(y,z),l^1}+\frac{1}{2}\left|\beta'[y,x]-\beta'[z,x]\right|_{\cF_x(y,z),l^1}\nonumber\\
&+\frac{1}{2}\left|\beta'[x,y]-\beta'[z,y]\right|_{\cF_y(x,z),l^1}+\frac{1}{2}\left|\beta'[y,x]-\beta'[y,z]\right|_{\cF_y(x,z),l^1}\nonumber\\
&+\frac{1}{2}\left|\beta'[y,z]-\beta'[x,z]\right|_{\cF_z(x,y),l^1}+\frac{1}{2}\left|\beta'[z,y]-\beta'[z,x]\right|_{\cF_z(x,y),l^1} \nonumber\\
&+m_{[x,y]}+m_{[y,z]}+m_{[z,x]} \nonumber\\
\leq&3\cC_4+18.
\end{align}
By \eqref{C5 est-2} and \eqref{C5 est-3}, we can choose
$$\cC_5=\max\{1+\cC_4,3\cC_4+18\}=3\cC_4+18$$
and the corollary follows. (Dependence on $\rho_0$ and $\epsilon_0$ follows from Proposition \ref{prop of beta'}.)
\end{proof}

\section{Combinatorics of simplices and $\Theta$-separations}\label{sec combinatorics}
Despite the technicality of this section, the underlying idea is much simpler. Using the notion of $\Theta(\cdot,\cdot)$, the combinatorics of ``almost separation'' of a barycentric simplex by elements in $\Gamma F$ is overall very similar to the combinatorics of ``actual separation''. There are a few difference which requires extra care, but eventually they do not cause any essential trouble for the final proof of Theorem \ref{thm:main}.

\subsection{Motivations: actual separation of simplices by elements in $\Gamma F$}\label{subsec actual sep}
In this subsection, we briefly explain the combinatorics of actual separation of a barycentric simplex by elements in $\Gamma F$. This should be viewed as a toy model for the discussion in subsequent subsections. The goal of this subsection is to improve readability and does not include any serious mathematical statements that contribute to the results of this paper.

Also, in this subsection only, we will use the same notations for the combinatorics of ``actual separation'' as their counterparts in the combinatorics of ``almost separation'' introduced in the rest of this section.

\textbf{Combinatorics of ``actual separation'':} Fix some $y_0\in F$. Let $V=\{p_0,...,p_k\}$ be a subset of $\Gamma y_0$ with cardinality $k+1$ and $\sigma$ be a $k$-dimensional barycentric simplex with vertex set $V$. We assume that
\begin{enumerate}
\item[(1)] $F_{p_0},...,F_{p_k}$ are pairwise disjoint. Moreover, $F_{p_j}$ intersects the $1$-skeleton of $\sigma$ only at $p_j$ for any $j\in\{0,...,k\}$.
\end{enumerate}
Let $\cF(V)$ be all elements in $\Gamma F$ which intersect the $1$-skeleton of $\sigma$ and $\cA(V)$ be all elements in $\Gamma F$ which either separate the $1$-skeleton of $\sigma$ into two disjoint connected pieces or visit a vertex of $\sigma$. We observe the following properties:
\begin{enumerate}
\item[(2)] $\cF(V)=\cA(V)$.
\item[(3)] For any $\hF\in\cA(V)$, there exists a unique non-empty subset $I\subset V$ such that $\hF$ intersects all edges of $\sigma$ connecting a point in $I$ and a point in $V\setminus I$. We call $\stype{I}{V}$ the (unique) separation type of $\hF$ with respect to $V$.
\item[(4)] For any $\hF\in\cA(V)$ and any edge $[x,y]$ of $\sigma$ with intersects $\hF$, if $\stype{I}{V}$ is the (unique) separation type of $\hF$, then $|\{x,y\}\cap I|=1$.
\item[(5)] For any $W\subset V$ and any $\hF\in\cA(W)$, we have $\hF\in\cA(V)$. Moreover, if $\stype{I}{V}$ is the (unique) separation type of $\hF$ with respect to $V$, then $\{I\cap W,W\setminus I\}$ is the (unique) separation type of $\hF$ with respect to $W$.
\end{enumerate}
Elements in $\cA(V)$ divide $X$ into several connected components. Let $\bG_V$ be a graph defined as follows:
\begin{itemize}
\item Vertex set of $\bG_V$ is $\cA(V)$;
\item For any distinct $F_1,F_2\in\cA(V)$, $F_1$ and $F_2$ are connected by an edge if there are no elements in $\cA(V)$ which are strictly between $F_1$ and $F_2$.
\end{itemize}
Then after dividing $X$ using elements in $\cA(V)$, connected components of $X\setminus \left(\bigcup_{\hF\in\cA(V)}\hF\right)$ which intersects $\sigma$ corresponds to maximal complete subgraphs of $\bG_V$. One can also easily verify the following:
\begin{enumerate}
\item[(\hypertarget{MCS--1}{MCS-1})] Every vertex is shared by at most two maximal complete subgraphs of $\bG_V$. Moreover, $\{F_{p}\}_{p\in V}$ is all those vertices which are contained in only one maximal complete subgraph of $\bG_V$;
\item[(\hypertarget{MCS--2}{MCS-2})] Every edge is contained in a unique maximal complete subgraph of $\bG_V$.
\end{enumerate}
One can image that $\cA(V)$ cut the simplex $\sigma$ into ``polygonal pieces'', similar to the way a number of hyperplanes in $\RR^n$ cut a generic $k$-dimensional geodesic simplex in $\RR^n$ into polygonal pieces, with the additional assumption that these hyperplanes do not intersect each other in the interior of the simplex. The ``polygonal pieces'' then corresponds to maximal complete subgraphs of $\bG_V$.

For any $W\subset V$, we let $\cA(V;W)=\cA(W)\cap\cA(V)$ and let $\bG_{V;W}$ be the restriction of $\bG_V$ onto $\cA(V;W)$. Then one can easily verify the following:
\begin{enumerate}
\item[(6)] $\cA(V;W)=\cA(W)$. Moreover, $\bG_{W}=\bG_{V;W}$;
\item[(7)] For any maximal complete subgraph $\GV$ in $\bG_V$ such that $\GV\cap\bG_{V;W}\neq\emptyset$, $\GV\cap\bG_W$ is the unique maximal complete subgraph in $\bG_W$ contained in $\GV$.
\end{enumerate}

\textbf{Switching to ``almost separation'':} Let $V=\{p_0,...,p_k\}$ be a subsect of $\Gamma x_0$ with cardinality $k+1$ and $\sigma$ be a $k$-dimensional barycentric simplex with vertex set $V$.

The goal of this section is to provide the ``almost separation'' version of the above story with the help of $\Theta(\cdot,\cdot)$. For simplicity, we use the terminology $\Theta$-separation for ``almost separation'' involving $\Theta(\cdot,\cdot)$. With a $\Theta$-separation version of $\cF(V)$ and $\cA(V)$, the overall structure of the story for ``almost separation'' is very similar to the above discussion. However, comparing to (1)-(7) in the above, we have the following concerns:
\begin{enumerate}
\item We do not assume that $F_{p_0},...,F_{p_k}$ are pairwise disjoint any more.
\item There might be cases where $\cA(V)\subsetneq \cF(V)$.
\item There can be more than one $\Theta$-separation type of $\hF\in\cA(V)$ with respect to $V$.
\item Let $\hF\in\cA(V)$. It might be possible that there exists an edge $[x,y]$ of $\sigma$ which almost intersects $\hF$ such that for any $\Theta$-separation type $\stype{I}{V}$ of $\hF$, we have $|\{x,y\}\cap I|\neq 1$.
\item Let $W\subset V$ and $\hF\in\cA(W)$. Then there might be cases where $\hF\not\in\cA(V)$. Since $\Theta$-separation types are not necessarily unique, later in this section, we introduce a notion of ``good'' separation types. Unfortunately, if $\stype{I}{V}$ is a ``good'' separation type of $\hF$ with respect to $V$ such that $I\cap W\neq \emptyset$ and $W\setminus I\neq\emptyset$, $\{I\cap W,W\setminus I\}$ may not be a ``good'' separation type for $\hF$ with respect to $W$.
\item Let $W\subset V$. It might be possible that $\cA(V;W)\subsetneq\cA(W)$. Hence, even with the modified definitions. $\bG_W$ is not necessarily a subgraph of $\bG_V$.
\item With the modified definitions, let $\GV$ be a maximal complete subgraph in $\bG_V$ such that $\GV\cap\bG_{V;W}\neq\emptyset$. Then we have the following:
\begin{enumerate}
\item[(7a)] The number of maximal complete subgraph in $\bG_W$ which ``subordinates'' to $\GV$ can be $0$;
\item[(7b)] The number of maximal complete subgraph in $\bG_W$ which ``subordinates'' to $\GV$ can be more than $1$.
\end{enumerate}
\end{enumerate}

\textbf{Outline of the rest of this section}: Preparations for defining $\bG_V$ are in Subsection \ref{subsect:graph prep}. Subsection \ref{subsect:GV and its properties} discusses the definition of  $\bG_V$ along with its properties. In particular, \hyperlink{MCS--1}{(MCS-1)} and \hyperlink{MCS--2}{(MCS-2)} are proved in Proposition \ref{key prop of ASep graph} and Corollary \ref{remaining properties of sep graph}. We study the relations between $\bG_V$ and $\bG_W$ when $W\subset V$ in Subsection \ref{subsec most annoying}.

In all cases except one special case ($|V|=2$ and $|\cF(V)|=1$), the main changes in the vertex set of $\bG_V$ compared to the version for ``actual separation'' are the following:
\begin{itemize}
\item We throw away certain ``bad'' elements in $\cA(V)$ due to the concerns in (4). (See Notation \ref{sep type marking}.)
\item We pair the remaining ``good'' elements in $\cA(V)$ with ``good'' separation types due to the concerns in (1) and (3). (See Lemma \ref{prim decomp}, Notation \ref{sep type marking} and Definition \ref{bdry}.)
\end{itemize}
One advantage of the above is that for any $W\subset V$ and any ``good'' $\hF$ in $\cA(V)$, if $\stype{I}{V}$ is a ``good'' separation type of $\hF$ for $V$ such that $I\cap W\neq \emptyset$ and $W\setminus I\neq\emptyset$, we have $\hF$ is also a``good'' element in $\cA(W)$ and $\{I\cap W,W\setminus I\}$ is also a ``good'' separation type of $\hF$ for $W$. This addresses the concern in (5). (See Lemma \ref{res of vertices}.)

Thanks to Lemma \ref{almost all good}, there are at most finitely many elements in $\cF(V)$ which are not in $\cA(V)$ or are  ``bad'' elements in $\cA(V)$. Hence, the concerns in (2) and throwing away ``bad'' elements in $\cA(V)$ do not cause any harm in our construction of ``straightened simplex'' satisfying the first bullet point in (S2). (See \hyperlink{idea-and-plan}{the outline and plan of the proof}.)

Due to the changes in the vertex set of $\bG_V$, the definition of edges becomes much more technical. This is because not only we need to consider whether two elements in $\cA(V)$ are ``adjacent'', but also we need to check whether their separation types are ``adjacent'' as well. This is explained in Definition \ref{graph of sep} and Lemma \ref{reinterpretation of edges}. Preparations for these discussions can be found in Definition \ref{relations between sep types}, Definition \ref{in between for ASep} and Lemma \ref{key lem for ASep}.

For any $W\subset V$, the relations between $\bG_W$ and $\bG_V$ also become much more sophisicated due to the changes in the vertex set of $\bG_V$. (Roughly speaking, $\bG_W$ is similar to $\bG_{V;W}$, a subgraph of $\bG_V$, with possibly more ``details''.) In view of (6), we introduce the restriction map (in Definition \ref{res of types}), the notion of ``subordination'' (in Definition \ref{subordinate}) and the notion of ``enrichment'' (in Definition \ref{enrich}) to study the relations between $\bG_W$ and $\bG_V$. The last two notions rely on Lemma \ref{properties of W-face} which is highly technical. The concerns in (7a) are addressed in Definition \ref{singular} and Lemma \ref{singular case}. Lemma \ref{easy properties of enrich} and Lemma \ref{combinatorics behind d2=0} discuss the properties of ``subordination'' and ``enrichment''. In particular, they address the concerns in (7b) and play a crucial role in Section \ref{sec hom arg}.

\subsection{Preparations}\label{subsect:graph prep}
In Lemma \ref{properties of Theta}, \hyperlink{Theta-4}{property ($\Theta$4) of $\Theta(\cdot,\cdot)$}, we introduced subsets $\cF(\cdot)$,$\cA(\cdot)$ of $\Gamma F$ as certain elements in $\Gamma F$ related to the vertex set of a geodesic $2$-simplex with vertices in $\Gamma x_0$. We would like to first generalize the story to geodesic $k$-simplices for arbitrary $k\geq 2$.
\begin{definition}[$\Theta$-separation]\label{Theta sep}
For any finite subset $V$ of $\Gamma x_0$ with $|V|\geq 2$ (see Notation \ref{Fx def}), we define $\cF(V)=\union_{x,y\in V}\Theta(F_x,F_y)$. An element $\widehat F\in\cF(V)$ is \emph{$\Theta$-separating} $V$ if there exist $\emptyset\neq I\subsetneq V$ such that
$$\widehat F\in\bigcap_{x\in I,y\in V\setminus I}\Theta(F_x,F_y).$$
In this case, $\stype{I}{V}$ is called a \emph{$\Theta$-separation type of $\widehat F$ with respect to $V$}. Denoted by $\Sep_V(\widehat F)\subset\{\stype{I}{V}|\emptyset\neq I\subsetneq V\}$ the collection of all $\Theta$-separation type of $\widehat F$ with respect to $V$ and $\cA(V)$ the collection of all $\widehat F\in\cF(V)$ which are $\Theta$-separating $V$.
\end{definition}
\begin{rmk}
One should think about $\cF(V)$ as the collection of elements in $\Gamma F$ which can be arbitarily close to any geodesic simplex with vertex set equal to $V$. $\Theta$-separation is a subtle refinement of almost separation introduced in Definition \ref{almost sep} for technical purposes. Details are explained in the subsequent Lemma \ref{properties of Theta sep}.
\end{rmk}
\begin{notation}[Elements in $\cF(V)$ with edge marking]\label{edge marking}
For any finite subset $V$ of $\Gamma x_0$ with $|V|\geq 2$, we define
$$\widetilde \cF(V)=\{(\widehat F,\{x,y\})|x\neq y\in V, \widehat F\in\Theta(F_x, F_y)\}$$
and
$$\widetilde \cA(V)=\{(\widehat F,\{x,y\})|\widehat F\in\cA(V), \exists \stype{I}{V}\in\Sep_V(\widehat F) \mathrm{~s.t.~}x\in I, y\in V\setminus I\}.$$
\end{notation}
\begin{pexample}[Possible $(\widehat F,\{x,y\})\in\widetilde\cF(V)\setminus\widetilde\cA(V)$ and $\widehat F\in\cA(V)$]
Let $V=\{x_1,...,x_4\}$ and $\widehat F\in \Gamma F$ such that
$$\widehat F\in\Theta(F_{x_1},F_{x_4})\ints\Theta(F_{x_2},F_{x_4})\ints\Theta(F_{x_3},F_{x_4})\ints\Theta(F_{x_2},F_{x_3}),~\mathrm{and}~\widehat F\not\in\Theta(F_{x_1},F_{x_2})\union\Theta(F_{x_1},F_{x_3}).$$
Then $\widehat F\in\cA(V)$ and $\Sep_V(\widehat F)=\left\{\{\{x_4\},\{x_1,x_2,x_3\}\}\right\}$. However, $(\widehat F,\{x_2,x_3\})\in\widetilde\cF(V)\setminus\widetilde\cA(V)$. In Figure \ref{ex6.3}, one should expect that $[x_2,x_3]$ is not extremely close to $\hF$, but it is close enough to $\hF$ such that $\hF\in\Theta(F_{x_2},F_{x_3})$; $[x_2,x_1]$ and $[x_3,x_1]$ are not very far away from $\hF$, but they are far enough away from $\hF$ such that $\hF\not\in\Theta(F_{x_1},F_{x_3})\cup\Theta(F_{x_2},F_{x_1})$.
\end{pexample}

\begin{figure}[h]
	\centering
	\includegraphics[width=4in]{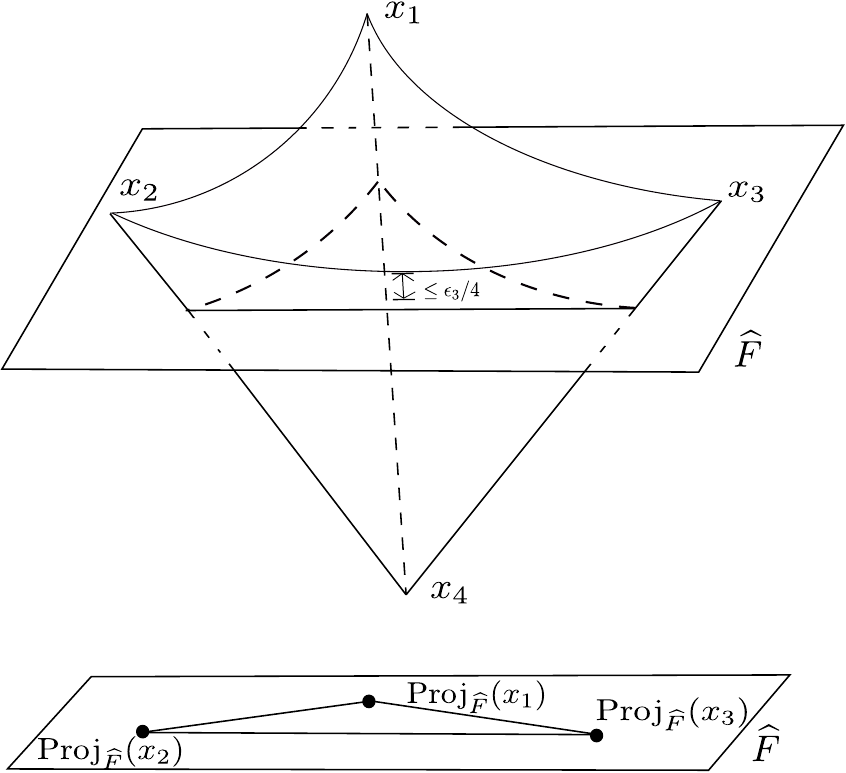}
	\caption{ \label{ex6.3}}
\end{figure}

\begin{lemma}\label{properties of Theta sep}
For any $k\geq 2$ and $V\subset \Gamma x_0$ with $V=\{x_0,...,x_k\}$, the following holds:
\begin{enumerate}
\item[\hypertarget{Sep-1}{(1).}] For any $\widehat F\not\in\cF(V)$, $d(\hF,\Db^k(x_0,...,x_k))\geq c_3(\epsilon_4/4)$;
\item[\hypertarget{Sep-2}{(2).}] For any $\hF\in\cA(V)\setminus\{F_x|x\in V\}$, $\hF$ is $\epsilon_3/4$-almost separating $\Db^k(x_0,...,x_k)$. Moreover, for any $\stype{I}{V}\in\Sep_V(\hF)$, $\stype{I}{V}$ is a $\epsilon_3/4$-almost separation type of $\hF$ with respect to $\Db^k(x_0,...,x_k)$;
\item[\hypertarget{Sep-3}{(3).}] For any distinct $F_j\in\cA(V)$ and $\stype{I_j}{V}\in\Sep_V(F_j)$, $j=1,2$, there exist $I_j'\in\stype{I_j}{V}$, $j=1,2,$ such that $I_1'\ints I_2'=\emptyset$.
\item[\hypertarget{Sep-4}{(4).}] Let $\widetilde\pi:\widetilde\cF(V)\to\cF(V)$ be the natural projection onto the first component. Then $\widetilde\pi(\widetilde\cF(V))=\cF(V)$ and $\widetilde\pi(\widetilde\cA(V))=\cA(V)$. Moreover
$$|\cF(V)\setminus\cA(V)|\leq|\widetilde\cF(V)\setminus\widetilde\cA(V)|\leq 3\cdot\comb{k+1}{3}\cdot\comb{k+1}{2},$$
where $\comb{m}{k}:=\frac{m!}{k!(m-k)!}$.
\end{enumerate}
\end{lemma}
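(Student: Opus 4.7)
The plan is to prove the four assertions in order. Items \hyperlink{Sep-1}{(1)}--\hyperlink{Sep-3}{(3)} reduce fairly directly to the infrastructure of Section~\ref{sec:bar simplex and almost sep}, while \hyperlink{Sep-4}{(4)} requires a short combinatorial argument which is the main obstacle. For \hyperlink{Sep-1}{(1)}, I would argue by contradiction: if $\hF \notin \cF(V)$, then $\hF \neq F_{x_j}$ for every $j$ (since $F_{x_j} \in \Theta(F_{x_j},F_{x_j}) \subset \cF(V)$), and $\hF \notin \Theta(F_{x_i},F_{x_j})$ for every pair. I claim $d([x_i,x_j],\hF) > \epsilon_4/4$ for all $i \neq j$: otherwise $d([x_i,x_j],\hF) \leq \epsilon_4/4 < \epsilon_4/2$, and choosing $p_1' = x_i$, $p_2' = x_j$, $F_1' = F_{x_i}$, $F_2' = F_{x_j}$ in the definition of $\Theta_0$ (together with \hyperlink{Theta-2}{($\Theta$2)}) would place $\hF \in \Theta(F_{x_i},F_{x_j}) \subset \cF(V)$, a contradiction. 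Lemma~\ref{bar: away from edge, away from simplex} then yields the desired lower bound $c_3(\epsilon_4/4)$ on $d(q,\hF)$ for $q$ in the image of the barycentric simplex.

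For \hyperlink{Sep-2}{(2)}, fix $\hF \in \cA(V) \setminus \{F_x : x \in V\}$ and any $\stype{I}{V} \in \Sep_V(\hF)$. Then $\hF \in \Theta(F_x,F_y)$ with $\hF \neq F_x, F_y$ for every $x \in I$ and $y \in V \setminus I$, so Lemma~\ref{properties of Theta}, \hyperlink{Theta-1}{($\Theta$1)} applied with $p_x = x$, $p_y = y$ gives $d([x,y],\hF) \leq \epsilon_3/4$, exhibiting $\stype{I}{V}$ as an $\epsilon_3/4$-almost separation type. For \hyperlink{Sep-3}{(3)}, I would first record the uniform estimate that whenever $\hF \in \Theta(F_x,F_y)$, $d([x,y],\hF) \leq \epsilon_0/2$: either $\hF \in \{F_x,F_y\}$ and convexity of the distance function with $d(x,F_x) = d(y,F_y) = \epsilon_0/2$ handles it, or \hyperlink{Theta-1}{($\Theta$1)} applies directly. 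Hence each $F_j \in \cA(V)$ is $\epsilon_0/2$-almost separating $\Db^k(x_0,\ldots,x_k)$ with its $\Theta$-separation type being an $\epsilon_0/2$-almost separation type. Since $\epsilon_0/2 < \epsilon_0$, Corollary~\ref{almost ints type property} produces the disjoint representatives $I_1', I_2'$.

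For \hyperlink{Sep-4}{(4)}, surjectivity of $\widetilde\pi$ onto $\cF(V)$ and $\cA(V)$ is immediate from the definitions. The first inequality follows by choosing, for each $\hF \in \cF(V) \setminus \cA(V)$, any $\{x,y\}$ with $\hF \in \Theta(F_x,F_y)$, producing a section $\cF(V) \setminus \cA(V) \hookrightarrow \widetilde\cF(V) \setminus \widetilde\cA(V)$. The second inequality is the technical core: my plan is to prove the key combinatorial lemma that whenever $(\hF,\{x,y\}) \in \widetilde\cF(V) \setminus \widetilde\cA(V)$, there exists a triple $T \subset V$ with $\hF \in \cF(T) \setminus \cA(T)$. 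Granted this, fixing a choice $T(\hF)$ for each offending $\hF$ yields an injection $(\hF,\{x,y\}) \mapsto (T(\hF),\hF,\{x,y\})$ into triples (of which there are $\comb{k+1}{3}$) times bad elements per triple (at most $3$ by \hyperlink{Theta-4}{($\Theta$4)}) times pairs ($\comb{k+1}{2}$), giving the bound. To prove the key lemma, I would define a relation on $V$ by $a \sim b$ iff $\hF \notin \Theta(F_a,F_b)$ and argue by contradiction: if no triple works for $\hF$, then no triple has exactly one $\Theta$-edge, which translates precisely into the implication ``$a \sim c$ and $b \sim c$ imply $a \sim b$,'' so $\sim$ is transitive. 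Together with symmetry and reflexivity this makes $\sim$ an equivalence relation, and the equivalence class $C$ containing $x$ produces a $\Theta$-separation type $\{C,V \setminus C\}$ of $\hF$ that separates $x$ from $y \notin C$, contradicting $(\hF,\{x,y\}) \notin \widetilde\cA(V)$. The hard part is the careful unwinding of the $\Theta$-separation definition to identify the right equivalence relation; everything else is routine bookkeeping.
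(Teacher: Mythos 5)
Your proof is correct and essentially follows the paper's own argument in each of the four parts: (1) via the definition of $\Theta_0(\cdot,\cdot)$ and Lemma~\ref{bar: away from edge, away from simplex}, (2) via ($\Theta$1), (3) via (2) and Corollary~\ref{almost ints type property}, and (4) via the same reduction to triples together with the count from ($\Theta$4). The one place that needs slight care in (4) is the claim that $\sim$ is reflexive and hence an equivalence relation on all of $V$: if $F_p=\hF$ for some $p\in V$ then $p\not\sim p$, since $F_p\in\Theta(F_p,F_p)=\{F_p\}$ always. This is harmless for your argument, because $(\hF,\{x,y\})\notin\widetilde\cA(V)$ forces $\hF\neq F_x$ and $\hF\neq F_y$ (otherwise $\stype{\{x\}}{V}$ or $\stype{\{y\}}{V}$ would already be a separation type putting $x$ and $y$ on opposite sides), so $C:=\{p\in V: p\sim x\}$ is nonempty with $x\in C$, $y\notin C$, and any $q$ with $F_q=\hF$ automatically has $\hF\in\Theta(F_p,F_q)$ for all $p$; but you should spell this out rather than invoke reflexivity. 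The paper sidesteps the issue by directly defining $I:=\{p\in V:\hF\notin\Theta(F_x,F_p)\}\cup\{x\}$, which is precisely your class $C$, and checking the two required inclusions with the triple $\{x,p,q\}$ in the same way. A small bonus of your write-up: your treatment of (3) explicitly covers the case $\hF\in\{F_x : x\in V\}$ with the $\epsilon_0/2$ bound, whereas the paper's ``direct corollary of (2)'' tacitly leaves that case to the reader.
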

\begin{proof}
\begin{enumerate}
\item[(1).] {This follows directly from Lemma \ref{bar: away from edge, away from simplex} and the definition of $\Theta(\cdot,\cdot)$ before Lemma \ref{properties of Theta}.}
\item[(2).] This is a direct corollary of Lemma \ref{properties of Theta}, \hyperlink{Theta-1}{property ($\Theta$1) of $\Theta(\cdot,\cdot)$} applied to every pair $x\neq y\in V$ such that $\hF\in\Theta(F_x, F_y)\setminus\{F_x, F_y\}$.
\item[(3).] This is a direct corollary of the above assertion (the second assertion in Lemma \ref{properties of Theta sep}) and Corollary \ref{almost ints type property}.
\item[(4).] $\widetilde\pi(\widetilde\cF(V))=\cF(V)$ and $\widetilde\pi(\widetilde\cA(V))=\cA(V)$ follow directly from definitions. The rest of this assertion largely relies on the following claim:

\emph{For any $(\hF,\{x,y\})\in\widetilde\cF(V)\setminus\widetilde\cA(V)$, there exist distinct points $x',y',z'\in V$ such that $\hF\in\cF(x',y',z')\setminus\cA(x',y',z')$.}

If the claim is false, let $(\hF,\{x,y\})\in\widetilde\cF(V)\setminus\widetilde\cA(V)$ such that for any distinct points $x',y',z'\in V$,$\hF\in\cA(x',y',z')$. We choose $I:=\{p\in V|\hF\not\in\Theta(F_x,F_p)\}\union\{x\}$. Clearly, $x\in I$, $y\in V\setminus I$ and $\hF\in\Theta(F_q, F_x)$ for any $q\in V\setminus I$. For any $p\in I\setminus\{x\}$ and $q\in V\setminus I$, $\hF\not\in\Theta(F_x, F_p)$ and $\hF\in\cA(x,p,q)$ imply that $\hF\in \Theta(F_q, F_p)$. Hence $\stype{I}{V}\in\Sep_V(\hF)$ with $x\in I$ and $y\in V\setminus I$. This implies that $(\hF,\{x,y\})\in\widetilde\cA(V)$, contradictory to the assumption that $(\hF,\{x,y\})\in\widetilde\cF(V)\setminus\widetilde\cA(V)$. Therefore the above claim holds.

By the above claim, there exists a map
$$\Phi:\widetilde\cF(V)\setminus\widetilde\cA(V)\to\bigsqcup_{J\subset V,|J|=3}(\cF(J)\setminus\cA(J))\times\{J\}$$
such that for any $(\hF,\{x,y\})\in \widetilde\cF(V)\setminus\widetilde\cA(V)$, $\Phi(\hF,\{x,y\})=(\hF,J)$ with $\hF\in\cF(J)\setminus\cA(J)$ for some appropriate choice of $J$. (The claim guarantees that there is always at least one choice of $J$. Such a map is not necessarily unique.) By the definition of $\Phi$, any point in $\bigsqcup_{J\subset V,|J=3|}(\cF(J)\setminus\cA(J))\times\{J\}$ has at most $\comb{k+1}{2}$ pre-images. By Lemma \ref{properties of Theta}, \hyperlink{Theta-4}{property ($\Theta$4) of $\Theta(\cdot,\cdot)$}, for any $J\subset V$ with $|J|=3$, $|\cF(J)\setminus\cA(J)|\leq 3$. Therefore
$$|\widehat\cF(V)\setminus\widehat\cA(V)|\leq\comb{k+1}{2}\cdot\left|\bigsqcup_{J\subset V,|J=3|}(\cF(J)\setminus\cA(J))\times\{J\}\right|=3\cdot\comb{k+1}{3}\cdot\comb{k+1}{2}.$$
\end{enumerate}
\end{proof}
\begin{rmk}
As a direct corollary of the third assertion, for any distinct $F_j\in\cA(V)$ and $\stype{I_j}{V}\in\Sep_V(F_j)$, $j=1,2$, the following holds:
\begin{itemize}
\item There exist $I_j'\in\stype{I_j}{V}$, $j=1,2$, such that $I_1'\cap I_2'\neq \emptyset$ and $(V\setminus I_1')\cap (V\setminus I_2')\neq\emptyset$.
\item If $I_1\cap I_2\neq \emptyset$ and $(V\setminus I_1)\cap (V\setminus I_2)\neq\emptyset$, then either $I_1\subset I_2$ or $I_2\subsetneq I_1$.
\end{itemize}
\end{rmk}

\begin{lemma}[Decomposition of $\Theta$-separation types]\label{prim decomp}
Let $V\subset \Gamma x_0$ with $|V|\geq 2$. For any $\hF\in\cA(V)$ and $p\in V$, we define
$$H_\hF^V(p)=\bigcap_{\substack{I:\stype{I}{V}\in\Sep_V(\hF)\\I\ni p}}I.$$
Then we have the following properties:
\begin{enumerate}
\item[(1).] For any $p,q\in V$, either $H_\hF^V(p)\ints H_\hF^V(q)=\emptyset$, or $H_\hF^V(p)= H_\hF^V(q)$;
\item[(2).] There exist disjoint subsets $H_1,...,H_m$ of $V$ such that
\begin{enumerate}
\item[(i).] $H_j=H_\hF^V(p_j)$ for some $p_j\in V$;
\item[(ii).] $V=\sqcup_{j=1}^mH_j$;
\item[(iii).] $\Sep_V(\hF)$ satisfies
$$\Sep_V(\hF)=\left\{\left.\left\{\bigsqcup_{j\in J}H_j,\bigsqcup_{j\in\{1,...,m\}\setminus J}H_j\right\}\right|\emptyset\neq J\subsetneq\{1,...,m\}\right\}.$$
\end{enumerate}
\end{enumerate}
\end{lemma}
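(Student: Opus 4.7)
The plan is to deduce everything from a single closure property of $\Sep_V(\hF)$ under intersection and union, and then extract the $H_j$'s as the ``atoms'' of this Boolean-like structure.

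First I would record the closure property: if $\stype{I_1}{V},\stype{I_2}{V}\in\Sep_V(\hF)$ and the representatives are chosen so that $I_1\cap I_2\neq\emptyset$ and $(V\setminus I_1)\cap (V\setminus I_2)\neq\emptyset$, then both $\stype{I_1\cap I_2}{V}$ and $\stype{I_1\cup I_2}{V}$ belong to $\Sep_V(\hF)$. This is immediate from Definition \ref{Theta sep}: for $x\in I_1\cap I_2$ and $y\in V\setminus(I_1\cap I_2)=(V\setminus I_1)\cup(V\setminus I_2)$, whichever $V\setminus I_j$ contains $y$ gives $\hF\in\Theta(F_x,F_y)$ via the separation type $\stype{I_j}{V}$; the union case is symmetric.

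Next I would prove (1). Suppose $r\in H_\hF^V(p)\cap H_\hF^V(q)$. For any $\stype{I}{V}\in\Sep_V(\hF)$, I claim $p\in I\iff q\in I$ (after fixing the representative $I$ of the separation type). Indeed, $p\in I$ forces $H_\hF^V(p)\subset I$ (by definition of the intersection), hence $r\in I$; if at the same time $q\in V\setminus I$, then $H_\hF^V(q)\subset V\setminus I$ and $r\in V\setminus I$, a contradiction. So $p$ and $q$ lie on the same side of every separation type of $\hF$, which gives $H_\hF^V(p)=H_\hF^V(q)$.

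For (2), the relation $p\sim q\Leftrightarrow H_\hF^V(p)=H_\hF^V(q)$ is an equivalence relation by (1); let $H_1,\ldots,H_m$ be its classes with representatives $p_1,\ldots,p_m$. A quick check using (1) shows that the class of $p_j$ coincides with $H_\hF^V(p_j)$: if $q\in H_\hF^V(p_j)$, then $q\in H_\hF^V(p_j)\cap H_\hF^V(q)$, so (1) gives $H_\hF^V(q)=H_\hF^V(p_j)$. This establishes (i) and (ii). To get (iii), apply the intersection half of the closure property iteratively to all representatives $I\in\Sep_V(\hF)$ with $p_j\in I$, whose intersection is precisely $H_j$; this shows $\stype{H_j}{V}\in\Sep_V(\hF)$ (the intersection is non-empty since it contains $p_j$, and its complement is non-empty since it contains any $V\setminus I$ from a single separation type with $p_j\in I$). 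Then use the union half of the closure property, built up one $H_j$ at a time, to conclude that $\stype{\bigsqcup_{j\in J}H_j}{V}\in\Sep_V(\hF)$ for every non-empty proper $J\subsetneq\{1,\ldots,m\}$. Conversely, any $\stype{I}{V}\in\Sep_V(\hF)$ must be such a union: the argument in (1) shows that each $H_j$ is either entirely contained in $I$ or entirely contained in $V\setminus I$.

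The main conceptual step is the closure-under-intersection-and-union property, and the only point that requires a moment of care is keeping track of the choice of representative (the unordered pair $\{I,V\setminus I\}$ versus the set $I$), since the closure property only holds after selecting representatives on the ``same side''; once this bookkeeping is set up, the rest is a straightforward extraction of atoms from a finite set system closed under these two operations.
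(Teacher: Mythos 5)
Your proof is correct, and part~(1) is essentially line-for-line the paper's argument. For part~(2) you repackage the key computation as a closure property of $\Sep_V(\hF)$ under intersection and union of representatives, whereas the paper verifies directly from Definition~\ref{Theta sep} that each $\stype{H_\hF^V(p)}{V}$, and then each union $\stype{\bigsqcup_{j\in J}H_j}{V}$, is a separation type. The underlying observation is identical (for $x$ on one side and $y$ on the other, pick $\stype{I}{V}\in\Sep_V(\hF)$ with $x\in I$, $y\notin I$, so $\hF\in\Theta(F_x,F_y)$), but the lattice-theoretic phrasing is a clean and slightly more systematic way of organizing the same facts.

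One point to tighten. You state the closure property with the \emph{joint} hypotheses $I_1\cap I_2\neq\emptyset$ and $(V\setminus I_1)\cap(V\setminus I_2)\neq\emptyset$, and then invoke ``the intersection half'' and ``the union half'' separately. In fact each half should carry only its own hypothesis: $\stype{I_1\cap I_2}{V}\in\Sep_V(\hF)$ needs only $I_1\cap I_2\neq\emptyset$, and $\stype{I_1\cup I_2}{V}\in\Sep_V(\hF)$ needs only $(V\setminus I_1)\cap(V\setminus I_2)\neq\emptyset$. This matters precisely where you apply the two halves: during the iterated intersection building $H_j$, the sets involved all contain $p_j$ but their complements need not intersect (two of them may cover $V$), so the second hypothesis can fail; and during the iterated union building $\bigsqcup_{j\in J}H_j$, the $H_j$'s are disjoint by construction, so the first hypothesis fails outright. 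Your sketched proof of the closure property does in fact use only the relevant hypothesis in each case, so the argument is sound once the statement is split into two separate implications; but as literally written, the joint hypotheses are not satisfied at the points where you invoke the property.
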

\begin{proof}
\begin{enumerate}
\item[(1).] If there exists $x\in H_\hF^V(p)\ints H_\hF^V(q)$, then for any $\Theta$-separation type $\stype{I}{V}$ of $\hF$ such that $q\in I$, $x\in H_\hF^V(q)\subset I$. Since $x\in H_\hF^V(p)$, $p\in I$. (Otherwise, $p\in V\setminus I$ implies that $x\in H_\hF^V(p)\subset V\setminus I$. This contradicts $x\in I\ints H_\hF^V(p)$.) Therefore, $H_\hF^V(p)\subset I$. By arbitariness of $I$ which contains $q$, we have
$$H_\hF^V(p)\subset \left(\bigcap_{\substack{I:\stype{I}{V}\in\Sep_V(\hF)\\I\ni q}}I\right)=H_\hF^V(q).$$
Similarly $H_\hF^V(q)\subset H_\hF^V(p)$. Therefore $H_\hF^V(q)= H_\hF^V(p)$.
\item[(2).] Let $\cH:=\{H_\hF^V(p)|p\in V\}=\{H_1,...,H_m\}$. Then by the first assertion, $H_j=H_\hF^V(p_j)$ for some $p_j\in V$ and $V=\sqcup_{j=1}^mH_j$. For any $p\in V$, any $x\in H_\hF^V(p)$ and any $y\in V\setminus  H_\hF^V(p)$, there exists some $\stype{I}{V}\in\Sep_V(\hF)$ such that $H_\hF^V(p)\subset I$ and $y\not\in I$. Therefore $\hF\in\Theta(F_y, F_x)$. As a corollary, $\stype{H_\hF^V(p)}{V}\in \Sep_V(\hF)$, for any $p\in V$. In particular, this proves that
$$\Sep_V(\hF)\supset\left\{\left.\left\{\bigsqcup_{j\in J}H_j,\bigsqcup_{j\in\{1,...,m\}\setminus J}H_j\right\}\right|\emptyset\neq J\subsetneq\{1,...,m\}\right\}.$$
On the other hand, for any $\stype{I}{V}\in \Sep_V(\hF)$, $I=\cup_{p\in I}H_\hF^V(p)$. By the first assertion, there exists $\emptyset\neq J\subsetneq\{1,...,m\}$ such that $I=\sqcup_{j\in J}H_j$. This proves
$$\Sep_V(\hF)\subset\left\{\left.\left\{\bigsqcup_{j\in J}H_j,\bigsqcup_{j\in\{1,...,m\}\setminus J}H_j\right\}\right|\emptyset\neq J\subsetneq\{1,...,m\}\right\}.\qedhere$$
\end{enumerate}
\end{proof}

\begin{definition}[Primitive $\Theta$-separation type]\label{prim sep type}
For any finite subset $V\subset \Gamma x_0$ with $|V|\geq 2$ and any $\hF\in\cA(V)$, a $\Theta$-separation type is \emph{primitive} if $I=H_\hF^V(p)$ for some $p\in I$ or $V\setminus I= H_\hF^V(q)$ for some $q\in V\setminus I$.

We denote by $\PSep_V(\hF)$ the collection of primitive $\Theta$-separation types of $\hF$ with respect to $V$.
\end{definition}

The above constructions are ``$\Gamma$-equivariant'' in the following sense.
\begin{lemma}\label{gamma shift-1}
For any $\gamma\in\Gamma$ and any finite subset $V$ of $\Gamma x_0$ with $|V|\geq 2$, the map $\hF\to\gamma\hF$ satisfies the following.
\begin{itemize}
\item It gives a bijection from $\cF(V)$ to $\cF(\gamma V)$;
\item It gives a bijection from $\cA(V)$ to $\cA(\gamma V)$.
\end{itemize}
Hence the map $(\hF,\{p,q\})\to(\gamma\hF,\{\gamma p,\gamma q\})$ satisfies the following.
\begin{itemize}
\item It gives a bijection from $\widetilde\cF(V)$ to $\widetilde\cF(\gamma V)$;
\item It gives a bijection from $\widetilde\cA(V)$ to $\widetilde\cA(\gamma V)$.
\end{itemize}
Moreover, for any $\hF\in\cA(V)$ and any $\stype{I}{V}\in\Sep_V(\hF)$, the map $\stype{I}{V}\to\stype{\gamma I}{\gamma V}$ satisfies the following.
\begin{itemize}
\item It gives a bijection from $\Sep_V(\hF)$ to $\Sep_{\gamma V}(\gamma \hF)$;
\item It gives a bijection from $\PSep_V(\hF)$ to $\PSep_{\gamma V}(\gamma\hF)$.
\end{itemize}
\end{lemma}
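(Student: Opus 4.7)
The plan is to reduce everything to the single equivariance statement $\gamma\Theta(F_1,F_2)=\Theta(\gamma F_1,\gamma F_2)$ for any $\gamma\in\Gamma$ and any $F_1,F_2\in\Gamma F$, which was already observed (and essentially proved) in the second assertion of Proposition~\ref{prop of f}. Since $\Gamma$ acts by isometries on $X$, all of $d(\cdot,\cdot)$, $\Proj_{(\cdot)}(\cdot)$, and the condition $d(x,\hF)\le\epsilon_0/2$ used to define $F_x$ are $\Gamma$-equivariant, so $\gamma F_x=F_{\gamma x}$ for every $x\in\Gamma x_0$; combined with the equivariance of $\Omega(\cdot,\cdot)$ and $\Theta(\cdot,\cdot)$ noted above, this is the only ingredient I need.

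First I would verify the bijections at the unmarked level. From the definition $\cF(V)=\bigcup_{x,y\in V}\Theta(F_x,F_y)$, applying $\gamma$ gives $\gamma\cF(V)=\bigcup_{x,y\in V}\Theta(\gamma F_x,\gamma F_y)=\bigcup_{x',y'\in\gamma V}\Theta(F_{x'},F_{y'})=\cF(\gamma V)$, and injectivity is obvious because $\gamma$ is a bijection on $\Gamma F$. For $\cA(V)$: if $\hF\in\cA(V)$ with $\{I,V\setminus I\}\in\Sep_V(\hF)$, then for every $x\in I$, $y\in V\setminus I$ we have $\hF\in\Theta(F_x,F_y)$, hence $\gamma\hF\in\Theta(F_{\gamma x},F_{\gamma y})$ for every $\gamma x\in\gamma I$, $\gamma y\in\gamma V\setminus\gamma I$, which shows $\{\gamma I,\gamma V\setminus\gamma I\}\in\Sep_{\gamma V}(\gamma\hF)$ and thus $\gamma\hF\in\cA(\gamma V)$. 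Reversing the roles of $\gamma$ and $\gamma^{-1}$ gives the converse inclusion, and the same computation shows the assignment $\{I,V\setminus I\}\mapsto\{\gamma I,\gamma V\setminus\gamma I\}$ is a bijection $\Sep_V(\hF)\to\Sep_{\gamma V}(\gamma\hF)$.

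Next I would handle the edge-marked versions $\widetilde\cF(V)$ and $\widetilde\cA(V)$ from Notation~\ref{edge marking}. The map $(\hF,\{x,y\})\mapsto(\gamma\hF,\{\gamma x,\gamma y\})$ has obvious inverse given by $\gamma^{-1}$, and it sends $\widetilde\cF(V)$ to $\widetilde\cF(\gamma V)$ because $\hF\in\Theta(F_x,F_y)$ is equivalent to $\gamma\hF\in\Theta(F_{\gamma x},F_{\gamma y})$. For $\widetilde\cA(V)$, the condition that there exist $\{I,V\setminus I\}\in\Sep_V(\hF)$ with $x\in I$, $y\in V\setminus I$ translates under $\gamma$ to the condition that $\{\gamma I,\gamma V\setminus\gamma I\}\in\Sep_{\gamma V}(\gamma\hF)$ with $\gamma x\in\gamma I$, $\gamma y\in\gamma V\setminus\gamma I$, which is exactly the defining condition for $(\gamma\hF,\{\gamma x,\gamma y\})\in\widetilde\cA(\gamma V)$.

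Finally, for primitive separation types I would use the characterization in Lemma~\ref{prim decomp}. Given $\hF\in\cA(V)$ and $p\in V$, the equivariance of $\Sep_V(\hF)$ already established implies
\[H_{\gamma\hF}^{\gamma V}(\gamma p)=\bigcap_{\substack{I':\{I',\gamma V\setminus I'\}\in\Sep_{\gamma V}(\gamma\hF)\\I'\ni\gamma p}}I'=\bigcap_{\substack{I:\{I,V\setminus I\}\in\Sep_V(\hF)\\I\ni p}}\gamma I=\gamma H_\hF^V(p),\]
so a primitive type $\{H_\hF^V(p),V\setminus H_\hF^V(p)\}$ (or with the other side primitive) maps to $\{\gamma H_\hF^V(p),\gamma V\setminus\gamma H_\hF^V(p)\}=\{H_{\gamma\hF}^{\gamma V}(\gamma p),\gamma V\setminus H_{\gamma\hF}^{\gamma V}(\gamma p)\}$, which is primitive by Definition~\ref{prim sep type}. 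The same argument with $\gamma^{-1}$ gives the reverse inclusion, so the bijection $\Sep_V(\hF)\to\Sep_{\gamma V}(\gamma\hF)$ restricts to a bijection $\PSep_V(\hF)\to\PSep_{\gamma V}(\gamma\hF)$. There is no real obstacle here; the entire lemma is a bookkeeping exercise once the $\Gamma$-equivariance of $\Theta(\cdot,\cdot)$ and of $x\mapsto F_x$ is in hand.
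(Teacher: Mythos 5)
Your proof is correct and follows essentially the same route as the paper: both reduce the entire lemma to the $\Gamma$-equivariance $\gamma\Theta(F_1,F_2)=\Theta(\gamma F_1,\gamma F_2)$ (the paper re-derives this directly from the definitions of $\Omega$ and $\Theta$ rather than citing Proposition~\ref{prop of f}, but that is the same observation) and then unwind Definition~\ref{Theta sep}, Notation~\ref{edge marking}, Lemma~\ref{prim decomp} and Definition~\ref{prim sep type}. You have simply made the bookkeeping explicit where the paper leaves it to the reader.
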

\begin{proof}
By the definition of $\Omega(\cdot,\cdot)$, $\gamma\Omega(F',F'')=\Omega(\gamma F',\gamma F'')$ for any $F',F''\in\Gamma F$. Hence by the definition of $\Theta(\cdot,\cdot)$, $\gamma\Theta(F',F'')=\Theta(\gamma F',\gamma F'')$ for any $F',F''\in\Gamma F$. The rest of the proof follows directly from the corresponding definitions in Definition \ref{Theta sep}, Notations \ref{edge marking}, Lemma \ref{prim decomp} and Definition \ref{prim sep type}.
\end{proof}
\begin{definition}[Relations between $\Theta$-separation types]\label{relations between sep types}
Let $\stype{I_j}{V}\in\{\stype{I}{V}|\emptyset\neq I\subsetneq V\}$, $j=1,2,3$. We say that $\stype{I_3}{V}$ is \emph{between} $\stype{I_1}{V}$ and $\stype{I_2}{V}$ if there exist $ I_1'\in\stype{I_1}{V}$ and $ I_2'\in\stype{I_2}{V}$ such that
$$I_1'\subset I_3\subset I_2' ~\mathrm{or}~I_2'\subset I_3\subset I_1'.$$
We say $\stype{I_1}{V}$, $\stype{I_2}{V}$ and $\stype{I_3}{V}$ are in \emph{triangle relation} if there exist $I_j'\in\stype{I_j}{V}$, $j=1,2,3$, such that $I_1',I_2',I_3'$ are disjoint from each other.
\end{definition}
\begin{rmk}
Under the above notations, it is easy to see that for any $\gamma\in\Gamma$, $\stype{I_3}{V}$ is between $\stype{I_1}{V}$ and $\stype{I_2}{V}$ if and only if $\stype{\gamma I_3}{\gamma V}$ is between $\stype{\gamma I_1}{\gamma V}$ and $\stype{\gamma I_2}{\gamma V}$; $\stype{I_1}{V}$, $\stype{I_2}{V}$ and $\stype{I_3}{V}$ are in triangle relation if and only if $\stype{\gamma I_1}{\gamma V}$, $\stype{\gamma I_2}{\gamma V}$ and $\stype{\gamma I_3}{\gamma V}$ are in triangle relation.

{Also, by Lemma \ref{prim decomp}, for any $\hF\in\Gamma F$ and any distinct $\stype{I_1}{V},\stype{I_2}{V},\stype{I_3}{V}\in\PSep_V(F)$, we have $\stype{I_1}{V}$, $\stype{I_2}{V}$ and $\stype{I_3}{V}$ are in triangle relation. }
\end{rmk}

\begin{example}[Example of an in-between relation]
Let $V=\{p_0,p_1,p_2,p_3\}\in\Gamma x_0$ and pairwise distinct $F_1,F_2,F_3\in\cF(V)$ such that the following holds:
\begin{itemize}
\item $F_1$ intersects the geodesic segments $[p_0,p_i]$ for any $i\in\{1,2,3\}$;
\item $F_2$ intersects the geodesic segments $[p_i,p_j]$ for any $i\in\{0,1\}$ and any $j\in\{2,3\}$;
\item $F_3$ intersects the geodesic segments $[p_i,p_3]$ for any $i\in\{0,1,2\}$.
\end{itemize}
Then $\{\{p_0\},\{p_1,p_2,p_3\}\}\in\Sep_V(F_1)$, $\{\{p_0,p_1\},\{p_2,p_3\}\}\in\Sep_V(F_2)$ and $\{\{p_0,p_1,p_2\},\{p_3\}\}\in\Sep_V(F_3)$. Moreover, $\{\{p_0,p_1\},\{p_2,p_3\}\}$ is between $\{\{p_0\},\{p_1,p_2,p_3\}\}$ and $\{\{p_0,p_1,p_2\},\{p_3\}\}$ in the sense of Definition \ref{relations between sep types}. One can easily check that in this case, $F_1$ and $F_3$ are on opposite sides of $F_2$. (See Figure \ref{pic:in between}.)
\begin{figure}[h]
		\centering
		\includegraphics[scale=0.5]{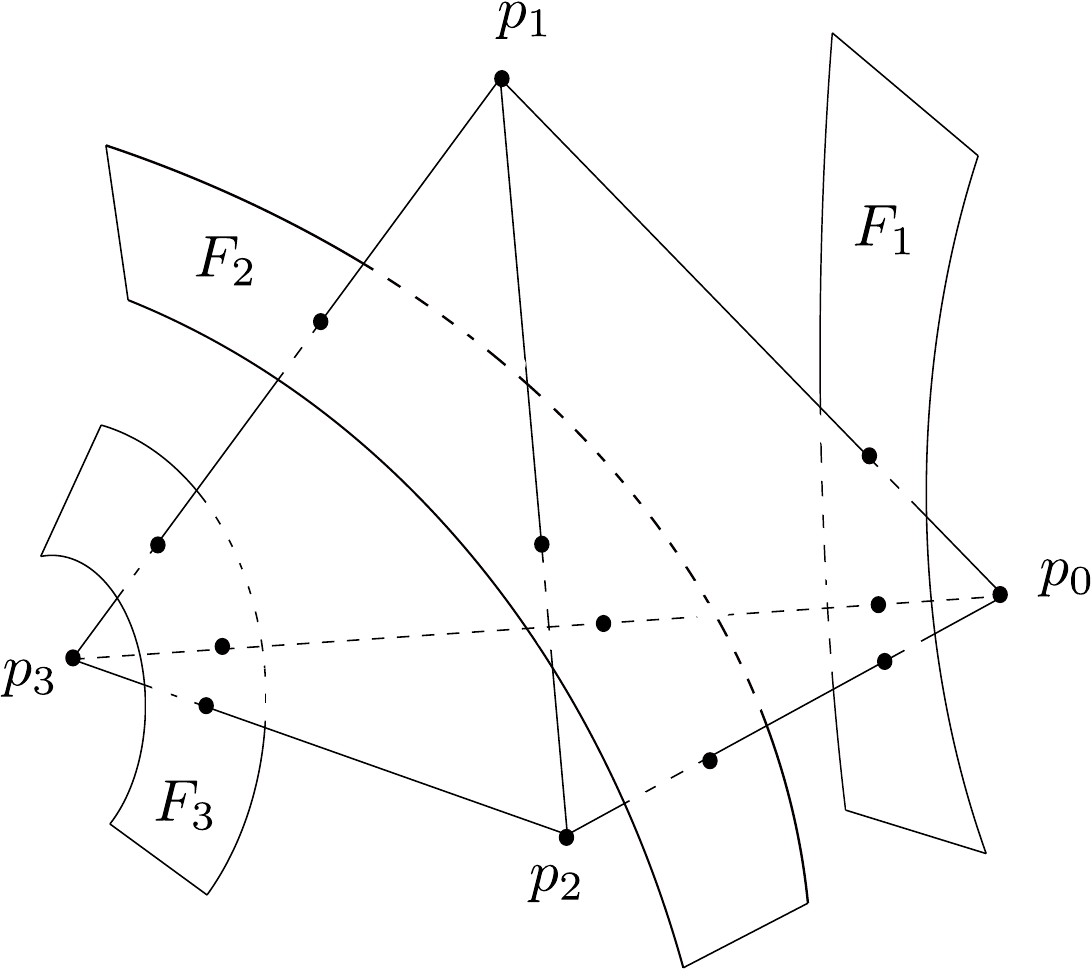}
		\caption{ \label{pic:in between}}
\end{figure}
\end{example}
\begin{example}[Example of triangle relation]
Let $V=\{p_0,p_1,p_2\}\in\Gamma x_0$ and pairwise distinct $F_0,F_1,F_2\in\cF(V)$ such that for any $i\in\{0,1,2\}$ and any $j\in V\setminus \{i\}$, $F_i$ intersects the geodesic segment $[p_i,p_j]$. Then $\{\{p_0\},\{p_1,p_2,\}\}\in\Sep_V(F_0)$, $\{\{p_1\},\{p_2,p_0\}\}\in\Sep_V(F_1)$ and $\{\{p_2\},\{p_0,p_1\}\}\in\Sep_V(F_2)$. Moreover, $\{\{p_0\},\{p_1,p_2,\}\}$, $\{\{p_1\},\{p_2,p_0\}\}$ and $\{\{p_2\},\{p_0,p_1\}\}$ are in triangle relation in the sense of Definition \ref{relations between sep types}. Assuming $F_3:=F_0$ and $F_4:=F_1$, one can easily check that for any $j\in\{0,1,2\}$, $F_{j+1}$ and $F_{j+2}$ are on the same side of $F_j$. (See Figure \ref{pic:trangle relation}.)
\begin{figure}[h]
		\centering
		\includegraphics[scale=0.5]{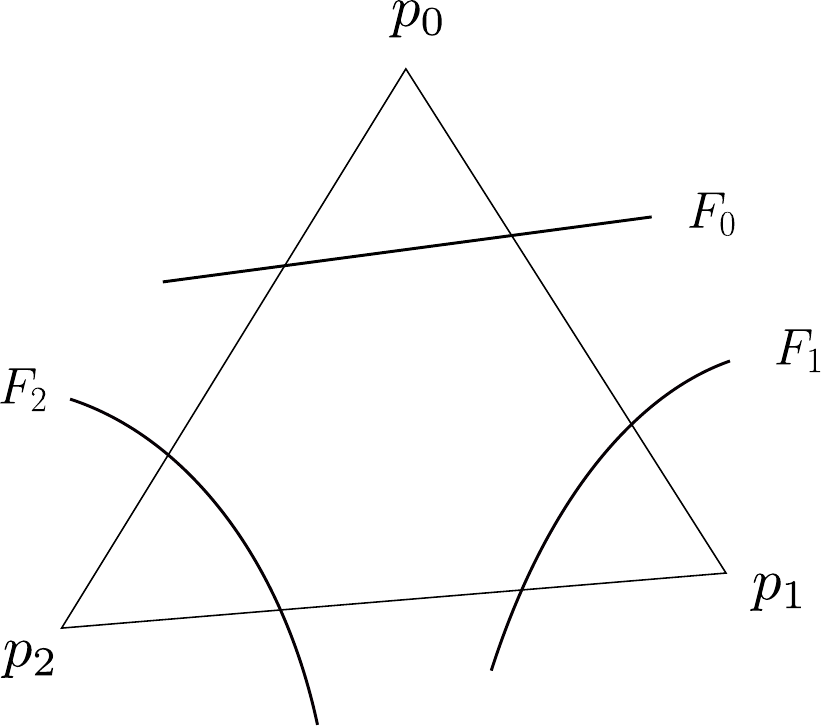}
		\caption{ \label{pic:trangle relation}}
\end{figure}
\end{example}
\begin{notation}[``Good'' elements in $\cF(V)$, $\Theta$-separation types as markings]\label{sep type marking}
Let $V\subset \Gamma F$ be a finite subset such that $|V|\geq 2$. Let $\cA_0(V)$ be the collection of ``good'' elements in $\cF(V)$ in the sense that
$$\cA_0(V)=\{\widehat F\in\cF(V)|(\widehat F,\{x,y\})\in\widetilde \cA(V),\forall (\widehat F,\{x,y\})\in\widetilde \cF(V) \}=\cF(V)\setminus\widetilde\pi\left(\widetilde\cF(V)\setminus\widetilde\cA(V)\right).$$

We define $$\cA_\Sep(V)=\{\typemark{\hF}{I}{V}|\hF\in\cA(V), \stype{I}{V}\in\Sep_V(\hF)\}$$
and
$$\cA_\PSep(V)=\{\typemark{\hF}{I}{V}|\hF\in\cA(V), \stype{I}{V}\in\PSep_V(\hF)\}.$$
Let
$$\AnSep(V)=\{\typemark{\hF}{I}{V}\in\cA_\Sep(V)|\hF\in\cA_0(V)\}$$
and
$$\AnPSep(V)=\{\typemark{\hF}{I}{V}\in\cA_\PSep(V)|\hF\in\cA_0(V)\}.$$
\end{notation}
\begin{rmk}
\begin{enumerate}
\item[(1).] If we think of elements in $(\widehat F,\{x,y\})\in\widetilde\cF(V)$ as ``the intersection point between $\widehat F$ and $[x,y]$'' and those in $\widetilde\cA(V)$ are considered ``good intersection point'', then ``good'' elements in $\cF(V)$ (i.e. elements in $\cA_0(V)$) do not have ``bad intersection points'' with edges of the geodesic simplex with vertex set $V$. Here,  the word ``good'', vaguely speaking, is used to extract those aspects in the story of $\Theta$-separation of simplices which are similar to the story of actual separation of simplices by elements in $\Gamma F$.
\item[(2).] We first consider the toy example of cutting a geodesic simplex in $\RR^n$ by a collection of finitely many hyperplanes in $\RR^n$. In addition, we assume the following:
\begin{itemize}
\item The hyperplanes do not intersect each other in the interior of the geodesic simplex;
\item For any vertex of the simplex, there is a unique hyperplane in this collection which passes through this vertex. Moreover, this hyperplane only intersects the simplex at this vertex. 
\end{itemize}
Then, the simplex is cut into several polygonal pieces. Each piece is determined by at least two hyperplanes. We will informally call these polygonal pieces ``chambers''. The faces of the polygonal pieces determined by hyperplanes are informally called ``floors''. Each hyperplane determines at most one face of any ``chamber''.

Here, we would like to imagine that elements in $\cA_0(V)$ also cut a barycentric simplex with vertex set $V$ in a similar way to the above toy example. Unfortunately, since $\Theta$-separation is a subtle version of almost separation, some bizzare phenomena can occur. For example, sometimes one $\Theta$-separating submanifold is enough to ``bound a chamber'' and this submanifold will show up in multiple faces of this ``chamber''. (See \eqref{1D chamber special}, \eqref{1D chamber} and \eqref{Chamber}.) Most ``floors'' are now described using the concept of $\AnPSep$. (See \eqref{0D floor special-1}, \eqref{0D floor special-2}, \eqref{0D floor-1}, \eqref{0D floor-2}, \eqref{floor}.) As is shown in the definition, one submanifold may contribute to many ``faces of chambers''. For example, in Figure \ref{fig:one flat many faces}, we consider some $V=\{x,y,z\}$ and some $\widehat F\in\cA_0(V)$ such that
$$\Sep_{V}(\widehat F)=\{(\{x\},\{y,z\}),(\{y\},\{x,z\}),(\{z\},\{x,y\})\}.$$
In other words, $\widehat F$ almost separates all edges of the triangle with vertices $\{x,y,z\}$. If we imagine that these almost separations are actual intersections (which is actually impossible), then $\widehat F$ alone ``bounds'' the ``shaded region''. The ``shaded region'' is a ``chamber'' bounded by three ``faces''. Since these ``faces'' all come from $\widehat F$ almost separating the triangle with vertices $\{x,y,z\}$, in order to distinguish these faces, we need to also mention the different $\Theta$-separation types of $\widehat F$.
\item[(3).] By Lemma \ref{gamma shift-1}, for any $\gamma\in\Gamma$, the map $\hF\to\gamma\hF$ gives a bijection from $\cA_0(V)$ to $\cA_0(\gamma V)$. Moreover, for any $\typemark{\hF}{I}{V}\in\cA_\Sep(V)$ and any $\gamma\in\Gamma$, we can naturally define $\gamma\typemark{\hF}{I}{V}=\typemark{\gamma \hF}{\gamma I}{\gamma V}$. Then by Lemma \ref{gamma shift-1} and the remark after Definition \ref{relations between sep types}, the map $\typemark{\hF}{I}{V}\to\typemark{\gamma \hF}{\gamma I}{\gamma V}$ satisfies the following
\begin{itemize}
\item It gives a bijection from $\cA_\Sep(V)$ to $\cA_\Sep(\gamma V)$.
\item It gives a bijection from $\cA_\PSep(V)$ to $\cA_\PSep(\gamma V)$.
\item It gives a bijection from $\AnSep(V)$ to $\AnSep(\gamma V)$.
\item It gives a bijection from $\AnPSep(V)$ to $\AnPSep(\gamma V)$.
\end{itemize}
\begin{figure}[h]
	\centering
	\includegraphics[width=4in]{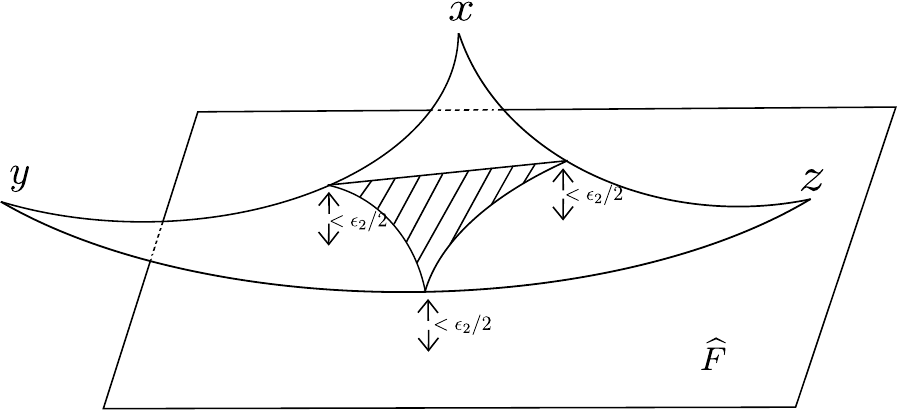}
	\caption{\label{fig:one flat many faces}}
\end{figure}

\end{enumerate}
\end{rmk}

\begin{lemma}\label{almost all good}
Let $V\subset \Gamma F$ be a finite subset such that $|V|=k+1\geq 2$. There are at most finitely many elements in $\cF(V)$ which are not ``good''. To be precise
$$|\cF(V)\setminus\cA_0(V)|\leq |\widetilde\cF(V)\setminus\widetilde\cA(V)|\leq 3\cdot\comb{k+1}{3}\cdot\comb{k+1}{2}.$$
Moreover, if we define $\widetilde\cA_0(V)=\{(\hF,\{x,y\})\in\widetilde\cF(V)|\hF\in\cA_0(V)\}$, then
$$|\widetilde\cF(V)\setminus\widetilde\cA_0(V)|\leq\comb{k+1}{2}\cdot|\cF(V)\setminus\cA_0(V)|\leq 3\cdot\comb{k+1}{3}\cdot\comb{k+1}{2}^2.$$
\end{lemma}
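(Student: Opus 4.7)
The plan is to reduce both inequalities to bookkeeping on the projection $\widetilde\pi:\widetilde\cF(V)\to\cF(V)$ and on the fibers of this projection, combined with the counting bound on $|\widetilde\cF(V)\setminus\widetilde\cA(V)|$ that was already established in the fourth assertion of Lemma \ref{properties of Theta sep}.

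First I would unwind the definition of $\cA_0(V)$. By Notation \ref{sep type marking},
\[
\cF(V)\setminus\cA_0(V) \;=\; \widetilde\pi\bigl(\widetilde\cF(V)\setminus\widetilde\cA(V)\bigr),
\]
so since the image of a set under a map has cardinality no larger than the set itself,
\[
|\cF(V)\setminus\cA_0(V)| \;\leq\; |\widetilde\cF(V)\setminus\widetilde\cA(V)|.
\]
This gives the first inequality in the lemma; the second inequality in the first chain is then immediate from \hyperlink{Sep-4}{property (4)} of Lemma \ref{properties of Theta sep}.

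For the moreover statement, I would directly rewrite $\widetilde\cF(V)\setminus\widetilde\cA_0(V)$ using the definition of $\widetilde\cA_0(V)$ as
\[
\widetilde\cF(V)\setminus\widetilde\cA_0(V) \;=\; \bigl\{(\hF,\{x,y\})\in\widetilde\cF(V) \,:\, \hF\in\cF(V)\setminus\cA_0(V)\bigr\}.
\]
For any single $\hF\in\cF(V)$ the fiber $\widetilde\pi^{-1}(\hF)\cap\widetilde\cF(V)$ has cardinality at most $\binom{k+1}{2}$, since its elements are indexed by the unordered pairs $\{x,y\}\subset V$ with $x\neq y$. Summing the fiber bound over $\hF\in\cF(V)\setminus\cA_0(V)$ yields
\[
|\widetilde\cF(V)\setminus\widetilde\cA_0(V)| \;\leq\; \binom{k+1}{2}\cdot |\cF(V)\setminus\cA_0(V)|,
\]
and then chaining with the first inequality and Lemma \ref{properties of Theta sep}\hyperlink{Sep-4}{(4)} gives the final bound $3\binom{k+1}{3}\binom{k+1}{2}^2$.

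I do not foresee any genuine obstacle: the statement is essentially a packaging of the counting bound from Lemma \ref{properties of Theta sep}\hyperlink{Sep-4}{(4)} together with the projection/fiber inequalities above, so the entire argument is definitional once those pieces are lined up.
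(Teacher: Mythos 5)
Your proof is correct and takes essentially the same route as the paper's: you use the direct observation that $\cF(V)\setminus\cA_0(V)=\widetilde\pi(\widetilde\cF(V)\setminus\widetilde\cA(V))$ and bound image by domain, where the paper instead picks a section $\Phi_1$ and notes it is injective — an equivalent phrasing of the same inequality. The fiber-counting argument for the ``moreover'' part matches the paper's exactly.
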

\begin{proof}
By the definition of $\cA_0(V)$ in Notation \ref{sep type marking}, for any $\hF\in\cF(V)\setminus\cA_0(V)$, one can choose some $\Phi_1(\hF)\in\widetilde\cF(V)\setminus\widetilde\cA(V)$ such that $\widetilde\pi(\Phi_1(\hF))=\hF$. Moreover, $\widetilde\pi\circ\Phi_1=\id_{\cF(V)\setminus\cA_0(V)}$ implies that $\Phi_1:\cF(V)\setminus\cA_0(V)\to\widetilde\cF(V)\setminus\widetilde\cA(V)$ is injective. Hence by the fourth assertion in Lemma \ref{properties of Theta sep}, we have
$$|\cF(V)\setminus\cA_0(V)|\leq |\widetilde\cF(V)\setminus\widetilde\cA(V)|\leq 3\cdot\comb{k+1}{3}\cdot\comb{k+1}{2}.$$
Notice that $\widetilde\pi(\widetilde\cF(V)\setminus\widetilde\cA_0(V))\subset\cF(V)\setminus\cA_0(V)$. Since any point in the image of $\widetilde\pi$ has at most $\comb{k+1}{2}$ preimages, hence
$$|\widetilde\cF(V)\setminus\widetilde\cA_0(V)|\leq\comb{k+1}{2}\cdot|\cF(V)\setminus\cA_0(V)|\leq 3\cdot\comb{k+1}{3}\cdot\comb{k+1}{2}^2.\qedhere$$
\end{proof}
\begin{definition}[``In between'' relations]\label{in between for ASep}
Let $V\subset \Gamma F$ be a finite subset such that $|V|\geq 2$. For any $\typemark{F_j}{I_j}{V}\in\cA_\Sep(V)$, $j=1,2,3$, we say that $\typemark{F_2}{I_2}{V}$ is \emph{between} $\typemark{F_1}{I_1}{V}$ and $\typemark{F_3}{I_3}{V}$ if $F_2\in\Theta(F_1,F_3)$ and $\stype{I_2}{V}$ is between $\stype{I_1}{V}$ and $\stype{I_3}{V}$. (See Definition \ref{relations between sep types}.)

When $|V|=2$ and $|\cF(V)|=1$, we assume WLOG that $V=\{p,q\}\subset\Gamma x_0$ for some distinct $p,q\in \Gamma x_0$ such that $F_p=F_q$. In this case, we define the ``in between'' relations on $V$ as follows:
\begin{itemize}
\item For any $x\in \{p,q\}$, $x$ is between $p$ and $q$.
\item For any $x\in\{p,q\}$, $x$ is between $x$ and $x$.
\item $p$ is not between $q$ and $q$; $q$ is not between $p$ and $p$.
\end{itemize}
\end{definition}
\begin{rmk}
The case when $|V|=2$ and $|\cF(V)|=1$ is a special case for Definition \ref{graph of sep}. In this case, one cannot find a injective map from $V$ to $\cA_\Sep(V)$ because $\cA_\Sep(V)$ contains only one element. If we do not treat this case in a special way and do the regular constructions involving $\cA_\Sep(V)$, many later arguments will become messier. This is why we define the ``in between'' relations when $|V|=2$ and $|\cF(V)|=1$ in a separate way.

By the remark after Definition \ref{relations between sep types} and the third remark after Notations \ref{sep type marking}, for any $\gamma\in\Gamma$, $\typemark{F_2}{I_2}{V}$ is between $\typemark{F_1}{I_1}{V}$ and $\typemark{F_3}{I_3}{V}$ if and only if $\gamma\typemark{F_2}{I_2}{V}$ is between $\gamma\typemark{F_1}{I_1}{V}$ and $\gamma\typemark{F_3}{I_3}{V}$.
\end{rmk}

\begin{lemma}\label{key lem for ASep}
Let $V\subset \Gamma F$ be a finite subset such that $|V|\geq 2$. The following holds:
\begin{enumerate}
\item[\hypertarget{ASep-1}{(1).}] For any $\typemark{F_j}{I_j}{V}\in\cA_\Sep(V)$, $j=1,2,3$, with $F_3\in\Theta(F_1, F_2)\setminus\{F_1,F_2\}$, either $\stype{I_1}{V}$, $\stype{I_2}{V}$ and $\stype{I_3}{V}$ are in triangle relation in the sense of Definition \ref{relations between sep types}, or $\typemark{F_3}{I_3}{V}$ is between $\typemark{F_1}{I_1}{V}$ and $\typemark{F_2}{I_2}{V}$ in the sense of Definition \ref{in between for ASep}.

Moreover, for any fixed $\typemark{F_j}{I_j}{V}\in\cA_\Sep(V)$, $j=1,2$, with some $F_3\in\cA_0(V)\ints(\Theta(F_1, F_2)\setminus\{F_1,F_2\})$, there always exists some $\stype{I_3}{V}\in\Sep_V(F_3)$ such that $\typemark{F_3}{I_3}{V}$ is between $\typemark{F_1}{I_1}{V}$ and $\typemark{F_2}{I_2}{V}$ in the sense of Definition \ref{in between for ASep}.
\item[\hypertarget{ASep-2}{(2).}] For any $\typemark{F_j}{I_j}{V}\in\cA_\Sep(V)$, $j=1,2,3$, with $F_1,F_2,F_3$ distinct, either $\stype{I_1}{V}$, $\stype{I_2}{V}$ and $\stype{I_3}{V}$ are in triangle relation in the sense of Definition \ref{relations between sep types}, or there exist some $l\in\{1,2,3\}$ such that $\typemark{F_l}{I_l}{V}$ is between the other two in the sense of Definition \ref{in between for ASep};
\item[\hypertarget{ASep-3}{(3).}] For any distinct $F_1, F_2\in \cA(V)$, there exist a unique $\stype{I_j}{V}\in \PSep_V(F_j)$, $j=1,2$, such that for any $\stype{I_j'}{V}\in \Sep_V(F_j)$, $j=1,2$, $\typemark{F_j}{I_j}{V}$ is between $\typemark{F_1}{I_1'}{V}$ and $\typemark{F_2}{I_2'}{V}$, $j=1,2$.
\end{enumerate}
\end{lemma}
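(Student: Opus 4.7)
The plan is to treat part (1) as the main technical step and derive parts (2) and (3) from it by combinatorial reduction. Throughout, I will use the geometric interpretation of $\Theta(F_1,F_2)$ as an ordered collection of elements in $\Gamma F$ sitting along any geodesic segment joining a point $\epsilon_0/2$-close to $F_1$ to a point $\epsilon_0/2$-close to $F_2$, provided by Lemma \ref{properties of Theta}, \hyperlink{Theta-3}{property ($\Theta$3)} together with Lemma \ref{properties of Omega}, \hyperlink{Omega-2}{property ($\Omega$2)}.

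For part (1), fix $p_1,p_2\in\Gamma x_0$ with $d(p_j,F_j)\le\epsilon_0/2$, and use Lemma \ref{properties of Theta}, \hyperlink{Theta-1}{property ($\Theta$1)} to locate $r_3\in[p_1,p_2]$ with $d(r_3,F_3)\le\epsilon_3/4$. By Lemma \ref{properties of Theta sep}, \hyperlink{Sep-2}{assertion (2)}, each $\stype{I_j}{V}$ is an $\epsilon_3/4$-almost separation type for $F_j$ with respect to $\Db^k(V)$. I would then examine the eight subsets of $V$ cut out by intersecting the three $2$-partitions, and apply the argument of Corollary \ref{almost ints type property} in conjunction with Proposition \ref{almost ints positioning} to each pair $(F_i,F_j)$. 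The key geometric input is that for any $x\in I_1,y\in V\setminus I_1$ with $F_1$ $\epsilon_3/4$-close to $[x,y]$, the location along $[x,y]$ where $F_3$ is close is constrained by the order of $F_1,F_3,F_2$ along $[p_1,p_2]$. A finite case analysis then shows that only two combinatorial configurations of the eight ``corners'' survive: either all three $\stype{I_j}{V}$ admit mutually disjoint representatives (triangle relation) or $I_3$ is sandwiched between representatives of $\stype{I_1}{V}$ and $\stype{I_2}{V}$ (between relation). For the ``moreover'' clause, the hypothesis $F_3\in\cA_0(V)$ rules out pathological edge markings, so I would use Lemma \ref{prim decomp} to pick $\stype{I_3}{V}\in\PSep_V(F_3)$ matching the partition of $V$ induced by the geometric position of $r_3$ along $[p_1,p_2]$; the ordering ensures this type is between.

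For part (2), when $F_1,F_2,F_3$ are distinct elements of $\cA(V)$, by Lemma \ref{properties of Theta}, \hyperlink{Theta-3}{property ($\Theta$3)} the three sets $\Theta(F_i,F_j)$ either share a common element (a ``median''), in which case one of the $F_l$ lies in the $\Theta$-interval of the other two and we reduce directly to part (1), or else none of $F_1,F_2,F_3$ lies in $\Theta$ of the other pair. In the latter situation I would apply Lemma \ref{properties of Theta sep}, \hyperlink{Sep-3}{assertion (3)} to each pair to produce pairwise disjoint representatives, and the same geometric argument as in part (1), now applied without the between hypothesis, forces these pairwise disjoint representatives to be chosen compatibly as a simultaneous triangle relation. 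For part (3), given distinct $F_1,F_2\in\cA(V)$, I would define $I_1\subset V$ as the union of those equivalence classes $H_{F_1}^V(p)$ from Lemma \ref{prim decomp} for which $p$ lies on the ``$F_1$-side of $F_2$'' in the sense that $F_1\in\Theta(F_p,F_q)$ for every $q$ in a class lying on the ``$F_2$-side''; this is primitive by the minimality built into $H_{F_1}^V$, and the analogous construction yields $I_2$. Uniqueness and the between property for every $\stype{I_j'}{V}$ then follow because any other primitive choice would contradict the ordering along any geodesic segment from a point near $F_1$ to a point near $F_2$, combined with Lemma \ref{properties of Theta sep}, \hyperlink{Sep-3}{assertion (3)}.

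The main obstacle will be the case analysis in part (1): there are $2^3=8$ positions of a point of $V$ relative to the three partitions and I need to rule out each incompatible configuration using Proposition \ref{almost ints positioning} together with the ordering along $[p_1,p_2]$. In particular, the subtle point is showing that the existence of a ``fourth corner'' forces $F_3$ to be located on the wrong side of either $F_1$ or $F_2$, contradicting $F_3\in\Theta(F_1,F_2)$. I expect the ``moreover'' clause to require the most care, since it demands pinning down a specific primitive type rather than merely asserting existence.
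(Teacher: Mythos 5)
Your proposal captures the rough spirit of the argument (order structure along geodesics, Proposition~\ref{almost ints positioning}, Lemma~\ref{properties of Theta sep}), but each of the three parts has a genuine gap when compared with what actually has to be proved.

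For part (1), the paper does not enumerate the eight corners of the triple partition. Instead it uses Lemma~\ref{properties of Theta sep}(3) to reduce at once to a chain $I_1\subset I_2$ together with one of $I_1\subset I_3$ or $I_3\subsetneq I_1$, and then obtains the contradiction in the impossible case ($I_3\subsetneq I_1\subset I_2$) by a specific choice of three vertices $x\in I_3$, $y\in I_1\setminus I_3$, $z\in V\setminus I_2$ and one invocation of Proposition~\ref{almost ints positioning} combined with Lemma~\ref{properties of Theta}, \hyperlink{Theta-3}{($\Theta3$)}. Your sketch concedes that you do not know how this step is completed; as written, the 8-corner enumeration is a harder route and would still need exactly this input to kill the bad configuration.

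For part (2), the dichotomy you propose is incorrect. You assert that if $\Theta(F_1,F_2)$, $\Theta(F_1,F_3)$, $\Theta(F_2,F_3)$ share a common element, then one of the $F_l$ lies in the $\Theta$-interval of the other two. That is false: the shared element can be some $F'\notin\{F_1,F_2,F_3\}$ sitting at the center of a tripod, and then none of $F_1,F_2,F_3$ is between the others. Property \hyperlink{Theta-3}{($\Theta3$)} gives a linear order inside a single $\Theta(F_i,F_j)$; it does not give a median axiom across three intervals, and no such median structure is established anywhere in the paper. The paper's part (2) avoids this entirely: under the non-triangle hypothesis it produces a single pair $p\in I_3$, $q\in V\setminus I_2$ with $F_1,F_2,F_3\in\Theta(F_p,F_q)$, and \emph{then} invokes \hyperlink{Theta-3}{($\Theta3$)} inside that one interval to conclude that some $F_l$ lies between the other two, reducing to part (1). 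Your "latter situation" (no $F_l$ between any other two, prove triangle by compatibility of pairwise disjoint representatives) is not how the proof goes and the compatibility claim is unsupported.

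For part (3), your definition of $I_1$ is circular: you define "the $F_1$-side of $F_2$" in terms of "a class lying on the $F_2$-side" with no independent specification of either side, and the appeal to "the minimality built into $H_{F_1}^V$" does not yield primitivity. Primitivity means that \emph{one} of the two pieces of the partition equals a single $H_{F_1}^V$-class; if several classes lie on the "$F_2$-side" your $I_1$ is not primitive. The paper's Steps 1--2 are precisely the substantive argument you skip: they show that for any $\stype{I_1'}{V}\in\Sep_V(F_1)$ and fixed $\hI_1\subsetneq\hI_2$, one representative of $\stype{I_1'}{V}$ contains $V\setminus\hI_2$, and symmetrically for $F_2$, so that $V\setminus H_{F_1}^V(p_1)$ and $H_{F_2}^V(p_2)$ are well-defined independent of choices. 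Step 4 (proving $I_1\subset I_2$) again needs Proposition~\ref{almost ints positioning} and is not addressed in your outline.
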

\begin{proof}
\begin{enumerate}
\item[(1).]
By the third assertion in Lemma \ref{properties of Theta sep}, we can assume WLOG that $I_1\subset I_2$, $I_1\ints I_3\neq\emptyset$ and $(V\setminus I_1)\ints(V\setminus I_3)\neq\emptyset$. (This is because if $I_j'\in\stype{I_j}{V}$, $j=1,2$, satisfies $I_1'\ints I_2'=\emptyset$, then we can set $ I_1'=I_1$ and $V\setminus I_2'=I_2$. If $I_j''\in\stype{I_j}{V}$, $j=1,3$, satisfies $I_1''\ints I_3''=\emptyset$, then we set $I_3=V\setminus I_3''$ when $I_1''=I_1$ and $I_3=I_3''$ when $I_1''=V\setminus I_1$.) Hence either $I_1\subset I_3$ or $I_3\subsetneq I_1$.

\textbf{Case 1}: $I_3\subsetneq I_1\subset I_2$. Choose $x\in I_3$, $y\in I_1\setminus I_3$ and $z\in V\setminus I_2$. Then
$$F_3\in\Theta(F_x,F_y)\ints \Theta(F_x, F_z)~\mathrm{and}~ F_1,F_2\in\Theta(F_z,F_x)\ints\Theta(F_z,F_y).$$
\begin{figure}[h]
	\centering
	\includegraphics[scale=0.5]{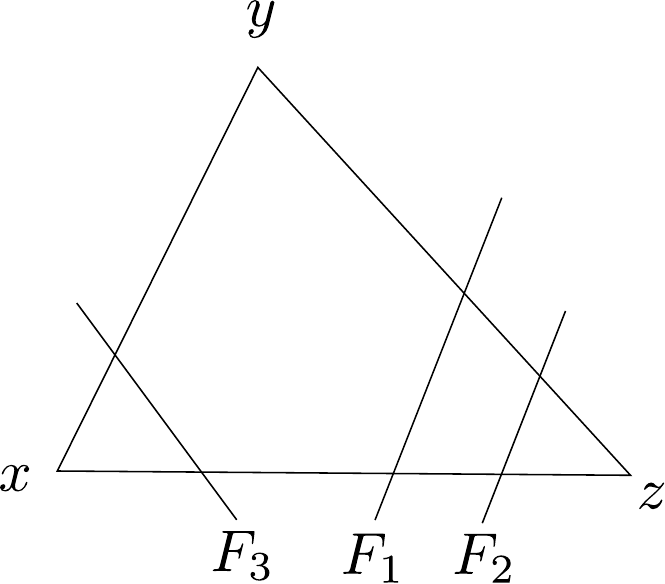}
	\caption{ \label{pic:key lem for ASep-1}}
\end{figure}
(See Figure \ref{pic:key lem for ASep-1}.) By the second assertion in Lemma \ref{properties of Theta sep},
$$d(F_3,[x,y]), d(F_3,[x,z]), d(F_1,[x,z]), d(F_2,[x,z]), d(F_1,[y,z]), d(F_2,[y,z])\leq \frac{\epsilon_0}{2}.$$
Let $p_j\in[x,z]$ such that $d(p_j, F_j)\leq \epsilon_0/2$, $j=1,2,3$. By Proposition \ref{almost ints positioning} applied to the above inequalities, we have $p_3\in[x,p_1]\ints[x,p_2]$. On the other hand, by the fact that $F_3\in\Theta(F_1,F_2)\setminus\{F_1,F_2\}$ and Lemma \ref{properties of Theta}, \hyperlink{Theta-3}{property ($\Theta$3) of $\Theta(\cdot,\cdot)$}, we have $p_1\in[x,p_3]$ or $p_2\in[x,p_3]$. Hence $p_3\in\{p_1,p_2\}$. This contradicts the fact that $F_3\not\in\{F_1,F_2\}$ and subsequently $d(F_3,F_1), d(F_3,F_2)>\rho_0\geq 3\epsilon_0.$ (See the definition of $\rho_0$ right before Proposition \ref{almost ints positioning}.) Therefore this case is impossible.

\textbf{Case 2}: $I_1\subset I_3$ and $I_1\subset I_2$. Then $I_1\ints(V\setminus I_3)=I_1\ints(V\setminus I_2)=\emptyset$. If $\stype{I_1}{V}$, $\stype{I_2}{V}$ and $\stype{I_3}{V}$ are \textbf{NOT} in triangle relation in the sense of Definition \ref{relations between sep types}, then $(V\setminus I_3)\ints(V\setminus I_2)\neq\emptyset$. Since $\emptyset\neq I_1\subset I_2\ints I_3$, by the remark after Lemma \ref{properties of Theta sep}, we have either $I_2\subsetneq I_3$ or $I_3\subset I_2$. In the second case, $I_1\subset I_3\subset I_2$ implies that $\stype{I_3}{V}$ is between $\stype{I_1}{V}$ and $\stype{I_2}{V}$ in the sense of Definition \ref{relations between sep types}. Therefore, by the assumption that $F_3\in\Theta(F_1,F_2)$, $\typemark{F_3}{I_3}{V}$ is between $\typemark{F_1}{I_1}{V}$ and $\typemark{F_2}{I_2}{V}$ in the sense of Definition \ref{in between for ASep}.

Now we assume that $I_1\subset I_2\subsetneq I_3$. Choose $I_j'=V\setminus I_j$, $j=1,2,3$. Then $I_3'\subsetneq I_2'\subset I_1'$. This reduces to \textbf{Case 1} after swtiching the order of $\typemark{F_1}{I_1}{V}$ and $\typemark{F_2}{I_2}{V}$, which is impossible. This completes the proof for the first half of the first assertion in this lemma.

If $F_3\in\cA_0(V)\ints(\Theta(F_1,F_2)\setminus\{F_1,F_2\})$, let $p\in I_1$ and $q\in V\setminus I_2$. Then $F_1,F_2\in\Theta(F_p,F_q)$ implies that $F_3\in\cA_0(V)\ints\Theta(F_p,F_q)$. By the definition of $\cA_0(V)$, $(F_3,\{p,q\})\in\widetilde\cA(V)$. Hence there exist $\stype{I_3}{V}\in\Sep_V(F_3)$ such that $p\in I_3$ and $q\in V\setminus I_3$. Therefore $\stype{I_1}{V}$, $\stype{I_2}{V}$ and $\stype{I_3}{V}$ are \textbf{NOT} in triangle relation in the sense of Definition \ref{relations between sep types}. By the first half of the first assertion in this lemma, $\typemark{F_3}{I_3}{V}$ is between $\typemark{F_1}{I_1}{V}$ and $\typemark{F_2}{I_2}{V}$ in the sense of Definition \ref{in between for ASep}. This finishes the second half of the first assertion in this lemma.

\item[(2).] Similar to the proof of the first assertion in this lemma, we can assume WLOG that $I_1\subset I_2$, $I_1\ints I_3\neq\emptyset$ and $(V\setminus I_1)\ints(V\setminus I_3)\neq\emptyset$ due to the third assertion in Lemma \ref{properties of Theta sep}. Hence either $I_1\subset I_3$ or $I_3\subset I_1$.

\textbf{Case 1}: $I_3\subset I_1\subset I_2$. Choose $p\in I_3$ and $q\in V\setminus I_2$, we have $F_1,F_2,F_3\in\Theta(F_p,F_q)$. By Lemma \ref{properties of Theta}, \hyperlink{Theta-3}{property ($\Theta$3) of $\Theta(\cdot,\cdot)$}, there exist some $l\in\{1,2,3\}$ such that $F_l\in\Theta(F_{l+1},F_{l+2})\setminus\{F_{l+1},F_{l+2}\}$, where $F_4:=F_1$ and $F_5:=F_2$. Since $\stype{I_1}{V}$, $\stype{I_2}{V}$ and $\stype{I_3}{V}$ are \textbf{NOT} in triangle relation in the sense of Definition \ref{relations between sep types}, by the first assertion of this lemma, $\typemark{F_l}{I_l}{V}$ is between $\typemark{F_{l+1}}{I_{l+1}}{V}$ and $\typemark{F_{l+2}}{I_{l+2}}{V}$ in the sense of Definition \ref{in between for ASep}, where $I_4:=I_1$ and $I_5:=I_2$. (See Figure \ref{pic:key lem for ASep-2}. Warning: In this case $l$ does not necessarily equal to $1$!)

\begin{figure}[h]
	\centering
	\includegraphics[scale=0.5]{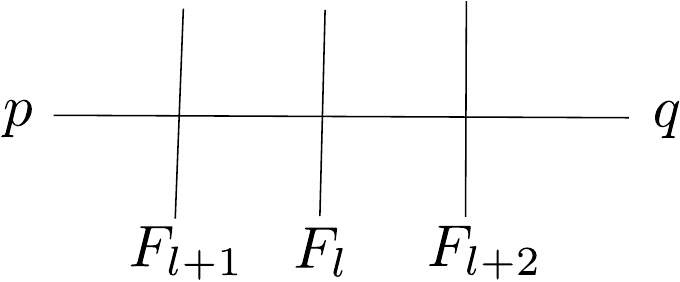}
	\caption{ \label{pic:key lem for ASep-2}}
\end{figure}

\textbf{Case 2}: $I_1\subset I_3$ and $I_1\subset I_2$. Then $I_1\ints(V\setminus I_3)=I_1\ints(V\setminus I_2)=\emptyset$. If $\stype{I_1}{V}$, $\stype{I_2}{V}$ and $\stype{I_3}{V}$ are \textbf{NOT} in triangle relation in the sense of Definition \ref{relations between sep types}, then $(V\setminus I_3)\ints(V\setminus I_2)\neq\emptyset$. Since $\emptyset\neq I_1\subset I_2\ints I_3$, by the remark after Lemma \ref{properties of Theta sep}, either $I_2\subset I_3$ or $I_3\subset I_2$.

\textbf{Case 2a}: $I_1\subset I_2\subset I_3$. This case is essentially the same as \textbf{Case 1}.

\textbf{Case 2b}: $I_1 \subset I_3\subset I_2$. This case is essentially the same as \textbf{Case 1}.

\item[(3).] This assertion is trivial if both $F_1$ and $F_2$ has a unique $\Theta$-separation type. Otherwise, there exists some $\stype{\hI_j}{V}\in\Sep_V(F_j)$, $j=1,2$, such that $\stype{\hI_1}{V}\neq\stype{\hI_2}{V}$. By the third assertion in Lemma \ref{properties of Theta sep}, we can assume WLOG that $\hI_1\subsetneq\hI_2$.

\textbf{Step 1}: For any $\stype{I_1'}{V}\in\Sep_V(F_1)$, we would like to show that either $I_1'\cap(V\setminus\hI_2)=\emptyset$ or $(V\setminus I_1')\cap(V\setminus\hI_2)=\emptyset$. As a corollary of this, we can assume WLOG that $I_1'\cap(V\setminus\hI_2)=\emptyset$.

By the third assertion in Lemma \ref{properties of Theta sep}, we assume WLOG that either $I_1'\cap(V\setminus\hI_2)=\emptyset$ or $I_1'\cap\hI_2=\emptyset$. It suffices to show that when $I_1'\cap\hI_2=\emptyset$, we have $(V\setminus I_1')\cap(V\setminus\hI_2)=\emptyset$.

If $I_1'\cap \hI_2=\emptyset$, then $\{I_1'\sqcup \hI_1,V\setminus(I_1'\sqcup \hI_1)\}\in\Sep_V(F_1)$. Notice that $(I_1'\sqcup \hI_1)\cap \hI_2= \hI_1\neq \emptyset$, that $(I_1'\sqcup \hI_1)\cap (V\setminus \hI_2)= I_1'\cap(V\setminus \hI_2)= I_1'\neq \emptyset$ and that $(V\setminus(I_1'\sqcup \hI_1))\cap\hI_2=(V\setminus \hI_1)\cap\hI_2\neq\emptyset$. By the fact that $F_1\neq F_2$, the fact that $\hI_1\subsetneq\hI_2$ and the third assertion in Lemma \ref{properties of Theta sep}, we have $(V\setminus(I_1'\sqcup \hI_1))\cap(V\setminus \hI_2)=(V\setminus I_1')\cap(V\setminus \hI_2')=\emptyset$. This finishes \textbf{Step 1}.

\textbf{Step 2}: For any $\stype{I_2'}{V}\in\Sep_V(F_2)$, since $\hI_1\subsetneq\hI_2\implies(V\setminus\hI_2)\subsetneq(V\setminus \hI_1)$, repeating the same arguments in \textbf{Step 1}, either $I_2'\cap\hI_1=\emptyset$ or $(V\setminus I_2')\cap\hI_1=\emptyset$. Therefore we can assume WLOG that $(V\setminus I_2')\cap \hI_1=\emptyset$ and hence $\hI_1\subset I_2'$.

\textbf{Step 3:} Construction of $I_1, I_2$.

Choose $I_2=H_{F_2}^V(p_2)$ for some $p_2\in \hI_1$ and $I_1=V\setminus(H_{F_1}^V(p_1))$ for some $p_1\in V\setminus\hI_2$. (See Lemma \ref{prim decomp} for the definition of $H_\hF^V(\cdot)$.) Notice that $I_2\ints \hI_1\supset\{p_2\}\neq\emptyset$, we have $\hI_1\subset I_2$ by \textbf{Step 2}. Similarly $(V\setminus I_1)\ints (V\setminus \hI_2)\supset \{p_1\}\neq\emptyset$ and hence $I_1\subset \hI_2$ by \textbf{Step 1}. Then, by Lemma \ref{prim decomp}, $I_1,I_2$ is independent of the choice of $p_1,p_2$. (To be specific, this is because $H_{F_1}^V(p_1)\supset (V\setminus \hI_2)\neq\emptyset$ and $H_{F_2}^V(p_2)\supset \hI_1\neq\emptyset$. The first assertion in Lemma \ref{prim decomp} implies that different choices of $p_1\in V\setminus \hI_2$ and $p_2\in\hI_1$ will lead to the same $H_{F_1}^V(p_1)$ and $H_{F_2}^V(p_2)$.) Moreover, for any $\stype{I_j'}{V}\in\Sep_V(F_j)$, $j=1,2$, by \textbf{Step 1} and \textbf{Step 2}, we can assume that $I_1'\subset\hI_2$ and $\hI_1\subset I_2'$. In particular, $(V\setminus I_1')\ints (V\setminus I_1)\supset (V\setminus\hI_2)\neq\emptyset$ and $I_2'\ints I_2\supset \hI_1\neq\emptyset$. By the second assertion of Lemma \ref{prim decomp}, $I_2'\supset I_2$ and $(V\setminus I_1')\supset (V\setminus I_1)\Leftrightarrow I_1'\subset I_1$. In particular, $\hI_2\supset I_2$ and $I_1\supset \hI_1$.

To show that $I_1,I_2$ constructed above satisfy all the requirements in this assertion, it remains for us to prove that $I_1\subset I_2$.

\textbf{Step 4:} Proving $I_1\subset I_2$.

Clearly $I_1\ints I_2\supset \hI_1\neq\emptyset$ and $(V\setminus I_1)\ints(V\setminus I_2)\supset(V\setminus \hI_2)\neq\emptyset$. Therefore by  the remark after Lemma \ref{properties of Theta sep}, either $I_1\subset I_2$ or $I_2\subsetneq I_1$. If $I_2\subsetneq I_1$, then we have $\hI_1\subset I_2\subsetneq I_1\subset \hI_2$. Choose $q_0\in \hI_1$, $q_1\in I_1\setminus I_2$ and $q_2\in V\setminus \hI_2$. Then
$$F_1, F_2\in\Theta(F_{q_0},F_{q_1})\ints\Theta(F_{q_0},F_{q_2})\ints\Theta(F_{q_1},F_{q_2}).$$
By the second assertion in Lemma \ref{properties of Theta sep},
$d(F_j, [q_s,q_t])\leq\epsilon_0/2$ for any $1\leq j\leq 2$ and $0\leq s,t\leq 2$. Since $F_1\neq F_2$, this is impossible by Proposition \ref{almost ints positioning}. Therefore $I_1\subset I_2$.

\textbf{Step 5}: Uniqueness of $I_1,I_2$ satisfying the requirements.

If $\stype{J_j}{V}\in \Sep_V(F_j)$, $j=1,2$ such that for any $\stype{I_j'}{V}\in \Sep_V(F_j)$, $j=1,2$, $\stype{J_j}{V}$ is between $\stype{I_1'}{V}$ and $\stype{I_2'}{V}$ in the sense of Definition \ref{relations between sep types}, $j=1,2$. In particular, $\stype{J_j}{V}$ is between $\stype{I_1}{V}$ and $\stype{I_2}{V}$ in the sense of Definition \ref{relations between sep types}, and $\stype{I_j}{V}$ is between $\stype{J_1}{V}$ and $\stype{J_2}{V}$ in the sense of Definition \ref{relations between sep types}, $j=1,2$.

If $I_1=I_2$, then $\stype{J_j}{V}=\stype{I_1}{V}=\stype{I_2}{V}$ for any $j=1,2$.

If $I_1\subsetneq I_2$, by the fact that $\stype{J_j}{V}$ is between $\stype{I_1}{V}$ and $\stype{I_2}{V}$ in the sense of Definition \ref{relations between sep types}, we can assume WLOG that $I_1\subset J_1,J_2\subset I_2$. Notice that $I_2=H_{F_2}^V(p_2)$ for some $p_2\in \hI_1$ and $I_1=V\setminus(H_{F_1}^V(p_1))$ for some $p_1\in V\setminus\hI_2$ from \textbf{Step 3}, $V\setminus J_1\subset V\setminus I_1=H_{F_1}^V(p_1)$ and $J_2\subset I_2=H_{F_2}^V(p_2)$. By Lemma \ref{prim decomp}, $I_1=J_1$ and $I_2=J_2$. This finishes the uniqueness of $I_1,I_2$ satisfying the requirements in this assertion.\qedhere
\end{enumerate}
\end{proof}

\begin{definition}[Boundary elements]\label{bdry}
For any finite subset $V\subset \Gamma x_0$ with $|V|\geq 2$ and any subset $\cB\subset\cA_\Sep(V)$, an element $\typemark{\hF}{I}{V}\in\cB$ is called a \emph{boundary element} of $\cB$ if for any pair of elements $\typemark{F_j}{I_j}{V}\in\cB$, $j=1,2$, $\typemark{\hF}{I}{V}$ is between $\typemark{F_1}{I_1}{V}$ and $\typemark{F_2}{I_2}{V}$ in the sense of Definition \ref{in between for ASep} if and only if $\typemark{\hF}{I}{V}=\typemark{F_l}{I_l}{V}$ for some $l=1,2$.

Denoted by $\del\cB$ the collection of boundary elements of $\cB$.
\end{definition}

\begin{rmk}
By the third remark after Notations \ref{sep type marking}, for any $\gamma\in\Gamma$, an element $\typemark{\hF}{I}{V}\in\cB\subset\cA_\Sep(V)$ is a boundary element of $\cB$ if and only if $\gamma\typemark{\hF}{I}{V}\in\gamma\cB\subset\cA_\Sep(\gamma V)$ is a boundary element of $\gamma\cB$.

When $|V|=2$ and $|\cF(V)|=1$, for any $\cB\subset V$, by the second half of Definition \ref{in between for ASep}, one can also define $\del\cB$ in a similar way. In particular, $\del\cB=\cB$ whenever $\cB\neq\emptyset$.
\end{rmk}

\begin{lemma}[Boundary elements of $\cA_\Sep(V)$ and $\cA_\PSep(V)$]\label{bdry=vertex}
For any finite subset $V\subset \Gamma x_0$ with $|V|\geq 2$,
$$\del\cA_\Sep(V)=\del\cA_\PSep(V)=\{\typemark{F_p}{\{p\}}{V}|p\in V\}.$$
\end{lemma}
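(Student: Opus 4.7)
The inclusions $\{\typemark{F_p}{\{p\}}{V}\mid p\in V\}\subset\cA_\PSep(V)\subset\cA_\Sep(V)$ are immediate because $H_{F_p}^V(p)=\{p\}$ forces $\stype{\{p\}}{V}\in\PSep_V(F_p)$. The plan is to establish both set equalities simultaneously by proving the non-boundary and boundary directions separately, handling the special case $|V|=2$, $|\cF(V)|=1$ at the end (where $\cA_\Sep(V)=\cA_\PSep(V)=\{\typemark{F_p}{\{p\}}{V}\}$ is a singleton and the statement is trivial).

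For the non-boundary direction, let $\typemark{\hF}{I}{V}\in\cA_\Sep(V)$ be distinct from every $\typemark{F_p}{\{p\}}{V}$. I pick $a\in I$ and $b\in V\setminus I$; the pair $\typemark{F_a}{\{a\}}{V},\typemark{F_b}{\{b\}}{V}\in\cA_\PSep(V)$ serves as a witness of non-boundary. Indeed, the chain $\{a\}\subset I\subset V\setminus\{b\}$ shows $\stype{I}{V}$ is between $\stype{\{a\}}{V}$ and $\stype{\{b\}}{V}$, while $\stype{I}{V}\in\Sep_V(\hF)$ gives $\hF\in\Theta(F_a,F_b)$, so $\typemark{\hF}{I}{V}$ is between these two $\cA_\PSep$-elements yet equal to neither. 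This argument works verbatim in both $\cA_\Sep(V)$ and $\cA_\PSep(V)$.

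For the boundary direction, suppose $\typemark{F_p}{\{p\}}{V}$ is between $\typemark{F_1}{I_1}{V},\typemark{F_2}{I_2}{V}\in\cA_\Sep(V)$. By Definition~\ref{in between for ASep}, after a possible swap there exist $I_j'\in\stype{I_j}{V}$ with $I_1'\subset\{p\}\subset I_2'$; nonemptiness forces $I_1'=\{p\}$, hence $\stype{I_1}{V}=\stype{\{p\}}{V}$ and $p\in I_2'$. The case $F_1=F_p$ gives $\typemark{F_1}{I_1}{V}=\typemark{F_p}{\{p\}}{V}$ immediately, so assume $F_1\neq F_p$. The central rigidity claim is $\Sep_V(F_p)=\{\stype{\{p\}}{V}\}$: otherwise some $\stype{J}{V}\in\Sep_V(F_p)$ has (WLOG) $p\in J\supsetneq\{p\}$, so picking $q\in J\setminus\{p\}$, $r\in V\setminus J$ yields $F_p\in\Theta(F_q,F_r)$, and the corollary of Lemma~\ref{properties of Omega}, \hyperlink{Omega-2}{($\Omega$2)} gives $\Omega(F_q,F_p)\cap\Omega(F_p,F_r)=\{F_p\}$; but $F_1\in\Theta(F_p,F_q)\cap\Theta(F_p,F_r)\subset\Omega(F_q,F_p)\cap\Omega(F_p,F_r)$ would then force $F_1=F_p$, a contradiction. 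With rigidity in hand, if $F_p=F_2$ then $\stype{I_2}{V}\in\Sep_V(F_p)=\{\stype{\{p\}}{V}\}$ gives $\typemark{F_2}{I_2}{V}=\typemark{F_p}{\{p\}}{V}$; to rule out $F_p\notin\{F_1,F_2\}$, I pick $q'\in V\setminus I_2'$ (so $q'\neq p$ and $F_1,F_2\in\Theta(F_p,F_{q'})\subset\Omega(F_p,F_{q'})$) and use the linear decomposition $\Omega(F_p,F_{q'})=\Omega(F_p,F_1)\cup\Omega(F_1,F_{q'})$ with intersection $\{F_1\}$: whichever side contains $F_2$, splitting that piece at $F_2$ and combining with $F_p\in\Omega(F_1,F_2)$ (from $F_p\in\Theta(F_1,F_2)$) collapses $F_p$ onto $F_1$ or $F_2$, contradicting $F_p\notin\{F_1,F_2\}$. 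The main obstacle is the rigidity claim, whose proof is the essential use of the line-like structure of $\Omega$ encoded in \hyperlink{Omega-2}{($\Omega$2)}.
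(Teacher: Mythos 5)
Your proof is correct. The non-boundary direction and the opening reduction of the boundary direction (writing $I_1'\subset\{p\}\subset I_2'$ and concluding $I_1'=\{p\}$) coincide with the paper's, but you take a genuinely different route for the remaining cases. Where the paper chooses $q\in V\setminus I_2$, invokes property ($\Theta$3) to force $F_1=F_p$ or $F_2=F_p$, and then in the stubborn sub-case $F_2=F_p$, $\{p\}\subsetneq I_2$, $F_1\neq F_p$ goes back to geometry---picking $x\in I_2\setminus\{p\}$, $y\in V\setminus I_2$, extracting $\epsilon_0/2$ distance bounds from the second assertion of Lemma~\ref{properties of Theta sep}, and applying Proposition~\ref{almost ints positioning}---you instead isolate a clean conditional rigidity statement ($F_1\neq F_p$ with $\stype{\{p\}}{V}\in\Sep_V(F_1)$ implies $\Sep_V(F_p)=\{\stype{\{p\}}{V}\}$) proved purely from the corollary to ($\Omega$2), and then finish the $F_p\notin\{F_1,F_2\}$ case with a second application of the same linear decomposition. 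In effect you push all the geometry into the already-proved ($\Omega$2) and keep the argument at the level of the interval-like structure of $\Omega$, so the proof reads combinatorially and never touches distances. The paper's version shows a bit more of the geometric mechanism; your version isolates a reusable rigidity fact (a submanifold that shares a vertex-cut with a strictly farther one can only cut at that vertex) which could plausibly serve elsewhere. Both are valid and of comparable length; yours is arguably cleaner because of the extracted lemma.
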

\begin{proof}
By Lemma \ref{prim decomp}, $\del\cA_\Sep(V)\subset\cA_\PSep(V)$. If $\typemark{\hF}{I}{V}\in\cA_\PSep(V)$ such that $\hF\not\in\{F_p|p\in V\}$, we choose $p\in I$ and $q\in V\setminus I$ and then $\typemark{\hF}{I}{V}$ is between $\typemark{F_p}{\{p\}}{V}$ and $\typemark{F_q}{\{q\}}{V}$ in the sense of Definition \ref{in between for ASep}. Hence $\typemark{\hF}{I}{V}\not\in\del\cA_\Sep(V)$ or $\del\cA_\PSep(V)$.

If $\typemark{F_p}{I}{V}\in \cA_\PSep(V)$ for some $p\in I$ such that for any $p'\in V$ with $F_p=F_{p'}$, $\typemark{F_p}{I}{V}\neq \typemark{F_p}{\{p'\}}{V}$, then $\{p\}\subsetneq I$. Choose $q\in V\setminus I$, then $\typemark{F_p}{I}{V}$ is between $\typemark{F_p}{\{p\}}{V}$ and $\typemark{F_q}{\{q\}}{V}$ in the sense of Definition \ref{in between for ASep}. Hence by the assumptions on $\typemark{F_p}{I}{V}$,  we have $\typemark{F_p}{I}{V}\not\in\del\cA_\Sep(V)$ or $\del\cA_\PSep(V)$. As a short summary, we have
$$\del\cA_\Sep(V)=\del\cA_\PSep(V)\subset\{\typemark{F_p}{\{p\}}{V}|p\in V\}.$$

For any $p\in V$ and any $\typemark{F_j}{I_j}{V}\in\cA_\Sep(V)$, $j=1,2$, such that $\typemark{F_p}{\{p\}}{V}$ is between $\typemark{F_1}{I_1}{V}$ and $\typemark{F_2}{I_2}{V}$ in the sense of Definition \ref{in between for ASep}. WLOG, we assume that $I_1\subset\{p\}\subset I_2$. Then $I_1=\{p\}$. Choose $q\in V\setminus I_2$. Then $F_1, F_p, F_2\in\Theta(F_p,F_q)$ and $F_p\in\Theta(F_1, F_2)$. By Lemma \ref{properties of Theta}, \hyperlink{Theta-3}{property ($\Theta$3) of $\Theta(\cdot,\cdot)$}, $F_1=F_p$ or $F_2=F_p$. If $F_1=F_p$, then $\typemark{F_1}{I_1}{V}=\typemark{F_p}{\{p\}}{V}$. If $F_2=F_p$ and $I_2=\{p\}$, then $\typemark{F_2}{I_2}{V}=\typemark{F_p}{\{p\}}{V}$. Therefore it remains to consider the case when $F_2=F_p$, $\{p\}\subsetneq I_2$ and $F_1\neq F_p$. Choose $x\in I_2\setminus\{p\}$ and $y\in V\setminus I_2$. Then
$$F_1\in\Theta(F_p,F_x)\ints\Theta(F_p,F_y)~\mathrm{and}~F_p\in\Theta(F_p,F_x)\ints\Theta(F_p,F_y)\ints\Theta(F_x,F_y).$$
By the second assertion in Lemma \ref{properties of Theta sep},
$$d(F_1,[p,x]), d(F_1,[p,y]),d(F_p,[p,x]), d(F_p,[p,y]), d(F_p,[x,y])\leq\frac{\epsilon_0}{2}.$$
By Proposition \ref{almost ints positioning} and Lemma \ref{properties of Theta}, \hyperlink{Theta-3}{property ($\Theta$3) of $\Theta(\cdot,\cdot)$}, $F_1\in\Theta(F_p,F_p)=\{F_p\}$, which implies $F_1=F_p$, contradictory to the assumption that $F_1\neq F_p$. As a summary, this proves that
$$\del\cA_\Sep(V)=\del\cA_\PSep(V)\supset\{\typemark{F_p}{\{p\}}{V}|p\in V\}.\qedhere$$
\end{proof}

\subsection{Graph of $\Theta$-separation $\bG_V$ and its properties}\label{subsect:GV and its properties}
\begin{notation}
Throughout this paper, a \emph{graph} $\bG$ is always assumed to be an undirected (i.e. edges do not have directions), simple (i.e. every pair of vertices only admits at most 1 edge) graph without loops (i.e. endpoints of any edge are not the same). Its set of vertices is denoted as $\cV(\bG)$ and its set of edges is denoted as $\cE(\bG)$. An edge with endpoints $Q_0\neq Q_1$ is denoted as $Q_0\mbox{---}Q_1$.

For any graph $\bG$ and any subgraphs $G_1,G_2\subset \bG$, we denote as $G_1\union G_2$ the subgraph of $\bG$ with vertices $\cV(G_1)\union\cV(G_2)$ and edges $\cE(G_1)\union\cE(G_2)$. Similarly, we denote as $G_1\ints G_2$ the subgraph of $\bG$ with vertices $\cV(G_1)\ints\cV(G_2)$ and edges $\cE(G_1)\ints\cE(G_2)$.
\end{notation}

\begin{definition}[Graph of $\Theta$-separation]\label{graph of sep}
For any finite subset $V\subset \Gamma x_0$ with $|V|\geq 2$, we denote by $\bG_V$ as the \emph{graph of $\Theta$-separation with respect to $V$} which is defined as follows: (See Figure \ref{fig:graph examples} for examples.)
\begin{enumerate}
\item[{(1).}] {When $|\cF(V)|=1$ and $|V|=2$ with $V=\{p,q\}$, $\bG_V$ is a complete graph with vertices in $V$. In other words, $\cV(\bG_V)=V$ and $p,q$ are connected by a single edge in $\bG_V$;}
\item[{(2).}] {When $|\cF(V)|>1$ or $|V|>2$}, we have
\begin{itemize}
\item The set of vertices $\cV(\bG_V):=\AnPSep(V)\union\del\cA_\PSep(V)=\AnPSep(V)\union\del\cA_\Sep(V)$;
\item For any $Q_1\neq Q_2\in \cV(\bG_V)$, $Q_1,Q_2$ are connected by a single edge in $\bG_V$ if and only if for any $Q'\in\cV(\bG_V)$, $Q'$ is between $Q_1$ and $Q_2$ in the sense of Definition \ref{in between for ASep} if and only if $Q'\in\{Q_1,Q_2\}$.
\end{itemize}
\end{enumerate}
\begin{figure}[h]
	\centering
	\includegraphics[width=5in]{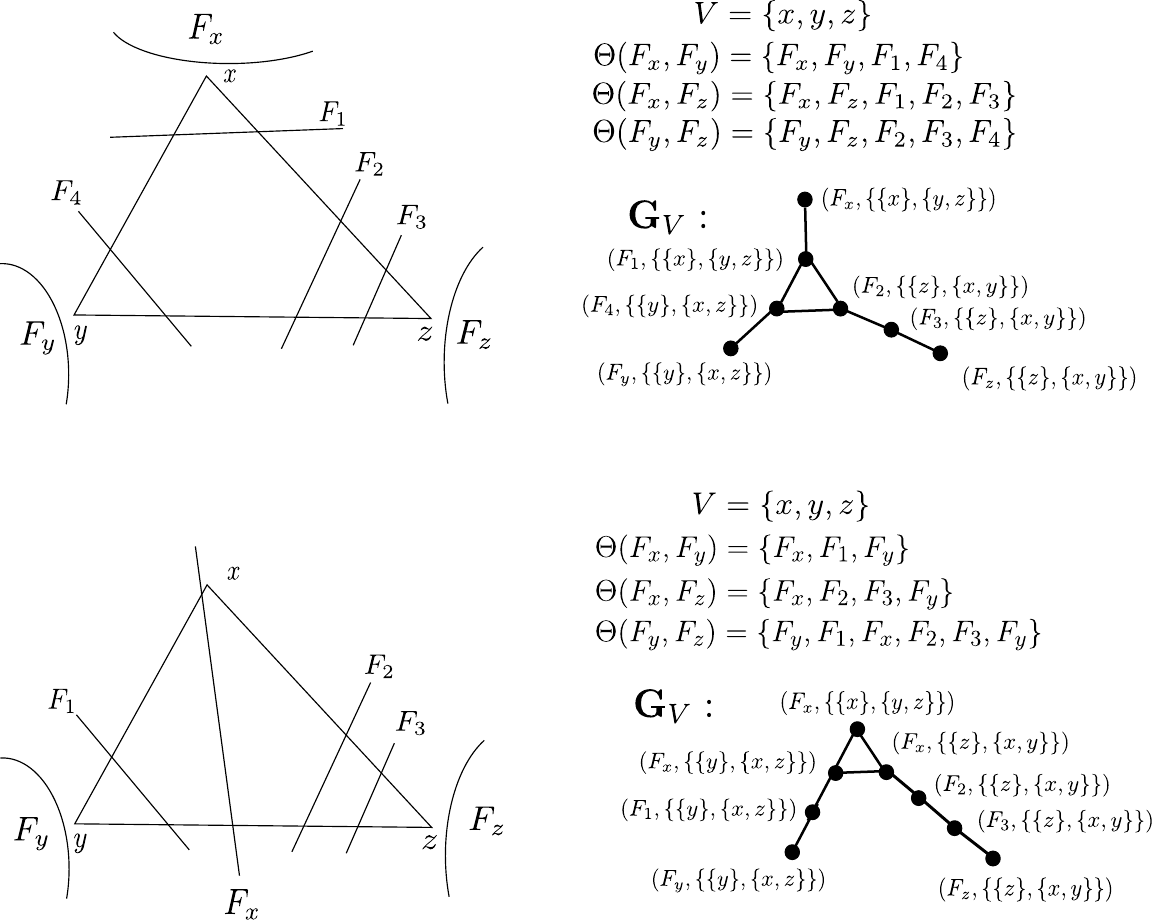}
	\caption{\label{fig:graph examples}}
\end{figure}
\end{definition}
\begin{rmk}
By the third remark after Notations \ref{sep type marking}, the remark after Definition \ref{in between for ASep} and the remark after Definition \ref{bdry}, for any $Q\in\cV(\bG_V)$, any $Q_1\mbox{---}Q_2\in\cE(\bG_V)$ and any $\gamma\in\Gamma$, the map $Q\to\gamma Q$ gives graph isomorphism from $\bG_V$ to $\bG_{\gamma V}$ in the sense that
\begin{itemize}
\item $Q\to\gamma Q$ gives a bijection from $\cV(\bG_V)$ to $\cV(\bG_{\gamma V})$.
\item $\gamma Q_1\mbox{---}\gamma Q_2\in\cE(\bG_{\gamma V})$ and hence $(Q_1\mbox{---} Q_2)\to(\gamma Q_1\mbox{---}\gamma Q_2)$ gives a bijection from $\cE(\bG_V)$ to $\cE(\bG_{\gamma V})$.
\end{itemize}
Therefore, for any subgraph $G\subset \bG_V$, we can define $\gamma G$ as the subgraph of $\bG_{\gamma V}$ with vertices in $\gamma \cV(G)$ and edges in $\gamma \cE(G)$.
\end{rmk}

\begin{lemma}\label{reinterpretation of edges}
Let $V\subset \Gamma x_0$ be a finite subset with $|V|\geq 2$. {Assume that $|\cF(V)|> 1$ or $|V|>2$. Then the following holds.}
\begin{enumerate}
\item[(1).] For any distinct elements $Q_j:=\typemark{F_j}{I_j}{V}\in\cV(\bG_V)$, $j=1,2$, $Q_1$ and $Q_2$ are connected by a single edge in $\bG_V$ if and only if one of the following holds:
\begin{enumerate}
\item[(i).] $F_1=F_2$;
\item[(ii).] $\Theta(F_1,F_2)\ints\cA_0(V)=\emptyset\subset\{F_1,F_2\}$ when $F_1\neq F_2$ and $F_1, F_2\not\in\cA_0(V)$;
\item[(iii).] When exactly one of $F_1, F_2$ is \textbf{NOT} in $\cA_0(V)$, we automatically have $F_1\neq F_2$. WLOG we assume that $F_1\not\in\cA_0(V)$ and $F_2\in\cA_0(V)$. Then $Q_1=\typemark{F_p}{\{p\}}{V}$ for some $p\in V$. In this case, we require that $\Theta(F_1,F_2)\ints\cA_0(V)=\{F_2\}\subset\{F_1, F_2\}$ and $\stype{I_2}{V}$ is the unique separation type for $F_2$ mentioned in the third assertion of Lemma \ref{key lem for ASep} with respect to the pair $F_1, F_2$.
\item[(iv).] When $F_1, F_2\in\cA_0(V)$ and $F_1\neq F_2$, we require that $\Theta(F_1,F_2)\ints\cA_0(V)=\{F_1, F_2\}$ and $\stype{I_j}{V}$ is the unique separation type mentioned in the third assertion of Lemma \ref{key lem for ASep} for $F_j$ with respect to the pair $F_1, F_2$, $j=1,2$.
\end{enumerate}
\item[(2).] For any distinct elements $Q_j:=\typemark{F_j}{I_j}{V}\in\cV(\bG_V)$, $j=1,2,3$, if $Q_2$ is connected to $Q_1$ and $Q_3$ by a single edge in $\bG_V$, then the following are equivalent:
\begin{enumerate}
\item[(i).] $Q_1$ and $Q_3$ are connected by a single edge in $\bG_V$;
\item[(ii).] $\stype{I_1}{V}$, $\stype{I_2}{V}$ and $\stype{I_3}{V}$ are in triangle relation in the sense of Definition \ref{relations between sep types};
\item[(iii).] $Q_2$ is \textbf{NOT} between $Q_1$ and $Q_3$ in the sense of Definition \ref{in between for ASep}.
\end{enumerate}
\end{enumerate}
\end{lemma}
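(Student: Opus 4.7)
The plan is to characterize the edges of $\bG_V$ by unpacking the edge definition: $Q_1\mbox{---}Q_2 \in \cE(\bG_V)$ precisely when no $Q' \in \cV(\bG_V)\setminus\{Q_1,Q_2\}$ is between $Q_1$ and $Q_2$ in the sense of Definition \ref{in between for ASep}. Part (1) becomes a case-by-case analysis of which candidate intermediate vertices can or cannot exist, while Part (2) follows quickly from the trichotomy in Lemma \ref{key lem for ASep}(2).

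For Part (1), I will handle each case by proving both implications. For case (i), $F_1=F_2$: any intermediate $Q' = \typemark{F'}{I'}{V}$ forces $F' \in \Theta(F_1,F_2) = \{F_1\}$ by Definition \ref{in between for ASep}, and then a direct check using the primitive decomposition of Lemma \ref{prim decomp} shows no third primitive type of $F_1$ can sit strictly between the two distinct primitive types $\stype{I_1}{V},\stype{I_2}{V}$, since primitive types are encoded by elements of a partition. For cases (ii)--(iv), the main engine is Lemma \ref{key lem for ASep}(1), second half: if $F_3 \in \Theta(F_1,F_2) \cap \cA_0(V) \setminus \{F_1,F_2\}$, then one extracts $\stype{I_3}{V} \in \Sep_V(F_3)$ between $\stype{I_1}{V}$ and $\stype{I_2}{V}$, refines $I_3$ to a primitive sub-type $H_{F_3}^V(p)$ for $p$ in the smaller side, and produces an honest vertex of $\bG_V$ strictly between $Q_1$ and $Q_2$, thereby precluding an edge. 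Conversely, when $\Theta(F_1,F_2) \cap \cA_0(V)$ is restricted to $\emptyset$, $\{F_2\}$, or $\{F_1,F_2\}$ as in (ii)--(iv), Lemma \ref{key lem for ASep}(3) pins down the unique primitive separation types of $F_1,F_2$ that can ever appear in an in-between relation; combining this with Lemma \ref{bdry=vertex} to describe the remaining boundary vertices then blocks all strict intermediates.

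For Part (2), I apply Lemma \ref{key lem for ASep}(2) to the three distinct flats $F_1,F_2,F_3$: either $\stype{I_1}{V},\stype{I_2}{V},\stype{I_3}{V}$ are in triangle relation, or exactly one of the three marked types is between the other two in the sense of Definition \ref{in between for ASep}. Given that $Q_2$ is connected to both $Q_1$ and $Q_3$ by edges, $Q_2$ cannot be strictly between $Q_1$ and $Q_3$; nor can $Q_1$ be between $Q_2, Q_3$ (resp.\ $Q_3$ between $Q_1,Q_2$) without contradicting the edge $Q_1\mbox{---}Q_2$ (resp.\ $Q_2\mbox{---}Q_3$). This forces the trichotomy into triangle relation whenever $Q_1\mbox{---}Q_3$ is an edge, giving (i)$\Rightarrow$(ii); (ii)$\Leftrightarrow$(iii) is immediate from Definition \ref{in between for ASep} combined with $F_2 \in \Theta(F_1,F_3)$ or not (determined via Lemma \ref{key lem for ASep}(1)); and (ii)$\Rightarrow$(i) follows because triangle relation rules out all strict in-between candidates coming from primitive or boundary vertices, by another application of Part (1).

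The main obstacle will be handling the asymmetry within $\cV(\bG_V)$: it mixes ``good'' primitive types (those with $F \in \cA_0(V)$) with ``bad'' boundary types $\typemark{F_p}{\{p\}}{V}$ for which $F_p$ need not lie in $\cA_0(V)$. The lemmas of Section \ref{sec:bar simplex and almost sep}, particularly Lemma \ref{key lem for ASep}(1), produce candidate intermediates cleanly only for flats in $\cA_0(V)$. In cases (ii) and (iii), I will need to rule out exotic ``bad'' boundary vertices $\typemark{F_p}{\{p\}}{V}$ (with $F_p \notin \cA_0(V)$) appearing as strict intermediates. This reduces to the combinatorial observation that a singleton $\stype{\{p\}}{V}$ can be between $\stype{I_1}{V}$ and $\stype{I_2}{V}$ only when $\{p\}$ coincides with $I_1$ or with $I_2$ up to complement, so any such $Q'$ must coincide with $Q_1$ or $Q_2$ and cannot be strict; a careful enumeration along these lines, combined with the third assertion of Lemma \ref{properties of Theta sep} to control possible $\Theta$-separation types, will complete the case analysis.
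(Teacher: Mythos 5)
Your overall strategy—unpack the edge condition of Definition \ref{graph of sep} as the absence of strict intermediates and then do a case analysis—matches the paper. For Part (2) you also identify the right engine (Lemma \ref{key lem for ASep}(2)) and the cleaner way to exclude exotic boundary vertices is indeed what the paper does: a boundary element of $\cA_\Sep(V)$ by Definition \ref{bdry} can never be \emph{strictly} between two vertices, so $\AnPSep(V)\cup\del\cA_\Sep(V)=\cV(\bG_V)$ forces any strict intermediate to come from $\cA_0(V)$.

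The gap is in Part (1), in the converse direction when $\Theta(F_1,F_2)\cap\cA_0(V)$ contains some $\hF\notin\{F_1,F_2\}$. You propose to take the in-between type $\stype{I_3}{V}\in\Sep_V(\hF)$ from the second half of Lemma \ref{key lem for ASep}(1), then ``refine $I_3$ to a primitive sub-type $H_{\hF}^V(p)$ for $p$ in the smaller side'' to produce a vertex of $\bG_V$ strictly between $Q_1$ and $Q_2$. But this refinement does not, in general, stay in between: if $I_1\subset I_3\subset I_2$ and $p\in V\setminus I_2$, Lemma \ref{prim decomp} only gives $H_{\hF}^V(p)\subset V\setminus I_3$, and nothing guarantees $V\setminus I_2\subset H_{\hF}^V(p)$, so the required nested inclusion can break. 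This is exactly the subtlety the paper's proof addresses differently: it invokes Lemma \ref{key lem for ASep}(3) (not \ref{prim decomp}) for the pairs $(F_1,\hF)$ and $(F_2,\hF)$ to extract two canonical primitive types $\stype{I_1'}{V},\stype{I_2'}{V}\in\PSep_V(\hF)$, and then splits: either one of $\typemark{\hF}{I_1'}{V},\typemark{\hF}{I_2'}{V}$ is a strict intermediate (done), or neither is, in which case both must be in triangle relation with $\{\stype{I_1}{V},\stype{I_2}{V}\}$ and a further combinatorial argument forces $I_1'=I_2'$ and a contradiction. Your plan as written omits the second branch entirely, so it cannot conclude in the subcase where the direct primitive candidate fails to be in between.

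A smaller issue: in Part (2), the chain (i)$\Rightarrow$(ii)$\Rightarrow$(iii)$\Rightarrow$(ii)$\Rightarrow$(i) requires separate case analyses depending on whether $F_1,F_2,F_3$ are pairwise distinct or not (the paper has cases 1a--1d and 2 for (iii)$\Rightarrow$(ii) and similarly for (ii)$\Rightarrow$(i)). Your sketch treats it as a near-immediate consequence of Lemma \ref{key lem for ASep}(2) plus Part (1); that is correct in spirit but undersells the bookkeeping needed when two of the $F_j$ coincide, where Lemma \ref{key lem for ASep}(2) no longer applies directly and you must fall back on Lemma \ref{prim decomp} and Lemma \ref{bdry=vertex}.
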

\begin{proof}
\begin{enumerate}
\item[(1).] For any distinct elements $Q_j:=\typemark{F_j}{I_j}{V}\in\cV(\bG_V)$, $j=1,2$.

\textbf{Case 1}: If $F_1=F_2$, for any $\typemark{\hF}{I}{V}\in\cA_\PSep(V)$, $\hF\in\Theta(F_1,F_2)$ implies that $\hF=F_1=F_2$. Since $\cV(\bG_V)\subset \cA_\PSep(V)$, for any $\stype{I}{V}\in\PSep_{F_1}(V)$, by Lemma \ref{prim decomp}, either $\stype{I}{V}\in\{\stype{I_1}{V},\stype{I_2}{V}\}$, or $\stype{I}{V}$, $\stype{I_1}{V}$ and $\stype{I_2}{V}$ are in triangle relation in the sense of Definition \ref{relations between sep types}. Therefore $Q_1$ and $Q_2$ are always connected by a single edge in $\bG_V$ according to Definition \ref{graph of sep}.

\textbf{Case 2}: If $F_1\neq F_2$ and $F_1, F_2\not\in\cA_0(V)$, by Lemma \ref{bdry=vertex}, $Q_1=\typemark{F_p}{\{p\}}{V}$ and $Q_2=\typemark{F_q}{\{q\}}{V}$ for some $p,q\in V$, $p\neq q$ and $F_p, F_q\not\in\cA_0(V)$. For any $\typemark{\hF}{I}{V}\neq Q_1, Q_2$ which is between $Q_1$ and $Q_2$ in the sense of Definition \ref{in between for ASep}, it follows from Definition \ref{bdry} that $\typemark{\hF}{I}{V}\not\in\del\cA_\Sep(V)$. Since $\AnPSep(V)\union\del\cA_\Sep(V)=\cV(\bG_V)$ (due to Definition \ref{graph of sep}), if we assume in addition that $(\hF,\stype{I}{V})\in\cV(\bG_V)$, then $\hF\in\Theta(F_p, F_q)\ints \cA_0(V)$. This shows that if $\Theta(F_1, F_2)\ints\cA_0(V)=\emptyset,$ then $Q_1$ and $Q_2$ are connected by a single edge in $\bG_V$.

On the other hand, if there exists some $\hF\in\Theta(F_1, F_2)\ints\cA_0(V)$, then $\hF\neq F_1$ and $\hF\neq F_2$. By the third assertion in Lemma \ref{key lem for ASep} (applied to the pairs $F_1,\hF$ and $F_2,\hF$), there exists $\stype{I_1'}{V},\stype{I_2'}{V}\in\PSep_V(\hF)$ such that the following holds:
\begin{itemize}
\item $\stype{I_1'}{V}$ is between $\stype{I'}{V}$ and $\stype{I_1}{V}$ in the sense of Definition \ref{relations between sep types} for any $\stype{I'}{V}\in \Sep_V(\hF)$;
\item $\stype{I_2'}{V}$ is between $\stype{I'}{V}$ and $\stype{I_2}{V}$ in the sense of Definition \ref{relations between sep types} for any $\stype{I'}{V}\in \Sep_V(\hF)$.
\end{itemize}
Notice that $\hF\in\cA_0(V)$, $\typemark{\hF}{I_1'}{V},\typemark{\hF}{I_2'}{V}\in\cV(\bG_V)$. If one of $\typemark{\hF}{I_1'}{V}$ and $\typemark{\hF}{I_2'}{V}$ is between $Q_1$ and $Q_2$ in the sense of Definition \ref{in between for ASep}, then $Q_1$ and $Q_2$ are not connected by a single edge in $\bG_V$. Otherwise, by the first assertion in Lemma \ref{key lem for ASep}, $\stype{I_j'}{V}$, $\stype{I_1}{V}$ and $\stype{I_2}{V}$ are in triangle relation in the sense of Definition \ref{relations between sep types}, $j=1,2$. We show that this is impossible. WLOG, we assume that $I_1\cap I_2=\emptyset$ and $I_1\sqcup I_2\subset I_1'\cap I_2'$. Since $I_1\subsetneq I_1'$, by the properties of $I_1'$ (the first bullet point), either $I_1'\subset I_2'$ or $I_1'\subset V\setminus I_2'$. The latter case implies that $I_1\subset I_1'\cap I_2'\subset I_1'\subset V\setminus I_2'\subset V\setminus I_1$, which is impossible. Hence $I_1'\subset I_2'$. Similarly, using properties of $I_2'$ (the second bullet point), $I_2'\subset I_1'$. Therefore $I_1'=I_2'$. By the first assertion in Lemma \ref{key lem for ASep} applied to $F_1, F_2$ and $\hF$, there exists some $\stype{I}{V}\in\Sep_V(\hF)$ such that $\stype{I}{V}$ is between $\stype{I_1}{V}$ and $\stype{I_2}{V}$ in the sense of Definition \ref{relations between sep types}. Since $I_1\cap I_2=\emptyset$, WLOG, we assume that $I_1\subset I\subset V\setminus I_2$. By the properties of $I_1'$ (the first bullet point) and the fact that $I_1\sqcup I_2\subset I_1'$, either $I_1'\subset I$ or $I_1'\subset V\setminus I$. The latter case is impossible since it implies that $I_1\subsetneq I_1'\subset V\setminus I\subset V\setminus I_1$. Hence $I_1'\subset I$. On the other hand, by the properties of $I_2'$ (the second bullet point) and the fact that $I_1\sqcup I_2\subset I_2'$, either $I_2'\subset I$ or $I_2'\subset V\setminus I$. The former case is impossible since it implies that $I_2\subsetneq I_2'\subset I\subset V\setminus I_2$. Hence $I_2'\subset V\setminus I$. Notice that $I_1'=I_2'$, the above discussions show that $I_1'\subset I\subset V\setminus I_2'=V\setminus I_1'$, which is impossible.
Hence we conclude that if $\Theta(F_1, F_2)\ints\cA_0(V)\neq\emptyset$, then $Q_1$ and $Q_2$ are not connected by a single edge in $\bG_V$. This finishes the proof of the fact that in this case, $Q_1$ and $Q_2$ are connected by a single edge in $\bG_V$ if and only if $\Theta(F_1, F_2)\ints\cA_0(V)=\emptyset$.

\textbf{Case 3}: If $F_1\not\in\cA_0(V)$ and $F_2\in\cA_0(V)$,  by Lemma \ref{bdry=vertex}, we have $Q_1=\typemark{F_p}{\{p\}}{V}$ for some $p\in V$. For any $\typemark{\hF}{I}{V}\not\in\{ Q_1, Q_2\}$ which is between $Q_1$ and $Q_2$ in the sense of Definition \ref{in between for ASep}, it follows from Definition \ref{bdry} that $\typemark{\hF}{I}{V}\not\in\del\cA_\Sep(V)$. Since $\AnPSep(V)\union\del\cA_\Sep(V)=\cV(\bG_V)$ (due to Definition \ref{graph of sep}), if we assume in addition that $(\hF,\stype{I}{V})\in\cV(\bG_V)$, then either $\hF\in(\Theta(F_1, F_2)\ints\cA_0(V))\setminus\{F_1,F_2\}$ or $\hF=F_2$ and $\stype{I}{V}$ is between $\stype{I_1}{V}$ and $\stype{I_2}{V}$ in the sense of Definition \ref{relations between sep types}.  By the third assertion in Lemma \ref{key lem for ASep} and the fact that
$$\{\typemark{F_2}{I}{V}|\stype{I}{V}\in\PSep_V(F_2)\}\subset\AnPSep(V)\subset\cV(\bG_V),$$
if $\Theta(F_1, F_2)\ints\cA_0(V)=\{ F_2\}$ and $\stype{I_2}{V}$ is the unique separation type for $F_2$ mentioned in the third assertion of Lemma \ref{key lem for ASep} with respect to the pair $F_1, F_2$, then $Q_1$ and $Q_2$ are connected by a single edge in $\bG_V$.

On the other hand, if $\stype{I_2}{V}$ is not the unique separation type for $F_2$ satisfying the assumptions in the third assertion of Lemma \ref{key lem for ASep} with respect to the pair $F_1, F_2$, there exists some $\stype{I}{V}\in\PSep_V(F_2)$ such that $\stype{I}{V}\neq \stype{I_2}{V}$ and $\stype{I}{V}$ is between $\stype{I_2}{V}$ and $\stype{I_1}{V}$ in the sense of Definition \ref{relations between sep types}. Choose $Q_3=\typemark{F_2}{I}{V}\in\AnPSep(V)\subset\cV(\bG_V)$. Then $Q_3\not\in\{Q_1,Q_2\}$ and $Q_3$ is between $Q_1$ and $Q_2$ in the sense of Definition \ref{in between for ASep}. Hence $Q_1$ and $Q_2$ are not connected by a single edge in $\bG_V$.

If there exists some $\hF\in(\Theta(F_1, F_2)\ints\cA_0(V))\setminus \{ F_2\}$, following the same arguments in the second paragraph of \textbf{Case 2}, one can also show that there exists some $Q_3\in\cV(\bG_V)\setminus\{Q_1,Q_2\}$ such that $Q_3$ is between $Q_1$ and $Q_2$ in the sense of Definition \ref{in between for ASep}. Hence $Q_1$ and $Q_2$ are not connected by a single edge in $\bG_V$. This finishes the proof of the fact that $Q_1$ and $Q_2$ are connected by a single edge in $\bG_V$ if and only if $\Theta(F_1, F_2)\ints\cA_0(V)=\{ F_2\}$ and $\stype{I_2}{V}$ is the unique separation type for $F_2$ mentioned in the third assertion of Lemma \ref{key lem for ASep} with respect to the pair $F_1, F_2$.

\textbf{Case 4}: If $F_1\neq F_2$ and $F_1,F_2\in\cA_0(V)$, for any $\typemark{\hF}{I}{V}\not\in\{Q_1, Q_2\}$ which is between $Q_1$ and $Q_2$ in the sense of Definition \ref{in between for ASep}, it follows from Definition \ref{bdry} that $\typemark{\hF}{I}{V}\not\in\del\cA_\Sep(V)$. Since $\AnPSep(V)\union\del\cA_\Sep(V)=\cV(\bG_V)$ (due to Definition \ref{graph of sep}), if we assume in addition that $(\hF,\stype{I}{V})\in\cV(\bG_V)$, then either $\hF\in(\Theta(F_1, F_2)\ints\cA_0(V))\setminus\{F_1,F_2\}$ or $\hF\in\{F_1,F_2\}$ and $\stype{I}{V}$ is between $\stype{I_1}{V}$ and $\stype{I_2}{V}$ in the sense of Definition \ref{relations between sep types}. By the third assertion in Lemma \ref{key lem for ASep} and the fact that
$$\{\typemark{\hF}{I}{V}|\hF\in\{F_1,F_2\},\stype{I}{V}\in\PSep_V(\hF)\}\subset\AnPSep(V)\subset\cV(\bG_V),$$
if $\Theta(F_1,F_2)\ints\cA_0(V)=\{F_1, F_2\}$ and $\stype{I_j}{V}$ is the unique separation type satisfying the assumptions in the third assertion of Lemma \ref{key lem for ASep} for $F_j$ with respect to the pair $F_1, F_2$, $j=1,2$, then $Q_1$ and $Q_2$ are connected by a single edge in $\bG_V$.

On the other hand, if $\Theta(F_1,F_2)\ints\cA_0(V)\neq\{F_1, F_2\}$, or one of $\stype{I_1}{V}$ and $\stype{I_2}{V}$ is not the unique separation type satisfying the assumptions in the third assertion of Lemma \ref{key lem for ASep} with respect to the pair $F_1, F_2$, by the same arguments in the second and the third paragraph of \textbf{Case 3}, $Q_1$ and $Q_2$ are not connected by a single edge in $\bG_V$. Therefore we conclude that $Q_1$ and $Q_2$ are connected by a single edge in $\bG_V$ if and only if $\Theta(F_1,F_2)\ints\cA_0(V)=\{F_1, F_2\}$ and $\stype{I_j}{V}$ is the unique separation type mentioned in the third assertion of Lemma \ref{key lem for ASep} for $F_j$ with respect to the pair $F_1, F_2$, $j=1,2$.
\item[(2).] We prove in the order of (i)$\implies$(ii)$\implies$(iii)$\implies$(ii)$\implies$(i).

\textbf{Proof of} (i)$\implies$(ii): We split the proof into two cases.

\textbf{Case 1}: Assume that there exist distinct $i,j\in\{1,2,3\}$ such that $F_i=F_j$.

\textbf{Case 1a}: If $F_i\in\cA_0(V)$, then $Q_i,Q_j\in\AnPSep(V)$. Since any pair of vertices in $\{Q_1,Q_2,Q_3\}$ are connected by an edge due to (i), by (iii) and (iv) in the first assertion of Lemma \ref{reinterpretation of edges} and the third assertion in Lemma \ref{key lem for ASep}, we have $F_1=F_2=F_3\in\cA_0(V)$. In particular, $Q_1,Q_2,Q_3\in\AnPSep(V)$. By the remark after Lemma \ref{relations between sep types}, (ii) holds.

\textbf{Case 1b}: If $F_i\not\in\cA_0(V)$, then $Q_i,Q_j\not\in\AnPSep(V)$. Recall that $Q_1,Q_2,Q_3$ are distinct. By Definition \ref{graph of sep} and Lemma \ref{bdry=vertex}, there exist distinct $p,q\in V$ such that $F_p=F_q=F_i=F_j$, $Q_i=\typemark{F_p}{\{p\}}{V}$ and $Q_j=\typemark{F_q}{\{q\}}{V}$. Let $k$ be the unique element in $\{1,2,3\}\setminus\{i,j\}$. If $\{p,q\}\subset I_k$ or $\{p,q\}\subset V\setminus I_k$, then (ii) holds. Therefore it remains to consider the case when $\{p,q\}\cap I_k\neq\emptyset$ and $\{p,q\}\cap (V\setminus I_k)\neq\emptyset$. This implies that $F_k\in\Theta(F_p,F_q)=\{F_p\}$. Hence we have $F_1=F_2=F_3$ and therefore $Q_1,Q_2,Q_3\not\in\AnPSep(V)$. By Definition \ref{graph of sep}, Lemma \ref{bdry=vertex} and the assumption that $Q_1,Q_2,Q_3$ are distinct, (ii) holds.

\textbf{Case 2}: Assume that $F_1,F_2,F_3$ are pairwise distinct. If (ii) does not hold, by the second assertion in Lemma \ref{key lem for ASep}, there exists $l\in\{1,2,3\}$ such that $Q_l$ is between $Q_{l+1}$ and $Q_{l+2}$. ($Q_4:=Q_1$ and $Q_5:=Q_2$) Hence $Q_{l+1}$ and $Q_{l+2}$ are not connected by a single edge in $\bG_V$, contradictory to the assumption that any pair of vertices in $\{Q_1, Q_2, Q_3\}$ are connected by a single edge in $\bG_V$.

\textbf{Proof of} (ii)$\implies$(iii): If not, by Definition \ref{in between for ASep}, $\stype{I_2}{V}$ is between $\stype{I_1}{V}$ and $\stype{I_3}{V}$ in the sense of Definition \ref{relations between sep types}. WLOG we assume that $I_1\subset I_2\subset I_3$. Choose $p\in I_1$ and $q\in V\setminus I_3$. Then for any $I_j'\in\stype{I_j}{V}$, $j=1,2,3$, $|\{p,q\}\ints I_j'|=1$. In particular, there exist at least two elements $s\neq t\in \{1,2,3\}$ such that either $p\in I_s'\ints I_t'$ or $q\in I_s'\ints I_t'$. Hence $\stype{I_1}{V}$, $\stype{I_2}{V}$ and $\stype{I_3}{V}$ are \textbf{NOT} in triangle relation in the sense of Definition \ref{relations between sep types}. This contradicts (ii).

\textbf{Proof of} (iii)$\implies$(ii): We split the proof into two cases.

\textbf{Case 1}: Assume that $F_1,F_2,F_3$ are not pairwise distinct.

\textbf{Case 1a}: If $F_1=F_3\in\cA_0(V)$, then $Q_1,Q_3\in\AnPSep(V)$. By (iii), (iv) in the first assertion of Lemma \ref{reinterpretation of edges}, the assumption that $Q_1,Q_2,Q_3$ are distinct and the third assertion in Lemma \ref{key lem for ASep}, we have $F_1=F_2=F_3$. In this case, both (iii) and (ii) holds due to the remark after Definition \ref{relations between sep types}.

\textbf{Case 1b}: If $F_1=F_3\not\in\cA_0(V)$, then $Q_1,Q_3\not\in\AnPSep(V)$. Recall that $Q_1,Q_2,Q_3$ are distinct. By Definition \ref{graph of sep} and Lemma \ref{bdry=vertex}, there exist distinct $p,q\in V$ such that $F_p=F_q=F_1=F_3$, $Q_1=\typemark{F_p}{\{p\}}{V}$ and $Q_3=\typemark{F_q}{\{q\}}{V}$. Since we assume that (iii) holds, either $\{p,q\}\subset I_2$ or $\{p,q\}\subset V\setminus I_2$. In particular, (ii) holds.

\textbf{Case 1c}: If $F_1\neq F_3$, $F_2\in\{F_1,F_3\}$ and $F_2\in\cA_0(V)$, we assume WLOG that $F_1=F_2\neq F_3$. In this case, we have $Q_1,Q_2\in\AnPSep(V)$. By (iii) and (iv) in the first assertion of Lemma \ref{reinterpretation of edges} and the third assertion in Lemma \ref{key lem for ASep}, (iii) does not hold. Since we assume that (iii) holds, this leads to a contradiction.

\textbf{Case 1d}: If $F_1\neq F_3$, $F_2\in\{F_1,F_3\}$ and $F_2\not\in\cA_0(V)$, we assume WLOG that $F_1=F_2\neq F_3$. Recall that $Q_1,Q_2,Q_3$ are distinct. By Definition \ref{graph of sep} and Lemma \ref{bdry=vertex}, there exist distinct $p,q\in V$ such that $F_p=F_q=F_1=F_2$, $Q_1=\typemark{F_p}{\{p\}}{V}$ and $Q_2=\typemark{F_q}{\{q\}}{V}$. If $I_3\cap\{p,q\}\neq\emptyset$ and $\{p,q\}\cap(V\setminus I_3)\neq \emptyset$, then $F_3\in\Theta(F_p,F_q)=\{F_p\}$, which implies that $F_1=F_2=F_3$. This contradicts with the assumption of this case. Therefore, either $\{p,q\}\subset I_3$ or $\{p,q\}\subset (V\setminus I_3)$. In particular, (ii) holds.

\textbf{Case 2} Assume that $F_1,F_2,F_3$ are pairwise distinct. If (ii) does not hold, by the second assertion in Lemma \ref{key lem for ASep}, there are 2 cases for $Q_1,Q_2,Q_3$:

\textbf{Case 2a:} $Q_1$ is between $Q_2$ and $Q_3$ in the sense of Definition \ref{in between for ASep}. This contradicts $Q_2$ and $Q_3$ being connected by a single edge in $\bG_V$.

\textbf{Case 2b:} $Q_3$ is between $Q_2$ and $Q_1$ in the sense of Definition \ref{in between for ASep}. This contradicts $Q_2$ and $Q_1$ being connected by a single edge in $\bG_V$.

\textbf{Proof of} (ii)$\implies$(i): Since $Q_1$ and $Q_3$ are connected by a single edge in $\bG_V$ when $F_1=F_3$ (due to the first assertion of Lemma \ref{reinterpretation of edges}), WLOG, we assume that $I_1,I_2,I_3$ are pairwise disjoint and $F_1\neq F_3$.

\textbf{Case 1}: If $F_2\in\{F_1,F_3\}$, WLOG we assume that $F_1=F_2$.

\textbf{Case 1a}: If $F_2\in\cA_0(V)$, then $Q_1,Q_2\in\AnPSep(V)$. Since $Q_1\neq Q_2$, by (iii) and (iv) in the first assertion of Lemma \ref{reinterpretation of edges} and the third assertion Lemma \ref{key lem for ASep} applied to $Q_2$ and $Q_3$, $Q_2$ is between $Q_1$ and $Q_3$. This contradicts the fact that (ii)$\implies$(iii).

\textbf{Case 1b}: If $F_2\not\in\cA_0(V)$, then $Q_1,Q_2\not\in\AnPSep(V)$. By Lemma \ref{bdry=vertex} and Definition \ref{graph of sep}, there exist distinct $p,q\in V$ such that $F_p=F_q=F_1=F_2$, $Q_1=\typemark{F_p}{\{p\}}{V}$ and $Q_2=\typemark{F_q}{\{q\}}{V}$. Since we assume that $I_1, I_2, I_3$ are pairwise disjoint, we have $I_1=\{p\}$, $I_2=\{q\}$ and $V\setminus I_3\supset\{p,q\}$. Choose $x\in I_3$. Since $Q_2$ and $Q_3$ are connected by a single edge in $\bG_V$, by the first assertion of Lemma \ref{reinterpretation of edges}, Lemma \ref{prim decomp}, Lemma \ref{bdry=vertex} and the third assertion in Lemma \ref{key lem for ASep}, we claim that
\begin{align}\label{F3}
I_3=
\begin{cases}
\displaystyle \{x\},~&\mathrm{if}~F_3\not\in\cA_0(V),\\
\displaystyle V\setminus H_{F_3}^V(p)=V\setminus H_{F_3}^V(q),~&\mathrm{if}~F_3\in\cA_0(V).
\end{cases}
\end{align}
To be specific, if $F_3\not\in\cA_0(V)$, by Lemma \ref{bdry=vertex} and the fact that $Q_3\not\in\{ Q_1,Q_2\}$, $Q_3=\typemark{F_y}{\{y\}}{V}$ for some $y\in V\setminus\{ p,q\}$. Since $I_1, I_2, I_3$ are pairwise disjoint and $x\in I_3$, we have $I_3=\{y\}$ and hence $y=x$.

If $F_3\in\cA_0(V)$, by (iii) in the first assertion of Lemma \ref{reinterpretation of edges}, $\stype{I_3}{V}$ is the unique separation type for $F_3$ mentioned in the third assertion of Lemma \ref{key lem for ASep} with respect to the pair $F_3, F_2$. Since $I_2\subsetneq V\setminus I_3$, for any $\stype{I_3'}{V}\in\Sep_V(F_3)$, either $(V\setminus I_3)\subset I_3'$ or $(V\setminus I_3)\subset (V\setminus  I_3')$. By Lemma \ref{prim decomp}, $V\setminus I_3=H_{F_3}^V(z)$ for any $z\in V\setminus I_3$. In particular, one can choose $z=p$ or $q$. This proves the claim in \eqref{F3}.

By the assumption that $F_1=F_2$, the fact that $Q_2$ and $Q_3$ are connected by a single edge in $\bG_V$, and (ii), (iii) in the first assertion of Lemma \ref{reinterpretation of edges}, we have
$$\Theta(F_2,F_3)\ints\cA_0(V)\setminus\{F_2,F_3\}=\Theta(F_1,F_3)\ints\cA_0(V)\setminus\{F_1,F_3\}\subset\{F_1,F_3\}.$$

If $F_3\in\cA_0(V)$ and$\stype{I_3'}{V}$ is the unique separation type for $F_3$ mentioned in the third assertion of Lemma \ref{key lem for ASep} with respect to the pair $F_3, F_1$, by the fact that $I_1,I_2,I_3$ are pairwise disjoint (and hence $I_3\subsetneq V\setminus I_1$), we can assume WLOG that $I_3\subset I_3'\subset V\setminus I_1$. Since we showed in the previous paragraph that for any $\stype{I_3''}{V}\in\Sep_V(F_3)$, either $I_3''\subset I_3$ or $V\setminus I_3''\subset I_3$, we have $I_3'=I_3$ and hence $\stype{I_3}{V}$ is the unique separation type for $F_3$ mentioned in the third assertion of Lemma \ref{key lem for ASep} with respect to the pair $F_3, F_1$. By (ii) and (iii) in the first assertion of Lemma \ref{reinterpretation of edges}, $Q_1$ and $Q_3$ are connected by a single edge.

\textbf{Case 2}: $F_1,F_2,F_3$ are pairwise distinct. Choose $p_1\in I_1\subset V\setminus (I_2\cup I_3)$ and $p_2\in I_2\subset V\setminus (I_1\cup I_3)$. By (iii) and (iv) in the first assertion of Lemma \ref{reinterpretation of edges}, Lemma \ref{prim decomp}, Lemma \ref{bdry=vertex} and the third assertion in Lemma \ref{key lem for ASep}, the fact that $Q_1$ and $Q_2$ are connected by a single edge in $\bG_V$ implies that
\begin{align*}
I_1=
\begin{cases}
\displaystyle \{p_1\},~&\mathrm{if}~F_1\not\in\cA_0(V),\\
\displaystyle V\setminus H_{F_1}^V(p_2),~&\mathrm{if}~F_1\in\cA_0(V),
\end{cases}
~\mathrm{and}~
I_2=
\begin{cases}
\displaystyle \{p_2\},~&\mathrm{if}~F_2\not\in\cA_0(V),\\
\displaystyle V\setminus H_{F_2}^V(p_1),~&\mathrm{if}~F_2\in\cA_0(V).
\end{cases}
\end{align*}
(The details are the same as proof of \eqref{F3} in \textbf{Case 1b}. To be specific, if $F_1\not \in \cA_0(V)$, then by Definition \ref{graph of sep}, $\typemark{F_1}{I_1}{V}\in\del \cA_\Sep(V)$. By Lemma \ref{bdry=vertex}, $\typemark{F_1}{I_1}{V}=\typemark{F_p}{\{p\}}{V}$ for some $p\in V$. Since $I_1\subsetneq V\setminus I_2\subsetneq V$, we have $I_1=\{p\}$. Similar arguments can apply to the case when $F_2\not \in \cA_0(V)$.

If $F_1\in\cA_0(V)$, by (iii) and (iv) in the first assertion of Lemma \ref{reinterpretation of edges}, $\stype{I_1}{V}$ is the unique separation type for $F_1$ mentioned in the third assertion Lemma \ref{key lem for ASep} with respect to the pair $F_1, F_2$. Since $I_1\subsetneq V\setminus I_2$, for any $\stype{I_1'}{V}\in\Sep_V(F_1)$, either $I_1'\subset I_1$ or $V\setminus I_1'\subset I_1$. By Lemma \ref{prim decomp}, $V\setminus I_1=H_{F_1}^V(q)$ for any $q\in V\setminus I_1$. In particular, we can choose $q=p_2\in V\setminus I_1$. Similar arguments can apply to the case when $F_2\in\cA_0(V)$.)

Similarly
$$ I_3=
\begin{cases}
\displaystyle \{p_3\},~&\mathrm{if}~F_3\not\in\cA_0(V),\\
\displaystyle V\setminus H_{F_3}^V(p_2),~&\mathrm{if}~F_3\in\cA_0(V),
\end{cases}$$
where we choose $p_3\in I_3\subset V\setminus (I_1\cup I_2)$. Since $p_2,p_3\in V\setminus I_1$ and $p_2,p_1\in V\setminus I_3$, by Lemma \ref{prim decomp},
$$H_{F_1}^V(p_2)=H_{F_1}^V(p_3)~\mathrm{if}~F_1\in\cA_0(V),~\mathrm{and}~H_{F_3}^V(p_2)=H_{F_3}^V(p_1)~\mathrm{if}~F_3\in\cA_0(V).$$
Therefore
\begin{align}\label{F1F3}
I_1=
\begin{cases}
\displaystyle \{p_1\},~&\mathrm{if}~F_1\not\in\cA_0(V),\\
\displaystyle V\setminus H_{F_1}^V(p_3),~&\mathrm{if}~F_1\in\cA_0(V),
\end{cases}
~\mathrm{and}~
I_3=
\begin{cases}
\displaystyle \{p_3\},~&\mathrm{if}~F_3\not\in\cA_0(V),\\
\displaystyle V\setminus H_{F_3}^V(p_1),~&\mathrm{if}~F_3\in\cA_0(V).
\end{cases}
\end{align}

By the second assertion in Lemma \ref{properties of Theta sep}, (See Figure \ref{lem6.19(2)}.)
\begin{equation}\label{F1F2F3 Theta}
\left.
\begin{aligned}
F_1\in\Theta(F_{p_1},F_{p_2})\ints\Theta(F_{p_1},F_{p_3}) ,\\
F_2\in\Theta(F_{p_2},F_{p_1})\ints\Theta(F_{p_2},F_{p_3}) ,\\
F_3\in\Theta(F_{p_3},F_{p_1})\ints\Theta(F_{p_3},F_{p_2}) ,
\end{aligned}
\right\}\implies
\left\{
\begin{aligned}
d(F_1,[p_1,p_2]),d(F_1,[p_1,p_3])\leq\epsilon_0/2 ,\\
d(F_2,[p_2,p_1]),d(F_1,[p_2,p_3])\leq\epsilon_0/2, \\
d(F_3,[p_3,p_1]),d(F_3,[p_3,p_1])\leq\epsilon_0/2.
\end{aligned}
\right.
\end{equation}

\begin{figure}[h]
	\centering
	\includegraphics[scale=0.5]{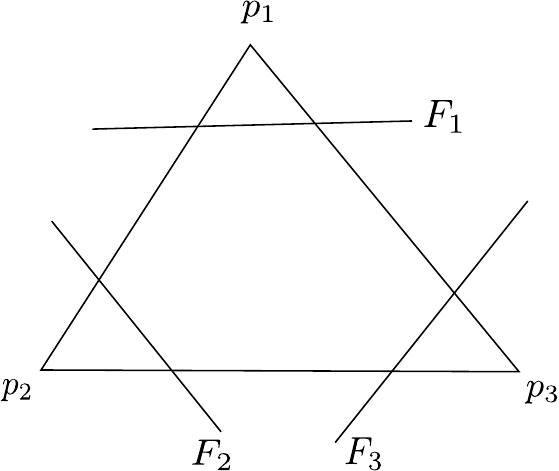}
	\caption{ \label{lem6.19(2)}}
\end{figure}

If $Q_1$ and $Q_3$ are not connected by a single edge in $\bG_V$, assume that $Q:=\typemark{\hF}{I}{V}\in\cV(\bG_V)$ is between $Q_1, Q_3$ and not equal to $Q_1$ or $Q_3$. Definition \ref{bdry} implies that $Q\not\in\del \cA_\Sep(V)$. Hence $\hF\in\cA_0(V)$ due to Definition \ref{graph of sep}. By \eqref{F1F3}, Lemma \ref{prim decomp} and the fact that $I_1, I_2, I_3$ are pairwise disjoint, $\hF\not\in\{F_1,F_3\}$ and therefore $\hF\in(\Theta(F_1,F_3)\ints\cA_0(V))\setminus\{F_1,F_3\}$. (To be specific, if $\hF=F_1$, then by \eqref{F1F3}, Lemma \ref{prim decomp} and the fact that $I_1, I_2, I_3$ are pairwise disjoint, $Q_1$ is between $Q$ and $Q_3$. Therefore $Q=Q_1$. This contradicts the assumption that $Q\neq Q_1$ or $Q_3$. $\hF\neq F_3$ for the same reason.)

\textbf{Case 2a:} If $ \hF\neq F_2$, then either $\hF\in\Theta(F_{p_1},F_{p_3})\ints\Theta(F_{p_1},F_{p_2})$, or $\hF\in\Theta(F_{p_1},F_{p_3})\ints\Theta(F_{p_2},F_{p_3})$ (due to the fact that $\hF\in\cA_0(V)$). By the second assertion in Lemma \ref{properties of Theta sep}, either $\max\{d(\hF,[p_1,p_3]),d(\hF,[p_1,p_2])\}\leq\epsilon_0/2$, or $\max\{d(\hF,[p_1,p_3]),d(\hF,[p_2,p_3])\}\leq\epsilon_0/2$. \eqref{F1F2F3 Theta} and Proposition \ref{almost ints positioning} applied to $\hF, F_2$ imply that either $\hF\in(\Theta(F_1,F_2)\ints\cA_0(V))\setminus\{F_1,F_2\}$ or $\hF\in(\Theta(F_2,F_3)\ints\cA_0(V))\setminus\{F_2,F_3\}$. This leads to a contradiction with either the fact that $Q_1, Q_2$ are connected by a single edge in $\bG_V$, or the fact that $Q_3, Q_2$ are connected by a single edge in $\bG_V$. (Here we used (iii) and (iv) in the first assertion of Lemma \ref{reinterpretation of edges}.)

\textbf{Case 2b:} Assume that $\hF=F_2$. Since $Q_1,Q_2$ and $Q_3,Q_2$ are connected by single edges in $\bG_V$ respectively, by (iii) and (iv) in the first assertion of Lemma \ref{reinterpretation of edges}, the third assertion in Lemma \ref{key lem for ASep} and the fact that $F_2=\hF\in\cA_0(V)$ (mentioned right before \textbf{Case 2a}), $Q_2$ is between $Q_j$ and $Q$ in the sense of Definition \ref{in between for ASep}, $j=1,3$. In particular, $\stype{I_2}{V}$ is between $\stype{I_j}{V}$ and $\stype{I}{V}$ in the sense of Definition \ref{relations between sep types}. Notice that $I_2\subsetneq V\setminus I_j$, $j=1,3$, we can assume WLOG that $I\subset I_2$. Then $I, I_1, I_3$ are pairwise disjoint due to $I_2, I_1,I_3$ being pairwise disjoint.

On the other hand, $Q$ is between $Q_1$ and $Q_3$ in the sense of Definition \ref{in between for ASep}. In particular, $\stype{I}{V}$ is between $\stype{I_1}{V}$ and $\stype{I_3}{V}$ in the sense of Definition \ref{relations between sep types}. Since $I_1\subsetneq V\setminus I_3$ and  $I_3\subsetneq V\setminus I_1$, either $I_1\subset I$ or $I_3\subset I$. This contradicts the fact that $I, I_1, I_3$ are pairwise disjoint.\qedhere
\end{enumerate}
\end{proof}

\begin{definition}[Maximal complete subgraph (with respect to inclusion)]\label{MCS def}
For any graph $\bG$, a subgraph $\bG'\subset\bG$ (i.e. the vertices and edges of $\bG'$ are subsets of those of $\bG$ respectively) is a \emph{maximal complete subgraph} if $\bG'$ is a complete graph (i.e. $\bG'$ is nonempty and every pair of its vertices admits an edge) and for any $\bG'\subset\bG''\subset\bG$, $\bG''$ is complete if and only if $\bG''=\bG'$.

We will use the abbreviation MCS for maximal complete subgraphs. Also, we denote by $\MCS(\bG)$ the set of all MCS of $\bG$.
\end{definition}

\begin{rmk}
By the remark after Definition \ref{graph of sep}, one can easily check that $G$ is a MCS of $\bG_V$ if and only if $\gamma G$ is a MCS of $\bG_{\gamma V}$. Moreover, the map $G\to\gamma G$ gives a bijection from $\MCS(\bG_V)$ to $\MCS(\bG_{\gamma V})$.
\end{rmk}

\begin{proposition}\label{key prop of ASep graph}
Let $V\subset \Gamma x_0$ be a finite subset such that $|V|\geq 2$. Then,
\begin{enumerate}
\item[(1).] Every edge in $\bG_V$ is contained in a unique MCS of $\bG_V$.
\item[(2).] Every vertex in $\bG_V$ is contained in at most 2 MCS of $\bG_V$.
\end{enumerate}
\end{proposition}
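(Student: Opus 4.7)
The case $|V|=2$ and $|\cF(V)|=1$ is immediate, since $\bG_V$ is a single edge on two vertices. Henceforth I assume $|\cF(V)|>1$ or $|V|>2$, so that Lemma~\ref{reinterpretation of edges} applies; I write each vertex of $\bG_V$ as $Q=\typemark{F_Q}{I_Q}{V}$.

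For assertion (1), I will fix an edge $e=Q_1\mbox{---}Q_2\in\cE(\bG_V)$ and consider the subset
\[
G(e):=\{Q_1,Q_2\}\cup\{Q\in\cV(\bG_V):Q\mbox{---}Q_1,\,Q\mbox{---}Q_2\in\cE(\bG_V)\}
\]
of $\cV(\bG_V)$. Every MCS containing $e$ is contained in $G(e)$, so it suffices to show that $G(e)$ induces a complete subgraph; this then coincides with the unique MCS through $e$. The heart of the argument is that any two distinct $Q,Q'\in G(e)\setminus\{Q_1,Q_2\}$ are joined by an edge. By Lemma~\ref{reinterpretation of edges}(2) applied with $Q_1$ (resp.\ $Q_2$) as middle vertex, this reduces to verifying that neither $Q_1$ nor $Q_2$ is strictly between $Q$ and $Q'$ in the sense of Definition~\ref{in between for ASep}. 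Supposing toward contradiction that $Q_1$ is strictly between $Q,Q'$ yields sides $C_Q\subsetneq C_1\subsetneq C_{Q'}$. The edge triangles $\{Q,Q_1,Q_2\}$ and $\{Q',Q_1,Q_2\}$ produce, via Lemma~\ref{reinterpretation of edges}(2) and Definition~\ref{relations between sep types}, triangle relations with pairwise disjoint side representatives $(A_Q,A_1,A_2)$ and $(B_{Q'},B_1,B_2)$. A short case check matching $A_1,B_1,A_Q,B_{Q'}$ against $\{C_1,V\setminus C_1\}$ and the analogous pairs collapses, via the disjointness constraints, to $A_1=V\setminus C_1,\ B_1=C_1,\ A_Q=C_Q,\ B_{Q'}=V\setminus C_{Q'}$; then $A_2\subset C_1\setminus C_Q$ while $B_2\subset C_{Q'}\setminus C_1$, and comparing these inside the pair $\stype{I_2}{V}=\{A_2,V\setminus A_2\}$ forces either $A_2=\emptyset$ or $C_Q=\emptyset$, both impossible.

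For assertion (2), fix $Q\in\cV(\bG_V)$. The plan is to inject the set of MCS containing $Q$ into $\stype{I_Q}{V}$ (which has cardinality $2$), yielding the bound. To an MCS $G$ containing $Q$ with $|G|\geq 2$, I associate a distinguished side $\sigma(G)\in\stype{I_Q}{V}$ extracted from the triangle relations in $G$ through $Q$: Lemma~\ref{reinterpretation of edges}(2) on an edge triangle through $Q$ when $|G|\geq 3$, and Lemma~\ref{key lem for ASep}(3) on the single edge through $Q$ when $|G|=2$. In both situations the disjoint-side machinery pins down a specific $J_Q\in\stype{I_Q}{V}$, and a side-chasing argument mirroring that in (1) shows $J_Q$ is independent of the triangle (or edge) chosen, so $\sigma(G)$ is well defined. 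If $\sigma(G_1)=\sigma(G_2)$ for two MCS $G_1,G_2$ containing $Q$, then picking $Q'\in G_1\setminus\{Q\}$ and $Q''\in G_2\setminus\{Q\}$ and repeating the side analysis shows $Q'\mbox{---}Q''\in\cE(\bG_V)$; part (1) then forces $G_1=G_2$. The case $G=\{Q\}$ only occurs when $Q$ is isolated, and then $\{Q\}$ is trivially the only MCS containing $Q$.

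The main obstacle in both parts is the finite but intricate case analysis on side representatives inside the separation-type pairs. Throughout, the engines are Lemma~\ref{properties of Theta sep}(3), Lemma~\ref{key lem for ASep}(3), Lemma~\ref{reinterpretation of edges}, and the primitive decomposition from Lemma~\ref{prim decomp}, which locks in side choices for primitive separation types and reduces the verification to tracking at most a handful of inclusions among the subsets of $V$ singled out by the in-between and triangle criteria.
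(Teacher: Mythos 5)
Part (1) is correct but takes a genuinely different route from the paper. The paper proves uniqueness by contradiction: it assumes an edge $Q_0\mbox{---}Q_3$ lies in two MCS, extracts non-adjacent $Q_1,Q_2$ in those MCS, shows $F_0,F_1,F_2,F_3$ are pairwise distinct via Lemma~\ref{reinterpretation of edges}(1) and the uniqueness in Lemma~\ref{key lem for ASep}(3), and then exploits the linear chain structure of $\Theta(F_1,F_2)$ (property ($\Theta$3)) to place $F_0,F_3$ on that chain and contradict the edge condition in Lemma~\ref{reinterpretation of edges}(1). You instead show the common-neighbor set $G(e)$ of the edge is already complete, using a purely set-theoretic collision between the disjoint triangle representatives of $\{Q,Q_1,Q_2\}$ and $\{Q',Q_1,Q_2\}$ and the sandwich $C_Q\subset I_1\subset C_{Q'}$. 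I traced this through: the forced identifications $A_Q=C_Q$, $A_1=V\setminus C_1$, $B_{Q'}=V\setminus C_{Q'}$, $B_1=C_1$ are indeed all forced (and the strict inclusions $C_Q\subsetneq C_1\subsetneq C_{Q'}$ come for free since equal separation types cannot coexist inside a triangle relation), and the final collapse to $A_2=\emptyset$ or $C_Q=\emptyset$ works. This avoids the $\Theta$-chain machinery entirely, at the cost of a case check you only sketch; the paper's version is shorter because it can invoke ($\Theta$3) as a black box.

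Part (2) has a genuine gap. Your plan is to construct an injection $\sigma$ from the set of MCS through $Q$ into the $2$-element set $\stype{I_Q}{V}$, which is an elegant idea and different from the paper's direct three-MCS contradiction (the paper uses part (1) to reduce to three pairwise non-adjacent $Q_1,Q_2,Q_3$ with $Q_0$ between each pair, then settles WLOG $I_0=I_l$, and grinds through Lemma~\ref{key lem for ASep}(2) and three sub-cases). But the two load-bearing claims — that $\sigma(G)$ is well-defined, and that $\sigma(G_1)=\sigma(G_2)$ forces an edge $Q'\mbox{---}Q''$ — are both left to ``a side-chasing argument mirroring that in (1),'' and neither is obviously true as stated. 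For well-definedness: the assertion that ``the disjoint-side machinery pins down a specific $J_Q$'' fails when $\stype{I_Q}{V}=\stype{I_{Q'}}{V}$ for the edge $\{Q,Q'\}$ (then both $(J,V\setminus J)$ and $(V\setminus J,J)$ are disjoint pairs), and this degenerate case is exactly the one confined to $|G|=2$ which your triangle-based argument does not see. For injectivity: knowing $J_{Q'}$ and $J_{Q''}$ are both disjoint from the same $J_Q$ gives $J_{Q'},J_{Q''}\subset V\setminus J_Q$, but this does not by itself give $J_{Q'}\cap J_{Q''}=\emptyset$, which is what Lemma~\ref{reinterpretation of edges}(2)(ii) requires before you can conclude $Q'\mbox{---}Q''\in\cE(\bG_V)$. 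You would need an additional argument showing that $Q$ is not between $Q'$ and $Q''$ under the hypothesis $\sigma(G_1)=\sigma(G_2)$, and I don't see how to get it from the ingredients you list. The paper's route is less slick but each case is closed out explicitly.
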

\begin{proof}
The proposition is trivial if $|\cF(V)|=1$ and $|V|=2$. Therefore we assume that $|\cF(V)|> 1$ or $|V|>2$.
\begin{enumerate}
	\begin{figure}[h]
		\centering
		\includegraphics[scale=1]{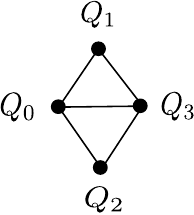}
		\caption{ \label{pic:key prop of ASep graph-1}}
	\end{figure}
\item[(1).] If not, there exist an edge with endpoints $Q_0\neq Q_3\in\cV(\bG_V)$ and $G_1\neq G_2\in\MCS(\bG_V)$ such that the edge $Q_0\mbox{---}Q_3\in\cE(G_1)\ints\cE(G_2)$. By maximality in the sense of Definition \ref{MCS def}, there exist $Q_1\in\cV(G_1)$ and  $Q_2\in\cV(G_2)$ such that $Q_1$ and $Q_2$ are not equal nor connected by a single edge in $\bG_V$. (See Figure \ref{pic:key prop of ASep graph-1}.) Otherwise, for any $Q_j'\in\cV(G_j)$, $j=1,2$, $Q_1'$ and $Q_2'$ are connected by a single edge. Then there exists a complete subgraph $G_3$ with vertex set equal to $\cV(G_1)\union \cV(G_2)$. Since $G_1,G_2$ are MCS, $G_1=G_2=G_3$, contradicting $G_1\neq G_2$. For such a choice of $Q_1, Q_2$, we have $Q_1\not\in\cV(G_2)$ and $Q_2\not\in\cV(G_1)$. Hence $Q_0,...,Q_3$ are distinct vertices of $\bG_V$. Moreover, for any $0\leq s\neq t\leq 3$, $Q_s$ and $Q_t$ are not connected by a single edge in $\bG_V$ if and only if $\{s,t\}=\{1,2\}$.

Let $Q_j=\typemark{F_j}{I_j}{V}$, $j=0,1,2,3$. We first prove that $F_0,F_1,F_2,F_3$ are distinct. By the second assertion of Lemma \ref{reinterpretation of edges}, $Q_0,Q_3$ are between $Q_1$ and $Q_2$ in the sense of Definition \ref{in between for ASep}. It follows from Definition \ref{bdry} that $Q_0,Q_3\not\in\del \cA_\Sep(V)$. Hence by Definition \ref{graph of sep}, $F_0,F_3\in \cA_0(V)$. By the first assertion of Lemma \ref{reinterpretation of edges}, the uniqueness in the third assertion of Lemma \ref{key lem for ASep} and the fact that $F_0,F_3\in \cA_0(V)$, $F_0,F_1,F_3$ are either the same, or pairwise distinct. Similarly, $F_0,F_2,F_3$ are either the same, or pairwise distinct. Notice that $F_1\neq F_2$ due to the first assertion of Lemma \ref{reinterpretation of edges} and the fact that $Q_1,Q_2$ are not connected by an edge in $\bG_V$. Therefore, $F_0,F_1,F_2,F_3$ are pairwise distinct.

Since $Q_0, Q_3$ are between $Q_1$ and $Q_2$ in the sense of Definition \ref{in between for ASep}, by the first assertion of Lemma \ref{reinterpretation of edges}, $F_0, F_3\in\Theta(F_1,F_2)\ints\cA_0(V)\setminus\{F_1,F_2\}.$ Since $F_0,F_1,F_2, F_3$ are distinct, by Lemma \ref{properties of Theta}, \hyperlink{Theta-3}{property ($\Theta$3) of $\Theta(\cdot,\cdot)$}, either $F_3\in\Theta(F_1,F_0)\ints\cA_0(V)\setminus\{F_1,F_0\}$, or $F_0\in\Theta(F_1,F_3)\ints\cA_0(V)\setminus\{F_1,F_3\}$. This contradicts either $Q_1\mbox{---}Q_0\in\cE(\bG_V)$ or $Q_1\mbox{---}Q_3\in\cE(\bG_V)$ due to the first assertion in Lemma \ref{reinterpretation of edges}. Hence every edge in $\bG_V$ must be contained in a unique MCS of $\bG_V$.
\begin{figure}[h]
	\centering
	\includegraphics[scale=1]{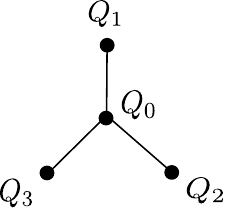}
	\caption{ \label{pic:key prop of ASep graph-2}}
\end{figure}
\item[(2).] If not, there exist some $Q_0\in\cV(\bG_V)$ and distinct MCS $G_1,G_2,G_3\in\MCS(\bG_V)$ such that $Q_0\in\cV(G_1)\ints\cV(G_2)\ints\cV(G_3)$. By the maximality of $G_j$ and the fact that they are distinct, $j=1,2,3$, $|\cV(G_j)|\geq 2$ for any $1\leq j\leq 3$. In particular, one can choose $Q_j\in\cV(G_j)\setminus\{Q_0\}$, $j=1,2,3$. If $Q_s\mbox{---}Q_t\in\cE(\bG_V)$ for some $1\leq s\neq t\leq 3$, then $Q_0,Q_t,Q_s$ and the edges connecting any pair of them form a complete subgraph of $\bG_V$. Let $G\in\MCS(\bG_V)$ be a MCS containing $Q_0,Q_t,Q_s$. Since $Q_s\mbox{---}Q_0\in\cE(G)\ints\cE(G_s)$ and $Q_t\mbox{---}Q_0\in\cE(G)\ints\cE(G_t)$, by the first assertion of Proposition \ref{key prop of ASep graph}, $G=G_s=G_t$. This contradicts the assumption that $G_s\neq G_t$. Therefore we can conclude that $Q_s, Q_t$ are \textbf{NOT} connected by a single edge in $\bG_V$ for any $1\leq s\neq t\leq 3$ and $Q_0,Q_j$ are connected by a single edge in $\bG_V$ for any $1\leq j\leq 3$. (See Figure \ref{pic:key prop of ASep graph-2}.)

Let $Q_j=\typemark{F_j}{I_j}{V}$, $j=0,1,2,3$. By the first assertion of Lemma \ref{reinterpretation of edges}, $F_1,F_2,F_3$ are pairwise distinct. By the second assertion of Lemma \ref{reinterpretation of edges}, for any $1\leq s\neq t\leq 3$, $Q_0$ is between $Q_s$ and $Q_t$ in the sense of Definition \ref{in between for ASep}. In particular, for any $1\leq s\neq t\leq 3$, $\stype{I_0}{V}$ is between $\stype{I_s}{V}$ and $\stype{I_t}{V}$ in the sense of Definition \ref{relations between sep types}. Therefore we can assume WLOG that either $I_0\subset I_j$ or $I_j\subset I_0$ for any $1\leq j\leq 3$.

We first claim that there exist some $l\in\{1,2,3\}$ such that $I_0=I_l$. If not, for any $l\in\{1,2,3\}$, either $I_l\subsetneq I_0$ or $I_0\subsetneq I_l$. WLOG, assume that $I_1\subsetneq I_0$. By the fact that $\stype{I_0}{V}$ is between $\stype{I_1}{V}$ and $\stype{I_t}{V}$ in the sense of Definition \ref{relations between sep types}, $t=2,3$, we have $I_0\subsetneq I_2$ and $I_0\subsetneq I_3$. Since $\stype{I_0}{V}$ is between $\stype{I_2}{V}$ and $\stype{I_3}{V}$ in the sense of Definition \ref{relations between sep types}, either $I_3\subsetneq I_0$ or $V\setminus I_3\subsetneq I_0$. Both options contradicts with $I_0\subsetneq I_3$. Hence there exist some $l\in\{1,2,3\}$ such that $I_0=I_l$. WLOG, we assume that $I_0=I_1$ and $I_2\subset I_0\subset I_3$ (by the fact that $\stype{I_0}{V}$ is between $\stype{I_2}{V}$ and $\stype{I_3}{V}$ in the sense of Definition \ref{relations between sep types}).

Choose $p\in I_2$ and $q\in V\setminus I_3$, we have $p\in I_1\ints I_2\ints I_3$ and $q\in (V\setminus I_1)\ints(V\setminus I_2)\ints(V\setminus I_3)$. Therefore $\stype{I_1}{V}$, $\stype{I_2}{V}$ and $\stype{I_3}{V}$ are \textbf{NOT} in triangle relation in the sense of Definition \ref{relations between sep types}. By the second assertion Lemma \ref{key lem for ASep}, there exist some $l\in\{1,2,3\}$ such that $Q_l$ is between $Q_{l+1}$ and $Q_{l+2}$ in the sense of Definition \ref{relations between sep types}, where $Q_4:=Q_1$ and $Q_5:=Q_2$. Since both $Q_0$ and $Q_l$ are between $Q_{l+1}$ and $Q_{l+2}$ in the sense of Definition \ref{relations between sep types}, by Definition \ref{bdry} and Definition \ref{graph of sep}, $F_l, F_0\in\cA_0(V)$. We discuss the rest of the proof (which is basically finding contradictions) in the following 3 cases.

\textbf{Case 1}: If $F_0=F_{l+1}$ or $F_{l+2}$, then by the first assertion in Lemma \ref{reinterpretation of edges} and the fact that  $F_l\in\Theta(F_{l+1},F_{l+2})\ints \cA_0(V)\setminus\{F_{l+1},F_{l+2}\}$, $Q_0$ is \textbf{NOT} connected to either $Q_{l+2}$ or $Q_{l+1}$ by a single edge in $\bG_V$. This contradicts the fact that $Q_0\mbox{---} Q_{l+1},Q_0\mbox{---} Q_{l+2}\in\cE(\bG_V)$.

\textbf{Case 2}: If $F_0\not\in\{ F_1,F_2,F_3\}$, then $F_0, F_l\in \Theta(F_{l+1},F_{l+2})\ints \cA_0(V)\setminus\{F_{l+1},F_{l+2}\}$ are distinct. By Lemma \ref{properties of Theta}, \hyperlink{Theta-3}{property ($\Theta$3)}, either $F_l\in \Theta(F_{l+1},F_{0})\ints \cA_0(V)\setminus\{F_{l+1},F_{0}\}$ or $F_l\in \Theta(F_{0},F_{l+2})\ints \cA_0(V)\setminus\{F_{0},F_{l+2}\}$. By the first assertion in Lemma \ref{reinterpretation of edges}, this contradicts the fact that $Q_0\mbox{---} Q_{l+1},Q_0\mbox{---} Q_{l+2}\in\cE(\bG_V)$.

\textbf{Case 3}: If $F_0=F_l$, then either $I_0\subsetneq I_l$ or $I_l\subsetneq I_0$. WLOG we assume that $I_{l+1}\subset I_0\subset I_{l+2}$. (This is because $Q_0$ is between $Q_{l+1}$ and $Q_{l+2}$ in the sense of Definition \ref{relations between sep types}.) Since $Q_l$ is between $Q_{l+1}$ and $Q_{l+2}$ in the sense of Definition \ref{relations between sep types}, in particular, $\stype{I_{l}}{V}$ is between $\stype{I_{l+1}}{V}$ and $\stype{I_{l+2}}{V}$ in the sense of Definition \ref{relations between sep types}. If $I_{l+1}=I_{l+2}$, then $I_l=I_0=I_{l+1}$, contradictory to the fact that $I_0\neq I_l$. Therefore $I_{l+1}\subsetneq I_{l+2}$ and hence either $I_{l+1}\subset I_l\subset I_{l+2}$ or $I_{l+1}\subset V\setminus I_l\subset I_{l+2}$.

\textbf{Case 3a}: If $I_{l+1}\subset I_l\subset I_{l+2}$, by the fact that $I_{l+1}\subset I_0\subset I_{l+2}$, either $I_{l+1}\subset I_0\subsetneq I_l\subset I_{l+2}$ or $I_{l+1}\subset I_l\subsetneq I_0\subset I_{l+2}$. Hence $\stype{I_0}{V}$ is \textbf{NOT} the unique $\Theta$-separation type for $F_0$ satisfying the assumptions in the third assertion of Lemma \ref{properties of Theta sep} with respect to either the pair $F_0, F_{l+2}$ or the pair $F_0, F_{l+1}$. By the first assertion of Lemma \ref{reinterpretation of edges} and the fact that $F_0=F_l\in\cA_0(V)$, this contradicts the fact that $Q_0\mbox{---} Q_{l+1},Q_0\mbox{---} Q_{l+2}\in\cE(\bG_V)$.

\textbf{Case 3b}: If $I_{l+1}\subset V\setminus I_l\subset I_{l+2}$, then $V\setminus I_{l+2}\subset  I_l\subset V\setminus I_{l+1}$. When  $I_0\subsetneq I_l$, $I_{l+1}\subset I_0\subset I_{l+2}$ and $I_0\subsetneq I_l\subset V\setminus I_{l+1}$ imply that $I_{l+1}\subset V\setminus I_{l+1}$, which is impossible. When $I_l\subsetneq I_0$, $I_{l+1}\subset I_0\subset I_{l+2}$ and $V\setminus I_{l+2}\subset I_l\subsetneq I_0$ imply that $V\setminus I_{l+2}\subset I_{l+2}$, which is impossible.\qedhere
\end{enumerate}
\end{proof}
\begin{lemma}\label{sep ordering}
Let $V\subset \Gamma x_0$ be a finite subset such that $|V|\geq 2$. Assume that $|\cF(V)|>1$ or $|V|>2$. Let $Q_j=\typemark{F_j}{I_j}{V}\in\cV(\bG_V)$ be distinct vertices, $j=0,1,...,m$ such that
\begin{itemize}
\item For any $0\leq j\leq m-1$, $Q_j\mbox{---}Q_{j+1}\in\cE(\bG_V)$. In addition, by the third assertion in Lemma \ref{properties of Theta sep} (when $F_j\neq F_{j+1}$) and Lemma \ref{prim decomp} (when $F_j=F_{j+1}$), we can assume WLOG that either $I_j\subset I_{j+1}$ or $I_{j+1}\subset I_j$;
\item For any $0\leq j\leq m-2$, $Q_j$ and $Q_{j+2}$ are not connected by a single edge in $\bG_V$;
\item $m\geq 1$ and $I_0\supsetneq I_1$.
\end{itemize}
Then $I_0\supsetneq I_1\supset I_2\supset...\supset I_m$.
\end{lemma}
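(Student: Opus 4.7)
Start by invoking Lemma \ref{reinterpretation of edges} (2): since $Q_j$ is connected by an edge to both $Q_{j-1}$ and $Q_{j+1}$ while $Q_{j-1}$ and $Q_{j+1}$ are not connected, the equivalence (i)$\iff$(iii) forces $Q_j$ to lie between $Q_{j-1}$ and $Q_{j+1}$ in the sense of Definition \ref{in between for ASep}. In particular, for each $1\le j\le m-1$, the separation type $\{I_j, V\setminus I_j\}$ is between $\{I_{j-1}, V\setminus I_{j-1}\}$ and $\{I_{j+1}, V\setminus I_{j+1}\}$ in the sense of Definition \ref{relations between sep types}, and simultaneously $F_j\in\Theta(F_{j-1},F_{j+1})$.

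The plan is to prove by induction on $j$ that $I_j\supset I_{j+1}$. The base case $j=0$ is given. For the inductive step, assume $I_0\supsetneq I_1\supset I_2\supset\cdots\supset I_j$ and suppose for contradiction that $I_j\subsetneq I_{j+1}$. The between condition at index $j$ yields representatives $I'_{j-1}\in\{I_{j-1},V\setminus I_{j-1}\}$ and $I'_{j+1}\in\{I_{j+1},V\setminus I_{j+1}\}$ realizing either $I'_{j-1}\subset I_j\subset I'_{j+1}$ or $I'_{j+1}\subset I_j\subset I'_{j-1}$. A short set-theoretic case analysis rules out the reversed orientation (it forces $V\setminus I_{j+1}\subset I_j\subsetneq I_{j+1}$, hence $I_{j+1}=V$), and in the first orientation one is forced to take $I'_{j+1}=I_{j+1}$ (else $I_j\cap I_{j+1}=\emptyset$ gives $I_j=\emptyset$) and $I'_{j-1}=I_{j-1}$ (else $V\setminus I_{j-1}\subset I_j$ combined with $I_{j-1}\supset I_j$ forces $I_{j-1}=V$). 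This produces $I_{j-1}\subset I_j$, which together with the inductive hypothesis $I_{j-1}\supset I_j$ yields $I_{j-1}=I_j$. When $j=1$ this contradicts $I_0\supsetneq I_1$ and we are done.

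The main obstacle is handling the case $j\ge 2$, where the Case (a) analysis only yields $I_{j-1}=I_j$ rather than an immediate contradiction. To close the argument, I would upgrade the induction to a strong induction on $m$. Applying the inductive hypothesis to the tail sub-chain $\tilde Q_l:=Q_{j+l}$ for $l=0,1,\dots,m-j$ with the complementary choice of representatives $\tilde I_l:=V\setminus I_{j+l}$ (which preserves the WLOG nesting and all edge/non-edge conditions, and satisfies $\tilde I_0\supsetneq\tilde I_1$ precisely because $I_j\subsetneq I_{j+1}$), we obtain $I_j\subsetneq I_{j+1}\subset I_{j+2}\subset\cdots\subset I_m$. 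Combined with $I_0\supsetneq I_1\supset\cdots\supset I_{j-1}=I_j$, this yields a ``V-shaped'' chain whose apex is at index $j-1=j$.

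The hardest step is extracting a contradiction from this V-shape. My plan is to exploit the fact that $Q_{j-1},Q_j$ are distinct vertices sharing the separation type $\{I_j,V\setminus I_j\}$, which forces $F_{j-1}\ne F_j$. By Lemma \ref{reinterpretation of edges} (1), the edge $Q_{j-1}\mbox{---}Q_j$ must fall into case (iii) or (iv): either one of the flats lies in $\del\cA_\Sep(V)$ and forces $\{I_j,V\setminus I_j\}=\{\{p\},V\setminus\{p\}\}$ for some $p\in V$, or both flats lie in $\cA_0(V)$ and $\{I_j,V\setminus I_j\}$ is pinned down as the unique primitive separation type supplied by Lemma \ref{key lem for ASep} (3) for the pair $(F_{j-1},F_j)$. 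I then plan to intersect these rigid possibilities with the edge conditions at $Q_j\mbox{---}Q_{j+1}$ (and, if necessary, propagate backward along the chain of equalities $I_{j-1}=I_j$ using the flat-level relation $F_l\in\Theta(F_{l-1},F_{l+1})$ together with Lemma \ref{properties of Theta}, \hyperlink{Theta-3}{($\Theta$3)}), eventually forcing either $I_j=\emptyset$ or $I_j=V$, which contradicts $\{I_j,V\setminus I_j\}$ being a valid separation type. The technical heart of the argument is this final rigidity step, and it is where the full strength of Lemma \ref{reinterpretation of edges} (1) and the primitive decomposition of Lemma \ref{prim decomp} enter.
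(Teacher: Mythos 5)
Your opening moves match the paper: invoking the equivalence (i)$\iff$(iii) in Lemma \ref{reinterpretation of edges}(2) to get the between-ness of $Q_j$, reducing to a nesting chain, assuming $I_j\subsetneq I_{j+1}$ for contradiction, and applying the inductive hypothesis to the (reversed) tail to manufacture a ``V-shaped'' configuration. (The paper organizes this slightly differently — a single induction on $m$, applying the length-$(m-1)$ case to the fully reversed chain to force $I_1=I_2=\cdots=I_{m-1}$ outright — but that is a cosmetic difference from your nested induction.) Your case analysis showing that the between-ness orientation forces $I_{j-1}\subset I_j$ is also correct, and the $j=1$ base case closes as you say.

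The genuine gap is the final rigidity step. You propose to close the V-shape by a purely combinatorial argument: pin the apex separation type via Lemma \ref{reinterpretation of edges}(1)(iii)/(iv) and Lemma \ref{key lem for ASep}(3), then ``propagate backward along the chain of equalities $I_{j-1}=I_j$ using $F_l\in\Theta(F_{l-1},F_{l+1})$ and ($\Theta$3)'' to force $I_j=\emptyset$ or $I_j=V$. This is not spelled out, and I do not believe it can be spelled out with only those tools: a V-shaped chain in which $F_1,\dots,F_{m-1}$ are distinct elements of $\Gamma F$ sharing the single separation type $\stype{I_1}{V}$, with $F_{l+1}\in\Theta(F_l,F_{l+2})$, is \emph{combinatorially consistent} with the edge/non-edge structure of $\bG_V$ and with Lemmas \ref{reinterpretation of edges}, \ref{prim decomp} and \ref{key lem for ASep}. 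The paper's actual contradiction is geometric: after splitting into three cases (via the third assertion in Lemma \ref{properties of Theta sep} applied to $F_0,F_m$) it picks auxiliary points $x\in I_1$, $y$, $z$ appropriately, uses the second assertion of Lemma \ref{properties of Theta sep} to place the $\epsilon_0/2$-distance bounds $d(F_j,[x,y]),d(F_j,[x,z])\le\epsilon_0/2$ and $d(F_0,[y,z]),d(F_m,[y,z])\le\epsilon_0/2$, and then invokes Proposition \ref{almost ints positioning} to conclude $F_j\in\Theta(F_0,F_x)\cap\Theta(F_m,F_x)$ for all $1\le j\le m-1$. Only then does ($\Theta$3) together with the distinctness of the $F_j$ produce the contradictory pair of strict chains $\Theta(F_1,F_x)\supsetneq\cdots\supsetneq\Theta(F_{m-1},F_x)$ and $\Theta(F_{m-1},F_x)\supsetneq\cdots\supsetneq\Theta(F_1,F_x)$. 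The positioning fact $F_j\in\Theta(F_0,F_x)$ is genuinely geometric — it records that the near-intersections with $[x,z]$ are ordered consistently — and cannot be read off from the between-ness relations on $\cV(\bG_V)$ that your plan uses. You would need to bring Proposition \ref{almost ints positioning} (or an equivalent piece of hyperbolicity) into the final step for the argument to close.
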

\begin{proof}
We prove by induction on $m$. When $m=1$, nothing needs to be proved. When $m=2$, this follows from the second assertion of Lemma \ref{reinterpretation of edges}. We now assume that $m\geq 3$.

Suppose the result holds for $\leq m-1$, then $I_0\supsetneq I_1\supset I_2\supset...\supset I_{m-1}$. It suffices for us to prove that $I_{m-1}\supset I_m$.

If not, we have $I_{m-1}\subsetneq I_{m}$. Let $I_j'=I_{m-j}$ and $Q_j'=Q_{m-j}=\typemark{F_{m-j}}{I_j'}{V}\in\cV(\bG_V)$, $j=1,2,...,m$. $Q_0',...,Q_{m-1}'$ satisfy the assumptions in this lemma. By the case of $m-1$, we have $I_0'\supsetneq I_1'\supset...\supset I_{m-1}'$, which is equivalent to $I_m\supsetneq I_{m-1}\supset...\supset I_1$. Hence $I_0\supsetneq I_1=I_2=...=I_{m-1}\subsetneq I_m$. Since $Q_j$ are distinct vertices, $j=0,...,m$, $F_j$ are distinct elements in $\cF(V)$, $j=1,...,m-1$. By the third assertion Lemma \ref{properties of Theta sep} (when $F_0\neq F_m$) and Lemma \ref{prim decomp} (when $F_0=F_m$), we have 3 remaining cases.
\begin{figure}[h]
	\centering
	\includegraphics[scale=0.7]{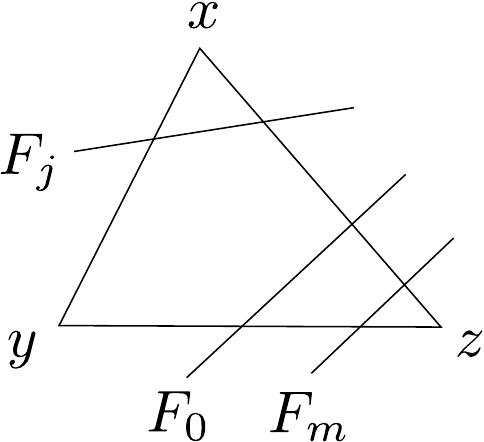}
	\caption{ \label{pic:sep ordering-1}}
\end{figure}

\textbf{Case 1}: $I_m\ints (V\setminus I_0)=\emptyset\Leftrightarrow I_m\subset I_0$. Choose $x\in I_1$, $y\in I_m\setminus I_1$ and $z\in V\setminus I_0$. Then
\begin{align*}
F_0,F_m\in \Theta(F_x,F_z)\ints\Theta(F_y,F_z),
~F_j\in\Theta(F_x,F_y)\ints\Theta(F_x,F_z),~1\leq j\leq m-1.
\end{align*}
(See Figure \ref{pic:sep ordering-1}.) By the second assertion in Lemma \ref{properties of Theta sep}, we have
$$d(F_0,[x,z]),d(F_m,[x,z]),d(F_0,[y,z]),d(F_m,[y,z]),d(F_j,[x,y]),d(F_j,[x,z])\leq \frac{\epsilon_0}{2},~~1\leq j\leq m-1.$$
By Proposition \ref{almost ints positioning}, Lemma \ref{properties of Theta}, \hyperlink{Theta-3}{property ($\Theta$3)} and the fact that $F_j\in\Theta(F_x,F_z)$ for any $0\leq j\leq m$, we have $F_j\in\Theta(F_0,F_x)\ints\Theta(F_m,F_x)$ for any $1\leq j\leq m-1$. By the second assertion of Lemma \ref{reinterpretation of edges}, $Q_{j+1}$ is between $Q_j$ and $Q_{j+2}$ in the sense of Definition \ref{in between for ASep} and therefore $F_{j+1}\in\Theta(F_j, F_{j+2})$ for any $0\leq j\leq m-2$. By Lemma \ref{properties of Theta}, \hyperlink{Theta-3}{property ($\Theta$3)}, we have
\begin{align}\label{sep ordering-1}
F_j\in\Theta(F_0,F_x)\ints\Theta(F_m,F_x)\implies \Theta(F_j,F_x)\subset\Theta(F_0,F_x)\ints\Theta(F_m,F_x), ~~1\leq j\leq m-1.
\end{align}
Recall that $F_j$ are distinct elements in $\cF(V)$, $j=1,...,m-1$, by \eqref{sep ordering-1} and Lemma \ref{properties of Theta}, \hyperlink{Theta-3}{property ($\Theta$3)} again,
\begin{align*}
\left.
\begin{aligned}
&F_{j+1}\in\Theta(F_j, F_{j+2}),~\forall 0\leq j\leq m-2,\\
&F_s\neq F_t,~\forall 1\leq s\neq t\leq m-1,
\end{aligned}
\right\}
\implies
\left\{
\begin{aligned}
\Theta(F_0,F_x)\supset\Theta(F_1,F_x)\supsetneq...\supsetneq\Theta(F_{m-1},F_x),\\
\Theta(F_m,F_x)\supset\Theta(F_{m-1},F_x)\supsetneq...\supsetneq\Theta(F_1,F_x),
\end{aligned}
\right.
\end{align*}
which is impossible.

\textbf{Case 2}: $I_0\ints (V\setminus I_m)=\emptyset\Leftrightarrow I_0\subset I_m$. This is the same as \textbf{Case 2} after reversing the order of subscripts of $Q_j, F_j, I_j$, $0\leq j\leq m$, which is impossible.

\begin{figure}[h]
	\centering
	\includegraphics[scale=0.7]{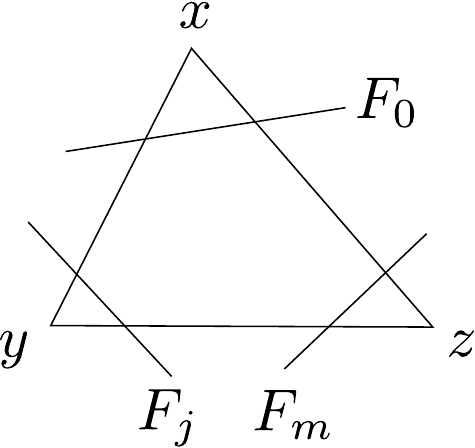}
	\caption{ \label{pic:sep ordering-2}}
\end{figure}

\textbf{Case 3}: $(V\setminus I_0)\ints (V\setminus I_m)=\emptyset$, then $V\setminus I_0,V\setminus I_m, I_1$ are pairwise disjoint. Choose $x\in V\setminus I_0$, $y\in I_1$ and $z\in V\setminus I_m$. Then
$$F_0\in\Theta(F_x,F_y)\ints \Theta(F_x,F_z), ~F_m\in\Theta(F_x,F_z)\ints\Theta(F_y,F_z),~F_j\in\Theta(F_x,F_y)\ints\Theta(F_z,F_y),~1\leq j\leq m-1.$$
(See Figure \ref{pic:sep ordering-2}.) By the second assertion in Lemma \ref{properties of Theta sep}, we have
$$d(F_0,[x,y]),d(F_0,[x,z]),d(F_m,[x,z]),d(F_m,[y,z]),d(F_j,[x,y]),d(F_j,[z,y])\leq \frac{\epsilon_0}{2},~~1\leq j\leq m-1.$$
By Proposition \ref{almost ints positioning}, Lemma \ref{properties of Theta}, \hyperlink{Theta-3}{property ($\Theta$3)} and the fact that $F_j\in\Theta(F_x,F_y)\ints\Theta(F_z,F_y)$ for any $1\leq j\leq m-1$, we have $F_j\in\Theta(F_0,F_y)\ints\Theta(F_m,F_y)$.
By the second assertion of Lemma \ref{reinterpretation of edges}, $Q_{j+1}$ is between $Q_j$ and $Q_{j+2}$ in the sense of Definition \ref{in between for ASep} and therefore $F_{j+1}\in\Theta(F_j, F_{j+2})$ for any $0\leq j\leq m-2$. By Lemma \ref{properties of Theta}, \hyperlink{Theta-3}{property ($\Theta$3)}, we have
\begin{align}\label{sep ordering-3}
F_j\in\Theta(F_0,F_y)\ints\Theta(F_m,F_y)\implies \Theta(F_j,F_y)\subset\Theta(F_0,F_y)\ints\Theta(F_m,F_y), ~~1\leq j\leq m-1.
\end{align}
Recall that $F_j$ are distinct elements in $\cF(V)$, $j=1,...,m-1$, by \eqref{sep ordering-3} and Lemma \ref{properties of Theta}, \hyperlink{Theta-3}{property ($\Theta$3)} again,
\begin{align}\label{sep ordering-4}
\left.
\begin{aligned}
&F_{j+1}\in\Theta(F_j, F_{j+2}),~\forall 0\leq j\leq m-2,\\
&F_s\neq F_t,~\forall 1\leq s\neq t\leq m-1,
\end{aligned}
\right\}
\implies \left\{
\begin{aligned}
\Theta(F_0,F_y)\supset\Theta(F_1,F_y)\supsetneq...\supsetneq\Theta(F_{m-1},F_y),\\
\Theta(F_m,F_y)\supset\Theta(F_{m-1},F_y)\supsetneq...\supsetneq\Theta(F_1,F_y),
\end{aligned}
\right.
\end{align}
which is impossible.
\end{proof}
\begin{definition}[Restriction of graphs]\label{res of graphs}
Let $\bG$ be a graph. For any $\cV\subset\cV(\bG)$, we define \emph{the restriction of $\bG$ onto $\cV$} as the subgraph $\bH$ such that
\begin{itemize}
\item $\cV(\bH):=\cV$;
\item For any $Q_1\neq Q_2\in \cV$, $Q_1\mbox{---}Q_2\in\cE(\bH)$ if and only if $Q_1\mbox{---}Q_2\in\cE(\bG)$;
\end{itemize}
\end{definition}
\begin{definition}[Walks and paths in a graph]\label{walks and paths}
Let $\bG$ be a graph. For any $Q_1\neq Q_2\in\cV(\bG)$, a \emph{walk $\cW$ from $Q_1$ to $Q_2$ with $k$ steps} is an ordered $(k+1)$-tuple $\cW:=(P_0,...,P_k)$ of vertices in $\cV(\bG)$ such that
\begin{itemize}
\item $P_0=Q_1$ and $P_k=Q_2$;
\item $P_j\neq P_{j+1}$ and $P_j\mbox{---}P_{j+1}\in\cE(\bG)$ for any $0\leq j\leq k-1$.
\end{itemize}
In particular, for any vertex $Q\in\cV(\bG)$, $Q$ may have multiple occurrences in this walk. The \emph{length} of the walk $\cW$ is defined to be $\length(\cW)=k$.

A subgraph $\bP\subset \bG$ is called a \emph{path connecting $Q_1$ and $Q_2$ in $\bG$} if $\cV(\bP)=\{P_0:=Q_1,P_1,...,P_m,P_{m+1}:=Q_2\}$ with distinct $P_0,...,P_{m+1}$ and $\cE(\bP)=\{P_j\mbox{---}P_{j+1}|0\leq j\leq m\}$. In this case, we use the notation
$$\bP=Q_1\mbox{---}P_1\mbox{---}\cdots\mbox{---}P_m\mbox{---}Q_2.$$
The \emph{length} of a path $\bP$ connecting $Q_1$ and $Q_2$ is defined to be $\length(\bP):=|\cE(\bP)|$. A \emph{shortest path connecting $Q_1$ and $Q_2$ in $\bG$} is a path connecting $Q_1$ and $Q_2$ in $\bG$ with the shortest possible length. The length of a shortest path connecting $Q_1$ and $Q_2$ in $\bG$ is called the \emph{distance between $Q_1$ and $Q_2$ in $\bG$}, denoted as $d_{\bG}(Q_1,Q_2)$.
\end{definition}
\begin{rmk}
One can easily show that any walk $\cW=(P_0:=Q_1,P_1,...,P_k,P_{k+1}=Q_2)$ from $Q_1$ to $Q_2$ in $\bG$ satisfies $\length(\cW)\geq d_{\bG}(Q_1,Q_2)$. Equality holds if and only if $P_j$ are distinct vertices of $\cV(\bG)$ and $P_0\mbox{---}P_1\mbox{---}\cdots\mbox{---}P_{k+1}$ is a shortest path connecting $Q_1$ and $Q_2$ in $\bG$.
\end{rmk}

\begin{notation}\label{PATH}
Let $V\subset \Gamma x_0$ be a finite subset with $|V|\geq 2$. For any distinct $p,q\in V$, we define
$$\cV_{p,q,V}=
\begin{cases}
\displaystyle \cV(\bG_V)=V,&\mathrm{if}~|\cF(V)|=1 \mathrm~{and}~ |V|=2, \\
\displaystyle \typemark{\hF}{I}{V}\in\{\cV(\bG_V)|\{p,q\}\ints I|=1\},~&\mathrm{if}~|\cF(V)|>1 \mathrm~{or}~ |V|>2.
\end{cases}$$
and $\bP_{p,q,V}$ to be the restriction of $\bG_V$ onto $\cV_{p,q,V}$.

It is easy to see that for any $\gamma\in\Gamma$ and any $Q\in\cV_{p,q,V}$, $Q\to\gamma Q$ gives a bijection from $\cV_{p,q,V}$ to $\cV_{\gamma p,\gamma q,\gamma V}$. Therefore by the remark after Definition \ref{graph of sep}, $\gamma\bP_{p,q,V}=\bP_{\gamma p,\gamma q,\gamma V}$.
\end{notation}
\begin{lemma}\label{unique shortest paths}
Let $V\subset \Gamma x_0$ be a finite subset with $|V|\geq 2$. For any distinct $p, q\in V$, let
$$Q_p:=\begin{cases}
\displaystyle p,&\mathrm{if~}|V|=2\mathrm{~and~}|\cF(V)|=1,\\
\displaystyle \typemark{F_p}{\{p\}}{V},~&\mathrm{if~}|V|>2\mathrm{~or~}|\cF(V)|>1
\end{cases}
$$
and
$$
Q_q:=\begin{cases}
\displaystyle q,&\mathrm{if~}|V|=2\mathrm{~and~}|\cF(V)|=1, \\
\displaystyle \typemark{F_q}{\{q\}}{V},~&\mathrm{if~}|V|>2\mathrm{~or~}|\cF(V)|>1.
\end{cases}$$
Then $\bP_{p,q,V}$ is the unique shortest path connecting $Q_p$ and $Q_q$ in $\bG_V$.

Moreover, for any vertices $Q_1,Q_2,Q_3\in\cV(\bP_{p,q,V})$, $Q_2$ is between $Q_1$ and $Q_3$ in the sense of Definition \ref{in between for ASep} if and only if the shortest path connecting $Q_1$ and $Q_3$ in $\bP_{p,q,V}$ contains $Q_2$.
\end{lemma}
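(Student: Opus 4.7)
The overall strategy is to show that $\bP_{p,q,V}$ is a simple linear path with endpoints $Q_p$ and $Q_q$, and then deduce both claims from this linear structure. The trivial case $|V|=2$ with $|\cF(V)|=1$ reduces immediately to Definition \ref{graph of sep}(1) and the special clause of Definition \ref{in between for ASep}, so I would assume $|V|>2$ or $|\cF(V)|>1$ throughout.

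The first stage is to linearly order $\cV_{p,q,V}$ by inclusion. For each vertex $\typemark{\hF}{I}{V}\in\cV_{p,q,V}$ I pick the representative with $p\in I$ and $q\in V\setminus I$. Given two distinct such vertices $\typemark{F_j}{I_j}{V}$, $j=1,2$, one has $p\in I_1\cap I_2$ and $q\in(V\setminus I_1)\cap(V\setminus I_2)$, so the remark after Lemma \ref{properties of Theta sep} (when $F_1\neq F_2$) together with Lemma \ref{prim decomp} (when $F_1=F_2$) forces $I_1\subsetneq I_2$ or $I_2\subsetneq I_1$. Writing the totally ordered chain as $\{p\}=I_0\subsetneq I_1\subsetneq\cdots\subsetneq I_m=V\setminus\{q\}$ with associated vertices $P_0=Q_p,P_1,\ldots,P_m=Q_q$, I then verify that $P_j\mbox{---}P_{j+1}\in\cE(\bG_V)$ for each $0\leq j\leq m-1$: any vertex of $\bG_V$ lying strictly between $P_j$ and $P_{j+1}$ in the sense of Definition \ref{in between for ASep} would itself belong to $\cV_{p,q,V}$ with $I$-part strictly between $I_j$ and $I_{j+1}$, contradicting consecutiveness; a direct check against the four cases of Lemma \ref{reinterpretation of edges}(1) then produces the edge. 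Conversely, for $|j-k|\geq 2$, Lemma \ref{reinterpretation of edges}(2) (applied with $P_{j+1}$ as the middle vertex) rules out $P_j\mbox{---}P_k$. Hence $\bP_{p,q,V}=P_0\mbox{---}P_1\mbox{---}\cdots\mbox{---}P_m$ is a simple path of length $m$ between $Q_p$ and $Q_q$.

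For the uniqueness claim, I take any shortest path $Q_p=R_0\mbox{---}\cdots\mbox{---}R_N=Q_q$ in $\bG_V$, noting that the shortest-path property forbids each edge $R_{j-1}\mbox{---}R_{j+1}$. Writing $R_j=\typemark{\hF_j}{J_j}{V}$ with $J_0=\{p\}$, and choosing the representative of $J_1$ so that $p\in J_1$ (which forces $J_0\subsetneq J_1$), Lemma \ref{sep ordering} yields $\{p\}=J_0\subsetneq J_1\subset J_2\subset\cdots\subset J_N$. The endpoint $R_N=Q_q$ then forces $J_N=V\setminus\{q\}$, so every $J_j$ contains $p$ and avoids $q$, placing $R_j\in\cV_{p,q,V}$. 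Thus the shortest path lies entirely in $\bP_{p,q,V}$, and since $\bP_{p,q,V}$ is itself a simple path from $Q_p$ to $Q_q$ the two must coincide, giving $N=m$ and establishing $\bP_{p,q,V}$ as the unique shortest path.

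For the betweenness statement, for vertices $Q_j=\typemark{F_j}{I_j}{V}\in\cV(\bP_{p,q,V})$ with the standard choice $p\in I_j$, the sub-path of the linear graph $\bP_{p,q,V}$ from $Q_1$ to $Q_3$ contains $Q_2$ if and only if $I_2$ lies between $I_1$ and $I_3$ under inclusion, which is equivalent to $\stype{I_2}{V}$ being between $\stype{I_1}{V}$ and $\stype{I_3}{V}$ in the sense of Definition \ref{relations between sep types}. The additional requirement $F_2\in\Theta(F_1,F_3)$ from Definition \ref{in between for ASep} follows by iterated application of Lemma \ref{properties of Theta}(Theta-3) along the edges of the sub-path (all the intermediate $F_j$ lie in $\Theta(F_p,F_q)$), while the converse is immediate. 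The main technical obstacle is the uniqueness step: making the consistent choice of $p$-containing representatives so that Lemma \ref{sep ordering} actually applies along the full candidate shortest path; the rest of the argument is essentially book-keeping against the case list of Lemma \ref{reinterpretation of edges}(1).
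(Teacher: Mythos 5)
Your first step---linearly ordering $\cV_{p,q,V}$ strictly by $I$-part inclusion---has a gap. You claim that for distinct vertices $\typemark{F_1}{I_1}{V},\typemark{F_2}{I_2}{V}\in\cV_{p,q,V}$ (with $p$-containing representatives), the remark after Lemma \ref{properties of Theta sep} or Lemma \ref{prim decomp} gives $I_1\subsetneq I_2$ or $I_2\subsetneq I_1$. When $F_1\neq F_2$ the remark only yields $I_1\subset I_2$ or $I_2\subsetneq I_1$, so $I_1=I_2$ with $F_1\neq F_2$ is possible, and this really occurs: if $F_i,F_j\in\Theta(F_p,F_q)$ with $i<j$ and $V\setminus H^V_{F_i}(q)=H^V_{F_j}(p)$, then $\typemark{F_i}{H^V_{F_i}(q)}{V}$ and $\typemark{F_j}{H^V_{F_j}(p)}{V}$ are distinct vertices of $\bP_{p,q,V}$ whose $p$-containing $I$-parts coincide. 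So your chain $\{p\}=I_0\subsetneq\cdots\subsetneq I_m=V\setminus\{q\}$ does not enumerate $\cV_{p,q,V}$, and the ``consecutiveness'' you invoke when verifying that $P_j\mbox{---}P_{j+1}$ is an edge does not rule out a distinct intermediate vertex having the same $I$-part as $P_j$ but a strictly intermediate flat, and such a vertex is between $P_j$ and $P_{j+1}$ in the sense of Definition \ref{in between for ASep}, which would defeat the edge.

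The paper avoids this by indexing vertices by the flats of $\Theta(F_p,F_q)$: each $F_l$ contributes $P_l^p=\typemark{F_l}{H^V_{F_l}(p)}{V}$ and $P_l^q=\typemark{F_l}{H^V_{F_l}(q)}{V}$, and a dedicated claim (whose proof requires a sub-claim using Proposition \ref{almost ints positioning}) establishes the non-strict nesting $p\in(V\setminus H^V_{F_i}(q))\subset H^V_{F_j}(p)\subset V\setminus H^V_{F_j}(q)\subset V\setminus\{q\}$ for $i<j$. Only with this structure in hand can one match the edge criteria of Lemma \ref{reinterpretation of edges}(1); your ``direct check against the four cases'' is exactly the content that needs that claim, since cases (iii) and (iv) require verifying the uniqueness condition from Lemma \ref{key lem for ASep}(3). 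Your uniqueness step via Lemma \ref{sep ordering} follows the paper, and your betweenness reduction is the right idea, but both presuppose the path structure that the gap above leaves unestablished.
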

\begin{proof}
The lemma is obviously true when $|V|=2$ and $|\cF(V)|=1$ due to Definition \ref{in between for ASep} and Definition \ref{graph of sep}. The lemma is also obviously true when $\Theta(F_p,F_q)\cap\cA_0(V)=\emptyset$ due to the first assertion in Lemma \ref{reinterpretation of edges}. Therefore we assume that $|V|>2$ or $|\cF(V)|>1$, and $\Theta(F_p,F_q)\cap\cA_0(V)\neq\emptyset$.

First we prove that $\bP_{p,q,V}$ is a path connecting $Q_p$ and $Q_q$. Since $\cV(\bG_V)=\AnPSep(V)\union\del\cA_\PSep(V)\subset\cA_\PSep(V)$ due to Definition \ref{graph of sep}, for any $Q=\typemark{\hF}{I}{V}\in\cV_{p,q,V}\subset\cV(\bG_V)$, by Lemma \ref{prim decomp}, $\typemark{\hF}{I}{V}=\typemark{\hF}{H_\hF^V(p)}{V}$ or $\typemark{\hF}{H_\hF^V(q)}{V}$. By Lemma \ref{properties of Theta}, \hyperlink{Theta-3}{property ($\Theta$3) of $\Theta(\cdot,\cdot)$}, we can assume that $\Theta(F_p,F_q)=\{F_0:=F_p,F_1,...,F_k,F_{k+1}:=F_q\}$ such that the following holds:
\begin{itemize}
\item $\Theta(F_i,F_j)=\{F_i,...,F_j\}$ for any $0\leq i\leq j\leq k+1$.
\item $F_1,...,F_k\in\Gamma F\setminus\{F_p,F_q\}$ are pairwise distinct.
\end{itemize}
If $F_p=F_q$, then by the above discussion, $\bP_{p,q,V}=Q_p\mbox{---}Q_q$ and it is a path. We now assume that $F_p\neq F_q$.

We make the following \hypertarget{claim}{claim}:
\begin{claim}
Under the above assumptions, for any $0\leq i<j\leq k+1$, $\stype{H^V_{F_i}(q)}{V}\in\PSep_V(F_i)$ and $\stype{H^V_{F_j}(p)}{V}\in\PSep_V(F_j)$ are the unique separation types mentioned in the third assertion of Lemma \ref{key lem for ASep} with respect to the pair $F_i, F_j$. In particular, $p\in (V\setminus H^V_{F_i}(q))\subset  H^V_{F_j}(p)\subset V\setminus H^V_{F_j}(q)\subset V\setminus\{q\}$.
\end{claim}
\begin{proof}[Proof of Claim]
Since for any $F_l$, $0\leq l\leq k+1$, either $F_l\in\{F_p,F_q\}$, or $F_l\in\cA_0(V)$. By Lemma \ref{prim decomp}, we have $p\not\in H^V_{F_l}(q)$ and $q\not\in H^V_{F_l}(p)$. In particular, $p\in(V\setminus H^V_{F_i}(q))\ints H^V_{F_j}(p)$ and $q\in(V\setminus\{p\})\ints H^V_{F_i}(q)\ints (V\setminus H^V_{F_j}(p))$. Therefore $\stype{\{p\}}{V}$, $\stype{H^V_{F_i}(q)}{V}$ and $\stype{H^V_{F_j}(p)}{V}$ are not in triangle relation in the sense of Definition \ref{relations between sep types}.

We first prove the following \hypertarget{lem6.26-subclaim}{sub-claim}:
\begin{center}
\emph{$\typemark{F_i}{H^V_{F_i}(q)}{V}$ is between $\typemark{F_0}{\{p\}}{V}$ and $\typemark{F_j}{H^V_{F_j}(p)}{V}$ in the sense of Definition \ref{in between for ASep}.}
\end{center}
Indeed, when $i\neq 0$, this follows directly from the first assertion in Lemma \ref{key lem for ASep}, the fact that $F_i\in\Theta(F_0,F_j)\setminus\{F_0,F_j\}$ and the above discussions. When $i=0$, let $\stype{I_0}{V}\in\PSep_V(F_0)$ be the unique separation type for $F_0$ mentioned in the third assertion of Lemma \ref{key lem for ASep} with respect to the pair $F_0,F_j$. By the third assertion of Lemma \ref{key lem for ASep} again, we assume WLOG that $\{p\}\subset I_0\subset H^V_{F_j}(p)$. In particular, $q\not\in I_0$. If $I_0\neq \{p\}$, then by Lemma \ref{prim decomp} and Definition \ref{graph of sep}, $V\setminus I_0=H^V_{F_0}(q)$. If $I_0=\{p\}$ and $H^V_{F_0}(q)\neq V\setminus \{p\}$, then by Lemma \ref{prim decomp}, $\{p\}=I_0\subsetneq V\setminus H^V_{F_0}(q)$. By the third assertion of Lemma \ref{key lem for ASep} applied to $F_0$ and $F_j$, $\stype{\{p\}}{V}$ is the only separation type of $F_j$. Let $p'\in (V\setminus H^V_{F_0}(q))\setminus \{p\}$. Then we have
$$F_0\in\Theta(F_p,F_q)\cap \Theta(F_{p'},F_q)~\mathrm{and}~F_j\in\Theta(F_p,F_q)\cap \Theta(F_p, F_{p'}). \text{ (See Figure \ref{lem6.26}).}$$
\begin{figure}[h]
	\centering
	\includegraphics[scale=0.5]{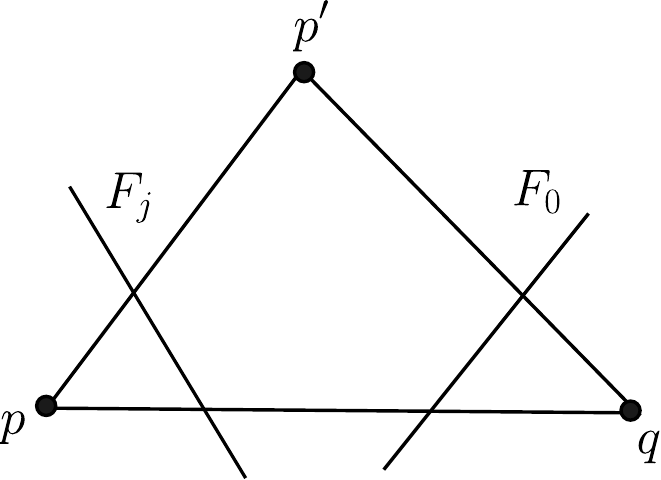}
	\caption{ \label{lem6.26}}
\end{figure}
By the second assertion in Lemma \ref{properties of Theta sep}, we have
$$d(F_0,[p,q]), d(F_0,[p',q]), d(F_j,[p,q]), d(F_j,[p,p'])\leq \epsilon_0/2.$$
Therefore by Lemma \ref{properties of Theta}, \hyperlink{Theta-3}{property ($\Theta$3) of $\Theta(\cdot,\cdot)$} and Proposition \ref{almost ints positioning}, $F_j\in\Theta(F_p,F_0)=\Theta(F_0,F_0)=\{F_0\}$. This contradicts the fact that $F_j\neq F_i=F_0$. Hence when $I_0=\{p\}$, $H^V_{F_0}(q)=V\setminus \{p\}$. This completes the proof of the \hyperlink{lem6.26-subclaim}{sub-claim} that $\typemark{F_i}{H^V_{F_i}(q)}{V}$ is between $\typemark{F_0}{\{p\}}{V}$ and $\typemark{F_j}{H^V_{F_j}(p)}{V}$ in the sense of Definition \ref{in between for ASep}.

It follows from the above that $p\in(V\setminus H^V_{F_i}(q))\subset H^V_{F_j}(p)$ and $q\in (V\setminus H^V_{F_j}(p))\subset H^V_{F_i}(q)\subset V\setminus\{p\}$. By Lemma \ref{prim decomp}, $\stype{H^V_{F_i}(q)}{V}\in\PSep_V(F_i)$ and $\stype{H^V_{F_j}(p)}{V}\in\PSep_V(F_j)$ are the unique separation types mentioned in the third assertion of Lemma \ref{key lem for ASep} with respect to the pair $F_i, F_j$

The above arguments proved the first inclusion in the \hyperlink{claim}{claim} that $p\in (V\setminus H^V_{F_i}(q))\subset  H^V_{F_j}(p)\subset V\setminus H^V_{F_j}(q)\subset V\setminus\{q\}$. The second and the third inclusion follow  immediately from Lemma \ref{prim decomp}, $p\not\in H^V_{F_l}(q)$ and $q\not\in H^V_{F_l}(p)$.
\end{proof}

Back to the proof of Lemma \ref{unique shortest paths}. 
We let $P_j^p=\typemark{F_j}{H_{F_j}^V(p)}{V}$ and $P_j^q=\typemark{F_j}{H_{F_j}^V(q)}{V}$ for any $0\leq j\leq k+1$. For any $Q', Q''\in\cV(\bG_V)$, we use the notation $Q'\sim Q''$ when either $Q'=Q''$ or $Q'\mbox{---}Q''\in\cE(\bG_V)$. Then when $F_p\neq F_q$, the first assertion in Lemma \ref{reinterpretation of edges} and the above \hyperlink{claim}{claim} imply that
$$\bP_{p,q,V}=
\begin{cases}
\displaystyle P_0^p\mbox{---}P_1^p\sim P_1^q\mbox{---}P_2^p\sim P_2^q\mbox{---}\cdots\mbox{---}P_k^p\sim P_k^q\mbox{---}P_{k+1}^q,~&\mathrm{when~}P_0^q, P_{k+1}^p\not\in\cV(\bG_V), \\
\displaystyle P_0^p\sim P_0^q\mbox{---}P_1^p\sim P_1^q\mbox{---}\cdots\mbox{---}P_k^p\sim P_k^q\mbox{---}P_{k+1}^q,~&\mathrm{when~}P_0^q\in \cV(\bG_V), P_{k+1}^p\not\in\cV(\bG_V),\\
\displaystyle P_0^p\mbox{---}P_1^p\sim P_1^q\mbox{---}P_2^p\sim P_2^q\mbox{---}\cdots\mbox{---}P_{k+1}^p\sim P_{k+1}^q,~&\mathrm{when~}P_0^q\not\in\cV(\bG_V), P_{k+1}^p\in\cV(\bG_V),\\
\displaystyle P_0^p\sim P_0^q\mbox{---}P_1^p\sim P_1^q\mbox{---}\cdots\mbox{---}P_{k+1}^p\sim P_{k+1}^q,~&\mathrm{when~}P_0^q, P_{k+1}^p\in\cV(\bG_V).
\end{cases}
$$
In particular, $\bP_{p,q,V}$ is a path connecting $Q_p=\typemark{F_p}{\{p\}}{V}=P^p_0$ and $Q_q=\typemark{F_q}{\{q\}}{V}=P^q_{k+1}$. Directly from the above analysis on $\bP_{p,q,V}$ and the above \hyperlink{claim}{claim}, for any vertices $Q_1,Q_2,Q_3\in\cV(\bP_{p,q,V})$, $Q_2$ is between $Q_1$ and $Q_3$ in the sense of Definition \ref{in between for ASep} if and only if the shortest path connecting $Q_1$ and $Q_3$ in $\bP_{p,q,V}$ contains $Q_2$.

It remains for us to prove that $\bP_{p,q,V}$ is the unique shortest path connecting $Q_p$ and $Q_q$. Let $\bP=Q_0:=Q_p\mbox{---}Q_1\mbox{---}\cdots\mbox{---}Q_m\mbox{---}Q_q=:Q_{m+1}$ be any other shortest path connecting $Q_p$ and $Q_q$ with $Q_j=\typemark{F_j'}{I_j'}{V}$. Then $Q_0,...,Q_{m+1}$ are distinct vertices. Moreover, $Q_j$ and $Q_{j+2}$ are not connected by a single edge in $\bG_V$. By Lemma \ref{sep ordering}, we can assume WLOG that $\{p\}=I_0'\subset I_1'\subset...\subset I_m'\subset I_{m+1}'=V \setminus \{q\}.$ (To be specific, by the remark after Lemma \ref{properties of Theta sep} and Lemma \ref{prim decomp}, we first assume WLOG that $I_0'=\{p\}$ and for any $0\leq j\leq m+1$, either $I_j'\subset I_{j+1}'$ or $I_{j+1}'\subset I_j'$.  Let $l$ be the smallest positive integer such that $I_j'\neq I_0'$. Since $I_j'\neq \emptyset$, $I_0'\subsetneq I_j'$. We then apply Lemma \ref{sep ordering} to $Q_{j-1},Q_j,...,Q_{m+1}$ and obtain $I_{j-1}'\subsetneq I_j'\subset...\subset I_{m+1}'$. In particular, $I_{m+1}'=V\setminus \{q\}$.) Hence $Q_j\in\cV_{p,q,V}=\cV(\bP_{p,q,V})$ for any $0\leq j\leq m+1$ and the path $Q_p\mbox{---}Q_1\mbox{---}\cdots\mbox{---}Q_m\mbox{---}Q_q$ is contained in $\bP_{p,q,V}$. Since $\bP_{p,q,V}$ is a path and therefore a tree, the path in $\bP_{p,q,V}$ connecting any $2$ vertices is unique. Therefore $Q_p\mbox{---}Q_1\mbox{---}\cdots\mbox{---}Q_m\mbox{---}Q_q=\bP_{p,q,V}$. This finishes the proof.
\end{proof}

Since $\bP_{p,q,V}$ is a tree, for any $Q_1\neq Q_2\in\cV(\bP_{p,q,V})$, there exist a unique path connecting $Q_1$ and $Q_2$ in $\bP_{p,q,V}$. We denote by $\Path_{p,q,V}(Q_1,Q_2)$ this unique path connecting $Q_1$ and $Q_2$ in $\bP_{p,q,V}$. In particular,
$$\bP_{p,q,V}=\begin{cases}
\Path_{p,q,V}(p,q),&~\text{if } |V|=2\text{ and }|\cF(V)|=1,\\
\Path_{p,q,V}(\typemark{F_p}{\{p\}}{V},\typemark{F_q}{\{q\}}{V}),&~\text{otherwise}.
\end{cases}$$
We have the following immediate corollary.
\begin{corollary}\label{remaining properties of sep graph}
Let $V\subset \Gamma x_0$ be a finite subset with $|V|\geq 2$.
\begin{enumerate}
\item[(1).] {Assume that $|V|>2$ or $|\cF(V)|>1$.} For any distinct $Q_1, Q_2\in\cV(\bG_V)$ with $Q_j=\typemark{F_j}{I_j}{V}$, $j=1,2$, WLOG we assume that $I_1\subset I_2$ (guaranteed by the third assertion in Lemma \ref{properties of Theta sep}). Then, $Q_1$ and $Q_2$ are connected by a unique shortest path. Moreover, for any $p\in I_1$ and $q\in V\setminus I_2$, this shortest path equals $\Path_{p,q,V}(Q_1, Q_2)$.
\item[(2).] All MCS of $\bG_V$ have at least $2$ vertices. Also, for any $Q\in\cV(\bG_V)$, $Q$ is contained in exactly 1 MCS of $\bG_V$ if and only if {$$Q\in\begin{cases}
\displaystyle \cV(\bG_V)=V,&\mathrm{if}~|V|=2\mathrm{~and~}|\cF(V)|=1, \\
\displaystyle \del\cA_\Sep(V)=\{\typemark{F_p}{\{p\}}{V}|p\in V\},~&\mathrm{if}~|V|>2\mathrm{~or~}|\cF(V)|>1.
\end{cases}$$}
\item[(3).] $\bG_V=\displaystyle\bigcup_{\substack{p,q:p,q\in V\\ p\neq q}}\bP_{p,q,V}$. In particular, $\bG_V$ is a connected graph (i.e. every two distinct vertices are joined by a path in the graph);
\item[(4).] For any $G\in\MCS(\bG_V)$ and any distinct $p, q\in V$, either $G\ints \bP_{p,q,V}=\emptyset$, or $G\ints\bP_{p,q,V}$ contains exactly 2 vertices (and hence exactly one edge).
\item[(5).] {Assume that $|V|>2$ or $|\cF(V)|>1$.} For any $G\in\MCS(\bG_V)$ whose vertices are given by $Q_j=\typemark{F_j}{I_j}{V}$, $1\leq j\leq k$, (assuming all $Q_j$'s are distinct), there exist $I_j'\in\stype{I_j}{V}$, $1\leq j\leq k$ such that $I_1',...,I_k'$ are pairwise disjoint and $\union_{j=1}^kI_j'=V$. {In particular, $|\cV(G)|\leq |V|$.}
\end{enumerate}
\end{corollary}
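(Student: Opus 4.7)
All five parts will follow from the structural results already developed for $\bG_V$, particularly Lemma \ref{sep ordering}, Lemma \ref{reinterpretation of edges}, Proposition \ref{key prop of ASep graph}, Lemma \ref{unique shortest paths}, and Lemma \ref{bdry=vertex}. The degenerate case $|V|=2$ with $|\cF(V)|=1$ is immediate from the definitions, so the substantive argument is in the remaining case, which I assume throughout.

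For (1), given $Q_1,Q_2$ with $I_1\subset I_2$, I will pick $p\in I_1$ and $q\in V\setminus I_2$, so $Q_1,Q_2\in\cV_{p,q,V}$ and there is a unique path between them in the tree $\bP_{p,q,V}$. To see this is the unique shortest path in $\bG_V$, I would take any candidate shortest path $Q_1=P_0\mbox{---}\cdots\mbox{---}P_m=Q_2$ and, after passing to monotone representatives via the third assertion of Lemma \ref{properties of Theta sep}, apply Lemma \ref{sep ordering} to force $I_1\subset I_j'\subset I_2$ for every intermediate vertex; this forces $P_j\in\cV_{p,q,V}$, and uniqueness follows since $\bP_{p,q,V}$ is a tree. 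Part (3) is then a direct packaging: every vertex lies in some $\cV_{p,q,V}$ (pick $p\in I$, $q\in V\setminus I$) and every edge lies in some $\bP_{p,q,V}$ via the monotone representatives, which both yields the union decomposition and, since each $\bP_{p,q,V}$ is a path, gives connectedness of $\bG_V$.

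For (2), connectedness forces every MCS to have at least two vertices. For the characterization, if $Q=\typemark{F_p}{\{p\}}{V}$ is a boundary vertex, Lemma \ref{bdry=vertex} prevents it from lying strictly between two other vertices of $\cV(\bG_V)$; the second assertion of Lemma \ref{reinterpretation of edges} then shows every pair of neighbors of $Q$ is adjacent, placing them all into a single MCS with $Q$. Conversely, if $Q\in\AnPSep(V)\setminus\del\cA_\Sep(V)$, choosing $p\in I$, $q\in V\setminus I$ makes $Q$ an interior vertex of $\bP_{p,q,V}$ with non-adjacent path-neighbors $Q^{\pm}$ (else the path would not be shortest), so Lemma \ref{reinterpretation of edges}(2) places $Q$ strictly between them and gives two distinct MCS. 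Part (4) uses the triangle-free structure of a path-tree for the upper bound $|G\cap\bP_{p,q,V}|\leq 2$; for the lower bound I apply (2): a path-endpoint in $G$ is a boundary vertex whose unique MCS also contains its path-neighbor, while an interior $Q\in G\cap\bP_{p,q,V}$ is in exactly two MCS (one on each side of the path) and whichever side $G$ lies on provides the second vertex.

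Part (5) is where I expect the principal difficulty. The plan is to fix a reference $x_0\in V$ whose boundary vertex $\typemark{F_{x_0}}{\{x_0\}}{V}$ lies in $G$ (existence I anticipate obtaining from maximality of $G$ combined with (4)) and to set $I_j'\in\stype{I_j}{V}$ to be the side \emph{not} containing $x_0$; Lemma \ref{prim decomp} identifies $V\setminus I_j'$ with the primitive piece $H_{F_j}^V(x_0)$. Pairwise disjointness will then follow from the three-wise triangle relation among vertices of $G$ guaranteed by Lemma \ref{reinterpretation of edges}(2), together with Lemma \ref{prim decomp} for the subcase where distinct $Q_j$'s share the same underlying submanifold. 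For the covering $\bigsqcup_j I_j'=V$, I would suppose some $x\in V\setminus\bigcup_j I_j'$ and apply (4) to $\bP_{x,x_0,V}$: the two path-vertices in $G$ separate $x$ from $x_0$, which must force $x\in I_j'$ for one of them, giving a contradiction. The main obstacle is globalizing the three-wise triangle relation into a consistent simultaneous choice of representatives across all of $G$, and the explicit identification via primitive pieces is what I expect to resolve this bookkeeping.
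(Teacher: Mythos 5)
Parts (1)--(4) are sound and follow the paper's structure with only small variations. For (1) you re-run the argument of the final paragraph of Lemma \ref{unique shortest paths} (pass to monotone representatives, apply Lemma \ref{sep ordering}, land in the tree $\bP_{p,q,V}$), whereas the paper instead glues $\Path_{p,q,V}(Q_p,Q_1)$, the candidate shortest path $\bP$, and $\Path_{p,q,V}(Q_2,Q_q)$ into a walk from $Q_p$ to $Q_q$ and invokes the already-established uniqueness of $\bP_{p,q,V}$ to force equality of lengths; the latter is shorter but the routes are interchangeable. Your (2), (3), (4) match the paper's in substance. One small caveat for (1): when you apply Lemma \ref{sep ordering} you still need to handle the possibility that several initial vertices share the same $I$-side, as the paper does in Lemma \ref{unique shortest paths} by passing to the first index where $I_j'$ changes; but this is a detail, not a gap.

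Part (5) is where the proposal breaks. The anchor of your plan---existence of $x_0\in V$ with $\typemark{F_{x_0}}{\{x_0\}}{V}\in\cV(G)$---is false. By (2) each element of $\del\cA_\Sep(V)$ lies in a single MCS, so at most $|V|$ maximal complete subgraphs can meet $\del\cA_\Sep(V)$, yet $\bG_V$ typically has more. Concretely, in Example \ref{ex:triangle} with $m_0,m_1,m_2\ge 1$ the center MCS $G_{\mathrm{center}}$ has vertex set $\{Q_{0;m_0},Q_{1;m_1},Q_{2;m_2}\}$, none of which is a boundary vertex. Your proposed assignment also has an internal inconsistency even when a boundary vertex does exist: if $Q_{j_0}=\typemark{F_{x_0}}{\{x_0\}}{V}\in\cV(G)$ then the side of $\stype{I_{j_0}}{V}$ not containing $x_0$ is $V\setminus\{x_0\}$, so $I_{j_0}'=V\setminus\{x_0\}$ cannot be disjoint from the other $I_j'$'s and simultaneously $x_0\notin\bigcup_j I_j'$, so the union can never be all of $V$. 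Finally, the claimed identification $V\setminus I_j' = H_{F_j}^V(x_0)$ is not automatic: a primitive separation type has exactly one side which is a primitive piece, and that side need not be the one containing $x_0$. The paper avoids all this by not choosing a reference $x_0\in V$ at all. Instead it fixes two vertices $Q_1,Q_2\in\cV(G)$, extracts the unique disjoint pair $I_1'\in\stype{I_1}{V}$, $I_2'\in\stype{I_2}{V}$, uses this uniqueness to show that the triples $\{Q_1,Q_2,Q_j\}$ and $\{Q_1,Q_j,Q_l\}$ all produce the \emph{same} representatives $I_1',I_2'$, and thereby globalizes the three-wise triangle relation from Lemma \ref{reinterpretation of edges}(2) into a single consistent family. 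Your covering argument via (4) applied to $\bP_{p,x,V}$ (with $p\in I_1$) is essentially what the paper does and would survive once the disjoint representatives are correctly produced.
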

\begin{proof}
\begin{enumerate}
\item[(1).] Let $\bP$ be a shortest path connecting $Q_1$ and $Q_2$ in $\bG_V$, since $Q_1,Q_2\in\cV(\bP_{p,q,V})$ and $\bP_{p,q,V}$ is a path connecting $Q_p,Q_q$ , WLOG we assume that $\Path_{p,q,V}(Q_p, Q_1)$ and $\Path_{p,q,V}(Q_2, Q_q)$ are disjoint. Then by Lemma \ref{unique shortest paths} (used in the first equality), we have
\begin{align*}
&\length(\Path_{p,q,V}(Q_p, Q_1))+\length(\Path_{p,q,V}(Q_1,Q_2))+\length(\Path_{p,q,V}(Q_2, Q_q))\\
=&d(Q_p,Q_q)
\leq \length(\Path_{p,q,V}(Q_p, Q_1))+\length(\bP)+\length(\Path_{p,q,V}(Q_p, Q_1)).
\end{align*}
Since $\bP$ be a shortest path connecting $Q_1$ and $Q_2$ in $\bG_V$, the above inequality sign is actually equal. By Lemma \ref{unique shortest paths} again, we have $\bP=\Path_{p,q,V}(Q_1,Q_2)$.
\item[(2).] {The assertion is obviously true if $|V|=2$ and $|\cF(V)|=1$. Therefore we assume that $|V|>2$ or $|\cF(V)|>1$.} For any $Q=\typemark{\hF}{I}{V}\in\cV(\bG_V)$, let $p\in I$ and $q\in V\setminus I$. Then $Q\in\cV(\bP_{p,q,V})$. In particular, $Q$ is not an isolated vertex (i.e. a vertex which is not an endpoint of any edge). Therefore any MCS of $\bG_V$ has at least $2$ vertices.

If $Q\not\in\del\cA_\Sep(V)=\{\typemark{F_p}{\{p\}}{V}|p\in V\}$, there exist $Q_1\neq Q_2\in\cV(\bP_{p,q,V})$ such that $Q\mbox{---}Q_1,Q\mbox{---}Q_2\in\cE(\bP_{p,q,V})$. Let $G_j\in\MCS(\bG_V)$ such that $Q\mbox{---}Q_j\in\cE(G_j)$, $j=1,2$. Notice that by Lemma \ref{unique shortest paths}, $Q_1$ and $Q_2$ are not connected by a single edge in $\bG_V$, we have $G_1\neq G_2$. Hence by the second assertion in Proposition \ref{key prop of ASep graph}, $Q$ is contained in exactly $2$ MCS of $\bG_V$.

If $Q\in\del\cA_\Sep(V)=\{\typemark{F_p}{\{p\}}{V}|p\in V\}$ and there exist $2$ different MCS $G_1\neq G_2$ of $\bG_V$ containing $Q$, then we can find $Q_j\in\cV(G_j)\setminus\{Q\}$, $j=1,2$ such that $Q_1$ and $Q_2$ are not connected by a single edge in $\bG_V$. (Otherwise, the restriction of $\bG_V$ onto $\cV(G_1)\union \cV(G_2)$ is a complete subgraph of $\bG_V$. By maximality of MCS, $\cV(G_1)=\cV(G_2)$ and hence $G_1=G_2$, which contradicts $G_1\neq G_2$.) By the second assertion of Lemma \ref{reinterpretation of edges}, $Q$ is between $Q_1$ and $Q_2$ in the sense of Definition \ref{in between for ASep}. By Definition \ref{bdry} and the fact that $Q\in\del\cA_\Sep(V)$, $Q=Q_1$ or $Q_2$, which contradicts $Q_j\in\cV(G_j)\setminus\{Q\}$, $j=1,2$. Therefore $Q$ is contained in exactly $1$ MCS of $\bG_V$.
\item[(3).] {The assertion is obviously true if $|V|=2$ and $|\cF(V)|=1$. Therefore we assume that $|V|>2$ or $|\cF(V)|>1$.} For any $Q=\typemark{\hF}{I}{V}\in\cV(\bG_V)$, let $p\in I$ and $q\in V\setminus I$. Then $Q\in\cV(\bP_{p,q,V})$. Hence $\cV(\bG_V)=\displaystyle\bigcup_{\substack{p,q:p,q\in V\\ p\neq q}}\bP_{p,q,V}$.

For any $Q_1\mbox{---}Q_2\in\cE(\bG_V)$ with $Q_j=\typemark{F_j}{I_j}{V}$, $j=1,2$, by the third assertion in Lemma \ref{properties of Theta sep} and Lemma \ref{prim decomp}, we can assume WLOG that $I_1\subset I_2$. Choose $p\in I_1$ and $q\in V\setminus I_2$, then $Q_1\mbox{---}Q_2\in\cE(\bP_{p,q,V})$. This proves $\cE(\bG_V)=\bigcup_{\substack{p,q:p,q\in V\\ p\neq q}}\cE(\bP_{p,q,V})$. Hence $\bG_V=\displaystyle\bigcup_{\substack{p,q:p,q\in V\\ p\neq q}}\bP_{p,q,V}=\displaystyle\bigcup_{\substack{p,q:p,q\in V\\ p\neq q}} \Path_{p,q,V}(Q_p,Q_q)$.

\item[(4).] {The assertion is obviously true if $|V|=2$ and $|\cF(V)|=1$. Therefore we assume that $|V|>2$ or $|\cF(V)|>1$.} Let $G\in\MCS(\bG_V)$ and $p\neq q\in V$ such that $\cV(G\ints\bP_{p,q,V})\neq \emptyset$. Then by the fact that $\bP_{p,q,V}$ is a shortest path in $\bG_V$ (due to Lemma \ref{unique shortest paths}), $G\ints\bP_{p,q,V}$ contains at most 2 vertices. Moreover, when $G\ints\bP_{p,q,V}$ contains exactly 2 vertices, it also contains exactly 1 edge. Let $Q\in\cV(G\cap\bP_{p,q,V})$.

If in addtion, $Q\in\del\cA_{\Sep}(V)=\{\typemark{F_x}{\{x\}}{V}|x\in V\}$ (due to Lemma \ref{bdry=vertex}). Then by Lemma \ref{unique shortest paths}, $Q=\typemark{F_p}{\{p\}}{V}$ or $\typemark{F_q}{\{q\}}{V}$. Let $Q_1\in\cV(\bP_{p,q,V})\setminus\{Q\}$ be the unique vertex such that $Q_1\mbox{---} Q\in\cE(\bP_{p,q,V})$. By the first assertion in Proposition \ref{key prop of ASep graph}, there exist a unique $G'\in \MCS(\bG_V)$ containing $Q_1\mbox{---} Q$. By the second assertion in Corollary \ref{remaining properties of sep graph}, $G'=G$. In particular, $|\cV(G\ints\bP_{p,q,V})|= 2$ and $\cE(G\ints\bP_{p,q,V})|= 1$.

If in addition, $Q\not\in\del\cA_{\Sep}(V)=\{\typemark{F_x}{\{x\}}{V}|x\in V\}$, let $Q_j\in\cV(\bP_{p,q,V})\setminus\{Q\}$, $j=1,2$, be the only 2 vertices such that $Q_j\mbox{---} Q\in\cE(\bP_{p,q,V})$, $j=1,2$. Notice that by the first assertion in Proposition \ref{key prop of ASep graph}, for any $1\leq j\leq 2$,  there exist a unique $G_j'\in \MCS(\bG_V)$ containing $Q_j\mbox{---} Q$. By Lemma \ref{unique shortest paths}, $Q_1$ and $Q_2$ are not connected by a single edge in $\bG_V$. Therefore $G_1'\neq G_2'$. By the second assertion in Corollary \ref{remaining properties of sep graph} and the second assertion in Proposition \ref{key prop of ASep graph}, $G\in\{G_1',G_2'\}$. In particular, $|\cV(G\ints\bP_{p,q,V})|= 2$ and $|\cE(G\ints\bP_{p,q,V})|= 1$. This completes the proof.

\item[(5).] We first prove that there exist $I_j'\in\stype{I_j}{V}$, $1\leq j\leq k$, such that they are pairwise disjoint. When $k=2$, this is guaranteed by the third assertion in Lemma \ref{properties of Theta sep}. When $k\geq 3$, by the second assertion of Lemma \ref{reinterpretation of edges}, for any $3\leq j\leq k$, there exist $I_{1,j}'\in\stype{I_1}{V}$, $I_{2,j}'\in\stype{I_2}{V}$ and $I_j'\in\stype{I_j}{V}$ such that $I_{1,j}',I_{2,j}'$ and $I_j'$ are pairwise disjoint. In particular, $I_{1,j}'\subsetneq V\setminus I_{2,j}'$ and $I_{2,j}'\subsetneq V\setminus I_{1,j}'$. This implies that $I_{1,3}'=I_{1,4}'=...=I_{1,k}'=:I_1'$ and $I_{2,3}'=I_{2,4}'=...=I_{2,k}'=:I_2'$ (due to the uniqueness of the choices of $I_{1}'\in\stype{I_1}{V}$ and $I_{2}'\in\stype{I_2}{V}$ such that $I_1'\ints I_2'=\emptyset$ and $I_1'\subsetneq V\setminus I_2'$). Then for any $3\leq j\neq l\leq k$, by the second assertion of Lemma \ref{reinterpretation of edges}, there exist $I_{1,jk}'\in\stype{I_1}{V}$, $I_{j,jk}'\in\stype{I_j}{V}$ and $I_{k,jk}'\in\stype{I_k}{V}$ such that $I_{1,jk}'$, $I_{j,jk}'$ and $I_{k,jk}'$ are pairwise disjoint. By similar arguments, we have $I_1'=I_{1,jk}'$, $I_j'=I_{j,jk}'$ and $I_k'=I_{k,jk}'$. Therefore there exist $I_j'\in\stype{I_j}{V}$, $1\leq j\leq k$, such that $I_1',...,I_k'$ are pairwise disjoint. WLOG, we assume that $I_j=I_j'$ for any $1\leq j\leq k$.

It remains for us to prove that $V=\union_{j=1}^k I_j$. If not, then there exist $x$ such that $x\in V\setminus(\union_{j=1}^k I_j)=\ints_{j=1}^k (V\setminus I_j)$. Choose $p\in I_1$. Then $p\neq x$ and $\{p,x\}\in \ints_{j=2}^k (V\setminus I_j)$. This implies that $Q_2,...,Q_k\not\in\cV(\bP_{p,x,V})$. Notice that $Q_1\in\cV(\bP_{p,x,V})$, we have $\cV(G\ints\bP_{p,x,V})=\{Q_1\}$, contradicting the fourth assertion of Corollary \ref{remaining properties of sep graph}. Hence $V=\union_{j=1}^k I_j$. \qedhere
\end{enumerate}
\end{proof}
{
\begin{rmk}
By the remark after Definition \ref{graph of sep} and the first assertion in the above corollary, for any $\gamma\in\Gamma$ and any $Q_1,Q_2\in\cV(\bG_V)$, $\gamma \Path_{p,q,V}(Q_1,Q_2)=\Path_{\gamma p,\gamma q,\gamma V}(\gamma Q_1,\gamma Q_2)$.
\end{rmk}
}

\subsection{Relations between $\bG_V$ and $\bG_W$ when $W\subset V$}\label{subsec most annoying}
\begin{definition}[Restriction of $\Theta$-separation types]\label{res of types}
Let $W\subset V$ be finite subsets of $\Gamma x_0$ such that $|W|\geq 2$ {and $|V|\geq 3$.} We define
$$\cA_\Sep(V;W)=\{\typemark{\hF}{I}{V}\in\cA_\Sep(V)|I\ints W\neq \emptyset\mathrm{~and~}W\setminus I\neq \emptyset\},$$
$$\cA_\PSep(V;W)=\{\typemark{\hF}{I}{V}\in\cA_\PSep(V)|I\ints W\neq \emptyset\mathrm{~and~}W\setminus I\neq \emptyset\},$$
$$\AnSep(V;W)=\{\typemark{\hF}{I}{V}\in\AnSep(V)|I\ints W\neq \emptyset\mathrm{~and~}W\setminus I\neq \emptyset\},$$
and
$$\AnPSep(V;W)=\{\typemark{\hF}{I}{V}\in\AnPSep(V)|I\ints W\neq \emptyset\mathrm{~and~}W\setminus I\neq \emptyset\}.$$
See Notation \ref{sep type marking} for the relevant notations. Clearly $\AnPSep(V;W)\subset \AnSep(V;W), \cA_\PSep(V;W)\subset \cA_\Sep(V;W)$.

{When $|W|>2$ or $|\cF(W)|>1$}, we define the restriction map $\res_W^V:\cA_\Sep(V;W)\to\cA_\Sep(W)$ as
$$\res_W^V(\typemark{\hF}{I}{V})=\rtypemark{\hF}{I}{W}.$$

{When $|W|=2$ and $|\cF(W)|=1$, we assume that $W=\{p,q\}$, where $p\neq q\in\Gamma x_0$. For any $\typemark{\hF}{I}{V}\in\cA_\PSep(V;W)$, we have $\hF=F_p=F_q$. Since $\stype{\{p\}}{V},\stype{\{q\}}{V}\in\PSep(F_p)$, by Lemma \ref{prim decomp}, $\cA_\PSep(V;W)=\{\typemark{F_p}{\{p\}}{V},\typemark{F_p}{\{q\}}{V}\}$. In this case, we define $\res_W^V:\cA_\PSep(V;W)\to W$ such that
$$\res_W^V(\typemark{F_p}{\{p\}}{V})=p\mathrm{~and~}\res_W^V(\typemark{F_p}{\{q\}}{V})=q.$$}
\end{definition}
{
\begin{rmk}
\begin{enumerate}
\item Eventually, we would like to show that for any vertex $Q\in\bG_V$ such that $\res_W^V(Q)$ is well defined, $\res_W^V(Q)$ is a vertex in $\bG_W$. (See Lemma \ref{res of vertices}.) This explains why we define $\res_W^V$ in a very different way when $|W|=2$ and $|\cF(W)|=1$.
\item Under the above assumptions, by the third remark after Notation \ref{sep type marking}, one can verify that for any $\gamma\in\Gamma$ and any $Q\in\cA_\Sep(V;W)$, the map $Q\to\gamma Q$ satisfies the following.
\begin{itemize}
\item It gives a bijection from $\cA_\Sep(V;W)$ to $\cA_\Sep(\gamma V;\gamma W)$.
\item It gives a bijection from $\cA_\PSep(V;W)$ to $\cA_\PSep(\gamma V;\gamma W)$.
\item It gives a bijection from $\AnSep(V;W)$ to $\AnSep(\gamma V;\gamma W)$.
\item It gives a bijection from $\AnPSep(V;W)$ to $\AnPSep(\gamma V;\gamma W)$.
\item $\gamma\res_W^V(Q)=\res_{\gamma W}^{\gamma V}(\gamma Q)$, whenever $\res_W^V(Q)$ is well-defined.
\end{itemize}
\end{enumerate}
\end{rmk}
}
\begin{definition}[$W$-face of $\bG_V$]\label{W-face of graph}
Let $W\subset V$ be finite subsets of $\Gamma x_0$ such that $|W|\geq 2$ {and $|V|\geq 3$.} We define the \emph{$W$-face of $\bG_V$}, denoted by $\bG_{V;W}$, as the restriction of $\bG_V$ onto $\cV(\bG_V)\ints\cA_\PSep(V;W)$.

{In particular, for any $\gamma\in \Gamma$, by the remark after Definition \ref{res of types}, $\gamma\bG_{V;W}=\bG_{\gamma V;\gamma W}$.}
\end{definition}
\begin{lemma}\label{res of vertices}
Let $W\subset V$ be finite subsets of $\Gamma x_0$ such that $|W|\geq 2$ {and $|V|\geq 3$}. Then the following holds.
\begin{enumerate}
\item[(1).] {For any $\hF\in\cA_0(V)\ints\cF(W)$ and any $p\in W$, we have $\hF\in\cA_0(W)$ and $H_\hF^W(p)=H_\hF^V(p)\ints W$}. As a direct corollary of this, when $|W|>2$ or $|\cF(W)|>1$, $\res_W^V(\AnPSep(V;W))\subset \AnPSep(W)$.
\item[(2).] $\del \cA_\PSep(V;W)=\del \cA_\Sep(V;W)=\del\cA_\Sep(V)\ints\cA_\Sep(V;W)=\{\typemark{F_p}{\{p\}}{V}|p\in W\}$. Therefore, when $|W|>2$ or $|\cF(W)|>1$, we have
$$\res_W^V(\del \cA_\PSep(V;W))=\res_W^V(\del \cA_\Sep(V;W))=\del \cA_\PSep(W)=\del\cA_\Sep(W).$$
\end{enumerate}
As an immediate corollary of the first two assertions, {Definition \ref{res of types} in the case when $|W|=2$ and $|\cF(W)|=1$} and Definition \ref{graph of sep}, $\res_W^V(\cV(\bG_{V;W}))\subset\cV(\bG_W)$. Moreover, the following also holds.
\begin{enumerate}
\item[(3).] For any $Q_j=\typemark{F_j}{I_j}{V}\in\cV(\bG_{V;W})$, $j=1,2,3$, $\res_W^V(Q_2)$ is between $\res_W^V(Q_1)$ and $\res_W^V(Q_3)$ in the sense of Definition \ref{in between for ASep} if and only if one of the following holds:
\begin{itemize}
\item $Q_2$ is between $Q_1$ and $Q_3$ in the sense of Definition \ref{in between for ASep}.
\item $\res_W^V(Q_2)\in\{\res_W^V(Q_1),\res_W^V(Q_3)\}$.
\end{itemize}
\end{enumerate}
\end{lemma}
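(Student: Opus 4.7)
My plan is to prove the three assertions in order, with the first two being direct consequences of the $H$-decomposition (Lemma \ref{prim decomp}) and the boundary characterization (Lemma \ref{bdry=vertex}), while the third requires more delicate bookkeeping. I expect Part (3) to be the main obstacle, and will handle its harder direction via the dichotomy supplied by Lemma \ref{key lem for ASep}.

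For Part (1): given $\hF\in\cA_0(V)\ints\cF(W)$ and any pair $x,y\in W$ with $\hF\in\Theta(F_x,F_y)$, I will take a $V$-separation type $\stype{I}{V}\in\Sep_V(\hF)$ witnessing $x\in I$, $y\in V\setminus I$ (which exists because $\hF\in\cA_0(V)$) and observe that $\stype{I\ints W}{W}\in\Sep_W(\hF)$ witnesses the $W$-version, proving $\hF\in\cA_0(W)$. The identity $H_\hF^W(p)=H_\hF^V(p)\ints W$ will then follow from Lemma \ref{prim decomp}: the primitive pieces $\{H_\hF^V(p_j)\}_j$ of $V$ restrict to a partition of $W$ by nonempty pieces, and by the restriction argument above any two such pieces in $W$ are separated by a $W$-separation type coming from a $V$-separation type, so they coincide with the primitive pieces of the $H$-decomposition of $W$.

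For Part (2): Lemma \ref{bdry=vertex} applied to $V$ gives $\del\cA_\Sep(V)\ints\cA_\Sep(V;W)=\{\typemark{F_p}{\{p\}}{V}\mid p\in W\}$, the condition $p\in W$ singling out exactly those boundary singletons whose unique vertex lies in $W$. Because $\cA_\Sep(V;W)\subset\cA_\Sep(V)$, any in-between sandwich inside $\cA_\Sep(V;W)$ is also a sandwich in $\cA_\Sep(V)$, so the right-hand side is contained in $\del\cA_\Sep(V;W)$; for the reverse inclusion I will sandwich any non-singleton $\typemark{\hF}{I}{V}\in\cA_\Sep(V;W)$ between $\typemark{F_p}{\{p\}}{V}$ and $\typemark{F_q}{\{q\}}{V}$ with $p\in I\ints W$ and $q\in W\setminus I$, both of which lie in $\cA_\PSep(V;W)\subset\cA_\Sep(V;W)$. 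The same sandwich argument, using that the sandwiching elements are already primitive, also establishes $\del\cA_\PSep(V;W)=\del\cA_\Sep(V;W)$. The final displayed chain of equalities in assertion (2) then follows by applying $\res_W^V$ and invoking Lemma \ref{bdry=vertex} for $W$.

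For Part (3): the ``if'' direction is monotonicity of restriction: if $Q_2$ is between $Q_1,Q_3$ in $V$ then WLOG $I_1\subset I_2\subset I_3$, so $I_1\ints W\subset I_2\ints W\subset I_3\ints W$, while $F_2\in\Theta(F_1,F_3)$ is independent of the ambient vertex set; the degenerate case $|W|=2$, $|\cF(W)|=1$ will be checked directly against the ad hoc in-between relations in Definition \ref{in between for ASep}. For the harder ``only if'' direction, I assume $\res_W^V(Q_2)$ is between the other two restrictions and that $\res_W^V(Q_2)\not\in\{\res_W^V(Q_1),\res_W^V(Q_3)\}$; then $F_2\in\Theta(F_1,F_3)$. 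When $F_2\in\{F_1,F_3\}$, the primitivity of the $Q_j$'s (they lie in $\cA_\PSep(V)$) together with Lemma \ref{prim decomp} will force the $I_j$'s to be comparable in exactly the way needed to place $Q_2$ between $Q_1$ and $Q_3$. When $F_1,F_2,F_3$ are pairwise distinct the first assertion of Lemma \ref{key lem for ASep} gives the dichotomy that either $\stype{I_1}{V}$, $\stype{I_2}{V}$, $\stype{I_3}{V}$ are in triangle relation, or $\typemark{F_2}{I_2}{V}$ is between $\typemark{F_1}{I_1}{V}$ and $\typemark{F_3}{I_3}{V}$. The main obstacle is ruling out the triangle-relation alternative; I will do so by restricting a hypothetical triangle configuration to $W$ and arguing that either the restricted types remain in triangle relation (contradicting that $\res_W^V(Q_2)$ lies between the other two restrictions) or one of the restricted pieces collapses so that $\res_W^V(Q_2)$ coincides with a restricted neighbour (contradicting the standing assumption). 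This leaves the in-between alternative, completing the proof.
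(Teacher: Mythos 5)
Your overall strategy tracks the paper's, but two of your three parts have genuine gaps.

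\textbf{Part (1).} You show (correctly) that the pieces $H_\hF^V(p)\cap W$ form a partition of $W$ that is separated pairwise by restricted $V$-types, i.e.\ the $H$-decomposition of $W$ is at least as \emph{coarse} as $\{H_\hF^V(p)\cap W\}_{p\in W}$. But this only gives the easy inclusion $H_\hF^W(p)\subset H_\hF^V(p)\cap W$. What you never rule out is that the $W$-decomposition is \emph{strictly finer}: there could a priori exist $x,y\in H_\hF^V(p)\cap W$ separated by some $\stype{J}{W}\in\Sep_W(\hF)$ that does not arise as a restriction of any $V$-type. Your sentence ``so they coincide with the primitive pieces of the $H$-decomposition of $W$'' does not follow from what precedes it. Closing this gap requires using $\hF\in\cA_0(V)$ a second time: such a $J$ would force $\hF\in\Theta(F_x,F_y)$, hence $(\hF,\{x,y\})\in\widetilde\cF(V)$, and since $\hF\in\cA_0(V)$ there is then a $V$-type separating $x$ from $y$, contradicting $x,y\in H_\hF^V(p)$. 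This is exactly the paper's argument, and it is the crux of the identity.

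\textbf{Part (2).} Your sketch is correct and is essentially the paper's argument.

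\textbf{Part (3), ``only if'' direction.} Two issues. First, your plan to rule out the triangle alternative by ``restricting a hypothetical triangle configuration to $W$'' is more complicated than needed and I am not convinced the case analysis you outline closes: since all three $Q_j$ lie in $\cV(\bG_{V;W})$, choosing $p\in I_1\cap W$ and $q\in W\setminus I_3$ (after arranging $I_1\cap W\subset I_2\cap W\subset I_3\cap W$) gives $p\in I_1\cap I_2\cap I_3$ and $q\in(V\setminus I_1)\cap(V\setminus I_2)\cap(V\setminus I_3)$, which by pigeonhole directly forbids three pairwise-disjoint representatives. You should use this direct argument. Second, and more seriously, the subcase $F_2\in\{F_1,F_3\}$ with $F_1\neq F_3$ is not handled by ``primitivity of the $Q_j$'s together with Lemma \ref{prim decomp}''. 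In this subcase the $I_j$'s are primitive for two \emph{different} $\hF$'s, and comparability of $I_2$ with $I_3$ is not an automatic consequence of Lemma \ref{prim decomp}. The paper's proof of this subcase is genuinely delicate: it first shows $F_1\in\cA_0(V)\cap\cF(W)$ (which is not immediate — it uses that $Q_1\neq Q_2$ and $F_1\neq F_p$), then invokes assertion (1) of the present lemma to identify $H_{F_1}^W(\cdot)$ with $H_{F_1}^V(\cdot)\cap W$, and finally chains the nested inclusions supplied by the explicit Claim in the proof of Lemma \ref{unique shortest paths}. An attempt to execute your one-sentence plan here would almost certainly stall. You should also separately treat the degenerate subcase $F_1=F_2=F_3$, where Lemma \ref{prim decomp} together with the choice of $p$ and $q$ pins $Q_1,Q_2,Q_3$ down to at most two possible vertices, contradicting distinctness.
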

\begin{proof}
\begin{enumerate}
\item[(1).] {It suffices to verify the following two assertions.
\begin{itemize}
\item For any $\hF\in\cA_0(V)\ints\cF(W)$, we have $\hF\in \cA_0(W)$; (See Notation \ref{sep type marking}.)
\item Under the above assumptions, for any $p\in W$, we have $H_\hF^W(p)=H_\hF^V(p)\ints W$.
\end{itemize}
}

We first verify that $\hF\in \cA_0(W)$. $\hF\in\cF(W)$ implies that there exist distinct $p,q\in W$ such that $\hF\in\Theta(F_p,F_q)$. Recall Notation \ref{edge marking} and Notation \ref{sep type marking}, for any distinct $p, q\in W$ such that $\hF\in\Theta(F_p,F_q)\Leftrightarrow (\hF,\{p,q\})\in\widetilde\cF(W)$, by the fact that $\hF\in\cA_0(V)$, there exist $\stype{I'}{V}\in\Sep_V(\hF)$ such that $p\in I'$ and $q\in V\setminus I'$. Hence $p\in I'\ints W$ and $q\in W\setminus I'$. Moreover, $\stype{I'}{V}\in\Sep_V(\hF)$ implies that $\rstype{I'}{W}\in\Sep _W(\hF)$ and therefore $(\hF,\{p,q\})\in\widetilde\cA(W)$. (See Notation \ref{edge marking}.) This proves that $\hF\in \cA_0(W)$.

{By Lemma \ref{prim decomp}, we always have $H^W_\hF(p)\subset H^V_\hF(p)\ints W$. If $H^W_\hF(p)\subsetneq H^V_\hF(p)\ints W$, we choose $q\in (H^V_\hF(p)\ints W)\setminus H^W_\hF(p)$. Then there exist $\stype{J}{W}\in\Sep_W(\hF)$ such that $p\in J$ and $q\in W\setminus J$. In particular, $\hF\in\Theta(F_p,F_q)$. By the fact that $\hF\in\cA_0(V)$, there exists $\stype{I'}{V}$ such that $p\in I'$ and $q\in V\setminus I'$. By the definition of $H^V_\hF(\cdot)$ in Lemma \ref{prim decomp}, we have $q\not\in H^V_\hF(p)$, contradictory to $q\in (H^V_\hF(p)\ints W)\setminus H^W_\hF(p)$. Therefore $H^W_\hF(p)= H^V_\hF(p)\ints W$.}
\item[(2).] By Definition \ref{bdry} and Lemma \ref{bdry=vertex}, we have
$$\del \cA_\PSep(V;W),\del \cA_\Sep(V;W)\supset\del\cA_\Sep(V)\ints\cA_\Sep(V;W)=\{\typemark{F_p}{\{p\}}{V}|p\in W\}.$$
On the other hand, for any element $\typemark{\hF}{I}{V}$ in $\del\cA_\Sep(V;W)$ or $\del\cA_\PSep(V;W)$, we choose $x\in I\ints W$ and $q\in W\setminus I$. Then $\typemark{\hF}{I}{V}$ is between $\typemark{F_x}{\{x\}}{V}$ and $\typemark{F_q}{\{q\}}{V}$ in the sense of Definition \ref{in between for ASep}. Since both $\typemark{F_x}{\{x\}}{V}$ and $\typemark{F_q}{\{q\}}{V}$ are in $\cA_\PSep(V;W)\ints \cA_\Sep(V;W)$, by Definition \ref{bdry}, $\typemark{\hF}{I}{V}\in \{\typemark{F_p}{\{p\}}{V}|p\in W\}$. This proves that
$$\del \cA_\PSep(V;W)=\del \cA_\Sep(V;W)=\del\cA_\Sep(V)\ints\cA_\Sep(V;W)=\{\typemark{F_p}{\{p\}}{V}|p\in W\}.$$
\item[(3).] When $|W|=2$ and $|\cF(W)|=1$, we have $\cV(\bG_W)=W$. One can easily check that the second bullet point holds if and only if $\res_W^V(Q_2)$ is between $\res_W^V(Q_1)$ and $\res_W^V(Q_3)$ in the sense of Definition \ref{in between for ASep}. Therefore, we only need to consider the case when $|W|>2$ or $|\cF(W)|>1$.

If $Q_2$ is between $Q_1$ and $Q_3$ in the sense of Definition \ref{in between for ASep}, then $F_2\in\Theta(F_1,F_3)$ and we can assume WLOG that $I_1\subset I_2\subset I_3$. Hence $(I_1\ints W)\subset (I_2\ints W)\subset (I_3\ints W)$. This shows that $\res_W^V(Q_2)$ is between $\res_W^V(Q_1)$ and $\res_W^V(Q_3)$ in the sense of Definition \ref{in between for ASep}. Clearly, $\res_W^V(Q_2)$ is between $\res_W^V(Q_1)$ and $\res_W^V(Q_3)$ in the sense of Definition \ref{in between for ASep} when $\res_W^V(Q_2)\in\{\res_W^V(Q_1),\res_W^V(Q_3)\}$.

On the other hand, if $\res_W^V(Q_2)$ is between $\res_W^V(Q_1)$ and $\res_W^V(Q_3)$ in the sense of Definition \ref{in between for ASep} and $\res_W^V(Q_2)\not\in\{\res_W^V(Q_1),\res_W^V(Q_3)\}$, we have $Q_1$, $Q_2$ and $Q_3$ are distinct, $F_2\in\Theta(F_1,F_3)$ and we can assume WLOG that $(I_1\ints W)\subset (I_2\ints W)\subset (I_3\ints W)$. Choose $p\in I_1\ints W$ and $q\in W\setminus I_3$, we have
\begin{align}\label{eqn:lem6.30-1}
p\in I_1\ints I_2\ints I_3\ints W \text{ and } q\in (V\setminus I_1)\ints(V\setminus I_2)\ints(V\setminus I_3)\ints W.
\end{align}
This proves that $\stype{I_1}{V}$, $\stype{I_2}{V}$ and $\stype{I_3}{V}$ are \textbf{NOT} in triangle relation in the sense of Definition \ref{relations between sep types}. We split the rest into 3 cases:

\textbf{Case 1}: If $F_2\not\in\{F_1,F_3\}$, then $F_1,F_2,F_3$ are pairwise distinct and $F_2\in\cA_0(V)$ (due to Definition \ref{graph of sep}). By the first assertion in Lemma \ref{key lem for ASep}, $Q_2$ is between $Q_1$ and $Q_3$ in the sense of Definition \ref{in between for ASep}.

\textbf{Case 2}: If $F_2=F_1=F_3$, by Lemma \ref{prim decomp} and \eqref{eqn:lem6.30-1}, we have $ Q_1,Q_2,Q_3\in\{\typemark{F_1}{H_{F_1}^V(p)}{V},\typemark{F_1}{H_{F_1}^V(q)}{V}\}\subset\cV(\bG_{V;W})\subset\cA_\PSep(V;W)$. This contradicts the assumption that $Q_1$, $Q_2$ and $Q_3$ are distinct.

\textbf{Case 3}: If $F_2\in\{F_1,F_3\}$ and $F_1\neq F_3$, WLOG we assume that $F_1=F_2\neq F_3$. Since $\res_W^V(Q_2)\not\in\{\res_W^V(Q_1),\res_W^V(Q_3)\}$, and $\res_W^V(Q_2)$ is between $\res_W^V(Q_1)$ and $\res_W^V(Q_3)$ in the sense of Definition \ref{in between for ASep}, by Lemma \ref{properties of Theta}, \hyperlink{Theta-3}{property ($\Theta$3) of $\Theta(\cdot,\cdot)$}, one of the following 2 senarios hold:

\textbf{Case 3a}: If $F_3\in\Theta(F_p,F_1)\setminus\{F_1\}$, by the assumptions on $p,q$, Lemma \ref{prim decomp} (used in the first and last inclusion of \eqref{inclusion-1W} and \eqref{inclusion-1V}) and the \hyperlink{claim}{claim} in the proof of Lemma \ref{unique shortest paths} (used in the middle inclusion of \eqref{inclusion-1W} and \eqref{inclusion-1V}),
\begin{align}\label{inclusion-1W}
H_{F_3}^W(p)\subset W\setminus H_{F_3}^W(q)\subset H_{F_1}^W(p)\subset W\setminus H_{F_1}^W(q)
\end{align}
and
\begin{align}\label{inclusion-1V}
H_{F_3}^V(p)\subset V\setminus H_{F_3}^V(q)\subset H_{F_1}^V(p)\subset V\setminus H_{F_1}^V(q).
\end{align}
Notice that $Q_1,Q_2,Q_3\in\cV(\bG_V)\subset\cA_\PSep(V;W)$ (due to Definition \ref{graph of sep}). By Lemma \ref{prim decomp}, the first two assertions in Lemma \ref{res of vertices} and \eqref{eqn:lem6.30-1}, we have
\begin{align}\label{res Q1Q2W}
\res_W^V(Q_1),\res_W^V(Q_2)\in\{\typemark{F_1}{H_{F_1}^W(p)}{W},\typemark{F_1}{H_{F_1}^W(q)}{W}\}
\end{align}
and
$$\res_W^V(Q_3)\in\{\typemark{F_3}{H_{F_3}^W(p)}{W},\typemark{F_3}{H_{F_3}^W(q)}{W}\}$$
Since $\res_W^V(Q_2)$ is between $\res_W^V(Q_1)$ and $\res_W^V(Q_3)$ in the sense of Definition \ref{in between for ASep} and $\res_W^V(Q_2)\not\in\{\res_W^V(Q_1),\res_W^V(Q_3)\}$, by \eqref{inclusion-1W}, we have
\begin{align}\label{eqn:locate res(Q)}
\res_W^V(Q_1)=\typemark{F_1}{H_{F_1}^W(q)}{W}\neq \res_W^V(Q_2)=\typemark{F_1}{H_{F_1}^W(p)}{W}.
\end{align}
Moreover, we have $H_{F_1}^W(p)\subsetneq W\setminus H_{F_1}^W(q)$. Recall that $F_3\in\Theta(F_p,F_1)\setminus\{F_1\}$, we have $F_1\neq F_p$. In particular, by \eqref{eqn:lem6.30-1}, the fact that $Q_1\neq Q_2$ and $F_1=F_2$, we have $F_1\not\in\del\cA_\Sep(V)$. It follows from Definition \ref{graph of sep} and \eqref{eqn:lem6.30-1} that $F_1\in\cA_0(V)\cap\cF(W)$. By the first assertion in Lemma \ref{res of vertices}, we have $H_{F_1}^W(p)=H_{F_1}^V(p)\ints W$ and $H_{F_1}^W(q)=H_{F_1}^V(q)\ints W$. Hence
$$H_{F_1}^V(q)\ints W=H_{F_1}^W(q)\subsetneq W\setminus H_{F_1}^W(p)\mathrm{~and~}W\setminus H_{F_1}^V(q)=W\setminus H_{F_1}^W(q)\supsetneq H_{F_1}^W(p).$$
This and \eqref{eqn:locate res(Q)} imply that $\res_W^V\typemark{F_1}{H^V_{F_1}(q)}{V}\neq \res_W^V(Q_2)$. Similar to \eqref{res Q1Q2W}, \eqref{eqn:lem6.30-1} implies that
\begin{align}\label{Q1Q2V}
Q_1,Q_2\in\{\typemark{F_1}{H_{F_1}^V(p)}{V},\typemark{F_1}{H_{F_1}^V(q)}{V}\}
\end{align}
and
$$Q_3 \in\{\typemark{F_3}{H_{F_3}^V(p)}{V},\typemark{F_3}{H_{F_3}^V(q)}{V}\}.$$
It follows from \eqref{eqn:locate res(Q)} and the above discussion that
$$Q_1=\typemark{F_1}{H_{F_1}^V(q)}{V}\neq Q_2=\typemark{F_1}{H_{F_1}^V(p)}{V}.$$
By \eqref{inclusion-1V}, $Q_2$ is between $Q_1$ and $Q_3$ in the sense of Definition \ref{in between for ASep}.

\textbf{Case 3b}: If $F_1\in\Theta(F_p,F_3)\setminus\{F_3\}$, this is essentially the same as \textbf{Case 3a}. In this case, the same arguments in  \textbf{Case 3a} can show that
$$H_{F_1}^V(p)\subset V\setminus H_{F_1}^V(q)\subset H_{F_3}^V(p)\subset V\setminus H_{F_3}^V(q) .$$
$$Q_1=\typemark{F_1}{H_{F_1}^V(p)}{V}\neq Q_2=\typemark{F_1}{H_{F_1}^V(q)}{V}$$
and
$$Q_3 \in\{\typemark{F_3}{H_{F_3}^V(p)}{V},\typemark{F_3}{H_{F_3}^V(q)}{V}\}.$$
Hence $Q_2$ is between $Q_1$ and $Q_3$ in the sense of Definition \ref{in between for ASep}.\qedhere

\end{enumerate}
\end{proof}
\begin{figure}[h]
	\centering
	\includegraphics[width=4in]{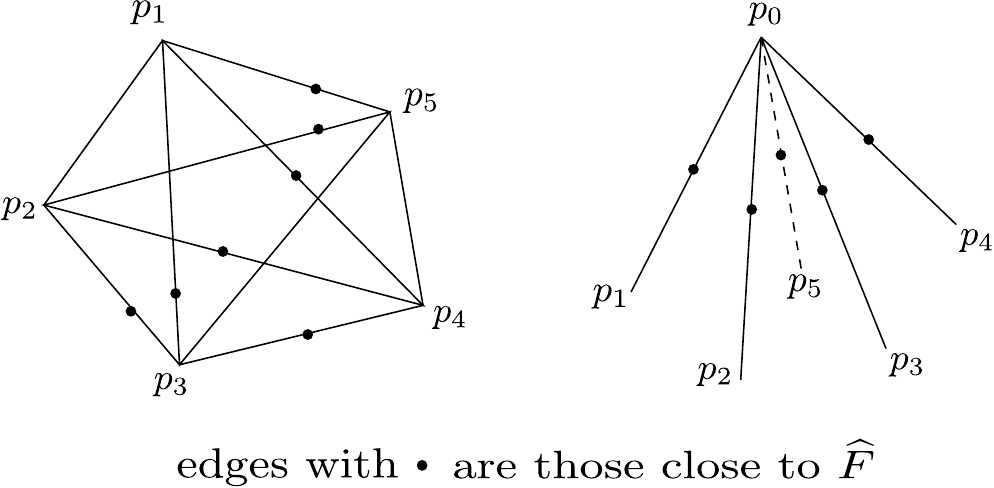}
	\caption{ \label{pic:res not PSep}}
\end{figure}
\begin{rmk}
The reason we introduce the notion of $\cA_0,\AnSep$ and $\AnPSep$ is that $\res_W^V(\cA_\PSep(V;W))$ may not be contained in $\cA_\PSep(W)$. See the following possible example.
\begin{pexample}
{Let $V=\{p_0,p_1,p_2,p_3,p_4,p_5\}\subset \Gamma x_0$, where $p_0,...,p_5$ are pairwise distinct. If there exists some $\hF\in\cF(V)$ such that $\hF\in\Theta(F_{p_i},F_{p_j})$ if and only if one of the following holds:
\begin{itemize}
\item $\{i,j\}=\{3,4\}$.
\item $\{i,j\}=\{0,l\}$ for some $l\in\{1,2,3,4,5\}$.
\item $|\{i,j\}\cap\{1,2\}|=|\{i,j\}\cap\{3,4,5\}|=1$.
\end{itemize}
Let $W=\{p_0,...,p_4\}$. Then one can check that $(\hF, \{\{p_0,p_1,p_2\},\{p_3,p_4,p_5\}\})\in \cA_\PSep(V;W)$, but
$$\res_W^V(\hF, \{\{p_0,p_1,p_2\},\{p_3,p_4,p_5\}\})=(\hF, \{\{p_0,p_1,p_2\},\{p_3,p_4\}\})\in\cA_\Sep(W)\setminus\cA_\PSep(W).$$ See Figure \ref{pic:res not PSep}.}
\end{pexample}
\end{rmk}

\begin{lemma}[Properties of $\bG_{V;W}$]\label{properties of W-face}
Let $W\subset V$ be finite subsets of $\Gamma x_0$ such that $|W|\geq 2$ and $|V|\geq 3$. Then $\bG_{V;W}$ satisfies the following properties:
\begin{enumerate}
\item[(1).] $\bG_{V;W}=\displaystyle\bigcup_{\substack{p,q:p,q\in W\\p\neq q}}\bP_{p,q,V}$. In particular, $\bG_{V;W}$ is a connected graph.
\item[(2).] For any $2$ different vertices $Q_1,Q_2$ of $\bG_{V;W}$, $Q_1$ and $Q_2$ are connected by a unique shortest path in $\bG_{V;W}$. Moreover, this unique shortest path in $\bG_{V;W}$ is the same as the unique shortest path connecting $Q_1$ and $Q_2$ in $\bG_V$.
\item[(3).] For any $\GVW\in\MCS(\bG_{V;W})$, there exists a unique $\GV\in\MCS(\bG_V)$ such that $\GVW\subset\GV$. Moreover, $\GV$ is the unique MCS in $\bG_V$ such that $\GVW=\GV\ints\bG_{V;W}$.
\item[(4).] Every edge in $\bG_{V;W}$ is contained in a unique $MCS$ of $\bG_{V;W}$. Also, for any $\GV\in\MCS(\bG_V)$, $\GV\ints\bG_{V;W}\in\MCS(\bG_{V;W})$ if and only if {$\GV\ints\bG_{V;W}\neq \emptyset$.}
\item[(5).] Every vertex in $\bG_{V;W}$ is contained in at most 2 MCS of $\bG_{V;W}$. Moreover, a vertex $\VQ\in\cV(\bG_{V;W})$ is contained in a unique MCS of $\bG_{V;W}$ if and only if $\VQ\in\del\cA_\Sep(V;W)\subset \cV(\bG_{V;W})$.
\item[(6).] For any distinct $p, q\in W$, we have $\res_W^V(\cV(\bP_{p,q,V}))\subset\cV(\bP_{p,q,W})$. Moreover, if
$\bP_{p,q,V}=\VQ_0:=\VQ_p\mbox{---}\VQ_1\mbox{---}\cdots\mbox{---}\VQ_m\mbox{---}\VQ_q=:\VQ_{m+1}$,
where $\VQ_p=\typemark{F_p}{\{p\}}{V}$ and $\VQ_q=\typemark{F_q}{\{q\}}{V}$, then
$$\res_W^V(\VQ_l)\in\cV(\Path_{p,q,W}(\res_W^V(\VQ_s),\res_W^V(\VQ_t)))$$
if and only if one of the following holds:
\begin{itemize}
\item $0\leq s\leq l\leq t\leq m+1$.
\item $0\leq t\leq l\leq s\leq m+1$.
\item $\res_W^V(\VQ_l)\in\{\res_W^V(\VQ_s),\res_W^V(\VQ_t)\}$.
\end{itemize}
\item[(7).] For any $\GVW\in\MCS(\bG_{V;W})$ and any distinct $p, q\in W$, either $\GVW\ints \bP_{p,q,V}=\emptyset$, or $\GVW\ints\bP_{p,q,V}$ contains exactly 2 vertices and exactly one edge. Moreover, if $\GVW$ has vertices $\VQ_j=\typemark{F_j}{I_j}{V}$, $1\leq j\leq k$, (assuming all $\VQ_j$'s are distinct), there exist $I_j'\in\stype{I_j}{V}$, $1\leq j\leq k$ such that $I_1',...,I_k'$ are pairwise disjoint and $W\subset  \union_{j=1}^kI_j'$.
\item[(8).] For any edge $\WP_1\mbox{---}\WP_2\in\cE(\bG_W)$, there exist a unique $\GVW\in MCS(\bG_{V;W})$ such that there exist distinct $\VQ_1,\VQ_2\in\cV(\GVW)$ satisfying
$$\WP_1\mbox{---}\WP_2\in\cE(\Path_{p,q,W}(\res_W^V(\VQ_1),\res_W^V(\VQ_2)))$$
for any suitable choice of $p,q\in W$ which make the right hand side well-defined. (By the first assertion of Corollary \ref{remaining properties of sep graph}, $\Path_{p,q,W}(\res_W^V(\VQ_1),\res_W^V(\VQ_2))$ is the unique shortest path connecting $\res_W^V(\VQ_1)$ and $\res_W^V(\VQ_2)$ in $\bG_W$, which is independent of the choice of $p,q$ as long as the expression is well-defined.)

Furthermore, the choice of $\GVW$ only depends on the unique MCS in $\bG_W$ containing the edge $\WP_1\mbox{---}\WP_2$.
\end{enumerate}
\end{lemma}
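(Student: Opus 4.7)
The plan is to bootstrap each assertion off the analogous statements for the full graph $\bG_V$ (Proposition \ref{key prop of ASep graph}, Corollary \ref{remaining properties of sep graph}, Lemma \ref{unique shortest paths}) together with the restriction results in Lemma \ref{res of vertices}. Roughly, $\bG_{V;W}$ should behave like ``$\bG_V$ seen through the subset $W$'', with vertices forced to $\Theta$-separate at least one pair in $W$, and $\res_W^V$ should relate shortest paths and MCS between $\bG_{V;W}$ and $\bG_W$.

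First I would prove (1). Given any vertex $\VQ = \typemark{\hF}{I}{V}\in\cV(\bG_{V;W})\subset\cA_\PSep(V;W)$, choose $p\in I\cap W$ and $q\in W\setminus I$; then $\VQ\in\cV(\bP_{p,q,V})$ directly from Notation \ref{PATH}. Conversely every vertex on $\bP_{p,q,V}$ with $p,q\in W$ lies in $\cA_\PSep(V;W)$, and the induced edges match because $\bP_{p,q,V}$ is the restriction of $\bG_V$ onto $\cV_{p,q,V}$. Connectedness then follows as in the third assertion of Corollary \ref{remaining properties of sep graph}. For (2), let $\VQ_1,\VQ_2\in\cV(\bG_{V;W})$; by the first assertion of Corollary \ref{remaining properties of sep graph} there is a unique shortest path in $\bG_V$ connecting them, realized as some $\Path_{p,q,V}(\VQ_1,\VQ_2)$ for suitable $p\in I_1,\,q\in V\setminus I_2$. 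The key observation is that we may in fact choose $p,q\in W$: take $p\in I_1\cap W$ and $q\in W\setminus I_2$, which are nonempty because $\VQ_j\in\cA_\PSep(V;W)$. Then the path lies entirely in $\bP_{p,q,V}\subset\bG_{V;W}$, proving uniqueness and showing that the ambient shortest path and the one inside $\bG_{V;W}$ coincide.

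For (3) and (4), take $\GVW\in\MCS(\bG_{V;W})$ and choose any MCS $\GV$ of $\bG_V$ containing some edge of $\GVW$ (which exists by the second assertion of Corollary \ref{remaining properties of sep graph}, as all MCS have at least two vertices). I would show $\cV(\GVW)\subset\cV(\GV)$: every other vertex of $\GVW$ must be connected by edges in $\bG_V$ to the two endpoints of the chosen edge, hence lies in the unique MCS of $\bG_V$ containing that edge by the first assertion of Proposition \ref{key prop of ASep graph}. Maximality of $\GVW$ inside $\bG_{V;W}$ then forces $\GVW=\GV\cap\bG_{V;W}$, and the first assertion of Proposition \ref{key prop of ASep graph} (applied to $\bG_V$) gives uniqueness of $\GV$. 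This simultaneously proves (4): for any $\GV\in\MCS(\bG_V)$ meeting $\bG_{V;W}$, $\GV\cap\bG_{V;W}$ is a complete subgraph of $\bG_{V;W}$, and any strictly larger complete subgraph would pull back to a larger complete subgraph of $\bG_V$ containing $\GV$, contradicting maximality. Uniqueness of the MCS of $\bG_{V;W}$ containing a given edge then descends from Proposition \ref{key prop of ASep graph}. For (5), the ``at most 2'' bound follows by pulling back: if $\VQ$ were in $3$ distinct MCS of $\bG_{V;W}$, part (3) would give $3$ distinct MCS of $\bG_V$ containing $\VQ$, contradicting the second assertion of Proposition \ref{key prop of ASep graph}. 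The characterization of vertices in a unique MCS reduces to the second assertion of Corollary \ref{remaining properties of sep graph} and the identification of $\del\cA_\Sep(V;W)$ from the second assertion of Lemma \ref{res of vertices}.

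For (6), since $\bP_{p,q,V}$ lies entirely in $\cA_\PSep(V;W)$, the restriction $\res_W^V$ is defined on all its vertices; applying $\res_W^V$ to each $\VQ_l=\typemark{F_l}{I_l}{V}$ yields an element of $\cA_\PSep(W)\cup\del\cA_\Sep(W)\subset\cV(\bG_W)$ by Lemma \ref{res of vertices}. After assuming WLOG the separation types are linearly nested (via Lemma \ref{sep ordering}), $\res_W^V$ sends the chain of nested types to a chain of nested types in $W$, which must therefore trace out $\bP_{p,q,W}$; the three bullet-points then describe exactly the monotone image of the interval $[s,t]$ (with duplicates possibly arising when consecutive $\VQ_l$ collapse under $\res_W^V$). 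For (7), the two-vertex/one-edge intersection property is the same argument as the fourth assertion of Corollary \ref{remaining properties of sep graph} using that $\bP_{p,q,V}$ is a shortest path in $\bG_{V;W}$ by (2). The disjointness-with-$W$-cover statement is obtained by lifting $\GVW$ to its unique ambient MCS $\GV$ via (3), applying the fifth assertion of Corollary \ref{remaining properties of sep graph} to $\GV$, and then intersecting the resulting disjoint collection with $W$; the ``$W\subset\bigsqcup I_j'$'' part follows because otherwise some $x\in W$ would lie outside every $I_j'$, giving a pair $p\in I_1'\cap W$, $x\in W\setminus\bigsqcup I_j'$ with $\bP_{p,x,V}$ meeting $\GVW$ in only one vertex, contradicting the first half of (7).

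The genuinely tricky step is (8), which I expect to be the main obstacle: it is the only place where the noninjectivity of $\res_W^V$ really bites, so that an edge of $\bG_W$ may not be the image of a single edge of $\bG_{V;W}$ under restriction. My approach is as follows. Given $\WP_1\mbox{---}\WP_2\in\cE(\bG_W)$, pick $p\in I_1\cap W$, $q\in W\setminus I_2$ (using the structure of $\WP_1,\WP_2$ in $\bG_W$) and consider $\bP_{p,q,W}$ and $\bP_{p,q,V}$. By (6), $\res_W^V$ maps $\bP_{p,q,V}$ monotonically onto $\bP_{p,q,W}$, so the edge $\WP_1\mbox{---}\WP_2$ must be covered by at least one pair of consecutive vertices $\VQ_l,\VQ_{l+1}$ in $\bP_{p,q,V}$ with $\res_W^V\{\VQ_l,\VQ_{l+1}\}=\{\WP_1,\WP_2\}$. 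The edge $\VQ_l\mbox{---}\VQ_{l+1}$ of $\bG_{V;W}$ lies in a unique MCS $\GVW$ by (4). To check $\GVW$ is independent of the choices and depends only on the MCS in $\bG_W$ containing $\WP_1\mbox{---}\WP_2$: use (3) to lift $\GVW$ to a unique $\GV\in\MCS(\bG_V)$, and show using the triangle-relation criterion in Lemma \ref{reinterpretation of edges}(2) that the images $\res_W^V(\cV(\GV\cap\bG_{V;W}))$ form a complete subgraph of $\bG_W$ whose containing MCS in $\bG_W$ is determined solely by $\WP_1\mbox{---}\WP_2$ via Proposition \ref{key prop of ASep graph} applied in $\bG_W$. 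Reversing the argument gives a well-defined assignment depending only on the MCS of $\bG_W$, completing the proof.
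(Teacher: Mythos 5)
Your arguments for assertions (1)--(7) match the paper's in substance (the proof of the covering claim in (7) is phrased a bit differently but amounts to the same thing). The genuine problem is in your approach to (8), which is, as you anticipated, the hard part.

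Your plan for (8) asserts that the image set $\res_W^V(\cV(\GV\cap\bG_{V;W}))$ forms a complete subgraph of $\bG_W$, and then uses Proposition \ref{key prop of ASep graph} in $\bG_W$ to pin down a unique MCS of $\bG_W$. This claim is false in general. Even for an edge $\VQ_1\mbox{---}\VQ_2\in\cE(\bG_{V;W})$, the images $\res_W^V(\VQ_1)$ and $\res_W^V(\VQ_2)$ need not be adjacent in $\bG_W$: they are joined by a shortest path $\Path_{p,q,W}(\res_W^V(\VQ_1),\res_W^V(\VQ_2))$ which can have arbitrary length, because $\cF(W)$ can contain elements of $\Gamma F$ that are invisible to $\bG_V$ (they lie in some $\Theta(F_x,F_y)$ for $x,y\in W$ but do not produce a vertex of $\bG_V$, e.g.\ because they fail to be in $\cA_0(V)$ or because their unique primitive $\Theta$-separation types of $V$ do not survive). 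This is precisely the phenomenon that later motivates the definitions of ``subordination'' (Definition \ref{subordinate}) and ``enrichment'' (Definition \ref{enrich}): a single MCS of $\bG_{V;W}$ typically corresponds to a \emph{union} of MCS of $\bG_W$, not a single one. In particular, the intended logic of (8) is the converse of yours -- a fixed MCS of $\bG_W$ determines a unique $\GVW$, not the other way around.

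The substantive work that is missing is what the paper does in Steps 2 and 3 of its proof of (8). Step 2 (uniqueness of $\GVW$ given one edge of $\bG_W$) cannot be dispatched by a completeness argument in $\bG_W$; it requires carefully intersecting the relevant shortest paths in $\bG_{V;W}$ and $\bG_W$ and using the third assertion of Lemma \ref{res of vertices} to transfer the ``in between'' relations back to $\bG_V$, together with Lemma \ref{reinterpretation of edges} to force the candidate MCS to coincide. Step 3 (two edges in the same MCS of $\bG_W$ give the same $\GVW$) is a separate argument; the key move, entirely absent from your sketch, is to show that when $\WP_{11}\mbox{---}\WP_{12}$ and $\WP_{11}\mbox{---}\WP_{22}$ lie in the same MCS of $\bG_W$, one can realize $\WP_{11}\mbox{---}\WP_{22}$ as an edge of some $\bP_{p_1,y,W}$, re-run the existence argument on that path, and prove the resulting pair of vertices in $\bG_{V;W}$ share a vertex (namely that one endpoint is forced to equal the endpoint obtained from the first edge). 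This requires a careful chase through the path structures, the sixth assertion of the lemma, and distance computations in $\bG_W$, none of which your outline supplies. As written, your proof of (8) does not go through.
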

\begin{proof}
\begin{enumerate}
\item[(1).] It is straightforward from Notation \ref{PATH} and Definition \ref{W-face of graph} that $\bG_{V;W}\supset\displaystyle\bigcup_{\substack{p,q:p,q\in W\\p\neq q}}\bP_{p,q,V}$. We only need to prove that $\bG_{V;W}\subset\displaystyle\bigcup_{\substack{p,q:p,q\in W\\p\neq q}}\bP_{p,q,V}$.

For any $\typemark{\hF}{I}{V}\in\cV(\bG_{V;W})$, choose $p\in I\ints W$ and $q\in W\setminus I$ and we have $\typemark{\hF}{I}{V}\in\cV(\bP_{p,q,V})$. This proves that $\cV(\bG_{V;W})\subset\displaystyle\bigcup_{\substack{p,q:p,q\in W\\p\neq q}}\cV(\bP_{p,q,V})$.

For any $\VQ_1\mbox{---}\VQ_2\in\cE(\bG_{V;W})$, by Lemma \ref{prim decomp} (used when $F_1=F_2$) and the third assertion in Lemma \ref{properties of Theta sep} (used when $F_1\neq F_2$), we can assume WLOG that $\VQ_j=\typemark{F_j}{I_j}{V}$, $j=1,2,$ with $I_1\subset I_2$. Choose $p\in I_1\ints W$ and $q\in W\setminus I_2$. Then $\VQ_1\mbox{---}\VQ_2\in\cE(\bP_{p,q,V})$. This proves that $\cE(\bG_{V;W})\subset\displaystyle\bigcup_{\substack{p,q:p,q\in W\\p\neq q}}\cE(\bP_{p,q,V})$.

\item[(2).] For any $\VQ_1,\VQ_2\in\cV(\bG_{V;W})$, by Lemma \ref{prim decomp} (used when $F_1=F_2$) and the third assertion in Lemma \ref{properties of Theta sep} (used when $F_1\neq F_2$), we can assume WLOG that $\VQ_j=\typemark{F_j}{I_j}{V}$, $j=1,2,$ with $I_1\subset I_2$. Choose $p\in I_1\ints W$ and $q\in W\setminus I_2$. Then $\VQ_1,\VQ_2\in\cV(\bP_{p,q,V})$. Hence by the first assertion of Corollary \ref{remaining properties of sep graph}, $\Path_{p,q,V}(\VQ_1,\VQ_2)\subset \bG_{V;W}$ is simultaneously the unique shortest path connecting $\VQ_1,\VQ_2$  in $\bG_{V;W}$ and the unique shortest path connecting $\VQ_1,\VQ_2$  in $\bG_{V}$.

\item[(3).] By the first assertion in Lemma \ref{properties of W-face}, every MCS of $\bG_{V;W}$ contains at least 2 vertices and 1 edge. For any $\GVW\in\MCS(\bG_{V;W})$, let $\VQ_1\mbox{---}\VQ_2\in\cE(\GVW)$. By the first assertion of Proposition \ref{key prop of ASep graph}, there exists a unique $\GV\in \MCS(\bG_V)$ such that $\VQ_1\mbox{---}\VQ_2\in\cE(\GV)$. Since $\GVW$ is a complete subgraph of $\bG_{V;W}\subset\bG_V$, $\GVW$ is contained in a MCS of $\bG_V$. Let $\widetilde{\GV}$ be an arbitrary MCS of $\bG_V$ containing $\GVW$. In particular, $\VQ_1\mbox{---}\VQ_2\in\cE(\widetilde{\GV})$. By the first assertion of Proposition \ref{key prop of ASep graph}, we have $\GV=\widetilde{\GV}$. In particular, this shows that $\GV$ is the unique MCS in $\bG_V$ containing $\GVW$.

On the other hand, $\GV\ints\bG_{V;W}$ is a complete subgraph of $\bG_{V;W}\subset\bG_V$, it is contained in some $\widetilde{\GVW}\in\MCS(\bG_{V;W})$. Recall that $\GVW\in\MCS(\bG_{V;W})$, we have
$$\GVW\subset\GV\ints\bG_{V;W}\subset \widetilde{\GVW}\implies\GVW=\widetilde{\GVW}.$$
This proves $\GV\ints\bG_{V;W}=\GVW$.

\item[(4).] If there exist some edge of $\bG_{V;W}$ which is contained in $\GVW_1\neq\GVW_2\in\MCS(\bG_{V;W})$ with $\GVW_1\neq\GVW_2$. by the third assertion of Lemma \ref{properties of W-face}, there exist unique $\GV_1,\GV_2\in\MCS(\bG_V)$ such that $\GV_j\ints\bG_{V;W}=\GVW_j$, $j=1,2$. By the first assertion of Proposition \ref{key prop of ASep graph}, $\GV_1=\GV_2$, which implies that $\GVW_1=\GVW_2$, contradictory to the assumption that $\GVW_1\neq\GVW_2$. Hence every edge in $\bG_{V;W}$ is contained in a unique MCS of $\bG_{V;W}$.

For any $\GV\in\MCS(\bG_V)$, if $\GV\ints\bG_{V;W}\in\MCS(\bG_{V;W})$, by the first assertion in Lemma \ref{properties of W-face}, $\GV\ints\bG_{V;W}$ contains at least 1 edge. {Hence $\GV\ints\bG_{V;W}\neq \emptyset$}. On the other hand, {if $\GV\ints\bG_{V;W}\neq \emptyset$, by the first assertion in Lemma \ref{properties of W-face}, there exist $p\neq q\in W$ such that $\GV\ints\bG_{V;W}\supset\GV\ints\bP_{p,q,V}\neq \emptyset$. By the fourth assertion in Corollary \ref{remaining properties of sep graph}, $\GV\ints\bG_{V;W}$ contains at least 1 edge}. Since $G^{(V)}$ is complete, by the definition of $\bG_{V;W}$, we know that $\GV\ints\bG_{V;W}$ is a complete subgraph of $\bG_{V;W}$. By the previous paragraph, there exists a unique $\GVW\in\MCS(\bG_{V;W})$ such that $\GV\ints\bG_{V;W}\subset \GVW$. It follows from the third assertion of Lemma \ref{properties of W-face} that $\GVW=\widetilde\GV\ints\bG_{V;W}$ for some $\widetilde\GV\in\MCS(\bG_V)$. Since $\GV$ and $\widetilde\GV$ share an edge, by the first assertion of Proposition \ref{key prop of ASep graph}, $\GV=\widetilde\GV$. Hence $\GVW=\GV\ints\bG_{V;W}\in\MCS(\bG_{V;W})$. Thus we conclude that for any $\GV\in\MCS(\bG_V)$, $\GV\ints\bG_{V;W}\in\MCS(\bG_{V;W})$ if and only if $\GV\ints\bG_{V;W}\neq \emptyset$.

\item[(5).] If some vertex of $\bG_{V;W}$ is contained in 3 different MCS $\GVW_j\in \MCS(\bG_{V;W})$, $j=1,2,3$, by the third assertion in Lemma \ref{properties of W-face}, there exist unique MCS $\GV_j\in\MCS(\bG_V)$ such that $\GVW_j=\GV_j\ints\bG_{V;W}$, $j=1,2,3$. By the second assertion of Proposition \ref{key prop of ASep graph}, $\GV_s=\GV_t$ for some $1\leq s\neq t\leq 3$, which implies that $\GVW_s=\GVW_t$, contradictory to the assumption that $\GVW_1,\GVW_2,\GVW_3$ are distinct MCS.

For any $\VQ\in\cV(\bG_{V;W})$ and any pair $\GVW_1,\GVW_2\in \MCS(\bG_{V;W})$ containing $\VQ$, by the third assertion of Lemma \ref{properties of W-face}, there exist unique $\GV_j\in\MCS(\bG_V)$ such that $\GVW_j=\GV_j\ints\bG_{V;W}$, $j=1,2$. If $\GVW_1\neq\GVW_2$, then $\GV_1\neq\GV_2$. In particular, by the second assertion of Corollary \ref{remaining properties of sep graph} and the second assertion of Lemma \ref{res of vertices}, $\VQ\not\in\del\cA_\Sep(V;W)$. On the other hand, if  $\VQ\not\in\del\cA_\Sep(V;W)$, by the second the assertion of Corollary \ref{remaining properties of sep graph}, there exist exactly two MCS $\GV_1,\GV_2\in\MCS(\bG_{V})$ containing $\VQ$. Since $\GV_j\cap\bG_{V;W}\neq \emptyset$, $j=1,2$, by the fourth assertion in Lemma \ref{properties of W-face}, $\GV_j\cap\bG_{V;W}\in\MCS(\bG_{V;W})$, $j=1,2$. By the first assertion in Proposition \ref{key prop of ASep graph}, $\GV_1\cap\GV_2=\{\VQ\}$. Therefore, by the first assertion in Lemma \ref{properties of W-face}, $\GV_1\cap\bG_{V;W}\neq\GV_2\cap\bG_{V;W}$. Hence we conclude that $\VQ$ is contained in a unique MCS of $\bG_{V;W}$ if and only if $\VQ\in\del\cA_\Sep(V;W)\subset \cV(\bG_{V;W})$.
\item[(6).] For any distinct $p, q\in W$, $\res_W^V(\cV(\bP_{p,q,V}))\subset\cV(\bP_{p,q,W})$ follows directly from definitions of $\bP_{p,q,V}$ and $\res_W^V$ {(i.e. Notations \ref{PATH} and Definition \ref{res of types})}. The rest of the assertion follows directly from the third assertion of Lemma \ref{res of vertices} and Lemma \ref{unique shortest paths}.
\item[(7).] By the third assertion in Lemma \ref{properties of W-face}, there exist a unique $\GV\in\MCS(\bG_{V})$, such that $\GVW=\GV\ints \bG_{V;W}$. Notice that $\bP_{p,q,V}\subset \bG_{V;W}$ due to the first assertion in Lemma \ref{properties of W-face}. Therefore $\bP_{p,q,V}\ints\GVW=\bP_{p,q,V}\ints\GV$. By the fourth assertion in Corollary \ref{remaining properties of sep graph}, $\bP_{p,q,V}\ints\GVW$ is either an empty graph, or contains exactly 2 vertices and exactly 1 edge.

Let $\VQ_{k+1},...,\VQ_{k+l}$ be all the vertices in $\cV(\GV)\setminus\cV(\GVW)$. In particular, $\VQ_{k+1},...,\VQ_{k+l}\not\in\cV(\bG_{V;W})$. Assume that $\VQ_{k+1},...,\VQ_{k+l}$ are distinct and that $\VQ_{j}=\typemark{F_j}{I_j}{V}$ for any $k+1\leq j\leq k+l$. Then by the fifth assertion in Corollary \ref{remaining properties of sep graph}, there exist $I_j'\in\stype{I_j}{V}$, $1\leq j\leq k+l$, such that $I_j'$ are pairwise disjoint and $\union_{j=1}^{k+l}I_j'=V$. Since $\GVW=\GV\ints \bG_{V;W}$ and that $\emptyset\neq I_1\ints W\subset W\setminus I_j'$ for any $k+1\leq j\leq l+1$, the fact that $\VQ_{k+1},...,\VQ_{k+l}\not\in\cV(\bG_{V;W})$ implies that $I_{k+1}',...,I_{k+l}'\subset V\setminus W$. Therefore $W\subset \union_{j=1}^k I_j'$.

\item[(8).] This assertion has a long proof. We first prove the case when $|W|=2$ and $|\cF(W)|=1$. Let $W=\{p,q\}$ with $p\neq q$. Then $\bG_W$ is a complete graph and hence $\MCS(\bG_W)=\{\bG_W\}$. Notice that for any $\typemark{\hF}{I}{V}\in\bG_{V;W}$, $\hF=F_p=F_q$. By Lemma \ref{unique shortest paths} and the first assertion in Lemma \ref{properties of W-face}, $\cV(\bG_{V;W})=\{\typemark{F_p}{\{p\}}{V},\typemark{F_q}{\{q\}}{V}\}$ and $\bG_{V;W}$ is a complete graph. Hence $\MCS(\bG_{V;W})=\{\bG_{V;W}\}$. Recall that by Definition \ref{res of types}, $\res_W^V(\typemark{F_p}{\{p\}}{V})=p$ and $\res_W^V(\typemark{F_q}{\{q\}}{V})=q$, the assertion clearly holds in this case.

From now on, we assume that $|W|> 2$ or $|\cF(W)|> 1$, we split the rest of the proof into 3 steps. Here is an outline:
\begin{itemize}
\item \textbf{Step 1:} Existence of $\GVW$.
\item \textbf{Step 2:} Uniqueness of $\GVW$.
\item \textbf{Step 3:} For any 2 different edges in $\bG_W$ in the same MCS of $\bG_W$, the corresponding MCS in $\bG_{V;W}$ constructed above are identical.
\end{itemize}

\textbf{Step 1}: By the third assertion in Lemma \ref{properties of Theta sep} and Lemma \ref{prim decomp}, we can assume WLOG that $\WP_l=\typemark{\hF_l}{J_l}{W}$, $l=1,2$, with $J_1\subset J_2$. Choose $p\in J_1$ and $q\in W\setminus J_2$, then $\WP_1\mbox{---}\WP_2$ is an edge of $\bP_{p,q,W}$. Denoted by $\WP_p=\typemark{F_p}{\{p\}}{W}$ and $\WP_q=\typemark{F_q}{\{q\}}{W}$. WLOG we can assume that
$$\Path_{p,q,W}(\WP_p,\WP_1)\ints\Path_{p,q,W}(\WP_2,\WP_q)=\emptyset.$$
Assume that
$$\bP_{p,q,V}=\VQ_0:=\VQ_p\mbox{---}\VQ_1\mbox{---}\cdots\mbox{---}\VQ_m\mbox{---}\VQ_q=:\VQ_{m+1},$$
where $\VQ_p=\typemark{F_p}{\{p\}}{V}$ and $\VQ_q=\typemark{F_q}{\{q\}}{V}$. By the sixth assertion of Lemma \ref{properties of W-face}, $\res_W^V(\cV(\bP_{p,q,V}))\subset\cV(\bP_{p,q,W})$. Moreover, for any $0\leq  j \leq m+1$, the following holds:
\begin{itemize}
\item Either $\res_W^V(\VQ_j)\in\cV(\Path_{p,q,W}(\WP_p,\WP_1))$ or $\res_W^V(\VQ_j)\in\cV(\Path_{p,q,W}(\WP_2,\WP_q))$. This is due to the fact that $\bP_{p,q,W}$ is a path and $\WP_1\mbox{---}\WP_2$ is an edge of $\bP_{p,q,W}$.
\item If $\res_W^V(\VQ_j)\in\cV(\Path_{p,q,W}(\WP_p,\WP_1))$, for any $0\leq l\leq j$, we have $\res_W^V(\VQ_l)\in\cV(\Path_{p,q,W}(\WP_p,\WP_1))$. This follows from the sixth assertion of Lemma \ref{properties of W-face}.
\item If $\res_W^V(\VQ_j)\in\cV(\Path_{p,q,W}(\WP_2,\WP_q))$, for any $j\leq l\leq m+1$, we have $\res_W^V(\VQ_l)\in\cV(\Path_{p,q,W}(\WP_2,\WP_q))$. This follows from the sixth assertion of Lemma \ref{properties of W-face}.
\end{itemize}
Choose $j_0=\max\{j|0\leq j\leq m+1\mathrm{~and~}\res_W^V(\VQ_j)\in\cV(\Path_{p,q,W}(\WP_p,\WP_1))\}$ and $l_0=\min\{l|0\leq l\leq m+1\mathrm{~and~}\res_W^V(\VQ_l)\in\cV(\Path_{p,q,W}(\WP_2,\WP_q))\}$. Then we have $j_0=l_0-1$. Hence by the first assertion of Proposition \ref{key prop of ASep graph}, we can choose $\GVW\in\MCS(\bG_{V;W})$ to be the MCS containing the edge $\VQ_{j_0}\mbox{---}\VQ_{l_0}=\VQ_{j_0}\mbox{---}\VQ_{j_0+1}$. This shows the existence of $\GVW$ satisfying the requirements in this assertion.

\textbf{Step 2}: Now we prove that for a fixed $\WP_1\mbox{---}\WP_2$, the $\GVW$ contructed in \textbf{Step 1} is unique. By the third assertion in Lemma \ref{properties of Theta sep} and Lemma \ref{prim decomp}, we assume WLOG that $\WP_l=\typemark{\hF_l}{J_l}{W}$, $l=1,2$, with $J_1\subset J_2$. Suppose there exist $\VQ_{l1}\neq\VQ_{l2}\in\cV(\GVW_l)$ for some $\GVW_l\in\MCS(\bG_{V;W})$, $l=1,2$, such that
$$\WP_1\mbox{---}\WP_2\in\cE(\Path_{p_l,q_l,W}(\res_W^V(\VQ_{l1}),\res_W^V(\VQ_{l2}))),~l=1,2,$$
for some suitable choice of $p_1,p_2,q_1,q_2\in W$. (A direct consequence of this is that $\res_W^V(\VQ_{l1})\neq \res_W^V(\VQ_{l2})$ and hence $\VQ_{l1}\neq\VQ_{l2}$, $l=1,2$.) Then $|\{p_l,q_l\}\ints J_t|=||\{p_l,q_l\}\ints(W\setminus J_t)|=1$ for any $1\leq l,t\leq 2$. Therefore we can assume WLOG that $p_1,p_2\in J_1$ and $q_1,q_2\in W\setminus J_2$. In particular, $\{p_1,p_2\}\cap\{q_1,q_2\}=\emptyset$.

Let $\WP_{x}=\typemark{F_x}{\{x\}}{W}$ for any $x\in W$. By the first assertion in Corollary \ref{remaining properties of sep graph} and the path structure of $\bP_{p_l,q_l,W}$ for any $l=1,2$, we can also assume WLOG that the following properties hold: (See Figure \ref{lem6.32(8)-1}.)
\begin{align}\label{no overlap-0}
\begin{cases}
\displaystyle\res_W^V(\VQ_{l1})\in\Path_{p_l,q_l,W}(\WP_{p_l},\WP_1)=\Path_{p_l,q_{l+1},W}(\WP_{p_l},\WP_1)~\forall l=1,2.\\
\displaystyle\res_W^V(\VQ_{l2})\in\Path_{p_l,q_l,W}(\WP_{q_l},\WP_2)=\Path_{p_{l+1},q_l,W}(\WP_{q_l},\WP_2)~\forall l=1,2.\\
\displaystyle \WP_2\not\in\cV(\Path_{p_1,q_1,W}(\WP_{p_1},\WP_1))=\cV(\Path_{p_1,q_2,W}(\WP_{p_1},\WP_1)).\\
\displaystyle \WP_1\not\in\cV(\Path_{p_1,q_1,W}(\WP_{2},\WP_{q_1}))=\cV(\Path_{p_2,q_1,W}(\WP_{2},\WP_{q_1})).
\end{cases}
\end{align}
where $p_3:=p_1$ and $q_3:=q_1$.
Since $\Path_{p_1,q_2,W}(\WP_{p_1},\WP_{1}),\Path_{p_1,q_2,W}(\WP_2,\WP_{q_2})\subset \bP_{p_1,q_2,W}$, by \eqref{no overlap-0} and the first assertion in Corollary \ref{remaining properties of sep graph}, $\WP_2\in\cV(\Path_{p_1,q_2,W}(\WP_1,\WP_{q_2}))$ and hence
$$\WP_1\not\in\cV(\Path_{p_1,q_2,W}(\WP_2,\WP_{q_2}))=\cV(\Path_{p_2,q_2,W}(\WP_2,\WP_{q_2})).$$
A similar argument shows that $\WP_1\in\cV(\Path_{p_2,q_1,W}(\WP_{p_2},\WP_{2}))$ and hence
$$\WP_2\not\in\cV(\Path_{p_2,q_1,W}(\WP_{p_2},\WP_1))=\cV(\Path_{p_2,q_2,W}(\WP_{p_2},\WP_1)).$$ A short summary of \eqref{no overlap-0} and the above discussion is the following: (See Figure \ref{lem6.32(8)-1}.)
\begin{align}\label{no overlap-2}
\begin{split}
&\cV(\Path_{p_t,q_t,W}(\res_W^V(\VQ_{t1}),\WP_1))\ints\cV(\Path_{p_s,q_s,W}(\WP_2,\res_W^V(\VQ_{s2}))) \\
\subset&\cV(\Path_{p_t,q_t,W}(\WP_{p_t},\WP_1))\ints\cV(\Path_{p_s,q_s,W}(\WP_2,\WP_{q_s})\\
=&\cV(\Path_{p_t,q_s,W}(\WP_{p_t},\WP_1))\ints\cV(\Path_{p_t,q_s,W}(\WP_2,\WP_{q_s})=\emptyset,~1\leq s,t\leq 2.
\end{split}
\end{align}
\begin{figure}[h]
	\centering
	\includegraphics[width=4in]{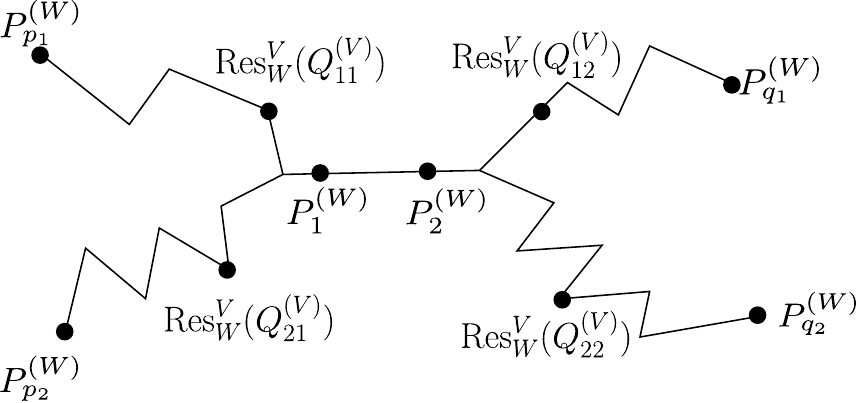}
	\caption{ \label{lem6.32(8)-1}}
\end{figure}
Moreover, for any $1\leq t,s\leq 2$, we have
\begin{align}
\begin{split}
\widetilde\bP_{ts}:=\bP_{p_t,q_s,W}
=&\Path_{p_t,q_t,W}(\WP_{p_t},\WP_2))\union\Path_{p_s,q_s,W}(\WP_2,\WP_{q_s}) \\
=&\Path_{p_t,q_t,W}(\WP_{p_t},\WP_1))\union\Path_{p_s,q_s,W}(\WP_1,\WP_{q_s}))\\
\end{split}
\end{align}
and
\begin{align}\label{diagonal also work}
\begin{split}
\bP_{ts}:=&\Path_{p_t,q_s,W}(\res_W^V(\VQ_{t1}),\res_W^V(\VQ_{s2}))\\
=&\Path_{p_t,q_t,W}(\res_W^V(\VQ_{t1}),\WP_2))\union\Path_{p_s,q_s,W}(\WP_2,\res_W^V(\VQ_{s2})) \\
=&\Path_{p_t,q_t,W}(\res_W^V(\VQ_{t1}),\WP_1))\union\Path_{p_s,q_s,W}(\WP_1,\res_W^V(\VQ_{s2}))\subset\widetilde\bP_{ts}.
\end{split}
\end{align}

%
%
%

Since $\VQ_{l1}\mbox{---}\VQ_{l2}\in\cE(\bG_{V;W})$ for any $l=1,2$, we claim that
\begin{align}\label{nothing in between-1}
\cV(\Path_{p_l,q_l,W}(\res_W^V(\VQ_{l1}),\WP_1)))\ints\res_W^V(\cV(\bG_{V;W}))=\{\res_W^V(\VQ_{l1})\}, ~1\leq l\leq 2,
\end{align}
and
\begin{align}\label{nothing in between-2}
\cV(\Path_{p_l,q_l,W}(\WP_2,\res_W^V(\VQ_{l2})))\ints\res_W^V(\cV(\bG_{V;W}))=\{\res_W^V(\VQ_{l2})\}, ~1\leq l\leq 2.
\end{align}
To see why \eqref{nothing in between-1} and \eqref{nothing in between-2} are true, for any $\widetilde{\VQ_{l1}},\widetilde{\VQ_{l2}}\in\cV(\bG_{V;W})$ such that
$$\res_W^V(\widetilde{\VQ_{l1}})\in\cV(\Path_{p_l,q_l,W}(\res_W^V(\VQ_{l1}),\WP_1)))\ints\res_W^V(\cV(\bG_{V;W}))$$
and
$$\res_W^V(\widetilde{\VQ_{l2}})\in\cV(\Path_{p_l,q_l,W}(\WP_2,\res_W^V(\VQ_{l2})))\ints\res_W^V(\cV(\bG_{V;W})),$$
we have $\res_W^V(\widetilde{\VQ_{l1}})$ and $\res_W^V(\widetilde{\VQ_{l2}})$ are between $\res_W^V(\VQ_{l1})$ and $\res_W^V(\VQ_{l2})$ in the sense of Definition \ref{in between for ASep} (due to Lemma \ref{unique shortest paths}). By \eqref{no overlap-2}, the third assertion of Lemma \ref{res of vertices} and the above, for any $l,t\in\{1,2\}$, one of the following holds:
\begin{itemize}
\item $\VQ_{lt}$ is between $\VQ_{l1}$ and $\VQ_{l2}$ in the sense of Definition \ref{in between for ASep}.
\item $\res_W^V(\widetilde{\VQ_{lt}})=\res_W^V(\VQ_{lt})$.
\end{itemize}
Since $\VQ_{l1}\mbox{---}\VQ_{l2}$ is an edge in $\bG_{V;W}$ for any $l\in\{1,2\}$, by Definition \ref{graph of sep}, the second bullet point in the above must holds.
This verifies \eqref{nothing in between-1} and \eqref{nothing in between-2}.

Recall that $\WP_1$ and $\WP_2$ are connected by an edge in $\bG_W$. As a corollary of \eqref{diagonal also work}, \eqref{nothing in between-1} and \eqref{nothing in between-2}, we have
\begin{align}\label{nothing in between-3}
\cV(\bP_{ts})\ints\res_W^V(\cV(\bG_{V;W}))=\{\res_W^V(\VQ_{t1}),\res_W^V(\VQ_{s2})\}, ~1\leq s,t\leq 2.
\end{align}
By the third assertion in Lemma \ref{res of vertices}, for any $\VQ\in\cV(\bG_{V;W})$ which is between $\VQ_{t1}$ and $\VQ_{s2}$ in the sense of Definition \ref{in between for ASep}, $\res_W^V(\VQ)\in\{\res_W^V(\VQ_{t1}),\res_W^V(\VQ_{s2})\}$. Hence we discuss the following 2 cases:

\textbf{Case 1}: If $\res_W^V(\VQ)=\res_W^V(\VQ_{t1})$, then $\VQ\in \cV(\bP_{p_t,q_t,V})$. By \eqref{no overlap-2} and the third assertion in Lemma \ref{res of vertices}, $\VQ,\VQ_{t1}$ is between $\VQ_{p_t}$ and $\VQ_{t2}$ in the sense of Definition \ref{in between for ASep}, where $\VQ_{p_t}=\typemark{F_{p_t}}{\{p_t\}}{V}$. By Definition \ref{graph of sep}, the third assertion in Lemma \ref{res of vertices} and the fact that $\VQ_{t1}\mbox{---}\VQ_{t2}\in\cE(\bG_{V;W})$, we have
\begin{align}\label{VQ position-1}
\VQ\in \cV(\Path_{p_t,q_t,V}(\VQ_{p_t}, \VQ_{t1}))=\cV(\Path_{p_t,q_s,V}(\VQ_{p_t}, \VQ_{t1})).
\end{align}
By \eqref{no overlap-0} and \eqref{no overlap-2}, we have
$$\Path_{p_t,q_s,W}(\WP_{p_t},\res_W^V(\VQ_{t1}))\ints\Path_{p_t,q_s,W}(\res_W^V(\VQ_{s2}),\WP_{q_s})=\emptyset.$$
Therefore, by the sixth assertion in Lemma \ref{properties of W-face},
$$\Path_{p_t,q_s,V}(\VQ_{p_t},\VQ_{t1})\ints\Path_{p_t,q_s,V}(\VQ_{s2},\VQ_{q_s})=\emptyset.$$
Hence
\begin{align}\label{VQ position-1'}
\cV(\Path_{p_t,q_s,V}(\VQ_{p_t},\VQ_{t1}))\ints\cV(\Path_{p_t,q_s,V}(\VQ_{t1},\VQ_{s2}))=\{\VQ_{t1}\}.
\end{align}
Since we assumed that $\VQ$ is between $\VQ_{t1}$ and $\VQ_{s2}$ in the sense of Definition \ref{in between for ASep}, by Lemma \ref{unique shortest paths}, $\VQ\in\Path_{p_t,q_s,V}(\VQ_{t1},\VQ_{s2})$. The fact that $\VQ=\VQ_{t1}$ then follows from \eqref{VQ position-1} and \eqref{VQ position-1'}.

\textbf{Case 2}: If $\res_W^V(\VQ)=\res_W^V(\VQ_{s2})$, by the same arguments as in \textbf{Case 1}, we have $\VQ=\VQ_{s2}$.

Hence by Definition \ref{graph of sep}, $\VQ_{t1}\mbox{---}\VQ_{s2}\in\cE(\bG_V)$ and therefore $\VQ_{t1}\mbox{---}\VQ_{s2}\in\cE(\bG_{V;W})$ (since the endpoints are in $\bG_{V;W}$).

Given the above preparations, we now prove that $\GVW_1=\GVW_2$:

If $\VQ_{11}=\VQ_{21}$ and $\VQ_{12}=\VQ_{22}$, then by the fourth assertion in Lemma \ref{properties of W-face}, $\GVW_1=\GVW_2$.

If at least one of $\VQ_{11}\neq\VQ_{21}$ or $\VQ_{12}\neq\VQ_{22}$ holds, we assume WLOG that $\VQ_{11}\neq \VQ_{21}$. We show that $\VQ_{11}$ and $\VQ_{21}$ are connected by a single edge in $\bG_{V;W}$. If not, by Definition \ref{W-face of graph}, we have $\VQ_{11}$ and $\VQ_{21}$ are not connected by a single edge in $\bG_{V}$. Recall that $\VQ_{11}$ and $\VQ_{21}$ are connected to $\VQ_{22}$ by a single edge in $\bG_{V;W}\subset\bG_V$. By the second assertion of Lemma \ref{reinterpretation of edges}, $\VQ_{22}$ is between $\VQ_{11}$ and $\VQ_{21}$. Hence by Lemma \ref{unique shortest paths} and the fact that $\VQ_{tl}\in\cV(\bG_{V;W})$ for any $1\leq t,l\leq 2$, there exist some $p,q\in W$ such that
\begin{align*}
\VQ_{22}\in\cV(\Path_{p,q,V}(\VQ_{11},\VQ_{21}))\setminus\{\VQ_{11},\VQ_{21}\}.
\end{align*}
By \eqref{diagonal also work}, we have
\begin{align}\label{use1}
\WP_1\mbox{---}\WP_2\in\cE(\Path_{p,q,W}(\res_W^V(\VQ_{11}),\res_W^V(\VQ_{22}))).
\end{align}
By the assumptions on $\VQ_{21},\VQ_{22}$, we have
\begin{align}\label{use2}
\WP_1\mbox{---}\WP_2\in\cE(\Path_{p,q,W}(\res_W^V(\VQ_{21}),\res_W^V(\VQ_{22}))).
\end{align}
Therefore $\res_W^V(\VQ_{11})\neq\res_W^V(\VQ_{22})$ and $\res_W^V(\VQ_{21})\neq\res_W^V(\VQ_{22})$. On the other hand, recall that $\VQ_{22}$ is between $\VQ_{11}$ and $\VQ_{21}$. By the third assertion of Lemma \ref{res of vertices}, we have $\res_W^V(\VQ_{22})$ is between $\res_W^V(\VQ_{11})$ and $\res_W^V(\VQ_{21})$. In particular, $\res_W^V(\VQ_{21})\neq\res_W^V(\VQ_{11})$. Hence, by Lemma \ref{unique shortest paths},
$$\cE(\Path_{p,q,W}(\res_W^V(\VQ_{11}),\res_W^V(\VQ_{22})))\ints\cE(\Path_{p,q,W}(\res_W^V(\VQ_{21}),\res_W^V(\VQ_{22})))=\emptyset.$$
This contradicts \eqref{use1} and \eqref{use2}. Therefore $\VQ_{11}\mbox{---}\VQ_{21}\in\cE(\bG_{V})$, which implies that $\VQ_{11}\mbox{---}\VQ_{21}\in\cE(\bG_{V;W})$.

Now, $\VQ_{11}$, $\VQ_{21}$ and $\VQ_{12}$ are vertices of a complete subgraph of $\bG_{V;W}$. By the fourth assertion of Lemma \ref{properties of W-face}, $\VQ_{11}\mbox{---}\VQ_{21}\in\cE(\GVW_1)$. Similarly, $\VQ_{11}$, $\VQ_{21}$ and $\VQ_{22}$ are vertices of a compete subgraph of $\bG_{V;W}$. By the fourth assertion of Lemma \ref{properties of W-face}, $\VQ_{11}\mbox{---}\VQ_{21}\in\cE(\GVW_2)$. Finally, by the fourth assertion of Lemma \ref{properties of W-face}, $\GVW_1=\GVW_2$. This finishes the uniqueness of $\GVW$ mentioned in the statement of this assertion.

\textbf{Step 3}: For any $\GW\in\MCS(\bG_W)$ and any $\WP_{11}\mbox{---}\WP_{12}, \WP_{21}\mbox{---}\WP_{22}\in\cE(\GW)$, as is already proved in \textbf{Step 1} and \textbf{Step 2}, there exist unique MCS $\GVW_1,\GVW_2\in\MCS(\bG_{V;W})$ such that there exist distinct $\VQ_{l1},\VQ_{l2}\in\cV(\GVW)$ satisfying
$$\WP_{l1}\mbox{---}\WP_{l2}\in\cE(\Path_{p_l,q_l,W}(\res_W^V(\VQ_{l1}),\res_W^V(\VQ_{l2}))), ~l=1,2.$$
In this step, we prove that $\GVW_1=\GVW_2$. In fact, it is sufficient to prove the special case when $\WP_{11}=\WP_{21}$. The general case follows from applying the special case to $\WP_{11}\mbox{---}\WP_{12}, \WP_{11}\mbox{---}\WP_{22}$ and to $\WP_{11}\mbox{---}\WP_{22},\WP_{21}\mbox{---}\WP_{22}$.

If in addition $\WP_{12}=\WP_{22}$, then $\WP_{11}\mbox{---}\WP_{12}= \WP_{21}\mbox{---}\WP_{22}$. By uniqueness of $\GVW_1$ proved in \textbf{Step 2}, $\GVW_2=\GVW_1$. Therefore we assume that $\WP_{12}\neq\WP_{22}$

By the path structure of $\bP_{p_l,q_l,W}$, similar to \textbf{Step 2}, we assume WLOG that
\begin{align}\label{no overlap-3}
\res_W^V(\VQ_{l1})\in\Path_{p_l,q_l,W}(\WP_{p_l},\WP_{l1})~\mathrm{and}~ \res_W^V(\VQ_{l2})\in\Path_{p_l,q_l,W}(\WP_{q_l},\WP_{l2}),~l=1,2,
\end{align}
as well as
\begin{align}\label{no overlap-4}
\begin{split}
&\Path_{p_l,q_l,W}(\res_W^V(\VQ_{l1}),\WP_{l1})\ints\Path_{p_l,q_l,W}(\WP_{l2},\res_W^V(\VQ_{l2})) \\
\subset&\Path_{p_l,q_l,W}(\WP_{p_l},\WP_{l1})\ints\Path_{p_l,q_l,W}(\WP_{l2},\WP_{q_l})=\emptyset,~l=1,2,
\end{split}
\end{align}
where $\WP_{x}=\typemark{F_x}{\{x\}}{W}$ for any $x\in W$. (See Figure \ref{lem6.32(8)-2}.) Here, by \eqref{no overlap-3}, for any $1\leq l\leq 2$, we have
$$\Path_{p_l,q_l,W}(\res_W^V(\VQ_{l1}),\WP_{l1})\subset\Path_{p_l,q_l,W}(\WP_{p_l},\WP_{l1})$$
and
$$\Path_{p_l,q_l,W}(\WP_{l2},\res_W^V(\VQ_{l2})) \subset \Path_{p_l,q_l,W}(\WP_{l2},\WP_{q_l}).$$
This explains the inclusion sign in \eqref{no overlap-4}. As a straightforward consequence of \eqref{no overlap-3} and \eqref{no overlap-4}, we have $\res_W^V(\VQ_{12})\neq\res_W^V(\VQ_{11})$ and $\res_W^V(\VQ_{21})\neq \res_W^V(\VQ_{22})$.
\begin{figure}[h]
	\centering
	\includegraphics[width=5in]{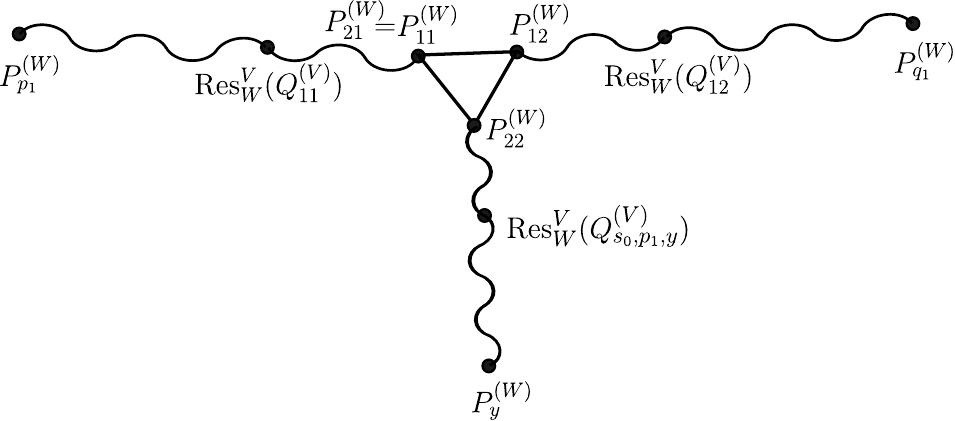}
	\caption{ \label{lem6.32(8)-2}}
\end{figure}

We first claim that $\WP_{22}\not\in\cV(\Path_{p_1,q_1,W}(\WP_{p_1},\WP_{11}))$. If not, since $\Path_{p_1,q_1,W}(\WP_{p_1},\WP_{11})$ is the shortest path connecting $\WP_{p_1}$ and $\WP_{11}$ in $\bG_W$ (by the first assertion in Corollary \ref{remaining properties of sep graph}), and that $\WP_{22}\mbox{---}\WP_{11}\in\cE(\bG_W)$, we have $d_{\bG_W}(\WP_{p_1},\WP_{22})= d_{\bG_W}(\WP_{p_1},\WP_{11})-1$. Hence by the assumption that $\WP_{22}$ and $\WP_{12}$ are in the same MCS of $\bG_W$, we have
\begin{align}\label{ridiculous dist est-1}
\begin{split}
d_{\bG_W}(\WP_{p_1},\WP_{12})\leq& d_{\bG_W}(\WP_{p_1},\WP_{22})+d_{\bG_W}(\WP_{22},\WP_{12})\\
=& d_{\bG_W}(\WP_{p_1},\WP_{11})-1+1=d_{\bG_W}(\WP_{p_1},\WP_{11}).
\end{split}
\end{align}
On the other hand, by the first assertion in Corollary \ref{remaining properties of sep graph}, $\Path_{p_1,q_1,W}(\WP_{p_1},\WP_{12})$ is the shortest path connecting $\WP_{p_1}$ and $\WP_{12}$ in $\bG_W$. Moreover, by \eqref{no overlap-4}, we have $\WP_{11}\in\cV(\Path_{p_1,q_1,W}(\WP_{p_1},\WP_{12}))$. Therefore
$$d_{\bG_W}(\WP_{p_1},\WP_{12})=d_{\bG_W}(\WP_{p_1},\WP_{11})+d_{\bG_W}(\WP_{11},\WP_{12})=d_{\bG_W}(\WP_{p_1},\WP_{11})+1.$$
This contradicts \eqref{ridiculous dist est-1}. Hence $\WP_{22}\not\in\cV(\Path_{p_1,q_1,W}(\WP_{p_1},\WP_{11}))$. In particular,
\begin{align}\label{a normal path-1}
\bP:=\Path_{p_1,q_1,W}(\WP_{p_1},\WP_{11})\union\left(\WP_{11}\mbox{---}\WP_{22}\right)
\end{align}
is a path connecting $\WP_{p_1}$ and $\WP_{22}$. Since we assumed that $\WP_{22}\neq\WP_{12}$ and that
$$\cV(\Path_{p_1,q_1,W}(\WP_{p_1},\WP_{12}))=\cV(\Path_{p_1,q_1,W}(\WP_{p_1},\WP_{11}))\union\{\WP_{12}\},~\textrm{following \eqref{no overlap-4}}, $$
we have $\WP_{22}\not\in\cV(\Path_{p_1,q_1,W}(\WP_{p_1},\WP_{12}))$. Hence, by \eqref{a normal path-1} and the uniqueness of shortest paths connecting $\WP_{p_1}$ and $\WP_{12}$ (i.e. the first assertion in Corollary \ref{remaining properties of sep graph} applied to $\WP_{p_1}$ and $\WP_{12}$), we have
\begin{align*}
d_{\bG_W}(\WP_{p_1},\WP_{12})\leq& d_{\bG_W}(\WP_{p_1},\WP_{22})+d_{\bG_W}(\WP_{22},\WP_{12})-1\\
=&d_{\bG_W}(\WP_{p_1},\WP_{22})\leq \mathrm{length}(\bP)=d_{\bG_W}(\WP_{p_1},\WP_{12}).
\end{align*}
This implies that $d_{\bG_W}(\WP_{p_1},\WP_{22})= \mathrm{length}(\bP)$ and therefore the path $\bP$ defined in \eqref{a normal path-1} is a shortest path. By the first assertion in Corollary \ref{remaining properties of sep graph}, we have
$$\bP=\Path_{p_1,y,W}(\WP_{p_1}, \WP_{22})~\textrm{for some}~y\in W\setminus\{p_1\}.$$
In particular, $\WP_{11}\mbox{---}\WP_{22}$ is an edge of $\bP_{p_1,y,W}$ with
\begin{align}\label{no overlap-5}
\Path_{p_1,y,W}(\WP_{p_1}, \WP_{11})\ints\Path_{p_1,y,W}(\WP_{y}, \WP_{22})=\emptyset.\text{ (See Figure \ref{lem6.32(8)-2}.)}
\end{align}
We can then apply the same arguments in \textbf{Step 1} to $\WP_{11}\mbox{---}\WP_{22}$ and $\bP_{p_1,y,W}$. To be specific, we let $\VQ_{0,p_1,y},...,\VQ_{m+1,p_1,y}$ be all the vertices in $\bP_{p_1,y,W}$ such that
$$\bP_{p_1,y,V}=\VQ_{0,p_1,y}:=\VQ_{p_1}\mbox{---}\VQ_{1,p_1,y}\mbox{---}\cdots\mbox{---}\VQ_{m,p_1,y}\mbox{---}\VQ_y=:\VQ_{m+1,p_1,y},$$
where $\VQ_x=\typemark{F_x}{\{x\}}{V}$ for any $x\in V$. Choose
\begin{align}\label{eqn:lem6.32(8)-t0}
t_0=\max\{t|0\leq t\leq m+1\mathrm{~and~}\res_W^V(\VQ_{t,p_1,y})\in\cV(\Path_{p_1,y,W}(\WP_{p_1},\WP_{11}))\}
\end{align}
and
$$s_0=\min\{s|0\leq s\leq m+1\mathrm{~and~}\res_W^V(\VQ_{s,p_1,y})\in\cV(\Path_{p_1,y,W}(\WP_{22},\WP_{y}))\}.$$
Then for the same reasons as in \textbf{Step 1}, $t_0=s_0-1$ and
\begin{align}\label{renewed corresp}
\WP_{11}\mbox{---}\WP_{22}\in\cE(\Path_{p_1,y,W}(\res_W^V(\VQ_{t_0,p_1,y}),\res_W^V(\VQ_{s_0,p_1,y}))).
\end{align}
Now we claim that $\VQ_{t_0,p_1,y}=\VQ_{11}$. If not, by \eqref{no overlap-3} and the definition of $\VQ_{t_0,p_1,y}$, either
\begin{align}\label{same VQs-1}
\res_W^V(\VQ_{11})\in\cV(\Path_{p_1,y,W}(\WP_{p_1},\res_W^V(\VQ_{t_0,p_1,y})))
\end{align}
or
$$\res_W^V(\VQ_{11})\in\cV(\Path_{p_1,y,W}(\res_W^V(\VQ_{t_0,p_1,y}),\WP_{11}))\setminus\{\res_W^V(\VQ_{t_0,p_1,y})\}.$$
In the latter case, we have
$$\res_W^V(\VQ_{11})\in\cV(\Path_{p_1,y,W}(\res_W^V(\VQ_{t_0,p_1,y}),\res_W^V(\VQ_{s_0,p_1,y})))\setminus\{\res_W^V(\VQ_{t_0,p_1,y}),\res_W^V(\VQ_{s_0,p_1,y})\}.$$
By the sixth assertion in Lemma \ref{properties of W-face} and the fact that $t_0=s_0-1$, we have
$$\VQ_{11}\in\cV(\Path_{p_1,y,V}(\VQ_{t_0,p_1,y},\VQ_{s_0,p_1,y}))\setminus\{\VQ_{t_0,p_1,y},\VQ_{s_0,p_1,y}\}=\emptyset.$$
This is impossible. Therefore it remains to consider the case when \eqref{same VQs-1} holds. By \eqref{no overlap-3}, \eqref{no overlap-4}, the first assertion in Corollary \ref{remaining properties of sep graph} and the definition of $\VQ_{t_0,p_1,y}$, we have
\begin{align}\label{same VQs-2}
\begin{split}
&\Path_{p_1,y,W}(\WP_{p_1},\res_W^V(\VQ_{t_0,p_1,y})) \\
\subset &\Path_{p_1,y,W}(\WP_{p_1},\WP_{11}) \\
=&\Path_{p_1,q_1,W}(\WP_{p_1},\WP_{11})
\subset\Path_{p_1,q_1,W}(\WP_{p_1},\WP_{12})\subset\Path_{p_1,q_1,W}(\WP_{p_1},\res_W^V(\VQ_{12})).
\end{split}
\end{align}
Therefore, by \eqref{same VQs-1} and \eqref{same VQs-2}, $\res_W^V(\VQ_{t_0,p_1,y})$ is between $\res_W^V(\VQ_{11})$ and $\res_W^V(\VQ_{12})$ in the sense of Definition \ref{in between for ASep}. By the third assertion of Lemma \ref{res of vertices}, either $\VQ_{t_0,p_1,y}$ is between $\VQ_{11}$ and $\VQ_{12}$ in the sense of Definition \ref{in between for ASep}, or $\res_W^V(\VQ_{t_0,p_1,y})\in\{\res_W^V(\VQ_{11}),\res_W^V(\VQ_{12})\}$. We discuss these 2 cases separately to prove that $\VQ_{t_0,p_1,y}= \VQ_{11}$.

\textbf{Case 1}: If $\VQ_{t_0,p_1,y}$ is between $\VQ_{11}$ and $\VQ_{12}$ in the sense of Definition \ref{in between for ASep}, by the fact that $\VQ_{11}\mbox{---}\VQ_{12}\in\cE(\bG_{V;W})\subset\cE(\bG_V)$, we have $\VQ_{t_0,p_1,y}\in\{\VQ_{11},\VQ_{12}\}$. By \eqref{no overlap-4}, $\res_W^V(\VQ_{12})\not\in\Path_{p_1,q_1,W}(\WP_{p_1},\WP_{11})$. Therefore by \eqref{same VQs-2}, $\res_W^V(\VQ_{t_0,p_1,y})\neq\res_W^V(\VQ_{12})$, which implies that $\VQ_{t_0,p_1,y}\neq \VQ_{12}$. As a consequnce, $\VQ_{t_0,p_1,y}= \VQ_{11}$.

\textbf{Case 2}: If $\res_W^V(\VQ_{t_0,p_1,y})\in\{\res_W^V(\VQ_{11}),\res_W^V(\VQ_{12})\}$, as is already proved in the above \textbf{Case 1}, $\res_W^V(\VQ_{t_0,p_1,y})\neq\res_W^V(\VQ_{12})$. Hence
\begin{align}\label{same VQs-3}\res_W^V(\VQ_{t_0,p_1,y})=\res_W^V(\VQ_{11})\neq\res_W^V(\VQ_{12}).
\end{align} Since $\res_W^V(\VQ_{11}),\res_W^V(\VQ_{12})\in\cV(\bP_{p_1,q_1,W})$, we have
\begin{align}\label{eqn:lem6.32-same line}
\VQ_{t_0,p_1,y},\VQ_{11},\VQ_{12}\in\cV(\bP_{p_1,q_1,V}).
\end{align}
By Lemma \ref{unique shortest paths}, one of $\VQ_{t_0,p_1,y},\VQ_{11},\VQ_{12}$ is between the other two in the sense of Definition \ref{in between for ASep}. By \eqref{same VQs-3} and the third assertion in Lemma \ref{res of vertices}, we have
$\VQ_{12}$ cannot be between $\VQ_{t_0,p_1,y}$ and $\VQ_{11}$ in the sense of Definition \ref{in between for ASep}. Therefore, there are two remaining subcases:

\textbf{Case 2a}: If $\VQ_{t_0,p_1,y}$ is between $\VQ_{11}$ and $\VQ_{12}$ in the sense of Definition \ref{in between for ASep}, this is the same as \textbf{Case 1} and we must have $\VQ_{t_0,p_1,y}= \VQ_{11}$.

\textbf{Case 2b}: Assume that $\VQ_{11}$ is between $\VQ_{12}$ and $\VQ_{t_0,p_1,y}$ in the sense of Definition \ref{in between for ASep}. By \eqref{no overlap-3}, \eqref{eqn:lem6.32(8)-t0} and \eqref{same VQs-3}, we have
\begin{align}\label{eqn:lem6.32-in between-1}
\VQ_{11}\text{ is between }\VQ_{p_1}\text{ and }\VQ_{t_0,p_1,y}\text{ in the sense of Definition \ref{in between for ASep}}.
\end{align}
Also, by \eqref{no overlap-3} and \eqref{no overlap-4}, $\res_W^V(\VQ_{11})$ is between $\res_W^V(\VQ_{p_1})=\WP_{p_1}$ and $\res_W^V(\VQ_{12})$ in the sense of Definition \ref{in between for ASep}. By the third assertion in Lemma \ref{res of vertices}, Definition \ref{bdry} and \eqref{same VQs-3}, we have
\begin{align}\label{eqn:lem6.32-in between-2}
\VQ_{11}\text{ is between }\VQ_{p_1}\text{ and }\VQ_{12}\text{ in the sense of Definition \ref{in between for ASep}.}
\end{align}
It follows from \eqref{same VQs-3}, the standing assumptions of \textbf{Case 2b}, \eqref{eqn:lem6.32-same line}, \eqref{eqn:lem6.32-in between-1}, \eqref{eqn:lem6.32-in between-2}, and Lemma \ref{unique shortest paths} that $\VQ_{11}=\VQ_{t_0,p_1,y}$ in \textbf{Case 2b}. This completes the proof of $\VQ_{11}=\VQ_{t_0,p_1,y}$.

Back to the remaining parts in the proof of \textbf{Step 3}. Now that we have $\VQ_{11}=\VQ_{t_0,p_1,y}$, we summarize what we have known at this point: (See Figure \ref{lem6.32(8)-2}.)

By \eqref{renewed corresp}, we have $\VQ_{11}\mbox{---}\VQ_{12},\VQ_{11}\mbox{---}\VQ_{s_0,p_1,y}\in\cE(\bG_{V;W})$ such that
\begin{align}\label{at last!-1}
\WP_{11}\mbox{---}\WP_{12}\in\cE(\Path_{p_1,q_1,W}(\res_W^V(\VQ_{11}),\res_W^V(\VQ_{12}))).
\end{align}
and
\begin{align}\label{at last!-2}
\WP_{21}\mbox{---}\WP_{22}=\WP_{11}\mbox{---}\WP_{22}\in\cE(\Path_{p_1,y,W}(\res_W^V(\VQ_{11}),\res_W^V(\VQ_{s_0,p_1,y}))).\end{align}
By the uniqueness of $\GVW_2$ proved in \textbf{Step 2}, $\VQ_{11}\mbox{---}\VQ_{s_0,p_1,y}\in\cE(\GVW_2)$. Recall that $\VQ_{11}\mbox{---}\VQ_{12}\in\cE(\GVW_1)$. If $\VQ_{s_0,p_1,y}\mbox{---}\VQ_{12}\in\cE(\bG_V)$, then $\VQ_{s_0,p_1,y}\mbox{---}\VQ_{12}\in\cE(\bG_{V;W})$. By the fourth assertion in Lemma \ref{properties of W-face}, $\GVW_1$ and $\GVW_2$ both equal to the unique MCS in $\bG_{V;W}$ which contains $\VQ_{11},\VQ_{12}, \VQ_{s_0,p_1,y}$.
Therefore, it remains for us to prove that $\VQ_{s_0,p_1,y}$ and $\VQ_{12}$ are connected by a single edge in $\bG_V$.

Suppose $\VQ_{s_0,p_1,y}$ and $\VQ_{12}$ are \textbf{NOT} connected by a single edge in $\bG_V$, by the second assertion of Lemma \ref{reinterpretation of edges}, $\VQ_{11}$ is between $\VQ_{s_0,p_1,y}$ and $\VQ_{12}$ in the sense of Definition \ref{in between for ASep}. By the third assertion of Lemma \ref{res of vertices}, $\res_W^V(\VQ_{11})$ is between $\res_W^V(\VQ_{s_0,p_1,y})$ and $\res_W^V(\VQ_{12})$ in the sense of Definition \ref{in between for ASep}. Therefore there exist some $p_0\neq q_0\in W$ such that,
$$\res_W^V(\VQ_{11}),\res_W^V(\VQ_{s_0,p_1,y}),\res_W^V(\VQ_{12})\in\cV(\bP_{p_0,q_0,W}).$$
By \eqref{at last!-1} and \eqref{at last!-2},
$$\WP_{11},\WP_{12},\WP_{22}\in\cV(\bP_{p_0,q_0,W})\ints\cV(\GW).$$
By Lemma \ref{unique shortest paths}, this is impossible because a shortest path connecting any pair of points in $\bG_{V}$ cannot contain 3 vertices of a complete subgraph of $\bG_{V}$. Hence $\VQ_{s_0,p_1,y}$ and $\VQ_{12}$ must be connected by a single edge in $\bG_V$. By the discussion in the previous paragraph, $\GVW_1=\GVW_2$ is the MCS containing $\VQ_{11}$, $\VQ_{12}$ and $\VQ_{s_0,p_1,y}$. This completes \textbf{Step 3} and hence the whole proof of the eighth assertion in Lemma \ref{properties of W-face}. \qedhere
\end{enumerate}
\end{proof}
\begin{rmk}
By the remark after Definition \ref{MCS def}, Definition \ref{W-face of graph} and the fourth assertion above, for any $\gamma\in\Gamma$ and $G\in\MCS(\bG_{V;W})$, the map $G\to\gamma G$ gives a bijection from $\MCS(\bG_{V;W})$ to $\MCS(\bG_{\gamma V;\gamma W})$.
\end{rmk}

Regarding the seventh assertion in Lemma \ref{properties of W-face}, although for any $\GVW\in\MCS(\bG_{V;W})$ and any $p\neq q$, $|\cV(\GVW)\ints\cV(\bP_{p,q,V})|\in\{0,2\}$, $\res_W^V(\cV(\GVW))\ints\cV(\bP_{p,q,W})$ may have cardinality equal to $1$, which does not happen in the world of actual separation described in Subsection \ref{subsec actual sep}. See the following example.
\begin{example}
Let $V=\{x,y,z\}\in\Gamma x_0$, where $x,y,z$ are pairwise distinct. Assume that $\hF\in\cF(V)\setminus\{F_x,F_y,F_z\}$ such that $\hF\in\Theta(F_p,F_q)$ for any distinct $p,q\in V$. In particular, $F_x,F_y,F_z$ are pairwise distinct. (See Figure \ref{example6.33}.)

Let $Q_{\hF,p}:=\typemark{\hF}{\{p\}}{V}$ for any $p\in V$. Then by the first assertion in Lemma \ref{reinterpretation of edges}, one can easily verify that there exists a $\GV\in\MCS(\bG_V)$ such that $\cV(\GV)=\{Q_{\hF,x},Q_{\hF,y},Q_{\hF,z}\}$. By the fourth assertion in Lemma \ref{properties of W-face}, for any $W\subset V$ with $|W|=2$, $\GVW:=\GV\cap\bG_{V;W}\in\MCS(\bG_{V;W})$. It follows from Definition \ref{res of types} that $|\res_W^V(\cV(\GVW))\cap\cV(\bG_W)|=1$. (Here, if $W=\{p,q\}$, then $\bG_W=\bP_{p,q,W}$.)
\begin{figure}[h]
	\centering
	\includegraphics[width=4in]{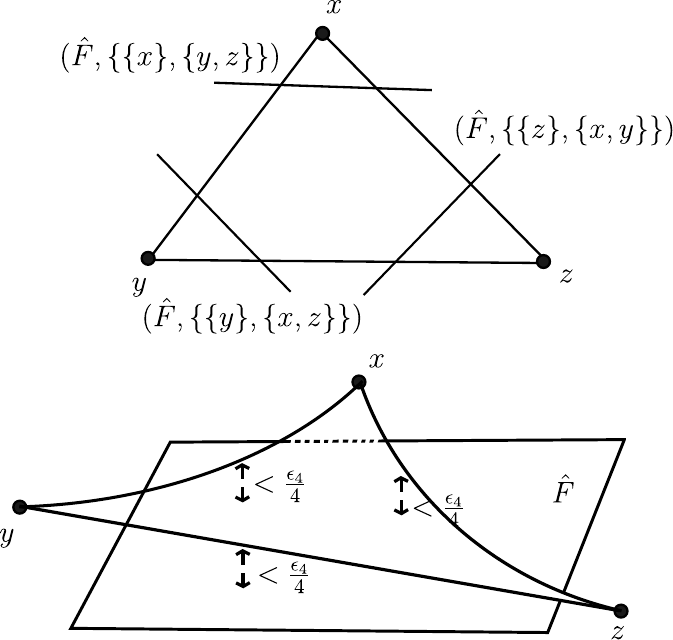}
	\caption{ \label{example6.33}}
\end{figure}
\end{example}
In Section \ref{sec hom arg}, we are mostly interested in the case when $|W|\geq 2$ and $|V\setminus W|=1$. In this case, we want to show that the above phenomenon only happens under very restrictive conditions.

\begin{definition}[Singularity of MCS in $\bG_V$]\label{singular}
Let $V\subset \Gamma x_0$ such that $|V|\geq 3$. For any $\GV\in\MCS(\bG_V)$ and any $p\in V$, we say that $\GV$ is \emph{singular} at $p$ if there exist distinct  $q_1, q_2\in W_p$ such that
$$\left|\res_{W_p}^V(\cV(\GV\ints\bG_{V;W_p}))\ints\cV(\bP_{q_1,q_2,W_p})\right|=1,$$
where $W_p:=V\setminus\{p\}$. Denoted by $\Sing(\GV)=\{p\in V|\GV\mathrm{~is~singular~at~}p\}$. If $\Sing(\GV)=\emptyset$, then we say that $\GV$ is \emph{regular}.
\end{definition}
\begin{rmk}
Under the above settings, for any $\gamma\in\Gamma$, by the remark after Definition \ref{MCS def}, Notations \ref{PATH}, the remark after Definition \ref{res of types} and the remark after Lemma \ref{properties of W-face}, we have $\gamma\Sing(\GV)=\Sing(\gamma\GV)$.
\end{rmk}
\begin{lemma}\label{singular case}
Let $V\subset \Gamma x_0$ such that $|V|\geq 3$. For any $\GV\in\MCS(\bG_V)$, if $p\in\Sing(\GV)$, then $|W_p|>2$ or $|\cF(W_p)|>1$, where $W_p:=V\setminus\{p\}$.
Moreover, there exist $\hF\in\cF(V)$ and $I_1,I_2\subset V$ such that
\begin{itemize}
\item $I_1\ints I_2=\emptyset$ and  $I_1\union I_2=V\setminus \{p\}$;
\item $\PSep_V(\hF)=\left\{\stype{I_1}{V},\stype{I_2}{V},\stype{\{p\}}{V}\right\}$;
\item $\cV(\GV)=\left\{\typemark{\hF}{I_1}{V},\typemark{\hF}{I_2}{V},\typemark{\hF}{\{p\}}{V}\right\}$.
\end{itemize}
As a direct consequence of this,
$$\left|\res_{W_p}^V(\cV(\GV\ints\bG_{V;W_p}))\right|=1.$$
\end{lemma}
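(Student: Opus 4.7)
The plan is to dispose of the easy case by a cardinality argument, then combine the partition structure of an MCS from Corollary \ref{remaining properties of sep graph}(5) with the restriction identity forced by singularity to pin down the three vertices of $\GV$, and finally identify the common flat $\hF$ together with its primitive separation types.

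For the first claim, suppose $|W_p|=2$ and $|\cF(W_p)|=1$, write $W_p=\{q_1,q_2\}$. The special case of Definition \ref{res of types} yields $\cA_\PSep(V;W_p)=\{\typemark{F_{q_1}}{\{q_1\}}{V},\typemark{F_{q_1}}{\{q_2\}}{V}\}$, on which $\res_{W_p}^V$ is injective with distinct images in $\cV(\bP_{q_1,q_2,W_p})$. By the fourth assertion of Lemma \ref{properties of W-face}, $\GV\cap\bG_{V;W_p}$ is a nonempty MCS of $\bG_{V;W_p}$, hence has at least two vertices by the second assertion of Corollary \ref{remaining properties of sep graph}. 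Injectivity then produces two distinct elements in the restricted intersection, contradicting singularity. Hence $|W_p|>2$ or $|\cF(W_p)|>1$, and the restriction map is defined in the regular way.

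For the structural claim, fix witnesses $q_1\neq q_2\in W_p$ of singularity. Since $\bP_{q_1,q_2,V}\subset\bG_{V;W_p}$ by the first assertion of Lemma \ref{properties of W-face}, the fourth assertion of Corollary \ref{remaining properties of sep graph} forces $\GV\cap\bP_{q_1,q_2,V}$ to contain exactly two vertices $\VQ_j=\typemark{F_j}{I_j}{V}$, $j=1,2$; the sixth assertion of Lemma \ref{properties of W-face} places their images in $\cV(\bP_{q_1,q_2,W_p})$, and singularity forces those images to coincide. Thus $F_1=F_2=:\hF$ and $\stype{I_1\cap W_p}{W_p}=\stype{I_2\cap W_p}{W_p}$. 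Apply the fifth assertion of Corollary \ref{remaining properties of sep graph} to $\GV$: there exist pairwise disjoint representatives $I_\ell'\in\stype{I_\ell}{V}$ for each vertex of $\GV$ whose union is $V$. After relabelling, $q_1\in I_1'$ and $q_2\in I_2'$. Coincidence of restrictions combined with disjointness rules out $I_1'\cap W_p=I_2'\cap W_p$, so $I_1'\cap W_p=W_p\setminus I_2'$, equivalently $W_p\subset I_1'\cup I_2'$. If in addition $p\in I_1'\cup I_2'$, then $V=I_1'\sqcup I_2'$, hence $\stype{I_1}{V}=\stype{I_2}{V}$ and $\VQ_1=\VQ_2$, a contradiction. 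Therefore exactly one further representative remains, namely $I_3'=\{p\}$, and $\cV(\GV)=\{\typemark{\hF}{I_1'}{V},\typemark{\hF}{I_2'}{V},\typemark{F_3}{\{p\}}{V}\}$ with $I_1'\sqcup I_2'=W_p$.

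To close out, use the edges $\VQ_1\mbox{---}\VQ_3$ and $\VQ_2\mbox{---}\VQ_3$ in $\bG_V$: if $F_3\neq\hF$, the edge characterisation in cases (iii)--(iv) of the first assertion of Lemma \ref{reinterpretation of edges}, combined with the uniqueness statement in the third assertion of Lemma \ref{key lem for ASep}, forces both $\stype{I_1'}{V}$ and $\stype{I_2'}{V}$ to equal the unique primitive separation type of $\hF$ with respect to the pair $\{\hF,F_3\}$, so $\VQ_1=\VQ_2$, a contradiction. Hence $F_3=\hF$. For $\PSep_V(\hF)$, Lemma \ref{prim decomp} indexes primitive separation types by the canonical partition $V=H_1\sqcup\cdots\sqcup H_m$; the three already-found primitives give $m\geq 3$, the singleton $\{p\}$ is forced to equal some $H_j$, and a brief disjointness check (each $I_\ell'$ equals some $H_k$ or its complement $V\setminus H_k$, and the complement option would force $I_1'$ or $I_2'$ to swallow $\{p\}$ or to empty) yields $I_1'=H_{j_1}$, $I_2'=H_{j_2}$, whence $m=3$. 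The main technical obstacle is the identification $F_3=\hF$: non-uniqueness of $\Theta$-separation types makes a naive approach fail, and one must exploit the full strength of the edge classification in Lemma \ref{reinterpretation of edges}(1) together with the uniqueness in Lemma \ref{key lem for ASep}(3) to collapse the two separation types and contradict $\VQ_1\neq\VQ_2$.
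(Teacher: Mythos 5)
Your proposal is largely sound and arrives at the same statement by a genuinely different route. Both proofs dispose of the trivial case ($|W_p|=2$, $|\cF(W_p)|=1$) in the same spirit, using the bijectivity of $\res_{W_p}^V$ on $\cA_\PSep(V;W_p)$ in Definition \ref{res of types}. For the structural part, the paper proceeds top--down: it derives $\PSep_V(\hF)=\{\stype{I_1}{V},\stype{I_2}{V},\stype{\{p\}}{V}\}$ from Lemma \ref{prim decomp}, then shows the explicit vertex $\VQ_3:=\typemark{\hF}{\{p\}}{V}$ lies in $\cV(\bG_V)$ (by ruling out $\hF\notin\cA_0(V)$ exactly as you do with the first claim), then uses Lemma \ref{reinterpretation of edges}(1), case (i) ($F_1=F_2\Rightarrow$ edge) together with Proposition \ref{key prop of ASep graph}(1) to place $\VQ_3$ in $\cV(\GV)$, and finally Corollary \ref{remaining properties of sep graph}(5) shows there is nothing else. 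You go bottom--up: Corollary \ref{remaining properties of sep graph}(5) and the disjointness/coincidence bookkeeping immediately give $\cV(\GV)=\{\VQ_1,\VQ_2,\VQ_3\}$ with $\VQ_3=\typemark{F_3}{\{p\}}{V}$, and then you must still identify $F_3=\hF$. Your way is a bit more elementary in that it extracts more from the single application of Corollary \ref{remaining properties of sep graph}(5).

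There is, however, a real gap in your $F_3=\hF$ step. You deduce $\stype{I_1'}{V}=\stype{I_2'}{V}$ from ``cases (iii)--(iv)'' of Lemma \ref{reinterpretation of edges}(1), but those cases pin the separation type of the $\cA_0(V)$-endpoint only. If $\hF\notin\cA_0(V)$ you land either in case (ii) (no constraint on separation types at all) or in case (iii) with $F_1=\hF\notin\cA_0(V)$, where the uniqueness constraint attaches to $\VQ_3$'s type, not to $\stype{I_1'}{V}$ or $\stype{I_2'}{V}$, and the argument does not close. This scenario is not vacuous a priori: it is exactly the one that would force $\VQ_1,\VQ_2\in\del\cA_\Sep(V)$, hence $I_1'=\{q_1\}$, $I_2'=\{q_2\}$, $W_p=\{q_1,q_2\}$ with $F_{q_1}=F_{q_2}=\hF$ and $|\cF(W_p)|=1$. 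So it \emph{is} ruled out --- but by the first claim, not by the lemmas you cite. You should insert exactly this one sentence: if $\hF\notin\cA_0(V)$ then $\VQ_1,\VQ_2\in\del\cA_\Sep(V)$ forces $W_p=\{q_1,q_2\}$ and $\cF(W_p)=\{\hF\}$, contradicting the first claim, so $\hF\in\cA_0(V)$; only then do cases (iii)--(iv) deliver $\stype{I_1'}{V}=\stype{I_2'}{V}$. (Alternatively, once $\hF\in\cA_0(V)$ you can sidestep the case split entirely, as the paper does, by noting $\typemark{\hF}{\{p\}}{V}\in\AnPSep(V)\subset\cV(\bG_V)$ is joined to $\VQ_1$ and $\VQ_2$ by case (i), hence lies in $\cV(\GV)$ by Proposition \ref{key prop of ASep graph}(1), and must coincide with $\VQ_3$.) The remaining bookkeeping about $\PSep_V(\hF)$ via Lemma \ref{prim decomp} is fine.
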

\begin{proof}

For simplicity we write $W:=W_p=V\setminus\{p\}$. If $|W|=2$ and $|\cF(W)|=1$, let $W=\{q_1,q_2\}$ for some $q_1\neq q_2$. By the corresponding discussions in Definition \ref{res of types}, $\bP_{q_1,q_2,W}=\bG_W$ and the restriction map $\res_W^V:\cA_\PSep(V;W)\to W$ is bijective. Since $\cV(\GV\ints\bP_{q_1,q_2,V})\neq \emptyset$, by Definition \ref{graph of sep} and the seventh assertion in Lemma \ref{properties of W-face}, $\cV(\GV\ints\bP_{q_1,q_2,V})$ is a subset of $\cA_\PSep(V;W)$ with cardinality 2, which implies that $\cV(\GV\ints\bP_{q_1,q_2,V})=\cA_\PSep(V;W)$. Hence $\res_W^V(\cV(\GV\ints\bG_{V;W}))\ints\cV(\bG_W)=\res_W^V(\cV(\GV\ints\bG_{V;W}))\ints\cV(\bP_{q_1,q_2,W})=W$. This contradicts the assumption that $p\in\Sing(\GV)$. Therefore $|W|>2$ or $|\cF(W)|>1$.

It remains for us to verify the three bullet points. Since $p\in\Sing(\GV)$, there exist distinct $q_1, q_2\in W$ and nonempty subsets $I_1, I_2:=W\setminus I_1$ of $ W$ such that
$$\res_W^V(\cV(\GV\ints\bG_{V;W})\ints\cV(\bP_{q_1,q_2,W}))=\{\WP:=(\hF,\{I_1,I_2\})\}.$$
By the seventh assertion in Lemma \ref{properties of W-face}, $\GV\ints\cV(\bP_{q_1,q_2,W})\neq\emptyset$ and hence there exist distinct $\VQ_1,\VQ_2\in\cV(\GV)$ such that $\res_W^V(\VQ_j)=\WP$ for any $j=1,2$. Therefore we can assume WLOG that $\VQ_j=\typemark{\hF}{I_j}{V}$. By the fact that $\VQ_1,\VQ_2\in\cV(\GV)$ and Definition \ref{graph of sep}, we have $\stype{I_1}{V},\stype{I_2}{V}\in\PSep_V(\hF)$. Therefore by Lemma \ref{prim decomp},
$$\PSep_V(\hF)=\left\{\stype{I_1}{V},\stype{I_2}{V},\stype{\{p\}}{V}\right\}.$$
If $\VQ_3:=\typemark{\hF}{\{p\}}{V}\not\in\cV(\bG_V)$, then by Definition \ref{graph of sep}, $\VQ_1,\VQ_2\in\del\cA_\PSep(V)$. This implies that $I_1=\{q_1\}$, $I_2=\{q_2\}$ and $F_{q_1}=F_{q_2}=\hF$ for some $q_1\neq q_2$, which contradicts the fact that $|W|>2$ or $|\cF(W)|>1$. Hence $\VQ_3\in\cV(\bG_V)$. By the first assertion in Lemma \ref{reinterpretation of edges} and the first assertion in Proposition \ref{key prop of ASep graph}, $\{\VQ_1,\VQ_2\}\subset\cV(\GV)\implies \VQ_3\in\cV(\GV)$. On the other hand, by the fifth assertion in Corollary \ref{remaining properties of sep graph},
$\cV(\GV)\setminus\{\VQ_1,\VQ_2,\VQ_3\}=\emptyset$. Therefore $\cV(\GV)=\{\VQ_1,\VQ_2,\VQ_3\}$.
\end{proof}

Thanks to the eighth assertion in Lemma \ref{properties of W-face}, we introduce the following notions.
\begin{definition}[Subordinate]\label{subordinate}
Let $W\subset V\subset \Gamma x_0$ be finite subsets such that $|W|\geq 2$ and $|V|\geq 3$. We say $\GW\in\MCS(\bG_W)$ is \emph{subordinate} to $\GVW\in\MCS(\bG_{V;W})$ if $\GW$ and $\GVW$ satisfies the relation described in the eighth assertion in Lemma \ref{properties of W-face}. To be specifc, $\GW\in\MCS(\bG_W)$ is \emph{subordinate} to $\GVW\in\MCS(\bG_{V;W})$ if and only if one of the following holds:
\begin{enumerate}
\item[(i).] There exist an edge $\WP_1\mbox{---}\WP_2\in\cE(\GW)$, an edge $\VQ_1\mbox{---}\VQ_2\in\cE(\GVW)$ and distinct $p,q\in W$ such that
$$\WP_1\mbox{---}\WP_2\in\cE(\Path_{p,q,W}(\res_W^V(\VQ_1),\res_W^V(\VQ_2))).$$
\item[(ii).] For any edge $\WP_1\mbox{---}\WP_2\in\cE(\GW)$, there exist an edge $\VQ_1\mbox{---}\VQ_2\in\cE(\GVW)$ and distinct $p,q\in W$ satisfying
$$\WP_1\mbox{---}\WP_2\in\cE(\Path_{p,q,W}(\res_W^V(\VQ_1),\res_W^V(\VQ_2))).$$
\end{enumerate}
The eighth assertion in Lemma \ref{properties of W-face} guarantees that (i) and (ii) are equivalent. Moreover, for any $\GW\in\MCS(\bG_W)$, it is subordinate to a unique $\GVW\in\MCS(\bG_{V;W})$.

For any $\GVW\in\MCS(\bG_V)$, we denote by $\subord_W^V(\GVW)$ the collection of all $\GW\in\MCS(\bG_W)$ which are subordinate to $\GVW$. Namely
$$\subord_W^V(\GVW):=\{\GW\in\MCS(\bG_W)|\GW~\mathrm{is ~subordinate~to}~\GVW\}.$$
\end{definition}
\begin{rmk}
Under the above settings, for any $\gamma\in\Gamma$ and any $\GW\in\MCS(\bG_W)$, by the remark after Definition \ref{graph of sep}, the remark after Corollary \ref{remaining properties of sep graph} and the remark after Definition \ref{res of types}, the map $\GW\to\gamma\GW$ gives a bijection from $\subord_W^V(\GVW)$ to $\subord_{\gamma W}^{\gamma V}(\gamma\GVW)$.
\end{rmk}

\begin{definition}[$(W;V)$-enrichment]\label{enrich}
Let $W\subset V\subset \Gamma x_0$ be finite subsets such that $|W|\geq 2$ and $|V|\geq 3$. For any $\GVW\in\MCS(\bG_{V;W})$, we define the \emph{$(W;V)$-enriched $\GVW$} as a subgraph of $\bG_W$ such that
$$\Enrich_W^V(\GVW)=\bigcup_{\GW\in\subord_W^V(\GVW)}\GW.$$
In particular, by the remark after Definition \ref{subordinate}, for any $\gamma\in\Gamma$, we have $\gamma\Enrich_W^V(\GVW)=\Enrich_{\gamma W}^{\gamma V}(\gamma\GVW)$.
\end{definition}

\begin{example}\label{W trivial case}
When $|W|=2$ and $|\cF(W)|=1$, we let $W=\{p,q\}$ with $p\neq q$. By the third assertion in Corollary \ref{remaining properties of sep graph}, the first assertion in Lemma \ref{properties of W-face} and the discussion at the beginning of the proof for the eighth assertion in Lemma \ref{properties of W-face}, we have $|\bG_W|=|\bG_{V;W}|=2$, $\MCS(\bG_W)=\{\bG_W\}$ and $\MCS(\bG_{V;W})=\{\bG_{V;W}\}$. Moreover, $\bG_{V;W}=\{\VQ_p:=\typemark{F_p}{\{p\}}{V},\VQ_q:=\typemark{F_q}{\{q\}}{V}\}$ with $F_p=F_q$. By Definition \ref{res of types} and Definition \ref{subordinate}, $\subord_W^V(\bG_{V;W})=\{\bG_W\}$ and hence
$$\Enrich_W^V(\bG_{V;W})=\bG_W=\Path_{p,q,W}(\res_W^V(\VQ_p),\res_W^V(\VQ_q))=\bP_{p,q,W}.$$
\end{example}
{\begin{example}[The case when $|V|=3$]\label{ex:triangle}
Let $V=\{p_0,p_1,p_2\}\subset \Gamma x_0$ with pairwise distinct $p_0,p_1,p_2$, and let $V_j=V\setminus\{p_j\}$ for any $j\in\{0,1,2\}$. For simplicity, we write $p_3:=p_0$ and $p_4:=p_1$. In this case, we have $\cA(V)=\cA_0(V)$ and hence $\cA_\Sep(V)=\cA_\PSep(V)=\AnSep(V)=\AnPSep(V)$. (See Definition \ref{Theta sep}, Notation \ref{edge marking} and Notation \ref{sep type marking}.) For simplicity, we write $p_3:=p_0$ and $p_4:=p_1$. Define
\begin{align}\label{angle portion}
\cA_{p_j}(V):=\{\typemark{\hF}{\{p_j\}}{V}|\hF\in\cF_{p_j}(p_{j+1},p_{j+2})\}\subset\cA_\Sep(V),
\end{align}
where $\cF_x(y,z)=\Theta(F_x,F_y)\ints\Theta(F_x,F_z)$ is defined in \eqref{I am lazy}. Hence by Definition \ref{graph of sep}, we have
\begin{align}\label{vertices when k=2}
\cV(\bG_V)=\cA_\Sep(V)=\cA_{p_0}(V)\sqcup\cA_{p_1}(V)\sqcup\cA_{p_2}(V).
\end{align}

By Lemma \ref{properties of Theta}, \hyperlink{Theta-3}{property ($\Theta$3) of $\Theta(\cdot,\cdot)$}, we can assume WLOG that for any $0\leq j\leq 2$,
\begin{align}\label{eqn:Fj(j+1,j+2)}
\cF_{p_j}(p_{j+1},p_{j+2})=\{F_{j;0}:=F_{p_j},F_{j;1},...,F_{j;m_j}\}
\end{align}
for some $m_j\geq 0$ and distinct $F_{j;0},...,F_{j;m_j}\in\cF(V)$ such that $\Theta(F_{j;0},F_{j;t})=\{F_{j;0},...,F_{j;t}\}$ for any $0\leq t\leq m_j$. Let $Q_{j;t}:=\typemark{F_{j;t}}{\{p_j\}}{V}\in\cA_{p_j}(V)$ for any $0\leq j\leq 2$ and $0\leq t\leq m_j$. Moreover, by Proposition \ref{almost ints positioning}, Lemma \ref{properties of Theta}, \hyperlink{Theta-1}{properties ($\Theta$1) and ($\Theta$3) of $\Theta(\cdot,\cdot)$}, $F_{j,t}\not\in\Theta(F_{p_{j+1}},F_{p_{j+2}})$ for any $0\leq j\leq 2$ and $0\leq t\leq m_j-1$. Then the following holds: (See Figure \ref{example6.39}.)

\textbf{Properties of $\bG_V$ and $\bG_{V;V_j}$}:
\begin{itemize}
\item $\cV(\bG_V)=\{Q_{j,t}|0\leq j\leq 2, 0\leq t\leq m_j\}$. Moreover, for any $0\leq j_1,j_2\leq 2$ and $0\leq t_l\leq m_{j_l}$ with $l\in\{1,2\}$, $Q_{j_1;t_1}=Q_{j_2;t_2}$ if and only if $j_1=j_2$ and $t_1=t_2$. This is just rewriting \eqref{vertices when k=2};
\item For any $0\leq j\leq 2$ and $0\leq t\leq m_j-1$, $Q_{j;t}$ is only connected to $Q_{j;t-1}$ (when $t\geq 1$) and $Q_{j;t+1}$ by a single edge in $\bG_V$.
\item For any $0\leq j\leq 2$, $Q_{j;m_j}$ is connected to $Q_{j;m_j-1}$ (when $m_j\geq 1$) and $Q_{i;m_i}$ by a single edge in $\bG_V$ for any $i\in\{0,1,2\}\setminus\{j\}$. These are the only edges with one endpoint being $Q_{j;m_j}$.
\item For any $0\leq j\leq 2$, $\bG_{V;V_j}$ is the restriction of $\bG_V$ onto $\cA_{p_{j+1}}(V)\sqcup\cA_{p_{j+2}}(V)$ in the sense of Definition \ref{res of graphs}. Moreover, $F_{0;0},...,F_{0;m_0-1},F_{1;0},...,F_{1;m_1-1},F_{2;0},...,F_{2;m_2-1}$ are distinct elements in $\cF(V)\setminus\{F_{0,m_0},F_{1,m_1},F_{2,m_2}\}$ and hence the restriction map $\res_{V_j}^V$ is injective on $\cA_{p_{j+1}}(V)\sqcup\cA_{p_{j+2}}(V)\setminus\{Q_{j+1;m_{j+1}},Q_{j+2;m_{j+2}}\}$. Here, for simplicity we write $F_{3;t}:=F_{0;t}$ and $F_{4;s}:=F_{1;s}$ for any $0\leq t\leq m_3:=m_0$ and $0\leq s\leq m_4:=m_1$.
\end{itemize}

\begin{figure}[h]
	\centering
	\includegraphics[width=6in]{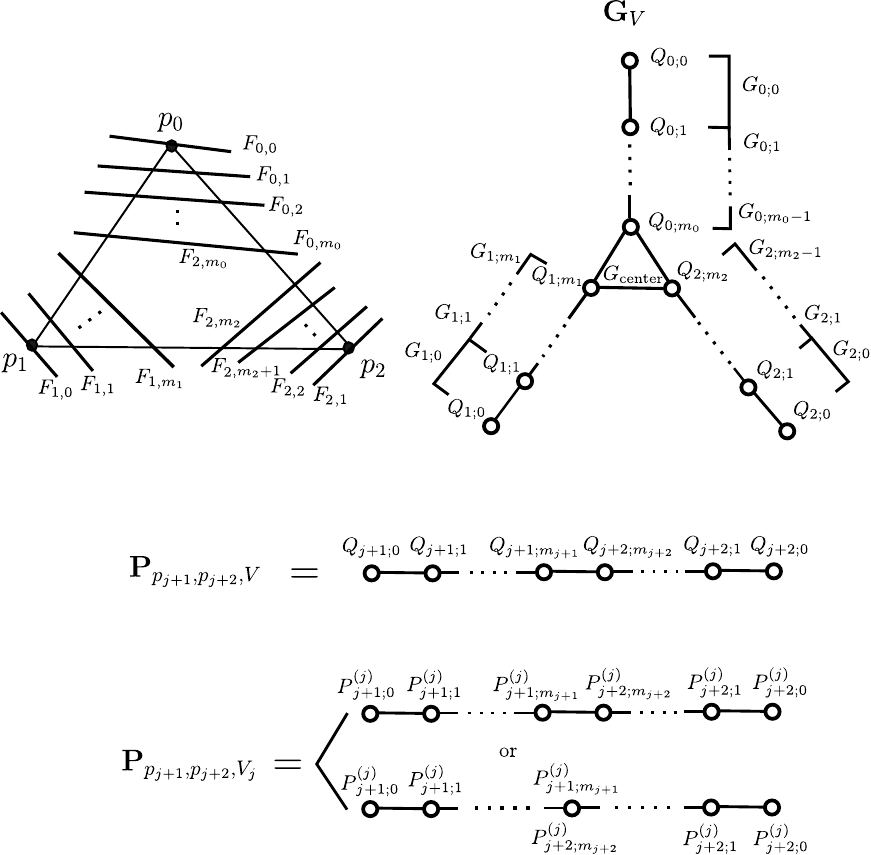}
	\caption{ \label{example6.39}}
\end{figure}

\textbf{MCS of $\bG_V$}: (See Figure \ref{example6.39}.)
\begin{itemize}
\item Let $G_{j;t}=Q_{j;t}\mbox{---} Q_{j;t+1}\in\MCS(\bG_V)$ and $G_{\mathrm{center}}$ be the complete subgraph of $\bG_V$ with $\cV(G_{\mathrm{center}})=\{Q_{0;m_0},Q_{1;m_1},Q_{2;m_2}\}$. Then we have
\begin{align}\label{MCS when k=2}
\MCS(\bG_V)=\{G_{j;t}|0\leq j\leq 2,0\leq t\leq m_j-1\}\sqcup\{G_{\mathrm{center}}\}.
\end{align}
\end{itemize}
For simplicity, $Q_{3;t}:=Q_{0;t}$ and $Q_{4;s}:=Q_{1;s}$ for any $0\leq t\leq m_0=m_3$ and any $0\leq s\leq m_1=m_4$. Similarly, we write $G_{3;t}:=G_{0;t}$ and $G_{4;s}:=G_{1;s}$ for any $0\leq t\leq m_0-1=m_3-1$ and any $0\leq s\leq m_1-1=m_4-1$. For any $0\leq j\leq 2$ fixed, any $0\leq t\leq m_{j+1}$ and any $0\leq s\leq m_{j+2}$, we define $P^{(j)}_{j+1;t}:=\res_{V_j}^V(Q_{j+1;t})$ and $P^{(j)}_{j+2;s}:=\res_{V_j}^V(Q_{j+2;s})$. We also write $P^{(1)}_{3;t}=:P^{(1)}_{0;t}$, $P^{(2)}_{3;s}=:P^{(2)}_{0;s}$ and $P^{(2)}_{4;s}=:P^{(2)}_{1;s}$ for any $0\leq t\leq m_0=m_3$ and any $0\leq s\leq m_1=m_4$. Then we obtain the following properties of $\bG_{V_j}$.

\textbf{Properties of $\bG_{V_j}$ and $\mathrm{Sing}(\cdot)$}: (See Definition \ref{singular} for $\mathrm{Sing}(\cdot)$. See Figure \ref{example6.39} for the corresponding picture.)
\begin{itemize}
\item $\bP_{p_{j+1},p_{j+2},V_j}=\Path_{p_{j+1},p_{j+2},V_j}(P^{(j)}_{j+1;0},P^{(j)}_{j+2;0})=\bG_{V_j}$.
\item For any $0\leq t\leq m_{j+1}-1$ and any $0\leq s\leq m_{j+2}-1$, we have $P^{(j)}_{j+1;t}=(F_{j+1;t},\{\{p_{j+1}\},\{p_{j+2}\}\})$ and $P^{(j)}_{j+2;s}=(F_{j+2;s},\{\{p_{j+1}\},\{p_{j+2}\}\})$. Moreover, $P^{(j)}_{j+1;0},...,P^{(j)}_{j+1;m_{j+1}-1},P^{(j)}_{j+2;0},...,P^{(j)}_{j+2;m_{j+2}-1}$ are distinct elements in $\cV(\bG_{V_j})\setminus\{P^{(j)}_{j+1;m_{j+1}},P^{(j)}_{j+2;m_{j+2}}\}$.

When $|\cF(V_j)|>1$, we also have $P^{(j)}_{j+1;m_{j+1}}=(F_{j+1;m_{j+1}},\{\{p_{j+1}\},\{p_{j+2}\}\})$ and $P^{(j)}_{j+2;m_{j+2}}=(F_{j+2;m_{j+2}},\{\{p_{j+1}\},\{p_{j+2}\}\})$. (This follows immediately from the aforementioned fact that $F_{0;0},...,F_{0;m_0-1},F_{1;0},...,F_{1;m_1-1},F_{2;0},...,F_{2;m_2-1}$ are distinct elements in $\cF(V)\setminus\{F_{0,m_0},F_{1,m_1},F_{2,m_2}\}$.)
\item For any $0\leq t\leq m_{j+1}$ and any $0\leq s\leq m_{j+2}$, $P^{(j)}_{j+1;t}\in\cV(\Path_{p_{j+1},p_{j+2},V_j}(P^{(j)}_{j+1;0},P^{(j)}_{j+2;m_{j+2}}))$ and $P^{(j)}_{j+2;s}\in\cV(\Path_{p_{j+1},p_{j+2},V_j}(P^{(j)}_{j+2;0},P^{(j)}_{j+1;m_{j+1}}))$. Moreover, we have
$$\Path_{p_{j+1},p_{j+2},V_j}(P^{(j)}_{j+1;0},P^{(j)}_{j+1;m_{j+1}})=P^{(j)}_{j+1;0}\mbox{---}\cdots\mbox{---}P^{(j)}_{j+1;m_{j+1}}$$
and
$$\Path_{p_{j+1},p_{j+2},V_j}(P^{(j)}_{j+2;0},P^{(j)}_{j+2;m_{j+2}})=P^{(j)}_{j+2;0}\mbox{---}\cdots\mbox{---}P^{(j)}_{j+2;m_{j+2}}.$$
(This follows directly from the assumptions on $F_{l;t}$ for any $0\leq l\leq 2$ and $0\leq t\leq m_l$.)
\item $\Sing(G_{j;t})=\emptyset$ for any $0\leq j\leq 2$ and $0\leq t\leq m_j-1$. Regarding $\Sing(G_{\mathrm{center}})$, we need to split the discussion into three cases:

\textbf{Case 1}: If $|\{F_{p_0},F_{p_1},F_{p_2}\}|=1$, then $m_0=m_1=m_2=0$. In this case $G_{\mathrm{center}}=\bG_V$. Since $|\cF(V_j)|=1$ for any $j\in\{0,1,2\}$, $\res^V_{V_j}$ is a bijection for any $j\in\{0,1,2\}$. (See Definition \ref{res of types}.) Hence $\mathrm{Sing}(G_{\mathrm{center}})=\emptyset$.

\textbf{Case 2}: If $|\{F_{p_0},F_{p_1},F_{p_2}\}|=2$, let $j\in\{0,1,2\}$ be the unique element such that $F_{p_j}\neq F_{p_i}$ for any $i\in\{0,1,2\}\setminus \{j\}$. In this case, $m_j\geq 1$ and $m_i=0$ for any $i\in\{0,1,2\}\setminus\{j\}$. Moreover, $F_{0;m_0}=F_{1;m_1}=F_{2;m_2}=F_i$ for any $i\in\{0,1,2\}\setminus\{j\}$. One can easily verify that $\mathrm{Sing}(G_{\mathrm{center}})=V_j=V\setminus\{p_j\}$.

\textbf{Case 3}: If $|\{F_{p_0},F_{p_1},F_{p_2}\}|=3$, then we have
$$\mathrm{Sing}(G_{\mathrm{center}})=\begin{cases}
\emptyset,~&\mathrm{if}~F_{0;m_0},F_{1;m_1},F_{2;m_2}~\text{are pairwise distinct},\\
V,~&\text{othewise}.
\end{cases}$$
\item Let $G^{(j)}_{j+1;t}=P^{(j)}_{j+1;t}\mbox{---}P^{(j)}_{j+1;t+1}$ and $G^{(j)}_{j+2;s}=P^{(j)}_{j+2;s}\mbox{---}P^{(j)}_{j+2;s+1}$ for any $0\leq t\leq m_{j+1}-1$ and any $0\leq s\leq m_{j+2}-1$. For simplicity, we let $G^{(1)}_{3;t}=:G^{(1)}_{0;t}$, $G^{(2)}_{3;t}=:G^{(2)}_{0;t}$ and $G^{(2)}_{4;t}=:G^{(2)}_{1;t}$. Then $G^{(j)}_{j+1;0},...,G^{(j)}_{j+1;m_{j+1}-1},G^{(j)}_{j+2;0},...,G^{(j)}_{j+2;m_{j+2}-1}$ are distinct MCS of $\bG_{V_j}$. Moreover, under the same assumptions on $t,s$, $\subord_{V_j}^V(G_{j+1;t})=\{G^{(j)}_{j+1;t}\}$ and $\subord_{V_j}^V(G_{j+2;s})=\{G^{(j)}_{j+2;s}\}$.
\end{itemize}
In Subsection \ref{subsec 2D case}, properties of this example will be heavily used. We conclude the discussion by introducing a few claims which are used in Subsection \ref{subsec 2D case}. The proof of these claims are straighforward from the above properties.

\textbf{Useful claims}:
\begin{enumerate}
\item (Used in the proof of the second assertion in Proposition \ref{full constructions} when $k=2$.) For any $j\in \{0,1,2\}$, any $G\in\MCS(\bG_V)$ and any $Q=\typemark{\hF}{\{p_j\}}{V}\in\cV(G)\setminus\del\cA_\Sep(V)$, the following holds:
\begin{itemize}
\item There exists a unique $G'\in\MCS(\bG_V)\setminus\{G\}$ such that $Q\in\cV(G')$.
\item Let $G'$ be the same as the above. If there exists $\typemark{F'}{I'}{V}\in\cV(G)\setminus \{Q\}$ such that $F'\in\Theta(F_{p_j},\hF)\setminus\{\hF\}$, then for any $\typemark{F''}{I''}{V}\in\cV(G')$, we have $\hF\in\Theta(F_{p_j},F'')$.
\item Let $G'$ be the same as the above. If there exists $\typemark{F'}{I'}{V}\in\cV(G)\setminus \{Q\}$ such that $\hF\in\Theta(F_{p_j},F')$, then for any $\typemark{F''}{I''}{V}\in\cV(G)$, we have $\hF\in\Theta(F_{p_j},F'')$. Moreover, for any $\typemark{F''}{I''}{V}\in\cV(G')\setminus\{Q\}$, we have $F''\in\Theta(F_{p_j},\hF)\setminus\{\hF\}$.
\end{itemize}
\item (Used in the proof of the third and the sixth assertions in Proposition \ref{full constructions} when $k=2$.) Let $G\in\MCS(\bG_V)$. If $\Sing(G)\neq \emptyset$ and $\cV(G)\cap\del\cA_\Sep(V)\neq \emptyset$, then $G=G_{\mathrm{center}}$ and there exists $\hF\in\{F_{p_0},F_{p_1},F_{p_2}\}$ such that
$$\cV(G)=\{\typemark{\hF}{\{p_0\}}{V},\typemark{\hF}{\{p_1\}}{V} ,\typemark{\hF}{\{p_2\}}{V}\}.$$
Moreover, $|\{p\in V|F_p=\hF\}|=|\cV(G)\cap\del\cA_\Sep(V)|\in\{1,2\}$.
\begin{itemize}
\item If $|\{p\in V|F_p=\hF\}|=|\cV(G)\cap\del\cA_\Sep(V)|=1$, then $\Sing(G)=V$.
\item If $|\{p\in V|F_p=\hF\}|=|\cV(G)\cap\del\cA_\Sep(V)|=2$, then $\Sing(G)=V\setminus\{j_0\}$, where $p_{j_0}$ is the unique element in $V$ which is not contained in $\{p\in V|F_p=\hF\}$. (Equivalently, $\typemark{F_{p_{j_0}}}{\{p_{j_0}\}}{V}$ is the unique element in $\del\cA_\Sep(V)\setminus\cV(G)$.) In this case, $|\cF(V_{j_0})|=1$ and $|V_{j_0}|=2$.
\end{itemize}
\item (Used in the proof of the sixth assertion in Proposition \ref{full constructions} when $k=2$.) For any $G\in\MCS(\bG_V)$ and any $j\in\{0,1,2\}$ such that $p_j\not\in\Sing(G)$, the map $\res_{V_j}^V|_{\cV(G\cap\bG_{V;V_j})}$ is injective.
\end{enumerate}
\end{example}}

\begin{lemma}\label{easy properties of enrich}
Let $W\subset V\subset \Gamma x_0$ be finite subsets such that $|W| \geq 2$ and $|V|\geq 3$. Then the following holds:
\begin{enumerate}
\item[(1).] For any $\GVW\in\MCS(\bG_{V;W})$, let $\VQ_0,...,\VQ_k$ be all the vertices of $\GVW$. Assume that $\VQ_0,...,\VQ_k$ are distinct. If $\subord_W^V(\GVW)\neq \emptyset$, then
$$\Enrich_W^V(\GVW)=\bigcup_{j,l:0\leq j< l\leq k}\Path_{p_{jl},q_{jl},W}(\res_W^V(\VQ_j),\res_W^V(\VQ_l))$$
for any suitable choices of distinct $p_{jl}, q_{jl}\in W$ such that the right hand side is well-defined.

In addition, for any distinct $p, q\in W$, either
$$\Enrich_W^V(\GVW)\ints\bP_{p,q,W}=\emptyset,$$
or there exist a unique pair of distinct vertices $\VQ_j,\VQ_l$ of $\GVW\cap\bP_{p,q,V}$ for some $j,l\in\{0,...,k\}$ such that $\res_W^V(\VQ_j)\neq \res_W^V(\VQ_l)$ and
$$\Enrich_W^V(\GVW)\ints\bP_{p,q,W}=\Path_{p,q,W}(\res_W^V(\VQ_j),\res_W^V(\VQ_l)).$$

\item[(2).] Let $\GVW\in\MCS(\bG_{V;W})$ be an arbitrary MCS of $\bG_{V;W}$ such that $\subord_W^V(\GVW)\neq\emptyset$. Then any MCS of $\Enrich_W^V(\GVW)$ is a MCS of $\bG_W$. Hence, for any $\WP\in\cV(\Enrich_W^V(\GVW))$, $\WP$ is contained in at most 2 MCS of $\Enrich_W^V(\GVW)$.

Moreover, when $|W|>2$ or $|\cF(W)|>1$, $ \del\Enrich_W^V(\GVW)=\res_W^V(\cV(\GVW))$, where $\del\Enrich_W^V(\GVW)$ is the collection of boundary elements of $\Enrich_W^V(\GVW)\subset\cA_\Sep(W)$ introduced in Definition \ref{bdry}.

In addition, $\WP$ is contained in a unique MCS of $\Enrich_W^V(\GVW)$ if and only if $\WP\in\res_W^V(\cV(\GVW))$. (This paragraph is true without the assumption that $|W|>2$ or $|\cF(W)|>1$.)
\item[(3).] Every vertex in $\bG_W$ is contained in at most two $(W;V)$-enriched MCS of $\bG_{V;W}$ and every edge in $\bG_{W}$ is contained in a unique $(W;V)$-enriched MCS of $\bG_{V;W}$. Moreover, for any vertex $\WP\in\bG_W$, the following holds:
\begin{itemize}
\item If $\WP\not\in\res_W^V(\cV(\bG_{V;W}))$, then $\WP$ is contained in a unique $(W;V)$-enriched MCS of $\bG_{V;W}$.
\item When $|W|=2$ and $|\cF(W)|=1$, for any $\WP\in\bG_W$, $\WP$ is contained in a unique $(W;V)$-enriched MCS of $\bG_{V;W}$.
\item Assume that $|W|>2$ or $|\cF(W)|>1$. If $\WP\in\res_W^V(\cV(\bG_{V;W}))\setminus\del\cA_\Sep(W)=\res_W^V(\cV(\bG_{V;W}))\setminus\{\typemark{F_x}{\{x\}}{W}|x\in W\}$, then $\WP$ is contained in exactly 2 $(W;V)$-enriched MCS of $\bG_{V;W}$.
\item Assume that $|W|>2$ or $|\cF(W)|>1$. If $\WP\in\del\cA_\Sep(W)=\{\typemark{F_x}{\{x\}}{W}|x\in W\}$, then $\WP$ is contained in a unique $(W;V)$-enriched MCS of $\bG_{V;W}$.
\end{itemize}
\end{enumerate}
\end{lemma}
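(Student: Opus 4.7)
The plan is to unwind Definition \ref{enrich} for assertion (1) by combining it with the characterization of the subordination relation given by the eighth assertion of Lemma \ref{properties of W-face}. For the inclusion ``$\supset$'' in the displayed formula, I would fix a path $\Path_{p_{jl},q_{jl},W}(\res_W^V(\VQ_j),\res_W^V(\VQ_l))$ and note that each of its edges lies in a unique MCS $\GW$ of $\bG_W$; the eighth assertion of Lemma \ref{properties of W-face} (together with the final ``moreover'' in that assertion that passes from edge to MCS) shows that $\GW \in \subord_W^V(\GVW)$, so the edge lies in $\Enrich_W^V(\GVW)$. Conversely, for any edge $\WP_1 \mbox{---} \WP_2$ of a subordinate $\GW$, Definition \ref{subordinate} places it inside $\Path_{p,q,W}(\res_W^V(\VQ_j),\res_W^V(\VQ_l))$ for some edge $\VQ_j \mbox{---} \VQ_l$ of $\GVW$, and vertices of such paths are built out of $\res_W^V$ of vertices of $\GVW$ joined by segments of some $\bP_{p,q,V}$; this gives the ``$\subset$'' inclusion after noting that paths through consecutive $\res_W^V(\VQ_j)$'s merge to paths between any two of them by concatenation in the tree $\bP_{p_{jl},q_{jl},W}$. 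For the intersection-with-$\bP_{p,q,W}$ statement, I would combine this description with the seventh assertion of Lemma \ref{properties of W-face} applied inside $V$: either no vertex of $\GVW$ lies on $\bP_{p,q,V}$ (giving empty intersection), or exactly two vertices $\VQ_j, \VQ_l$ of $\GVW$ do, and then all other $\res_W^V(\VQ_i)$ fall off the path segment $\Path_{p,q,W}(\res_W^V(\VQ_j),\res_W^V(\VQ_l))$, so the enrichment-path structure collapses to this single sub-path.

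For assertion (2), I would argue that each MCS $\mathbf{H}$ of $\Enrich_W^V(\GVW)$ contains some edge $\WP_1\mbox{---}\WP_2$ (since MCS have $\geq 2$ vertices by the second assertion of Corollary \ref{remaining properties of sep graph}), and this edge lies in a unique MCS $\GW$ of $\bG_W$ by the first assertion of Proposition \ref{key prop of ASep graph}. Since $\GW \subset \Enrich_W^V(\GVW)$ is complete, maximality inside $\Enrich$ forces $\mathbf{H} = \GW$, identifying MCS of $\Enrich$ with MCS of $\bG_W$ that subordinate to $\GVW$. This immediately gives the ``at most 2 MCS'' bound via Proposition \ref{key prop of ASep graph} and Corollary \ref{remaining properties of sep graph}. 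For the identification of boundary elements when $|W|>2$ or $|\cF(W)|>1$, I would show both inclusions: if $\WP = \res_W^V(\VQ_j)$ for some $\VQ_j \in \cV(\GVW)$, then by assertion (1) the two path-segments meeting at $\WP$ through the various $\res_W^V(\VQ_l)$ on opposite sides of $\VQ_j$ in the tree structure of $\bP_{p,q,V}$ give $\WP$ a unique-MCS character (via the unique shortest path through $\WP$), and conversely any interior vertex of a path $\Path_{p,q,W}(\res_W^V(\VQ_j),\res_W^V(\VQ_l))$ is a non-endpoint of an edge of $\bP_{p,q,W}$ and hence lies in two distinct subordinate MCS. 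The characterization via Definition \ref{bdry} and Lemma \ref{bdry=vertex} then pins down $\del\Enrich_W^V(\GVW) = \res_W^V(\cV(\GVW))$.

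For assertion (3), the edge statement is immediate: every edge $\WP_1\mbox{---}\WP_2 \in \cE(\bG_W)$ lies in a unique $\GW \in \MCS(\bG_W)$, which subordinates to a unique $\GVW \in \MCS(\bG_{V;W})$ by the eighth assertion of Lemma \ref{properties of W-face} and Definition \ref{subordinate}; this $\GVW$ is the unique MCS of $\bG_{V;W}$ whose enrichment contains the edge. For the vertex statement, I would consider a vertex $\WP \in \cV(\bG_W)$ and analyze it via the MCS of $\bG_W$ containing it: by Corollary \ref{remaining properties of sep graph}, $\WP$ is in one or two MCS of $\bG_W$, and each such MCS subordinates to a unique MCS of $\bG_{V;W}$. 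The cases split by whether $\WP = \res_W^V(\VQ)$ for some $\VQ \in \cV(\bG_{V;W})$: if not, then $\WP$ is in the interior of every relevant path and its two ambient MCS of $\bG_W$ must subordinate to the \emph{same} $\GVW$ (otherwise, by assertion (2), $\WP$ would be a boundary element of each enrichment, forcing $\WP \in \res_W^V(\cV(\bG_{V;W}))$, a contradiction); the remaining cases are routine verifications using Example \ref{W trivial case} for the degenerate $|W|=2$, $|\cF(W)|=1$ case, and assertion (2) combined with Lemma \ref{bdry=vertex} for the boundary cases.

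The main obstacle I expect is assertion (2)'s identification of $\del\Enrich_W^V(\GVW)$, because it requires carefully tracking how a vertex $\res_W^V(\VQ_j)$ sits in multiple paths $\Path_{p,q,W}(\res_W^V(\VQ_j),\res_W^V(\VQ_l))$ for varying $l$; these paths could conceivably share initial edges beyond $\res_W^V(\VQ_j)$ itself, in which case the ``boundary'' structure would be more subtle. Resolving this requires the full strength of the eighth assertion of Lemma \ref{properties of W-face} (the uniqueness of $\GVW$ associated to an edge of $\bG_W$), which forces distinct enrichment-paths emanating from $\res_W^V(\VQ_j)$ to live in distinct MCS of $\bG_W$ as soon as they branch.
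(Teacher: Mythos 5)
Your overall structure matches the paper's proof closely, and most of the component arguments are sound: the ``$\supset$'' direction of assertion (1) via the eighth assertion of Lemma \ref{properties of W-face}; the ``$\subset$'' direction via Definition \ref{subordinate}; the identification of $\MCS(\Enrich_W^V(\GVW))$ with the subordinate MCS for assertion (2); and the edge-uniqueness and the non-$\res$ vertex case of assertion (3).

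However, there is a genuine gap in your treatment of the second half of assertion (1), and it propagates into assertion (2). You observe, correctly, that the seventh assertion of Lemma \ref{properties of W-face} gives exactly two vertices $\VQ_j, \VQ_l$ of $\GVW$ on $\bP_{p,q,V}$ (when the intersection is nonempty), and that no other $\res_W^V(\VQ_i)$ lies on $\bP_{p,q,W}$. From this you conclude that ``the enrichment-path structure collapses to this single sub-path.'' But this conclusion is not immediate: a path $\Path_{p_{il},q_{il},W}(\res_W^V(\VQ_i),\res_W^V(\VQ_l))$ for some $\VQ_i$ \emph{off} $\bP_{p,q,V}$ could, in principle, wander along $\bP_{p,q,W}$ for several edges beyond the segment $[\res_W^V(\VQ_j),\res_W^V(\VQ_l)]$ before leaving it. The fact that it cannot is exactly the content of the claim \eqref{eqn:lem6.40(1)-2prep2} in the paper's proof, which requires a nontrivial degree argument: one assumes the extremal point $\WP_{p;\GVW}$ of $\Enrich_W^V(\GVW)\cap \bP_{p,q,W}$ is \emph{not} in $\res_W^V(\cV(\GVW))$, produces three pairwise non-adjacent neighbors $\WP_{p,1}, \WP_{p,2}, \WP_{p,3}$ of $\WP_{p;\GVW}$ (one along $\bP_{p,q,W}$ toward $\WP_p$, two inside the enrichment but not toward $\WP_p$), and derives a contradiction with the second assertion of Proposition \ref{key prop of ASep graph} (every vertex in at most two MCS of $\bG_W$). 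Without this step, the equality $\Enrich_W^V(\GVW)\cap\bP_{p,q,W}=\Path_{p,q,W}(\res_W^V(\VQ_j),\res_W^V(\VQ_l))$ is an inclusion in only one direction.

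The consequence for assertion (2) is that your argument for $\del\Enrich_W^V(\GVW)\supset\res_W^V(\cV(\GVW))$ is also incomplete: you argue via ``unique MCS membership of $\WP$'' and appeal to Lemma \ref{bdry=vertex}, but that lemma concerns $\del\cA_\Sep(V)$, not the boundary of a general subset of $\cA_\Sep(W)$, and the unique-MCS characterization is the \emph{conclusion} of assertion (2), not a tool you can already use. The paper instead proves this inclusion by contrapositive: if $\WP\notin\del\Enrich_W^V(\GVW)$ then by the (fully established) first assertion $\WP$ lies strictly inside a segment $\Path_{p,q,W}(\res_W^V(\VQ_1),\res_W^V(\VQ_2))$, and if additionally $\WP=\res_W^V(\VQ_3)$ then the seventh assertion of Lemma \ref{properties of W-face} forces $\VQ_3\in\{\VQ_1,\VQ_2\}$, a contradiction. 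This route again leans on the corrected form of assertion (1). Once that gap is filled, your outline for the remainder of the lemma is essentially the paper's.
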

\begin{proof}
\begin{enumerate}
\item[(1).] By Example \ref{W trivial case}, this assertion holds when $|W|=2$ and $|\cF(W)|=1$. Therefore we assume that $|W|>2$ or $|\cF(W)|>1$.

Notice that for any $j,l\in\{0,...,k\}$ with $j<l$ and any suitable choice of distinct $p_{jl},q_{jl}\in W$, by the first assertion in Corollary \ref{remaining properties of sep graph}, $\Path_{p_{jl},q_{jl},W}(\res_W^V(\VQ_j),\res_W^V(\VQ_l))$ is the unique shortest path connecting $\res_W^V(\VQ_j)$ and $\res_W^V(\VQ_l)$ in $\bG_W$. Moreover, for any edge $\WP_1\mbox{---}\WP_2\in\cE(\Path_{p_{jl},q_{jl},W}(\res_W^V(\VQ_j),\res_W^V(\VQ_l)))$, let $\GW\in\MCS(\bG_W)$ be the unique MCS in $\bG_W$ containing $\WP_1\mbox{---}\WP_2$. (Uniqueness is guaranteed by the first assertion in Proposition \ref{key prop of ASep graph}.) Then $\GW$ is subordinate to $\GVW$ in the sense of Definition \ref{subordinate}. Since we assumed that $\subord_W^V(\GVW)\neq \emptyset$, by Definition \ref{subordinate}, we have $|\res_W^V(\cV(\GVW))|\geq 2$. Hence the above discussions imply that
$$\Enrich_W^V(\GVW)=\bigcup_{\GW\in\subord_W^V(\GVW)}\GW\supset\bigcup_{j,l:0\leq j< l\leq k}\Path_{p_{jl},q_{jl},W}(\res_W^V(\VQ_j),\res_W^V(\VQ_l)).$$
On the other hand, for any $\GW\in\subord_W^V(\GVW)$ and any edge $\WP_1\mbox{---}\WP_2\in\cE(\GW)$, by Definition \ref{subordinate}, there exist some $j,l$ such that $0\leq j<l\leq k$ and
$$\WP_1\mbox{---}\WP_2\in\cE(\Path_{p_{jl},q_{jl},W}(\res_W^V(\VQ_j),\res_W^V(\VQ_l)))$$
for any suitable choice of distinct $p_{jl},q_{jl}\in W$ such that the right hand side is well-defined. This proves
$$\Enrich_W^V(\GVW)=\bigcup_{\GW\in\subord_W^V(\GVW)}\GW\subset\bigcup_{j,l:0\leq j< l\leq k}\Path_{p_{jl},q_{jl},W}(\res_W^V(\VQ_j),\res_W^V(\VQ_l)).$$
Hence
\begin{align}\label{enrich simplicial structure}
\Enrich_W^V(\GVW)=\bigcup_{j,l:0\leq j< l\leq k}\Path_{p_{jl},q_{jl},W}(\res_W^V(\VQ_j),\res_W^V(\VQ_l))
\end{align}
for any suitable choice of distinct $p_{jl}, q_{jl}\in W$ such that the right hand side is well-defined. This proves the first half of the first assertion.

Assume that $\Enrich_W^V(\GVW)\ints\bP_{p,q,W}\neq\emptyset$. Let $\WP_x:=\typemark{F_x}{\{x\}}{W}$ for any $x\in W$. We choose $\WP_{p;\GVW},\WP_{q;\GVW}\in\cV(\Enrich_W^V(\GVW)\ints\bP_{p,q,W})$ such that for any $\WP\in\cV(\Enrich_W^V(\GVW)\ints\bP_{p,q,W})$, we have
\begin{align}\label{eqn:lem6.40(1)-2prep0}
\WP\in\cV(\Path_{p,q,W}(\WP_{p;\GVW},\WP_q))\cap\cV(\Path_{p,q,W}(\WP_p,\WP_{q;\GVW})).
\end{align}
By the path structure of $\bP_{p,q,W}$ and \eqref{eqn:lem6.40(1)-2prep0}, the choice of the pair $\WP_{p;\GVW},\WP_{q;\GVW}\in\cV(\bP_{p,q,W})$ is unique. Moreover, one can immediately see that
\begin{align}\label{eqn:lem6.40(1)-subset2}
\Enrich_W^V(\GVW)\ints\bP_{p,q,W}\subset\Path_{p,q,W}(\WP_{p;\GVW},\WP_{q;\GVW}).
\end{align}
We first show that
\begin{align}\label{eqn:lem6.40(1)-2prep1}
\WP_{p;\GVW}\neq\WP_{q;\GVW}.
\end{align}
Indeed, by \eqref{eqn:lem6.40(1)-2prep0}, it suffices to show that $\Enrich_W^V(\GVW)\ints\bP_{p,q,W}$ contains more than 1 vertex. Since $\Enrich_W^V(\GVW)\ints\bP_{p,q,W}\neq \emptyset$, by Definition \ref{enrich}, there exists some $\GW\in\subord_W^V(\GVW)$ such that $\GW\cap\bP_{p,q,W}\neq \emptyset$. By the fourth assertion in Corollary \ref{remaining properties of sep graph} and Definition \ref{enrich}, we have $|\cV(\Enrich_W^V(\GVW)\ints\bP_{p,q,W})|\geq |\cV(\GW\cap\bP_{p,q,W})|= 2$. This proves \eqref{eqn:lem6.40(1)-2prep1}.

Now, we claim that
\begin{align}\label{eqn:lem6.40(1)-2prep2}
\WP_{p;\GVW},\WP_{q;\GVW}\in\res_W^V(\cV(\GVW))=\res_W^V(\{\VQ_0,...,\VQ_k\}).
\end{align}
\textbf{Proof of \eqref{eqn:lem6.40(1)-2prep2}}:
Since the proof of $\WP_{q;\GVW}\in\res_W^V(\cV(\GVW))$ is identical to the proof of $\WP_{p;\GVW}\in\res_W^V(\cV(\GVW))$, we only prove $\WP_{p;\GVW}\in\res_W^V(\cV(\GVW))$.

If $\WP_{p;\GVW}\not\in \res_W^V(\cV(\GVW))$, by \eqref{enrich simplicial structure}, there exist $s,t\in\{0,...,k\}$ with $s<t$ and some suitable choice of distinct $p_{st},q_{st}\in W$ such that
\begin{align}\label{eqn:lem6.40(1)-2prep3}
\WP_{p;\GVW}\in\cV(\Path_{p_{st},q_{st},W}(\res_W^V(\VQ_s),\res_W^V(\VQ_t)))\setminus\{\res_W^V(\VQ_s),\res_W^V(\VQ_t)\}.
\end{align}
Apply Definition \ref{bdry} and Lemma \ref{unique shortest paths} to the above, we have $\WP_{p;\GVW}\not\in\del\cA_\Sep(W)$. In particular, $\WP_{p;\GVW}\not\in\{\WP_p,\WP_q\}$ due to Lemma \ref{bdry=vertex}.

Let $\WP_{p,1},\WP_{p,2},\WP_{p,3}\in\cV(\bG_W)$ be distinct elements such that the following holds: (See Figure \ref{lem6.40(1)}.)
\begin{figure}[h]
	\centering
	\includegraphics[scale=1.1]{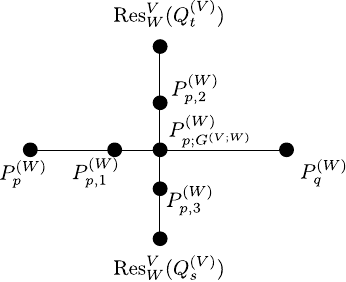}
	\caption{ \label{lem6.40(1)}}
\end{figure}
\begin{enumerate}
\item[(a)] $\WP_{p,1}\in\cV(\Path(\WP_p,\WP_{p;\GVW}))\setminus\{\WP_{p;\GVW}\}$ and $\WP_{p,1}\mbox{---}\WP_{p;\GVW}\in\cE(\bG_W)$. (This is guaranteed by the fact that $\WP_{p;\GVW}\not\in\{\WP_p,\WP_q\}$ and the fact that $\WP_{p;\GVW}\in\cV(\Enrich_W^V(\GVW)\ints\bP_{p,q,W})$ introduced right before \eqref{eqn:lem6.40(1)-2prep0}.)
\item[(b)] $\WP_{p,2},\WP_{p,3}\in\cV(\Path_{p_{st},q_{st},W}(\res_W^V(\VQ_s),\res_W^V(\VQ_t)))\setminus\{\WP_{p;\GVW}\}$. (This is guaranteed by \eqref{eqn:lem6.40(1)-2prep3}.)
\item[(c)] $\WP_{p,2}\not\in\cV(\bP_{p,q,W})$ and $\WP_{p,2}\mbox{---}\WP_{p;\GVW},\WP_{p,3}\mbox{---}\WP_{p;\GVW} \in\cE(\bG_W)$. (This is guaranteed by \eqref{enrich simplicial structure}, \eqref{eqn:lem6.40(1)-2prep0} and \eqref{eqn:lem6.40(1)-2prep3}.)
\end{enumerate}
For any $i\in\{1,2,3\}$, let $\GW_i$ be the unique MCS in $\bG_{W}$ containing the edge $\WP_{p,i}\mbox{---}\WP_{p;\GVW}$. (Uniqueness is guaranteed by the first assertion in Proposition \ref{key prop of ASep graph}.) Observe the following:
\begin{itemize}
\item If $\WP_{p,1}\mbox{---}\WP_{p,i}$ for some $i\in\{2,3\}$, then $\GW_1=\GW_i$ is the unique MCS in $\bG_W$ containing $\WP_{p,1}, \WP_{p,i},\WP_{p;\GVW}$ (due to Item (a) and Item (c) in the above, and the first assertion in Proposition \ref{key prop of ASep graph}). In this case, Item (b) and Definition \ref{subordinate} imply that $\GW_1=\GW_i\in\subord_W^V(\GVW)$. A direct consequence of this is that $\WP_{p,1}\in\cV(\Enrich_W^V(\GVW)\cap\bP_{p,q,W})$. This contradicts \eqref{eqn:lem6.40(1)-2prep0} and Item (a) in the above. Therefore $\WP_{p,1}$ and $\WP_{p,i}$ are not connected by an edge in $\bG_W$ for any $i\in\{2,3\}$. In particular, $\GW_1\neq \GW_i$ for any $i\in\{2,3\}$.
\item By Item (b) and Item (c) in the above, and the first assertion in Corollary \ref{remaining properties of sep graph}, $\WP_{p,2}$ and $\WP_{p,3}$ are not connected by an edge in $\bG_W$. In particular, $\GW_2\neq \GW_3$.
\end{itemize}
Since $\WP_{p,\GVW}\in\cV(\GW_1)\cap\cV(\GW_2)\cap\cV(\GW_3)$, the above two bullet points contradict the second assertion in Proposition \ref{key prop of ASep graph}. Therefore, $\WP_{p;\GVW}\in\res_W^V(\cV(\GVW))$. Recall the discussion right after the claim \eqref{eqn:lem6.40(1)-2prep2}. This finishes the proof of \eqref{eqn:lem6.40(1)-2prep2}.

It follows from \eqref{enrich simplicial structure}, \eqref{eqn:lem6.40(1)-subset2} and \eqref{eqn:lem6.40(1)-2prep2} that
$$\Enrich_W^V(\GVW)\ints\bP_{p,q,W}=\Path_{p,q,W}(\res_W^V(\VQ_{j}),\res_W^V(\VQ_{l})),$$
for some $j,l\in\{0,...,k\}$. Moreover, $\res_W^V(\VQ_{j})\neq \res_W^V(\VQ_{l})$ due to \eqref{eqn:lem6.40(1)-2prep1}. One can easily see that $\VQ_j,\VQ_l\in\cV(\bP_{p,q,V})$ following the definition of $\res_W^V(\cdot)$ in Definition \ref{subsec most annoying}. By the seventh assertion in Lemma \ref{properties of W-face} applied to $\GVW$ and the fact that $\res_W^V(\VQ_{j})\neq \res_W^V(\VQ_{l})$, we have
\begin{align*}
|\cV(\GVW\cap\bP_{p,q,V})\cap(\res_W^V)^{-1}(\res_W^V(\VQ_{j}))|\\
=&|\cV(\GVW\cap\bP_{p,q,V})\cap(\res_W^V)^{-1}(\res_W^V(\VQ_{l}))|=1.
\end{align*}
This proves the second half of the first assertion.

\item[(2).] By Example \ref{W trivial case}, this assertion holds when $|W|=2$ and $|\cF(W)|=1$. Therefore we assume that $|W|>2$ or $|\cF(W)|>1$.

Since $\subord_W^V(\GVW)\neq \emptyset$, we have $\Enrich_W^V(\GVW)\neq \emptyset$. Let $G$ be an arbitrary MCS of $\Enrich_W^V(\GVW)$, then $G$ is a complete subgraph of $\bG_W$. Moreover, by the first assertion of Lemma \ref{easy properties of enrich}, $\Enrich_W^V(\GVW)$ is a connected graph. Hence $G$ contains at least an edge. Let $\GW$ be a MCS of $\bG_W$ such that $G\subset \GW$. Then by Definition \ref{subordinate}, we have $\GW\in\subord_W^V(\GVW)$, which implies that $\GW\subset \Enrich_W^V(\GVW)$. By maximality of $G$, we have $G=\GW$. This proves that $\MCS(\Enrich_W^V(\GVW))\subset\MCS(\bG_W)$.

By the second assertion in Proposition \ref{key prop of ASep graph}, every vertex of $\Enrich_W^V(\GVW)$ is contained in at most 2 MCS of $\bG_W$. Therefore every vertex of $\Enrich_W^V(\GVW)$ is contained in at most 2 MCS of $\Enrich_W^V(\GVW)$.

By Definition \ref{bdry}, Lemma \ref{unique shortest paths} and the first assertion in Lemma \ref{easy properties of enrich}, we have
$$\del\Enrich_W^V(\GVW)\subset\res_W^V(\cV(\GVW)).$$
On the other hand, if $\WP\not\in \del\Enrich_W^V(\GVW)$, there exist distinct $\WP_1,\WP_2\in\cV(\Enrich_W^V(\GVW)$ such that $\WP\not\in\{\WP_1,\WP_2\}$ and $\WP$ is between $\WP_1$ and $\WP_2$ in the sense of Definition \ref{in between for ASep}. Therefore, there exist distinct $p, q\in W$ such that $\WP,\WP_1,\WP_2\in\cV(\bP_{p,q,W})$. By the first assertion in Lemma \ref{easy properties of enrich},
\begin{align}\label{eqn:lem6.40(2)-prep1}
\WP,\WP_1,\WP_2\in\cV(\Path_{p,q,W}(\res_W^V(\VQ_1),\res_W^V(\VQ_2)))
\end{align}
for some distinct $\VQ_1,\VQ_2\in\cV(\GVW).$

If in addition, $\WP\in \res_W^V(\cV(\GVW))$, we assume that $\WP=\res_W^V(\VQ_3)$ for some $\VQ_3\in\cV(\GVW)$ and $\VQ_j=\typemark{F_j}{I_j}{V}$, $j=1,2,3$. Notice that $\VQ_j\in\cV(\bP_{p,q,V})\cap\cV(\GVW)$ for any $j=1,2,3$. (This is due to Definition \ref{res of types} and the fact that $\WP,\WP_1,\WP_2\in\cV(\Path_{p,q,W}(\res_W^V(\VQ_1),\res_W^V(\VQ_2)))\subset \bP_{p,q,W}$.) By the seventh assertion in Lemma \ref{properties of W-face} applied to $\GVW$, we have $\VQ_3\in\{\VQ_1,\VQ_2\}$ and hence
\begin{align}\label{eqn:lem6.40(2)-prep2}
\WP\in\{\res_W^V(\VQ_1),\res_W^V(\VQ_2)\}.
\end{align}
On the other hand, recall that $\WP\not\in\{ \WP_1,\WP_2\}$, and that $\WP$ is between $\WP_1$ and $\WP_2$. By Lemma \ref{unique shortest paths}, \eqref{eqn:lem6.40(2)-prep1} and the path structure of $\Path_{p,q,W}(\res_W^V(\VQ_1),\res_W^V(\VQ_2))$, we have
\begin{align*}
\WP\in&\cV(\Path_{p,q,W}(\WP_1,\WP_2))\setminus\{\WP_1,\WP_2\}\\
\subset& \cV(\Path_{p,q,W}(\res_W^V(\VQ_1),\res_W^V(\VQ_2)))\setminus\{\res_W^V(\VQ_1),\res_W^V(\VQ_2)\}.
\end{align*}
This contradicts \eqref{eqn:lem6.40(2)-prep2}. Hence $\WP\not\in\res_W^V(\cV(\GVW))$. This proves that $\del\Enrich_W^V(\GVW)\supset\res_W^V(\cV(\GVW))$. Hence
\begin{align}\label{eqn:bdry of enrich}
\del\Enrich_W^V(\GVW)=\res_W^V(\cV(\GVW)).
\end{align}
Suppose $\WP\not\in\del\Enrich_W^V(\GVW)=\res_W^V(\cV(\GVW))$, then by the first assertion in Lemma \ref{easy properties of enrich}, there exist distinct $p, q\in W$, distinct $\VQ_1,\VQ_2\in\cV(\GVW)$ and distinct $\WP_1,\WP_2\in\cV(\Path_{p,q,W}(\res_W^V(\VQ_1),\res_W^V(\VQ_2)))$ such that
$$\WP\mbox{---}\WP_1,\WP\mbox{---}\WP_2\in\cE(\Path_{p,q,W}(\res_W^V(\VQ_1),\res_W^V(\VQ_2))).$$
By the first assertion of Corollary \ref{remaining properties of sep graph}, $\WP_1$ and $\WP_2$ are not connected by a single edge in $\bG_W$. Let $\GW_1,\GW_2\in\MCS(\bG_W)$ such that $\WP\mbox{---}\WP_j\in\cE(\GW_j)$, $j=1,2$. Then $\GW_1\neq \GW_2$. By Definition \ref{subordinate} and Definition \ref{enrich},  we know that $\GW_1,\GW_2\subset \Enrich_W^V(\GVW)$ and hence $\GW_1,\GW_2\in\MCS(\Enrich_W^V(\GVW))$ are distinct. Therefore, $\WP$ is contained in two MCS of $\Enrich_W^V(\GVW)$.

On the other hand, suppose $\WP$ is contained in two different MCS $\GW_1,\GW_2$ of $\Enrich_W^V(\GVW)$, choose $\WP_j\in\cV(\GW_j)\setminus\{\WP\}$. Earlier in the proof of this assertion, we showed that $\GW_1$ and $\GW_2$ are MCS of $\bG_W$. Then by the first assertion of Proposition \ref{key prop of ASep graph}, $\WP_1$ and $\WP_2$ are not equal and are not connected by a single edge in $\bG_W$. By the second assertion in Lemma \ref{reinterpretation of edges}, $\WP$ is between $\WP_1$ and $\WP_2$. Then by Definition \ref{bdry} and \eqref{eqn:bdry of enrich}, we have $\WP\not\in\del\Enrich_W^V(\GVW)=\res_W^V(\cV(\GVW))$. Hence $\WP$ is contained in a unique MCS of $\Enrich_W^V(\GVW)$ if and only if $\WP\in\res_W^V(\cV(\GVW))$.

\item[(3).] By Example \ref{W trivial case}, this assertion holds when $|W|=2$ and $|\cF(W)|=1$. Therefore we assume that $|W|>2$ or $|\cF(W)|>1$.

We first notice that for any $\GVW\in\MCS(\bG_{V;W})$, $\subord_W^V(\GVW)=\emptyset$ if and only if $\Enrich_W^V(\GVW)=\emptyset$. (See Definition \ref{subordinate} and Definition \ref{enrich}.) By Definition \ref{subordinate} and the second assertion in Lemma \ref{easy properties of enrich}, for any distinct $\GVW_1,\GVW_2\in\bG_{V;W}$, we have
\begin{align}\label{enrich similar to MCS}
\MCS(\Enrich_W^V(\GVW_1))\ints\MCS(\Enrich_W^V(\GVW_2))=\emptyset
\end{align}
and
$$\bigunion_{\GVW\in\MCS(\bG_{V;W})}\MCS(\Enrich_W^V(\GVW))=\MCS(\bG_W).$$
Therefore it follows from Proposition \ref{key prop of ASep graph} that every vertex in $\bG_W$ is contained in at most 2 $(W;V)$-enriched MCS of $\bG_{V;W}$ and every edge in $\bG_{W}$ is contained in a unique $(W;V)$-enriched MCS of $\bG_{V;W}$.
\begin{itemize}
\item If $\WP\not\in\res_W^V(\cV(\bG_{V;W}))$, then by the second assertion of Lemma \ref{easy properties of enrich}, for any $\GVW\in\MCS(\bG_{V;W})$ such that $\WP\in\cV(\Enrich_W^V(\GVW))$, $\WP$ is contained in exactly 2 MCS in $\Enrich_W^V(\GVW)$. By the first assertion in Proposition \ref{key prop of ASep graph}, $\WP$ is contained in at most 2 MCS of $\bG_W$. Hence by \eqref{enrich similar to MCS}, there exist a unique $\GVW$ such that $\WP\in\cV(\Enrich_W^V(\GVW))$.
\item As is mentioned in the beginning of the proof of this assertion, when $|W|=2$ and $|\cF(W)|=1$, this assertion holds from the discussion in Example \ref{W trivial case}.
\item Assume that $|W|>2$ or $|\cF(W)|>1$. If $\WP\in\res_W^V(\cV(\bG_{V;W}))\setminus\del\cA_\Sep(W)=\res_W^V(\cV(\bG_{V;W}))\setminus\{\typemark{F_x}{\{x\}}{W}|x\in W\}$, then by the second assertion of Corollary \ref{remaining properties of sep graph}, $\WP$ is contained in 2 distinct MCS $\GW_1,\GW_2$ of $\bG_W$. Let $\GVW_j$ be the unique MCS in $\bG_{V;W}$ such that $\GW_j\in\subord_W^V(\GVW_j)$, $1\leq j\leq 2$. (Uniqueness here is guaranteed by Definition \ref{subordinate}.) Since $\WP\in\res_W^V(\cV(\bG_{V;W}))$, by the second assertion of Lemma \ref{easy properties of enrich}, $\GVW_1\neq\GVW_2$. Therefore $\WP$ is contained in exactly 2 $(W;V)$-enriched MCS of $\bG_{V;W}$ due to \eqref{enrich similar to MCS}.
\item Assume that $|W|>2$ or $|\cF(W)|>1$. If $\WP\in\del\cA_\Sep(W)=\{\typemark{F_x}{\{x\}}{W}|x\in W\}$, then by the second assertion of Corollary \ref{remaining properties of sep graph}, $\WP$ is contained in a unique MCS of $\bG_W$. Hence $\WP$ is contained in a unique $(W;V)$-enriched MCS of $\bG_{V;W}$ due to \eqref{enrich similar to MCS}.\qedhere
\end{itemize}
\end{enumerate}
\end{proof}
\begin{rmk}
A direct corollary is that for any $\GVW\in\MCS(\bG_{V;W})$, we have
\begin{align}\label{eqn:MCS=subord}
\subord^V_W(\GVW)=\MCS(\Enrich^V_W(\GVW)).
\end{align}
This clearly holds when $\subord^V_W(\GVW)=\emptyset$. Assume that $\subord^V_W(\GVW)\neq\emptyset$. The fact that $\subord^V_W(\GVW)\subset\MCS(\Enrich^V_W(\GVW))$ follows directly from Definition \ref{enrich}. The fact that $\subord^V_W(\GVW)\supset\MCS(\Enrich^V_W(\GVW))$ follows from Definition \ref{subordinate} and the second assertion in Lemma \ref{easy properties of enrich}.
\end{rmk}

\begin{lemma}\label{combinatorics behind d2=0}
Let $U\subset W\subset V\subset \Gamma x_0$ be finite subsets such that $|U|\geq 2$ and $|W|\geq 3$. For any $\GV\in\MCS(\bG_V)$ such that $\GV\ints\bG_{V;W}\neq \emptyset$ (and hence $\GV\ints\bG_{V;W}\in\MCS(\bG_{V;W})$ due to the {fourth} assertion in Lemma \ref{properties of W-face}), if there exist some $\GW\in\subord_W^V(\GV\ints\bG_{V;W})$ with $\GW\ints\bG_{W;U}\neq\emptyset$, then $\GV\ints\bG_{V;U}\neq \emptyset$ (and hence $\GV\ints\bG_{V;U}\in\MCS(\bG_{V;U})$ due to the {fourth} assertion in Lemma \ref{properties of W-face}).

{Moreover,} if {$\GV\ints\bG_{V;U}\neq \emptyset$}, then we have
\begin{align}\label{important-1}\subord_U^V(\GV\ints\bG_{V;U})=\bigsqcup_{
\substack{
\GW:\GW\in\subord_W^V(\GV\ints\bG_{V;W})\\
{\GW\ints\bG_{W;U}\neq\emptyset}
}
}\subord_U^W(\GW\ints\bG_{W;U}).
\end{align}
As a corollary of this,
$$\Enrich_U^V(\GV\ints\bG_{V;U})=\bigcup_{
\substack{
\GW:\GW\in\subord_W^V(\GV\ints\bG_{V;W})\\
{\GW\ints\bG_{W;U}\neq\emptyset}
}
}\Enrich_U^W(\GW\ints\bG_{W;U}).$$
\end{lemma}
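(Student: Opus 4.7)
The plan is to first establish the existence of $\GV\ints\bG_{V;U}$ and then prove the set equality \eqref{important-1}; the $\Enrich$ corollary will then follow by taking unions of MCSs and invoking \eqref{eqn:MCS=subord}. For the existence claim, I would pick any $\WP\in \cV(\GW)\ints\cV(\bG_{W;U})$ and split into two cases. In the case $\WP=\res_W^V(\VQ)$ for some $\VQ\in\cV(\GV\ints\bG_{V;W})$, writing $\VQ=\typemark{\hF}{I}{V}$ and using $U\subset W$, the condition $\WP\in\cV(\bG_{W;U})$ forces $I\ints U\neq\emptyset$ and $U\setminus I\neq\emptyset$, hence $\VQ\in\GV\ints\bG_{V;U}$. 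Otherwise pick $p,q\in U$ separated by $\WP$, so $\WP\in\cV(\bP_{p,q,W})\ints\cV(\Enrich_W^V(\GV\ints\bG_{V;W}))$, and the first assertion of Lemma \ref{easy properties of enrich} produces distinct $\VQ_1,\VQ_2\in\cV((\GV\ints\bG_{V;W})\ints\bP_{p,q,V})$; these separate $p,q\in U$, so they lie in $\cV(\GV\ints\bG_{V;U})$.

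For the inclusion $\supset$ in \eqref{important-1}, the plan is to prove subordination is transitive along $U\subset W\subset V$: if $\GU\in\subord_U^W(\GW\ints\bG_{W;U})$ for some $\GW\in\subord_W^V(\GV\ints\bG_{V;W})$, then $\GU\in\subord_U^V(\GV\ints\bG_{V;U})$. Given an edge $\UP_1\mbox{---}\UP_2\in\cE(\GU)$, Definition \ref{subordinate} supplies $\WP_1\mbox{---}\WP_2\in\cE(\GW\ints\bG_{W;U})$ and $p,q\in U$ with $\UP_1\mbox{---}\UP_2\in\cE(\Path_{p,q,U}(\res_U^W(\WP_1),\res_U^W(\WP_2)))$. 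The $\WP_j$ separate $p,q\in U\subset W$, so $p,q$ is a suitable witness pair for the eighth assertion of Lemma \ref{properties of W-face} applied with $W\subset V$ to the edge $\WP_1\mbox{---}\WP_2\in\cE(\GW)$; combined with $\GW\in\subord_W^V(\GV\ints\bG_{V;W})$ this yields $\VQ_1'\mbox{---}\VQ_2'\in\cE(\GV\ints\bG_{V;W})$ with $\WP_1\mbox{---}\WP_2\in\cE(\Path_{p,q,W}(\res_W^V(\VQ_1'),\res_W^V(\VQ_2')))$. Since $\VQ_j'$ separate $p,q\in U$, they lie in $\cV(\GV\ints\bG_{V;U})$, so $\VQ_1'\mbox{---}\VQ_2'\in\cE(\GV\ints\bG_{V;U})$. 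Applying $\res_U^W$ to the $W$-level path and invoking the sixth assertion of Lemma \ref{properties of W-face} places $\res_U^W(\WP_j)\in\cV(\Path_{p,q,U}(\res_U^V(\VQ_1'),\res_U^V(\VQ_2')))$, whence $\UP_1\mbox{---}\UP_2$ lies on this $U$-level path, confirming $\GU\in\subord_U^V(\GV\ints\bG_{V;U})$.

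For the inclusion $\subset$, given $\GU\in\subord_U^V(\GV\ints\bG_{V;U})$ with a distinguished edge $\UP_1\mbox{---}\UP_2\in\cE(\GU)$ accompanied by $\VQ_1\mbox{---}\VQ_2\in\cE(\GV\ints\bG_{V;U})$ and $p,q\in U$, the eighth assertion of Lemma \ref{properties of W-face} applied with $U\subset W$ produces a unique $\GWU\in\MCS(\bG_{W;U})$ to which $\GU$ subordinates, and Lemma \ref{properties of W-face}(3) gives a unique $\GW\in\MCS(\bG_W)$ with $\GW\ints\bG_{W;U}=\GWU$; the uniqueness in the eighth assertion makes $\GW$ independent of the choice of edge. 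It remains to show $\GW\in\subord_W^V(\GV\ints\bG_{V;W})$. To this end, Lemma \ref{easy properties of enrich}(1) applied to $\Enrich_U^W(\GWU)\ints\bP_{p,q,U}$ supplies witnesses $\WP_1^*,\WP_2^*\in\cV(\GWU)\ints\cV(\bP_{p,q,W})$ whose images $\res_U^W(\WP_j^*)$ are the extreme endpoints of that intersection, so the edge $\UP_1\mbox{---}\UP_2$ lies in the $U$-level sub-path they bound. Taking $\VQ_1,\VQ_2$ and $p,q$ as candidate witnesses for the $W$-subordination of $\GW$, the remaining task is to verify $\WP_1^*\mbox{---}\WP_2^*\in\cE(\Path_{p,q,W}(\res_W^V(\VQ_1),\res_W^V(\VQ_2)))$; this reduces to an order-tracking argument in $\bP_{p,q,W}$ using monotonicity of $\res_U^W$ (sixth assertion of Lemma \ref{properties of W-face}), uniqueness of shortest paths (Corollary \ref{remaining properties of sep graph}(1)), and the extremal characterization of $\WP_j^*$. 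Once verified, the eighth assertion of Lemma \ref{properties of W-face} (for $W\subset V$) identifies the unique MCS associated to the edge $\WP_1^*\mbox{---}\WP_2^*$ as $\GV\ints\bG_{V;W}$, proving $\GW\in\subord_W^V(\GV\ints\bG_{V;W})$.

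Finally, disjointness in \eqref{important-1} follows from Lemma \ref{properties of W-face}(3), which gives $\GW_1\ints\bG_{W;U}\neq\GW_2\ints\bG_{W;U}$ for distinct $\GW_1,\GW_2\in\subord_W^V(\GV\ints\bG_{V;W})$, together with Lemma \ref{easy properties of enrich}(3), which asserts that each edge of $\bG_U$ belongs to a unique $(U;W)$-enriched MCS of $\bG_{W;U}$. The $\Enrich$ corollary is then immediate: both sides of \eqref{important-1} describe the MCS decomposition of a subgraph of $\bG_U$ obtained as a union of its MCSs, so \eqref{important-1} together with \eqref{eqn:MCS=subord} yields the claimed $\Enrich$ identity. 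The main obstacle will be the order-compatibility step in direction $\subset$: reconciling the two a priori independent $\bP_{p,q,U}$-paths containing $\UP_1\mbox{---}\UP_2$ --- one from the original $V$-subordination of $\GU$, the other from the extremal endpoints furnished by Lemma \ref{easy properties of enrich}(1) at the $W$-level --- and lifting this compatibility to the required $\bP_{p,q,W}$-level path relation.
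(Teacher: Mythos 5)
Your proposal is correct in overall structure and, for the $\subset$ direction of \eqref{important-1}, takes a genuinely different route from the paper. Your existence argument and the $\supset$ inclusion (transitivity of subordination) match the paper's in substance, with cosmetic differences in lemma citations: where you invoke the eighth and sixth assertions of Lemma \ref{properties of W-face}, the paper reaches the same path-containment facts via the first assertion of Lemma \ref{easy properties of enrich}, Lemma \ref{unique shortest paths}, and the third assertion of Lemma \ref{res of vertices}. (A small imprecision: in the $\supset$ direction, invoking the eighth assertion alone to obtain $\VQ_1',\VQ_2'$ with $\WP_1\mbox{---}\WP_2\in\cE(\Path_{p,q,W}(\res_W^V(\VQ_1'),\res_W^V(\VQ_2')))$ for \emph{your} chosen $p,q\in U$ requires knowing that $p,q$ is ``suitable'' for those $\VQ_j'$, which is exactly what Lemma \ref{easy properties of enrich}(1) supplies; the paper uses that lemma directly. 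Your existence argument, by contrast, is actually a bit cleaner than the paper's, since it works directly from a vertex $\WP\in\cV(\GW\ints\bG_{W;U})$ and sidesteps any need to produce a $\GU\in\subord_U^W(\GW\ints\bG_{W;U})$.)

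The genuine divergence is in $\subset$. The paper does \emph{not} attempt to show the $\WP^*$-edge lies on $\Path_{p,q,W}(\res_W^V(\VQ_1),\res_W^V(\VQ_2))$ for the original witnesses $\VQ_1,\VQ_2$ of $\GU\in\subord_U^V(\GV\ints\bG_{V;U})$; instead it manufactures a fresh $\GVW\in\MCS(\bG_{V;W})$ to which $\GW$ subordinates, produces fresh $\VQ$'s from Lemma \ref{easy properties of enrich}(1) (so the $W$-level path containment holds by construction), shows $\GU\in\subord_U^V(\GVW\ints\bG_{V;U})$, and only then identifies $\GVW=\GV\ints\bG_{V;W}$ by the uniqueness in the eighth assertion together with the third assertion of Lemma \ref{properties of W-face}. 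Your plan commits to the original $\VQ_1,\VQ_2$ and therefore must reconcile two a priori independent $\bP_{p,q,U}$-intervals through the shared edge $\UP_1\mbox{---}\UP_2$. You correctly flag this order-compatibility step as the main obstacle and do not carry it out; it does go through, as follows. Since $\res_U^V(\VQ_1)\neq\res_U^V(\VQ_2)$, one has $\res_W^V(\VQ_1)\neq\res_W^V(\VQ_2)$, and by the sixth assertion of Lemma \ref{properties of W-face} both lie on $\bP_{p,q,W}$; similarly $\WP_1^*\mbox{---}\WP_2^*$ is an edge of $\bP_{p,q,W}$. Order the vertices of $\bP_{p,q,W}$ linearly with $\res_W^V(\VQ_1)\le\res_W^V(\VQ_2)$ and $\WP_1^*<\WP_2^*$; the sixth assertion makes $\res_U^W$ (weakly) monotone along $\bP_{p,q,W}$, and both $[\res_U^W(\WP_1^*),\res_U^W(\WP_2^*)]$ and $[\res_U^V(\VQ_1),\res_U^V(\VQ_2)]$ contain the ordered edge $\UP_1\mbox{---}\UP_2$ on $\bP_{p,q,U}$. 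If $\WP_2^*\le\res_W^V(\VQ_1)$, monotonicity forces $\res_U^W(\WP_2^*)\le\res_U^V(\VQ_1)$, contradicting the two interval containments; a symmetric argument rules out $\res_W^V(\VQ_2)\le\WP_1^*$. Hence $\res_W^V(\VQ_1)\le\WP_1^*<\WP_2^*\le\res_W^V(\VQ_2)$, which is the required $W$-level path containment. Filling this in completes your $\subset$ direction. Both approaches thus work; the paper's buys simplicity (the $W$-level path relation holds by construction, uniqueness does the bookkeeping), while yours is more direct but must do the interval bookkeeping explicitly.
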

\begin{proof}
{Let $\GW\in\subord_W^V(\GV\ints\bG_{V;W})$ with {$\GW\ints\bG_{W;U}\neq\emptyset$}, by the fourth assertion in Lemma \ref{properties of W-face}, $\GW\ints\bG_{W;U}\in\MCS(\bG_{W;U})$. For any $\GU\in\subord_U^W(\GW\ints\bG_{W;U})$, there exist edges $\UO_1\mbox{---}\UO_2\in\cE(\GU)$ and $\WP_1\mbox{---}\WP_2\in\cE(\GW\ints\bG_{W;U})$ such that
$$\UO_1\mbox{---}\UO_2\in\cE(\Path_{p,q,U}(\res_U^W(\WP_1),\res_U^W(\WP_2)))$$
for some distinct $p, q\in U$. It follows from Definition \ref{enrich} and Definition \ref{res of types} that
$$\WP_1,\WP_2\in\cV(\bP_{p,q,W}\cap\GW)\subset \cV(\bP_{p,q,W}\cap\Enrich_W^V(\GV\cap\bG_{V;W})).$$
By the first assertion in Lemma \ref{easy properties of enrich}, there exist distinct $\VQ_1,\VQ_2\in\cV(\bP_{p,q,V})\ints\cV(\GV\ints\bG_{V;W})$ such that
$$\Enrich_W^V(\GVW)\ints\bP_{p,q,W}=\Path_{p,q,W}(\res_W^V(\VQ_1),\res_W^V(\VQ_2)).$$
Since $p,q\in U$, we have $\VQ_1,\VQ_2\in\cV(\GV\cap\bP_{p,q,V})\subset\cV(\GV\ints\bG_{V;U})\neq\emptyset$, which proves that $\GV\ints\bG_{V;U}\neq\emptyset$.}

{For the remaining parts of this lemma,} we first note that by the fourth assertion in Lemma \ref{properties of W-face}, both sides of both equations in the statement of this lemma are well-defined. Also, the disjoint union sign on the right hand side of \eqref{important-1} follows directly from Definition \ref{subordinate} and the third assertion in Lemma \ref{properties of W-face}.

For any $\GU\in\subord_U^V(\GV\ints\bG_{V;U})$, by Definition \ref{subordinate}, there exist some $\GWU\in\MCS(\bG_{W;U})$ such that $\GU\in\subord_U^W(\GWU)$. Therefore, there exist distinct $\UO_1,\UO_2\in\cV(\GU)$ and distinct $p,q\in U$ such that
$$\UO_1\mbox{---}\UO_2\in\cE(\GU)\subset\cE(\Enrich_U^W(\GWU)\ints\bP_{p,q,U})\neq\emptyset.$$
By the first assertion in Lemma \ref{easy properties of enrich}, there exist distinct $\WP_1,\WP_2\in\cV(\bP_{p,q,W})\ints\cV(\GWU)$ such that
$$\Enrich_U^W(\GWU)\ints\bP_{p,q,U}=\Path_{p,q,U}(\res_U^W(\WP_1),\res_U^W(\WP_2)).$$
Hence
\begin{align}\label{UW-1}
\UO_1\mbox{---}\UO_2\in\cE(\Path_{p,q,U}(\res_U^W(\WP_1),\res_U^W(\WP_2))).
\end{align}
By the third assertion in Lemma \ref{properties of W-face}, there exist a unique $\GW\in\MCS(\bG_W)$ such that $\GW\ints\bG_{W;U}=\GWU$. Again, Definition \ref{subordinate} implies that there exist some $\GVW\in\MCS(\bG_{V;W})$ such that $\GW\in\subord_W^V(\GVW)$. Therefore
$$\WP_1\mbox{---}\WP_2\in\cE(\GW)\subset\cE(\Enrich_W^V(\GVW)\ints\bP_{p,q,W})\neq\emptyset.$$
By the first assertion in Lemma \ref{easy properties of enrich}, there exist distinct $\VQ_1,\VQ_2\in\cV(\bP_{p,q,V})\ints\cV(\GVW)$ such that
$$\Enrich_W^V(\GVW)\ints\bP_{p,q,W}=\Path_{p,q,W}(\res_W^V(\VQ_1),\res_W^V(\VQ_2)).$$
Hence
\begin{align}\label{WV-1}
\WP_1\mbox{---}\WP_2\in\cE(\Path_{p,q,W}(\res_W^V(\VQ_1),\res_W^V(\VQ_2)))
\end{align}
By Lemma \ref{unique shortest paths}, $\WP_1,\WP_2$ are between $\res_W^V(\VQ_1)$ and $\res_W^V(\VQ_2)$ in the sense of Definition \ref{in between for ASep}.

By the first assertion in Lemma \ref{properties of W-face}, there exist a unique $\widetilde\GV\in\MCS(\bG_V)$ such that $\widetilde\GV\ints\bG_{V;W}=\GVW$. Notice that $p,q\in U$, by \eqref{WV-1} and the fact that $\bG_{V;U}\subset\bG_{V;W}$, we have $\VQ_1,\VQ_2\in\cV(\widetilde\GV\ints\bG_{V;U})=\cV(\GVW\ints\bG_{V;U})$. Hence by the fourth assertion in Lemma \ref{properties of W-face}, $\GVW\ints\bG_{V;U}\in\MCS(\bG_{V;U})$. Since $\res_U^W\circ\res_W^V=\res_U^V$ whenever either side is well-defined, it follows from the third assertion of Lemma \ref{res of vertices} that $\res_U^W(\WP_1),\res_U^W(\WP_2)$ are between $\res_U^V(\VQ_1)$ and $\res_U^V(\VQ_2)$ in the sense of Definition \ref{in between for ASep}. By the path structure of $\bP_{p,q,U}$ and Lemma \ref{unique shortest paths},
$$\Path_{p,q,U}(\res_U^W(\WP_1),\res_U^W(\WP_2))\subset\Path_{p,q,U}(\res_U^V(\VQ_1),\res_U^V(\VQ_2)).$$
Hence \eqref{UW-1} implies that
$$\UO_1\mbox{---}\UO_2\in\cE(\Path_{p,q,U}(\res_U^V(\VQ_1),\res_U^V(\VQ_2))).$$
This shows that $\GU\in\subord_U^V(\GVW\ints\bG_{V;U})$. By the eighth assertion in Lemma \ref{properties of W-face}, we have
$$\widetilde\GV\ints\bG_{V;U}=\GVW\ints\bG_{V;U}=\GV\ints\bG_{V;U}\in\MCS(\bG_{V;U}).$$
Hence, the {third} assertion in Lemma \ref{properties of W-face} implies that $\GV=\widetilde\GV$. In particular, {by the fourth assertion in Lemma \ref{properties of W-face}, $\GVW=\GV\ints\bG_{V;W}\neq \emptyset$ implies that $\GVW\in\MCS(\bG_{V;W})$. Therefore by the first assertion in Lemma \ref{properties of W-face}, $\GVW$ contains at least 1 edge.} By the assumptions on $\GW$ and $\GU$, we have $\GW\in\subord_W^V(\GV\ints\bG_{V;W})$ and $\GU\in\subord_U^W(\GW\ints\bG_{W;U})$. This proves that
$$\subord_U^V(\GV\ints\bG_{V;U})\subset\bigsqcup_{
\substack{
\GW\in\subord_W^V(\GV\ints\bG_{V;W})\\
{\GW\ints\bG_{W;U}\neq\emptyset}
}
}\subord_U^W(\GW\ints\bG_{W;U}).$$
On the other hand, {similar to the beginning of the proof,} for any $\GW\in\subord_W^V(\GV\ints\bG_{V;W})$ with {$\GW\ints\bG_{W;U}\neq\emptyset$}, by the fourth assertion in Lemma \ref{properties of W-face}, $\GW\ints\bG_{W;U}\in\MCS(\bG_W)$. For any $\GU\in\subord_U^W(\GW\ints\bG_{W;U})$, there exist edges $\UO_1\mbox{---}\UO_2\in\cE(\GU)$ and $\WP_1\mbox{---}\WP_2\in\cE(\GW\ints\bG_{W;U})$ such that
$$\UO_1\mbox{---}\UO_2\in\cE(\Path_{p,q,U}(\res_U^W(\WP_1),\res_U^W(\WP_2)))$$
for some distinct $p,q\in U$. By the first assertion in Lemma \ref{easy properties of enrich}, there exist distinct $\VQ_1,\VQ_2\in\cV(\bP_{p,q,V})\ints\cV(\GV\ints\bG_{V;W})$ such that
$$\Enrich_W^V(\GVW)\ints\bP_{p,q,W}=\Path_{p,q,W}(\res_W^V(\VQ_1),\res_W^V(\VQ_2)).$$
By Lemma \ref{unique shortest paths}, $\WP_1,\WP_2$ are between $\res_W^V(\VQ_1)$ and $\res_W^V(\VQ_2)$ in the sense of Definition \ref{in between for ASep}. Since $\res_U^W\circ\res_W^V=\res_U^V$ whenever either side is well-defined, it follows from the third assertion of Lemma \ref{res of vertices} that $\res_U^W(\WP_1),\res_U^W(\WP_2)$ are between $\res_U^V(\VQ_1)$ and $\res_U^V(\VQ_2)$ in the sense of Definition \ref{in between for ASep}. By the path structure of $\bP_{p,q,U}$ and Lemma \ref{unique shortest paths},
$$\UO_1\mbox{---}\UO_2\in\cE(\Path_{p,q,U}(\res_U^W(\WP_1),\res_U^W(\WP_2)))\subset\cE(\Path_{p,q,U}(\res_U^V(\VQ_1),\res_U^V(\VQ_2))),$$
which proves that $\GU\in\subord_U^V(\GV\ints\bG_{V;U})$. This proves that
$$\subord_U^V(\GV\ints\bG_{V;U})\supset\bigsqcup_{
\substack{
\GW:\GW\in\subord_W^V(\GV\ints\bG_{V;W})\\
{\GW\ints\bG_{W;U}\neq\emptyset}
}
}\subord_U^W(\GW\ints\bG_{W;U}).$$
Therefore \eqref{important-1} holds. Take the union of all elements in both the left hand side and the right hand side of \eqref{important-1}, we have
$$\Enrich_U^V(\GV\ints\bG_{V;U})=\bigcup_{
\substack{
\GW:\GW\in\subord_W^V(\GV\ints\bG_{V;W})\\
{\GW\ints\bG_{W;U}\neq\emptyset}
}
}\Enrich_U^W(\GW\ints\bG_{W;U}).\qedhere$$
\end{proof}


\section{Homological arguments}\label{sec hom arg}
\subsection{Notations and goals}
\begin{notation}\label{not:basic hom}
Let $C^X:=(C_\bullet(X;\RR),\del^X)$ be the augmented singular chain complex on $X$ with coefficients in $\RR$. To be specific, $C^X$ consists of $\RR[\Gamma]$-modules $C_i(X;\RR)$ with $-2\leq i<\infty$ and $\RR[\Gamma]$-module homomorphisms $\del^X_i:C_i(X;\RR)\to C_{i-1}(X;\RR)$ such that
\begin{itemize}
\item $C_{-2}(X;\RR)=0$ and $C_{-1}(X;\RR)=\RR$. The action of $\Gamma$ on $C_{-1}(X;\RR)=\RR$ is trivial.
\item $\del^X_{-1}=0$ and $\del^X_0(\sum_{p\in X}\alpha_p\cdot p)=\sum_{p\in X}\alpha_p$ for any $\sum_{p\in X}\alpha_p\cdot p\in C_0(X;\RR)$ with $\alpha_p\in\RR$. One can check easily that $\del^X_{0}$ is a homomorphism between $\RR[\Gamma]$-modules.
\item The definitions of $C_i(X;\RR)$ and $\del^X_{i+1}$ are introduced in Definition \ref{singular chains in X} for any $i\geq 0$.
\end{itemize}

Let $Y$ be the geometric realization of the homogeneous bar-construction for $\Gamma$. Namely, $Y$ is a simplicial complex whose $n$-simplices are identified with the ordered $(n+1)$-tuples of elements in $\Gamma$. We will not distinguish between ordered $(n+1)$-tuples of elements in $\Gamma$ and $n$-simplices in $Y$ when there is no ambiguity. In particular, $Y$ admits a natural $\Gamma$-action and $\Gamma\backslash Y$ is a $K(\Gamma,1)$-space. (See \cite[Example 1B.7]{Hatcher02} for the detailed definition of $Y$.)

Let $C^Y:=(C_\bullet(Y;\RR),\del^Y)$ be the augmented cellular chain complex on $Y$. To be specific, $C^Y$ consists of $\RR[\Gamma]$-modules $C_i(Y;\RR)$ with $-2\leq i<\infty$ and $\RR[\Gamma]$-module homomorphisms $\del^Y_i:C_i(Y;\RR)\to C_{i-1}(Y;\RR)$ such that
\begin{itemize}
\item $C_{-2}(Y;\RR)=0$ and $C_{-1}(Y;\RR)=\RR$. The action of $\Gamma$ on $C_{-1}(Y;\RR)=\RR$ is trivial.
\item $C_i(Y;\RR)$ is the $\RR$-vector space with basis $\cB_i=\{(\gamma_0,...,\gamma_{i}):\gamma_j\in\Gamma, 0\leq j\leq i\}$. The $\Gamma$-action is given by $\gamma(\gamma_0,...,\gamma_{i})=(\gamma\gamma_0,...,\gamma\gamma_{i})$ for any $\gamma\in\Gamma$. In other words, $C_i(Y;\RR)$ is the $\RR$-vector space generated by all $i$-dimensional simplices in $Y$.
\item $\del^Y_{-1}=0$, $\del^Y_0(\sum_{\gamma\in\Gamma}\alpha_\gamma\gamma)=\sum_{\gamma\in\Gamma}\alpha_\gamma$ for any $\sum_{\gamma\in\Gamma}\alpha_\gamma\gamma\in C_0(Y;\RR)$, and for any $i\geq 1$ and $\sum_{(\gamma_0,...,\gamma_i)\in\cB_i}\alpha_{(\gamma_0,...,\gamma_i)}(\gamma_0,...,\gamma_i)\in C_i(Y;\RR)$,
$$\del^Y_i\left(\sum_{(\gamma_0,...,\gamma_i)\in\cB_i}\alpha_{(\gamma_0,...,\gamma_i)}(\gamma_0,...,\gamma_i)\right)=\sum_{(\gamma_0,...,\gamma_i)\in\cB_i}\sum_{j=0}^i(-1)^{j}\alpha_{(\gamma_0,...,\gamma_i)}(\gamma_0,...,\widehat{\gamma_j},...,\gamma_i).$$
\end{itemize}
\end{notation}
We recall a well-known lemma from homological algebra.
\begin{lemma}[{\cite[Lemma I.7.4]{Brown82}}]\label{key hom lemma}
Let $(C_\bullet,\del)$, $(C'_\bullet,\del')$ be chain complexes of $\cR$-modules and $r$ be an integer, where $\cR$ is a ring. Let $(f_i:C_i\to C'_i)_{i\leq r}$ be a family of maps such that $\del'_i\circ f_i=f_{i-1}\circ\del_i$ for $i\leq r$. If $C_i$ is projective for $i>r$ and $H_i(C'_\bullet)=0$ for all $i\geq r$, then $(f_i)_{i\leq r}$ extends to a chain map $f_\bullet:C_\bullet\to C'_\bullet$ and $f_\bullet$ is unique up to chain homotopy.
\end{lemma}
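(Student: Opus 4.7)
The plan is to prove both existence and uniqueness by the standard inductive argument that underlies the comparison theorem for projective resolutions, using projectivity of $C_i$ for $i>r$ to lift maps against surjections, and the acyclicity $H_i(C'_\bullet)=0$ for $i\geq r$ to ensure that those surjections actually exist.

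For existence, I would extend $(f_i)_{i\leq r}$ by induction on $i>r$. Assume $f_0,\ldots,f_{i-1}$ have been constructed with $\del'_j\circ f_j=f_{j-1}\circ\del_j$ for $j\leq i-1$. Consider the composition $f_{i-1}\circ\del_i\colon C_i\to C'_{i-1}$. A direct check gives
\[
\del'_{i-1}\circ(f_{i-1}\circ\del_i)=f_{i-2}\circ\del_{i-1}\circ\del_i=0,
\]
so this composition lands in $\ker\del'_{i-1}$. Since $i-1\geq r$, the hypothesis $H_{i-1}(C'_\bullet)=0$ gives $\ker\del'_{i-1}=\operatorname{im}\del'_i$, so $\del'_i\colon C'_i\to\ker\del'_{i-1}$ is surjective. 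Projectivity of $C_i$ then produces a lift $f_i\colon C_i\to C'_i$ with $\del'_i\circ f_i=f_{i-1}\circ\del_i$, completing the inductive step.

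For uniqueness up to chain homotopy, let $f_\bullet,g_\bullet$ be two extensions and set $\varphi_\bullet:=f_\bullet-g_\bullet$; this is a chain map with $\varphi_i=0$ for $i\leq r$. I would construct a chain homotopy $h_i\colon C_i\to C'_{i+1}$ satisfying $\del'_{i+1}\circ h_i+h_{i-1}\circ\del_i=\varphi_i$ by induction on $i$, starting with $h_i=0$ for $i<r$ (and $h_r=0$ is a valid choice since $\varphi_r=0$). For $i\geq r+1$, once $h_{r},\ldots,h_{i-1}$ are defined, set $\psi_i:=\varphi_i-h_{i-1}\circ\del_i\colon C_i\to C'_i$. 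Using $\del'\varphi=\varphi\del$ together with the inductive equation for $h_{i-1}$, a short calculation shows $\del'_i\circ\psi_i=0$, so $\psi_i$ lands in $\ker\del'_i=\operatorname{im}\del'_{i+1}$ by $H_i(C'_\bullet)=0$. Projectivity of $C_i$ yields $h_i\colon C_i\to C'_{i+1}$ with $\del'_{i+1}\circ h_i=\psi_i$, which is the required identity.

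This is the classical comparison lemma in homological algebra, so I do not expect any serious obstacle; the only thing to watch is the bookkeeping of indices, in particular handling the base cases $i=r$ and $i=r+1$ of the two inductions so that the acyclicity hypothesis is invoked exactly at indices $\geq r$ and projectivity exactly at indices $>r$. Since this is a well-known statement (precisely Lemma I.7.4 of Brown, as cited), in the paper itself one would simply refer to the reference rather than reproducing the argument.
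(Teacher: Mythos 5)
Your proof is correct and is exactly the standard inductive comparison-theorem argument, with the index bookkeeping handled properly (acyclicity invoked at indices $\geq r$, projectivity at indices $>r$). The paper does not reproduce a proof at all — it simply cites Lemma~I.7.4 of Brown, as you anticipated — so there is nothing to compare beyond noting that your argument is the standard one found in that reference.
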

\begin{rmk}
Since $X$ and $Y$ are contractible (i.e. $M=\Gamma\backslash X$ and $\Gamma\backslash Y$ are all $K(\Gamma,1)$ spaces), we have $H_i(C^X_\bullet)=H_i(C^Y_\bullet)=0$ for any $i\geq -1$. Notice that for any $i\geq 0$, $C_i(X;\RR)$ and $C_i(Y;\RR)$ are free $\RR[\Gamma]$-modules, hence they are projective $\RR[\Gamma]$-modules. By Lemma \ref{key hom lemma}, for any chain homomorphisms $\psi_\bullet:C^X_\bullet\to C^Y_\bullet$ and $\phi_\bullet:C^Y_\bullet\to C^X_\bullet$ with $\psi_{-1}=\phi_{-1}=\id:\RR\to\RR$, $\phi_\bullet\circ\psi_\bullet:C^X_\bullet\to C^X_\bullet$ is chain homotopic to the identity map on $C^X_\bullet$.
\end{rmk}

The goal of this section is to construct chain homomorphisms $\psi_\bullet:C^X_\bullet\to C^Y_\bullet$ and $\phi_\bullet:C^Y_\bullet\to C^{X,}_\bullet$ with $\psi_{-1}=\phi_{-1}=\id:\RR\to\RR$ so that it is convenient for us to show positivity of the simplicial volume of $M$. The composition $\phi_\bullet\circ\psi_\bullet$ can be thought of as a ``straightening'' of simplices. Vaguely speaking, one way to prove positivity of simplicial volume is to show that every ``straightened simplex'' has uniformly bounded ``volume'' with respect to some generator of $H^n(X;\RR)\isom H^n_{\mathrm{dR}}(X;\RR)$. In our constructions, we fix an $n$-form $\omega$ supported in an annulus region $A_{r,R}(\mathrm{Stab}_F(\Gamma)\backslash F)=A_{r,R}(N)$ for some suitable choice of $0<r<R\ll 1$ depending on $\epsilon_0$ and $\rho_0$. (Details regarding $\omega$ can be found in Section \ref{last sec}.) We denote also by $\omega$ its lift to the universal cover $X$. Then in $X$, $\omega$ is supported in $\sqcup_{\hF\in\Gamma F}A_{r,R}(\hF)$. By {the first assertion in Lemma \ref{properties of Theta sep} and Lemma \ref{bar uniform volume bound}}, for any {barycentric simplex with vertices in $V\subset \Gamma x_0$, its ``volume'' with respect to $\omega$ is bounded above by some constant depending only $|\cF(V)|$ and $\omega$.} Therefore, we want to construct $\psi_\bullet$ and $\phi_\bullet$ such that every ``straightened'' $n$-simplex is a weighted sum of ``special'' {barycentric} simplices satisfying the following:
\begin{itemize}
\item The total weight is uniformly bounded from above
\item For any ``special'' {barycentric} simplex, let $V$ be its set of vertices. Then $V\subset \Gamma x_0$ and $|\cF(V)|$ is uniformly bounded from above.
\end{itemize}

\subsection{Constructions of chain maps $\psi_\bullet$ and $\phi_\bullet$}\label{subsect:phi and psi}
\begin{notation}
The notion of ordered $(k+1)$-tuples in $\Gamma$ can be naturally extended to a $(k+1)$-real multilinear map from $C_0(Y;\RR)^{k+1}$ to $C_k(Y;\RR)$. Namely, for any $k\geq 0$ and any $\sum_{\gamma_i\in\Gamma}\alpha_{i,\gamma_i}\gamma_i\in C_0(Y;\RR)$ with $0\leq i\leq k$ and $\alpha_{i,\gamma_i}\in\RR$, we write
$$\left(\sum_{\gamma_0\in\Gamma}\alpha_{0,\gamma_0}\gamma_0,...,\sum_{\gamma_k\in\Gamma}\alpha_{k,\gamma_k}\gamma_i\right):=\sum_{\gamma_0,...\gamma_k:\gamma_0,...,\gamma_k\in\Gamma}\left(\prod_{i=0}^k\alpha_{i,\gamma_i}\right)\cdot\left(\gamma_0,...,\gamma_k\right)\in C_k(Y;\RR).$$
\end{notation}
\textbf{Construction of $\psi_\bullet$}: The construction of $\psi_\bullet:C^X_\bullet\to C^Y_\bullet$ can be rather flexible and much simpler than the construction of $\phi_\bullet:C^Y_\bullet\to C^X_\bullet$:

For any $x\in X$, we define $\psi_{-1}:\RR\to\RR$ to be the identity map and the $\RR[\Gamma]$-linear map $\psi_0:C_0(X;\RR)\to C_0(Y;\RR)=\RR[\Gamma]$ such that
$$\psi_0(x)=\frac{1}{|B_{\mathrm{diam}(M)+1}(x)\ints\Gamma x_0|}\sum_{\substack{\gamma\in\Gamma\\ \gamma x_0\in B_{\mathrm{diam}(M)+1}(x)\ints\Gamma x_0}}\gamma.$$
Then $\del^Y_0(\psi_0(x))=1=\psi_{-1}(\del^X_0(x))$. For any $k\geq 1$ and any $\sigma\in \cS_k(X)$, we denote by $x_{j;k}=(\delta_{0j},\delta_{1j},...,\delta_{kj})\in\Delta^k_{\RR^{k+1}}$ for any $0\leq j\leq k$, and define $\RR[\Gamma]$-linear maps $\psi_k:C_k(X;\RR)\to C_k(Y;\RR)$ such that
\begin{align}\label{psi}
\psi_k(\sigma)=\left(\psi_0(\sigma(x_{0;k})),...,\psi_0(\sigma(x_{k;k}))\right).
\end{align}
One can immediately see that for any $\sigma\in\cS_k(X)$, $\psi_k(\sigma)$ is a convex combination of $(k+1)$-tuples in $\Gamma$. In particular, for any $k\geq 0$ and any $a\in C_k(X;\RR)$, we have 
\begin{align}\label{eqn:psi norm control}
|\psi_k(a)|_{l^1}\leq |a|_{l^1}.
\end{align} 
Notice that for any $k\geq 1$,
\begin{align*}
\del_k^Y(\psi_k(\sigma))=&\del_k^Y\left(\psi_0(\sigma(x_{0;k})),...,\psi_0(\sigma(x_{k;k}))\right) \\
=&\sum_{j=0}^k(-1)^j\left(\psi_0(\sigma(x_{0;k})),...,\psi_0(\sigma(x_{j-1;k})),\psi_0(\sigma(x_{j+1;k})),...,\psi_0(\sigma(x_{k;k}))\right) \\
=&\psi_{k-1}(\del^X_k(\sigma)).
\end{align*}
This verifies that $\psi_\bullet:C^X\to C^Y$ is an $\RR[\Gamma]$-chain homomorphism such that $\psi_{-1}=\id$.

\textbf{Rough ideas for $\phi_\bullet:C^Y_\bullet\to C^X_\bullet$}: The rough idea behind the construction of $\phi_\bullet:C^Y_\bullet\to C^X_\bullet$ is the following: for any $(\gamma_0,...,\gamma_k)\in\Gamma$, we want to construct $\phi_k(\gamma_0,...,\gamma_k)$ as a ``straightened'' simplex with the set of vertices equal to $V:=\{\gamma_0x_0,...,\gamma_kx_0\}$ and edges of the form $\phi_1(\gamma_i,\gamma_j)=\beta[\gamma_ix_0,\gamma_jx_0]$ introduced in Section \ref{sec bicombing}, $0\leq i<j\leq k$. If we imagine that $\beta[x,y]$ is the {oriented geodesic segment from $x$ to $y$} and there is a {``geodesic simplex''} with edges exactly equal to $\beta[\gamma_ix_0,\gamma_jx_0]$, $0\leq i<j\leq k$. We can then apply the theory of $\Theta$-separation in Section \ref{sec combinatorics} to ``cut'' this ``geodesic simplex'' into ``pieces'' via elements in $\cF(V)$. (See Figure \ref{def7.4}.) ``Pieces'' will be later renamed as ``chambers'', which have a one-to-one correspondence to the elements in $\MCS(\bG_V)$. We will finally construct $\phi_k(\gamma_0,...,\gamma_k)$ by first performing ``filling constructions'' within each ``chamber'' using ``smaller simplices'' and then putting all chambers together to form the ``straightened'' simplex. Since $\phi_\bullet$ is a chain homomorphism, all the above constructions need to be created in a consistent way. This is based on the most technical results in Subsection \ref{subsec most annoying}.

The ``filling constructions'' mentioned above rely on several notions.
\begin{figure}[h]
	\centering
	\includegraphics[scale=1]{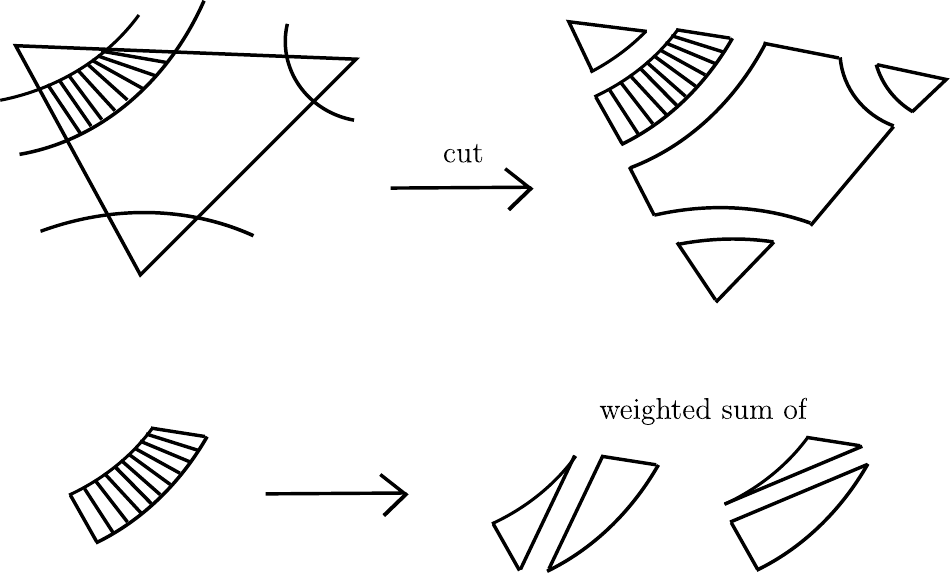}
	\caption{\label{def7.4}}
\end{figure}

\begin{definition}[Barycentric cone, barycentric chains]\label{geo cone}
{For any $p\in X$ and any $\sigma=\Db^{k}(p_0,...,p_k)$, we define the \emph{barycentric cone with apex $p$ and base $\sigma$} as $\Cone(p,\sigma):=\Db^{k+1}(p,p_0,...,p_k)$.

Let $C_{k;\mathrm{bar}}(X;\RR)$ be the collection of all $\RR$-linear combinations of $k$-dimensional barycentric simplices. Any element in $C_{k;\mathrm{bar}}(X;\RR)$ is called a \emph{$k$-dimensional barycentric chain.} We also let $C_{k,x_0;\mathrm{bar}}(X;\RR):=C_{k;\mathrm{bar}}(X;\RR)\cap C_{k,x_0}(X;\RR)$. Then the above definition naturally extends to a $\RR$-bilinear map $\Cone:C_0(X;\RR)\times C_{k;\mathrm{bar}}(X;\RR)\to C_{k+1;\mathrm{bar}}(X;\RR)$. This map naturally restricts to a $\RR$-bilinear map $\Cone:C_{0,x_0}(X;\RR)\times C_{k,x_0;\mathrm{bar}}(X;\RR)\to C_{k+1,x_0;\mathrm{bar}}(X;\RR)$.}
\begin{rmk}
It is straightforward from the definition of {barycentric} cones that
$$\del^X_{k+1}\Cone(p,a)=a-\Cone(p,\del^X_ka),~\forall p\in X, a\in C_{k,\mathrm{bar}}(X;\RR).$$
In particular, if $a$ is a closed $k$-dimensional {barycentric} chain (i.e. $\del^X_ka=0$), then $\del^X_{k+1}\Cone(p,a)=a$.
\end{rmk}
\end{definition}
\begin{definition}[Vertex support]\label{vert supp}
For any $k\geq 0$ and $a=\sum_{\sigma\in\cS_{k}}\alpha_\sigma\sigma\in C_{k}(X;\RR)$, we define its \emph{vertex support} as
$$\vertsupp(a)=\{p\in X|\exists \sigma\in\cS_{k} \mathrm{~s.t.~}p\mathrm{~is~a~vertex~of~}\sigma\mathrm{~and~}\alpha_\sigma\neq 0\}.$$

In particular, if $a\in C_{k,x_0}(X;\RR)$, $\vertsupp(a)\subset \Gamma x_0$. In this case, we also denote by
\begin{align}\label{neighboring flats}
\cF_a=\{\hF\in\Gamma F|\hF=F_x\mathrm{~for~some~}x\in\vertsupp(a)\}.\end{align}
\end{definition}

\begin{definition}[Cone average]\label{FBCone}
For any $k\geq 0$ and any {barycentric chain} $a\in C_{k,x_0;\mathrm{bar}}(X;\RR)$, {the \emph{cone average} of $a$ is defined as the following: If $a=0$, then $\FBCone(a):=0$. If $a\neq 0$, then
$$\FBCone(a)=\frac{1}{|\vertsupp(a)|}\sum_{z\in\vertsupp(a)}\Cone(z,a)\in C_{k+1,x_0;\mathrm{bar}}(X;\RR).$$}
\end{definition}

\begin{rmk}
Here are some straightforward properties of $\Cone$ and $\FBCone$.
\begin{enumerate}
\item[(1).] For any $k\geq 0$, $\Cone(p,a)=-\Cone(p,-a)$ for any $p\in X$ and $a\in C_{k;\mathrm{bar}}(X;\RR)$.
\item[(2).] For any $k\geq 0$, $|\Cone(p,a)|_{l^1}\leq |a|_{l^1}$ for any $p\in X$ and $a\in C_{k;\mathrm{bar}}(X;\RR)$.
\item[(3).] For any $k\geq 0$, ${\FBCone}(a)=-{\FBCone}(-a)$ for any $a\in C_{k;\mathrm{bar}}(X;\RR)$.
\item[(4).] For any $k\geq 0$, $|{\FBCone}(a)|_{l^1}\leq |a|_{l^1}$ for any $a\in C_{k;\mathrm{bar}}(X;\RR)$.
\item[(5).] For any $k\geq 0$, $\cF_{{\FBCone}(a)}{\subset}\cF_{a}$ for any $a\in C_{k,x_0;\mathrm{bar}}(X;\RR)$, where $\cF_a$ is defined in \eqref{neighboring flats}.
\item[(6).] For any $k\geq 0$ and any $a\in C_{k;\mathrm{bar}}(X;\RR)$, if $\del^X_ka=0$, then $\del^X_{k+1}{\FBCone}(a)=a$.
\item[(7).] For any $\gamma\in\Gamma$, any $k\geq 0$ and any $a\in C_{k,x_0;\mathrm{bar}}(X;\RR)$, we have $\gamma\vertsupp(a)=\vertsupp(\gamma a)$, $\gamma\cF_{a}=\cF_{\gamma a}$ and $\gamma{\FBCone}(a)={\FBCone}(\gamma a)$.
\end{enumerate}
\end{rmk}

Now we can start constructing $\phi_\bullet$. We define $\RR$-linear maps $\phi_{-1}=\id:\RR\to\RR$ and $\phi_0(\gamma)=\gamma x_0$ for any $\gamma\in\Gamma$. One can check that both $\phi_{-1}$ and $\phi_0$ are $\Gamma$-equivariant. Moreover, $\del^X_0(\phi_0(\gamma))=1=\phi_{-1}(\del^Y_0(\gamma))$.

\textbf{Construction of $\phi_1$}: For any $(\gamma_0,\gamma_1)\in C_1(Y;\RR)$, we define the $\RR$-linear map $\phi_1$ such that $\phi_1(\gamma_0,\gamma_1)=0$ if $\gamma_0=\gamma_1$ and $\phi_1(\gamma_0,\gamma_1)=\beta[\gamma_0x_0,\gamma_1x_0]$. $\Gamma$-equivariance of $\phi_1$ follows from the fact that $\beta[\cdot,\cdot]$ is a homological bicombing in the sense of Definition \ref{Homological bicombing}. Moreover, the same fact implies that $\del^X_1(\phi_1(\gamma_0,\gamma_1))=\gamma_1x_0-\gamma_0x_0=\phi_0(\del^Y_1(\gamma_0,\gamma_1))$.

For simplicity, we let $p_0=\gamma_0x_0$ and $p_1=\gamma_1x_0$ for any $\gamma_0,\gamma_1\in\Gamma$. Since $\phi$ is anti-symmetric in the sense that $\phi_1(\gamma_1,\gamma_0)=-\phi_1(\gamma_0,\gamma_1)$, we only consider the case when $p_0\neq p_1$. Let $V=\{p_0,p_1\}$ and $\Delta=(\gamma_0,\gamma_1)\in\cB_1\subset C_1(Y;\RR)$. (See Notation \ref{not:basic hom}.)

Although the constructions at this level do not consume a lot of effort, we can already establish the relations between decompositions of the homological bicombing in \eqref{beta' detailed full} and MCS of $\bG_V$, which will play a central role in the constructions of $\phi_k$ when $k\geq 2$.

By Lemma \ref{properties of Theta}, \hyperlink{Theta-3}{property ($\Theta$3) of $\Theta(\cdot,\cdot)$}, we can assume that $\Theta(F_{p_0}, F_{p_1})=\{F_0:=F_{p_0},F_1,...,F_m:=F_{p_1}\}$ for some $m\geq 0$ and distinct $F_1,...,F_{m-1}\in\Gamma F\setminus\{F_{p_0},F_{p_1}\}$ with $\Theta(F_0,F_j)=\{F_0,...,F_j\}$ for any $0\leq j\leq m$. We first introduce the notion of \emph{chambers} and \emph{floors} for the given $V$ and $\Delta$.

\textbf{When $m=0$}: In this case, $|V|=2$ and $|\cF(V)|=1$. Therefore $\bG_V=p_0\mbox{---}p_1$ and hence $\MCS(\bG_V)=\{\bG_V\}$. We define the \emph{(1-dimensional) chamber for $\Delta$ subject to $\bG_V$} as
\begin{align}\label{1D chamber special}
\Chamber_\Delta(\bG_V)=\beta[p_0,p_1]=\frac{1}{2}\left(\beta'[p_0,p_1]-\beta'[p_1,p_0]\right)\in C_{1,x_0;\mathrm{bar}}(X;\RR).
\end{align}
The \emph{(0-dimensional) floor for $\Delta$ subject to $\bG_V$ at $p_0$} is defined as the $p_0$ part in $\del^X_1\Chamber_\Delta(\bG_V)$. Namely,
\begin{align}\label{0D floor special-1}
\Floor_{\Delta,\bG_V}(p_0)=\frac{1}{2}\left(-\cI'_{F_{p_0},p_0}[p_0,p_1]-\cI'_{F_{p_0},p_0}[p_1,p_0]\right)=-p_0\in C_{0,x_0;\mathrm{bar}}(X;\RR).
\end{align}
The \emph{(0-dimensional) floor for $\Delta$ subject to $\bG_V$ at $p_1$} is defined as the $p_1$ part in $\del^X_1\Chamber_\Delta(\bG_V)$. Namely,
\begin{align}\label{0D floor special-2}
\Floor_{\Delta,\bG_V}(p_1)=\frac{1}{2}\left(\cI'_{F_{p_1},p_1}[p_0,p_1]+\cI'_{F_{p_1},p_1}[p_1,p_0]\right)=p_1\in C_{0,x_0;\mathrm{bar}}(X;\RR).
\end{align}
See \eqref{easy del beta' seg} for the definitions of $\cI'_{F_{p_j},p_j}[p_0,p_1]$, $j=1,2$.

\textbf{When $m\geq 1$}: In this case, $|\cF(V)|>1$. Therefore
$$\bG_V=\typemark{F_0}{\{p_0\}}{V}\mbox{---}\typemark{F_1}{\{p_0\}}{V}\mbox{---}\cdots\mbox{---}\typemark{F_m}{\{p_0\}}{V}.$$
We denote by $G_j=\typemark{F_j}{\{p_0\}}{V}\mbox{---}\typemark{F_{j+1}}{\{p_0\}}{V}$, $0\leq j\leq m-1$. Then $\MCS(\bG_V)=\{G_0,...,G_{m-1}\}$. Recall that in \eqref{beta' detailed full} and \eqref{beta} we have $\beta[p_0,p_1]=(\beta'[p_0,p_1]-\beta'[p_1,p_0])/2$,
$$\beta'[p_0,p_1]=\beta'_{F_0F_1}[p_0,p_1]+...+\beta'_{F_{m-1}F_{m}}[p_0,p_1]\mathrm{~and~}\beta'[p_1,p_0]=\beta'_{F_{m}F_{m-1}}[p_1,p_0]+...+\beta'_{F_1F_0}[p_1,p_0].$$
For any $0\leq j\leq m-1$, we define the \emph{(1-dimensional) chamber for $\Delta$ subject to $G_j$} as
\begin{align}\label{1D chamber}
\Chamber_\Delta(G_j)=\frac{1}{2}\left(\beta'_{F_jF_{j+1}}[p_0,p_1]-\beta'_{F_{j+1}F_j}[p_1,p_0]\right)\in C_{1,x_0;\mathrm{bar}}(X;\RR).
\end{align}
The \emph{(0-dimensional) floor for $\Delta$ subject to $G_j$ at $\typemark{F_j}{\{p_0\}}{V}$} is defined as the $\cI_{F_j}'$ part of $\del_1^X\Chamber_\Delta(G_j)$. (See \eqref{beta' seg endpts} and \eqref{del beta' seg}.) Namely,
\begin{align}\label{0D floor-1}
\Floor_{\Delta,G_j}(\typemark{F_j}{\{p_0\}}{V})=\frac{1}{2}\left(-\cI'_{F_{j}}[p_0,p_1]-\cI'_{F_{j}}[p_1,p_0]\right)\in C_{0,x_0;\mathrm{bar}}(X;\RR).
\end{align}
The \emph{(0-dimensional) floor for $\Delta$ subject to $G_j$ at $\typemark{F_{j+1}}{\{p_0\}}{V}$} is defined as the $\cI_{F_{j+1}}'$ part of $\del_1^X\Chamber_\Delta(G_j)$. (See \eqref{beta' seg endpts} and \eqref{del beta' seg}.) Namely,
\begin{align}\label{0D floor-2}
\Floor_{\Delta,G_j}(\typemark{F_{j+1}}{\{p_0\}}{V})=\frac{1}{2}\left(\cI'_{F_{j+1}}[p_0,p_1]+\cI'_{F_{j+1}}[p_1,p_0]\right)\in C_{0,x_0;\mathrm{bar}}(X;\RR).
\end{align}
Recall that $\Delta=(\gamma_0,\gamma_1)$ and $(p_0,p_1)=(\gamma_0x_0,\gamma_1x_0)$, one can see from \eqref{0D floor-1} and \eqref{0D floor-2} that for any $G\in\MCS(\bG_V)$ and any $Q=\typemark{\hF}{\{p_0\}}{V}\in\cV(G)$, the signs in the definition of $\Floor_{\Delta, G}(Q)$ are negative if $\hF$ is the closer to $F_{p_0}$; the signs in the definition of $\Floor_{\Delta, G}(Q)$ are positive if $\hF$ is the closer to $F_{p_1}$.

For any $m\geq 0$, we introduce two more notions related to the ``chambers'' and ``floors'' defined above. Let
\begin{align}\label{0D bchamber}
\BChamber_\Delta(G)=\del^X_1\Chamber_\Delta(G)\in C_{0,x_0;\mathrm{bar}}(X;\RR),~\forall G\in\MCS(\bG_V)
\end{align}
and
\begin{align}\label{-1D bfloor}
\BFloor_{\Delta,G}(Q)=\del_0^X\Floor_{\Delta,G}(Q)\in C_{-1}(X;\RR)\isom\RR,~\forall G\in\MCS(\bG_V)~\mathrm{and}~Q\in\cV(G).
\end{align}
Then we have the following observations.
\begin{proposition}[Properties of ``Floors'' and ``Chambers'' in their lowest possible dimensions]\label{baby constructions}
Let $\Delta=(\gamma_0,\gamma_1)$ for some $\gamma_0\neq\gamma_1\in\Gamma$ and $V=\{p_0:=\gamma_0x_0,p_1:=\gamma_1x_0\}$. Then the following holds.
\begin{enumerate}
\item[(1).] $\phi_1(\Delta)=\sum_{G\in\MCS(\bG_V)}\Chamber_\Delta(G)$.
\item[(2).] For any $G\in \MCS(\bG_V)$ and any $Q\in\cV(G)$, $\BFloor_{\Delta,G}(Q)\in\{1,-1\}$. Moreover, for any $Q\in\cV(\bG_V)$, if there exist $G\neq G'\in\MCS(\bG_V)$ such that $Q\in\cV(G\ints G')$, then
$$\BFloor_{\Delta, G}(Q)+\BFloor_{\Delta,G'}(Q)=0\mathrm{~and~}\Floor_{\Delta, G}(Q)+\Floor_{\Delta,G'}(Q)=0.$$
\item[(3).] For any $G\in \MCS(\bG_V)$ and any $Q\in\cV(G)$, let
\begin{align}\label{eqn:flat part}
F_Q=\hF\text{ when }|\cF(V)|=\{\hF\}\text{ or }Q=\typemark{\hF}{I}{V}.
\end{align}
Then we have
$$\vertsupp(\Floor_{\Delta, G}(Q))\subset\{x\in\Gamma x_0|F_x=F_Q\}.$$
\item[(4).] For any $G\in \MCS(\bG_V)$ and any $Q\in\cV(G)$, $\BFloor_{\Delta, G}(Q)$ and $\Floor_{\Delta, G}(Q)$ are alternating with respect to $\Delta$. Namely, for any permutation $\tau:\{0,1\}\to\{0,1\}$ and $\Delta'=(\gamma_{\tau(0)},\gamma_{\tau(1)})$, we have
$$\BFloor_{\Delta, G}(Q)=(-1)^{\mathrm{sgn}(\tau)}\BFloor_{\Delta', G}(Q)\mathrm{~and~}\Floor_{\Delta, G}(Q)=(-1)^{\mathrm{sgn}(\tau)}\Floor_{\Delta', G}(Q).$$
\item[(5).] For any $G\in \MCS(\bG_V)$, $\BChamber_{\Delta}(G)$ and $\Chamber_{\Delta}(G)$ are alternating with respect to $\Delta$. Namely, for any permutation $\tau:\{0,1\}\to\{0,1\}$ and $\Delta'=(\gamma_{\tau(0)},\gamma_{\tau(1)})$, we have
$$\BChamber_{\Delta}(G)=(-1)^{\mathrm{sgn}(\tau)}\BChamber_{\Delta'}(G)\mathrm{~and~}\Chamber_{\Delta}(G)=(-1)^{\mathrm{sgn}(\tau)}\Chamber_{\Delta'}(G).$$
\item[(6).] For any $G\in \MCS(\bG_V)$, we define $\cF_V(G):=\left\{\hF\in\Gamma F\left|
\hF\in\Theta(F_{Q_1},F_{Q_2}),Q_1,Q_2\in\cV(G).
\right.\right\}$. (See \eqref{eqn:flat part}.) Then
$$\vertsupp(\BChamber_\Delta(G)),\vertsupp(\Chamber_\Delta(G))\subset\{x\in\Gamma x_0|F_x\in\cF_V(G)\}.$$
Moreover, one can choose $\cC_7(1):=2$ such that $|\cF_V(G)|\leq \cC_7(1)$ for any $|V|=2$ and any $G\in\MCS(\bG_V)$.
\item[(7).] For any $\gamma\in\Gamma$, any $G\in\MCS(\bG_V)$ and any $Q\in \cV(G)$, the following holds.
\begin{itemize}
\item $\gamma\BFloor_{\Delta,G}(Q)=\BFloor_{\gamma\Delta,\gamma G}(\gamma Q)$.
\item $\gamma\Floor_{\Delta,G}(Q)=\Floor_{\gamma\Delta,\gamma G}(\gamma Q)$.
\item $\gamma\BChamber_{\Delta}(G)=\BChamber_{\gamma\Delta}(\gamma G)$.
\item $\gamma\Chamber_{\Delta}(G)=\Chamber_{\gamma\Delta}(\gamma G)$.
\end{itemize}
\item[(8).] For any $G\in\MCS(\bG_V)$, we have
$$\BChamber_\Delta(G)=\sum_{Q\in\cV(G)}\Floor_{\Delta,G}(Q).$$
\item[(9).] For any $G\in \MCS(\bG_V)$ and any $Q\in\cV(G)$, $|\Floor_{\Delta,G}(Q)|_{l^1}=1$. Hence $|\BChamber_\Delta(G)|_{l^1}\leq 2$ and $|\Chamber_\Delta(G)|_{l^1}\leq 1$.
\end{enumerate}
\end{proposition}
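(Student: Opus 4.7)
The strategy is to verify each of the nine assertions by unwinding the explicit definitions from \eqref{1D chamber special}--\eqref{-1D bfloor} and invoking the properties of $\beta$, $\beta'$, and $\cI'_\hF$ already established in Section \ref{sec bicombing}. Assertion (1) is immediate: when $m=0$, $\MCS(\bG_V)=\{\bG_V\}$ and \eqref{1D chamber special} literally equals $\phi_1(\Delta)=\beta[p_0,p_1]$; when $m\geq 1$, it follows from the decomposition \eqref{beta' detailed full} of $\beta'[p_0,p_1]$ and $\beta'[p_1,p_0]$ (reversed) into the segments $\beta'_{F_jF_{j+1}}$ and $\beta'_{F_{j+1}F_j}$ respectively, together with the definition of $\Chamber_\Delta(G_j)$ in \eqref{1D chamber}. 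Assertion (7) is a direct consequence of the $\Gamma$-equivariance of $\beta,\beta'$, and of $\cI'_\hF$ established in \eqref{beta}, the proof of Proposition \ref{prop of f}, and the construction at \eqref{beta' seg endpts}; combined with the remarks that the $\Gamma$-action permutes $\MCS(\bG_V)$ and $\cV(\bG_V)$ (see the remarks after Definition \ref{graph of sep} and Definition \ref{MCS def}).

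For assertion (2), the formulas \eqref{0D floor special-1}--\eqref{0D floor special-2} give $\Floor_{\Delta,\bG_V}(p_j)=\pm p_j$ directly, while \eqref{0D floor-1}--\eqref{0D floor-2} combined with $|\cI'_\hF[x,y]|_{l^1}=1$ from \eqref{beta' seg endpts norm} and the fact that $\cI'_\hF[x,y]$ is a convex combination of $0$-simplices (see the discussion around \eqref{beta' seg endpts}) give $\del^X_0\Floor_{\Delta,G}(Q)=\pm 1$. The cancellation at a shared vertex $Q=\typemark{F_j}{\{p_0\}}{V}$ of $G_{j-1}$ and $G_j$ is read off \eqref{0D floor-1}--\eqref{0D floor-2}: the $\cI'_{F_j}$-part appears with opposite signs in $\Floor_{\Delta,G_{j-1}}$ and $\Floor_{\Delta,G_j}$. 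Assertion (3) is the statement that $\cI'_\hF[x,y]$ is supported near $\hF$, which is precisely the content of the paragraph containing \eqref{beta' seg endpts norm} (and is trivial in the $m=0$ case since $\Floor_{\Delta,\bG_V}(p_j)=\pm p_j$ with $F_{p_j}=\hF$). For (9), the same norm formula \eqref{beta' seg endpts norm} gives $|\Floor_{\Delta,G}(Q)|_{l^1}=1$; the bounds on $|\BChamber_\Delta(G)|_{l^1}$ and $|\Chamber_\Delta(G)|_{l^1}$ then follow from assertion (8) and from $|\beta'_{F_jF_{j+1}}[p_0,p_1]|_{l^1}=1$ in \eqref{l1 norms of beta' seg} together with \eqref{1D chamber}.

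Assertions (4) and (5) follow from inspection of the formulas: swapping $\gamma_0$ and $\gamma_1$ swaps $p_0\leftrightarrow p_1$ in each occurrence, and by the antisymmetric averaging $\beta=\frac{1}{2}(\beta'[\cdot,\cdot]-\beta'[\cdot,\cdot])$ every expression in \eqref{0D floor special-1}--\eqref{0D floor-2} and \eqref{1D chamber special}, \eqref{1D chamber} changes sign, hence so do $\BFloor,\BChamber$. For (6), the key input is that $\beta'_{F_jF_{j+1}}[p_0,p_1]$ and $\beta'_{F_{j+1}F_j}[p_1,p_0]$ have vertex support in $\{x\in\Gamma x_0:F_x\in\{F_j,F_{j+1}\}\}$ (see \eqref{l1 norms of beta' seg} and the definition of $|\cdot|_{\{F_j,F_{j+1}\},l^1}$); since $\cF_V(G_j)\supset\{F_j,F_{j+1}\}$, the desired inclusion follows, and the uniform bound $|\cF_V(G)|\leq 2$ for $|V|=2$ is immediate because each MCS has exactly two vertices whose $\hF$-parts lie in a $2$-element set.

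Finally, assertion (8) is a tautology given our definitions: by \eqref{0D bchamber}, $\BChamber_\Delta(G)=\del^X_1\Chamber_\Delta(G)$, and in the $m=0$ case \eqref{1D chamber special} combined with \eqref{0D floor special-1}--\eqref{0D floor special-2} immediately gives $\del^X_1\beta[p_0,p_1]=p_1-p_0=\Floor_{\Delta,\bG_V}(p_0)+\Floor_{\Delta,\bG_V}(p_1)$; in the $m\geq 1$ case, this is precisely \eqref{del beta' seg} regrouped according to which endpoint of $G_j$ each summand belongs to, which matches the partition $\cV(G_j)=\{\typemark{F_j}{\{p_0\}}{V},\typemark{F_{j+1}}{\{p_0\}}{V}\}$ used in \eqref{0D floor-1}--\eqref{0D floor-2}. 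There is no real obstacle in this proposition; it is a compendium of bookkeeping properties designed so that one can later cite them uniformly when constructing $\phi_k$ for $k\geq 2$. The only mildly delicate point is the sign convention in (2) and (8), which must be checked to be internally consistent across \eqref{0D floor-1} and \eqref{0D floor-2} so that boundary cancellations at shared vertices in $\cV(\bG_V)$ agree with the single cancellation of $\del^X_1$ applied to $\beta'[p_0,p_1]-\beta'[p_1,p_0]$.
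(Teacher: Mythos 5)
Your proposal is correct and follows essentially the same approach as the paper's own proof, which is simply a list of citations to the defining equations \eqref{1D chamber special}--\eqref{-1D bfloor}, \eqref{beta' detailed full}, \eqref{del beta' seg}, \eqref{beta' seg endpts norm}, \eqref{l1 norms of beta' seg}, the remarks after Definitions \ref{graph of sep} and \ref{MCS def}, and Proposition \ref{prop of f}; you have spelled out the same bookkeeping in slightly more detail.
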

\begin{proof}
This proposition is simply a summary of the results in Section \ref{sec bicombing} but rewritten partially in the languages of Section \ref{sec combinatorics}.
\begin{enumerate}
\item[(1).] Since $\phi_1(\gamma_0,\gamma_1)=\beta[\gamma_0x_0,\gamma_1x_0]=\beta[p_0,p_1]=(\beta'[p_0,p_1]-\beta'[p_1,p_0])/2$, this follows immediately from \eqref{beta' detailed full}, \eqref{1D chamber special} and \eqref{1D chamber}.
\item[(2).] This follows directly from \eqref{0D floor special-1}, \eqref{0D floor special-2}, \eqref{0D floor-1}, \eqref{0D floor-2} and the discussions near \eqref{beta' seg endpts norm}.
\item[(3).] This follows directly from \eqref{0D floor special-1}, \eqref{0D floor special-2}, \eqref{0D floor-1}, \eqref{0D floor-2}, \eqref{beta' seg endpts} and the third assertion in Proposition \ref{prop of f}.
\item[(4).] This follows directly from \eqref{0D floor special-1}, \eqref{0D floor special-2}, \eqref{0D floor-1}, \eqref{0D floor-2} and \eqref{-1D bfloor}.
\item[(5).] This follows directly from \eqref{1D chamber special}, \eqref{1D chamber} and \eqref{0D bchamber}.
\item[(6).] This follows directly from \eqref{1D chamber special}, \eqref{1D chamber}, \eqref{0D bchamber}, \eqref{l1 norms of beta' seg} and Definition \ref{l1-seminorm}.
\item[(7).] This follows directly from equations \eqref{1D chamber special}-\eqref{-1D bfloor}, the remark after Definition \ref{graph of sep}, and the second assertion in Proposition \ref{prop of f} applied to \eqref{beta' seg endpts} and \eqref{beta' seg}.
\item[(8).] When $|\cF(V)|=1$, this assertion follows directly from \eqref{1D chamber special}, \eqref{0D floor special-1}, \eqref{0D floor special-2} and \eqref{0D bchamber}. When $|\cF(V)|\geq 2$, this assertion follows directly from \eqref{1D chamber}, \eqref{0D floor-1}, \eqref{0D floor-2}, \eqref{0D bchamber} and \eqref{del beta' seg}.
\item[(9).] The fact that $|\Floor_{\Delta,G}(Q)|_{l^1}=1$ follows directly from \eqref{0D floor special-1}, \eqref{0D floor special-2}, \eqref{0D floor-1}, \eqref{0D floor-2} and \eqref{beta' seg endpts norm}. The fact that  $|\BChamber_\Delta(G)|_{l^1}\leq 2$ then follows from the eighth assertion in Proposition \ref{baby constructions}. Finally, the fact that $|\Chamber_\Delta(G)|_{l^1}\leq 1$ follows directly from \eqref{1D chamber special}, \eqref{1D chamber} and \eqref{l1 norms of beta' seg}. \qedhere
\end{enumerate}
\end{proof}

\textbf{\hypertarget{GQj}{Construction of $\phi_k$ when $k\geq 2$}}: Let $\Delta=(\gamma_0,...,\gamma_k)\in C_k(Y;\RR)$. For simplicity, we write $p_j=\gamma_jx_0$ for any $0\leq j\leq k$. Denoted by {$V(\Delta)=\{p_0,...,p_k\}\subset\Gamma x_0$.} If $\gamma_i=\gamma_j$ for some $0\leq i<j\leq k$, then we simply define $\phi_k(\Delta)=0$. (See also \eqref{phi full construction}.) From now on, unless otherwise mentioned, we assume that $\gamma_0,...,\gamma_k$ are distinct, which implies that $p_0,...,p_k$ are also distinct.

We denote by $\Delta_j=(\gamma_0,...,\widehat{\gamma_j},...,\gamma_k)\in C_{k-1}(Y;\RR)$ and {hence $V(\Delta_j)=V\setminus\{\gamma_jx_0\}=V\setminus\{p_j\}$} for any $0\leq j\leq k$. {In particular, for any permutation $\tau:\{0,...,k\}\to\{0,...,k\}$ and $\Delta'=(\gamma_{\tau(0)},...,\gamma_{\tau(k)})$, we have $V(\Delta')=V(\Delta)$ and $V(\Delta'_j)=V\setminus\{p_{\tau(j)}\}=V(\Delta_{\tau(j)})$.}


For any $0\leq j\leq k$ and $G\in\MCS(\bG_{{V(\Delta)}})$ such that $G\ints\bG_{{V(\Delta);V(\Delta_j)}}\neq\emptyset$, we consider the following 2 cases:

\textbf{\hypertarget{GQj def}{Case 1}, {the definition of $G_{Q;j;\Delta}$}}: If ${\gamma_jx_0=p_j}\not\in\Sing(G)$ {and $Q\in\cV(G)\ints\cV(\bG_{V(\Delta);V(\Delta_j)})$}, since we assumed that $G\ints\bG_{{V(\Delta);V(\Delta_j)}}\neq\emptyset$, by Definition \ref{singular} and the seventh assertion in Lemma \ref{properties of W-face}, there exist distinct $x,y\in {V(\Delta_j)}$ such that
$$\left|\res_{{V(\Delta_j)}}^{{V(\Delta)}}(\cV(G\ints\bG_{{V(\Delta);V(\Delta_j)}}))\ints\cV(\bP_{x,y,{V(\Delta_j)}})\right|=2.$$
Hence by Definition \ref{subordinate}, $\subord_{{V(\Delta_j)}}^{{V(\Delta)}}(G\ints\bG_{{V(\Delta_j)}})\neq\emptyset$. Therefore, the first assertion in Lemma \ref{easy properties of enrich} implies that $\Enrich_{{V(\Delta_j)}}^{{V(\Delta)}}(G\ints\bG_{{V(\Delta_j)}})$ is a connected subgraph which contains at least one edge. By the second assertion in Corollary \ref{remaining properties of sep graph}, Definition \ref{subordinate} and the second assertion in Lemma \ref{easy properties of enrich}, there exist a unique MCS in $\bG_{{V(\Delta_j)}}$, denoted as $G_{Q;j;{\Delta}}$, such that $G_{Q;j;{\Delta}}\in\subord_{{V(\Delta_j)}}^{{V(\Delta)}}(G\ints\bG_{{V(\Delta_j)}})$ and $\res_{{V(\Delta_j)}}^{{V(\Delta)}}(Q)\in\cV(G_{Q;j;{\Delta}})$. By the second remark after Definition \ref{res of types} and the remark after Definition \ref{subordinate}, one can easily check that for any $\gamma\in\Gamma$, we have $\gamma \cdot (G_{Q;j;\Delta})=(\gamma G)_{\gamma Q;j;\gamma\Delta}$.

\textbf{Case 2, {the definition of $G_{Q;j;\Delta}^{\mathrm{op}}$}}: If ${\gamma_jx_0=p_j}\in\Sing(G)$ and $Q\in\cV(G){\ints\cV(\bG_{V(\Delta);V(\Delta_j)})}\setminus\del\cA_\Sep(V)$, by Lemma \ref{singular case}, $\res_{{V(\Delta_j)}}^{{V(\Delta)}}(\cV(G\ints\bG_{{V(\Delta_j)}}))$ contains a single element. Since we assume that $Q\not\in\del\cA_\Sep(V)$, by the second assertion in Corollary \ref{remaining properties of sep graph}, there exists a unique $G'\in\MCS(\bG_V)$ such that $G'\neq G$ and $Q\in\cV(G')\ints\cV(G)$. Notice that for any $Q'\in\cV(G)$, $F_{Q'}=F_{Q}$ due to Lemma \ref{singular case}. (See \eqref{eqn:flat part}.) By the first assertion in Lemma \ref{reinterpretation of edges}, for any distinct $Q',Q''\in\cV(G')$, we have $F_{Q'}\neq F_{Q''}$. In particular, ${p_j}\not\in\Sing(G')$ due to Definition \ref{singular}. Therefore we can define $G^{\mathrm{op}}_{Q;j;{\Delta}}:=G'_{Q;j;{\Delta}}$. In other words, $G^{\mathrm{op}}_{Q;j;{\Delta}}$ is the unique MCS in $\bG_{{V(\Delta_j)}}$ such that $G^{\mathrm{op}}_{Q;j;{\Delta}}\in\subord_{{V(\Delta_j)}}^{{V(\Delta)}}(G'\ints\bG_{{V(\Delta_j)}})$ and $\res_{{V(\Delta_j)}}^{{V(\Delta)}}(Q)\in\cV(G^{\mathrm{op}}_{Q;j;{\Delta}})$. By the second remark after Definition \ref{res of types} and the remark after Definition \ref{subordinate}, one can easily check that for any $\gamma\in\Gamma$, we have $\gamma \cdot (G^{\mathrm{op}}_{Q;j;\Delta})=(\gamma G)^{\mathrm{op}}_{\gamma Q;j;\gamma\Delta}$.

{\textbf{Permutations of $\gamma_0,...,\gamma_k$ and their relations with $G_{Q;j;\Delta}$ and $G_{Q;j;\Delta}^{\mathrm{op}}$}: For any permutation $\tau:\{0,...,k\}\to\{0,...,k\}$ with $\Delta'=(\gamma_{\tau(0)},...,\gamma_{\tau(k)})$, and any $G\in\MCS(\bG_{V(\Delta)})=\MCS(\bG_{V(\Delta')})$, if $\gamma_{\tau(j)}x_0\not\in\Sing(G)$ and $G\ints\bG_{V(\Delta);V(\Delta'_j)}= G\ints\bG_{V(\Delta);V(\Delta_{\tau(j)})}\neq\emptyset$, then for any $Q\in\cV(G)\ints\cV(\bG_{V(\Delta);V(\Delta'_j)})$, $G_{Q;j;\Delta'}=G_{Q;\tau(j);\Delta}$. Similarly, if $\gamma_{\tau(j)}x_0\in\Sing(G)$ and $G\ints\bG_{V(\Delta);V(\Delta'_j)}= G\ints\bG_{V(\Delta);V(\Delta_{\tau(j)})}\neq\emptyset$, then for any $Q\in\cV(G)\ints\cV(\bG_{V(\Delta);V(\Delta'_j)})$, $G_{Q;j;\Delta'}^{\mathrm{op}}=G_{Q;\tau(j);\Delta}^{\mathrm{op}}.$}

We first construct $\Chamber$, $\BChamber$, $\Floor$ and $\BFloor$ in an inductive way when $k\geq 2$: For any $G\in\MCS(\bG_{{V(\Delta)}})$ and any $Q\in\cV(G)$, we let $\BFloor_{\Delta,G}(Q)=0$ if $Q\in\del\cA_\Sep({{V(\Delta)}})$. Otherwise, we let
\begin{align}\label{bfloor}
\begin{split}
\BFloor_{\Delta,G}(Q):=&\sum_{\substack{j:0\leq j\leq k\\\cV(\bG_{{V(\Delta);V(\Delta_j)}})\ni Q\\{\gamma_jx_0}\not\in\Sing(G)}}(-1)^j\Floor_{\Delta_j, G_{Q;j;{\Delta}}}(\res_{{V(\Delta_j)}}^{{V(\Delta)}}(Q)) \\
&-\sum_{\substack{j:0\leq j\leq k\\ \cV(\bG_{{V(\Delta);V(\Delta_j)}})\ni Q \\{\gamma_jx_0}\in\Sing(G)}}(-1)^j\Floor_{\Delta_j,G_{Q;j;{\Delta}}^{\mathrm{op}}}(\res_{{V(\Delta_j)}}^{{V(\Delta)}}(Q))\in C_{k-2,x_0;\mathrm{bar}}(X,\RR).
\end{split}
\end{align}
(See Figure \ref{fig:floor}.) Then we define
\begin{align}\label{floor}
\Floor_{\Delta, G}(Q):={\FBCone}(\BFloor_{\Delta,G}(Q))\in C_{k-1,x_0;\mathrm{bar}}(X,\RR).
\end{align}
In particular, if $Q\in\del\cA_\Sep({{V(\Delta)}})$, by {Definition \ref{FBCone}} and \eqref{bfloor}, we have $\Floor_{\Delta, G}(Q)=0$.
\begin{figure}[h]
	\centering
	\includegraphics[scale=1]{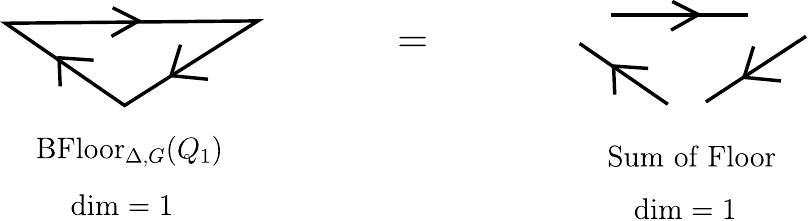}
	\caption{\label{fig:floor}}
\end{figure}

For any $G\in\MCS(\bG_{{V(\Delta)}})$, we define $\BChamber_\Delta(G)\in C_{k-1,x_0;\mathrm{bar}}(X,\RR)$ such that
\begin{align}\label{BChamber}
\begin{split}
&\BChamber_\Delta(G) \\
:=&(-1)^{k-1}\sum_{Q:Q\in\cV(G)}\Floor_{\Delta,G}(Q)+\sum_{\substack{j, G_j:0\leq j\leq k\\ G\ints\bG_{{V(\Delta);V(\Delta_j)}}\neq\emptyset \\ G_j\in\subord_{{V(\Delta_j)}}^{{V(\Delta)}}(G\ints\bG_{{V(\Delta);V(\Delta_j)}})}}(-1)^j\Chamber_{\Delta_j}(G_j)
\end{split}
\end{align}
and correspondingly
\begin{align}\label{Chamber}
\Chamber_\Delta(G):={\FBCone}(\BChamber_\Delta(G))\in C_{k,x_0;\mathrm{bar}}(X,\RR),
\end{align}
where we made use of the fact that $G\ints\bG_{{V(\Delta);V(\Delta_j)}}\neq\emptyset$ if and only if $G\ints\bG_{{V(\Delta);V(\Delta_j)}}\in\MCS(\bG_{{V(\Delta);V(\Delta_j)}})$ from the fourth assertion in Lemma \ref{properties of W-face}. (See Figure \ref{fig:chamber}.)
\begin{figure}[h]
	\centering
	\includegraphics[scale=0.9]{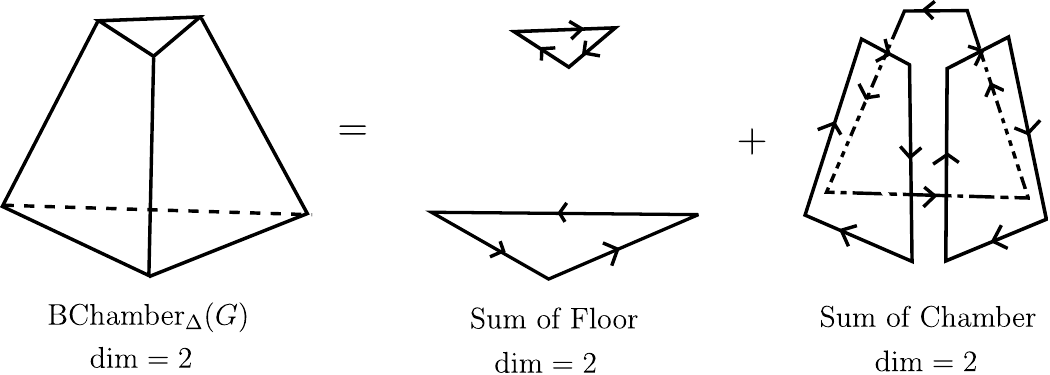}
	\caption{\label{fig:chamber}}
\end{figure}

Similar to the first assertion in Proposition \ref{baby constructions}, for any $\Delta=(\gamma_0,...,\gamma_k)$ (with $\gamma_0,...,\gamma_k$ not necessarily distinct), we let
\begin{align}\label{phi full construction}
\phi_k(\Delta):=\begin{cases}
\displaystyle \sum_{G\in\MCS(\bG_V)}\Chamber_\Delta(G), ~&\text{if }\gamma_0,...,\gamma_k\text{ are distinct,}\\
\displaystyle 0, &\text{otherwise}.
\end{cases}
\end{align}
The fact that $\phi_\bullet$ constructed above is a chain map and other crucial properties of $\phi_\bullet$ can be summarized in the following mega-proposition.

\begin{proposition}[Properties of $\Chamber$, $\BChamber$, $\Floor$ and $\BFloor$ when $k\geq 2$]\label{full constructions}
{Fix any $\Delta=(\gamma_0,...,\gamma_k)$ with distinct $\gamma_0,...,\gamma_k\in\Gamma$ and $p_j:=\gamma_jx_0$, for simplicity we write $V:=V(\Delta)$ and $V_j:=V(\Delta_j)$. For any $G\in\MCS(\bG_V)$ and any $Q\in\cV(G)$, we also write for simplicity that $G_{Q;j}:=G_{Q;j;\Delta}$ and $G_{Q;j}^{\mathrm{op}}:=G_{Q;j;\Delta}^{\mathrm{op}}$ whenever they are well defined.} {(See the \hyperlink{GQj}{definitions} of $G_{Q;j;\Delta}$ and $G_{Q;j;\Delta}^{\mathrm{op}}$.)} Then the following holds when $k\geq 2$.
\begin{enumerate}
\item[(1).] {For any distinct $G, G'\in\MCS(\bG_V)$ such that $\cV(G)\ints\cV(G')\neq \emptyset$, let $Q$ be the unique element in $\cV(G)\ints\cV(G')$ (guaranteed by Proposition \ref{key prop of ASep graph}.) Then we have
$$\BFloor_{\Delta, G}(Q)+\BFloor_{\Delta,G'}(Q)=0\mathrm{~and~hence~}\Floor_{\Delta, G}(Q)+\Floor_{\Delta,G'}(Q)=0.$$
In view of the second assertion in Corollary \ref{remaining properties of sep graph} and the fact that $\BFloor_{\Delta,G}(Q)=0$ whenever $Q\in\del\cA_\Sep(V)$ (introduced right before \eqref{bfloor}), this is equivalent to the following statement: for any $Q\in\cV(\bG_V)$, we have
$$\sum_{\substack{G:G\in\MCS(\bG_V)\\\cV(G)\ni Q}}\BFloor_{\Delta, G}(Q)=0\mathrm{~and~}\sum_{\substack{G:G\in\MCS(\bG_V)\\\cV(G)\ni Q}}\Floor_{\Delta, G}(Q)=0.$$
(Compare this with the second assertion in Proposition \ref{baby constructions}.)}
\item[(2).] For any $G\in\MCS(\bG_V)$ and any $Q\in\cV(G)$, $\BFloor_{\Delta,G}(Q)$ is a closed singular $(k-2)$-chain. As a direct corollary of this and the sixth remark after Definition \ref{FBCone}, we have $\del^X_{k-1}\Floor_{\Delta,G}(Q)=\BFloor_{\Delta,G}(Q)$. (Compare this with \eqref{-1D bfloor}.)
\item[(3).] For any $G\in\MCS(\bG_V)$ and any $Q\in\cV(G)$, if we assume in addition that $\Sing(G)=\emptyset$ and $Q=\typemark{F_{p}}{\{p\}}{V}$ for some $p\in V$, then
\begin{align}\label{(3,2)-1}
\sum_{\substack{j:0\leq j\leq k\\ \cV(\bG_{V;V_j})\ni Q}}(-1)^j\Floor_{\Delta_j,G_{Q;j}}(\res_{V_j}^V(Q))=0.
\end{align}
Moreover, for any $G\in\MCS(\bG_V)$, we have
\begin{align}\label{(3,2)-2}
\begin{split}
&\sum_{Q':Q'\in\cV(G)\ints\del\cA_\Sep(V)}\sum_{\substack{j:0\leq j\leq k\\ \cV(\bG_{V;V_j})\ni Q'\\ {p_j}\not\in\Sing(G)}}(-1)^j\Floor_{\Delta_j,G_{Q';j}}(\res_{V_j}^V(Q'))\\
=&\begin{cases}
\displaystyle (-1)^{j_0}\sum_{\substack{j:0\leq j\leq 2\\ j\neq j_0}}\Floor_{\Delta_{j_0},\bG_{V_{j_0}}}(p_j),~&\begin{aligned}
&\mathrm{if~}|V|=3\Leftrightarrow k=2,\Sing(G)\neq\emptyset\mathrm{~and~}\\
&|\cV(G)\ints\del\cA_\Sep(V)|=2,
\end{aligned}\\
\displaystyle 0,~&\mathrm{otherwise.}
\end{cases}
\end{split}
\end{align}
In the first case of \eqref{(3,2)-2}, $j_0\in\{0,1,2\}$ is the unique element such that $\typemark{F_{j_0}}{\{p_{j_0}\}}{V}\not\in\cV(G)$. (Compare this with \eqref{bfloor} when $Q\not\in\del\cA_\Sep(V)$.)
\item[(4).] For any $G\in\MCS(\bG_V)$ and any $Q\in\cV(G)$, we have
$$\vertsupp(\BFloor_{\Delta,G}(Q))\subset\{x\in\Gamma x_0|F_x=F_Q\}.$$
(See \eqref{eqn:flat part} for the definition of $F_Q$.) As a consequence of this,
$$\vertsupp(\Floor_{\Delta,G}(Q))\subset\{x\in\Gamma x_0|F_x=F_Q\}.$$
(Compare this with the third assertion in Proposition \ref{baby constructions}.)
\item[(5).] For any $G\in\MCS(\bG_V)$ and any $Q\in\cV(G)$, $\BFloor_{\Delta,G}(Q)$ is alternating with respect to $\Delta$. Namely, for any permutation $\tau:\{0,...,k\}\to\{0,...,k\}$ and $\Delta'=(\gamma_{\tau(0)},...,\gamma_{\tau(k)})$, we have
$$\BFloor_{\Delta,G}(Q)=(-1)^{\mathrm{sgn}(\tau)}\BFloor_{\Delta',G}(Q).$$
As a direct consequence of this and the third remark after Definition \ref{FBCone}, we have
$$\Floor_{\Delta,G}(Q)=(-1)^{\mathrm{sgn}(\tau)}\Floor_{\Delta',G}(Q);$$
(Compare this with the fourth assertion in Proposition \ref{baby constructions}.)
\item[(6).] For any $G\in\MCS(\bG_V)$, $\BChamber_\Delta(G)$ is a closed singular $(k-1)$-chain. As a direct corollary of this and the sixth remark after Definition \ref{FBCone}, we have $\del^X_{k}\Chamber_\Delta(G)=\BChamber_\Delta(G)$. Moreover, by the first assertion in Proposition \ref{baby constructions} and \eqref{phi full construction}, we have
\begin{align}\label{(6,k)}
\begin{split}
\del^X_{k}\phi_k(\Delta)=&\sum_{G:G\in\MCS(\bG_V)}
\BChamber_\Delta(G)\\
=&\sum_{j=0}^k(-1)^j\sum_{G_j:G_j\in\MCS(\bG_{V_j})}\Chamber_{\Delta_j}(G_j)=\sum_{j=0}^k(-1)^j\phi_{k-1}(\Delta_j)=\phi_{k-1}(\del^Y_{k}(\Delta)).
\end{split}
\end{align}
(Compare this with \eqref{0D bchamber} and the eighth assertion in Proposition \ref{baby constructions}.)
\item[(7).] There exists a constant $\cC_6(k):=\cC_6(k,\epsilon_0,\rho_0)$ such that
$$\sum_{\substack{Q,G:Q\in\cV(G)\\G\in\MCS(\bG_V)}}|\BFloor_{\Delta,G}(Q)|_{l^1}\leq\cC_6(k)\mathrm{~and~}\sum_{\substack{Q,G:Q\in\cV(G)\\G\in\MCS(\bG_V)}}|\Floor_{\Delta,G}(Q)|_{l^1}\leq \cC_6(k).$$
\item[(8).] For any $G\in\MCS(\bG_V)$, $\BChamber_\Delta(G)$ is alternating with respect to $\Delta$. Namely, for any permutation $\tau:\{0,...,k\}\to\{0,...,k\}$ and $\Delta'=(\gamma_{\tau(0)},...,\gamma_{\tau(k)})$, we have
$$\BChamber_\Delta(G)=(-1)^{\mathrm{sgn}(\tau)}\BChamber_{\Delta'}(G).$$
As a direct consequence of this and the third remark after Definition \ref{FBCone}, we have
$$\Chamber_\Delta(G)=(-1)^{\mathrm{sgn}(\tau)}\Chamber_{\Delta'}(G).$$
(Compare this with the fifth assertion in Proposition \ref{baby constructions}.)
\item[(9).] For any $G\in\MCS(\bG_V)$, let $\cF_V(G)$ be the same as in the sixth assertion in Proposition \ref{baby constructions}. Then
$$\vertsupp(\BChamber_\Delta(G)),\vertsupp(\Chamber_\Delta(G))\subset\{x\in\Gamma x_0|F_x\in\cF_V(G)\}.$$
Moreover, one can choose $\cC_7(k)>0$ only depending on $k$ such that $|\cF_V(G)|\leq \cC_7(k)$. (Compare this with the sixth assertion in Proposition \ref{baby constructions}.)
\item[(10).] There exist a constant $\cC_8(k):=\cC_8(k,\epsilon_0,\rho_0)$ such that
$$\sum_{G:G\in\MCS(\bG_V)}|\BChamber_\Delta(G)|_{l^1},\sum_{G:G\in\MCS(\bG_V)}|\Chamber_\Delta(G)|_{l^1}\leq\cC_8(k).$$
\item[(11).] For any $\gamma\in\Gamma$, any $G\in\MCS(\bG_V)$ and any $Q\in\cV(G)$, the following holds.
\begin{itemize}
\item $\gamma\BFloor_{\Delta,G}(Q)=\BFloor_{\gamma\Delta,\gamma G}(\gamma Q)$.
\item $\gamma\Floor_{\Delta,G}(Q)=\Floor_{\gamma\Delta,\gamma G}(\gamma Q)$.
\item $\gamma\BChamber_{\Delta}(G)=\BChamber_{\gamma\Delta}(\gamma G)$.
\item $\gamma\Chamber_{\Delta}(G)=\Chamber_{\gamma\Delta}(\gamma G)$.
\end{itemize}
(Compare this with the seventh assertion in Proposition \ref{baby constructions}.)
\end{enumerate}
\end{proposition}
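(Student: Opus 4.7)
The plan is to prove all eleven assertions simultaneously by strong induction on $k \geq 2$. The base case $k = 2$ will be handled by explicit verification using the detailed combinatorial picture of Example \ref{ex:triangle}: when $|V|=3$, every MCS is either a ``side piece'' $G_{j;t}$ (whose structure mimics the $k=1$ case of Proposition \ref{baby constructions}, allowing reduction to properties of the bicombing $\beta$ established in Section \ref{sec bicombing}) or the ``central'' MCS $G_{\mathrm{center}}$, and the three useful claims listed at the end of that example are precisely tailored to drive the base case for assertions (1), (3), and (6). For the inductive step, I will run the implications in the order (11) $\Rightarrow$ (4,5,9) $\Rightarrow$ (1) $\Rightarrow$ (2) $\Rightarrow$ (3) $\Rightarrow$ (6) $\Rightarrow$ (7,8,10), noting that (11), (5), and (8) are transparent from the $\Gamma$-equivariance and alternating nature of the ingredients ($\Floor,\Chamber$ at level $k-1$, plus the maps $\res_W^V$, $\subord$, $\Enrich$, and the fact that $\gamma\cdot G_{Q;j;\Delta}=(\gamma G)_{\gamma Q;j;\gamma\Delta}$ established in the \hyperlink{GQj}{definition}). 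Assertion (4) follows from the inductive vertex-support control for $\Floor_{\Delta_j,G_{Q;j}}$ together with the observation that $\res_{V_j}^V(Q)$ has the same ``flat part'' as $Q$; assertion (9) uses this together with Lemma \ref{easy properties of enrich} to bound $|\cF_V(G)|$ by a constant $\cC_7(k)$ depending only on $k$ (since an MCS of $\bG_V$ has at most $|V|=k+1$ vertices by Corollary \ref{remaining properties of sep graph}(5)).

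The main work is in assertions (1), (2), (3), and (6), which interlock. Assertion (2), the closedness $\partial^X \BFloor_{\Delta,G}(Q) = 0$, will follow by taking $\partial^X$ of the defining sum \eqref{bfloor}, using the inductive assertion (2) at level $k-1$ to rewrite each $\partial\Floor_{\Delta_j,G_{Q;j}^{(\mathrm{op})}}(\res(Q))$ as a $\BFloor$, and then applying the inductive assertion (1) at level $k-1$ to cancel contributions coming from adjacent MCS of $\bG_{V_j}$ meeting at $\res_{V_j}^V(Q)$. The combinatorial input is that the pairs $(G_{Q;j}, G_{Q;j}^{\mathrm{op}})$ realize exactly the two MCS of $\bG_{V_j}$ containing $\res_{V_j}^V(Q)$ (guaranteed by Corollary \ref{remaining properties of sep graph}(2) and Lemma \ref{easy properties of enrich}(3)), so the pairwise cancellation is total. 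Assertion (1) is similar in spirit: for $Q \in \cV(G) \cap \cV(G')$ with $G \neq G'$, the definitions assign $G_{Q;j}$ versus $(G')_{Q;j}$ in such a way that every non-singular summand in $\BFloor_{\Delta,G}(Q) + \BFloor_{\Delta,G'}(Q)$ cancels, while singular summands are handled by the Lemma \ref{singular case} description and the second useful claim of Example \ref{ex:triangle}. Assertion (3) is a direct cancellation computation using the same local combinatorics near $\del\cA_\Sep(V)$.

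The main obstacle will be assertion (6), the closedness of $\BChamber_\Delta(G)$, because it requires a global matching between
\[
(-1)^{k-1}\sum_{Q\in\cV(G)} \partial^X\Floor_{\Delta,G}(Q)
= (-1)^{k-1}\sum_{Q\in\cV(G)} \BFloor_{\Delta,G}(Q)
\]
(using inductive (2)) and
\[
\sum_{j,G_j}(-1)^j \partial^X\Chamber_{\Delta_j}(G_j)
= \sum_{j,G_j}(-1)^j \BChamber_{\Delta_j}(G_j)
\]
(using inductive (6)). Expanding each $\BChamber_{\Delta_j}(G_j)$ by its defining formula \eqref{BChamber} at level $k-1$ produces a first sum of lower-dimensional floors indexed by $(j,G_j,Q_j)$, which must be matched (up to sign) with the summands coming from the $\Floor_{\Delta_j,G_{Q;j}^{(\mathrm{op})}}(\res(Q))$ appearing in the first block; the remaining double-chamber terms, indexed by pairs $(j,G_j,l,G_{jl})$, must cancel among themselves after re-indexing $(j,l) \leftrightarrow (l,j)$. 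The crucial ingredient here is Lemma \ref{combinatorics behind d2=0}, which identifies $\subord_{V_{jl}}^V(G\cap \bG_{V;V_{jl}})$ with a disjoint union of $\subord_{V_{jl}}^{V_j}(G_j\cap \bG_{V_j;V_{jl}})$ over $G_j \in \subord_{V_j}^V(G\cap\bG_{V;V_j})$, so that the $(j,l)$ and $(l,j)$ chamber contributions really do pair up. Once (6) is proved, the chain-map identity $\partial^X_k \phi_k(\Delta) = \phi_{k-1}(\partial^Y_k\Delta)$ follows by summing over $G \in \MCS(\bG_V)$ and reading off the $\phi_{k-1}(\Delta_j)$ on the right side, and assertions (7) and (10) then follow inductively by propagating the constants $\cC_4, \cC_5$ from Section \ref{sec bicombing} through the finite sums in \eqref{bfloor}--\eqref{Chamber}, the bound $|\FBCone(a)|_{l^1}\leq |a|_{l^1}$, and the bound $|\cV(G)| \leq k+1$.
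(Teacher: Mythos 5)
Your overall architecture is correct: strong induction on $k$, explicit verification of the $k=2$ base case via Example \ref{ex:triangle}, and Lemma \ref{combinatorics behind d2=0} as the combinatorial engine behind the chain-map identity (6). Assertions (1), (4), (5), (8), (9), (11) are proved essentially the way you describe, and your account of (6) is accurate. (One small bookkeeping slip: the three \emph{Useful claims} at the end of Example \ref{ex:triangle} feed into assertions (2), (3), and (6), not (1), (3), (6).)

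However, your cancellation mechanism for assertion (2) in the inductive step $k \geq 3$ is wrong. You write that after applying inductive (2) to rewrite $\partial\Floor_{\Delta_j,G_{Q;j}}$ as $\BFloor_{\Delta_j,G_{Q;j}}$, one then applies inductive (1) to ``cancel contributions coming from adjacent MCS of $\bG_{V_j}$ meeting at $\res_{V_j}^V(Q)$,'' with the pair $(G_{Q;j},G_{Q;j}^{\mathrm{op}})$ realizing the two such MCS. This cannot work: $G_{Q;j}$ and $G_{Q;j}^{\mathrm{op}}$ are never simultaneously defined (one requires $p_j \notin \Sing(G)$, the other $p_j \in \Sing(G)$), so they do not appear as a cancelling pair in $\BFloor_{\Delta,G}(Q)$; you are importing the mechanism that drives assertion (1), where the floors of two \emph{distinct} top-level MCS $G, G'$ sharing the vertex $Q$ get paired. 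For (2), after reducing via assertion (1) (at level $k$, not $k-1$) to the case $\Sing(G)=\emptyset$, the boundary of $\BFloor_{\Delta,G}(Q)$ expands into a \emph{double} sum over pairs $(i,j)$, and cancellation comes from the antisymmetry under $(i,j)\leftrightarrow(j,i)$. Making that antisymmetry literal requires two nontrivial facts your sketch omits: (a) the subordinate face $G_{Q;j}$ is itself nonsingular, so the lower-level formula \eqref{bfloor} contributes only the non-$\mathrm{op}$ terms, and (b) the two ways of descending two levels agree, i.e.\ $(G_{Q;j})_{\res_{V_j}^V(Q);i;\Delta_j} = (G_{Q;i})_{\res_{V_i}^V(Q);j;\Delta_i}$, which is exactly Lemma \ref{combinatorics behind d2=0}. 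So that lemma is needed already for assertion (2), not just for (6). Without (a) and (b) the signed double sum has no reason to vanish, and your proposed local pairing simply does not occur.

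A secondary point: you place (5) ahead of (1) and call it ``transparent.'' The paper proves (1) first and uses it to reduce (5) to the case $\Sing(G)=\emptyset$, which keeps the two sums in \eqref{bfloor} from interacting; a direct verification is probably possible (the permutation relations $G_{Q;i;\Delta'}=G_{Q;\tau(i);\Delta}$ and $G_{Q;i;\Delta'}^{\mathrm{op}}=G_{Q;\tau(i);\Delta}^{\mathrm{op}}$ make both sub-sums transform correctly), but it is not ``transparent'' and you would need to say a word about why the singular/nonsingular partition of indices is stable under the permutation.
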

We leave the inductive proof of Proposition \ref{full constructions} to Subsections \ref{subsec 2D case} and \ref{subsec kD case}. We finish this subsection by stating a corollary of Proposition \ref{full constructions} regarding the properties of $\phi_\bullet$. But before that, we first introduce the notion of special geodesic simplices.
\begin{definition}[Special {barycentric} simplices]\label{special bar simplices}
For any $m\geq 3$, $k\geq 0$ and $p_0,...,p_k\in X$, we say that $\Db^k(p_0,...,p_k)$ is a \emph{special {barycentric} simplex with depth at most $m$} if the following holds:
\begin{enumerate}
\item[(1)] $p_0,...,p_k\in \Gamma x_0$.
\item[(2).] $|\cF(\{p_0,...,p_k\})|\leq m$.
\end{enumerate}
\end{definition}
\begin{corollary}\label{properties of straightening}
$\phi_\bullet:C^Y_\bullet\to C^X_\bullet$ is a chain homomorphism of $\RR[\Gamma]$-modules. Moreover, it satisfies the following properties.
\begin{enumerate}
\item[(1).] For any $k\geq 1$, $\phi_k$ is alternating. Namely, for any $\gamma_0,...,\gamma_k\in\Gamma$ and any permutation $\tau:\{0,...,k\}\to\{0,...,k\}$, we have $\phi_k(\gamma_0,..,\gamma_k)=(-1)^{\mathrm{sgn}(\tau)}\phi_k(\gamma_{\tau(0)}..,\gamma_{\tau(k)})$.
\item[(2).] For any $k\geq 2$ and $\gamma_0,...,\gamma_k\in\Gamma$, $|\phi_k(\gamma_0,...,\gamma_k)|_{l^1}\leq \cC_8(k)$ (introduced in the tenth assertion in Proposition \ref{full constructions}). Moreover, there exist a constant $\cC_9(k)$ which only depends on $k$ such that $\phi_k(\gamma_0,...,\gamma_k)$ is a linear combination of special {barycentric} simplices of depth at most $\cC_9(k)$ in the sense of Definition \ref{special bar simplices}.
\end{enumerate}
\end{corollary}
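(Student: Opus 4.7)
The plan is to derive Corollary \ref{properties of straightening} as a direct consequence of Proposition \ref{full constructions}, combined with the elementary base cases $\phi_0$ and $\phi_1$ already verified in Subsection \ref{subsect:phi and psi}. The structure of the argument will be to unpack the formula \eqref{phi full construction} and translate each of the claimed properties into the corresponding assertion of Proposition \ref{full constructions}, paying attention to the correct \emph{order} of verification since the chain homomorphism property in the degenerate case secretly relies on the alternating property.

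First I would establish $\Gamma$-equivariance, and hence $\RR[\Gamma]$-linearity, of each $\phi_k$. For $k = 0, 1$ this is immediate from the construction. For $k \geq 2$, the equivariance of $\phi_k$ follows from \eqref{phi full construction}, the eleventh assertion of Proposition \ref{full constructions} (which gives $\gamma\Chamber_\Delta(G) = \Chamber_{\gamma\Delta}(\gamma G)$), and the remark after Definition \ref{MCS def} (which guarantees that $G \mapsto \gamma G$ is a bijection $\MCS(\bG_V)\to\MCS(\bG_{\gamma V})$).

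Next I would verify the alternating property (item (1) of the corollary). For $k = 1$ it follows from the anti-symmetry $\beta[p,q] = -\beta[q,p]$ built into the definition of $\phi_1$. For $k \geq 2$, one applies the eighth assertion of Proposition \ref{full constructions} to the summation in \eqref{phi full construction}, making use of the fact that the indexing set $\MCS(\bG_V)$ depends only on the unordered set $V$. With the alternating property in hand, I would then verify the chain homomorphism property. For $k = 0, 1$ the identity $\del^X_k\phi_k = \phi_{k-1}\del^Y_k$ is direct (the latter case invoking that $\beta$ is a homological bicombing). For $k \geq 2$ with $\gamma_0,\ldots,\gamma_k$ pairwise distinct, this is exactly equation \eqref{(6,k)} in the sixth assertion of Proposition \ref{full constructions}. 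The small remaining subtlety, which is the only place I anticipate any mild care being needed, is the degenerate case $\gamma_i = \gamma_j$ with $i < j$: here $\phi_k(\Delta) = 0$ by \eqref{phi full construction}, so one must check that $\phi_{k-1}(\del^Y_k\Delta) = 0$ as well. All but two summands of $\del^Y_k\Delta$ retain the coincidence $\gamma_i = \gamma_j$ and are annihilated by $\phi_{k-1}$ by definition; the remaining two terms, $(-1)^i\phi_{k-1}(\gamma_0,\ldots,\widehat{\gamma_i},\ldots,\gamma_k)$ and $(-1)^j\phi_{k-1}(\gamma_0,\ldots,\widehat{\gamma_j},\ldots,\gamma_k)$, cancel by the alternating property applied to the transposition moving $\gamma_j$ into the $i$-th slot (which contributes a sign $(-1)^{j-i-1}$, exactly what is needed).

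Finally the quantitative claim (item (2)) is a direct packaging of the tenth and ninth assertions of Proposition \ref{full constructions}. From \eqref{phi full construction} and the tenth assertion one immediately gets $|\phi_k(\gamma_0,\ldots,\gamma_k)|_{l^1} \leq \cC_8(k)$. For the special barycentric structure, note that by construction (via \eqref{Chamber} and the iterated use of Definition \ref{FBCone}), every barycentric $k$-simplex $\Db^k(q_0,\ldots,q_k)$ appearing in $\phi_k(\Delta)$ has $\{q_0,\ldots,q_k\} \subset \vertsupp(\Chamber_\Delta(G))$ for some $G \in \MCS(\bG_V)$. The ninth assertion of Proposition \ref{full constructions} then gives $\{q_0,\ldots,q_k\} \subset \{x \in \Gamma x_0 : F_x \in \cF_V(G)\}$, so $\cF(\{q_0,\ldots,q_k\}) \subset \cF_V(G)$ and $|\cF(\{q_0,\ldots,q_k\})| \leq |\cF_V(G)| \leq \cC_7(k)$. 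Setting $\cC_9(k) := \max\{3,\cC_7(k)\}$ completes the proof. I do not expect any serious obstacles beyond the degenerate-case bookkeeping noted above, as the corollary is essentially a repackaging of properties already internalized into Proposition \ref{full constructions}.
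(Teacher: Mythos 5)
Your handling of the chain-homomorphism property, the alternating property, and the $l^1$ bound mirrors the paper's argument and is correct, including the cancellation of the two non-degenerate boundary summands via the alternating property in the degenerate case.

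However, there is a genuine gap in the final step for the special-barycentric-depth bound. You conclude from $\{q_0,\ldots,q_k\} \subset \{x \in \Gamma x_0 : F_x \in \cF_V(G)\}$ that $\cF(\{q_0,\ldots,q_k\}) \subset \cF_V(G)$, but this inference is not valid. Having $F_{q_i}, F_{q_j} \in \cF_V(G)$ does not force $\Theta(F_{q_i},F_{q_j}) \subset \cF_V(G)$: the set $\cF_V(G) = \bigcup_{Q_1,Q_2\in\cV(G)}\Theta(F_{Q_1},F_{Q_2})$ is a finite union of $\Theta$-intervals and need not be ``$\Theta$-convex.'' Concretely, $F_{q_i}$ and $F_{q_j}$ may lie in two different constituent intervals, say $\Theta(F_{Q_1},F_{Q_2})$ and $\Theta(F_{Q_1'},F_{Q_2'})$, and then $\Theta(F_{q_i},F_{q_j})$ can contain elements outside both. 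What you can control is the \emph{cardinality} of $\Theta(F_{q_i},F_{q_j})$, and this is exactly what the paper proves as a separate auxiliary Lemma \ref{coarse trig ineq} (a ``coarse triangle inequality'' based on property $(\Theta 4)$): from $|\Theta(F_{Q_i},F_{Q_j})| \le \cC_7(k)$ for all $Q_i,Q_j\in\cV(G)$, one gets $|\Theta(F',F'')| \le 3\cC_7(k)+6$ for any $F',F''\in\cF_V(G)$. Hence $|\cF(\{q_0,\ldots,q_k\})| \le 3\cC_7(k)+6$, and the paper takes $\cC_9(k) = 3\cC_7(k)+6$; your proposed $\cC_9(k)=\max\{3,\cC_7(k)\}$ is likely too small and, more importantly, is not supported by the reasoning you give. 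You need to invoke (and prove) the coarse triangle inequality rather than a set inclusion.
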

\begin{proof}
If $\gamma_0,...,\gamma_k\in\Gamma$ are not pairwise distinct, by \eqref{phi full construction}, the fifth assertion in Proposition \ref{baby constructions} and the eighth assertion in Proposition \ref{full constructions}, one can easily check that $0=\phi_{k-1}(\del_k^Y(\gamma_0,...,\gamma_k))=\del_{k}^X(\phi_k(\gamma_0,...,\gamma_k))$. Hence, the fact that $\phi_\cdot$ is a chain homomorphism of $\RR[\Gamma]$-modules follows directly from the construction of $\phi_1$, \eqref{phi full construction}, the sixth and the eleventh assertions in Proposition \ref{full constructions}. The first property follows directly from the construction of $\phi_1$, \eqref{phi full construction} and the eighth assertion in Proposition \ref{full constructions}. It remains to verify the second property. Since $\phi_k(\gamma_0,...,\gamma_k)=0$ whenever $\gamma_i=\gamma_j$ for some $0\leq i<j\leq k$, we assume WLOG that $\gamma_0,...,\gamma_k$ are distinct.

We first proof a lemma. Its proof relies on the application of a ``coarse triangular inequality'': Lemma \ref{properties of Theta}, \hyperlink{Theta-4}{property ($\Theta$4) of $\Theta(\cdot,\cdot)$}.
\begin{lemma}\label{coarse trig ineq}
For any $k\geq 2$ and any $F_1,...,F_k\in \Gamma F$, let $\cF:=\union_{1\leq i<j\leq k}\Theta(F_i,F_j)$. If there exists some $m\geq1  $ such that $|\Theta(F_i,F_j)|\leq m$ for any $1\leq i<j\leq k$, then for any $F',F''\in\cF$, $\Theta(F',F'')\leq 3m+6.$
\end{lemma}
\begin{proof}[Proof of Lemma \ref{coarse trig ineq}]
Assume that $F'\in\Theta(F_{i'},F_{j'})$ and $F''\in\Theta(F_{i''},F_{j''})$ for some $1\leq i',i'',j',j''\leq k$. Then by Lemma \ref{properties of Theta}, \hyperlink{Theta-3}{property ($\Theta$3) of $\Theta(\cdot,\cdot)$}, $|\Theta(F',F_{i'})|, |\Theta(F'',F_{i''})|\leq m$. Apply Lemma \ref{properties of Theta}, \hyperlink{Theta-4}{property ($\Theta$4) of $\Theta(\cdot,\cdot)$}, we have
$$
\left.
\begin{aligned}
\left.
\begin{aligned}
&|\Theta(F',F_{i'})|\leq m \\
&|\Theta(F_{i'},F_{i''})|\leq m
\end{aligned}\right\}\implies &|\Theta(F',F_{i''})|\leq 2m+3\\
&|\Theta(F'',F_{i''})|\leq m
\end{aligned}
\right\}\implies \Theta(F',F'')\leq 3m+6. \qedhere
$$
\end{proof}
Back to the proof of the second property in Corollary \ref{properties of straightening}. $|\phi_k(\gamma_0,...,\gamma_k)|_{l^1}\leq \cC_8(k)$ follows directly from the tenth assertion in Proposition \ref{full constructions} and \eqref{phi full construction}. Thanks to \eqref{phi full construction}, it suffices to show that for any $\Delta:=(\gamma_0,...,\gamma_k)$ with $V:=\{p_0:=\gamma_0x_0,...,p_k:=\gamma_kx_0\}$, and any $G\in\MCS(\bG_V)$, $\Chamber_\Delta(G)$ is a linear combination of special {barycentric} simplices of depth at most $\cC_9(k)$ in the sense of Definition \ref{special bar simplices}.

Let $\cV(G)=\{Q_1,...,Q_l\}$ with $Q_1,...,Q_l$ distinct. By the ninth assertion in Lemma \ref{full constructions},
we have $|\Theta(F_{Q_i},F_{Q_j})|\leq \cC_7(k)$ for any $1\leq i,j\leq k.$
Choose $\cC_9(k)=3\cC_7(k)+6$ and the rest of the corollary follows directly from the ninth assertion in Proposition \ref{full constructions} and Lemma \ref{coarse trig ineq}.
\end{proof}

\subsection{Proof of Proposition \ref{full constructions} when $k=2$}\label{subsec 2D case}
Before we present the proof of Proposition \ref{full constructions}, we first explain the structure of the proof: We define a lexicographic ordering on $\{(k,j):k\geq 2, 1\leq j\leq {11}\}$. For any $(k_1,j_1),(k_2,j_2)\in\{(k,j):k\geq 2, 1\leq j\leq {11}\}$, we say that $(k_1,j_1)<(k_2,j_2)$ if $k_1< k_2$ or $k_1=k_2$ and simultaneouly $j_1< j_2$. \textbf{We will always prove the $j_1$-th assertion in Proposition \ref{full constructions} for the case $k=k_1$ before the proof of the $j_2$-th assertion in Proposition \ref{full constructions} for the case $k=k_2$ whenever $(k_1,j_1)<(k_2,j_2)$. The proof of the $j_2$-th assertion in Proposition \ref{full constructions} for the case $k=k_2$ may rely on the $j_1$-th assertion in Proposition \ref{full constructions} for the case $k=k_1$ for any $(k_1,j_1)<(k_2,j_2)$.}

The proof of some assertions in the case of $k=2$ works for general $k$. We will indicate them in the proof in order to avoid repetitions. As mentioned above, the inductive proof for the $j$-th assertion in Proposition \ref{full constructions} in the case $k$ assumes that for any $(k',j')<(k,j)$, the $j'$-th assertion in Proposition \ref{full constructions} in the case $k'$ is proved.

\begin{proof}[Proof of Proposition \ref{full constructions} when $k=2$]
\begin{enumerate}
\item[(1).] {\textbf{The proof of this assertion works for any $k\geq 2$, assuming that for any $(k',j')<(k,1)$, the $j'$-th assertion in Proposition \ref{full constructions} for the case $k'$ holds. (The arguments in the proof of this assertion still work under weaker assumptions.)}

First, we can assume WLOG that $\Sing(G')=\emptyset$. This is because by Lemma \ref{singular case}, if both $\Sing(G)$ and $\Sing(G')$ are non-empty, then for any $Q',Q''\in\cV(G)\union\cV(G')$, $F_{Q'}=F_{Q''}=F_Q$. (See \eqref{eqn:flat part} for the definition of $F_Q$.) By the first assertion in Lemma \ref{reinterpretation of edges}, we have $G=G'$, contradicting the assumption that $G\neq G'$.

If ${p_j}\in V\setminus\Sing(G)$ and $\cV(\bG_{V;V_j})\ni Q$, then by Definition \ref{subordinate}, we have $G_{Q;j}\neq G'_{Q;j}$ (due to the fact that $G_{Q;j}$ and $G'_{Q;j}$ are subordinate to different MCS in $\bG_{V;V_j}$) and $\res_{V_j}^V(Q)\in\cV(G_{Q;j})\ints\cV(G'_{Q;j})$. (See \hyperlink{GQj def}{the definition of $G_{Q;j}$}.) Therefore by the second assertion in Proposition \ref{baby constructions} or the first assertion in Proposition \ref{full constructions} for the case $k-1$, we have
\begin{align}\label{(1,k-1)-1}
\Floor_{\Delta_j,G_{Q;j}}(\res_{V_j}^V(Q))+\Floor_{\Delta_j,G'_{Q;j}}(\res_{V_j}^V(Q))=0.
\end{align}

If ${p_j}\in\Sing(G)$ and $\cV(\bG_{V;V_j})\ni Q$, by the \hyperlink{GQj def}{definition of $G_{Q;j}$ and $G_{Q;j}^{\mathrm{op}}$}, we have $G_{Q;j}^{\mathrm{op}}=G'_{Q;j}$ and hence
\begin{align}\label{(1,k-1)-2}
\Floor_{\Delta_j,G^{\mathrm{op}}_{Q;j}}(\res_{V_j}^V(Q))=\Floor_{\Delta_j,G'_{Q;j}}(\res_{V_j}^V(Q)).
\end{align}
Apply \eqref{(1,k-1)-1} and \eqref{(1,k-1)-2} to \eqref{bfloor}, we have
\begin{align*}
&\BFloor_{\Delta,G}(Q)\\
=&\sum_{\substack{j:0\leq j\leq k\\\cV(\bG_{V;V_j})\ni Q\\{p_j}\not\in\Sing(G)}}(-1)^j\Floor_{\Delta_j, G_{Q;j}}(\res_{V_j}^V(Q))-\sum_{\substack{j:0\leq j\leq k\\ \cV(\bG_{V;V_j})\ni Q \\{p_j}\in\Sing(G)}}(-1)^j\Floor_{\Delta_j,G_{Q;j}^{\mathrm{op}}}(\res_{V_j}^V(Q)) \\
=&-\sum_{\substack{j:0\leq j\leq k\\\cV(\bG_{V;V_j})\ni Q\\{p_j}\not\in\Sing(G)}}(-1)^j\Floor_{\Delta_j, G'_{Q;j}}(\res_{V_j}^V(Q))-\sum_{\substack{j:0\leq j\leq k\\ \cV(\bG_{V;V_j})\ni Q \\{p_j}\in\Sing(G)}}(-1)^j\Floor_{\Delta_j,G'_{Q;j}}(\res_{V_j}^V(Q)) \\
=&-\sum_{\substack{j:0\leq j\leq k\\\cV(\bG_{V;V_j})\ni Q}}(-1)^j\Floor_{\Delta_j, G'_{Q;j}}(\res_{V_j}^V(Q))=-\BFloor_{\Delta,G'}(Q).
\end{align*}
The fact that $\Floor_{\Delta,G}(Q)=-\Floor_{\Delta,G'}(Q)$ follows directly from \eqref{floor} and the third remark after {Definition \ref{FBCone}}.}
\item[(2).] {By the definition of $\BFloor$ in \eqref{bfloor}, it suffices to verify the case when $Q\not\in\del\cA_\Sep(V)$. We split the proof of this assertion into 2 major cases, with various minor cases. The discussions in Example \ref{ex:triangle} are heavily used here. (See the items under \textbf{Useful claims}.)

\textbf{Case 1:} Assume that for any distinct $\typemark{F_1}{I_1}{V},\typemark{F_2}{I_2}{V}\in\cV(G)$, we have $F_1\neq F_2$. In this case, by Lemma \ref{singular case}, we have $\Sing(G)=\emptyset$.

We need to consider 6 minor cases. \textbf{Case 1a}, \textbf{Case 1c} and \textbf{Case 1e} are similar; \textbf{Case 1b}, \textbf{Case 1d} and \textbf{Case 1f} are similar.
\begin{figure}[h]
	\centering
	\includegraphics[scale=0.6]{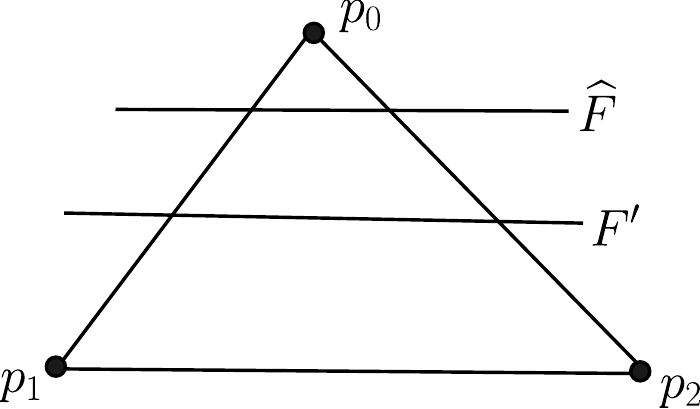}
	\caption{\label{fig:(2,2)-1}}
\end{figure}

\textbf{Case 1a}: Assume that $Q=\typemark{\hF}{\{p_0\}}{V}$ and there exists some $\typemark{F'}{I'}{V}\in\cV(G)$ such that $\hF\in\Theta(F_{p_0},F')\setminus\{F'\}$. (See Figure \ref{fig:(2,2)-1}.) By the fact that $Q\not\in\del\cA_\Sep(V)$, we have $\hF\neq F_{p_0}$ and hence $|\cF(V_1)|,|\cF(V_2)|\geq 2$. By the discussions in Example \ref{ex:triangle} (item (1) in \textbf{Useful claims}), for any $\typemark{F''}{I''}{V}\in\cV(G)$, we have $\hF\in\Theta(F_{p_0},F'')$. In view of the comments after \eqref{0D floor-2}, the following holds:


\begin{itemize}
\item $\del_0^X\Floor_{\Delta_1,G_{Q;1}}((\hF,\{\{p_0\},\{p_2\}\}))=-1$. This follows from \eqref{beta' seg endpts norm}, \eqref{0D floor-1} and the fact that $|\cF(V_1)|\geq 2$.
\item $\del_0^X\Floor_{\Delta_2,G_{Q;2}}((\hF,\{\{p_0\},\{p_1\}\})) =-1$. This follows from \eqref{beta' seg endpts norm}, \eqref{0D floor-1} and the fact that $|\cF(V_2)|\geq 2$.
\end{itemize}
By \eqref{bfloor} and the above, we have
\begin{align*}
\BFloor_{\Delta, G}(Q)=&\sum_{\substack{j:0\leq j\leq 2\\ \cV(\bG_{V;V_j})\ni Q}}(-1)^j\Floor_{\Delta_j,G_{Q;j}}(\res_{V_j}^V(Q)) \\
=&-\Floor_{\Delta_1,G_{Q;1}}((\hF,\{\{p_0\},\{p_2\}\}))+\Floor_{\Delta_2,G_{Q;2}}((\hF,\{\{p_0\},\{p_1\}\}))
\end{align*}
and therefore
\begin{align}\label{1a}
\del^X_0(\BFloor_{\Delta, G}(Q))=-(-1)+(-1)=0.
\end{align}
\begin{figure}[h]
	\centering
	\includegraphics[scale=0.6]{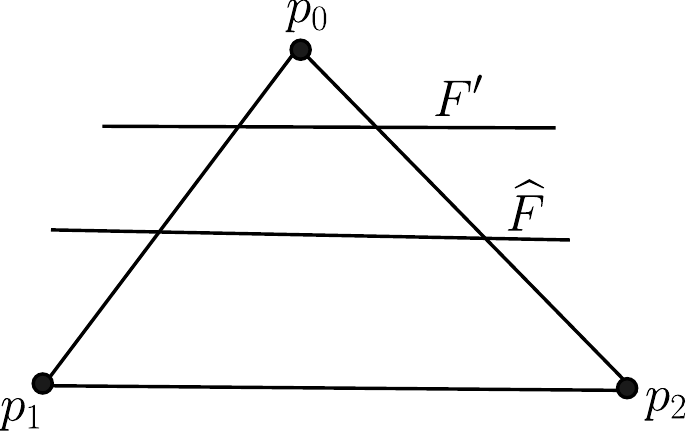}
	\caption{\label{fig:(2,2)-2}}
\end{figure}

\textbf{Case 1b}: Assume that $Q=\typemark{\hF}{\{p_0\}}{V}$ and there exists some $\typemark{F'}{I'}{V}\in\cV(G)$ such that $F'\in\Theta(F_{p_0},\hF)\setminus\{\hF\}$. (See Figure \ref{fig:(2,2)-2}.) By the fact that $Q\not\in\del\cA_\Sep(V)$ and the discussions in Example \ref{ex:triangle} (item (1) in \textbf{Useful claims}), there exists some $G'\in\MCS(\bG_V)\setminus\{G\}$ such that $Q\in\cV(G')$. Moreover, $G'$ and $Q$ satisfy \textbf{Case 1a}. Hence by \eqref{1a} and the first assertion in Proposition \ref{full constructions} when $k=2$, we have
\begin{align}\label{1b}
\del^X_0(\BFloor_{\Delta, G}(Q))=-\del^X_0(\BFloor_{\Delta,G'}(Q))=0.
\end{align}
\begin{figure}[h]
	\centering
	\includegraphics[scale=0.6]{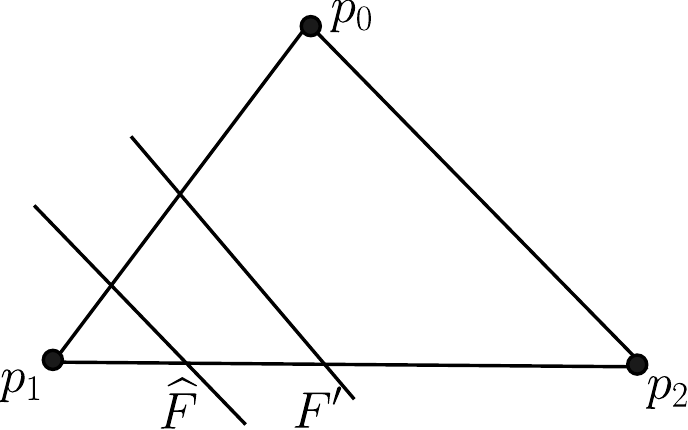}
	\caption{\label{fig:(2,2)-3}}
\end{figure}

\textbf{Case 1c}: Assume that $Q=\typemark{\hF}{\{p_1\}}{V}$ and there exists some $\typemark{F'}{I'}{V}\in\cV(G)$ such that $\hF\in\Theta(F_{p_1},F')\setminus\{F'\}$. (See Figure \ref{fig:(2,2)-3}.) By the fact that $Q\not\in\del\cA_\Sep(V)$, we have $\hF\neq F_{p_1}$ and hence $|\cF(V_0)|,|\cF(V_2)|\geq 2$. By the discussions in Example \ref{ex:triangle} (item (1) in \textbf{Useful claims}), for any $\typemark{F''}{I''}{V}\in\cV(G)$, we have $\hF\in\Theta(F_{p_1},F'')$. In view of the comments after \eqref{0D floor-2}, the following holds:
\begin{itemize}
\item $\del_0^X\Floor_{\Delta_0,G_{Q;0}}((\hF,\{\{p_1\},\{p_2\}\}))=-1$. This follows from \eqref{beta' seg endpts norm}, \eqref{0D floor-1} and the fact that $|\cF(V_0)|\geq 2$.
\item $\del_0^X\Floor_{\Delta_2,G_{Q;2}}((\hF,\{\{p_1\},\{p_0\}\})) =1$. This follows from \eqref{beta' seg endpts norm}, \eqref{0D floor-2} and the fact that $|\cF(V_2)|\geq 2$.
\end{itemize}
By \eqref{bfloor} and the above, we have
\begin{align*}
\BFloor_{\Delta, G}(Q)=&\sum_{\substack{j:0\leq j\leq 2\\ \cV(\bG_{V;V_j})\ni Q}}(-1)^j\Floor_{\Delta_j,G_{Q;j}}(\res_{V_j}^V(Q)) \\
=&\Floor_{\Delta_0,G_{Q;0}}((\hF,\{\{p_1\},\{p_2\}\}))+\Floor_{\Delta_2,G_{Q;2}}((\hF,\{\{p_1\},\{p_0\}\}))
\end{align*}
and therefore
\begin{align}\label{1c}
\del^X_0(\BFloor_{\Delta, G}(Q))=(-1)+1=0.
\end{align}
\begin{figure}[h]
	\centering
	\includegraphics[scale=0.6]{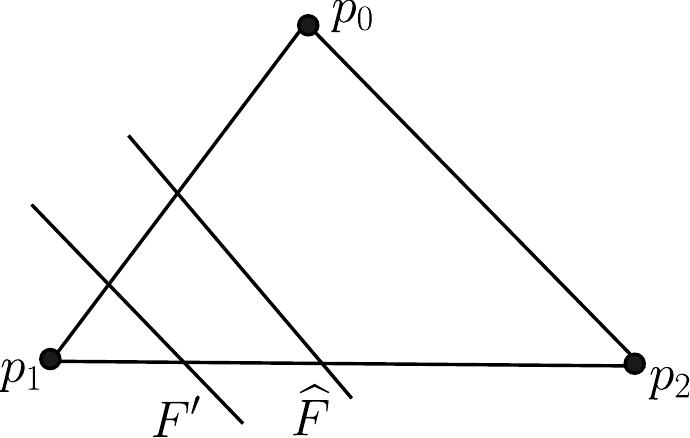}
	\caption{\label{fig:(2,2)-4}}
\end{figure}

\textbf{Case 1d}: Assume that $Q=\typemark{\hF}{\{p_1\}}{V}$ and there exists some $\typemark{F'}{I'}{V}\in\cV(G)$ such that $F'\in\Theta(F_{p_1},\hF)\setminus\{\hF\}$. (See Figure \ref{fig:(2,2)-4}.) By the fact that $Q\not\in\del\cA_\Sep(V)$ and the discussions in Example \ref{ex:triangle}  (item (1) in \textbf{Useful claims}), there exists some $G'\in\MCS(\bG_V)\setminus\{G\}$ such that $Q\in\cV(G')$. Moreover, $G'$ and $Q$ satisfy \textbf{Case 1c}. Hence by \eqref{1c} and the first assertion in Proposition \ref{full constructions} when $k=2$, we have
\begin{align}\label{1d}
\del^X_0(\BFloor_{\Delta, G}(Q))=-\del^X_0(\BFloor_{\Delta, G'}(Q))=0.
\end{align}
\begin{figure}[h]
	\centering
	\includegraphics[scale=0.6]{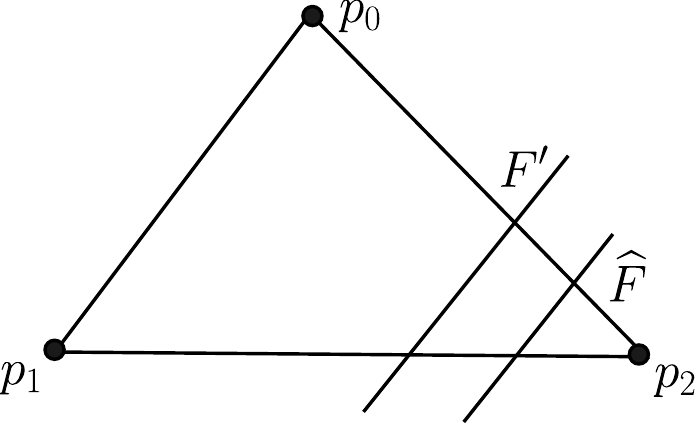}
	\caption{\label{fig:(2,2)-5}}
\end{figure}

\textbf{Case 1e}: Assume that $Q=\typemark{\hF}{\{p_2\}}{V}$ and there exists some $\typemark{F'}{I'}{V}\in\cV(G)$ such that $\hF\in\Theta(F_{p_2},F')\setminus\{F'\}$. (See Figure \ref{fig:(2,2)-5}.) By the fact that $Q\not\in\del\cA_\Sep(V)$, we have $\hF\neq F_{p_2}$ and hence $|\cF(V_0)|,|\cF(V_1)|\geq 2$. By the discussions in Example \ref{ex:triangle} (item (1) in \textbf{Useful claims}), for any $\typemark{F''}{I''}{V}\in\cV(G)$, we have $\hF\in\Theta(F_{p_2},F'')$. In view of the comments after \eqref{0D floor-2}, the following holds:
\begin{itemize}
\item $\del_0^X\Floor_{\Delta_0,G_{Q;0}}((\hF,\{\{p_2\},\{p_1\}\}))=1$. This follows from \eqref{beta' seg endpts norm}, \eqref{0D floor-2} and the fact that $|\cF(V_0)|\geq 2$.
\item $\del_0^X\Floor_{\Delta_1,G_{Q;1}}((\hF,\{\{p_2\},\{p_0\}\})) =1$. This follows from \eqref{beta' seg endpts norm}, \eqref{0D floor-2} and the fact that $|\cF(V_1)|\geq 2$.
\end{itemize}
By \eqref{bfloor} and the above, we have
\begin{align*}
\BFloor_{\Delta, G}(Q)=&\sum_{\substack{j:0\leq j\leq 2\\ \cV(\bG_{V;V_j})\ni Q}}(-1)^j\Floor_{\Delta_j,G_{Q;j}}(\res_{V_j}^V(Q)) \\
=&\Floor_{\Delta_0,G_{Q;0}}((\hF,\{\{p_2\},\{p_1\}\}))-\Floor_{\Delta_1,G_{Q;1}}((\hF,\{\{p_2\},\{p_0\}\}))
\end{align*}
and therefore
\begin{align}\label{1e}
\del^X_0(\BFloor_{\Delta, G}(Q))=1-1=0.
\end{align}
\begin{figure}[h]
	\centering
	\includegraphics[scale=0.6]{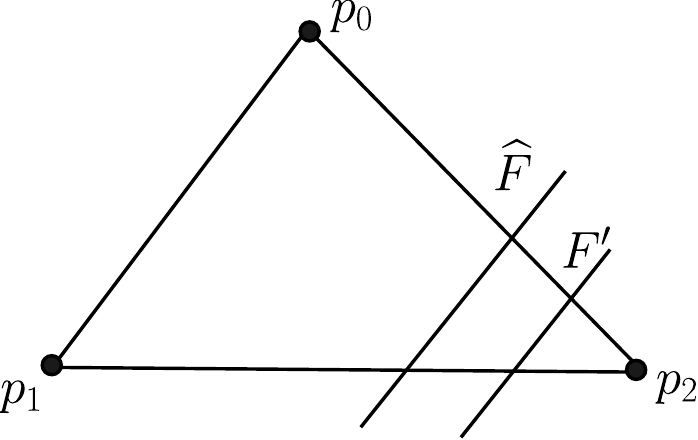}
	\caption{\label{fig:(2,2)-6}}
\end{figure}

\textbf{Case 1f}: Assume that $Q=\typemark{\hF}{\{p_2\}}{V}$ and there exists some $\typemark{F'}{I'}{V}\in\cV(G)$ such that $F'\in\Theta(F_{p_2},\hF)\setminus\{\hF\}$. (See Figure \ref{fig:(2,2)-6}.) By the fact that $Q\not\in\del\cA_\Sep(V)$ and the discussions in Example \ref{ex:triangle} (item (1) in \textbf{Useful claims}), there exists some $G'\in\MCS(\bG_V)\setminus\{G\}$ such that $Q\in\cV(G')$. Moreover, $G'$ and $Q$ satisfy \textbf{Case 1e}. Hence by \eqref{1e} and the first assertion in Proposition \ref{full constructions} when $k=2$, we have
\begin{align}\label{1f}
\del^X_0(\BFloor_{\Delta, G}(Q))=-\del^X_0(\BFloor_{\Delta, G'}(Q))=0.
\end{align}
This finishes the discussion in \textbf{Case 1}.
\begin{figure}[h]
	\centering
	\includegraphics[scale=0.6]{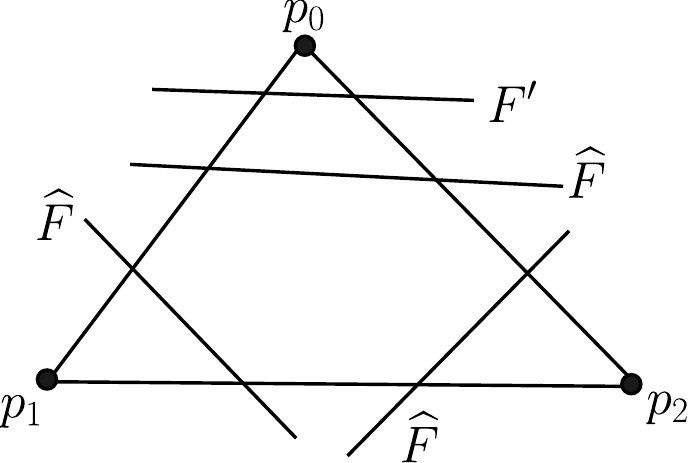}
	\caption{\label{fig:(2,2)-7}}
\end{figure}

\textbf{Case 2}: If there exist distinct $\typemark{F_1}{I_1}{V},\typemark{F_2}{I_2}{V}\in\cV(G)$ such that $F_1=F_2$, then
$$\Sep_V(F_1)=\PSep_V(F_1)=\{\stype{\{p_0\}}{V},\stype{\{p_1\}}{V},\stype{\{p_2\}}{V}\}.$$
Hence if we assume that $Q=\typemark{\hF}{I}{V}$, then $\hF=F_1=F_2$ and
$$\cV(G)=\{\typemark{\hF}{\{p_0\}}{V},\typemark{\hF}{\{p_1\}}{V},\typemark{\hF}{\{p_2\}}{V}\}.$$
By the fact that $Q\not\in\del\cA_\Sep(V)$ and the discussions in Example \ref{ex:triangle} (item (1) in \textbf{Useful claims}), there exists some $G'\in\MCS(\bG_V)\setminus\{G\}$ such that $Q\in\cV(G')$. Moreover, $G'$ and $Q$ satisfies \textbf{Case 1}. Therefore by equations \eqref{1a}-\eqref{1f} in \textbf{Case 1} and the first assertion in Proposition \ref{full constructions} when $k= 2$, we have
$$\del^X_0\BFloor_{\Delta,G}(Q)=-\del^X_0\BFloor_{\Delta,G'}(Q)=0.$$
This completes the proof that $\BFloor_{\Delta,G}(Q)$ is a closed singular $0$-chain.}

\item[(3).] {Let $Q=\typemark{F_{p_l}}{\{p_l\}}{V}$ for some $l\in\{0,1,2\}$ and $\Sing(G)= \emptyset$, if there exist some $\typemark{\hF}{I}{V}\in\cV(G)\setminus\{Q\}$ such that $\hF=F_{p_l}$, then $\stype{\{p_0\}}{V},\stype{\{p_1\}}{V},\stype{\{p_2\}}{V}\in\Sep_V(F_{p_l})$. Hence
$$\cV(G)=\left\{\typemark{F_{p_l}}{\{p_0\}}{V},\typemark{F_{p_l}}{\{p_1\}}{V},\typemark{F_{p_l}}{\{p_2\}}{V}\right\}.$$
Since $\Sing(G)=\emptyset$, by Definition \ref{res of types} and Definition \ref{singular}, we have $F_{p_1}=F_{p_2}=F_{p_0}=F_{p_l}$ and $G=\bG_V$. Hence $G_{Q;j}=\bG_{V_j}$ whenever $j\neq l$. By \eqref{0D floor special-1} and \eqref{0D floor special-2}, we have
\begin{align*}
&\sum_{\substack{j:0\leq j\leq 2\\ \cV(\bG_{V;V_j})\ni Q}}(-1)^j\Floor_{\Delta,G_{Q;j}}(\res_{V_j}^V(Q))\\
=&\begin{cases}
\displaystyle -\Floor_{\Delta_1,\bG_{V_1}}(p_0)+\Floor_{\Delta_2,\bG_{V_2}}(p_0)=-(-p_0)+(-p_0)=0,~&l=0, \\
\displaystyle \Floor_{\Delta_0,\bG_{V_0}}(p_1)+\Floor_{\Delta_2,\bG_{V_2}}(p_1)=-p_1+p_1=0,~&l=1,\\
\displaystyle \Floor_{\Delta_0,\bG_{V_0}}(p_2)-\Floor_{\Delta_1,\bG_{V_1}}(p_2)=p_2-p_2=0,~&l=2.
\end{cases}
\end{align*}

Now we assume that for any $\typemark{\hF}{I}{V}\in\cV(G)\setminus\{Q\}$, $\hF\neq F_{p_l}$. We discuss the cases $l=0,1,2$ separately.

\textbf{Case 1}: $l=0$. Then $F_{p_1},F_{p_2}\in\Gamma F\setminus\{ F_{p_0}\}$. Similar to \textbf{Case 1a} in the proof of the second assertion in Proposition \ref{full constructions} when $k=2$, \eqref{0D floor-1} and \eqref{beta' seg endpts} imply that
\begin{align*}
&\sum_{\substack{j:0\leq j\leq 2\\ \cV(\bG_{V;V_j})\ni Q}}(-1)^j\Floor_{\Delta_j,G_{Q;j}}(\res_{V_j}^V(Q)) \\
=&-\Floor_{\Delta_1,G_{Q;1}}((F_{p_0},\{\{p_0\},\{p_2\}\}))+\Floor_{\Delta_2,G_{Q;2}}((F_{p_0},\{\{p_0\},\{p_1\}\})) \\
=&-\frac{1}{2}\left(-\cI'_{F_{p_0}}[p_0,p_2]-\cI'_{F_{p_0}}[p_2,p_0]\right)+\frac{1}{2}\left(-\cI'_{F_{p_0}}[p_0,p_1]-\cI'_{F_{p_0}}[p_1,p_0]\right) \\
=&-\frac{1}{2}(-p_0-p_0)+\frac{1}{2}(-p_0-p_0)=0.
\end{align*}

\textbf{Case 2}: $l=1$. Then $F_{p_0},F_{p_2}\in\Gamma F\setminus\{ F_{p_1}\}$. Similar to \textbf{Case 1c} in the proof of the second assertion in Proposition \ref{full constructions} when $k=2$, \eqref{0D floor-1}, \eqref{0D floor-2} and \eqref{beta' seg endpts} imply that
\begin{align*}
&\sum_{\substack{j:0\leq j\leq 2\\ \cV(\bG_{V;V_j})\ni Q}}(-1)^j\Floor_{\Delta_j,G_{Q;j}}(\res_{V_j}^V(Q)) \\
=&\Floor_{\Delta_0,G_{Q;0}}((F_{p_1},\{\{p_1\},\{p_2\}\}))+\Floor_{\Delta_2,G_{Q;2}}((F_{p_1},\{\{p_1\},\{p_0\}\})) \\
=&\frac{1}{2}\left(-\cI'_{F_{p_1}}[p_1,p_2]-\cI'_{F_{p_1}}[p_2,p_1]\right)+\frac{1}{2}\left(\cI'_{F_{p_1}}[p_0,p_1]+\cI'_{F_{p_1}}[p_1,p_0]\right) \\
=&\frac{1}{2}(-p_1-p_1)+\frac{1}{2}(p_1+p_1)=0.
\end{align*}

\textbf{Case 3}: $l=2$. Then $F_{p_0},F_{p_1}\in\Gamma F\setminus\{ F_{p_2}\}$. Similar to \textbf{Case 1e} in the proof of the second assertion in Proposition \ref{full constructions} when $k=2$, \eqref{0D floor-2} and \eqref{beta' seg endpts} imply that
\begin{align*}
&\sum_{\substack{j:0\leq j\leq 2\\ \cV(\bG_{V;V_j})\ni Q}}(-1)^j\Floor_{\Delta_j,G_{Q;j}}(\res_{V_j}^V(Q)) \\
=&\Floor_{\Delta_0,G_{Q;0}}((F_{p_2},\{\{p_2\},\{p_1\}\}))-\Floor_{\Delta_1,G_{Q;1}}((F_{p_2},\{\{p_2\},\{p_0\}\})) \\
=&\frac{1}{2}\left(\cI'_{F_{p_2}}[p_1,p_2]+\cI'_{F_{p_2}}[p_2,p_1]\right)-\frac{1}{2}\left(\cI'_{F_{p_2}}[p_0,p_2]+\cI'_{F_{p_2}}[p_2,p_0]\right) \\
=&\frac{1}{2}(p_2+p_2)-\frac{1}{2}(p_2+p_2)=0.
\end{align*}

This finishes the proof of \eqref{(3,2)-1} when $k=2$. Thanks to \eqref{(3,2)-1} when $k=2$, it remains to prove \eqref{(3,2)-2} when $k=2$ for $G\in\MCS(\bG_V)$ such that $\Sing(G)\neq\emptyset$ and $\cV(G)\ints\del\cA_\Sep(V)\neq \emptyset$.
By the discussions in Example \ref{ex:triangle} (item (2) in \textbf{Useful claims}), there exists some $\hF\in\{F_{p_0},F_{p_1},F_{p_2}\}$ such that
$$\cV(G)=\left\{\typemark{\hF}{\{p_0\}}{V},\typemark{\hF}{\{p_1\}}{V},\typemark{\hF}{\{p_2\}}{V}\right\}.$$
Moreover,  $|\{p\in V|F_p=\hF\}|=|\cV(G)\cap\del\cA_\Sep(V)|\in\{1,2\}$. We discuss these two cases separately.

\textbf{Case 1}: If $|\{p\in V|F_p=\hF\}|=|\cV(G)\cap\del\cA_\Sep(V)|=1$, then
$\Sing(G)=V=\{p_0,p_1,p_2\}$. As a direct corollary,
\begin{align}\label{(3,2)-2-1}
\sum_{Q':Q'\in\cV(G)\ints\del\cA_\Sep(V)}\sum_{\substack{j:0\leq j\leq 2\\ \cV(\bG_{V;V_j})\ni Q'\\ {p_j}\not\in\Sing(G)}}(-1)^j\Floor_{\Delta_j,G_{Q';j}}(\res_{V_j}^V(Q'))=0.
\end{align}

\textbf{Case 2}: If $|\{p\in V|F_p=\hF\}|=|\cV(G)\cap\del\cA_\Sep(V)|=2$, then
$\Sing(G)=V\setminus\{p_{j_0}\}$, where $j_0$ is the unique element in $\{0,1,2\}$ which is not contained in $|\{p\in V|F_p=\hF\}|$. Moreover, $|\cF(V_{j_0})|=1$ and $|V_{j_0}|=2$. Let $Q_j:=\typemark{F_{p_j}}{\{p_j\}}{V}$ for any $j\in\{0,1,2\}$. By Definition \ref{res of types}, the \hyperlink{GQj}{definition} of $G_{Q;j}$ and the above discussions, we have
\begin{align}\label{(3,2)-2-2}
&\sum_{Q':Q'\in\cV(G)\ints\del\cA_\Sep(V)}\sum_{\substack{j:0\leq j\leq 2\\ \cV(\bG_{V;V_j})\ni Q'\\{ p_j}\not\in\Sing(G)}}(-1)^j\Floor_{\Delta_j,G_{Q';j}}(\res_{V_j}^V(Q')) \nonumber\\
=&\sum_{\substack{Q':Q'\in\cV(G)\ints\del\cA_\Sep(V)\\Q'\in\cV(\bG_{V;V_{j_0}})}}(-1)^{j_0}\Floor_{\Delta_{j_0},G_{Q';j_0}}(\res_{V_{j_0}}^V(Q')) \nonumber\\
=&\sum_{\substack{j:0\leq j\leq 2\\ j\neq j_0}}(-1)^{j_0}\Floor_{\Delta_{j_0},G_{Q_j;j_0}}(p_i)=(-1)^{j_0}\sum_{\substack{0\leq j\leq 2\\ j\neq j_0}}\Floor_{\Delta_{j_0},\bG_{V_{j_0}}}(p_j).
\end{align}
Hence \eqref{(3,2)-2} follows from \eqref{(3,2)-1}, \eqref{(3,2)-2-1} and \eqref{(3,2)-2-2}.}

\item[(4).] \textbf{The proof of this assertion works for any $k\geq 2$, assuming that for any $(k',j')<(k,4)$, the $j'$-th assertion in Proposition \ref{full constructions} for the case $k'$ holds (although the arguments still work under weaker assumptions).}

In the case $k$, by \eqref{bfloor}, the third assertion in Proposition \ref{baby constructions} or the fourth assertion in Proposition \ref{full constructions} for the case $k-1$, we have
$$\vertsupp(\BFloor_{\Delta,G}(Q))\subset\{x\in\Gamma x_0|F_x=F_Q\}.$$
By the fifth remark after {Definition \ref{FBCone}},
$$\vertsupp(\Floor_{\Delta,G}(Q))\subset\{x\in\Gamma x_0|F_x=F_Q\}.$$
\item[(5).] \textbf{The proof of this assertion works for any $k\geq 2$, assuming that for any $(k',j')<(k,5)$, the $j'$-th assertion in Proposition \ref{full constructions} for the case $k'$ holds (although the arguments still work under weaker assumptions).}

It suffices to prove this assertion for any permutation $\tau$ of $\{0,...,k\}$ satisfying the following: there exist $0\leq j_1<j_2\leq k$ such that
$$\tau(i)=\begin{cases}
\displaystyle i,~&\mathrm{if}~ i\neq j_1,j_2, \\
\displaystyle j_2,~&\mathrm{if}~i=j_1,\\
\displaystyle j_1,~&\mathrm{if}~i=j_2.
\end{cases}$$
In particular, $(-1)^{\mathrm{sgn}(\tau)}=-1$.

By \eqref{bfloor} and the {first} assertion in Proposition \ref{full constructions} in the case of $k$, we can assume WLOG that $\Sing(G)=\emptyset$. (This is because for those $G$ such that $\Sing(G)\neq\emptyset$, if $Q\in\del\cA_\Sep(V)$, then nothing needs to be proved. If $Q\not\in\del\cA_\Sep(V)$, by the assertion in Corollary \ref{remaining properties of sep graph}, one can choose $G'\in \MCS(\bG_V)$ such that $G\neq G'$ and $Q\in\cV(G)\ints\cV(G')$. By Lemma \ref{singular case} and the first assertion in Lemma \ref{reinterpretation of edges}, $\Sing(G')=\emptyset$. Hence, by the first assertion in Proposition \ref{full constructions} in the case $k$, one only needs to prove $\BFloor_{\Delta,G'}(Q)=-\BFloor_{\Delta',G'}(Q)$ and $\Floor_{\Delta,G'}(Q)=-\Floor_{\Delta',G'}(Q)$ instead.)

{Recall the \hyperlink{GQj}{notations} at the beginning of the construction of $\phi_k$, $k\geq 2$, we have $V(\Delta')=V$, }
$${V(\Delta'_i)=\begin{cases}
\displaystyle V_i,~&\mathrm{if}~ i\not\in\{ j_1,j_2\}, \\
\displaystyle V_{j_2},~&\mathrm{if}~i=j_1,\\
\displaystyle V_{j_1},~&\mathrm{if}~i=j_2
\end{cases}\mathrm{~and}~
G_{Q;i;\Delta'}=
\begin{cases}
\displaystyle G_{Q;i},~&\mathrm{if}~ i\not\in\{ j_1,j_2\}, \\
\displaystyle G_{Q;j_2},~&\mathrm{if}~i=j_1,\\
\displaystyle G_{Q;j_1},~&\mathrm{if}~i=j_2.
\end{cases}}$$
Therefore by the fourth assertion in Proposition \ref{baby constructions} or the fifth assertion in Proposition \ref{full constructions} in the case $k-1$, we have
\begin{align}\label{(5,k-1)}
\Floor_{\Delta'_i,G_{Q;i;{\Delta'}}}(\res_{{V(\Delta'_i)}}^{{V(\Delta')}}(Q))=\begin{cases}
\displaystyle -\Floor_{\Delta_i,G_{Q;i}}(\res_{V_i}^V(Q)), ~&\mathrm{if}~i\not\in\{ j_1,j_2\}, \\
\displaystyle (-1)^{j_2-j_1-1}\Floor_{\Delta_{j_2},G_{Q;j_2}}(\res_{V_{j_2}}^V(Q)), ~&\mathrm{if}~i=j_1,\\
\displaystyle (-1)^{j_1-j_2-1}\Floor_{\Delta_{j_1},G_{Q;j_1}}(\res_{V_{j_1}}^V(Q)), ~&\mathrm{if}~i=j_2.
\end{cases}
\end{align}
Apply \eqref{(5,k-1)} to \eqref{bfloor}, we have $\BFloor_{\Delta,G}(Q)=-\BFloor_{\Delta',G}(Q)$ in the case $k$. The fact that $\Floor_{\Delta,G}(Q)=-\Floor_{\Delta',G}(Q)$ in the case $k$ follows directly from the third remark after {Definition \ref{FBCone}}.

\item[(6).] We first notice that by \eqref{BChamber}, the second assertion in Corollary \ref{remaining properties of sep graph} and the {first} assertion in Proposition \ref{full constructions} in the case $k$,
\begin{align}\label{(6,k)-1g}
\begin{split}
\sum_{G:G\in\MCS(\bG_V)}
\BChamber_\Delta(G)
=&-(-1)^k\sum_{G:G\in\MCS(\bG_V)}\sum_{Q:Q\in\cV(G)}\Floor_{\Delta,G}(Q) \\
&+\sum_{G:G\in\MCS(\bG_V)}\sum_{\substack{j,G_j:0\leq j\leq k\\ G\ints\bG_{V;V_j}\neq\emptyset \\ G_j\in\subord_{V_j}^V(G\ints\bG_{V;V_j})}}(-1)^j\Chamber_{\Delta_j}(G_j) \\
=&-(-1)^k\sum_{Q:Q\in\cV(\bG_V)}\sum_{\substack{G:G\in\MCS(\bG_V)\\\cV(G)\ni Q}}\Floor_{\Delta,G}(Q) \\
&+\sum_{j=0}^k(-1)^j\sum_{\substack{G:G\in\MCS(\bG_V)\\G\ints\bG_{V;V_j}\neq\emptyset }}\sum_{{G_j:  G_j\in\subord_{V_j}^V(G\ints\bG_{V;V_j})}}\Chamber_{\Delta_j}(G_j) \\
=&\sum_{j=0}^k(-1)^j\sum_{\substack{G:G\in\MCS(\bG_V)\\G\ints\bG_{V;V_j}\neq\emptyset }}\sum_{{G_j:  G_j\in\subord_{V_j}^V(G\ints\bG_{V;V_j})}}\Chamber_{\Delta_j}(G_j).
\end{split}
\end{align}
It follows from Definition \ref{subordinate}, the third and the fourth assertions in Lemma \ref{properties of W-face} that for any $0\leq j\leq k$,
\begin{align}\label{(6,k)-2g}
&\sum_{\substack{G:G\in\MCS(\bG_V)\\G\ints\bG_{V;V_j}\neq\emptyset }}\sum_{{G_j:  G_j\in\subord_{V_j}^V(G\ints\bG_{V;V_j})}}\Chamber_{\Delta_j}(G_j)\nonumber\\
=&\sum_{G_j:G_j\in\MCS(\bG_{V_j})}\sum_{\substack{G:G\in\MCS(\bG_V)\\G\cap\bG_{V;V_j}\neq \emptyset\\\subord_{V_j}^V(G\ints\bG_{V;V_j})\ni G_j}}\Chamber_{\Delta_j}(G_j)=\sum_{G_j:G_j\in\MCS(\bG_{V_j})}\Chamber_{\Delta_j}(G_j)
\end{align}
Combining \eqref{(6,k)-1g} and \eqref{(6,k)-2g}, we obtain
\begin{align}\label{(6,k)-3g}
\sum_{G:G\in\MCS(\bG_V)}
\BChamber_\Delta(G)
=\sum_{j=0}^k(-1)^j\sum_{G_j:G_j\in\MCS(\bG_{V_j})}\Chamber_{\Delta_j}(G_j).
\end{align}
The right hand side of \eqref{(6,k)-3g} equals $\phi_{k-1}(\del_k^Y(\Delta))$ by the first assertion in Proposition \ref{baby constructions} or \eqref{phi full construction}. Therefore \eqref{(6,k)} follows from \eqref{(6,k)-3g} and the sixth remark after {Definition \ref{FBCone}} if we can prove that $\BChamber_\Delta(G)$ is closed for any $G\in\MCS(\bG_V)$.

When $k=2$, \eqref{BChamber} (used in the first equality), the {second} assertion in Proposition \ref{full constructions} when $k=2$ (used in the second equality), \eqref{0D bchamber} (used in the second equality), the eighth assertion in Proposition \ref{baby constructions} (used in the third equality) and \eqref{bfloor} (used in the third equality) imply that
\begin{align}\label{bchamber closed}
&\del^X_1\BChamber_\Delta(G) \nonumber\\
=&-\sum_{Q:Q\in\cV(G)}\del^X_1\Floor_{\Delta,G}(Q)+\sum_{\substack{j,G_j:0\leq j\leq 2\\ G\ints\bG_{V;V_j}\neq\emptyset \\ G_j\in\subord_{V_j}^V(G\ints\bG_{V;V_j})}}(-1)^j\del^X_1\Chamber_{\Delta_j}(G_j) \nonumber\\
=&-\sum_{Q:Q\in\cV(G)}\BFloor_{\Delta,G}(Q)+\sum_{\substack{j,G_j:0\leq j\leq 2\\ G\ints\bG_{V;V_j}\neq\emptyset \\ G_j\in\subord_{V_j}^V(G\ints\bG_{V;V_j})}}(-1)^j\BChamber_{\Delta_j}(G_j) \nonumber\\
=&-\sum_{\substack{Q:Q\in\cV(G)\\ Q\not\in\del\cA_\Sep(V)}}\sum_{\substack{j:0\leq j\leq 2\\\cV(\bG_{V;V_j})\ni Q\\{p_j}\not\in\Sing(G)}}(-1)^j\Floor_{\Delta_j, G_{Q;j}}(\res_{V_j}^V(Q)) \nonumber\\
&+\sum_{\substack{Q:Q\in\cV(G)\\ Q\not\in\del\cA_\Sep(V)}}\sum_{\substack{j:0\leq j\leq 2\\ \cV(\bG_{V;V_j})\ni Q \\{p_j}\in\Sing(G)}}(-1)^j\Floor_{\Delta_j,G_{Q;j}^{\mathrm{op}}}(\res_{V_j}^V(Q)) \nonumber\\
&+\sum_{\substack{j,G_j:0\leq j\leq 2\\ G\ints\bG_{V;V_j}\neq\emptyset \\ G_j\in\subord_{V_j}^V(G\ints\bG_{V;V_j})}}\sum_{P_j:P_j\in\cV(G_j)}(-1)^j\Floor_{\Delta_j,G_j}(P_j) \nonumber\\
\begin{split}
=&-\sum_{\substack{Q:Q\in\cV(G)\\ Q\not\in\del\cA_\Sep(V)}}\sum_{\substack{j:0\leq j\leq 2\\\cV(\bG_{V;V_j})\ni Q\\{p_j}\not\in\Sing(G)}}(-1)^j\Floor_{\Delta_j, G_{Q;j}}(\res_{V_j}^V(Q)) \\
&+\sum_{\substack{Q:Q\in\cV(G)\\ Q\not\in\del\cA_\Sep(V)}}\sum_{\substack{j:0\leq j\leq 2\\ \cV(\bG_{V;V_j})\ni Q \\{p_j}\in\Sing(G)}}(-1)^j\Floor_{\Delta_j,G_{Q;j}^{\mathrm{op}}}(\res_{V_j}^V(Q)) \\
&+\sum_{\substack{j,G_j:0\leq j\leq 2\\ G\ints\bG_{V;V_j}\neq\emptyset \\ G_j\in\subord_{V_j}^V(G\ints\bG_{V;V_j})\\{p_j}\not\in\Sing(G)}}\sum_{P_j:P_j\in\cV(G_j)}(-1)^j\Floor_{\Delta_j,G_j}(P_j) \\
&+\sum_{\substack{j,G_j:0\leq j\leq 2\\ G\ints\bG_{V;V_j}\neq\emptyset \\ G_j\in\subord_{V_j}^V(G\ints\bG_{V;V_j})\\{p_j}\in\Sing(G)}}\sum_{P_j:P_j\in\cV(G_j)}(-1)^j\Floor_{\Delta_j,G_j}(P_j).
\end{split}
\end{align}

When ${p_j}\in\Sing(G)$, by Lemma \ref{singular case} and Definition \ref{subordinate}, we have $\subord_{V_j}^V(G\ints\bG_{V;V_j})=\emptyset$. Hence
\begin{align}\label{bchamber closed-1}
\sum_{\substack{j,G_j:0\leq j\leq 2\\ G\ints\bG_{V;V_j}\neq\emptyset \\ G_j\in\subord_{V_j}^V(G\ints\bG_{V;V_j})\\{p_j}\in\Sing(G)}}\sum_{P_j:P_j\in\cV(G_j)}(-1)^j\Floor_{\Delta_j,G_j}(P_j)   =0.
\end{align}

When ${p_j}\not\in\Sing(G)$, by the discussions in Example \ref{ex:triangle} (item (3) in \textbf{Useful claims}),
the restriction map
$$\res_{V_j}^V|_{\cV(G\ints\bG_{V;V_j})}:\cV(G\ints\bG_{V;V_j})\to\cV(\bG_{V_j})$$
is injective.

As a corollary of \eqref{eqn:MCS=subord} (used in the first equality and the fifth equalty), the second assertion in Proposition \ref{baby constructions} when $k=2$ (used in the fourth equality, the inner-most summation in the second big summation is zero), the previous paragraph (used in the fifth equality), the second assertion in Lemma \ref{easy properties of enrich} (used in the fourth and the sixth equality) and the \hyperlink{GQj}{definition} of $G_{Q;j}=G_{Q;j;\Delta}$ (used in the sixth equality; see also the statement of Proposition \ref{full constructions} for the simplified notations), we have
\begin{align}\label{bchamber closed-2-1}
&\sum_{\substack{j,G_j:0\leq j\leq 2\\ G\ints\bG_{V;V_j}\neq\emptyset \\ G_j\in\subord_{V_j}^V(G\ints\bG_{V;V_j})\\{p_j}\not\in\Sing(G)}}\sum_{P_j:P_j\in\cV(G_j)}(-1)^j\Floor_{\Delta_j,G_j}(P_j)\nonumber \\
=&\sum_{\substack{j:0\leq j\leq 2\\ G\ints\bG_{V;V_j}\neq\emptyset \\ {p_j}\not\in\Sing(G)}}\sum_{\substack{G_j,P_j:G_j\in\MCS(\Enrich_{V_j}^V(G\ints\bG_{V;V_j}))\\P_j\in\cV(G_j)}}(-1)^j\Floor_{\Delta_j,G_j}(P_j) \nonumber\\
=&\sum_{\substack{j:0\leq j\leq 2\\ G\ints\bG_{V;V_j}\neq\emptyset \\ {p_j}\not\in\Sing(G)}}(-1)^j\sum_{P_j:P_j\in\cV(\Enrich_{V_j}^V(G\ints\bG_{V;V_j}))}\sum_{\substack{G_j:G_j\in\MCS(\Enrich_{V_j}^V(G\ints\bG_{V;V_j}))\\ \cV(G_j)\ni P_j}}\Floor_{\Delta_j,G_j}(P_j) \nonumber\\
=&\sum_{\substack{j:0\leq j\leq 2\\ G\ints\bG_{V;V_j}\neq\emptyset \\ {p_j}\not\in\Sing(G)}}(-1)^j\sum_{P_j:P_j\in\res_{V_j}^V(\cV(G\ints\bG_{V;V_j}))}\sum_{\substack{G_j:G_j\in\MCS(\Enrich_{V_j}^V(G\ints\bG_{V;V_j}))\\ \cV(G_j)\ni P_j}}\Floor_{\Delta_j,G_j}(P_j) \nonumber\\
&+\sum_{\substack{j:0\leq j\leq 2\\ G\ints\bG_{V;V_j}\neq\emptyset \\ {p_j}\not\in\Sing(G)}}(-1)^j\sum_{\substack{P_j:P_j\in\cV(\Enrich_{V_j}^V(G\ints\bG_{V;V_j}))\\P_j\not\in\res_{V_j}^V(\cV(G\ints\bG_{V;V_j}))}}\sum_{\substack{G_j:G_j\in\MCS(\Enrich_{V_j}^V(G\ints\bG_{V;V_j}))\\ \cV(G_j)\ni P_j}}\Floor_{\Delta_j,G_j}(P_j) \nonumber\\
=&\sum_{\substack{j:0\leq j\leq 2\\ G\ints\bG_{V;V_j}\neq\emptyset \\ {p_j}\not\in\Sing(G)}}(-1)^j\sum_{P_j:P_j\in\res_{V_j}^V(\cV(G\ints\bG_{V;V_j}))}\sum_{\substack{G_j:G_j\in\MCS(\Enrich_{V_j}^V(G\ints\bG_{V;V_j}))\\ \cV(G_j)\ni P_j}}\Floor_{\Delta_j,G_j}(P_j) \nonumber\\
=&\sum_{\substack{j:0\leq j\leq 2\\ G\ints\bG_{V;V_j}\neq\emptyset \\ {p_j}\not\in\Sing(G)}}(-1)^j\sum_{Q:Q\in\cV(G\ints\bG_{V;V_j})}\sum_{\substack{G_j:G_j\in\subord_{V_j}^V(G\ints\bG_{V;V_j})\\ \cV(G_j)\ni \res_{V_j}^V(Q)}}\Floor_{\Delta_j,G_j}(\res_{V_j}^V(Q)) \nonumber\\
=&\sum_{Q:Q\in\cV(G)}\sum_{\substack{j:0\leq j\leq 2\\\cV(\bG_{V;V_j})\ni Q\\{p_j}\not\in\Sing(G)}}(-1)^j\Floor_{\Delta_j, G_{Q;j}}(\res_{V_j}^V(Q)).
\end{align}

Based on \eqref{(3,2)-2} in the {third} assertion in Proposition \ref{full constructions} and the fact that $k=2$ (equivalently $|V|=3$), we discuss the following $2$ cases:

\textbf{Case 1}: If $\Sing(G)\neq \emptyset$ and $|\cV(G)\ints\del\cA_\Sep(V)|=2$, by the discussions in Example \ref{ex:triangle} (item (2) in \textbf{Useful claims}), there exists $\hF\in\{F_{p_0},F_{p_1},F_{p_2}\}$ such that
$$\cV(G)=\{\typemark{\hF}{\{p_0\}}{V}, \typemark{\hF}{\{p_1\}}{V},\typemark{\hF}{\{p_2\}}{V}\}.$$
Moreover, there exists a unique $j_0\in\{0,1,2\}$ such that $Q_{j_0}:=\typemark{F_{p_{j_0}}}{\{p_{j_0}\}}{V}$ is the unique element in $\del\cA_\Sep(V)\setminus\cV(G)$ and $\Sing(G)=V\setminus\{p_{j_0}\}$. In particular, $F_p=\hF$ for any $p\in V\setminus\{p_{j_0}\}$. Then by \eqref{(3,2)-2} in the case of $k=2$, we have
\begin{align}\label{bchamber closed-2-2special}
\sum_{{Q:Q\in\cV(G)\cap\del\cA_\Sep(V)}}\sum_{\substack{j:0\leq j\leq 2\\\cV(\bG_{V;V_j})\ni Q\\{p_j}\not\in\Sing(G)}}(-1)^j\Floor_{\Delta_j, G_{Q;j}}(\res_{V_j}^V(Q))=(-1)^{j_0}\sum_{\substack{0\leq j\leq 2\\ j\neq j_0}}\Floor_{\Delta_{j_0},\bG_{V_{j_0}}}(p_j).
\end{align}
For simplicity, we denote by $Q_l^G:=\typemark{\hF}{\{p_l\}}{V}$ for any $l\in\{0,1,2\}$. Then, $Q_{j_0}^G$ is the unique element in $\cV(G)\setminus\del\cA_\Sep(V)$. Apply \eqref{bchamber closed-1}, \eqref{bchamber closed-2-1} and \eqref{bchamber closed-2-2special} to \eqref{bchamber closed}, we have
\begin{align}\label{bchamber closed prep special-1}
&\del^X_1\BChamber_\Delta(G) \nonumber\\
=&-\sum_{\substack{Q:Q\in\cV(G)\\ Q\not\in\del\cA_\Sep(V)}}\sum_{\substack{j:0\leq j\leq 2\\\cV(\bG_{V;V_j})\ni Q\\{p_j}\not\in\Sing(G)}}(-1)^j\Floor_{\Delta_j, G_{Q;j}}(\res_{V_j}^V(Q)) \nonumber\\
&+\sum_{\substack{Q:Q\in\cV(G)\\ Q\not\in\del\cA_\Sep(V)}}\sum_{\substack{j:0\leq j\leq 2\\ \cV(\bG_{V;V_j})\ni Q \\{p_j}\in\Sing(G)}}(-1)^j\Floor_{\Delta_j,G_{Q;j}^{\mathrm{op}}}(\res_{V_j}^V(Q)) \nonumber\\
&+\sum_{\substack{j,G_j:0\leq j\leq 2\\ G\ints\bG_{V;V_j}\neq\emptyset \\ G_j\in\subord_{V_j}^V(G\ints\bG_{V;V_j})\\{p_j}\not\in\Sing(G)}}\sum_{P_j:P_j\in\cV(G_j)}(-1)^j\Floor_{\Delta_j,G_j}(P_j) \nonumber\\
&+\sum_{\substack{j,G_j:0\leq j\leq 2\\ G\ints\bG_{V;V_j}\neq\emptyset \\ G_j\in\subord_{V_j}^V(G\ints\bG_{V;V_j})\\{p_j}\in\Sing(G)}}\sum_{P_j:P_j\in\cV(G_j)}(-1)^j\Floor_{\Delta_j,G_j}(P_j) \nonumber\\
=&-\sum_{\substack{Q:Q\in\cV(G)\\ Q\not\in\del\cA_\Sep(V)}}\sum_{\substack{j:0\leq j\leq 2\\\cV(\bG_{V;V_j})\ni Q\\{p_j}\not\in\Sing(G)}}(-1)^j\Floor_{\Delta_j, G_{Q;j}}(\res_{V_j}^V(Q)) \nonumber\\
&+\sum_{\substack{Q:Q\in\cV(G)\\ Q\not\in\del\cA_\Sep(V)}}\sum_{\substack{j:0\leq j\leq 2\\ \cV(\bG_{V;V_j})\ni Q \\{p_j}\in\Sing(G)}}(-1)^j\Floor_{\Delta_j,G_{Q;j}^{\mathrm{op}}}(\res_{V_j}^V(Q)) \nonumber\\
&+\sum_{Q:Q\in\cV(G)}\sum_{\substack{j:0\leq j\leq 2\\\cV(\bG_{V;V_j})\ni Q\\{p_j}\not\in\Sing(G)}}(-1)^j\Floor_{\Delta_j, G_{Q;j}}(\res_{V_j}^V(Q))+0  \nonumber\\
=&\sum_{\substack{Q:Q\in\cV(G)\\ Q\not\in\del\cA_\Sep(V)}}\sum_{\substack{j:0\leq j\leq 2\\ \cV(\bG_{V;V_j})\ni Q \\{p_j}\in\Sing(G)}}(-1)^j\Floor_{\Delta_j,G_{Q;j}^{\mathrm{op}}}(\res_{V_j}^V(Q)) \nonumber\\
&+\sum_{{Q:Q\in\cV(G)\cap\del\cA_\Sep(V)}}\sum_{\substack{j:0\leq j\leq 2\\\cV(\bG_{V;V_j})\ni Q\\{p_j}\not\in\Sing(G)}}(-1)^j\Floor_{\Delta_j, G_{Q;j}}(\res_{V_j}^V(Q))\nonumber\\
=&\sum_{\substack{j:0\leq j\leq 2\\ j\neq j_0}}(-1)^j\Floor_{\Delta_j,G_{Q_{j_0}^G;j}^{\mathrm{op}}}(\res_{V_j}^V(Q_{j_0}^G))+(-1)^{j_0}\sum_{\substack{j:0\leq j\leq 2\\ j\neq j_0}}\Floor_{\Delta_{j_0},\bG_{V_{j_0}}}(p_j).
\end{align}
Recall that $F_p=\hF$ for any $p\in V\setminus\{p_{j_0}\}$ and that $F_{p_{j_0}}\neq \hF$. By \eqref{0D floor special-1}, \eqref{0D floor special-2}, \eqref{0D floor-1} and \eqref{0D floor-2}, we can further simplify \eqref{bchamber closed prep special-1} as
\begin{align}\label{bchamber closed prep special-2}
&\sum_{\substack{j:0\leq j\leq 2\\ j\neq j_0}}(-1)^j\Floor_{\Delta_j,G_{Q_{j_0}^G;j}^{\mathrm{op}}}(\res_{V_j}^V(Q_{j_0}^G))+(-1)^{j_0}\sum_{\substack{j:0\leq j\leq 2\\ j\neq j_0}}\Floor_{\Delta_{j_0},\bG_{V_{j_0}}}(p_j)\nonumber\\
=&\begin{cases}
\displaystyle \begin{aligned}
&-\Floor_{\Delta_1,G_{Q^G_0;1}^\mathrm{op}}(\hF,\{\{p_0\},\{p_2\}\})+\Floor_{\Delta_2,G_{Q^G_0;2}^\mathrm{op}}(\hF,\{\{p_0\},\{p_1\}\}) \\
&+\Floor_{\Delta_{0},\bG_{V_{0}}}(p_1)+\Floor_{\Delta_{0},\bG_{V_{0}}}(p_2),
\end{aligned}~&\mathrm{if}~j_0=0,\\
\displaystyle \begin{aligned}
&\Floor_{\Delta_0,G_{Q^G_1;0}^\mathrm{op}}(\hF,\{\{p_1\},\{p_2\}\})+\Floor_{\Delta_2,G_{Q^G_1;2}^\mathrm{op}}(\hF,\{\{p_1\},\{p_0\}\}) \\
&-\Floor_{\Delta_{1},\bG_{V_{1}}}(p_0)-\Floor_{\Delta_{1},\bG_{V_{1}}}(p_2),
\end{aligned}~&\mathrm{if}~j_0=1,\\
\displaystyle \begin{aligned}
&\Floor_{\Delta_0,G_{Q^G_2;0}^\mathrm{op}}(\hF,\{\{p_2\},\{p_1\}\})-\Floor_{\Delta_1,G_{Q^G_2;1}^\mathrm{op}}(\hF,\{\{p_2\},\{p_0\}\}) \\
&+\Floor_{\Delta_{2},\bG_{V_{2}}}(p_0)+\Floor_{\Delta_{2},\bG_{V_{2}}}(p_1),
\end{aligned}~&\mathrm{if}~j_0=2
\end{cases}\nonumber\\
=&\begin{cases}
\displaystyle -p_2+p_1+(-p_1)+p_2=0,~&\mathrm{if}~j_0=0,\\
\displaystyle p_2-p_0-(-p_0)-p_2=0,~&\mathrm{if}~j_0=1,\\
\displaystyle -p_1-(-p_0)+(-p_0)+p_1=0,~&\mathrm{if}~j_0=2.
\end{cases}
\end{align}
Combining \eqref{bchamber closed prep special-1} and \eqref{bchamber closed prep special-2}, we have $\del^X_1\BChamber_\Delta(G)=0$ in \textbf{Case 1}.

\textbf{Case 2}: This is just the complementary case of the previous case. If $\Sing(G)= \emptyset$ or $|\cV(G)\ints\del\cA_\Sep(V)|\neq 2$, then by \eqref{(3,2)-2} in the case of $k=2$, we have
\begin{align}\label{bchamber closed-2-2}
\sum_{Q:Q\in\cV(G)\ints\del\cA_\Sep(V)}\sum_{\substack{j:0\leq j\leq k\\ \cV(\bG_{V;V_j})\ni Q\\ {p_j}\not\in\Sing(G)}}(-1)^j\Floor_{\Delta_j,G_{Q;j}}(\res_{V_j}^V(Q))=0.
\end{align}
Similar to the previous case, we can apply \eqref{bchamber closed-1}, \eqref{bchamber closed-2-1} and \eqref{bchamber closed-2-2} to \eqref{bchamber closed} and obtain
\begin{align}\label{bchamber closed prep-1}
\del^X_1\BChamber_\Delta(G)
=&-\sum_{\substack{Q:Q\in\cV(G)\\ Q\not\in\del\cA_\Sep(V)}}\sum_{\substack{j:0\leq j\leq 2\\\cV(\bG_{V;V_j})\ni Q\\{p_j}\not\in\Sing(G)}}(-1)^j\Floor_{\Delta_j, G_{Q;j}}(\res_{V_j}^V(Q)) \nonumber\\
&+\sum_{\substack{Q:Q\in\cV(G)\\ Q\not\in\del\cA_\Sep(V)}}\sum_{\substack{j:0\leq j\leq 2\\ \cV(\bG_{V;V_j})\ni Q \\{p_j}\in\Sing(G)}}(-1)^j\Floor_{\Delta_j,G_{Q;j}^{\mathrm{op}}}(\res_{V_j}^V(Q)) \nonumber\\
&+\sum_{Q:Q\in\cV(G)}\sum_{\substack{j:0\leq j\leq 2\\\cV(\bG_{V;V_j})\ni Q\\{p_j}\not\in\Sing(G)}}(-1)^j\Floor_{\Delta_j, G_{Q;j}}(\res_{V_j}^V(Q))+0  \nonumber\\
=&\sum_{\substack{Q:Q\in\cV(G)\\ Q\not\in\del\cA_\Sep(V)}}\sum_{\substack{j:0\leq j\leq 2\\ \cV(\bG_{V;V_j})\ni Q \\{p_j}\in\Sing(G)}}(-1)^j\Floor_{\Delta_j,G_{Q;j}^{\mathrm{op}}}(\res_{V_j}^V(Q)) \nonumber\\
&+\sum_{{Q:Q\in\cV(G)\cap\del\cA_\Sep(V)}}\sum_{\substack{j:0\leq j\leq 2\\\cV(\bG_{V;V_j})\ni Q\\{p_j}\not\in\Sing(G)}}(-1)^j\Floor_{\Delta_j, G_{Q;j}}(\res_{V_j}^V(Q))\nonumber\\
=&\sum_{\substack{Q:Q\in\cV(G)\\ Q\not\in\del\cA_\Sep(V)}}\sum_{\substack{j:0\leq j\leq 2\\ \cV(\bG_{V;V_j})\ni Q \\{p_j}\in\Sing(G)}}(-1)^j\Floor_{\Delta_j,G_{Q;j}^{\mathrm{op}}}(\res_{V_j}^V(Q)).
\end{align}

If $\Sing(G)=\emptyset$, \eqref{bchamber closed prep-1} is obviously equal to $0$, which proves that $\del_1^X\BChamber_\Delta(G)=0$. It remains for us to consider the case when $\Sing(G)\neq\emptyset$ and $|\cV(G)\ints\del\cA_\Sep(V)|\neq 2$.

By Lemma \ref{singular case}, for any $j\in\{0,1,2\}$ such that $p_j\in\Sing(G)$, there exist a unique $P_j^G\in\cV(\bG_{V_j})$ such that $\res_{V_j}^V(\cV(G)\cap\cV(\bG_{V;V_j}))=\{P_j^G\}$. In particular, $\cV(G)\ints\cV(\bG_{V;V_j})\neq \emptyset$. Therefore we can rewrite \eqref{bchamber closed prep-1} as
\begin{align}\label{bchamber closed prep-2}
\begin{split}
&\sum_{\substack{Q:Q\in\cV(G)\\ Q\not\in\del\cA_\Sep(V)}}\sum_{\substack{j:0\leq j\leq 2\\ \cV(\bG_{V;V_j})\ni Q \\{p_j}\in\Sing(G)}}(-1)^j\Floor_{\Delta_j,G_{Q;j}^{\mathrm{op}}}(\res_{V_j}^V(Q))\\
=&\sum_{\substack{j:0\leq j\leq 2\\{p_j}\in\Sing(G)}}\sum_{\substack{Q:Q\in\cV(G)\ints\cV(\bG_{V;V_j})\\ Q\not\in\del\cA_\Sep(V)\\ \res_{V_j}^V(Q)=P_j^G }}(-1)^j\Floor_{\Delta_j,G_{Q;j}^{\mathrm{op}}}(P_j^G).
\end{split}
\end{align}

We first consider a special (sub)case:

\textbf{Case 2a:} Suppose in addition that $\cV(G)\ints\del\cA_\Sep(V)\neq \emptyset$, by the discussions in Example \ref{ex:triangle} (item (2) in \textbf{Useful claims}), we have $|\cV(G)\ints\del\cA_\Sep(V)|=1$. Let $\typemark{F_{p_l}}{\{p_l\}}{V}$ be the unique element in $\cV(G)\ints\del\cA_\Sep(V)$. In particular, $F_{p_0},F_{p_1},F_{p_2}$ are pairwise distinct and
\begin{align}\label{bchamber closed prep-3a: vertices of G}
\cV(G)=\{\typemark{F_{p_l}}{\{p_0\}}{V},\typemark{F_{p_l}}{\{p_1\}}{V},\typemark{F_{p_l}}{\{p_2\}}{V}\}.
\end{align}
Let $p_3:=p_0$ and $p_4:=p_1$. Then we have
\begin{align}\label{eqn:case2a sing set}
\Sing(G)=V=\{p_0,p_1,p_2\} \mathrm{~and~} P_j^G=(F_{p_l},\{\{p_{j+1}\},\{p_{j+2}\}\})\mathrm{~for~any~} j\in\{0,1,2\}.
\end{align}
Moreover, $P_l^G\in\res_{V_l}^V(\cV(\bG_{V;V_l}))\setminus\del\cA_\Sep(V_l)$. By the third assertion in Lemma \ref{easy properties of enrich} and the third assertion in Lemma \ref{properties of W-face}, $P_l^G$ is contained in exactly 2 $(V_l;V)$-enriched MCS of $\bG_{V;V_l}$, denoted as $\Enrich_{V_l}^{V}(G_1\ints\bG_{V;V_l})$ and $\Enrich_{V_l}^{V}(G_2\ints\bG_{V;V_l})$ for some distinct $G_1,G_2\in\MCS(\bG_{V})$. In particular, by Lemma \ref{reinterpretation of edges},  Lemma \ref{unique shortest paths} and the first assertion in Lemma \ref{easy properties of enrich}, we have $(\res_{V_l}^V)^{-1}(P_l^G)\ints\cV(G_t)\neq \emptyset$ for any $t\in\{1,2\}$. Since by \eqref{bchamber closed prep-3a: vertices of G}, $\subord_{V_l}^V(G\ints\bG_{V;V_l})=\emptyset$ and $\Enrich_{V_l}^V(G\ints\bG_{V;V_l})=\emptyset$. Hence $G\not\in\{ G_1,G_2\}$. (See Figure \ref{fig:(2,6)-1}.) Recall that $P_l^G\not\in\del\cA_\Sep(V_l)$ and $F_{p_l}\neq F_{p_i}$ for any $i\in\{0,1,2\}\setminus\{l\}$. By \eqref{bchamber closed prep-3a: vertices of G} again,
\begin{figure}[h]
	\centering
	\includegraphics[scale=0.9]{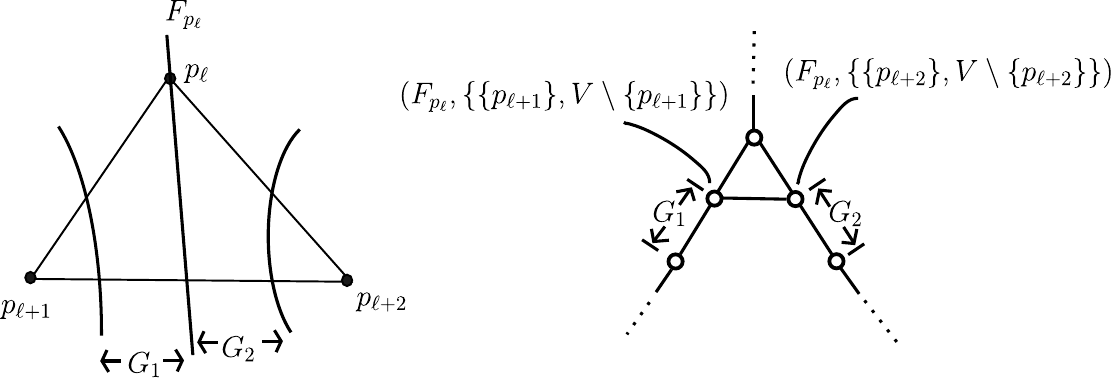}
	\caption{\label{fig:(2,6)-1}}
\end{figure}
\begin{align}\label{bchamber closed prep-3a: vertices of l-face}
\begin{split}
(\res_{V_l}^V)^{-1}(P_l^G)=&\{\typemark{F_{p_l}}{\{p_{l+1}\}}{V},\typemark{F_{p_l}}{\{p_{l+2}\}}{V}\}\\
=&\cV(G)\ints\cV(\bG_{V;V_l})=\cV(G)\setminus\del\cA_\Sep(V).
\end{split}
\end{align}
Therefore $\cV(G)\ints\cV(G_1)\neq\emptyset$ and $\cV(G)\ints\cV(G_2)\neq \emptyset$. By the second assertion in Proposition \ref{key prop of ASep graph}, we can assume WLOG that
$$\cV(G)\ints\cV(G_1)=\{Q_{l+1}:=\typemark{F_{p_l}}{\{p_{l+1}\}}{V}\}$$
and
$$\cV(G)\ints\cV(G_2)=\{Q_{l+2}:=\typemark{F_{p_l}}{\{p_{l+2}\}}{V}\}.$$
By the first assertion in Lemma \ref{reinterpretation of edges}, we have $\mathrm{Sing}(G_t)=\emptyset$ for any $t\in\{1,2\}$. Hence, we have
$$G_{Q_{l+1};l}^{\mathrm{op}}=(G_1)_{Q_{l+1};l}\mathrm{~and~}G_{Q_{l+2};l}^{\mathrm{op}}=(G_2)_{Q_{l+2};l}.$$
(See the notations in the statement of Proposition \ref{full constructions} and the \hyperlink{GQj}{definitions} of $G_{Q;j}$ and $G_{Q;j}^{\mathrm{op}}$.) For simplicity, we let $Q_4:=Q_1$ and $Q_5:=Q_2$. Recall that $F_{p_0}, F_{p_1}, F_{p_2}$ are distinct elements. Hence by \eqref{0D floor-1} (used in the seventh equality), \eqref{0D floor-2} (used in the seventh equality), \eqref{bchamber closed prep-3a: vertices of G} (used in the fifth equality), \eqref{eqn:case2a sing set} (used in the first equality and the sixth equality), \eqref{bchamber closed prep-3a: vertices of l-face} (used in the second equality), the second assertion in Proposition \ref{baby constructions} (used in the fourth equality) and the above (used in the third equality), we have
\begin{align}\label{bchamber closed prep-3a}
&\sum_{\substack{j:0\leq j\leq 2\\{p_j}\in\Sing(G)}}\sum_{\substack{Q:Q\in\cV(G)\ints\cV(\bG_{V;V_j})\\ Q\not\in\del\cA_\Sep(V)\\ \res_{V_j}^V(Q)=P_j^G }}(-1)^j\Floor_{\Delta_j,G_{Q;j}^{\mathrm{op}}}(P_j^G)\nonumber\\
=&\sum_{\substack{j:0\leq j\leq 2\\j\neq l}}\sum_{\substack{Q:Q\in\cV(G)\ints\cV(\bG_{V;V_j})\\ Q\not\in\del\cA_\Sep(V)\\ \res_{V_j}^V(Q)=P_j^G }}(-1)^j\Floor_{\Delta_j,G_{Q;j}^{\mathrm{op}}}(P_j^G)\nonumber\\
&+\sum_{\substack{Q:Q\in\cV(G)\ints\cV(\bG_{V;V_l})\\ Q\not\in\del\cA_\Sep(V)\\ \res_{V_l}^V(Q)=P_l^G }}(-1)^l\Floor_{\Delta_l,G_{Q;l}^{\mathrm{op}}}(P_l^G)  \nonumber\\
=&\sum_{\substack{j:0\leq j\leq 2\\j\neq l}}\sum_{\substack{Q:Q\in\cV(G)\ints\cV(\bG_{V;V_j})\\ Q\not\in\del\cA_\Sep(V)\\ \res_{V_j}^V(Q)=P_j^G }}(-1)^j\Floor_{\Delta_j,G_{Q;j}^{\mathrm{op}}}(P_j^G)\nonumber\\
&+(-1)^l(\Floor_{\Delta_l,G_{Q_{l+1};l}^{\mathrm{op}}}(P_l^G) +\Floor_{\Delta_l,G_{Q_{l+2};l}^{\mathrm{op}}}(P_l^G))\nonumber \\
=&\sum_{\substack{j:0\leq j\leq 2\\j\neq l}}\sum_{\substack{Q:Q\in\cV(G)\ints\cV(\bG_{V;V_j})\\ Q\not\in\del\cA_\Sep(V)\\ \res_{V_j}^V(Q)=P_j^G }}(-1)^j\Floor_{\Delta_j,G_{Q;j}^{\mathrm{op}}}(P_j^G)\nonumber\\
&+(-1)^l(\Floor_{\Delta_l,(G_1)_{Q_{l+1};l}}(P_l^G) +\Floor_{\Delta_l,(G_2)_{Q_{l+2};l}}(P_l^G))\nonumber \\
=&\sum_{\substack{0\leq j\leq 2\\j\neq l}}\sum_{\substack{Q\in\cV(G)\ints\cV(\bG_{V;V_j})\\ Q\not\in\del\cA_\Sep(V)\\ \res_{V_j}^V(Q)=P_j^G }}(-1)^j\Floor_{\Delta_j,G_{Q;j}^{\mathrm{op}}}(P_j^G)\nonumber\\
=&\begin{cases}
\displaystyle (-1)^1\Floor_{\Delta_1,G_{Q_2;1}^{\mathrm{op}}}(P_1^G)+(-1)^2\Floor_{\Delta_2,G_{Q_1;2}^{\mathrm{op}}}(P_2^G),~&\mathrm{if}~l=0, \\
\displaystyle (-1)^0\Floor_{\Delta_0,G_{Q_2;0}^{\mathrm{op}}}(P_0^G)+(-1)^2\Floor_{\Delta_2,G_{Q_0;2}^{\mathrm{op}}}(P_2^G),~&\mathrm{if}~l=1,\\
\displaystyle (-1)^0\Floor_{\Delta_0,G_{Q_1;0}^{\mathrm{op}}}(P_0^G)+(-1)^1\Floor_{\Delta_1,G_{Q_0;1}^{\mathrm{op}}}(P_1^G),~&\mathrm{if}~l=2
\end{cases}\nonumber\\
=&\begin{cases}
\displaystyle -\Floor_{\Delta_1,G_{Q_2;1}^{\mathrm{op}}}((F_{p_0},\{\{p_0\},\{p_2\}\}))+\Floor_{\Delta_2,G_{Q_1;2}^{\mathrm{op}}}((F_{p_0},\{\{p_0\},\{p_1\}\})),~&\mathrm{if}~l=0, \\
\displaystyle \Floor_{\Delta_0,G_{Q_2;0}^{\mathrm{op}}}((F_{p_1},\{\{p_1\},\{p_2\}\}))+\Floor_{\Delta_2,G_{Q_0;2}^{\mathrm{op}}}((F_{p_1},\{\{p_1\},\{p_0\}\})),~&\mathrm{if}~l=1,\\
\displaystyle \Floor_{\Delta_0,G_{Q_1;0}^{\mathrm{op}}}((F_{p_2},\{\{p_2\},\{p_1\}\}))-\Floor_{\Delta_1,G_{Q_0;1}^{\mathrm{op}}}((F_{p_2},\{\{p_2\},\{p_0\}\})),~&\mathrm{if}~l=2
\end{cases}\nonumber\\
=&\begin{cases}
\displaystyle -(-p_0)+(-p_0)=0,~&\mathrm{if}~l=0, \\
\displaystyle -p_1+p_1=0,~&\mathrm{if}~l=1,\\
\displaystyle p_2-p_2=0,~&\mathrm{if}~l=2.
\end{cases}
\end{align}
Therefore by \eqref{bchamber closed prep-1}, \eqref{bchamber closed prep-2} and \eqref{bchamber closed prep-3a}, we have $\del^X_1\BChamber_\Delta(G)=0$ in this (sub)case.

Now we prove the remaining (sub)case of \textbf{Case 2}.

\textbf{Case 2b:}
Suppose in addition that $\cV(G)\ints\del\cA_\Sep(V)=\emptyset$, by Lemma \ref{singular case} and the fact that $\Sing(G)\neq \emptyset$, there exist some $\hF\in\cF(V)\setminus\{F_{p_0},F_{p_1},F_{p_2}\}$.
\begin{align}\label{bchamber closed prep-3b: vertices of G}
\cV(G)=\{\typemark{\hF}{\{p_0\}}{V}, \typemark{\hF}{\{p_1\}}{V},\typemark{\hF}{\{p_2\}}{V}\}.
\end{align}
For simplicity, we write $Q_j=\typemark{\hF}{\{p_j\}}{V}$ for any $j\in\{0,1,2\}$. Since $\hF\in\Theta(F_{p_i},F_{p_j})\setminus\{F_{p_i},F_{p_j}\}$ for any $i\neq j\in\{0,1,2\}$, we have $F_{p_0},F_{p_1},F_{p_2}$ are distinct element and therefore $\Sing(G)=V$.

Let $p_3:=p_0$, $p_4:=p_1$, $Q_3:=Q_0$ and $Q_4:=Q_1$. Fix any $j\in\{0,1,2\}$. Then by \eqref{bchamber closed prep-3b: vertices of G} and the fact that $\hF\in\cF(V)\setminus\{F_{p_0},F_{p_1},F_{p_2}\}$, we have $P_j^G=(\hF,\{\{p_{j+1}\},\{p_{j+2}\}\})\in\res_{V_j}^V(\cV(\bG_{V;V_j}))\setminus\del\cA_\Sep(V_j)$ and
\begin{align}\label{bchamber closed prep-3b: vertices of j-face}
\begin{split}
(\res_{V_j}^V)^{-1}(P_j^G)=\{Q_{j+1},Q_{j+2}\}=\cV(G)\ints\cV(\bG_{V;V_j})\subset\cV(\bG_{V})\setminus\del\cA_\Sep(V).
\end{split}
\end{align}
By the third assertion in Lemma \ref{easy properties of enrich} and the third assertion in Lemma \ref{properties of W-face}, $P_j^G$ is contained in exactly $2$ $(V_j;V)$-enriched MCS of $\bG_{V;V_j}$, denoted as $\Enrich_{V_j}^V(G_{1;j}\ints\bG_{V;V_j})$ and $\Enrich_{V_j}^V(G_{2;j}\ints\bG_{V;V_j}))$ for some  distinct $G_{1;j}, G_{2;j}\in\MCS(\bG_{V})$. In particular, by Lemma \ref{reinterpretation of edges},  Lemma \ref{unique shortest paths} and the first assertion in Lemma \ref{easy properties of enrich}, we have $(\res_{V_j}^V)^{-1}(P_j^G)\ints\cV(G_{t;j})\neq \emptyset$ for any $t\in\{1,2\}$. Notice that by \eqref{bchamber closed prep-3b: vertices of G}, $\subord_{V_j}^V(G\ints\bG_{V;V_j})=\emptyset$ and hence $\Enrich_{V_j}^V(G\ints\bG_{V;V_j})=\emptyset$. As a direct corollary of this, $G\not\in\{ G_{1;j},G_{2;j}\}$. (See Figure \ref{fig:(2,6)-2}.) By the second assertion in Proposition \ref{key prop of ASep graph} and \eqref{bchamber closed prep-3b: vertices of j-face}, we can assume WLOG that
\begin{figure}[h]
	\centering
	\includegraphics[scale=0.9]{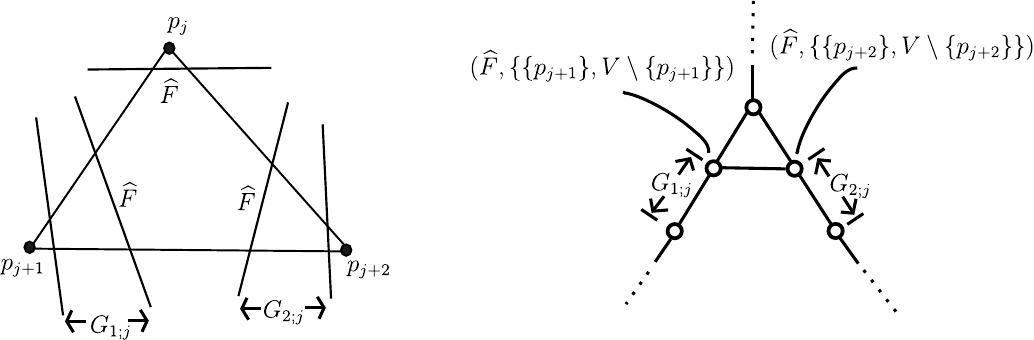}
	\caption{\label{fig:(2,6)-2}}
\end{figure}
$$\cV(G)\ints\cV(G_{1;j})=\{Q_{j+1}\}\mathrm{~and~}\cV(G)\ints\cV(G_{2;j})=\{Q_{j+2}\}.$$
Therefore, we have
$$G_{Q_{j+1};j}^{\mathrm{op}}=(G_{1;j})_{Q_{j+1};j}\mathrm{~and~}G_{Q_{j+2};j}^{\mathrm{op}}=(G_{2;j})_{Q_{j+2};j}.$$
(See the notations in the statement of Proposition \ref{full constructions} and the \hyperlink{GQj}{definitions} of $G_{Q;j}$ and $G_{Q;j}^{\mathrm{op}}$.) Hence by \eqref{bchamber closed prep-3b: vertices of j-face}, the second assertion in Proposition \ref{baby constructions} and the above, we have
\begin{align}\label{bchamber closed prep-3b}
&\sum_{\substack{j:0\leq j\leq 2\\{p_j}\in\Sing(G)}}\sum_{\substack{Q:Q\in\cV(G)\ints\cV(\bG_{V;V_j})\\ Q\not\in\del\cA_\Sep(V)\\ \res_{V_j}^V(Q)=P_j^G }}(-1)^j\Floor_{\Delta_j,G_{Q;j}^{\mathrm{op}}}(P_j^G) \nonumber\\
=&\sum_{j=0}^2(-1)^j(\Floor_{\Delta_j,G_{Q_{j+1};j}^{\mathrm{op}}}(P_j^G) +\Floor_{\Delta_j,G_{Q_{j+2};j}^{\mathrm{op}}}(P_j^G) )\nonumber\\
=&\sum_{j=0}^2(-1)^j(\Floor_{\Delta_j,(G_{1;j})_{Q_{j+1};j}}(P_j^G) +\Floor_{\Delta_j,(G_{2;j})_{Q_{j+2};j}}(P_j^G) )=0.
\end{align}
Therefore by \eqref{bchamber closed prep-1}, \eqref{bchamber closed prep-2} and \eqref{bchamber closed prep-3b}, we have $\del^X_1\BChamber_\Delta(G)=0$ in this (sub)case. This finishes the proof of $\BChamber_\Delta(G)$ for any $G\in\MCS(\bG_V)$ and hence the proof of the sixth assertion in Proposition \ref{full constructions} when $k=2$.

\item[\hypertarget{FP-7}{(7).}] {When $k=2$, we adopt the same notations as in Example \ref{ex:triangle}. Then by the discussions in Example \ref{ex:triangle}, for any $j\in\{0,1,2\}$, any $0\leq t\leq m_{j+1}-1$, any $0\leq s\leq m_{j+2}-1$, any $Q\in\cV(G_{j+1;t})$ and any $Q'\in\cV(G_{j+2;s})$, we have $(G_{j+1;t})_{Q;j}=G^{(j)}_{j+1;t}$ and $(G_{j+2;s})_{Q';j}=G^{(j)}_{j+2;s}$.}

By Proposition \ref{prop of beta'} (used in the eleventh (in)equality), the ninth assertion in Proposition \ref{baby constructions} (used in the third (in)equality), \eqref{beta' seg endpts} (used in the eighth (in)equality), \eqref{bfloor} (used in the second (in)equality), \eqref{0D floor-1} (used in the sixth (in)equality), \eqref{0D floor-2} (used in the sixth (in)equality) and \eqref{eqn:Fj(j+1,j+2)} (used in the tenth (in)equality), we have
\begin{align}\label{(7,2)}
&\sum_{\substack{Q,G:Q\in\cV(G)\\G\in\MCS(\bG_V)}}|\BFloor_{\Delta,G}(Q)|_{l^1} \nonumber\\
=&\sum_{j=0}^2|\BFloor_{\Delta,G_{\mathrm{center}}}(Q_{j;m_j})|_{l^1}+\sum_{j=0}^2\sum_{t=0}^{m_j-1}\left(|\BFloor_{\Delta,G_{j;t}}(Q_{j;t})|_{l^1}+|\BFloor_{\Delta,G_{j;t}}(Q_{j;t+1})|_{l^1}\right)\nonumber\\
\leq&\sum_{\substack{j:0\leq j\leq 2\\Q_{j;m_j}\not\in\del\cA_\Sep(V)}}\sum_{\substack{l:0\leq l\leq 2\\\cV(\bG_{V;V_l})\ni Q_{j;m_j}\\{p_l}\not\in\Sing(G)}}\left|\Floor_{\Delta_l, (G_{\mathrm{center}})_{Q_{j;m_j};l}}(P^{(l)}_{j;m_j})\right|_{l_1}\nonumber\\
&+\sum_{\substack{j:0\leq j\leq 2\\Q_{j;m_j}\not\in\del\cA_\Sep(V)}}\sum_{\substack{l:0\leq l\leq 2\\ \cV(\bG_{V;V_l})\ni Q_{j;m_j} \\{p_l}\in\Sing(G)}}\left|\Floor_{\Delta_l,(G_{\mathrm{center}})_{Q_{j;m_j};l}^{\mathrm{op}}}(P^{(l)}_{j;m_j})\right|_{l^1} \nonumber \\
&+\sum_{j=0}^2\sum_{t=0}^{m_j-1}\left|\sum_{\substack{l:0\leq l\leq 2\\l\neq j}}(-1)^l\Floor_{\Delta_l,(G_{j;t})_{Q_{j;t};l}}(P^{(l)}_{j;t})\right|_{l^1}\nonumber\\
&+\sum_{j=0}^2\sum_{t=0}^{m_j-1}\left|\sum_{\substack{l:0\leq l\leq 2\\l\neq j}}(-1)^l\Floor_{\Delta_l,(G_{j;t})_{Q_{j;t+1};l}}(P^{(l)}_{j;t+1})\right|_{l^1} \nonumber\\
\leq&3\times 3\times 1+3\times 3\times 1+\sum_{j=0}^2\sum_{t=0}^{m_j-1}\left|\sum_{\substack{0\leq l\leq 2\\l\neq j}}(-1)^l\Floor_{\Delta_l,(G_{j;t})_{Q_{j;t};l}}(P^{(l)}_{j;t})\right|_{l^1}\nonumber \\
&+\sum_{j=0}^2\sum_{t=0}^{m_j-1}\left|\sum_{\substack{l:0\leq l\leq 2\\l\neq j}}(-1)^l\Floor_{\Delta_l,(G_{j;t})_{Q_{j;t+1};l}}(P^{(l)}_{j;t+1})\right|_{l^1} \nonumber\\
=&18+\sum_{j=0}^2\sum_{t=0}^{m_j-1}\left|\sum_{\substack{l:0\leq l\leq 2\\l\neq j}}(-1)^l\Floor_{\Delta_l,G_{j;t}^{(l)}}(P^{(l)}_{j;t})\right|_{l^1}+\sum_{j=0}^2\sum_{t=0}^{m_j-1}\left|\sum_{\substack{l:0\leq l\leq 2\\l\neq j}}(-1)^l\Floor_{\Delta_l,G_{j;t}^{(l)}}(P^{(l)}_{j;t+1})\right|_{l^1} \nonumber\\
=&18+\sum_{t=0}^{m_0-1}\left|(-1)^1\Floor_{\Delta_1,G_{0;t}^{(1)}}(P^{(1)}_{0;t})+(-1)^2\Floor_{\Delta_2,G_{0;t}^{(2)}}(P^{(2)}_{0;t})\right|_{l^1} \nonumber\\
&+\sum_{t=0}^{m_1-1}\left|(-1)^0\Floor_{\Delta_0,G_{1;t}^{(0)}}(P^{(0)}_{1;t})+(-1)^2\Floor_{\Delta_2,G_{1;t}^{(2)}}(P^{(2)}_{1;t})\right|_{l^1} \nonumber\\
&+\sum_{t=0}^{m_2-1}\left|(-1)^0\Floor_{\Delta_0,G_{2;t}^{(0)}}(P^{(0)}_{2;t})+(-1)^1\Floor_{\Delta_1,G_{2;t}^{(1)}}(P^{(1)}_{2;t})\right|_{l^1} \nonumber\\
&+\sum_{t=0}^{m_0-1}\left|(-1)^1\Floor_{\Delta_1,G_{0;t}^{(1)}}(P^{(1)}_{0;t+1})+(-1)^2\Floor_{\Delta_2,G_{0;t}^{(2)}}(P^{(2)}_{0;t+1})\right|_{l^1} \nonumber\\
&+\sum_{t=0}^{m_1-1}\left|(-1)^0\Floor_{\Delta_0,G_{1;t}^{(0)}}(P^{(0)}_{1;t+1})+(-1)^2\Floor_{\Delta_2,G_{1;t}^{(2)}}(P^{(2)}_{1;t+1})\right|_{l^1} \nonumber\\
&+\sum_{t=0}^{m_2-1}\left|(-1)^0\Floor_{\Delta_0,G_{2;t}^{(0)}}(P^{(0)}_{2;t+1})+(-1)^1\Floor_{\Delta_1,G_{2;t}^{(1)}}(P^{(1)}_{2;t+1})\right|_{l^1} \nonumber\\
=&18+\frac{1}{2}\sum_{t=0}^{m_0-1}\left|-(-\cI'_{F_{0;t}}[p_0,p_2]-\cI'_{F_{0;t}}[p_2,p_0])+(-\cI'_{F_{0;t}}[p_0,p_1]-\cI'_{F_{0;t}}[p_1,p_0])\right|_{l^1} \nonumber\\
&+\frac{1}{2}\sum_{t=0}^{m_1-1}\left|(-\cI'_{F_{1;t}}[p_1,p_2]-\cI'_{F_{1;t}}[p_2,p_1])+(\cI'_{F_{1;t}}[p_0,p_1]+\cI'_{F_{1;t}}[p_1,p_0])\right|_{l^1} \nonumber\\
&+\frac{1}{2}\sum_{t=0}^{m_2-1}\left|(\cI'_{F_{2;t}}[p_1,p_2]+\cI'_{F_{2;t}}[p_2,p_1])-(\cI'_{F_{2;t}}[p_0,p_2]+\cI'_{F_{2;t}}[p_2,p_0])\right|_{l^1} \nonumber\\
&+\frac{1}{2}\sum_{t=0}^{m_0-1}\left|-(\cI'_{F_{0;t+1}}[p_0,p_2]+\cI'_{F_{0;t+1}}[p_2,p_0])+(\cI'_{F_{0;t+1}}[p_0,p_1]+\cI'_{F_{0;t+1}}[p_1,p_0])\right|_{l^1} \nonumber\\
&+\frac{1}{2}\sum_{t=0}^{m_1-1}\left|(\cI'_{F_{1;t+1}}[p_1,p_2]+\cI'_{F_{1;t+1}}[p_2,p_1])+(-\cI'_{F_{1;t+1}}[p_0,p_1]-\cI'_{F_{1;t+1}}[p_1,p_0])\right|_{l^1} \nonumber\\
&+\frac{1}{2}\sum_{t=0}^{m_2-1}\left|(-\cI'_{F_{2;t+1}}[p_1,p_2]-\cI'_{F_{2;t+1}}[p_2,p_1])-(-\cI'_{F_{2;t+1}}[p_0,p_2]-\cI'_{F_{2;t+1}}[p_2,p_0])\right|_{l^1} \nonumber\\
\leq&18+\frac{1}{2}\sum_{t=0}^{m_0-1}\left(\left|\cI'_{F_{0;t}}[p_0,p_2]-\cI'_{F_{0;t}}[p_0,p_1]\right|_{l^1}+\left|\cI'_{F_{0;t}}[p_2,p_0]-\cI'_{F_{0;t}}[p_1,p_0]\right|_{l^1}\right) \nonumber\\
&+\frac{1}{2}\sum_{t=0}^{m_1-1}\left(\left|\cI'_{F_{1;t}}[p_1,p_0]-\cI'_{F_{1;t}}[p_1,p_2]\right|_{l^1}+\left|\cI'_{F_{1;t}}[p_0,p_1]-\cI'_{F_{1;t}}[p_2,p_1]\right|_{l^1}\right) \nonumber\\
&+\frac{1}{2}\sum_{t=0}^{m_2-1}\left(\left|\cI'_{F_{2;t}}[p_2,p_0]-\cI'_{F_{2;t}}[p_2,p_1]\right|_{l^1}+\left|\cI'_{F_{2;t}}[p_0,p_2]-\cI'_{F_{2;t}}[p_1,p_2]\right|_{l^1}\right) \nonumber\\
&+\frac{1}{2}\sum_{t=0}^{m_0-1}\left(\left|\cI'_{F_{0;t+1}}[p_0,p_2]-\cI'_{F_{0;t+1}}[p_0,p_1]\right|_{l^1}+\left|\cI'_{F_{0;t+1}}[p_2,p_0]-\cI'_{F_{0;t+1}}[p_1,p_0]\right|_{l^1}\right) \nonumber\\
&+\frac{1}{2}\sum_{t=0}^{m_1-1}\left(\left|\cI'_{F_{1;t+1}}[p_1,p_0]-\cI'_{F_{1;t+1}}[p_1,p_2]\right|_{l^1}+\left|\cI'_{F_{1;t+1}}[p_0,p_1]-\cI'_{F_{1;t+1}}[p_2,p_1]\right|_{l^1}\right) \nonumber\\
&+\frac{1}{2}\sum_{t=0}^{m_2-1}\left(\left|\cI'_{F_{2;t+1}}[p_2,p_0]-\cI'_{F_{2;t+1}}[p_2,p_1]\right|_{l^1}+\left|\cI'_{F_{2;t+1}}[p_0,p_2]-\cI'_{F_{2;t+1}}[p_1,p_2]\right|_{l^1}\right) \nonumber\\
=&18+\frac{1}{2}\sum_{t=1}^{m_0-1}\left(\left|\cI'_{F_{0;t}}[p_0,p_2]-\cI'_{F_{0;t}}[p_0,p_1]\right|_{l^1}+\left|\cI'_{F_{0;t}}[p_2,p_0]-\cI'_{F_{0;t}}[p_1,p_0]\right|_{l^1}\right) \nonumber\\
&+\frac{1}{2}\sum_{t=1}^{m_1-1}\left(\left|\cI'_{F_{1;t}}[p_1,p_0]-\cI'_{F_{1;t}}[p_1,p_2]\right|_{l^1}+\left|\cI'_{F_{1;t}}[p_0,p_1]-\cI'_{F_{1;t}}[p_2,p_1]\right|_{l^1}\right) \nonumber\\
&+\frac{1}{2}\sum_{t=1}^{m_2-1}\left(\left|\cI'_{F_{2;t}}[p_2,p_0]-\cI'_{F_{2;t}}[p_2,p_1]\right|_{l^1}+\left|\cI'_{F_{2;t}}[p_0,p_2]-\cI'_{F_{2;t}}[p_1,p_2]\right|_{l^1}\right) \nonumber\\
&+\frac{1}{2}\sum_{t=1}^{m_0}\left(\left|\cI'_{F_{0;t}}[p_0,p_2]-\cI'_{F_{0;t}}[p_0,p_1]\right|_{l^1}+\left|\cI'_{F_{0;t}}[p_2,p_0]-\cI'_{F_{0;t}}[p_1,p_0]\right|_{l^1}\right) \nonumber\\
&+\frac{1}{2}\sum_{t=1}^{m_1}\left(\left|\cI'_{F_{1;t}}[p_1,p_0]-\cI'_{F_{1;t}}[p_1,p_2]\right|_{l^1}+\left|\cI'_{F_{1;t}}[p_0,p_1]-\cI'_{F_{1;t}}[p_2,p_1]\right|_{l^1}\right) \nonumber\\
&+\frac{1}{2}\sum_{t=1}^{m_2}\left(\left|\cI'_{F_{2;t}}[p_2,p_0]-\cI'_{F_{2;t}}[p_2,p_1]\right|_{l^1}+\left|\cI'_{F_{2;t}}[p_0,p_2]-\cI'_{F_{2;t}}[p_1,p_2]\right|_{l^1}\right) \nonumber\\
\leq&18+\sum_{t=1}^{m_0}\left(\left|\cI'_{F_{0;t}}[p_0,p_2]-\cI'_{F_{0;t}}[p_0,p_1]\right|_{l^1}+\left|\cI'_{F_{0;t}}[p_2,p_0]-\cI'_{F_{0;t}}[p_1,p_0]\right|_{l^1}\right) \nonumber\\
&+\sum_{t=1}^{m_1}\left(\left|\cI'_{F_{1;t}}[p_1,p_0]-\cI'_{F_{1;t}}[p_1,p_2]\right|_{l^1}+\left|\cI'_{F_{1;t}}[p_0,p_1]-\cI'_{F_{1;t}}[p_2,p_1]\right|_{l^1}\right) \nonumber\\
&+\sum_{t=1}^{m_2}\left(\left|\cI'_{F_{2;t}}[p_2,p_0]-\cI'_{F_{2;t}}[p_2,p_1]\right|_{l^1}+\left|\cI'_{F_{2;t}}[p_0,p_2]-\cI'_{F_{2;t}}[p_1,p_2]\right|_{l^1}\right) \nonumber\\
=&18+\sum_{\hF\in\cF_{p_0}(p_1,p_2)\setminus\{F_{p_0}\}}\left(\left|\cI'_{\hF}[p_0,p_2]-\cI'_{\hF}[p_0,p_1]\right|_{l^1}+\left|\cI'_{\hF}[p_2,p_0]-\cI'_{\hF}[p_1,p_0]\right|_{l^1}\right) \nonumber\\
&+\sum_{\hF\in\cF_{p_1}(p_0,p_2)\setminus\{F_{p_1}\}}\left(\left|\cI'_{\hF}[p_1,p_0]-\cI'_{\hF}[p_1,p_2]\right|_{l^1}+\left|\cI'_{\hF}[p_0,p_1]-\cI'_{\hF}[p_2,p_1]\right|_{l^1}\right) \nonumber\\
&+\sum_{\hF\in\cF_{p_2}(p_0,p_1)\setminus\{F_{p_2}\}}\left(\left|\cI'_{\hF}[p_2,p_0]-\cI'_{\hF}[p_2,p_1]\right|_{l^1}+\left|\cI'_{\hF}[p_0,p_2]-\cI'_{\hF}[p_1,p_2]\right|_{l^1}\right) \nonumber\\
\leq&18+6\cC_4.
\end{align}
Choose $\cC_6(2)=18+6\cC_4$ and the assertion follows from \eqref{floor}, \eqref{(7,2)} and the fourth remark after {Definition \ref{FBCone}}.

\item[(8).] \textbf{The proof of this assertion works for any $k\geq 2$, assuming that for any $(k',j')<(k,8)$, the $j'$-th assertion in Proposition \ref{full constructions} for the case $k'$ holds (although the arguments still work under weaker assumptions).}

The proof of this assertion is very similar to the proof of the fifth assertion in Proposition \ref{full constructions}. By the definition of $\Delta'$, we can easily see that $V(\Delta')=V$. It suffices to prove this assertion for any permutation $\tau$ of $\{0,...,k\}$ satisfying the following: there exist $0\leq j_1<j_2\leq k$ such that
$$\tau(i)=\begin{cases}
\displaystyle i,~&\mathrm{if}~ i\not\in\{ j_1,j_2\}, \\
\displaystyle j_2,~&\mathrm{if}~i=j_1,\\
\displaystyle j_1,~&\mathrm{if}~i=j_2
\end{cases}\mathrm{~and~hence~}
V(\Delta'_i)=\begin{cases}
\displaystyle V_i,~&\mathrm{if}~ i\not\in\{ j_1,j_2\}, \\
\displaystyle V_{j_2},~&\mathrm{if}~i=j_1,\\
\displaystyle V_{j_1},~&\mathrm{if}~i=j_2.
\end{cases}$$
In particular, $(-1)^{\mathrm{sgn}(\tau)}=-1$. By the fifth assertion in Proposition \ref{baby constructions} or the eighth assertion in Proposition \ref{full constructions} in the case of $k-1$, we have
\begin{align}\label{(8,k-1)}
&\sum_{\substack{G_{\tau(i)}:G\ints \bG_{V(\Delta');V(\Delta'_{\tau(i)})}\neq\emptyset\\G_{\tau(i)}\in\subord_{V(\Delta'_{\tau(i)})}^{V(\Delta')}(G\ints\bG_{V(\Delta');V(\Delta'_{\tau(i)})})}}\Chamber_{\Delta'_i}(G_{\tau(i)}) \nonumber\\
=&\begin{cases}
\displaystyle -\sum_{\substack{G_i:G\ints \bG_{V;V_{i}}\neq\emptyset\\G_i\in\subord_{V_i}^V(G\ints\bG_{V;V_i})}}\Chamber_{\Delta_i}(G_i), ~&\mathrm{if}~i\neq j_1,j_2, \\
\displaystyle (-1)^{j_2-j_1-1}\sum_{\substack{G_{j_2}:G\ints \bG_{V;V_{j_2}}\neq\emptyset\\G_{j_2}\in\subord_{V_{j_2}}^V(G\ints\bG_{V;V_{j_2}})}}\Chamber_{\Delta_{j_2}}(G_{j_2}), ~&\mathrm{if}~i=j_1,\\
\displaystyle (-1)^{j_1-j_2-1}\sum_{\substack{G_{j_1}:G\ints \bG_{V;V_{j_1}}\neq\emptyset\\G_{j_1}\in\subord_{V_{j_1}}^V(G\ints\bG_{V;V_{j_1}})}}\Chamber_{\Delta_{j_1}}(G_{j_1}), ~&\mathrm{if}~i=j_2.
\end{cases}
\end{align}
Apply the fifth assertion in Proposition \ref{full constructions} in the case of $k$ and \eqref{(8,k-1)} to \eqref{BChamber}, we have $\BChamber_{\Delta'}(G)=-\BChamber_{\Delta}(G)$. The rest of this assertion follows directly from the above and the third remark after {Definition \ref{FBCone}}.
\item[(9).] \textbf{The proof of this assertion works for any $k\geq 2$, assuming that for any $(k',j')<(k,9)$, the $j'$-th assertion in Proposition \ref{full constructions} for the case $k'$ holds (although the arguments still work under weaker assumptions).}

By Lemma \ref{unique shortest paths} and Definition \ref{subordinate}, for any $0\leq j\leq k$, any $G\in\MCS(\bG_V)$ with $G\ints\bG_{V;V_j}\neq \emptyset$ and any $G_j\in\subord_{V_j}^V(G\ints\bG_{V;V_j})$, we have $\cF_{V_j}(G_j)\subset\cF_V(G)$. Hence by the sixth assertion in Proposition \ref{baby constructions} or the ninth assertion in Proposition \ref{full constructions} in the case of $k-1$, we have
\begin{align}\label{(k,9)-1}
\vertsupp(\Chamber_{\Delta_j}(G_j))\subset\{x\in\Gamma x_0|F_x\in\cF_{V_j}(G_j)\}\subset\{x\in\Gamma x_0|F_x\in\cF_V(G)\}.
\end{align}
By the fourth assertion in Proposition \ref{full constructions} in the case of $k$, for any $Q\in\cV(G)$, we have
\begin{align}\label{(k,9)-2}
\vertsupp(\Floor_{\Delta,G}(Q))\subset\{x\in\Gamma x_0|F_x=F_Q\}\subset\{x\in\Gamma x_0|F_x\in\cF_V(G)\}.
\end{align}
Therefore, the fact that
$$\vertsupp(\BChamber_\Delta(G))\subset\{x\in\Gamma x_0|F_x\in\cF_V(G)\}$$
follows directly from \eqref{BChamber}, \eqref{(k,9)-1} and \eqref{(k,9)-2}. The fact that
$$\vertsupp(\Chamber_\Delta(G))\subset\{x\in\Gamma x_0|F_x\in\cF_V(G)\}$$
follows directly from \eqref{Chamber} and the fifth remark after Definition \ref{FBCone}.

Notice that for any distinct $Q, Q'\in\cV(G)$, by the first assertion in Lemma \ref{reinterpretation of edges}, we have $\Theta(F_Q,F_{Q'})\ints\cA_0(V)\subset\{F_Q,F_{Q'}\}$. Hence $\cF_V(G)\ints\cA_0(V)\subset\{F_Q|Q\in\cV(G)\}$. Notice that $\cF_V(G)\subset\cF(V)$, by the fifth assertion in Corollary \ref{remaining properties of sep graph} and Lemma \ref{almost all good}, we can choose
$$\cC_7(k):=k+1+3\cdot\comb{k+1}{3}\cdot\comb{k+1}{2}$$
and hence
$$|\cF_V(G)|\leq |V|+|\cF(V)\setminus\cA_0(V)|\leq \cC_7(k).$$

\item[(10).] Based on \eqref{BChamber} and the seventh assertion in Proposition \ref{full constructions} when $k=2$, we have
\begin{align}\label{(10,2)-1}
&\sum_{G:G\in\MCS(\bG_V)}|\BChamber_\Delta(G)|_{l^1}\nonumber\\
\leq &\sum_{\substack{G,Q:G\in\MCS(\bG_V)\\Q\in\cV(G)}}|\Floor_{\Delta,G}(Q)|_{l^1}+\sum_{G:G\in\MCS(\bG_V)}\left|\sum_{\substack{j,G_j:0\leq j\leq k\\ G\ints\bG_{V;V_j}\neq\emptyset \\ G_j\in\subord_{V_j}^V(G\ints\bG_{V;V_j})}}(-1)^j\Chamber_{\Delta_j}(G_j)\right|_{l^1}\nonumber\\
\leq &\cC_6(2)+\sum_{G:G\in\MCS(\bG_V)}\left|\sum_{\substack{j,G_j:0\leq j\leq 2\\ G\ints\bG_{V;V_j}\neq\emptyset \\ G_j\in\subord_{V_j}^V(G\ints\bG_{V;V_j})}}(-1)^j\Chamber_{\Delta_j}(G_j)\right|_{l^1}.
\end{align}
By the discussions in Example \ref{ex:triangle}, the following holds:
\begin{enumerate}
\item For any distinct $G,G'\in\MCS(\bG_V)$, we have $|\cF_V(G)\ints\cF_V(G')|\leq 1$.
\item By \eqref{1D chamber special}, \eqref{1D chamber}, \eqref{beta' seg} and \eqref{l1 norms of beta' seg}, for any $j\in\{0,1,2\}$, any $G\in\MCS(\bG_V)$ such that $G\cap\bG_{V;V_j}\neq \emptyset$, and any $G_j\in\subord^V_{V_j}(G\cap\bG_{V;V_j})$, we have $\Chamber_{\Delta_j}(G_{j})$ is a linear combination of geodesic bicombing of the form $\ovec{[x,y]}$ with $x,y\in\Gamma x_0$ and $F_x,F_y\in\cF_{V_j}(G_j)\subset\cF_{V}(G)$. Moreover, if $G\neq G_{\mathrm{center}}$, then $F_x\neq F_y$ in the above discussion.
\end{enumerate}
It follows from the remark after Definition \ref{dfn:geo.bicomb} (used in the first equality), the third assertion in Lemma \ref{properties of W-face} (used in the third equality), Definition \ref{subordinate} (used in the third equality), the first assertion in Proposition \ref{baby constructions} (used in the fourth equality), Corollary \ref{bicombing finished} (used in the last inequality) and the above discussions (used in the first equality) that
\begin{align}\label{(10,2)-2}
&\sum_{G:G\in\MCS(\bG_V)}\left|\sum_{\substack{j,G_j:0\leq j\leq 2\\ G\ints\bG_{V;V_j}\neq\emptyset \\ G_j\in\subord_{V_j}^V(G\ints\bG_{V;V_j})}}(-1)^j\Chamber_{\Delta_j}(G_j)\right|_{l^1}\nonumber\\
=&\left|\sum_{G:G\in\MCS(\bG_V)}\sum_{\substack{j,G_j:0\leq j\leq 2\\ G\ints\bG_{V;V_j}\neq\emptyset \\ G_j\in\subord_{V_j}^V(G\ints\bG_{V;V_j})}}(-1)^j\Chamber_{\Delta_j}(G_j)\right|_{l^1} \nonumber\\
=&\left|\sum_{j=0}^2\sum_{G_j:G_j\in\MCS(\bG_{V_j})}\sum_{\substack{G:G\in\MCS(\bG_{V})\\ G\ints\bG_{V;V_j}\neq\emptyset \\ \subord_{V_j}^V(G\ints\bG_{V;V_j})\ni G_j}}(-1)^j\Chamber_{\Delta_j}(G_j)\right|_{l^1} \nonumber\\
=&\left|\sum_{j=0}^2(-1)^j\sum_{G_j:G_j\in\MCS(\bG_{V_j})}\Chamber_{\Delta_j}(G_j)\right|_{l^1} \nonumber\\
=&\left|\sum_{j=0}^2(-1)^j\phi_1(\Delta_j)\right|_{l^1}\nonumber\\
=&\left|\beta[p_1,p_2]-\beta[p_0,p_2]+\beta[p_0,p_1]\right|_{l^1}=\left|\beta[p_1,p_2]+\beta[p_2,p_0]+\beta[p_0,p_1]\right|_{l^1}\leq \cC_5.
\end{align}
Choose $\cC_8(2)=\cC_6(2)+\cC_5$ and the rest of this assertion follows directly from \eqref{(10,2)-1}, \eqref{(10,2)-2} and the fourth remark after {Definition \ref{FBCone}}.

\item[(11).] \textbf{The proof of this assertion works for any $k\geq 2$, assuming that for any $(k',j')<(k,11)$, the $j'$-th assertion in Proposition \ref{full constructions} for the case $k'$ holds (although the arguments still work under weaker assumptions).}

Recall the definitions in \eqref{bfloor}, \eqref{floor}, \eqref{BChamber} and \eqref{Chamber}. This assertion in the case of $k$ then follows directly from the remark after Definition \ref{res of types}, the \hyperlink{GQj}{definitions} of $G_{Q;j;\Delta}$ and $G^{\mathrm{op}}_{Q;j;\Delta}$, the remark after Definition \ref{singular}, the remark after Definition \ref{subordinate}, the seventh remark after Definition \ref{FBCone} and either the seventh assertion in Proposition \ref{baby constructions} (when $k=2$), or the eleventh assertion in Proposition \ref{full constructions} in the case of $k-1$ (when $k\geq 3$).\qedhere
\end{enumerate}
\end{proof}
\subsection{Proof of Proposition \ref{full constructions} when $k\geq 3$}\label{subsec kD case}
This subsection is the continuation of Subsection \ref{subsec 2D case}. As is mentioned before, some assertions in Proposition \ref{full constructions} when $k\geq 3$ are already proved in Subsection \ref{subsec 2D case}. We will indicate those assertions in the following proof.

For any $1\leq j\leq 12$, we will always prove the $j$-th assertion in Proposition \ref{full constructions} for the case $k$ under the assumption that for any $(k',j')<(k,12)$, the $j'$-th assertion in Proposition \ref{full constructions} for the case $k'$ holds. The inductive proof structure also can be found in Subsection \ref{subsec 2D case}.
\begin{proof}[Proof of Proposition \ref{full constructions} when $k\geq 3$]
\begin{enumerate}
\item[(1).] Already proved in Subsection \ref{subsec 2D case}.
\item[(2).] By \eqref{bfloor}, this assertion is clearly true if $Q\in\del\cA_\Sep(V)$ when $k\geq 2$. When $Q\not\in\del\cA_\Sep(V)$, by the second assertion in Corollary \ref{remaining properties of sep graph}, there exist distinct $G, G'\in\MCS(\bG_V)$ such that $Q\in\cV(G)\ints\cV(G')$. Hence by the first assertion in Proposition \ref{full constructions} for the case of $k$, the second assertion in Proposition \ref{key prop of ASep graph} and the first assertion in Lemma \ref{reinterpretation of edges}, we can assume WLOG that
\begin{align}\label{eqn:(k,2)}
F_{Q'}\neq F_Q\text{ for any }Q'\in\cV(G)\setminus\{Q\}.
\end{align}
(This is because $\BFloor_{\Delta,G}(Q)$ is closed if and only if $\BFloor_{\Delta,G'}(Q)$ is closed due to the first assertion in Proposition \ref{full constructions} for the case of $k$. Moreover, if there exist some $Q''\in\cV(G)\setminus\{Q\}$ such that $F_{Q''}=F_Q$, the second assertion in Proposition \ref{key prop of ASep graph} and the first assertion in Lemma \ref{reinterpretation of edges} imply that for any $Q'\in\cV(G')\setminus\{Q\}$, we have $F_{Q'}\neq F_Q$. Then we can prove that $\BFloor_{\Delta,G'}(Q)$ is closed instead.) In particular, by Lemma \ref{singular case}, $\Sing(G)=\emptyset$.

In addition to the above, we have the following claim:
\begin{center}
\emph{For any $0\leq j\leq k$ such that $Q\in\cV(\bG_{V;V_j})$, we have $\Sing(G_{Q;j})=\emptyset$.}
\end{center}
If not, by Lemma \ref{singular case}, there exist some $P\in\cV(G_{Q;j})\setminus\{\res_{V_j}^V(Q)\}$ such that $F_P=F_Q=F_{\res_{V_j}^V(Q)}$. Let $Q=\typemark{F_P}{I}{V}$ and $P=\typemark{F_P}{J}{V_j}$ for some $\emptyset\neq I\subsetneq V$ and $\emptyset\neq J\subsetneq V_j$ such that $I\ints V_j\neq\emptyset$ and $V_j\setminus I\neq \emptyset$. The assumption that $P\in\cV(G_{Q;j})\setminus\{\res_{V_j}^V(Q)\}$ implies that $\rstype{I}{V_j}\neq\stype{J}{V_j}$.  Since $P\in\cA_\PSep(V_j)$ and $Q\in\cV(\bG_{V;V_j})\setminus\del\cA_\Sep(V)\subset\AnPSep(V)$, by Lemma \ref{prim decomp} and the first assertion in Lemma \ref{res of vertices}, we can assume WLOG that $J=H^{V_j}_{F_P}(q)$, $I=H^{V}_{F_P}(q')$ and $J\ints I=\emptyset$ for some $q,q'\in  V_j$. (Here we used the fact that $I\ints V_j=H^{V_j}_{F_P}(q')$ from the first assertion in Lemma \ref{res of vertices} and applied Lemma \ref{prim decomp} to the fact that $P\neq \res_{V_j}^V(Q)$.) Let $I'=H^V_{F_P}(q)$. Recall that $Q\not\in\del\cA_\Sep(V)\implies F_P=F_Q\in\cA_0(V)$, we have $Q':=\typemark{F_P}{I'}{V}\in\AnPSep(V)\subset\cV(\bG_V)$. By the first assertion in Lemma \ref{res of vertices} and Lemma \ref{prim decomp} again, we have $I'\ints V_j=J$ and $I'\ints I=\emptyset$. In particular, $F_Q=F_{Q'}$, $P=\res_{V_j}^V(Q')\neq\res_{V_j}^V(Q)$ and hence $Q'\neq Q$.

Let $G'$ is the unique element in $\MCS(\bG_V)\setminus\{G\}$ such that $Q\in\cV(G')$ (guaranteed by the second assertion in Corollary \ref{remaining properties of sep graph} and the assumption that $Q\not\in\del\cA_\Sep(V)$). By the first assertion in Proposition \ref{key prop of ASep graph}, \eqref{eqn:(k,2)} and the first assertion in Lemma \ref{reinterpretation of edges}, we have $Q'\in\cV(G')$. In particular, $G\cap\bG_{V;V_j}$ and $G'\cap\bG_{V;V_j}$ are distinct MCS in $\bG_{V;V_j}$ due to the fourth assertion in Lemma \ref{properties of W-face}. By Definition \ref{subordinate} and the \hyperlink{GQj}{definitions} of $G_{Q;j}$, this implies that $G_{Q;j}\in\subord_{V_j}^V(G'\cap\bG_{V;V_j})\cap\subord_{V_j}^V(G\cap\bG_{V;V_j})=\emptyset$, which is impossible. (See also the notations in the statement in Proposition \ref{full constructions}.) This proves the claim that $\Sing(G_{Q;j})=\emptyset$ for any $0\leq j\leq k$ such that $Q\in\cV(\bG_{V;V_j})$.

For any distinct $i,j\in\{0,...,k\}$, we denote by $V_{ij}=V_{ji}=V\setminus\{p_i,p_j\}$. Suppose $Q\in\cV(\bG_{V;V_{ij}})\subset\cV(\bG_{V;V_{j}})$, then by the fact that $\Sing(G_{Q;j})=\Sing(G)=\emptyset$, the \hyperlink{GQj}{definition} of $G_{Q;j}$ and Lemma \ref{combinatorics behind d2=0}, we have
$$\left(G_{Q;j}\right)_{\res_{V_j}^V(Q);i;\Delta_j}\in\subord_{V_{ij}}^{V_j}(G_{Q;j}\cap\bG_{V_j;V_{ij}})\subset\subord_{V_{ij}}^V(G\cap\bG_{V;V_{ij}})\neq\emptyset.$$
In particular, $\res_{V_{ij}}^V(Q)=\res_{V_{ij}}^{V_j}(\res_{V_j}^V(Q))$ is a vertex of $\left(G_{Q;j}\right)_{\res_{V_j}^V(Q);i;\Delta_j}$. On the other hand, by \eqref{eqn:MCS=subord} and the second assertion in Lemma \ref{easy properties of enrich}, there exists a unique MCS in $\bG_{V_{ij}}$, denoted as $G_{Q;ij}$, such that $G_{Q;ij}\in\subord_{V_{ij}}^V(G\cap\bG_{V;V_{ij}})$ and $\res_{V_{ij}}^V(Q)\in\cV(G_{Q;ij})$. Therefore
\begin{align}\label{GQij}
G_{Q;ij}=\left(G_{Q;j}\right)_{\res_{V_j}^V(Q);i;\Delta_j}=\left(G_{Q;i}\right)_{\res_{V_i}^V(Q);j;\Delta_i}.
\end{align}
Hence, by the second assertion in Proposition \ref{full constructions} for the case $k-1$ (used in the second equality), \eqref{bfloor} (used in the first and the third equalities) and \eqref{GQij} (used in the fourth equality), we have
\begin{align}\label{(2,k)}
&\del_{k-2}^X\BFloor_{\Delta,G}(Q) \nonumber\\
=&\sum_{\substack{j:0\leq j\leq k\\\cV(\bG_{V;V_j})\ni Q}}(-1)^j\del_{k-2}^X\Floor_{\Delta_j, G_{Q;j}}(\res_{V_j}^{V}(Q)) \nonumber\\
=&\sum_{\substack{j:0\leq j\leq k\\\cV(\bG_{V;V_j})\ni Q}}(-1)^j\BFloor_{\Delta_j, G_{Q;j}}(\res_{V_j}^{V}(Q)) \nonumber\\
=&\sum_{\substack{j:0\leq j\leq k\\\cV(\bG_{V;V_j})\ni Q}}(-1)^j\sum_{\substack{i:0\leq i<j\\ \cV(\bG_{V_j,V_{ij}})\ni\res_{V_{ij}}^{V_j}(Q)}}(-1)^i\Floor_{\Delta_{ij}, (G_{Q;j})_{\res_{V_j}^{V}(Q);i;\Delta_j}}(\res_{V_{ij}}^{V_j}(\res_{V_j}^{V}(Q))) \nonumber\\
&+\sum_{\substack{j:0\leq j\leq k\\\cV(\bG_{V;V_j})\ni Q}}(-1)^j\sum_{\substack{i:j<i\leq k\\ \cV(\bG_{V_j,V_{ij}})\ni\res_{V_{ij}}^{V_j}(Q)}}(-1)^{i-1}\Floor_{\Delta_{ij}, (G_{Q;j})_{\res_{V_j}^{V}(Q);i;\Delta_j}}(\res_{V_{ij}}^{V_j}(\res_{V_j}^{V}(Q))) \nonumber\\
=&\sum_{\substack{i,j:0\leq i<j\leq k\\\cV(\bG_{V;V_j})\ni Q\\\cV(\bG_{V_j,V_{ij}})\ni\res_{V_{j}}^{V}(Q)}}(-1)^{i+j}\Floor_{\Delta_{ij}, G_{Q;ij}}(\res_{V_{ij}}^{V}(Q)) \nonumber\\
&+\sum_{\substack{i,j:0\leq j<i\leq k\\\cV(\bG_{V;V_j})\ni Q\\\cV(\bG_{V_j,V_{ij}})\ni\res_{V_{j}}^{V}(Q)}}(-1)^{i+j-1}\Floor_{\Delta_{ij}, G_{Q;ij}}(\res_{V_{ij}}^{V}(Q)) \nonumber\\
=&\sum_{\substack{i,j:0\leq i<j\leq k\\\cV(\bG_{V;V_{ij}})\ni Q}}(-1)^{i+j}\Floor_{\Delta_{ij}, G_{Q;ij}}(\res_{V_{ij}}^{V}(Q)) \nonumber\\
&+\sum_{\substack{i,j:0\leq j<i\leq k\\\cV(\bG_{V;V_{ij}})\ni Q}}(-1)^{i+j-1}\Floor_{\Delta_{ij}, G_{Q;ij}}(\res_{V_{ij}}^{V}(Q)) =0.
\end{align}
The fact that $\del^X_{k-1}\Floor_{\Delta,G}(Q)=\BFloor_{\Delta,G}(Q)$ follows directly from \eqref{(2,k)} and the sixth remark after {Definition \ref{FBCone}}.

\item[(3).] For any $Q\in\del\cA_\Sep(V)$ and any $0\leq j\leq k$, if $Q\in\cV(\bG_{V;V_j})$, then $\res_{V_j}^V(Q)\in\del\cA_\Sep(V_j)$ (due to the second assertion in Lemma \ref{res of vertices}). Hence by \eqref{bfloor}, {Definition \ref{FBCone}} and the assumption that $k\geq 3$, for any $G_j\in\MCS(\bG_{V_j})$ with $\res_{V_j}^V(Q)\in\cV(G_j)$, $\Floor_{\Delta_j,G_j}(\res_{V_j}^V(Q))=0$. This assertion is a direct corollary of this fact when $k\geq 3$.
\item[(4).] Already proved in Subsection \ref{subsec 2D case}.
\item[(5).] Already proved in Subsection \ref{subsec 2D case}.

\item[(6).] By \eqref{bfloor} (used in the third equality), \eqref{BChamber} (used in the first and the third equalities), the second assertion in Proposition \ref{full constructions} for the case $k$ (used in the second equality), and the sixth assertion in Proposition \ref{full constructions} for the case $k-1$ (used in the second equality), we take the boundary of \eqref{BChamber} and obtain
\begin{align}\label{(6,k)'}
&\del_{k-1}^X\BChamber_\Delta(G)\nonumber\\
=&(-1)^{k-1}\sum_{Q:Q\in\cV(G)}\del^X_{k-1}\Floor_{\Delta,G}(Q)+\sum_{\substack{j,G_j:0\leq j\leq k\\G\ints G_{V;V_j}\neq \emptyset\\ G_j\in\subord_{V_j}^V(G\ints\bG_{V;V_j})}}(-1)^j\del^X_{k-1}\Chamber_{\Delta_j}(G_j) \nonumber\\
=&(-1)^{k-1}\sum_{Q:Q\in\cV(G)}\BFloor_{\Delta,G}(Q)+\sum_{\substack{j,G_j:0\leq j\leq k\\G\ints G_{V;V_j}\neq \emptyset\\ G_j\in\subord_{V_j}^V(G\ints\bG_{V;V_j})}}(-1)^j\BChamber_{\Delta_j}(G_j) \nonumber\\
=&(-1)^{k-1}\sum_{\substack{Q:Q\in\cV(G)\\Q\not\in\del\cA_\Sep(V)}}\sum_{\substack{j:0\leq j\leq k\\ \cV(\bG_{V;V_j})\ni Q\\p_j\not\in\Sing(G)}}(-1)^j\Floor_{\Delta_j,G_{Q;j}}(\res_{V_j}^V(Q)) \\
&-(-1)^{k-1}\sum_{\substack{Q:Q\in\cV(G)\\Q\not\in\del\cA_\Sep(V)}}\sum_{\substack{j:0\leq j\leq k\\ \cV(\bG_{V;V_j})\ni Q\\p_j\in\Sing(G)}}(-1)^j\Floor_{\Delta_j,G_{Q;j}^{\mathrm{op}}}(\res_{V_j}^V(Q)) \nonumber\\
+&\sum_{\substack{j,G_j:0\leq j\leq k\\G\ints \bG_{V;V_j}\neq \emptyset\\ G_j\in\subord_{V_j}^V(G\ints\bG_{V;V_j})}}(-1)^{j+k-2}\sum_{P_j:P_j\in\cV(G_j)}\Floor_{\Delta_j,G_j}(P_j)\nonumber \\
+&\sum_{\substack{j,G_j:0\leq j\leq k\\G\ints \bG_{V;V_j}\neq \emptyset\\ G_j\in\subord_{V_j}^V(G\ints\bG_{V;V_j})}}(-1)^j\sum_{\substack{i,G_{ij}:0\leq i<j\\G_j\ints\bG_{V_j;V_{ij}}\neq \emptyset \\ G_{ij}\in\subord_{V_{ij}}^{V_j}(G_j\ints\bG_{V_j;V_{ij}})}}(-1)^i\Chamber_{\Delta_{ij}}(G_{ij})\nonumber\\
+&\sum_{\substack{j,G_j:0\leq j\leq k\\G\ints \bG_{V;V_j}\neq \emptyset\\ G_j\in\subord_{V_j}^V(G\ints\bG_{V;V_j})}}(-1)^j\sum_{\substack{i,G_{ij}:j<i\leq k\\G_j\ints\bG_{V_j;V_{ij}}\neq \emptyset \\ G_{ij}\in\subord_{V_{ij}}^{V_j}(G_j\ints\bG_{V_j;V_{ij}})}}(-1)^{i-1}\Chamber_{\Delta_{ij}}(G_{ij})\nonumber.
\end{align}
Our goal is to prove that the second sum, the sum of the first and the third sums and the sum of the last two sums in \eqref{(6,k)'} all vanish. We first consider the last two sums. By Lemma \ref{combinatorics behind d2=0}, we have
\begin{align}\label{(6,k)-1}
&\sum_{\substack{j,G_j:0\leq j\leq k\\G\ints \bG_{V;V_j}\neq \emptyset\\ G_j\in\subord_{V_j}^V(G\ints\bG_{V;V_j})}}(-1)^j\sum_{\substack{i,G_{ij}:0\leq i<j\\G_j\ints\bG_{V_j;V_{ij}}\neq \emptyset \\ G_{ij}\in\subord_{V_{ij}}^{V_j}(G_j\ints\bG_{V_j;V_{ij}})}}(-1)^i\Chamber_{\Delta_{ij}}(G_{ij})\nonumber\\
+&\sum_{\substack{j,G_j:0\leq j\leq k\\G\ints \bG_{V;V_j}\neq \emptyset\\ G_j\in\subord_{V_j}^V(G\ints\bG_{V;V_j})}}(-1)^j\sum_{\substack{i,G_{ij}:j<i\leq k\\G_j\ints\bG_{V_j;V_{ij}}\neq \emptyset \\ G_{ij}\in\subord_{V_{ij}}^{V_j}(G_j\ints\bG_{V_j;V_{ij}})}}(-1)^{i-1}\Chamber_{\Delta_{ij}}(G_{ij})\nonumber\\
=&\sum_{\substack{i,j:0\leq i<j\leq k\\G\ints \bG_{V;V_{ij}}\neq\emptyset}}\sum_{\substack{G_j:G_j\ints \bG_{V_j;V_{ij}}\neq\emptyset\\G_j\in\subord_{V_j}^V(G\ints\bG_{V;V_j})}}\sum_{\substack{G_{ij}: G_{ij}\in\subord_{V_{ij}}^{V_j}(G_j\ints\bG_{V_j;V_{ij}})}}(-1)^{i+j}\Chamber_{\Delta_{ij}}(G_{ij})\nonumber\\
+&\sum_{\substack{i,j:0\leq j<i\leq k\\G\ints \bG_{V;V_{ij}}\neq\emptyset}}\sum_{\substack{G_j:G_j\ints \bG_{V_j;V_{ij}}\neq\emptyset\\G_j\in\subord_{V_j}^V(G\ints\bG_{V;V_j})}}\sum_{\substack{G_{ij}: G_{ij}\in\subord_{V_{ij}}^{V_j}(G_j\ints\bG_{V_j;V_{ij}})}}(-1)^{i+j-1}\Chamber_{\Delta_{ij}}(G_{ij})\nonumber\\
\begin{split}
=&\sum_{\substack{i,j,G_{ij}:0\leq i<j\leq k\\G\ints \bG_{V;V_{ij}}\neq\emptyset\\G_{ij}\in\subord_{V_{ij}}^V(G\ints\bG_{V;V_{ij}})}}(-1)^{i+j}\Chamber_{\Delta_{ij}}(G_{ij})\\
+&\sum_{\substack{i,j,G_{ij}:0\leq j<i\leq k\\G\ints \bG_{V;V_{ij}}\neq\emptyset\\G_{ij}\in\subord_{V_{ij}}^V(G\ints\bG_{V;V_{ij}})}}(-1)^{i+j-1}\Chamber_{\Delta_{ij}}(G_{ij})=0
\end{split}
\end{align}

We now consider the sum of the first and the third sums in \eqref{(6,k)'}. Notice that by Lemma \ref{singular case} and Definition \ref{subordinate}, for any $0\leq j\leq k$ such that $p_j\in\Sing(G)$, $\subord_{V_j}^V(G\ints\bG_{V;V_j})=\emptyset$. Hence
\begin{align}\label{(6,k)-2-1}
\begin{split}
&\sum_{\substack{j,G_j:0\leq j\leq k\\G\ints \bG_{V;V_j}\neq \emptyset\\ G_j\in\subord_{V_j}^V(G\ints\bG_{V;V_j})}}(-1)^{j+k-2}\sum_{P_j:P_j\in\cV(G_j)}\Floor_{\Delta_j,G_j}(P_j) \\
=&\sum_{\substack{j:0\leq j\leq k\\G\ints \bG_{V;V_j}\neq \emptyset\\p_j\not\in\Sing(G)}} \sum_{\substack{G_j,P_j:G_j\in\subord_{V_j}^V(G\ints\bG_{V;V_j})\\P_j\in\cV(G_j)}}(-1)^{j+k-2}\Floor_{\Delta_j,G_j}(P_j).
\end{split}
\end{align}
By Definition \ref{singular}, the fourth and the seventh assertions in Lemma \ref{properties of W-face}, for any $0\leq j\leq k$ such that $p_j\not\in\Sing(G)$ and $G\ints\bG_{V;V_j}\neq \emptyset$, the map $\res_{V_j}^V$ restricted to $\cV(G)\ints\cV(\bG_{V;V_j})$ is injective. To be specific, the fact that $G\ints\bG_{V;V_j}\neq \emptyset$ implies $G\cap\bG_{V;V_j}\in\MCS(\bG_{V;V_j})$ (due to the fourth assertion in Lemma \ref{properties of W-face}). If $\res_{V_j}^V$ restricted to $\cV(G)\ints\cV(\bG_{V;V_j})$ is not injective, then by the seventh assertion in Lemma \ref{properties of W-face}, there exist distinct $q_1,q_2\in V_j$ such that $|\res_{V_j}^V(\cV(G\cap\bG_{V;V_j})\cap\cV(\bP_{q_1,q_2,V_j}))|=1$. By Definition \ref{singular}, this implies that $p_j\in\Sing(G)$, which leads to a contradiction.

By the \hyperlink{GQj}{definition} of $G_{Q;j}$ (used in the fifth equality. See also the notation convention in the statement of Proposition \ref{full constructions}), \eqref{(6,k)-2-1} (used in the first equality), Definition \ref{subordinate} (used in the second equality), Definition \ref{enrich} (used in the second equality), \eqref{eqn:MCS=subord} (used in the third equality), the second assertion in Lemma \ref{easy properties of enrich} (used in the third equality), the first assertion in Proposition \ref{full constructions} for the case $k$ (used in the third equality), the third assertion in Proposition \ref{full constructions} for the case $k$ (used in the seventh equality) and the above discussion (used in the fourth equality), we can conclude that
\begin{align}\label{(6,k)-2}
&(-1)^{k-1}\sum_{\substack{Q:Q\in\cV(G)\\Q\not\in\del\cA_\Sep(V)}}\sum_{\substack{j:0\leq j\leq k\\ \cV(\bG_{V;V_j})\ni Q\\p_j\not\in\Sing(G)}}(-1)^j\Floor_{\Delta_j,G_{Q;j}}(\res_{V_j}^V(Q)) \nonumber\\
+&\sum_{\substack{j,G_j:0\leq j\leq k\\G\ints \bG_{V;V_j}\neq \emptyset\\ G_j\in\subord_{V_j}^V(G\ints\bG_{V;V_j})}}(-1)^{j+k-2}\sum_{P_j:P_j\in\cV(G_j)}\Floor_{\Delta_j,G_j}(P_j) \nonumber\\
=&\sum_{\substack{j:0\leq j\leq k\\p_j\not\in\Sing(G)}}\sum_{\substack{Q:Q\in\cV(G)\ints\cV(\bG_{V;V_j})\\Q\not\in\del\cA_\Sep(V)}}(-1)^{j+k-1}\Floor_{\Delta_j,G_{Q;j}}(\res_{V_j}^V(Q))\nonumber\\
&+\sum_{\substack{j:0\leq j\leq k\\G\ints \bG_{V;V_j}\neq \emptyset\\p_j\not\in\Sing(G)}} \sum_{\substack{G_j,P_j:G_j\in\subord_{V_j}^V(G\ints\bG_{V;V_j})\\P_j\in\cV(G_j)}}(-1)^{j+k-2}\Floor_{\Delta_j,G_j}(P_j)\nonumber \\
=&\sum_{\substack{j:0\leq j\leq k\\p_j\not\in\Sing(G)}}\sum_{\substack{Q:Q\in\cV(G)\ints\cV(\bG_{V;V_j})\\Q\not\in\del\cA_\Sep(V)}}(-1)^{j+k-1}\Floor_{\Delta_j,G_{Q;j}}(\res_{V_j}^V(Q))\nonumber\\
&+\sum_{\substack{j:0\leq j\leq k\\p_j\not\in\Sing(G)}} \sum_{\substack{P_j:P_j\in\res_{V_j}^V(\cV(G)\ints\cV(\bG_{V;V_j}))}}\sum_{\substack{G_j:G_j\in\subord_{V_j}^V(G\ints\bG_{V;V_j})\\\cV(G_j)\ni P_j}}(-1)^{j+k-2}\Floor_{\Delta_j,G_j}(P_j)\nonumber \\
&+\sum_{\substack{j:0\leq j\leq k\\G\cap\bG_{V;V_j}\neq\emptyset\\p_j\not\in\Sing(G)}} \sum_{\substack{P_j:P_j\in\cV(\Enrich^V_{V_j}(G\ints\bG_{V;V_j}))\\P_j\not\in\res_{V_j}^V(\cV(G)\ints\cV(\bG_{V;V_j}))}}\sum_{\substack{G_j:G_j\in\subord_{V_j}^V(G\ints\bG_{V;V_j})\\\cV(G_j)\ni P_j}}(-1)^{j+k-2}\Floor_{\Delta_j,G_j}(P_j)\nonumber \\
=&\sum_{\substack{j:0\leq j\leq k\\p_j\not\in\Sing(G)}}\sum_{\substack{Q:Q\in\cV(G)\ints\cV(\bG_{V;V_j})\\Q\not\in\del\cA_\Sep(V)}}(-1)^{j+k-1}\Floor_{\Delta_j,G_{Q;j}}(\res_{V_j}^V(Q))\nonumber\\
&+\sum_{\substack{j:0\leq j\leq k\\p_j\not\in\Sing(G)}} \sum_{\substack{P_j:P_j\in\res_{V_j}^V(\cV(G)\ints\cV(\bG_{V;V_j}))}}\sum_{\substack{G_j:G_j\in\subord_{V_j}^V(G\ints\bG_{V;V_j})\\\cV(G_j)\ni P_j}}(-1)^{j+k-2}\Floor_{\Delta_j,G_j}(P_j)\nonumber \\
=&\sum_{\substack{j:0\leq j\leq k\\p_j\not\in\Sing(G)}}\sum_{\substack{Q:Q\in\cV(G)\ints\cV(\bG_{V;V_j})\\Q\not\in\del\cA_\Sep(V)}}(-1)^{j+k-1}\Floor_{\Delta_j,G_{Q;j}}(\res_{V_j}^V(Q))\nonumber\\
&+\sum_{\substack{j:0\leq j\leq k\\p_j\not\in\Sing(G)}} \sum_{\substack{Q:Q\in\cV(G)\ints\cV(\bG_{V;V_j})}}\sum_{\substack{G_j:G_j\in\subord_{V_j}^V(G\ints\bG_{V;V_j})\\\cV(G_j)\ni \res^V_{V_j}(Q)}}(-1)^{j+k-2}\Floor_{\Delta_j,G_j}(\res_{V_j}^V(Q))\nonumber \\
=&\sum_{\substack{j:0\leq j\leq k\\p_j\not\in\Sing(G)}}\sum_{\substack{Q:Q\in\cV(G)\ints\cV(\bG_{V;V_j})\\Q\not\in\del\cA_\Sep(V)}}(-1)^{j+k-1}\Floor_{\Delta_j,G_{Q;j}}(\res_{V_j}^V(Q))\nonumber\\
&+\sum_{\substack{j:0\leq j\leq k\\p_j\not\in\Sing(G)}} \sum_{\substack{Q:Q\in\cV(G)\ints\cV(\bG_{V;V_j})}}(-1)^{j+k-2}\Floor_{\Delta_j,G_{Q;j}}(\res_{V_j}^V(Q))\nonumber \\
=&\sum_{\substack{j:0\leq j\leq k\\p_j\not\in\Sing(G)}}\sum_{\substack{Q:Q\in\cV(G)\ints\cV(\bG_{V;V_j})\\Q\in\del\cA_\Sep(V)}}(-1)^{j+k-2}\Floor_{\Delta_j,G_{Q;j}}(\res_{V_j}^V(Q))=0.
\end{align}
It remains for us to consider the second sum in \eqref{(6,k)'}. For any $0\leq j\leq k$ such that $p_j\in\Sing(G)$, by Lemma \ref{singular case}, $\res_{V_j}^V(\cV(G\ints\bG_{V;V_j}))$ contains only one element, which we denote by $P^G_j$. Since $\res_{V_j}^V(\del\cA_\Sep(V)\ints\cV(\bG_{V;V_j}))\subset\del\cA_\Sep(V_j)$ (due to the second assertion in Lemma \ref{res of vertices}), by \eqref{floor} (used in the second equality) and the above (used in the second and the third equalities), we have
\begin{align}\label{(6,k)-3-1}
&(-1)^{k-1}\sum_{\substack{Q:Q\in\cV(G)\\Q\not\in\del\cA_\Sep(V)}}\sum_{\substack{j:0\leq j\leq k\\ \cV(\bG_{V;V_j})\ni Q\\p_j\in\Sing(G)}}(-1)^j\Floor_{\Delta_j,G_{Q;j}^{\mathrm{op}}}(\res_{V_j}^V(Q))\nonumber\\
=&(-1)^{k-1}\sum_{\substack{j:0\leq j\leq k\\p_j\in\Sing(G)}}\sum_{\substack{Q:Q\in\cV(G)\ints\cV(\bG_{V;V_j})\\Q\not\in\del\cA_\Sep(V)}}(-1)^j\Floor_{\Delta_j,G_{Q;j}^{\mathrm{op}}}(\res_{V_j}^V(Q))\nonumber\\
=&(-1)^{k-1}\sum_{\substack{j:0\leq j\leq k\\p_j\in\Sing(G)}}\sum_{\substack{Q:Q\in\cV(G)\ints\cV(\bG_{V;V_j})\\\res_{V_j}^V(Q)\not\in\del\cA_\Sep(V_j)}}(-1)^j\Floor_{\Delta_j,G_{Q;j}^{\mathrm{op}}}(\res_{V_j}^V(Q))\nonumber\\
=&(-1)^{k-1}\sum_{\substack{j:0\leq j\leq k\\p_j\in\Sing(G)\\P_j^G\not\in\del\cA_\Sep(V_j)}}\sum_{\substack{Q:Q\in\cV(G)\ints\cV(\bG_{V;V_j})\\\res^V_{V_j}(Q)=P_j^G}}(-1)^j\Floor_{\Delta_j,G_{Q;j}^{\mathrm{op}}}(P_j^G).
\end{align}
By Lemma \ref{singular case} again, for any $j\in\Sing(G)$, $(\res_{V_j}^V)^{-1}(P_j^G)=\{Q_j^{(1)},Q_j^{(2)}\}$ for some $Q_j^{(1)},Q_j^{(2)}\in\cV(G)$. In particular, $F_P=F_{Q_j^{(1)}}=F_{Q_j^{(2)}}$. Moreover, if we assume in addition that $P_j^G\not\in\del\cA_\Sep(V_j)$, the second assertion in Lemma \ref{res of vertices} implies that $Q_j^{(1)},Q_j^{(2)}\not\in\del\cA_\Sep(V)$. By the second assertion of Corollary \ref{remaining properties of sep graph}, we can find $G_{1;j},G_{2;j}\in\MCS(\bG_{V})\setminus\{G\}$ such that $Q_j^{(t)}\in\cV(G_{t;j})$ for any $t\in\{1,2\}$. By the first assertion of Lemma \ref{reinterpretation of edges}, the first assertion in Proposition \ref{key prop of ASep graph}  and Lemma \ref{singular case}, we have $G_{1;j}\neq G_{2;j}$ and $F_{Q'}\neq F_P$ for any $Q'\in\cV(G_{1;j})\union\cV(G_{2;j})\setminus\{Q_j^{(1)},Q_j^{(2)}\}$. In particular, Lemma \ref{singular case} applied to $G_{1;j},G_{2;j}$ implies that $\Sing(G_{1;j})=\Sing(G_{2;j})=\emptyset$.
Hence by the \hyperlink{GQj}{definitions} of $G_{Q,j}$, we have $(G_{t;j})_{Q_j^{(t)};j}\in\subord_{V_j}^V(G_{t;j}\ints\bG_{V;V_j})$ is well-defined for any $t\in\{1,2\}$. By Definition \ref{subordinate}, $(G_{1;j})_{Q_j^{(1)};j}\neq (G_{2;j})_{Q_j^{(2)};j}$. Therefore by the second assertion Proposition \ref{key prop of ASep graph}, $(G_{1;j})_{Q_j^{(1)};j},(G_{2;j})_{Q_j^{(2)};j}$ are the only MCS in $\bG_{V_j}$ which contains $P_j^G$ as a vertex.

As a corollary of the first assertion in Proposition \ref{full constructions} for the case $k$ (used in the third equality), the \hyperlink{GQj}{definitions} of $G_{Q,j}$ and $G_{Q,j}^{\mathrm{op}}$ (used in the second equality), and the above (used in the first equality),
\begin{align}\label{(6,k)-3-2}
&(-1)^{k-1}\sum_{\substack{j:0\leq j\leq k\\p_j\in\Sing(G)\\P_j^G\not\in\del\cA_\Sep(V_j)}}\sum_{\substack{Q:Q\in\cV(G)\ints\cV(\bG_{V;V_j})\\\res^V_{V_j}(Q)=P_j^G}}(-1)^j\Floor_{\Delta_j,G_{Q;j}^{\mathrm{op}}}(P_j^G)\nonumber\\
=&(-1)^{k-1}\sum_{\substack{j:0\leq j\leq k\\p_j\in\Sing(G)\\P_j^G\not\in\del\cA_\Sep(V_j)}}(-1)^j\left[\Floor_{\Delta_j,G_{Q_j^{(1)};j}^{\mathrm{op}}}(P_j^G)+\Floor_{\Delta_j,G_{Q_j^{(2)};j}^{\mathrm{op}}}(P_j^G)\right]\nonumber\\
=&(-1)^{k-1}\sum_{\substack{j:0\leq j\leq k\\p_j\in\Sing(G)\\P_j^G\not\in\del\cA_\Sep(V_j)}}(-1)^{j}\left[\Floor_{\Delta_j,(G_{1;j})_{Q_j^{(1)};j}}(P_j^G)+\Floor_{\Delta_j,(G_{2;j})_{Q_j^{(2)};j}}(P_j^G)\right]\nonumber\\
=&(-1)^{k-1}\sum_{\substack{j:0\leq j\leq k\\p_j\in\Sing(G)\\P_j^G\not\in\del\cA_\Sep(V_j)}}(-1)^{j}\cdot0=0.
\end{align}
As a summary, we can apply \eqref{(6,k)-1}, \eqref{(6,k)-2}, \eqref{(6,k)-3-1} and \eqref{(6,k)-3-2} to \eqref{(6,k)'} and obtain $\del^X_{k-1}\BChamber_\Delta(G)=0$. The proof of the rest of the assertion follows from the same arguments as in \eqref{(6,k)-1g}, \eqref{(6,k)-2g} and \eqref{(6,k)-3g}.

\item[(7).] By \eqref{bfloor}, we have
\begin{align}\label{(7,k)-g}
&\sum_{\substack{Q,G:Q\in\cV(G)\\G\in\MCS(\bG_V)}}|\BFloor_{\Delta,G}(Q)|_{l^1} \nonumber\\
\leq&\sum_{\substack{Q,G:Q\in\cV(G)\\G\in\MCS(\bG_V)}}\sum_{\substack{j:0\leq j\leq k\\\cV(\bG_{V;V_j})\ni Q\\p_j\not\in\Sing(G)}}|\Floor_{\Delta_j,G_{Q;j}}(\res_{V_j}^V(Q))|_{l^1}\nonumber\\
&+\sum_{\substack{Q,G:Q\in\cV(G)\\G\in\MCS(\bG_V)}}\sum_{\substack{j:0\leq j\leq k\\\cV(\bG_{V;V_j})\ni Q\\p_j\in\Sing(G)}}|\Floor_{\Delta_j,G_{Q;j}^{\mathrm{op}}}(\res_{V_j}^V(Q))|_{l^1}\nonumber\\
=&\sum_{j=0}^k\sum_{Q:Q\in\cV(\bG_{V;V_j})}\sum_{\substack{G:G\in\MCS(\bG_V)\\\cV(G)\ni Q\\\Sing(G)\not\ni p_j}}|\Floor_{\Delta_j,G_{Q;j}}(\res_{V_j}^V(Q))|_{l^1}\nonumber\\
&+\sum_{j=0}^k\sum_{Q:Q\in\cV(\bG_{V;V_j})}\sum_{\substack{G:G\in\MCS(\bG_V)\\\cV(G)\ni Q\\\Sing(G)\ni p_j}}|\Floor_{\Delta_j,G_{Q;j}^{\mathrm{op}}}(\res_{V_j}^V(Q))|_{l^1}\nonumber\\
=&\sum_{j=0}^k\sum_{P:P\in\cV(\bG_{V_j})}\sum_{Q:Q\in(\res_{V_j}^V)^{-1}(P)}\sum_{\substack{G:G\in\MCS(\bG_V)\\\cV(G)\ni Q\\\Sing(G)\not\ni p_j}}|\Floor_{\Delta_j,G_{Q;j}}(P)|_{l^1}\nonumber\\
&+\sum_{j=0}^k\sum_{P:P\in\cV(\bG_{V_j})}\sum_{Q:Q\in(\res_{V_j}^V)^{-1}(P)}\sum_{\substack{G:G\in\MCS(\bG_V)\\\cV(G)\ni Q\\\Sing(G)\ni p_j}}|\Floor_{\Delta_j,G_{Q;j}^{\mathrm{op}}}(P)|_{l^1}
\end{align}
For any $0\leq j\leq k$, any $P\in\cV(\bG_{V_j})$ and any distinct $G_1, G_2\in\MCS(\bG_V)$ such that $\Sing(G_t)\not\ni p_j$ and $(\res_{V_j}^V)^{-1}(P)\ints\cV(G_t)\neq \emptyset$ for any $t\in\{1,2\}$, by Definition \ref{subordinate} and the \hyperlink{GQj}{definition} of $G_{Q,j}$, we have $(G_1)_{Q;j}\neq (G_2)_{Q;j}$. Since $|V\setminus V_j|=1$, $|(\res_{V_j}^V)^{-1}(P)|\leq 2$. Therefore for any $0\leq j\leq k$ and any $P\in\cV(\bG_{V_j})$, we have
\begin{align}\label{(7,k)-g1}
\begin{split}
&\sum_{Q:Q\in(\res_{V_j}^V)^{-1}(P)}\sum_{\substack{G:G\in\MCS(\bG_V)\\\cV(G)\ni Q\\\Sing(G)\not\ni p_j}}|\Floor_{\Delta_j,G_{Q;j}}(P)|_{l^1}\\
\leq&\sum_{Q:Q\in(\res_{V_j}^V)^{-1}(P)}\sum_{\substack{G_j:G_j\in\MCS(\bG_{V_j})\\\cV(G_j)\ni P}}|\Floor_{\Delta_j,G_{j}}(P)|_{l^1}\leq 2\sum_{\substack{G_j:G_j\in\MCS(\bG_{V_j})\\\cV(G_j)\ni P}}|\Floor_{\Delta_j,G_{j}}(P)|_{l^1}.
\end{split}
\end{align}
For any $0\leq j\leq k$ and any $P\in\cV(\bG_{V_j})$, by the first assertion in Lemma \ref{reinterpretation of edges} and Lemma \ref{singular case}, there exist at most one MCS in $\bG_{V}$ satisfying the following properties:
\begin{itemize}
\item $p_j$ is one of its singular points in the sense of Definition \ref{singular}.
\item The vertex set of this MCS contains some element in $(\res_{V_j}^V)^{-1}(P)$.
\end{itemize}
Notice that $|(\res_{V_j}^V)^{-1}(P)|\leq 2$, we have
\begin{align}\label{(7,k)-g2}
\begin{split}
&\sum_{Q:Q\in(\res_{V_j}^V)^{-1}(P)}\sum_{\substack{G:G\in\MCS(\bG_V)\\\cV(G)\ni Q\\\Sing(G)\ni p_j}}|\Floor_{\Delta_j,G_{Q;j}^{\mathrm{op}}}(P)|_{l^1}\\
\leq&\sum_{Q:Q\in(\res_{V_j}^V)^{-1}(P)}\sum_{\substack{G_j:G_j\in\MCS(\bG_{V_j})\\\cV(G_j)\ni P}}|\Floor_{\Delta_j,G_{j}}(P)|_{l^1}\leq 2\sum_{\substack{G_j:G_j\in\MCS(\bG_{V_j})\\\cV(G_j)\ni P}}|\Floor_{\Delta_j,G_{j}}(P)|_{l^1}.
\end{split}
\end{align}
Apply \eqref{(7,k)-g1}, \eqref{(7,k)-g2} and the seventh assertion in Proposition \ref{full constructions} for the case $k-1$ to \eqref{(7,k)-g}, we obtain
\begin{align}\label{(7,k)-final}
&\sum_{\substack{Q,G:Q\in\cV(G)\\G\in\MCS(\bG_V)}}|\BFloor_{\Delta,G}(Q)|_{l^1} \nonumber\\
\leq&4\sum_{j=0}^k\sum_{P:P\in\cV(\bG_{V_j})}\sum_{\substack{G_j:G_j\in\MCS(\bG_{V_j})\\\cV(G_j)\ni P}}|\Floor_{\Delta_j,G_{j}}(P)|_{l^1}\nonumber\\
=&4\sum_{j=0}^k\sum_{\substack{P,G_j:P\in\cV(G_j)\\G_j\in\MCS(\bG_{V_j})}}|\Floor_{\Delta_j,G_{j}}(P)|_{l^1} \leq 4(k+1)\cC_6(k-1).
\end{align}
Choose $\cC_6(k)=4(k+1)\cC_6(k-1)$ and the rest of the assertion follows directly from \eqref{(7,k)-final}, \eqref{floor} and the fourth remark after {Definition \ref{FBCone}}.
\item[(8).] Already proved in Subsection \ref{subsec 2D case}.
\item[(9).] Already proved in Subsection \ref{subsec 2D case}.
\item[(10).] By \eqref{BChamber} (used in the first inequality), the third assertion in Lemma \ref{properties of W-face} (used in the third (in)equality), Definition \ref{subordinate} (used in the third (in)equality), the seventh assertion in Proposition \ref{full constructions} for the case $k$ (used in the fourth (in)equality) and the tenth assertion in Proposition \ref{full constructions} for the case $k-1$ (used in the fourth (in)equality), we have
\begin{align}\label{(10,k)-final}
&\sum_{G:G\in\MCS(\bG_V)}|\BChamber_\Delta(G)|_{l^1}\nonumber\\
\leq&\sum_{G:G\in\MCS(\bG_V)}\sum_{Q:Q\in\cV(G)}|\Floor_{\Delta,G}(Q)|_{l^1}+\sum_{G:G\in\MCS(\bG_V)}\sum_{\substack{j,G_j:0\leq j\leq k\\G\ints\bG_{V;V_j}\neq \emptyset\\G_j\in\subord_{V_j}^V(G\ints\bG_{V;V_j})}}|\Chamber_{\Delta_j}(G_j)|_{l^1}\nonumber\\
=&\sum_{\substack{Q,G:Q\in\cV(G)\\G\in\MCS(\bG_V)}}|\Floor_{\Delta,G}(Q)|_{l^1}+\sum_{j=0}^k\sum_{\substack{G:G\in\MCS(\bG_V)\\G\ints\bG_{V;V_j}\neq\emptyset}}\sum_{G_j:G_j\in\subord_{V_j}^V(G\ints\bG_{V;V_j})}|\Chamber_{\Delta_j}(G_j)|_{l^1}\nonumber\\
\leq&\sum_{\substack{Q,G:Q\in\cV(G)\\G\in\MCS(\bG_V)}}|\Floor_{\Delta,G}(Q)|_{l^1}+\sum_{j=0}^k\sum_{G_j:G_j\in\MCS(\bG_{V_j})}|\Chamber_{\Delta_j}(G_j)|_{l^1}\nonumber\\
\leq& \cC_6(k)+(k+1)\cC_8(k-1).
\end{align}
Choose $\cC_8(k)=\cC_6(k)+(k+1)\cC_8(k-1)$ and the rest of the assertion follows directly from \eqref{(10,k)-final}, \eqref{Chamber} and the fourth remark after {Definition \ref{FBCone}}.
\item[(11).] Already proved in Subsection \ref{subsec 2D case}.\qedhere
\end{enumerate}
\end{proof}

\section{Proof of the theorems}\label{last sec}
\subsection{Preparations}
By Corollary \ref{properties of straightening}, for any singular $n$-simplex in $X$, $\phi_\bullet\circ\psi_\bullet(\sigma)$ is a weighted sum of special {barycentric} simplices with depth at most $\cC_9(n)$ whose weight is uniformly bounded from above by $\cC_8(n)$. By the first assertion in Lemma \ref{properties of Theta sep} and Definition \ref{special bar simplices}, the image of any special {barycentric} simplex with depth at most $\cC_9(n)$ is $(c_3(\epsilon_4/4)/2)$-close to at most $\cC_9(n)$ elements in $\Gamma F$. {In this subsection, we show that one can choose an $n$-form $\omega$ such that for any special barycentric simplex $\sigma$ with depth at most $m$, $\int_{\sP\circ\sigma}\omega$ can be uniformly bounded from above. Namely, we have the following lemma.}

\begin{lemma}\label{bar uniform volume bound}
Let $J:M\to\RR_{\geq 0}$ be a smooth function such that
$$\mathrm{supp}(J)\subset \{p\in M|r<d_M(p,N)<R\},$$
where $0<r<R<\min\{c_3(\epsilon_4/4)/2,\mathrm{injrad}(M)/3\}$. (See Lemma \ref{bar: away from edge, away from simplex} for the definition of $c_3(\cdot)$.) Consider the $n$-form $\omega_J(p)=J(p)d\vol_M$ on $M$. Then there exists some constant $\cC_1:=\cC_1(J,r,R,m)$ such that for any $n$-dimensional special barycentric simplex $\sigma:=\Db^n(p_0,...,p_n)$ on $X$ with depth at most $m$, we have
$$-\cC_1(J,r,R,m)<\int_{\sP\circ\sigma}\omega_J:=\int_{\Delta^n_{\RR^{n+1}}}(\sP\circ\sigma)^*\omega_J=\int_{\Delta^n_{\RR^{n+1}}}\sigma^*(\sP^*(\omega_J))<\cC_1(J,r,R,m),$$
where $\sP:X\to M$ is the covering map introduced in Section \ref{sect:setting}.
\end{lemma}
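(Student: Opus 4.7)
The plan is to decompose the integral over disjoint annular neighborhoods of flats, reduce it to finitely many nontrivial contributions using the depth hypothesis, and control each term by a geometric volume estimate coming from Section~\ref{sec:bar simplex and almost sep}.

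First I would lift everything to $X$. Since $r<R<c_3(\epsilon_4/4)/2\ll\rho_0/2$, the annuli $\{A_{r,R}(\widehat F)\}_{\widehat F\in\Gamma F}$ are pairwise disjoint, and $\sP^*\omega_J$ is supported in their disjoint union. Therefore
\begin{align*}
\int_{\sP\circ\sigma}\omega_J=\int_{\Delta^n_{\RR^{n+1}}}\sigma^*(\sP^*\omega_J)=\sum_{\widehat F\in\Gamma F}\int_{\sigma^{-1}(A_{r,R}(\widehat F))}\sigma^*(\sP^*\omega_J).
\end{align*}
The first assertion of Lemma~\ref{properties of Theta sep} gives $d(\widehat F,\sigma(\Delta^n_{\RR^{n+1}}))\geq c_3(\epsilon_4/4)>R$ whenever $\widehat F\notin\cF(V)$, where $V=\{p_0,\dots,p_n\}$. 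Hence the sum collapses to $\widehat F\in\cF(V)$, which contributes at most $m$ nonzero terms by the depth hypothesis.

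Second I would bound each remaining term by a volume estimate. Every vertex satisfies $d(p_j,\widehat F)\geq r$ for each $\widehat F\in\cF(V)$: either $d(p_j,\widehat F)=\epsilon_0/2$ when $\widehat F=F_{p_j}$, or $d(p_j,\widehat F)\geq\rho_0-\epsilon_0/2>2\rho_0/3$ otherwise, both exceeding $r$. Applying Lemma~\ref{bar bded ints away from flats} with $\epsilon=r$, the projection of $\sigma(\Delta^n_{\RR^{n+1}})\cap A_{r,R}(\widehat F)$ onto $\widehat F$ is contained in $\bigcup_{j=0}^n B_{R_1(r/2)}(\Proj_{\widehat F}(p_j))$, a union of at most $n+1$ balls of a fixed radius. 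Combined with the radial constraint $r<d(\cdot,\widehat F)<R$, the relevant portion of the image lies in the $R$-tubular neighborhood over this union. By compactness of $N=\mathrm{Stab}_\Gamma(F)\backslash F$ and the bounded sectional curvature of $M$, this tubular region has Riemannian volume at most some constant $C_0=C_0(r,R,n,M)$, independent of $\sigma$.

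Third I would convert the volume estimate into a bound on the integral via the area formula. By Lemma~\ref{smoothness of bar simplex} the map $\sigma$ is smooth, and by Corollary~\ref{bar almost injectivity}, for a.e.\ vertex configuration $(p_0,\dots,p_n)$ the preimage multiplicity of $\sigma$ equals $1$ a.e.\ on its image. Using $|\int\sigma^*\omega_J|\leq\int|\sigma^*\omega_J|$ and the area formula,
\begin{align*}
\left|\int_{\sigma^{-1}(A_{r,R}(\widehat F))}\sigma^*(\sP^*\omega_J)\right|\leq\|J\|_\infty\int_{\sigma(\Delta^n_{\RR^{n+1}})\cap A_{r,R}(\widehat F)}d\vol\leq\|J\|_\infty\cdot C_0.
\end{align*}
Summing over the at most $m$ contributing flats gives $|\int_{\sP\circ\sigma}\omega_J|\leq m\|J\|_\infty C_0$, and I would set $\cC_1(J,r,R,m):=m\|J\|_\infty C_0+1$.

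The main obstacle I anticipate is that Corollary~\ref{bar almost injectivity} only guarantees almost-injectivity of $\sigma$ for a \emph{generic} vertex configuration, whereas the lemma must hold for every special barycentric simplex with vertices in $\Gamma x_0$. I would resolve this by a continuity argument: Lemma~\ref{Lip bar simplex} shows that $(p_0,\dots,p_n)\mapsto\int_{\Db^n(p_0,\dots,p_n)}\omega_J$ is continuous (in fact Lipschitz on bounded subsets of $X^{n+1}$), while Lemma~\ref{almost injectivity lem} shows that the set of generic vertex configurations is dense in $X^{n+1}$. The uniform bound established on this dense generic subset therefore extends by continuity to every special barycentric simplex of depth at most $m$.
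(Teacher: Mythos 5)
Your proposal is correct and follows essentially the same strategy as the paper's proof: decompose the integral over the disjoint annuli, use Lemma~\ref{properties of Theta sep} and the depth hypothesis to reduce to at most $m$ contributing flats in $\cF(V)$, bound each term via the projection estimate of Lemma~\ref{bar bded ints away from flats} together with a change of variables that uses Corollary~\ref{bar almost injectivity}, and then extend from generic almost-injective configurations to all vertex configurations via Lemma~\ref{Lip bar simplex} and the density statement of Lemma~\ref{almost injectivity lem}. The only small imprecision is that the uniform bound on the volume of the relevant tubular region should be attributed to compactness of $M$ (which gives $\Gamma$-invariant, hence uniformly bounded, volumes of balls of fixed radius in $X$) rather than to compactness of $N$.
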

\begin{proof}
Let $V:=\{p_0,...,p_n\}$, $\widetilde{J}:=J\circ\sP$ and $\widetilde{\omega}_J:=\sP^*(\omega_J)$. Then we have
$$\mathrm{supp}(\widetilde{J}),\mathrm{supp}(\widetilde{\omega}_J)\subset\bigsqcup_{\widehat{F}\in\Gamma F}A_{r,R}(\widehat F).$$
By the definition of $\Theta(\cdot,\cdot)$, Definition \ref{special bar simplices} and the fact that $X$ is nonpositively curved, for any $i,j\in\{0,...,n\}$ with $i\neq j$, any $p_i'\in B_{\epsilon_4/4}(p_i)$, any $p_j'\in B_{\epsilon_4/4}(p_j)$ and any $\widehat{F}\in\Gamma F\setminus\cF(V)$, we have
$$d([p_i',p_j'],\widehat{F})> d([p_i,p_j],\widehat{F})-\frac{\epsilon_4}{4}\geq \frac{\epsilon_4}{2}-\frac{\epsilon_4}{4}=\frac{\epsilon_4}{4}.$$
By Lemma \ref{bar: away from edge, away from simplex} and the assumptions on $R$, for any $(p_0',...,p_n')\in B_{\epsilon_4/4}(p_0)\times...\times B_{\epsilon_4/4}(p_n)$ and any $\widehat{F}\in\Gamma F\setminus\cF(V)$, we have
\begin{align}\label{eqn:away from irrelatevant flats}
\Db^n(p_0',...,p_n')(\Delta^n_{\RR^{n+1}})\ints A_{r,R}(\widehat F)=\emptyset.
\end{align}
For any $\widehat F\in\Gamma F$, we define an $n$-form $\widetilde{\omega}_{J,F}$ on $X$ such that
$$\widetilde{\omega}_{J;F}(p)=\begin{cases}
\displaystyle \widetilde{\omega}_{J}(p),~&\mathrm{if}~d(p,\widehat F)\leq R,\\
\displaystyle 0,~&\mathrm{else}.
\end{cases}$$
then for any $(p_0',...,p_n')\in B_{\epsilon_4/4}(p_0)\times...\times B_{\epsilon_4/4}(p_n)$, \eqref{eqn:away from irrelatevant flats} implies that
\begin{align}\label{eqn:finite sum volume}
\int_{\Delta^n_{\RR^{n+1}}}(\Db^n(p_0',...,p_n'))^*\widetilde{\omega}_J=\sum_{\widehat F\in\cF(V)}\int_{\Delta^n_{\RR^{n+1}}}(\Db^n(p_0',...,p_n'))^*\widetilde{\omega}_{J,\widehat F}.
\end{align}
Now we claim that for any $(p_0',...,p_n')\in B_{\epsilon_4/4}(p_0)\times...\times B_{\epsilon_4/4}(p_n)$ and any $\widehat F\in\cF(V)$, there exists some $\cC_1':=\cC_1'(J,r,R)>0$ such that
\begin{align}\label{eqn:bar uniform volume bound each piece}
\left|\int_{\Delta^n_{\RR^{n+1}}}(\Db^n(p_0',...,p_n'))^*\widetilde{\omega}_{J,\widehat F}\right|\leq \cC_1'(J,r,R).
\end{align}
Suppose \eqref{eqn:bar uniform volume bound each piece} holds, then we choose $\cC_1=m\cC_1'$ and the lemma follow immediately from \eqref{eqn:finite sum volume} and \eqref{eqn:bar uniform volume bound each piece} (in the case when $(p_0',...,p_n')=(p_0,...,p_n)$). Therefore it remains for us to verify the claim \eqref{eqn:bar uniform volume bound each piece}.

By the second remark after Definition \ref{bar simplex} and Lemma \ref{Lip bar simplex}, it suffices to verify \eqref{eqn:bar uniform volume bound each piece} for $(p_0',...,p_n')$ on a dense subset of $B_{\epsilon_4/4}(p_0)\times...\times B_{\epsilon_4/4}(p_n)$. By Corollary \ref{bar almost injectivity}, the subset of points $(p_0',...,p_n')\in B_{\epsilon_4/4}(p_0)\times...\times B_{\epsilon_4/4}(p_n)$ satisfying the conditions in the statement of Corollary \ref{bar almost injectivity} is dense. Therefore, WLOG we assume that there exists an open and dense subset $\cU_{(p_0',...,p_n')}\subset X$ whose complement has $0$ measure with respect to $d\vol_X$, such that for any $q\in \Db^n(p_0',...,p_n')(\Delta^n_{\RR^{n+1}})\cap(\cU_{(p_0',...,p_{n}')})$, there exists a unique $(a_0,...,a_n)\in \Delta^n_{\RR^{n+1}}$ satisfying
$$\Db^n(p_0',...,p_n')(a_0,...,a_n)=q.$$
Let
$$\cU_{\mathrm{reg}}:=\{(a_0,...,a_n)\in\Delta^n_{\RR^{n+1}}|\mathrm{Jac}|_{(a_0,...,a_n)}(\Db^n(p_0',...,p_n'))\neq 0\}.$$
Then the restriction of $\Db^n(p_0',...,p_n')$ onto $\cU_{\mathrm{reg}}\ints (\Db^n(p_0',...,p_n'))^{-1}(\cU_{(p_0',...,p_n')})$ is a diffeomorphism onto its image. Moreover, the subset $\cU_{\mathrm{reg}}\ints (\Delta^n_{\RR^{n+1}}\setminus(\Db^n(p_0',...,p_n'))^{-1}(\cU_{(p_0',...,p_n')}))$ has $0$ Lebesgue measure. Since $(\Db^n(p_0',...,p_n'))^*\widetilde{\omega}_{J,\widehat F}$ is absolutely continuous with respect to the Lebesgue measure on $\Delta^n_{\RR^{n+1}}$, by Lemma \ref{bar bded ints away from flats} we have
\begin{align*}
\left|\int_{\Delta^n_{\RR^{n+1}}}(\Db^n(p_0',...,p_n'))^*\widetilde{\omega}_{J,\widehat F}\right|=&\left|\int_{\cU_{\mathrm{reg}}}(\Db^n(p_0',...,p_n'))^*\widetilde{\omega}_{J,\widehat F}\right|\\
=&\left|\int_{\cU_{\mathrm{reg}}\ints (\Db^n(p_0',...,p_n'))^{-1}(\cU_{(p_0',...,p_n')})}(\Db^n(p_0',...,p_n'))^*\widetilde{\omega}_{J,\widehat F}\right|\\
=&\left|\int_{\Db^n(p_0',...,p_n')(\cU_{\mathrm{reg}})\ints \cU_{(p_0',...,p_n')}}\widetilde{\omega}_{J,\widehat F}\right|\\
\leq&\left| \int_{A_{r,R}(\widehat F)\ints \Db^n(p_0',...,p_n')(\Delta^n_{\RR^{n+1}})}\widetilde{\omega}_{J,\widehat F}\right|\\
\leq &\left(\max_{p\in M}|J(p)|\right)\cdot\vol_X(A_{r,R}(\widehat F)\ints \Db^n(p_0',...,p_n')(\Delta^n_{\RR^{n+1}}))\\
\leq &\left(\max_{p\in M}|J(p)|\right)\cdot\sum_{j=0}^n\vol_X(B_{2R+R_1(r/2)}(\mathrm{Proj}_{\widehat F}(p_j')))\\
\leq &(n+1)\left(\max_{p\in M}|J(p)|\right)\cdot\vol_X(B_{2R+R_1(r/2)+\mathrm{diam}(M)+1}(x_0)),
\end{align*}
where $x_0$ is a fixed point in $X$ introduced in Notations \ref{Fx def}. Choose $\cC_1':=(n+1)\left(\max_{p\in M}|J(p)|\right)\cdot\vol_X(B_{2R+R_1(r/2)+\mathrm{diam}(M)+1}(x_0))$ and then \eqref{eqn:bar uniform volume bound each piece} follows. This completes the proof of Lemma \ref{bar uniform volume bound}.
\end{proof}

\subsection{Proof of Theorem \ref{thm:main}}
\begin{notation}
For any $n$-form $\omega$ on $M$ (or $X$) and any smooth $n$-dimensional chain $a=\sum_{\sigma}\alpha_\sigma\sigma$ with $\alpha_\sigma\in\RR$, (that is to say, the sum is over all smooth $n$-dimensional singular simplex $\sigma$ in the sense of \cite[page 416]{Lee03}), we let
$$\int_a\omega:=\sum_{\sigma}\alpha_\sigma\int_\sigma\omega=\sum_{\sigma}\alpha_\sigma\int_{\Delta^n_{\RR^{n+1}}}\sigma^*\omega.$$
\end{notation}
\begin{proof}[Proof of Theorem \ref{thm:main}]
By Lemma \ref{key hom lemma}, the constructions of $\phi_\bullet$ and $\psi_\bullet$, and Corollary \ref{properties of straightening}, we know that the $\RR[\Gamma]$-chain map $\phi_\bullet\circ\psi_{\bullet}$ is chain homotopic to the identity map on $C^X_\bullet$. Moreover, by \eqref{eqn:psi norm control} and Corollary \ref{properties of straightening}, for any singular simplex $\sigma\in C_k(X,\RR)$, $\phi_\bullet\circ\psi_{\bullet}(\sigma)$ is a linear combination of special barycentric simplices of depth at most $\cC_9(k)$ in the sense of Definition \ref{special bar simplices} satisfying $|\phi_k\circ\psi_{k}(\sigma)|_{l^1}\leq \cC_8(k)$.

Let $C^M:=(C_\bullet(M;\RR),\del^M)$ be the augmented chain complex of singular simplices on $M$ with coefficients in $\RR$. The covering map $\sP:X\to M$ naturally induces surjective $\RR$-linear maps $\sP_\bullet:C_\bullet(X;\RR)\to C_\bullet(M;\RR)$. (Namely, $\sP_{k}=\id$ for any $k<0$ and $\sP_{k}(\sigma)=\sP\circ\sigma$ for any $k\geq 0$ and any $k$-dimensional singular simplex $\sigma$.) Notice that for any $i,j\geq -2$ and any $\RR[\Gamma]$-homomorphism $h_X:C_i(X;\RR)\to C_j(X,\RR)$, there exists a unique $\RR$-linear map $h_M:C_i(M;\RR)\to C_j(M,\RR)$ such that $h_M\circ\sP_i=\sP_j\circ h_X$. By the fact that $\del^M_k\circ \sP_k=\sP_{k-1}\circ\del^X_k$ for any $k\geq -1$, there exists a chain map $\mathrm{st}_\bullet:C_\bullet(M;\RR)\to C_\bullet(M;\RR)$ such that $\mathrm{st}_\bullet\circ\sP_\bullet=\sP_\bullet\circ (\phi_\bullet\circ\psi_{\bullet})$. Moreover, by Lemma \ref{smoothness of bar simplex} and the uniqueness in the above discussions, $\mathrm{st}_\bullet$ is $\RR$-chain homotopic to the identity map with the image of $\mathrm{st}_n$ lying in the collections of smooth singular $n$-chains on $M$ in the sense of \cite[page 461]{Lee03}. In particular, for any closed $n$-chain $a\in C_n(M;\RR)$, the induced homology classes $[\mathrm{st}_n(a)]$ and $[a]$ in $H_n(M;\RR)$ are equal.

Fix some constants $0<r<R<\min\{c_3(\epsilon_4/4)/2,\mathrm{injrad}(M)/3\}$, a smooth function $J:M\to\RR_{\geq 0}$ such that $\mathrm{supp}(J)\subset \{p\in M|r<d_M(p,N)<R\}$ and its corresponding $n$-form $\omega_J(p)=J(p)d\vol_M$ on $M$.  For any singular simplex $\sigma\in C_n(M;\RR)$, we can find some $\widetilde{\sigma}\in C_n(X;\RR)$ such that $\sP_n(\widetilde\sigma)=\sigma$. Then by Lemma \ref{bar uniform volume bound},
\begin{align}\label{eqn:vol est of st simplex}
\int_{\mathrm{st}_n(\sigma)}\omega_J=\int_{\mathrm{st}_n(\sP_n(\widetilde\sigma))}\omega_J=\int_{\sP_n\circ\phi_n\circ\psi_{n}(\widetilde\sigma)}\omega_J<\cC_8(n)\cC_1(K,r,R,\cC_9(n)).
\end{align}
Hence for any closed $n$-chain $a$ such that $\int_a d\vol=1$, \eqref{eqn:vol est of st simplex} and the de Rham theorem \cite[Theorem 16.12]{Lee03} imply that
$$1=\int_ad\vol_M=\frac{\int_a\omega_J}{\int_M\omega_J}=\frac{\int_{\mathrm{st}_n(a)}\omega_J}{\int_M\omega_J}\leq \frac{\cC_8(n)\cC_1(K,r,R,\cC_9(n))}{\int_M\omega_J}\cdot|a|_{l^1}.$$
Taking infimum over all such $a$, we have
$$\|M\|\geq \frac{\int_M\omega_J}{\cC_8(n)\cC_1(K,r,R,\cC_9(n))}>0.$$
This completes the proof of Theorem \ref{thm:main}.
\end{proof}

\subsection{Proof of Corollary \ref{cor:main-1} and Corollary \ref{cor:main-2}}
\begin{proof}[Proof of Corollary \ref{cor:main-1}]
Let $X$ be the universal cover of $M$ and $F$ be a codimension one flat in $X$. Let $\sP:X\to M$ be the covering map. By the main theorem in \cite{Bangert91}, we assume WLOG that $\sP(F)$ is closed (and possibly with self-intersections). 

When $M$ has non-trivial Euclidean de Rham factors,  by \cite{Eberlein82} and \cite[page 8]{Gromov82}, we have $\|M\|=0$. 

Assume that $M$ does not have non-trivial Euclidean de Rham factors, by Theorem \ref{thm:main}, it suffices to show that $\sP(F)$ is a flat, isolated, closed codimension one submanifold in $M$. (See Section \ref{sect:setting} for the definition of isolatedness.) If not, by the fact that $F$ is flat, there exists a geodesic $c:\RR\to X$ such that $c(\RR)\not\subset F$ and that $d(c(t),F)$ is uniformally bounded from above. By \cite[1.A]{Bangert91}, $X$ is isometric to the product of $\RR$ and a Hadamard manifold with dimension $\dim(X)-1$. (This is due to the analytic assumption and the Sandwich Lemma in nonpositive curvature. See the discussion in \cite[1.A]{Bangert91} for more details.) This contradicts the assumption that $M$ does not have non-trivial Euclidean de Rham factors, finishing the proof.
\end{proof}
\begin{proof}[Proof of Corollary \ref{cor:main-2}]
By Corollary \ref{cor:main-1}, it remains to prove the case when there exists a maximal higher rank submanifold of type (3a). Let $F$ maximal higher rank submanifold of type (3a), which projects to a closed totally geodesic submanifold $N$ in $M$. Observe that if a $2$-flat intersects $F$ transversally, then this $2$-flat is not a maximal higher rank submanifold. (This is because the set $P$ of all points which lie on parallels of their intersection properly contains this $2$-flat. Due to the analytic assumptions, $P$ is a higher rank submanifold and has dimension at least $3$. Hence the $2$-flat is not maximal.) Hence by \cite[Lemma 3.3]{Schroeder89} and the above discussions, $F$ does not intersect any other maximal higher rank submanifold. This implies that $N$ is isolated. By Theorem \ref{thm:main}, we have $\|M\|>0$. 
\end{proof}

\def\cprime{$'$}
\providecommand{\bysame}{\leavevmode\hbox to3em{\hrulefill}\thinspace}
\providecommand{\MR}{\relax\ifhmode\unskip\space\fi MR }
\providecommand{\MRhref}[2]{%
	\href{http://www.ams.org/mathscinet-getitem?mr=#1}{#2}
}
\providecommand{\href}[2]{#2}

\end{document}